\newtheorem{theorem}{Theorem}
\newtheorem{thm}{Theorem}
\newtheorem{lem}[thm]{Lemma}
\newtheorem{prop}[thm]{Proposition}
\newtheorem{cor}[thm]{Corollary}
\theoremstyle{definition}
\newtheorem{defn}[thm]{Definition}
\theoremstyle{definition}
\theoremstyle{definition}
\newtheorem{examples}[thm]{Examples}
\theoremstyle{definition}
\theoremstyle{definition}
\newtheorem{rem}[thm]{Remark}
\newcommand{\Hide}[1]{}
\newenvironment{pf}[1][Proof Visible]{\begin{proof}}{\end{proof}}
\numberwithin{thm}{chapter}
 \newcommand{\To}{\longrightarrow}
 \newcommand{\ds}{\displaystyle}
 \newcommand{\Fl}{\mathbb{F}_{l}}
 \newcommand{\Fq}{\mathbb{F}_{q}}
 \newcommand{\fl}[1]{\mathbb{F}_{l^{#1}}}
 \newcommand{\fq}[1]{\mathbb{F}_{q^{#1}}}
 \newcommand{\Zp}{\mathbb{Z}_p}
 \newcommand{\Flbar}{\overline{\mathbb{F}}_{l}}
 \newcommand{\tensor}{\otimes}
 \newcommand{\Hom}{\text{Hom}}
 \newcommand{\End}{\text{End}}
 \newcommand{\Rep}{\text{Rep}}
 \newcommand{\Irr}{\text{Irr}}
 \newcommand{\Gal}{\text{Gal}}
 \newcommand{\Syl}{\text{Syl}}
 \newcommand{\Frob}{F}
 \newcommand{\Map}{\text{Map}}
 \newcommand{\Aut}{\text{Aut}}
 \newcommand{\im}{\text{image}}
 \newcommand{\id}{\text{id}}
 \newcommand{\orb}{\text{orb}}
 \newcommand{\res}{\text{res}}
 \newcommand{\tr}{\text{tr}}
 \newcommand{\rank}{\text{rank}}
 \newcommand{\stab}{\text{stab}}
 \newcommand{\nil}{\text{nil}}
 \newcommand{\ann}{\text{ann}}
 \newcommand{\coker}{\text{coker}}
 \newcommand{\soc}{\text{soc}}
 \newcommand{\spf}{\text{spf}}
 \newcommand{\skel}{\text{skel}}
 \newcommand{\transfer}{\text{tr}}
 \newcommand{\colim}{\underset{\to}{\text{lim }}}
 \newcommand{\limit}{\underset{\leftarrow}{\text{lim }}}
 \newcommand{\thstar}{\Theta^*}
 \newcommand{\Gflbar}{GL_d(\Flbar)}
 \newcommand{\conj}{\text{conj}}
 \newcommand{\euler}{\text{euler}}
 \newcommand{\nin}{\not \in}
 \newcommand{\iso}{\overset{\sim}{\longrightarrow}}
 \newcommand{\lpow}{[\hspace{-1.5pt}[}
 \newcommand{\rpow}{]\hspace{-1.5pt}]}
\begin{document}

\begin{titlepage}
%\vspace*{\stretch{1}}
\begin{center}
\phantom{blank}~\\
\vspace{6cm}
{\bf \Large The Morava $\mathbf{E}$-theories of finite general linear groups}\\
\vspace{1cm}
{\bf \large Samuel John Marsh}\\
\vspace{1cm} {\small A thesis submitted for the degree of}\\
{\small Doctor of Philosophy}\\
\vspace{4cm}
{\small Department of Pure Mathematics\\ School of Mathematics and Statistics\\
The University of Sheffield}\\
\bigskip
{\small April 2009} \vspace{1cm}
\end{center}
\end{titlepage}

%\newpage
%\thispagestyle{empty} \phantom{1}
%\newpage

\begin{abstract}
By studying the representation theory of a certain infinite $p$-group and using the generalised characters of
Hopkins, Kuhn and Ravenel we find useful ways of understanding the rational Morava $E$-theory of the classifying
spaces of general linear groups over finite fields. Making use of the well understood theory of formal group
laws we establish more subtle results integrally, building on relevant work of Tanabe. In particular, we study
in detail the cases where the group has dimension less than or equal to the prime $p$ at which the $E$-theory is
localised.\\
\vspace{6cm} ~
%\newpage \thispagestyle{empty} \phantom{1}
\end{abstract}

\tableofcontents
%\listoffigures
%\listoftables

%\newpage
%\thispagestyle{empty} \phantom{1}
%\newpage

\chapter{Introduction}
\label{ch:intro}

\section{Project overview}

\subsection{Background}

For any finite group $G$ there is an associated topological space, $BG$, known as the classifying space for $G$.
This space is closely related in structure to the group itself and lends itself well to being studied by
cohomological methods (see \cite{Benson}). We will look at groups of the form $GL_d(K)$, where $K$ is a finite
field, and their associated classifying spaces.

We study the spaces $BGL_d(K)$ using a family of generalised cohomology theories known as the Morava
$E$-theories. For each integer $n\geq 1$ and each prime $p$ there is an even-periodic cohomology theory $E$ with
coefficient ring $E^*=\mathbb{Z}_p\lpow u_1,\ldots,u_{n-1}\rpow[u,u^{-1}]$, where $\mathbb{Z}_p$ denotes the
$p$-adic integers, $u_1,\ldots,u_{n-1}$ all lie in degree zero and $u$ is an invertible element in degree $-2$.
These theories turn out to be computable yet, taken together, give a great deal of information (see
\cite{RavenelNil}). As covered in Chapter \ref{ch:FGLs}, the Morava $E$-theories are complex oriented and have
close relations with the theory of formal group laws. Under mild hypotheses, reduction modulo the ideal
$(p,u_1,\ldots,u_{n-1})$ gives a related theory, $K$, with which is associated the theory of formal group laws
over finite fields.

The starting point for the calculation of $E^*(BGL_d(K))$ is the work of Friedlander and Mislin
(\cite{FriedlanderMislin}, \cite{Friedlander}) who showed that, whenever $l$ is a prime different to $p$, the
mod $p$ cohomology of $BGL_d(\Flbar)$ coincides with that of $BGL_d(\mathbb{C})$, where $\Flbar$ denotes an
algebraic closure of the finite field with $l$ elements. Borel (\cite{Borel}) had already shown that, letting
$T$ denote the maximal torus in $GL_d(\mathbb{C})$, the latter could be described in terms of the invariant
elements of the cohomology of $BT$ under the permutation action of the relevant symmetric group.

In \cite{Tanabe}, Tanabe used the above ideas to show that, for a theory $K(n)$ closely related to $K$ above,
the $K(n)$-cohomology of $BGL_d(\fl{r})$ can be recovered from that of $BGL_d(\Flbar)$ as the coinvariants under
the action of the Galois group $\Gal(\Flbar/\fl{r})$, and that $K(n)^*(BGL_d(\Flbar))$ is just a power series
ring over $\mathbb{F}_p$.

There are some general techniques due to Hovey and Strickland (\cite{HoveyStrickland}) that enable results from
the theory $K(n)$ to be carried over into $E$-theory. In this vein, an extensive study of the $E$-theory of
$B\Sigma_d$ has been carried out by the latter author in \cite{StricklandSymmetricGroups}. Also relevant is
\cite{StricklandK(n)duality} where it was shown that, for finite $G$, $E^*(BG)$ has duality over its coefficient
ring.

The most useful tool for our understanding turns out to be the generalised character theory of Hopkins, Kuhn and
Ravenel (\cite{HKR}). There they show that, rationally, studying the $E$-theory of $BG$ for a finite group $G$
reduces to understanding commuting $n$-tuples of $p$-elements of $G$. We find that when $G=GL_d(K)$ for some
finite field $K$ of characteristic not equal to $p$ this reduces to understanding the $K$-representation theory
of the group $\mathbb{Z}_p^n$.

\subsection{Thesis outline}

In Chapter 2 we outline the basic material and conventions used in the thesis looking, in particular, at finite
fields, local rings and the $p$-adic integers. We also explore the notion of duality algebras and make some
preliminary calculations, establishing some basic results on the $p$-divisibility of integers of the form
$k^s-1$.

Fixing a prime $p$, in Chapter 3 we study the $p$-local structure of the finite general linear groups $GL_d(K)$
for finite fields $K$ of characteristic different from $p$ and find that it relates closely to that of the
symmetric group $\Sigma_d$. We give particular attention to the groups of dimension $p$ over fields for which
$p$ divides $|K^\times|$ and find that they have only two different conjugacy classes of abelian $p$-subgroups,
one being represented by the maximal torus and the other by a cyclic group. We also look at the normalizer of
the maximal torus in this case, finding that it has a finer $p$-local structure.

Chapter 4 details the relevant theory of formal group laws and defines the standard $p$-typical formal group law
that is fundamental to the development of the Morava $E$-theories, which we later go on to define. We also
outline the relevant known results in the Morava $E$-theory of classifying spaces, using the relationship
between mod $p$ cohomology and the Morava $K$- and $E$-theories. We show that, for all of the groups $G$ we
consider, the Morava $E$-theory of $BG$ is free and lies in even degrees. It follows that the Morava $K$-theory
of $BG$ is recoverable from the $E$-theory in simple algebraic terms. We introduce variants of the standard
chern and euler classes which prove to be more convenient in our setting.

In Chapter 5 we look at the generalised character theory of Hopkins, Kuhn and Ravenel and apply it to the
general linear group $GL_d(\Fq)$, where $q=l^r$ is a power of a prime different to $p$. We introduce the groups
$\thstar=\mathbb{Z}_p^n$, $\Phi=(\mathbb{Z}/p^\infty)^n$ (where $\mathbb{Z}/p^\infty=\colim (\mathbb{Z}/p^k)$)
and $\Lambda=\langle q\rangle\leqslant \mathbb{Z}_p^\times$. We let $\Lambda$ act on $\Phi$ and find that the
set of $d$-dimensional $\Fq$-representations of $\thstar$ bijects with $(\Phi^d/\Sigma_d)^\Lambda$ (see Theorem
\ref{Rep(thstar,G)}). We also give thought to the cases where $d$ is less than or equal to $p$ finding that,
under the hypothesis that $p$ divides $q-1$, we can understand the latter set well. The generalised character
theory then gives us a complete description of $L\tensor_{E^*} E^*(BGL_d(\Fq))$, where $L$ is some extension of
$\mathbb{Q}\tensor E^*$.

The aim in Chapter 6 is to get a good description of $E^*(BGL_d(\Fq))$ for the cases where $d$ is at most $p$
and $p$ divides $q-1$. We show that Tanabe's results on the Morava $K$-theory of the relevant spaces lifts to
$E$-theory; that is, we have $E^*(BGL_d(\Fq))\simeq E^*(BGL_d(\Flbar))_{\Gamma}$ where $\Gamma=\Gal(\Flbar/\Fq)$
acts on $GL_d(\Flbar)$ component-wise and hence also on its cohomology. Letting $T_d\simeq (\Fq^\times)^d$
denote the maximal torus of $GL_d(\Fq)$ and letting $\Sigma_d$ act by permuting the coordinates we show that the
restriction map $\beta:E^*(BGL_d(\Fq))\to E^*(BT)$ has image $E^*(BT)^{\Sigma_d}$. Further, when $d<p$ this map
is an isomorphism onto its image (Theorem \ref{d<p}). For the case $d=p$, we choose a basis for $\fq{p}$ over
$\Fq$ to get an embedding $\fq{p}^\times\rightarrowtail GL_p(\Fq)$ and hence a map in $E$-theory
$E^*(BGL_p(\Fq))\to E^*(B\fq{p}^\times)$. There is a quotient ring $D$ of $E^*(B\fq{p}^\times)$ and we let
$\alpha$ be the composition $E^*(BGL_p(\Fq))\to D$. Since $\Gamma=\Gal(\Flbar/\Fq)$ acts on $\fq{p}$ it also
acts on $D$ and we show that $\alpha$ has image $D^\Gamma$. We find that $\alpha$ and $\beta$ are jointly
injective and that they induce an isomorphism $\mathbb{Q}\tensor E^*(BGL_p(\Fq))\simeq \mathbb{Q}\tensor
E^*(BT)^{\Sigma_p}\times \mathbb{Q}\tensor D^\Gamma$. We also show that, in this situation, the kernel of
$\beta$ is principal and we are able to give an explicit basis for $E^*(BGL_p(\Fq))$ over $E^*$ (see Theorem
\ref{Theorem, d=p}).

\section{Acknowledgements}

I am very grateful to my supervisor, Neil Strickland, who has been so very patient in giving me detailed
accounts of some of the complex background theory and has, I am sure, bit his lip at some of the lower-level
questions I have asked over the past four years. I also would like to thank MJ Strong, Dave Barnes, Ian Young
and David Gepner who have all helped me to get through relatively unscathed, whether it be with academic support
or by simply lending an ear. There are many others in Sheffield to whom I should say thanks, but I reserve the
right not to do so here. Finally, I would like to say a big thank you to my fianc\'ee, Lucy, who has been
extremely supportive and has helped me to live a full and varied life outside of the strange and sometimes
isolating world of research mathematics.

\section{Notational conventions}

\begin{itemize}
\item For a positive integer $k$ we define $C_k=\{z\in S^1\mid z^k=1\}$ to be the cyclic subgroup of $S^1$ of order
$k$.
\item Let $H$ be a subgroup of $G$. Then we write $N_G(H)$ for the normalizer of $H$ in $G$.
\item Given a group $G$ and a prime $p$ we will write $\Syl_p(G)$ to denote a Sylow $p$-subgroup of $G$. Note
that $\Syl_p(G)$ is determined up to non-canonical isomorphism.
\item For a group $G$ and an element $g\in G$ we write $\conj_g$ for the conjugation map $G\to G$, $h\mapsto ghg^{-1}$.
\item Given a group $G$ acting on a set $S$ we let $S^G$ denote the $G$-invariant elements of $S$. Similarly, if
$G$ acts on a ring $R$ we let $R_G$ denote the coinvariants of the action; that is, $R_G=R/(r-g.r\mid r\in R,
g\in G)$.
\item Given a ring $R$ we denote by $R[x]$ the ring of polynomials in $x$ with coefficients in $R$ and, likewise, by $R\lpow
x\rpow$ the ring of formal power-series.
\item We will write $R^\times$ for the group of units of a ring $R$ under multiplication. If $a,b\in R$ then we
write $a\sim b$ to denote that $a$ is a unit multiple of $b$ in $R$.
\item We write $\text{nil}(R)$ for the nilradical and $\text{rad}(R)$ for the Jacobson radical of a ring $R$,
the former being the set of nilpotents and the latter the intersection of the maximal ideals.
\item Unless otherwise indicated, the symbol $\tensor$ will denote the tensor product over $\mathbb{Z}$.
\item We will write $\Hom(-,-)$ for the set of homomorphisms between two objects, where the structure should be clear from the
context, and $\Aut(-)$ for the set of automorphisms of an object. We will write $\Map(-,-)$ for the set of
functions between two sets.
\item We will use $H$ to denote singular homology and cohomology.
\item For a (generalised) cohomology theory $h$ we will write $h^*=h^*(pt)$ to denote the ring of coefficients.
\item Given a subspace $Y\subseteq X$ we write $\res_{Y}^{X}$ for the map in cohomology $h^*(X)\to h^*(Y)$.
\end{itemize}

%\newpage
%\thispagestyle{empty} \phantom{1}
%\newpage

%-----------------------------------------------------------------

\chapter{Preliminaries}\label{ch:prelims}

\section{Definitions, conventions and preliminary results}

We outline some of the basic definitions and results needed. Unless otherwise stated, all rings and algebras are
commutative and unital with homomorphisms respecting the units. All of the material presented here is well known
and good reference texts include \cite{LangAlgebra}, \cite{Benson} and \cite{Matsumura}.

\subsection{Local rings}\label{sec:local rings}

A ring $R$ is known as a \emph{local ring} if it has precisely one maximal ideal. We write $(R,\mathfrak{m})$ to
denote the local ring $R$ with maximal ideal $\mathfrak{m}$. It is easy to show that $R$ is local if and only if
$R\setminus R^\times$ is an ideal in $R$, since every element of a proper ideal is a non-unit and every non-unit
generates a proper ideal. This ideal will necessarily be the unique maximal ideal of $R$. If $I$ is any ideal in
$R$ then the ring $R/I$ is again local with maximal ideal $\mathfrak{m}/I$. We will need the following result.

\begin{lem} If $(R,\mathfrak{m})$ is a local ring then so is $R\lpow x\rpow$ with maximal ideal
$\mathfrak{m}\lpow x\rpow+(x)$.\end{lem}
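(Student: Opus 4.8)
The plan is to use the characterisation of local rings given just before the statement, namely that a ring is local if and only if its set of non-units forms an ideal, or equivalently that the non-units form a subgroup under addition. So I would first identify the candidate maximal ideal $\mathfrak{n}=\mathfrak{m}\lpow x\rpow+(x)$, observe that $R\lpow x\rpow/\mathfrak{n}\cong R/\mathfrak{m}$ is a field (hence $\mathfrak{n}$ is maximal), and then show $\mathfrak{n}$ consists precisely of the non-units of $R\lpow x\rpow$; since every non-unit lies in \emph{some} maximal ideal and we will have shown the non-units form the single ideal $\mathfrak{n}$, it follows that $\mathfrak{n}$ is the unique maximal ideal.

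The key computation is the standard one: a power series $f=\sum_{i\geq 0}a_i x^i\in R\lpow x\rpow$ is a unit if and only if its constant term $a_0$ is a unit in $R$. First I would note that $f\in\mathfrak{n}$ exactly when $a_0\in\mathfrak{m}$, i.e. exactly when $a_0$ is a non-unit of $R$ (using that $R$ is local). So it suffices to prove the claim about units. One direction is immediate: reducing mod $x$ shows that if $f$ is a unit then $a_0$ is a unit. For the converse, suppose $a_0\in R^\times$; I would construct an inverse $g=\sum_{i\geq 0}b_i x^i$ by solving the coefficient equations of $fg=1$ recursively: set $b_0=a_0^{-1}$, and having chosen $b_0,\ldots,b_{m-1}$, the coefficient of $x^m$ in $fg$ gives $\sum_{j=0}^{m}a_j b_{m-j}=0$, which can be solved for $b_m=-a_0^{-1}\sum_{j=1}^{m}a_j b_{m-j}$. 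This produces a well-defined two-sided inverse, so $f\in R\lpow x\rpow^\times$.

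Putting these together: the non-units of $R\lpow x\rpow$ are exactly those $f$ with $a_0\notin R^\times$, which is exactly $\mathfrak{n}$, and this is an ideal, so $R\lpow x\rpow$ is local with maximal ideal $\mathfrak{n}=\mathfrak{m}\lpow x\rpow+(x)$. I don't anticipate a genuine obstacle here — this is a routine result — but the one point requiring a little care is checking that the recursively defined series $g$ really is an element of $R\lpow x\rpow$ (it is, since each $b_m$ is a well-defined element of $R$, with no convergence issues in the $x$-adic setting) and that the same $g$ serves as inverse on both sides, which follows from commutativity of $R\lpow x\rpow$.
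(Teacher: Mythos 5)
Your proposal is correct and follows essentially the same route as the paper: identify the non-units of $R\lpow x\rpow$ as exactly those power series with constant term in $\mathfrak{m}$, and conclude that these form the ideal $\mathfrak{m}\lpow x\rpow + (x)$. The only difference is presentational — the paper cites the unit criterion for $R\lpow x\rpow$ (to Fr\"ohlich), whereas you prove it directly by recursive construction of the inverse; both are fine.
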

\begin{pf} Any power series $f(x)\in R\lpow x\rpow$ is invertible if and only if $f(0)\in R^\times$ (see, for example, \cite[Proposition 1]{Frohlich}). Since
$\mathfrak{m}=R\setminus R^\times$ it follows that $R\lpow x\rpow\setminus (R\lpow x\rpow)^\times$ is the ideal
$\mathfrak{m}\lpow x\rpow +(x)$.\end{pf}

Given any ring $R$ and an ideal $I$ in $R$ we define the {\em $I$-adic topology} on $R$ to be the topology
generated by the open sets $x+I^n$ ($x \in R$, $n\in \mathbb{N}$). If $\bigcap_n I^n=0$ then this topology
coincides with the one given by the metric
$$d(a,b)=\left\{\begin{array}{ll} 2^{-n} & \text{if $a-b\in I^n$ but $a-b\nin I^{n+1}$}\\
0 & \text{if $a-b\in I^n$ for all $n$}\end{array}\right.$$ where we use the convention that $I^0=R$. Here the
number 2 occurring is arbitrary; we get an equivalent metric choosing any real number greater than 1.

If not otherwise stated we assume that a local ring $(R,\mathfrak{m})$ carries the $\mathfrak{m}$-adic topology.
In particular, by a {\em complete local ring} we will mean a local ring that it is complete with respect to the
topology generated by its maximal ideal.

If $(R,\mathfrak{m})$ is a local ring we define the \textit{socle} of $R$, denoted $\soc(R)$, to be the
annihilator of the maximal ideal $\mathfrak{m}$. That is, $\soc(R)=\ann_R(\mathfrak{m})= \{r\in R\mid
r\mathfrak{m}=0\}$. Since $\mathfrak{m}.\soc(R)=0$ it follows that $\soc(R)$ is a vector space over the field
$R/\mathfrak{m}$.

\subsection{The $p$-adic numbers}\label{sec:p-adics}

Let $p$ be a prime. We define the {\em $p$-adic integers}, denoted $\mathbb{Z}_p$, to be the completion of
$\mathbb{Z}$ with respect to the $(p)$-adic topology. Note that, since $\bigcap_n (p^n)=0$, $\mathbb{Z}_p$ is in
fact a metric space. The ideal $p\mathbb{Z}_p$ is the unique maximal ideal of $\mathbb{Z}_p$ and hence
$\mathbb{Z}_p$ is a complete local ring.

An alternative characterisation of the $p$-adic integers is obtained by considering the inverse system
$$\mathbb{Z}/p\leftarrow \mathbb{Z}/p^2 \leftarrow
\mathbb{Z}/p^3\leftarrow\ldots$$ and defining $\mathbb{Z}_p=\limit \mathbb{Z}/p^n$. In this way, it is clear
that $\mathbb{Z}_p$ carries the structure of a commutative ring and we topologise it using the norm
$|a|_p=p^{-v_p(a)}$, where $v_p$ is the {\em $p$-adic valuation} given by
$$v_p(a)=\left\{\begin{array}{ll} n & \text{if $a=0$ in  $\mathbb{Z}/p^n$ but $a\neq 0$ in $\mathbb{Z}/p^{n+1}$}\\
\infty & \text{if $a=0$.}\end{array}\right.$$

With the latter definition in mind, it is perhaps easiest to think of $\mathbb{Z}_p$ as the set of sequences of
integers $(a_0,a_1,\ldots)$ such that $a_{k+1} = a_k$ mod $p^k$ with componentwise multiplication and addition.
Another useful observation is that every $p$-adic integer $a$ can be given a unique expansion
$a=\sum_{i=0}^\infty a_i p^i$ where $a_i\in\{0,\ldots,p-1\}$ for each $i$.

We can equally well apply the norm $|-|_p$ to $\mathbb{Q}$, defining $v_p(p^n \frac{s}{t})=n$ whenever both $s$
and $t$ are coprime to $p$. The completion of $\mathbb{Q}$ with respect to $|-|_p$ then gives the field of
$p$-adic numbers, denoted $\mathbb{Q}_p$. There is a presentation
$$\mathbb{Q}_p = \left\{\frac{a}{p^n}\mid a\in\mathbb{Z}_p\text{ and }n\geq 0\right\},$$
and, similarly to above, every $p$-adic number $b$ has a unique expansion $b=\sum_{k}^\infty b_i p^i$ for some
$k\leq 0$, where $b_i\in\{0,\ldots,p-1\}$ for each $i\geq k$. Note that $\mathbb{Z}_p$ is a subring, and hence
an additive subgroup, of $\mathbb{Q}_p$.

A related construction is that of the {\em Pr\"ufer group}. There is a direct system of embeddings
$$\mathbb{Z}/p\to \mathbb{Z}/p^2\to \mathbb{Z}/p^3\to \ldots$$
with each map corresponding to multiplication by $p$ and we define
$$\mathbb{Z}/p^{\infty}=\colim \mathbb{Z}/p^n=\bigcup_n \mathbb{Z}/p^n.$$
There is a canonical identification $\mathbb{Z}/p^\infty\simeq\{z\in S^1\mid z^{p^n}=1 \text{ for some } n\}$,
although $\mathbb{Z}/p^\infty$ usually carries the discrete topology rather than that of the subspace of $S^1$.

\begin{lem} There are isomorphisms $\Hom(\mathbb{Z}/p^\infty,S^1)\simeq \mathbb{Z}_p$ and $\mathbb{Z}/p^\infty\simeq \mathbb{Q}_p/\mathbb{Z}_p$.\end{lem}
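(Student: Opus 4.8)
The two isomorphisms are essentially formal consequences of the colimit description $\mathbb{Z}/p^\infty = \colim \mathbb{Z}/p^n$, together with the presentations of $\mathbb{Z}_p$ and $\mathbb{Q}_p$ recalled above, so the plan is simply to unwind the relevant (co)limits.

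For the first isomorphism, I would start from the universal property of the colimit, which gives $\Hom(\mathbb{Z}/p^\infty, S^1) = \Hom(\colim \mathbb{Z}/p^n, S^1) \simeq \limit \Hom(\mathbb{Z}/p^n, S^1)$, the inverse limit being taken over the maps dual to the structure maps (multiplication by $p$) of the direct system. A homomorphism $\mathbb{Z}/p^n \to S^1$ is determined by the image of a generator, which must be a $p^n$-th root of unity, so there is a natural identification $\Hom(\mathbb{Z}/p^n, S^1) \simeq C_{p^n}$ sending $\phi \mapsto \phi(1)$, under which precomposition with multiplication by $p$ becomes the $p$-th power map $C_{p^{n+1}} \to C_{p^n}$. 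Fixing a compatible system of primitive $p^n$-th roots of unity $\zeta_{p^n}$ (with $\zeta_{p^{n+1}}^p = \zeta_{p^n}$) yields compatible isomorphisms $C_{p^n} \simeq \mathbb{Z}/p^n$, $\zeta_{p^n} \mapsto 1$, under which the $p$-th power maps become the canonical surjections $\mathbb{Z}/p^{n+1} \to \mathbb{Z}/p^n$. Hence $\limit C_{p^n} \simeq \limit \mathbb{Z}/p^n = \mathbb{Z}_p$ by the inverse-limit characterisation of the $p$-adic integers.

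For the second isomorphism, I would define for each $n$ a homomorphism $\mathbb{Z}/p^n \to \mathbb{Q}_p/\mathbb{Z}_p$ by $1 \mapsto p^{-n} + \mathbb{Z}_p$; this is well defined since $p^n \cdot p^{-n} = 1 \in \mathbb{Z}_p$, and it is compatible with the structure maps of the direct system, since sending $1 \in \mathbb{Z}/p^n$ to $p \in \mathbb{Z}/p^{n+1}$ and then to $p \cdot p^{-(n+1)} = p^{-n}$ agrees with the direct map $1 \mapsto p^{-n}$. These assemble to a homomorphism $\mathbb{Z}/p^\infty = \colim \mathbb{Z}/p^n \to \mathbb{Q}_p/\mathbb{Z}_p$, and it remains to check it is bijective. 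Injectivity follows because each composite $\mathbb{Z}/p^n \to \mathbb{Q}_p/\mathbb{Z}_p$ has trivial kernel (as $k p^{-n} \in \mathbb{Z}_p$ iff $v_p(k) \geq n$) and every element of the kernel of the colimit map is represented at some finite stage. Surjectivity follows because, using the expansion $a = \sum_{i \geq 0} a_i p^i$ of an arbitrary $a \in \mathbb{Z}_p$, the class $a p^{-n} + \mathbb{Z}_p$ equals $m p^{-n} + \mathbb{Z}_p$ with $m = \sum_{i=0}^{n-1} a_i p^i \in \mathbb{Z}$, which lies in the image of $\mathbb{Z}/p^n$.

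The content is routine; the only points needing care are the choice of a compatible system of roots of unity making the identifications $C_{p^n} \simeq \mathbb{Z}/p^n$ commute with the transition maps in the first part, and keeping the direction of the multiplication-by-$p$ maps straight throughout. Alternatively, one could deduce the first isomorphism from the second by applying $\Hom(-, S^1)$ to the short exact sequence $0 \to \mathbb{Z}_p \to \mathbb{Q}_p \to \mathbb{Q}_p/\mathbb{Z}_p \to 0$ and invoking Pontryagin duality, but the direct inverse-limit argument above avoids that extra machinery.
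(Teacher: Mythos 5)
Your argument is correct and is essentially the paper's proof, repackaged: where you invoke $\Hom(\colim \mathbb{Z}/p^n, S^1) \simeq \limit \Hom(\mathbb{Z}/p^n, S^1)$ and then identify each $\Hom(\mathbb{Z}/p^n,S^1)$ with $\mathbb{Z}/p^n$ via a chosen compatible family of primitive roots, the paper does the same thing at the element level, recording the exponents $k_n$ of $\phi(e^{2\pi i/p^n})$ and observing that $(k_n)$ is a compatible sequence, hence a $p$-adic integer. The second isomorphism (including the surjectivity argument via the truncated $p$-adic expansion) is identical in both.
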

\begin{pf} View $\mathbb{Z}/p^\infty$ as a subgroup of $S^1$ and let $\phi:\mathbb{Z}/p^\infty\to S^1$ be any homomorphism. Then,
for each $n\geq 0$, $\phi(e^{2\pi i/p^n})=e^{2 k_n \pi i/p^n}$ for some $k_n\in \mathbb{Z}/p^n$. Since
$k_{n+1}=k_n$ mod $p^n$ for each $n$ we have a well defined element $(k_n)\in\mathbb{Z}_p$. Conversely, any
$a\in\mathbb{Z}_p$ gives a unique $\phi_a\in\Hom(\mathbb{Z}/p^\infty,S^1)$ defined by $\phi_a(e^{2\pi
i/p^n})=e^{2 a_n \pi i/p^n}$. It is clear that this construction gives an isomorphism of the groups.

For the second statement, the homomorphism $\mathbb{Z}/p^\infty\to \mathbb{Q}_p/\mathbb{Z}_p$ given by $e^{2k\pi
i/p^n}\mapsto k/p^n+\mathbb{Z}_p$ is well defined and injective. It is surjective since, for $a\in\mathbb{Z}_p$,
we have $a/p^n+\mathbb{Z}_p=\left(\sum_{i=0}^{n-1} a_i p^i\right)/p^n + \mathbb{Z}_p$ with the latter visibly
lying in the image.\end{pf}

\subsection{The Teichm\"uller lift map}\label{sec:Theichmuller lift map}

The ring of $p$-adic integers, $\mathbb{Z}_p$, contains precisely $p-1$ roots of $x^{p-1}-1=0$. Further, these
are all distinct (and necessarily non-zero) mod $p$. We define the {\em Teichm\"uller lift map} to be the
monomorphism of groups $\omega:(\mathbb{Z}/p)^\times\to \mathbb{Z}_p^\times$ sending $a$ to the unique
$(p-1)^\text{th}$ root of unity congruent to $a$ mod $p$. We often write $\hat{a}$ for
$\omega(a)\in\mathbb{Z}_p^\times$. We will need the following result.

\begin{lem}\label{Wilson's theorem} With the notation above we have $\ds \prod_{a\in(\mathbb{Z}/p)^\times} \hat{a}=-1\in \mathbb{Z}_p^\times.$\end{lem}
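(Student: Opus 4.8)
The statement is essentially a $p$-adic version of Wilson's theorem: the product of the $(p-1)$th roots of unity in $\mathbb{Z}_p^\times$ equals $-1$. The plan is to reduce this to the classical Wilson's theorem in $\mathbb{Z}/p$ by exploiting the fact that the Teichm\"uller map $\omega$ is a group monomorphism onto the subgroup $\mu_{p-1}\subseteq\mathbb{Z}_p^\times$ of $(p-1)$th roots of unity, together with the fact that reduction mod $p$ restricted to $\mu_{p-1}$ inverts $\omega$.

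**Key steps.**

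First I would observe that $\prod_{a\in(\mathbb{Z}/p)^\times}\hat a = \omega\!\left(\prod_{a\in(\mathbb{Z}/p)^\times}a\right)$, since $\omega$ is a group homomorphism and the indexing set is exactly $(\mathbb{Z}/p)^\times$. Next, by the classical Wilson's theorem, $\prod_{a\in(\mathbb{Z}/p)^\times}a = -1$ in $(\mathbb{Z}/p)^\times$ (the standard pairing-off argument: pair each $a$ with its inverse $a^{-1}$, leaving only the self-inverse elements $1$ and $-1$ unpaired). Hence the product in question equals $\omega(-1)$. Finally I would identify $\omega(-1)$: by definition $\omega(-1)$ is the unique $(p-1)$th root of unity in $\mathbb{Z}_p$ congruent to $-1$ mod $p$; but $-1\in\mathbb{Z}_p$ already satisfies $(-1)^{p-1}=1$ (as $p-1$ is even for $p$ odd, and for $p=2$ the statement is the trivial identity $1=-1$ in $\mathbb{Z}/2$, i.e.\ the empty-or-singleton product) and is obviously congruent to itself mod $p$, so by uniqueness $\omega(-1)=-1$. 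This gives the result.

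**The main obstacle.**

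There is really no serious obstacle here; the only point requiring a little care is the edge case $p=2$, where $(\mathbb{Z}/2)^\times$ is trivial, the empty product is $1$, and one must note that $1=-1$ in $\mathbb{Z}_2$ fails --- so one should either restrict to odd $p$ (which is surely the implicit convention, given the role of $p$ elsewhere in the thesis) or handle $p=2$ by noting $-1 = 1$ does not hold but the statement as literally written would be false, hence $p$ odd is assumed. Beyond that, the proof is a two-line reduction: apply $\omega$ to classical Wilson and use that $\omega$ fixes $\pm 1$. I would write it compactly, citing Lemma \ref{Wilson's theorem}'s context (the Teichm\"uller lift being a monomorphism) rather than re-deriving anything.
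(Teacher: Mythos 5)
Your proof is correct and takes essentially the same route as the paper's: both reduce to the classical Wilson's theorem in $(\mathbb{Z}/p)^\times$ via the pairing-off argument and then apply the homomorphism $\omega$. The one small point you make explicit that the paper leaves tacit is the identification $\omega(-1)=-1$; and your remark on the $p=2$ edge case is a fair observation about the hypotheses, but does not change the substance.
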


\begin{pf}This is an immediate corollary to the analogous result for $(\mathbb{Z}/p)^\times$; we briefly outline the details. Suppose $a\in\mathbb{Z}/p$ with $a^2=1$. Then $a^2-1=0$ so that $(a+1)(a-1)=0$. Hence, since $\mathbb{Z}/p$ is a
field, we have $a=\pm 1$, both of which are indeed square roots of $1$. Thus, the only self-inverse elements in
$\mathbb{Z}/p$ are $\pm 1$. Hence
$$\prod_{a\in(\mathbb{Z}/p)^\times} a = 1\times(-1)\times\prod_{\underset{a\neq\pm 1}{a\in(\mathbb{Z}/p)^\times}} a = -1$$
since the latter product comprises pairs of inverse elements. Applying the homomorphism $\omega$ then gives
the result.
\end{pf}

\subsection{Finite fields and their algebraic closures}\label{sec:finite fields}

For each prime $p$ we define $\mathbb{F}_p$ to be the field $\mathbb{Z}/p$. It is well known that we can choose
an algebraic closure $\overline{\mathbb{F}}_p$ for $\mathbb{F}_p$ and from here on we will assume that we have
done so for each prime. For a natural number $r$ we then define
$$\mathbb{F}_{p^r}=\left\{a\in \overline{\mathbb{F}}_p\mid a^{p^r}=1\right\}$$ which is a field containing $p^r$ elements. The two definitions coincide for the case $r=1$. It is a classical result that, for each $r$,
$\mathbb{F}_{p^r}$ is the unique field containing $p^r$ elements up to a non-canonical isomorphism (see, for
example, \cite{LangAlgebra}). Further, every finite field is isomorphic to $\mathbb{F}_{p^r}$ for some $p$ and
$r$. We refer to $p$ as the {\em characteristic} of the field. Note that the characteristic of a finite field
$K$ is given by $\text{char}(K)=\text{min}\{n\in \mathbb{N}\mid n.1=0\text{ in $K$}\}$.

If $K$ is a field of characteristic $p$ and $\overline{K}$ is an algebraic closure for $K$ then the map $\Frob:
\overline{K}\to \overline{K}$ sending $a\mapsto a^p$ satisfies $\Frob(1)=1$, $\Frob(ab)=\Frob(a)\Frob(b)$ and
$$\Frob(a+b)=(a+b)^p=a^p+b^p=\Frob(a)+\Frob(b)$$ since $p=0$ in $K$ and $p|{p \choose i}$ for $i\neq 0,p$. It
follows that $\Frob$ is a homomorphism of fields, and we refer to $\Frob$ as the {\em Frobenius homomorphism}.
For any prime $p$ the Galois group $\Gal(\overline{\mathbb{F}}_p/\mathbb{F}_p)$ is isomorphic to the profinite
integers (see \cite[p207]{Weibel}), topologically generated by $\Frob$. We will write
$\Gamma=\Gamma_p=\langle\Frob\rangle\simeq \mathbb{Z}$ for the subgroup of
$\Gal(\overline{\mathbb{F}}_p/\mathbb{F}_p)$ generated by $\Frob$.

Another important property of finite fields is that their multiplicative group of units is always cyclic (again,
see \cite{LangAlgebra}) and, for each $p$, there is a (non-canonical) embedding
$\overline{\mathbb{F}}_p^\times\to S^1$. If $l$ and $p$ are distinct primes then the embedding
$\overline{\mathbb{F}}_l^\times\to S^1$ induces a group isomorphism
$$\{a\in\Flbar^\times\mid a^{p^n}=1\text{ for some $n$}\}\simeq \mathbb{Z}/p^\infty$$
where $\mathbb{Z}/p^\infty$ is the Pr\"ufer group of Section \ref{sec:p-adics}. We will assume, from here on,
that we have chosen such embeddings for each prime. Note, however, that $\Flbar$ and $\Flbar^\times$ still both
carry the discrete topology.

\subsection{The symmetric and general linear groups}\label{sec:Symmetric and General Linear Groups}

For each $d\geq 1$ the \emph{symmetric group on $d$ symbols}, denoted $\Sigma_d$, is the group of permutations
of the finite set $\{1,\ldots,d\}$. For any $s$ and $t$ there is an obvious embedding
$\Sigma_s\times\Sigma_t\rightarrowtail \Sigma_{s+t}$. In particular, we can view $\Sigma_{d-1}$ as the subgroup
of $\Sigma_d$ fixing $d$. We will refer to the permutation $(1\ldots d)\in \Sigma_d$ as the {\em standard
$d$-cycle} and denote it by $\gamma_d$.

Let $K$ be a field. Then the {\em general linear group over $K$ of dimension $d$} is the group of invertible
$d\times d$ matrices with entries in $K$ and is denoted $GL_d(K)$. Equivalently, it is the group of
automorphisms of the $d$-dimensional vector space $K^d$. Similarly to above, there is an obvious embedding
$GL_s(K)\times GL_t(K)\rightarrowtail GL_{s+t}(K)$ for any $s$ and $t$. We can also view $\Sigma_d$ as a
subgroup of $GL_d(K)$ via the map $\sigma\mapsto \left(\sigma_{ij}\right)$ where
$$\sigma_{ij}=\left\{
\begin{array}{ll}
1 & \mbox{if $\sigma(j)=i$}\\
0 & \mbox{otherwise}\end{array}\right.
$$
and our convention is that $\sigma_{ij}$ denotes the entry in the $i^\text{th}$ row and $j^\text{th}$ column.
Another important subgroup of $GL_d(K)$ is the embedding of $(K^\times)^d$ along the diagonal, which we refer to
as the {\em maximal torus} and denote by $T_d$.

\subsection{Semidirect and wreath products}

Let $G,H$ be groups and let $G$ act on $H$ via group automorphisms. Then we define the \emph{semidirect product}
of $G$ and $H$, written $G\ltimes H$, to be the group with underlying set $G\times H$ but multiplication given
by
$$(g_1;h_1).(g_2;h_2)=(g_1g_2;(g_2^{-1}.h_1)h_2).$$

Note that there is an exact sequence $1\to H\rightarrowtail G\ltimes H\twoheadrightarrow G\to 1$. One of the
main sources of semidirect products is the following.

\begin{prop}\label{GH as semidirect product} Let $G$ and $H$ be subgroups $K$ with $G \cap H=1$ and $G\leqslant N_K(H)$. Then $G$
acts on $H$ by $g.h=ghg^{-1}$ and $GH$ is a subgroup of $K$ isomorphic to $G\ltimes H$.\end{prop}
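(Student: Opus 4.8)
The plan is to verify directly that the given set-with-operation is a group, and then to exhibit an explicit isomorphism with the abstractly-defined semidirect product $G\ltimes H$. First I would check that $GH=\{gh\mid g\in G,\ h\in H\}$ is closed under the multiplication inherited from $K$: given $g_1h_1$ and $g_2h_2$, write $g_1h_1g_2h_2 = g_1g_2(g_2^{-1}h_1g_2)h_2$, and observe that $g_2^{-1}h_1g_2\in H$ because $g_2\in G\leqslant N_K(H)$. Hence the product is again of the form $g'h'$ with $g'=g_1g_2\in G$ and $h'=(g_2^{-1}h_1g_2)h_2\in H$. The identity of $K$ lies in $GH$ (take $g=h=1$), and inverses are handled similarly: $(gh)^{-1}=h^{-1}g^{-1}=g^{-1}(gh^{-1}g^{-1})$, which lies in $GH$ since $gh^{-1}g^{-1}\in H$. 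Associativity is inherited from $K$, so $GH\leqslant K$.

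Next I would establish the key bookkeeping fact: every element of $GH$ has a \emph{unique} expression as $gh$ with $g\in G$, $h\in H$. If $g_1h_1=g_2h_2$ then $g_2^{-1}g_1 = h_2h_1^{-1}\in G\cap H=1$, so $g_1=g_2$ and $h_1=h_2$. This uniqueness is exactly what makes the map $\psi:G\ltimes H\to GH$, $(g;h)\mapsto gh$, a bijection on underlying sets. It remains to check $\psi$ is a homomorphism. Using the defining multiplication in $G\ltimes H$,
$$\psi\big((g_1;h_1)(g_2;h_2)\big)=\psi\big(g_1g_2;(g_2^{-1}.h_1)h_2\big)=g_1g_2(g_2^{-1}h_1g_2)h_2 = g_1h_1g_2h_2=\psi(g_1;h_1)\psi(g_2;h_2),$$
where the middle equality cancels $g_2g_2^{-1}$ and the action $g.h=ghg^{-1}$ is the one specified in the statement. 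Since $\psi$ is a bijective homomorphism, it is an isomorphism.

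There is no real obstacle here; the only point requiring care is making sure the direction of the action $g.h=ghg^{-1}$ is consistent with the (somewhat asymmetric) multiplication convention $(g_1;h_1)(g_2;h_2)=(g_1g_2;(g_2^{-1}.h_1)h_2)$ fixed earlier in the section — one must check that it is $g_2^{-1}$ (not $g_2$) that conjugates $h_1$, and that this is precisely what is needed for the $g_2$'s to telescope in the computation above. I would also remark in passing that $G$ acting on $H$ by conjugation is well-defined as an action by automorphisms precisely because $G\leqslant N_K(H)$, which is the hypothesis that licenses forming $G\ltimes H$ in the first place.
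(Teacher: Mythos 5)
Your proof is correct and follows essentially the same route as the paper: both verify $GH\leqslant K$, define $(g;h)\mapsto gh$, and check it is a bijective homomorphism via the same telescoping computation. Your version is a bit more explicit about closure of $GH$ and phrases bijectivity via unique factorisation rather than surjectivity-plus-trivial-kernel, but these are presentational variants of the same argument.
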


\begin{pf} Since $G$ is contained in the normaliser of $H$ in $K$ the action of $G$ on $H$ is well defined and it is straightforward to check that $GH$ is a
subgroup of $K$. Now, define a map $\phi:G\ltimes H\to GH$ by $\alpha(g;h)=gh$. Then $\phi$ is clearly
surjective. To see that it is a group homomorphism, we have
\begin{eqnarray*}
\phi((g_1;h_1)(g_2;h_2)) &=& \phi(g_1g_2;(g_2^{-1}.h_1)h_2)\\
& = & g_1 g_2 (g_2^{-1}h_1g_2)h_2\\
& = & g_1 h_1 g_2 h_2\\
& = & \phi(g_1;h_1)\phi(g_2;h_2).
\end{eqnarray*}
Finally, for injectivity note that if $gh=1$ then $g=h^{-1}\in G\cap H=1$ whereby $g=h=1$.\end{pf}

Now, let $S\leqslant \Sigma_d$ for some $d$ and let $G$ be any group. Then $S$ acts on $G^d$ by
$\sigma.(g_1,\ldots,g_d)=(g_{\sigma^{-1}(1)},\ldots,g_{\sigma^{-1}(d)})$ and we define the \emph{wreath product}
of $S$ and $G$ by $S\wr G = S\ltimes G^d$. One important property of the wreath product is that it is
associative in the following sense.

\begin{lem}\label{A wr B wr G} Let $A\leqslant \Sigma_s$ and $B\leqslant \Sigma_t$ and $G$ be any group. Then there is a canonical
embedding $A\wr B\rightarrowtail \Sigma_{st}$ and a canonical isomorphism $(A\wr B)\wr G\simeq A\wr(B\wr
G)$.\end{lem}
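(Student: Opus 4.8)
The plan is to construct the embedding $A\wr B\rightarrowtail \Sigma_{st}$ explicitly by identifying the set $\{1,\dots,st\}$ with the product $\{1,\dots,s\}\times\{1,\dots,t\}$ (say via the lexicographic bijection $(i,j)\mapsto (i-1)t+j$), and then let an element $(\beta;a_1,\dots,a_t)\in B\ltimes A^t = A\wr B$ act on $\{1,\dots,s\}\times\{1,\dots,t\}$ by the rule $(i,j)\mapsto (a_{\beta(j)}(i),\beta(j))$ — that is, $\beta$ permutes the $t$ ``blocks'', each of size $s$, and within the block indexed by $j$ the factor $a_j\in A$ permutes the $s$ coordinates. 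First I would check this is a genuine left action (the index bookkeeping is the only subtlety: one must track whether it is $a_j$ or $a_{\beta(j)}$ that appears, which is forced by the semidirect product convention $(g_1;h_1)(g_2;h_2)=(g_1g_2;(g_2^{-1}.h_1)h_2)$ given earlier) and that it is faithful, giving the claimed embedding. For $A\wr B$ to land in $\Sigma_{st}$ in a way compatible with iterating the construction, I would record that under this identification the subgroup $B\leqslant\Sigma_t$ sits inside $\Sigma_{st}$ as block permutations and $A^t$ as the ``block-diagonal'' copies of $A$.

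Granting the embedding, the isomorphism $(A\wr B)\wr G\simeq A\wr(B\wr G)$ is then obtained by unwinding both sides as iterated semidirect products and exhibiting a bijection of underlying sets that is a homomorphism. Writing things out: the left-hand side is $(A\wr B)\ltimes G^{st} = (B\ltimes A^t)\ltimes G^{st}$, while the right-hand side is $A\ltimes (B\wr G)^t = A$... wait, more carefully, $A\wr(B\wr G) = A\ltimes (B\wr G)^s$ where $B\wr G = B\ltimes G^t$, so this is $A\ltimes (B\ltimes G^t)^s$. Hmm — the two sides pair $A$ with $\Sigma_s$ and $B$ with $\Sigma_t$ on the left via the embedding $A\wr B\hookrightarrow\Sigma_{st}$, and on the right $A\wr(-)$ uses $A\leqslant\Sigma_s$ directly while $B\wr G\subseteq\Sigma_{st}\wr G$ wait no. Let me restate: the key point is that $G^{st}$, indexed by $\{1,\dots,s\}\times\{1,\dots,t\}$, can be regrouped either as $(G^{s})^{t}$ (giving the left-hand grouping, with $B$ permuting the outer $t$ factors and $A^t$ acting on the inner $G^s$'s) or as $(G^{t})^{s}$ — and here the matching has to respect which symmetric group acts where. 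I would define the map $\Psi:(A\wr B)\wr G\to A\wr(B\wr G)$ on elements by permuting the $G$-coordinates according to the fixed bijection and reshuffling the $A$-, $B$-components into the nested order, then verify it is bijective (the inverse being the analogous reshuffle) and multiplicative.

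The main obstacle I anticipate is purely combinatorial: keeping the indexing consistent across the chain of semidirect products so that the multiplication formula $(g_1;h_1)(g_2;h_2)=(g_1g_2;(g_2^{-1}.h_1)h_2)$ is respected at every level, since each conversion between $(G^s)^t$ and $(G^t)^s$ introduces a transpose-like reindexing and an inverse can easily be misplaced. Both assertions (that $A\wr B$ really is a \emph{subgroup} of $\Sigma_{st}$, i.e.\ the action is faithful, and that $\Psi$ is a homomorphism and not merely a set bijection) reduce to carefully expanding the relevant products and comparing components; there is no conceptual difficulty once the identification $\{1,\dots,st\}\cong\{1,\dots,s\}\times\{1,\dots,t\}$ is fixed, so I would fix that identification once at the start and then let all the verifications be direct computation, suppressing the routine parts.
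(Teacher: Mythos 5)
Your overall strategy (realise $A\wr B$ by an explicit block action on $\{1,\dots,st\}$, then reshuffle components for the associativity isomorphism) is the same as the paper's, but there is a genuine error in your expansion of the wreath product that, if carried through literally, would have you proving a false statement. The paper defines $S\wr G=S\ltimes G^d$ for $S\leqslant\Sigma_d$, so with $A\leqslant\Sigma_s$ one has $A\wr B=A\ltimes B^s$: the group on the \emph{left} of $\wr$ is the outer permutation group, acting on $s$ copies of $B$. You instead write $A\wr B = B\ltimes A^t$, and the action you define on $\{1,\dots,s\}\times\{1,\dots,t\}$ has $\beta\in B\leqslant\Sigma_t$ permuting $t$ slices with $a_j\in A$ acting inside each; that is $B\wr A$, not $A\wr B$, and in general $|A\wr B|=|A|\,|B|^s\neq |B|\,|A|^t=|B\wr A|$. (Your bijection $(i,j)\mapsto(i-1)t+j$ also makes the constant-$i$ slices the contiguous blocks, which matches the paper's $S_k$, not the constant-$j$ slices you want $\beta$ to permute.)

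The error propagates to the associativity step: you expand the left-hand side as $(B\ltimes A^t)\ltimes G^{st}$, of order $|B|\,|A|^t|G|^{st}$, but (after the visible ``wait no'' self-corrections) you expand the right-hand side correctly as $A\ltimes(B\ltimes G^t)^s$, of order $|A|\,|B|^s|G|^{st}$. These disagree in general, so no such $\Psi$ exists from the groups as you have written them; the issue is not the transpose bookkeeping but the wrong decomposition on the left. The fix is to keep the paper's convention throughout: partition $\{1,\dots,st\}$ into the $s$ blocks $S_k=\{(k-1)t+1,\dots,kt\}$, let $B^s$ act within blocks and $A$ permute the blocks, and note $A\cap B^s=1$ with $A$ normalising $B^s$, so $AB^s\simeq A\ltimes B^s=A\wr B\leqslant\Sigma_{st}$ (the paper cites its semidirect-product lemma here, a slightly tidier route than checking faithfulness of an action by hand). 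The associativity isomorphism is then the map $((a;b_1,\dots,b_s);g_1,\dots,g_{st})\mapsto(a;(b_1;g_1,\dots,g_t),\dots,(b_s;g_{(s-1)t+1},\dots,g_{st}))$, and your instinct that the only residual subtlety is index bookkeeping against the semidirect-product multiplication formula is correct once the formula you start from is right.
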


\begin{pf} For $k=1,\ldots,s$ write $S_k=\{(k-1)t+1,\ldots,kt\}$. Then $\{1,\ldots,st\}=S_1\sqcup\ldots\sqcup
S_s$ and we get embeddings $B^s\rightarrowtail \Sigma_{st}$ (where the $k^\text{th}$ factor permutes $S_k$) and
$A\rightarrowtail\Sigma_{st}$ (where $A$ permutes $\{S_1\ldots,S_s\}$ in the obvious way). Then, since any
element of $A\cap B^s$ must map $S_k\iso S_k$ for each $k$, it is clear that $A\cap B^s=1$. Further, if
$\sigma\in A$ and $\tau\in B^s$ then, taking $i\in S_k$, we have $\sigma\tau\sigma^{-1}(i)\in
S_{\sigma\sigma^{-1}(k)}=S_k$, so that $\sigma\tau\sigma^{-1}\in B^s$. Hence $A\leqslant N_{\Sigma_{st}}(B^s)$
and an application of Proposition \ref{GH as semidirect product} gives us $AB^s=A\ltimes B^s=A\wr B$ as a
subgroup of $\Sigma_{st}$.

The proof that $(A\wr B)\wr G\simeq A\wr(B\wr G)$ follows on careful checking that the map
$$((a;b_1,\ldots,b_s);g_1,\ldots,g_{st})\mapsto
(a;(b_1;g_1,\ldots,g_t),\ldots,(b_s;g_{(s-1)t+1},\ldots,g_{st}))$$ is an isomorphism.\end{pf}

Another useful feature is that the wreath product distributes over the cross product.

\begin{lem}\label{Wreath products distribute over times} Let $C\leqslant \Sigma_s$ and $D\leqslant\Sigma_t$. Then, viewing $C\times D$ as a subgroup of
$\Sigma_{s+t}$ the map $(C\wr G)\times (D\wr G)\to (C\times D)\wr G$ given by
$$((\sigma;g_1,\ldots,g_s),(\tau;h_1,\ldots,h_t))\mapsto ((\sigma,\tau);g_1,\ldots,g_s,h_1,\ldots,h_t)$$ is an
isomorphism.\end{lem}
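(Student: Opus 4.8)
The plan is to exhibit the given map explicitly and check it is a bijective homomorphism, exactly in the style of the preceding two lemmas. First I would recall that, by definition, $C\wr G=C\ltimes G^s$ has underlying set $C\times G^s$, and similarly $D\wr G=D\ltimes G^t$ and $(C\times D)\wr G=(C\times D)\ltimes G^{s+t}$, where $C\times D$ sits inside $\Sigma_{s+t}$ via the obvious block embedding (the first block permuting $\{1,\dots,s\}$ and the second $\{s+1,\dots,s+t\}$). The proposed map $\psi\colon(C\wr G)\times(D\wr G)\to(C\times D)\wr G$ is clearly a bijection on underlying sets, since it is essentially the identity after the canonical identification $(C\times G^s)\times(D\times G^t)\cong(C\times D)\times G^{s+t}$ given by shuffling the factors. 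So the only real content is that $\psi$ respects the multiplication.

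The key step is therefore the homomorphism check, and the only subtlety lies in how the $\Sigma_{s+t}$-action on $G^{s+t}$ interacts with the block decomposition. Concretely, if $\sigma\in C\leqslant\Sigma_s$ acts on an $s$-tuple and $\tau\in D\leqslant\Sigma_t$ acts on a $t$-tuple, then under the embedding $C\times D\hookrightarrow\Sigma_{s+t}$ the element $(\sigma,\tau)$ acts on an $(s+t)$-tuple $(x_1,\dots,x_s,y_1,\dots,y_t)$ by permuting the first $s$ entries via $\sigma$ and the last $t$ entries via $\tau$; in particular $(\sigma,\tau)$ never mixes the two blocks, so $(\sigma,\tau).(x;y)=(\sigma.x\,;\,\tau.y)$ where $\sigma.x$ and $\tau.y$ denote the separate actions. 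I would state this as the one observation that makes everything go through. Given it, computing $\psi$ of a product
\[
\bigl((\sigma_1;g)(\sigma_2;g')\,,\,(\tau_1;h)(\tau_2;h')\bigr)
=\bigl((\sigma_1\sigma_2;(\sigma_2^{-1}.g)g')\,,\,(\tau_1\tau_2;(\tau_2^{-1}.h)h')\bigr)
\]
and comparing with the product $\psi(\sigma_1;g\,,\,\tau_1;h)\cdot\psi(\sigma_2;g'\,,\,\tau_2;h')$ in $(C\times D)\wr G$ reduces, after applying the semidirect product multiplication rule, precisely to the identity $(\sigma_2,\tau_2)^{-1}.(g,h)=(\sigma_2^{-1}.g\,,\,\tau_2^{-1}.h)$ on $G^{s+t}$, which is the observation above (applied to $\sigma_2^{-1},\tau_2^{-1}$). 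Here I am writing concatenations like $g$ for the tuple $(g_1,\dots,g_s)$ and juxtaposition of tuples for coordinatewise multiplication in $G^{s+t}$.

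The main obstacle — such as it is — is purely bookkeeping: being careful with the inverse and the order of factors in the semidirect product convention $(g_1;h_1)(g_2;h_2)=(g_1g_2;(g_2^{-1}.h_1)h_2)$ fixed earlier, and making sure the block-permutation action is written with indices consistent with the stated action $\sigma.(g_1,\dots,g_d)=(g_{\sigma^{-1}(1)},\dots,g_{\sigma^{-1}(d)})$. There is no conceptual difficulty and no appeal to anything beyond the definitions of semidirect and wreath product already set up; as with Lemma \ref{A wr B wr G}, I would simply remark that the verification ``follows on careful checking'' after isolating the non-mixing property of the block embedding $C\times D\hookrightarrow\Sigma_{s+t}$, and leave the coordinate-chase to the reader.
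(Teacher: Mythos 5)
Your proposal is correct and takes the same route as the paper, which simply notes that the map is visibly a bijection and that the homomorphism check is a straightforward (if fiddly) direct verification. The one genuine observation you isolate — that the block embedding $C\times D\hookrightarrow\Sigma_{s+t}$ acts on $G^{s+t}$ without mixing the two blocks, so $(\sigma,\tau)^{-1}.(g,h)=(\sigma^{-1}.g\,;\,\tau^{-1}.h)$ — is exactly what makes the coordinate-chase go through, and is a helpful thing to make explicit.
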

\begin{pf} The map is visibly a bijection. Checking that it is a group homomorphism is straightforward, if a little fiddly.\end{pf}

\subsection{Classifying spaces}

A {\em topological group} is a group equipped with a Hausdorff topology for which the multiplication and inverse
maps are continuous. Given a topological group $G$ (with a CW structure) there is a CW-complex known as the
\textit{classifying space of $G$}, denoted $BG$, which is formed as the geometric realisation of the nerve of
the category $\mathbb{G}$ in which there is just one object with morphisms indexed by elements of $G$.

The assignment $G\mapsto BG$ is functorial and, for a large class of groups (in particular, all countable
groups), we have a homeomorphism $B(G\times H)\simeq BG\times BH$.\footnote{The problem that can arise here is
that the topology on $B(G\times H)$ does not, in general, coincide with the product topology on $BG\times BH$.
Instead the right hand-side must be given the compactly generated topology (see \cite{Segal}). They do coincide,
however, if $BG$ and $BH$ have countably many cells (see \cite[Appendix]{Hatcher}).} If $G$ carries the discrete
topology, such as when $G$ is finite, then $BG$ is a $K(G,1)$ Eilenberg-MacLane space, that is $\pi_1(BG)=G$ and
$\pi_n(BG)=0$ for all $n\neq 1$. Of fundamental importance is the space $BS^1$ which turns out to be
$\mathbb{C}P^\infty$.

We have the following useful result.

\begin{prop}\label{conj_g:BG to BG is homotopic to identity} Let $G$ be a topological group. Then the map $\conj_g:G\to G, h\mapsto ghg^{-1}$ induces a map
$BG\to BG$ which is homotopic to the identity.\end{prop}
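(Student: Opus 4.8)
The plan is to deduce this from the general principle that naturally isomorphic functors induce homotopic maps on classifying spaces. Recall that $BG$ is, by definition, the geometric realisation of the nerve of the (topological) one-object category $\mathbb{G}$ whose morphisms are the elements of $G$. Since $G$ is a topological group, multiplication and inversion are continuous, so $\conj_g:G\to G$ is a continuous group automorphism and therefore defines a continuous functor $\conj_g:\mathbb{G}\to\mathbb{G}$ fixing the unique object $\ast$ and sending a morphism $h$ to $ghg^{-1}$. The induced map on realisations is precisely the map $B\conj_g:BG\to BG$ in the statement, so it suffices to show $B\conj_g$ is homotopic to $B(\id_{\mathbb{G}})=\id_{BG}$.

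First I would exhibit a natural isomorphism $\eta:\id_{\mathbb{G}}\iso\conj_g$. As $\mathbb{G}$ has a single object $\ast$, such a natural transformation is a single morphism $\eta_\ast\in G$ for which $\eta_\ast\cdot h=\conj_g(h)\cdot\eta_\ast=(ghg^{-1})\eta_\ast$ for every $h\in G$; taking $\eta_\ast=g$ makes both sides equal to $gh$, and $g$ is invertible, so $\eta_\ast=g$ is the required natural isomorphism. (Continuity is vacuous here: the single component is the fixed element $g$.)

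Next I would invoke the standard construction turning a natural transformation into a homotopy of realisations. A natural transformation $\eta:F\Rightarrow F'$ of functors $\mathbb{G}\to\mathbb{G}$ is the same as a functor $\mathbb{G}\times[1]\to\mathbb{G}$, where $[1]$ denotes the arrow category $0\to 1$; passing to nerves gives a map of simplicial spaces $N\mathbb{G}\times\Delta^1\to N\mathbb{G}$, and realising produces a map $BG\times I\to BG$ restricting to $BF$ at $0$ and $BF'$ at $1$. Applied to $F=\id$ and $F'=\conj_g$ this gives the desired homotopy. If one wants it explicitly, I would use the standard prism decomposition of $\Delta^n\times\Delta^1$: on the bar construction the homotopy carries an $n$-simplex $(h_1,\dots,h_n)$, together with a coordinate in $\Delta^1$, into the simplices built from the tuples $(h_1,\dots,h_i,\,g,\,gh_{i+1}g^{-1},\dots,gh_ng^{-1})$, which is manifestly continuous in $(h_1,\dots,h_n)\in G^n$ since multiplication and inversion in $G$ are continuous.

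The one point that needs genuine care — and the step I would flag as the main obstacle — is the identification $\lvert N\mathbb{G}\times\Delta^1\rvert\cong\lvert N\mathbb{G}\rvert\times I=BG\times I$ in the topological (rather than discrete) setting, which is exactly the kind of product-topology subtlety noted in the footnote on classifying spaces earlier in this section. This is harmless here, either because $\Delta^1$ is a finite CW complex (so no point-set pathology arises), or because we are in any case working within the class of groups carrying a CW structure for which $BG$ is defined; everything else in the argument is routine.
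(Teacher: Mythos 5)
Your proof is correct and takes essentially the same route as the paper: both reduce the statement to the fact that a (continuous) natural transformation between functors of topological categories induces a homotopy of classifying spaces, and both exhibit the relevant natural transformation as the single morphism $g$ (or equivalently $g^{-1}$, as the paper writes it, running the transformation in the opposite direction). Your write-up is a bit more explicit about the simplicial mechanism behind the natural-transformation-gives-homotopy step and about the product-topology point, but the underlying argument is the one the paper cites from Segal.
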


\begin{pf} This is covered in \cite[Section 3]{Segal}. It is a corollary of the fact that for any topological
categories $\mathcal{C}$ and $\mathcal{C}'$ and continuous functors $F_1,F_2:\mathcal{C}\to\mathcal{C}'$, if
there is a natural transformation $F:F_0\to F_1$ then $BF_0,BF_1:B\mathcal{C}\to B\mathcal{C}'$ are homotopic.
Putting $\mathcal{C},\mathcal{C}'=G$, $F_0=\conj_g$ and $F_1=\text{id}_G$ then for any $h\in G$ we have
$F_1(h)=ghg^{-1}$ and $F_0(h)=h$ and hence a commutative diagram
$$
\xymatrix{ F_1(*) \ar[r]^{g^{-1}} \ar[d]_{F_1(h)} & F_0(*) \ar[d]^{F_0(h)}\\
F_1(*) \ar[r]^{g^{-1}} & F_0(*).}
$$
Thus we have a natural transformation given by $F_0(*) \overset{g^{-1}}{\To} F_1(*)$ and the result
follows.\end{pf}

\subsection{The elementary symmetric functions}\label{sec:Elementary symmetric functions}

Let $R$ be a ring. Then $\Sigma_d$ acts on the power series ring $R\lpow x_1,\ldots,x_d\rpow$ by
$\tau.x_i=x_{\tau(i)}$ and the ring of invariants is given by $R\lpow x_i,\ldots,x_d\rpow ^{\Sigma_d}=R\lpow
\sigma_1\ldots,\sigma_d\rpow$, where $\sigma_k$ is known as the {\em $k^\text{th}$ elementary symmetric
function} and is defined by
$$\sigma_k=\sum_{1\leq i_1<\ldots<i_k\leq d}
x_{i_1}\ldots x_{i_k}$$ (so that
$\sigma_1=x_1+\ldots+x_d,~\sigma_2=x_1x_2+x_1x_3+\ldots+x_{d-1}x_d,\ldots,~\sigma_d=x_1\ldots x_d$).

Letting $N\in\mathbb{N}$, write $q:R\lpow x_1,\ldots,x_d\rpow \to R\lpow
x_1,\ldots,x_d\rpow/(x_1^N,\ldots,x_d^N)$ for the quotient map and identify $\sigma_i$ with $q(\sigma_i)$ for
each $i$. We have the following lemma.

\begin{lem}\label{linear independence of symmetric functions} Let $N\in\mathbb{N}$. Then the elements $\sigma_1^{\beta_1}\ldots \sigma_d^{\beta_d}\in R\lpow x_1,\ldots,x_d\rpow/(x_1^N,\ldots,x_d^N)$
for $\beta_1,\ldots,\beta_d\in\mathbb{N}$ with $0\leq \beta_1+\ldots +\beta_d < N$ are linearly
independent.\end{lem}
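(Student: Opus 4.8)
The statement asserts linear independence, over $R$, of the monomials $\sigma_1^{\beta_1}\cdots\sigma_d^{\beta_d}$ in the truncated ring $A_N := R\lpow x_1,\ldots,x_d\rpow/(x_1^N,\ldots,x_d^N)$, ranging over $\beta$ with $|\beta| = \beta_1 + \cdots + \beta_d < N$. The plan is to exhibit, for each such monomial, a leading term in a suitable ordering that no other monomial in the family can produce, so that a putative linear dependence would force all coefficients to vanish. Concretely, I would work with the monomial basis $\{x^\alpha = x_1^{\alpha_1}\cdots x_d^{\alpha_d} : 0 \le \alpha_i < N\}$ of $A_N$ over $R$, and track how each $\sigma_1^{\beta_1}\cdots\sigma_d^{\beta_d}$ expands in this basis.

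**Choosing the right monomial order and identifying leading terms.**

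The key observation is that $\sigma_k$, as a polynomial in $x_1,\ldots,x_d$, has $x_1 x_2 \cdots x_k$ among its terms, and more usefully, with respect to the lexicographic order on exponent vectors (with $x_1 > x_2 > \cdots > x_d$) the leading monomial of $\sigma_k$ is exactly $x_1 x_2 \cdots x_k$. Since lex order is multiplicative, the leading monomial of $\sigma_1^{\beta_1}\cdots\sigma_d^{\beta_d}$ in $R\lpow x_1,\ldots,x_d\rpow$ is $\prod_{k=1}^d (x_1\cdots x_k)^{\beta_k} = x_1^{\beta_1+\cdots+\beta_d}\, x_2^{\beta_2+\cdots+\beta_d}\cdots x_d^{\beta_d}$, with coefficient $1$. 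Write $\alpha(\beta) = (\beta_1+\cdots+\beta_d,\ \beta_2+\cdots+\beta_d,\ \ldots,\ \beta_d)$ for this exponent vector; the map $\beta \mapsto \alpha(\beta)$ is injective (one recovers $\beta_d = \alpha_d$, $\beta_{d-1} = \alpha_{d-1} - \alpha_d$, etc.). The crucial point for the truncation is that the largest coordinate of $\alpha(\beta)$ is its first, namely $|\beta| < N$, so $x^{\alpha(\beta)}$ survives in the quotient $A_N$ — it is not killed by $(x_1^N,\ldots,x_d^N)$.

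**Running the dependence argument.**

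Now suppose $\sum_\beta c_\beta\, \sigma_1^{\beta_1}\cdots\sigma_d^{\beta_d} = 0$ in $A_N$, with finitely many nonzero $c_\beta \in R$ and all $|\beta| < N$. Among the $\beta$ with $c_\beta \ne 0$, pick one, say $\beta^\ast$, whose leading exponent vector $\alpha(\beta^\ast)$ is lex-maximal. I claim the coefficient of $x^{\alpha(\beta^\ast)}$ in the expansion of the left-hand side in the monomial basis of $A_N$ equals $c_{\beta^\ast}$. Indeed: (i) the term $\sigma_1^{\beta^\ast_1}\cdots\sigma_d^{\beta^\ast_d}$ contributes $c_{\beta^\ast}\cdot 1$ to this coefficient, since its lex-leading monomial is exactly $x^{\alpha(\beta^\ast)}$; (ii) for any other $\beta$ in the sum, $\sigma_1^{\beta_1}\cdots\sigma_d^{\beta_d}$ has lex-leading monomial $x^{\alpha(\beta)} \prec x^{\alpha(\beta^\ast)}$ by maximality (and $\alpha(\beta) \ne \alpha(\beta^\ast)$ by injectivity of $\beta \mapsto \alpha(\beta)$), so every monomial appearing in it is $\preceq x^{\alpha(\beta)} \prec x^{\alpha(\beta^\ast)}$, hence contributes nothing to the $x^{\alpha(\beta^\ast)}$-coefficient; (iii) passing to the quotient $A_N$ can only delete monomials, never create new ones, and $x^{\alpha(\beta^\ast)}$ itself is not deleted (as noted above, its coordinates are all $< N$), so the coefficient computation is unaffected. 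Therefore $c_{\beta^\ast} = 0$, a contradiction. Hence all $c_\beta = 0$.

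**Anticipated obstacle.**

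The only genuinely delicate point is bookkeeping around the truncation: one must be sure that (a) the distinguished leading monomial $x^{\alpha(\beta)}$ really does survive in $A_N$, which is exactly the role of the hypothesis $|\beta| < N$ (without it, e.g. $\sigma_1^N = 0$ in $A_N$ even though $\sigma_1^N \ne 0$ upstairs, and the family would fail to be independent), and (b) that passing to the quotient cannot promote some lower monomial from another term into the slot $x^{\alpha(\beta^\ast)}$ — which it cannot, since the quotient map kills basis monomials and fixes the rest, never rewriting one as another. Everything else is the standard "leading-term" argument; the verification that $x_1\cdots x_k$ is the lex-leading monomial of $\sigma_k$ and that lex order is multiplicative is routine and I would not belabour it.
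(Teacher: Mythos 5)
Your proof is correct, and it takes a genuinely different route from the paper's. The paper's proof is a two-step reduction: first it observes that in $\sigma_1^{\beta_1}\cdots\sigma_d^{\beta_d}$ (viewed in $R\lpow x_1,\ldots,x_d\rpow$) every monomial has each $x_i$-exponent at most $\beta_1+\cdots+\beta_d<N$, so a linear relation that vanishes in the quotient already vanishes upstairs in the untruncated ring; it then concludes $r_\beta=0$ by appealing (implicitly, via the identification $R\lpow x_1,\ldots,x_d\rpow^{\Sigma_d}=R\lpow\sigma_1,\ldots,\sigma_d\rpow$ stated at the start of the section) to the algebraic independence of the elementary symmetric functions. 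You instead run a self-contained leading-term argument inside the quotient: identify the lex-leading monomial $x^{\alpha(\beta)}$ of $\sigma_1^{\beta_1}\cdots\sigma_d^{\beta_d}$, note that $\beta\mapsto\alpha(\beta)$ is injective and that $x^{\alpha(\beta)}$ survives truncation since its largest coordinate is $|\beta|<N$, and then kill coefficients by maximality. Your approach has the merit of not invoking algebraic independence of the $\sigma_k$ as a black box, and it uses precisely the lex-leading-monomial technique that the paper itself deploys in the subsequent Proposition~\ref{Basis for Sigma_d-invariants} for the spanning half, so it unifies the two proofs; the paper's version is shorter at the cost of quoting a known theorem. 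One small point worth making explicit if you wrote this up: the leading coefficients of all the $\sigma_k$ are $1$, so the leading monomial of a product really is the product of leading monomials with nonzero (indeed unit) coefficient, even over an arbitrary ring $R$.
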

\begin{pf} Let $B$ be the set $\{(\beta_1,\ldots,\beta_d)\in\mathbb{N}^d\mid 0\leq \beta_1+\ldots +\beta_d < N\}$ and
suppose that $\sum_{{\bf \beta}\in B} r_\beta \sigma_1^{\beta_1}\ldots \sigma_d^{\beta_d}=0$ for some
$r_\beta\in R$. Then, for each $\beta$ and each $1\leq i\leq d$, the highest power of $x_i$ occurring in the
expression $\sigma_1^{\beta_1}\ldots \sigma_d^{\beta_d}$ is no more than $\beta_1+\ldots+\beta_d<N$. Hence the
relation lifts to $R\lpow x_1,\ldots,x_d\rpow$ whereby $r_\beta=0$ for all $\beta$.\end{pf}

\begin{prop}\label{Basis for Sigma_d-invariants} Let $N\in\mathbb{N}$. Then the free $R$-module $(R\lpow
x_1,\ldots,x_d\rpow/(x_1^N,\ldots,x_d^N))^{\Sigma_d}$ has basis $B=\{\sigma_1^{\beta_1}\ldots
\sigma_d^{\beta_d}\mid 0\leq \beta_1+\ldots +\beta_d < {N}\}$.\end{prop}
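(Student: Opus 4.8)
The plan is to show that $B$ both spans and is linearly independent over $R$. Linear independence is already handed to us by Lemma \ref{linear independence of symmetric functions}, so the real work is spanning. Write $A = R\lpow x_1,\ldots,x_d\rpow/(x_1^N,\ldots,x_d^N)$; this is a free $R$-module with basis the monomials $x_1^{a_1}\cdots x_d^{a_d}$ with $0\le a_i<N$, and it carries the $\Sigma_d$-action permuting the variables. I would first observe that $A^{\Sigma_d}$ is a free $R$-module, since the orbit sums of monomials form an $R$-basis for it (the action permutes a basis of $A$, so averaging/orbit-summing works integrally without dividing by $|\Sigma_d|$). So it suffices to show every orbit sum is an $R$-linear combination of elements of $B$.

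The key step is to lift the classical fact that symmetric polynomials are generated by the elementary symmetric functions, but keeping careful track of degrees so that it survives the quotient by $(x_1^N,\ldots,x_d^N)$. Concretely, I would argue as follows. Let $f\in A^{\Sigma_d}$; pick a symmetric representative $\tilde f\in R\lpow x_1,\ldots,x_d\rpow^{\Sigma_d}$, and then truncate: since we only care about $\tilde f$ modulo $(x_1^N,\ldots,x_d^N)$, and each monomial appearing with a variable to a power $\ge N$ dies in $A$, I can replace $\tilde f$ by the sum of its terms $x_1^{a_1}\cdots x_d^{a_d}$ with all $a_i<N$ — call it $g$. But $g$ need not be symmetric. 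Better: work monomial-orbit by monomial-orbit. It is enough to express, for each $d$-tuple $(a_1,\ldots,a_d)$ with $0\le a_i<N$, the orbit sum $m_{(a_1,\ldots,a_d)} := \sum_{\text{distinct images}} x_1^{a_1}\cdots x_d^{a_d}$ as an $R$-combination of the $\sigma_1^{\beta_1}\cdots\sigma_d^{\beta_d}$ with $\sum\beta_j<N$, inside $A$.

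For that, I would induct on the partition $\lambda = (a_{(1)}\ge a_{(2)}\ge\cdots)$ underlying the exponent tuple, ordered by dominance (or lexicographically), exactly as in the standard proof that $\{m_\lambda\}$ and $\{\text{monomials in the }\sigma_i\}$ are related by a unitriangular change of basis. The top term of $\prod_i \sigma_i^{\,\lambda'_i - \lambda'_{i+1}}$ (with $\lambda'$ the conjugate partition) is $m_\lambda$ plus lower terms in dominance order, and crucially the total degree of this product of $\sigma_i$'s equals $|\lambda| = \sum_i a_i$; so the relevant exponents satisfy $\sum_j\beta_j = \sum_i(\lambda'_i-\lambda'_{i+1}) = \lambda'_1 = (\text{number of nonzero }a_i) \le d$, and more to the point $\sum_j\beta_j\le \lambda'_1\le$ the largest part count — I must double-check the precise bound, but the essential point is that because every $a_i<N$ the partition $\lambda$ has all parts $<N$, and one checks the $\sigma$-exponents arising have $\sum_j\beta_j<N$. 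All "lower" correction terms coming out of the induction also correspond to partitions of the same size $|\lambda|$ with parts $<N$, so they stay in range. This gives the spanning statement, and combined with Lemma \ref{linear independence of symmetric functions} we conclude $B$ is a basis.

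The main obstacle is the bookkeeping in the previous paragraph: making sure the degree/size bound $\sum_j\beta_j<N$ is genuinely preserved at every step of the unitriangular reduction, and that no monomial $\sigma_1^{\beta_1}\cdots\sigma_d^{\beta_d}$ with $\sum\beta_j\ge N$ is ever needed — intuitively true because multiplying $\sigma$'s only raises total degree, and $m_\lambda$ with $\lambda$ a partition of $M<dN$ into parts $<N$ can always be built from $\sigma$-monomials of total degree exactly $M$, while $M<N$ is the regime where Lemma \ref{linear independence of symmetric functions} applies — wait, that is not quite the regime of the Proposition, so I would instead phrase the bound as: the $\sigma$-monomials needed to express $m_\lambda$ have $\sum_j\beta_j = \lambda'_1 \le$ (number of distinct indices used), which is automatically $<N$ as soon as... hmm. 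I would need to reconcile: the Proposition's basis is indexed by $\sum_j\beta_j<N$, whereas a naive count gives $\sum_j\beta_j\le d$ which is a different (and for $d\ge N$ weaker, for $d<N$ stronger) constraint — so the genuinely careful version uses that $A$ is truncated at $x_i^N$, which forces any surviving $\sigma$-monomial to have total degree $<dN$ but, after reducing modulo the relations, to be expressible using only $\sigma$-powers with $\sum\beta_j<N$; establishing exactly this reduction is the crux and the one place I would need to write a genuine argument rather than cite the classical theory.
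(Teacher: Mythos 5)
Your overall strategy is the same as the paper's: order the monomials lexicographically and do a unitriangular reduction by subtracting a product of elementary symmetric functions whose leading term matches. The one place you got stuck — the degree bound $\sum_j\beta_j < N$ — is precisely where you made a concrete slip. You wrote the $\sigma$-monomial to subtract as $\prod_i\sigma_i^{\lambda_i'-\lambda_{i+1}'}$, with $\lambda'$ the conjugate partition, and correspondingly computed $\sum_j\beta_j = \lambda_1'$ (the number of nonzero parts, $\le d$), which is why your bound refused to match the Proposition's $< N$. The correct monomial to subtract is $\prod_i\sigma_i^{\lambda_i-\lambda_{i+1}}$ (no conjugate), since $\sigma_i$ has lex-leading term $x_1\cdots x_i$, so $\prod_i\sigma_i^{c_i}$ has lex-leading term $x_1^{c_1+\cdots+c_d}x_2^{c_2+\cdots+c_d}\cdots x_d^{c_d}$, and to match $x_1^{\lambda_1}\cdots x_d^{\lambda_d}$ you need $c_i=\lambda_i-\lambda_{i+1}$. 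Then $\sum_j\beta_j = \lambda_1$, which is just the largest exponent appearing in the lex-leading monomial of your invariant — and that is $<N$ for free because you are working in $R\lpow x_1,\ldots,x_d\rpow/(x_1^N,\ldots,x_d^N)$. No dominance-order subtleties or "precise bound to double-check" are needed.

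Once you fix this, the rest of your reduction closes cleanly: after subtracting $r_m\,\sigma_1^{\lambda_1-\lambda_2}\cdots\sigma_d^{\lambda_d}$ you are left with another $\Sigma_d$-invariant element of the truncated ring whose lex-maximal monomial is strictly smaller, and since there are only finitely many monomials the process terminates. Every $\sigma$-monomial produced along the way has exponent sum equal to the top exponent of some monomial in the truncated ring, hence $<N$, so you never leave $B$. This is exactly the paper's argument; your orbit-sum framing and the remark that $\Sigma_d$ permutes a basis are also in the paper's proof (implicitly in establishing $r_\alpha=r_{\tau.\alpha}$), so the two proofs coincide once the partition bookkeeping is corrected.
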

\begin{pf} Take a non-zero element $y\in (R\lpow x_1,\ldots,x_d\rpow/(x_1^N,\ldots,x_d^N))^{\Sigma_d}$. Let $A$ denote the set $\{\alpha\in
\mathbb{N}^d\mid 0\leq \alpha_i<N\}$. Then, for $\alpha\in A$, we write ${\bf x}^\alpha$ for
$x_1^{\alpha_1}\ldots x_d^{\alpha_d}$ and, using the standard basis for $R\lpow
x_1,\ldots,x_d\rpow/(x_1^N,\ldots,x_d^N)$, we write $y=\sum_{\alpha\in A} r_\alpha {\bf x}^\alpha$ for some
$r_\alpha\in R$.

Note that we can define an action of $\Sigma_d$ on $A$ by
$\tau.(\alpha_1,\ldots,\alpha_d)=(\alpha_{\tau^{-1}(1)},\ldots,\alpha_{\tau^{-1}(d)})$ and, with this action,
$\tau.{\bf x}^\alpha={\bf x}^{\tau.\alpha}$. Letting $\tau\in\Sigma_d$ then, since $\tau^{-1}.y=y$, we have
$$
\sum_{\alpha\in A} r_\alpha {\bf x}^\alpha  = \tau^{-1}.\sum_{\alpha\in A}
r_\alpha {\bf x}^\alpha = \sum_{\alpha\in A} r_\alpha {\bf x}^{\tau^{-1}.\alpha}=\sum_{\alpha\in A} r_{\tau.\alpha} {\bf x}^{\alpha}.\\
$$
Hence we see that $r_{\alpha}=r_{\tau.\alpha}$ for all $\alpha\in A$ and all $\tau\in\Sigma_d$. Next, introduce
an ordering $\prec$ on the monomials in $(R[[x_1,\ldots,x_d]]/(x_1^{N},\ldots,x_d^{N}))^{\Sigma_d}$ by
\begin{eqnarray*}
x_1^{\alpha_1}\ldots x_d^{\alpha_d} \succ x_1^{\beta_1}\ldots
x_d^{\beta_d} & \Longleftrightarrow & \alpha_1 > \beta_1\\
& & \text{or } \alpha_1=\beta_1 \text{ and } \alpha_2>\beta_2\\
& & \text{or } \alpha_1=\beta_1, \alpha_2=\beta_2 \text{ and } \alpha_3>\beta_3, \text{ etc.}
\end{eqnarray*}
This is a total ordering on the monomials in $(R[[x_1,\ldots,x_d]]/(x_1^{N},\ldots,x_d^{N}))^{\Sigma_d}$ (the
{\em lexicographical ordering}). Now, let $B=\{\sigma_1^{\beta_1}\ldots \sigma_d^{\beta_d}\mid 0\leq
\beta_1+\ldots +\beta_d < N \}$. Let $r_m m=r_m \mathbf{x}^\alpha$ be the largest monomial appearing as a
summand of $y$. Note that $m$ is of the form $x_1^{\alpha_1}\ldots x_d^{\alpha_d}$ with $\alpha_1\geq \ldots
\geq \alpha_d$ since otherwise we could find some $\tau\in\Sigma_d$ such that $\tau.m\succ m$ and $\tau.m$
necessarily appears as a summand of $y$. Now,
\begin{eqnarray*}
\sigma_1^{\alpha_1-\alpha_2}\sigma_2^{\alpha_2-\alpha_3}\ldots \sigma_d^{\alpha_d} & = &
x_1^{\alpha_1-\alpha_2}(x_1x_2)^{\alpha_2-\alpha_3}\ldots(x_1\ldots
x_d)^{\alpha_d} + \text{ lower terms}\\
& = & x_1^{\alpha_1}x_2^{\alpha_2}\ldots x_d^{\alpha_d} + \text{
lower terms}\\
& = & m + \text{ lower terms.}
\end{eqnarray*}
Hence $y-r_m \sigma_1^{\alpha_1-\alpha_2}\sigma_2^{\alpha_2-\alpha_3}\ldots \sigma_d^{\alpha_d}$ consists of
monomials strictly smaller than $m$. Since $y$ has a finite number of monomial summands we can continue in this
way to get $y$ expressed as a linear sum of elements of $B$ in a finite number of steps. Thus $B$ spans
$(R[[x_1,\ldots,x_d]]/(x_1^{N},\ldots,x_d^{N}))^{\Sigma_d}$ and hence, using Lemma \ref{linear independence of
symmetric functions}, is a basis.\end{pf}

\subsection{Nakayama's lemma and related results}

In this section we include a few useful results from commutative algebra. We begin with a version of Nakayama's
lemma.

\begin{prop} [Nakayama's lemma] Let $R$ be a local ring and $M$ a finitely generated $R$-module.
If $I$ is a proper ideal of $R$ and $M=IM$ then $M=0$.\end{prop}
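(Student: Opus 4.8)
The plan is to run the classical minimal-generators argument, whose whole point is that in a local ring a proper ideal sits inside the maximal ideal $\mathfrak{m} = R \setminus R^\times$, so that every element of $1 + I$ is a unit.

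First I would record the key observation that $I \subseteq \mathfrak{m}$: since $I$ is proper, no element of $I$ is a unit, so $I \subseteq R \setminus R^\times = \mathfrak{m}$ by the characterisation of local rings in Section~\ref{sec:local rings}. In particular, for every $a \in I$ the element $1 - a$ lies outside $\mathfrak{m}$ and is therefore a unit of $R$.

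Next I would argue by contradiction: suppose $M \neq 0$. As $M$ is finitely generated, pick a generating set $m_1, \ldots, m_n$ with $n \geq 1$ as small as possible. Using $M = IM$, write the last generator as $m_n = \sum_{j=1}^{n} a_j m_j$ with all $a_j \in I$, and rearrange to $(1 - a_n) m_n = \sum_{j=1}^{n-1} a_j m_j$, where the right-hand side is read as $0$ when $n = 1$. Since $1 - a_n$ is a unit by the previous step, $m_n$ lies in the submodule generated by $m_1, \ldots, m_{n-1}$, so these already generate $M$. For $n = 1$ this says $M = 0$ outright, and for $n > 1$ it contradicts minimality of $n$. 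Hence $M = 0$.

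I do not expect a real obstacle here; the only points requiring care are that the argument genuinely needs $M$ finitely generated (to have a smallest generating set and to make the rearrangement a finite manipulation) and genuinely needs $R$ local (to invert $1 - a_n$). If one prefers to avoid choosing a minimal generating set, the same input runs the determinant trick: from $m_i = \sum_{j} a_{ij} m_j$ with $a_{ij} \in I$ one gets $(\mathrm{Id}_n - A)\mathbf{m} = 0$ for $A = (a_{ij})$, and multiplying by the adjugate of $\mathrm{Id}_n - A$ gives $\det(\mathrm{Id}_n - A)\, m_i = 0$ for each $i$; expanding the determinant shows $\det(\mathrm{Id}_n - A) \in 1 + I$, hence a unit, forcing every $m_i = 0$. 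I would present the minimal-generators version, since it is shorter and invokes nothing beyond Section~\ref{sec:local rings}.
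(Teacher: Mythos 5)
Your proof is correct and is the standard argument. The paper itself does not prove this proposition; it simply defers to Matsumura with the one-line justification ``This is covered in \cite{Matsumura}.'' So there is no in-paper argument to compare against. The minimal-generating-set argument you give (together with the determinant-trick variant you sketch) is precisely the classical proof that a reference like \cite{Matsumura} supplies, and both of your versions correctly isolate the two essential hypotheses: finite generation, which makes the relation $m_n = \sum_j a_j m_j$ a finite one and gives a smallest generating set to contradict, and locality, which guarantees $I \subseteq \mathfrak{m}$ so that $1 - a_n$ is a unit. One very small remark: you don't strictly need to argue by contradiction in the $n>1$ case separately from $n=1$; you could phrase the whole thing as ``a minimal generating set has size $0$,'' but this is purely cosmetic.
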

\begin{pf} This is covered in \cite{Matsumura}.\end{pf}

We will usually apply a corollary of this result, but first need the following lemma.

\begin{lem}\label{R/I tensor M = M/IM} Let $R$ be a ring, $I$ an ideal in $R$ and $M$ an $R$-module. Then
$R/I\tensor_RM\simeq M/IM$.\end{lem}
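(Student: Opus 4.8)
The plan is to construct the isomorphism explicitly using the right-exactness of the tensor functor, which is the standard approach. First I would write down the short exact sequence of $R$-modules $0 \to I \to R \to R/I \to 0$ and apply the functor $-\otimes_R M$. Since tensoring is right exact, this yields an exact sequence $I \otimes_R M \to R \otimes_R M \to R/I \otimes_R M \to 0$. The middle term is canonically identified with $M$ via the natural isomorphism $R \otimes_R M \simeq M$ sending $r \otimes m \mapsto rm$, and under this identification the image of the map $I \otimes_R M \to R \otimes_R M \simeq M$ is precisely the submodule $IM = \{\sum a_j m_j \mid a_j \in I, m_j \in M\}$ (every generator $a \otimes m$ maps to $am$, and such elements generate $IM$). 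Exactness then gives $R/I \otimes_R M \simeq M / IM$.

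Alternatively, and perhaps more cleanly for a self-contained argument, I would exhibit mutually inverse maps directly. Define $\phi : R/I \otimes_R M \to M/IM$ on generators by $\phi((r+I)\otimes m) = rm + IM$; this is well defined because the assignment $(r+I, m) \mapsto rm + IM$ is $R$-bilinear and independent of the coset representative of $r$ (if $r - r' \in I$ then $rm - r'm \in IM$), so it factors through the tensor product. In the other direction, define $\psi : M/IM \to R/I \otimes_R M$ by $\psi(m + IM) = (1+I)\otimes m$; this is well defined since for $a \in I$ and $m' \in M$ we have $(1+I)\otimes am' = (a+I)\otimes m' = (0+I)\otimes m' = 0$, so elements of $IM$ are sent to $0$. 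One then checks $\phi$ and $\psi$ are mutually inverse on generators, hence everywhere.

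Neither route presents a genuine obstacle — the only point requiring care is verifying that the image of $I \otimes_R M$ in $M$ is exactly $IM$ (equivalently, that $\psi$ kills $IM$), which is immediate from the definition of $IM$ as the $R$-submodule generated by products $am$ with $a \in I$. I would present the second, explicit version since it keeps the argument elementary and does not require invoking right-exactness as a black box, matching the expository tone of this preliminaries section.
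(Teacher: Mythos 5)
Your second, explicit route is exactly the paper's argument: the paper defines $f(m)=1\otimes m$, checks $IM\subseteq\ker f$ to get $\overline{f}:M/IM\to (R/I)\otimes_R M$ (your $\psi$), and verifies that $\overline{a}\otimes m\mapsto \overline{a}.\overline{m}$ (your $\phi$) is inverse to it. Both your versions are correct, and the one you say you would present matches the paper's proof.
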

\begin{pf} Define a map $f:M\to (R/I)
\tensor_R M$ by $f(m)=1\tensor m$. Then it is easy to show that $IM\subseteq \ker(f)$ so that $f$ factors
through a map $\overline{f}:M/IM\to (R/I)\tensor_R M$. It is then not difficult to check that the map
$(R/I)\tensor_R M\to M/IM,~\overline{a}\tensor m\mapsto \overline{a}.\overline{m}$ is inverse to $\overline{f}$
which gives the result.\end{pf}

\begin{prop}\label{M/IM=N/IN implies M=N} Let $R$ be a local ring, $I$ a proper ideal in $R$ and $\alpha:M\to N$ be a map of finitely generated $R$-modules. If the induced map $M/IM\to N/IN$ is an
isomorphism then $\alpha$ is surjective. Hence, if $\alpha$ is just the inclusion of $M$ in $N$ then
$M=N$.\end{prop}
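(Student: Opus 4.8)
The plan is to deduce this directly from Nakayama's lemma applied to the cokernel of $\alpha$. First I would form the $R$-module $C=\coker(\alpha)=N/\alpha(M)$, which is finitely generated since $N$ is. The key observation is that tensoring the exact sequence $M\xrightarrow{\alpha}N\to C\to 0$ with $R/I$ is right exact, so $(R/I)\tensor_R M\to (R/I)\tensor_R N\to (R/I)\tensor_R C\to 0$ is exact. By Lemma \ref{R/I tensor M = M/IM} this is the same as the sequence $M/IM\to N/IN\to C/IC\to 0$, where the first map is the given induced map. Since that map is assumed to be an isomorphism, hence surjective, exactness forces $C/IC=0$, i.e.\ $C=IC$.

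Now I would invoke Nakayama's lemma: $R$ is local, $I$ is a proper ideal, and $C$ is a finitely generated $R$-module with $C=IC$, so $C=0$. But $C=\coker(\alpha)$, so $\alpha$ is surjective, which is the first claim. For the final sentence, if $\alpha$ is simply the inclusion $M\hookrightarrow N$, then surjectivity of the inclusion means exactly $M=N$.

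I do not anticipate a genuine obstacle here; the proof is a routine packaging of right-exactness of the tensor product together with Nakayama. The only point requiring a little care is making sure the identification of $(R/I)\tensor_R(-)$ with $(-)/I(-)$ from Lemma \ref{R/I tensor M = M/IM} is natural enough that it carries the map $\alpha$ to the stated induced map $M/IM\to N/IN$ — but this is immediate from the explicit description of the isomorphism in that lemma ($\overline{a}\tensor m\mapsto \overline{a}\,\overline{m}$), since it commutes with any $R$-linear map. One should also note explicitly that finite generation of $N$ (hence of $C$) is what licenses the use of Nakayama; finite generation of $M$ is not actually needed for this argument, though it is harmless to assume it.
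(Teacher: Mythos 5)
Your proof is correct and follows essentially the same route as the paper's: tensor the cokernel sequence with $R/I$, use right-exactness and Lemma \ref{R/I tensor M = M/IM} to conclude $\coker(\alpha)=I\coker(\alpha)$, then apply Nakayama. The extra observations about naturality of the identification and about finite generation of $M$ being dispensable are accurate but do not change the argument.
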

\begin{pf} The exact sequence $M\overset{\alpha}{\to} N\to N/\alpha(M)\to 0$ induces an exact sequence $$(R/I)\tensor M\to (R/I)\tensor
N\to (R/I)\tensor (N/\alpha(M))\to 0.$$ Using Lemma \ref{R/I tensor M = M/IM} and the fact that $M/IM\to N/IN$
is an isomorphism we see that $(N/\alpha(M))/I(N/\alpha(M))=(R/I)\tensor (N/\alpha(M))=0$. Hence,
$N/\alpha(M)=I(N/\alpha(M))$ and, by Nakayama's lemma, $N/\alpha(M)=0$ so that $N=\alpha(M)$.\end{pf}

Let $R$ be a ring and $M$ an $R$-module. Then an element $x\in R$ is {\em regular on $M$} if $x.m=0$ implies
$m=0$ ($m\in M$). The ordered sequence $x_1,\ldots,x_n$ of elements of $R$ is a {\em regular sequence on $M$} if
$x_1$ is regular on $M$, $x_2$ is regular on $M/x_1M$,$\ldots$, $x_n$ is regular on $M/(x_1,\ldots,x_{n-1})M$
and $M/(x_1,\ldots,x_n)M\neq 0$.

\begin{lem} Let $(R,\mathfrak{m})$ be a local Noetherian ring and $\alpha:M\to N$ a map of finitely generated $R$-modules. Suppose that
$\mathfrak{m}=(x_1,\ldots,x_n)$ and $x_1,\ldots,x_n$ is a regular sequence on both $M$ and $N$. If the induced
map $M/\mathfrak{m}M\to N/\mathfrak{m}N$ is an isomorphism then so is $\alpha$.\end{lem}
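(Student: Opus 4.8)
The plan is to combine the surjectivity criterion of Proposition~\ref{M/IM=N/IN implies M=N} with an inductive argument on the length $n$ of the regular sequence to deduce injectivity as well. Since $M/\mathfrak{m}M\to N/\mathfrak{m}N$ is an isomorphism, Proposition~\ref{M/IM=N/IN implies M=N} immediately gives that $\alpha$ is surjective, so it remains only to show $\alpha$ is injective. Let $K=\ker(\alpha)$; since $R$ is Noetherian and $M$ is finitely generated, $K$ is finitely generated, and we have a short exact sequence $0\to K\to M\to N\to 0$ (using surjectivity). The goal is to show $K=0$, for which, by Nakayama's lemma, it suffices to show $K=\mathfrak{m}K$, or equivalently $K/\mathfrak{m}K=0$.

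First I would handle the case $n=0$, i.e.\ $\mathfrak{m}=0$; but then $R$ is a field, every finitely generated module is free, and an isomorphism mod $\mathfrak{m}=0$ is just an isomorphism, so there is nothing to prove. For the inductive step, I would argue that $x_1$ being regular on both $M$ and $N$ lets us pass to the quotient ring $\bar R=R/(x_1)$, which is again local Noetherian with maximal ideal $\bar{\mathfrak{m}}=(\bar x_2,\ldots,\bar x_n)$, and that $\bar x_2,\ldots,\bar x_n$ form a regular sequence on $\bar M=M/x_1M$ and $\bar N=N/x_1N$ by the definition of regular sequence. The induced map $\bar\alpha:\bar M\to\bar N$ reduces mod $\bar{\mathfrak{m}}$ to the original isomorphism $M/\mathfrak{m}M\to N/\mathfrak{m}N$ (since $\bar M/\bar{\mathfrak m}\bar M=M/\mathfrak m M$), so by the inductive hypothesis $\bar\alpha$ is an isomorphism. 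Thus $\alpha:M\to N$ becomes an isomorphism after applying $-\otimes_R R/(x_1)$.

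Now I would feed this back into the short exact sequence $0\to K\to M\xrightarrow{\alpha} N\to 0$. Tensoring with $R/(x_1)$ and using the long exact $\mathrm{Tor}$ sequence gives
\[
\mathrm{Tor}_1^R(R/(x_1),N)\to K/x_1K\to M/x_1M\xrightarrow{\bar\alpha} N/x_1N\to 0.
\]
Since $x_1$ is regular on $N$, we have $\mathrm{Tor}_1^R(R/(x_1),N)=0$ (the Koszul complex $0\to R\xrightarrow{x_1}R\to R/(x_1)\to 0$ is a free resolution of $R/(x_1)$, and its $\mathrm{Tor}$ is the $x_1$-torsion of $N$, which vanishes). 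Since $\bar\alpha$ is injective (indeed an isomorphism), exactness forces $K/x_1K=0$, hence $K=x_1K\subseteq\mathfrak{m}K$, and Nakayama's lemma yields $K=0$. Therefore $\alpha$ is injective, completing the proof.

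The main obstacle is the bookkeeping needed to justify that passing to $R/(x_1)$ preserves all the hypotheses — in particular that $\bar x_2,\ldots,\bar x_n$ remain a regular sequence on $M/x_1M$ and $N/x_1N$ and still generate $\bar{\mathfrak m}$, and that the reduction of $\bar\alpha$ recovers the given isomorphism — together with the (standard) computation of $\mathrm{Tor}_1^R(R/(x_1),N)$ via the Koszul resolution. None of these is deep, but they are where the care is required; everything else is a formal consequence of Nakayama's lemma and the results already established in the excerpt.
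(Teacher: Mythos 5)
Your proof is correct, and the core argument coincides with the paper's: show that the short exact sequence $0\to K\to M\to N\to 0$ remains exact after reduction modulo the regular sequence, so that $K/\mathfrak{m}K$ injects into the kernel of the (iso)morphism $M/\mathfrak{m}M\to N/\mathfrak{m}N$ and hence vanishes, whence $K=0$ by Nakayama. The only differences are presentational. Where the paper establishes the injectivity of $K/x_1K\to M/x_1M$ by a $3\times 3$ diagram chase and then says ``repeat the process,'' you encode the same fact as the vanishing of $\mathrm{Tor}_1^R(R/(x_1),N)$ via the Koszul resolution, and you package the iteration as a clean induction on $n$ by applying the full lemma over the quotient ring $R/(x_1)$. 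Your formulation has the advantage that the base case and the step are cleanly separated and the bookkeeping is explicit (in particular, you flag the need to check that $\bar x_2,\ldots,\bar x_n$ remain regular on $M/x_1M$ and $N/x_1N$ and generate the maximal ideal of $R/(x_1)$, which is immediate from the definitions but worth saying); the paper's version is shorter but leans on an implicit ``and so on.'' Substantively, though, the two proofs are the same.
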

\begin{pf} By Lemma \ref{M/IM=N/IN implies M=N} we know that $\alpha$ is surjective, so it remains to show
injectivity. Let $K=\ker(\alpha)$. Then, since $x_1$ is regular on $N$, $M$ and $K$, we have a diagram of exact
sequences
$$
\xymatrix{ x_1K \ar@{ >->}[r] \ar@{ >->}[d] & x_1M \ar@{ >->}[d] \ar@{->>}[r]^{\alpha} & x_1N \ar@{ >->}[d] \\
 K \ar@{ >->}[r] \ar@{->>}[d] & M \ar@{->>}[d] \ar@{->>}[r]^\alpha & N \ar@{->>}[d] \\
  K/x_1K \ar[r] & M/x_1M \ar@{->>}[r] & N/x_1N}
$$
and a diagram chase shows that the map $K/x_1K\to M/x_1M$ is injective. Hence we can repeat the process to end
up with an exact sequence
$$K/(x_1,\ldots,x_n)K \rightarrowtail M/(x_1,\ldots,x_n)M \twoheadrightarrow N/(x_1,\ldots,x_n)N.$$
But, by our hypothesis, $(x_1,\ldots,x_n)=\mathfrak{m}$ and $M/\mathfrak{m}M\to N/\mathfrak{m}N$ is an
isomorphism, so $K/\mathfrak{m}K=0$. Thus an application of Nakayama's lemma gives $K=0$ and $\alpha$ is
injective.\end{pf}

\begin{cor}\label{If m is regular on M then M is free over R} Let $(R,\mathfrak{m})$ be a local Noetherian ring and $M$ a finitely generated $R$-module. Suppose that
$\mathfrak{m}=(x_1,\ldots,x_n)$ and $x_1,\ldots,x_n$ is a regular sequence on $M$. Then $M$ is free over
$R$.\end{cor}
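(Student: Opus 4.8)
The plan is to prove this by induction on $n$, the length of the regular sequence generating $\mathfrak{m}$, using the previous lemma (the one with hypotheses on a regular sequence generating $\mathfrak{m}$) as the engine for the inductive step. First I would set up a free $R$-module $F$ that maps onto $M$: since $M$ is finitely generated, say by $m_1,\ldots,m_k$, I would pick generators so that their images form a basis of $M/\mathfrak{m}M$ over the residue field $R/\mathfrak{m}$ (using Nakayama's lemma to see that such a lift generates $M$), and take $F=R^k$ with the obvious surjection $\alpha:F\twoheadrightarrow M$. By construction the induced map $F/\mathfrak{m}F\to M/\mathfrak{m}M$ is an isomorphism of $R/\mathfrak{m}$-vector spaces.

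Next I would observe that $F=R^k$ inherits the regular sequence: since $x_1,\ldots,x_n$ is a regular sequence on $R$ itself (as $\mathfrak{m}=(x_1,\ldots,x_n)$ generates a proper ideal and $R$ is a domain quotient-wise — more precisely, regularity on $R$ follows because $R/\mathfrak{m}\neq 0$ and each $x_i$ is a nonzerodivisor on $R/(x_1,\ldots,x_{i-1})$, which is exactly the statement that $x_1,\ldots,x_n$ is a regular sequence on $R$), the same sequence is regular on any finite free module $R^k$ componentwise. Then I would like to apply the preceding lemma to $\alpha:F\to M$. The obstacle is that the lemma as stated requires $x_1,\ldots,x_n$ to be a regular sequence on \emph{both} source and target, and while we have it on $F$ by hypothesis we must be slightly careful that we also have it on $M$ — but that is precisely our standing assumption. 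So the lemma applies and gives that $\alpha:F\to M$ is an isomorphism, whence $M\cong R^k$ is free.

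The main thing to check carefully — the step I expect to be the real obstacle — is the claim that $x_1,\ldots,x_n$ forms a regular sequence on the free module $R^k$ (equivalently, first on $R$ and then that regularity is preserved under taking finite direct sums and under the successive quotients $R^k/(x_1,\ldots,x_i)R^k \cong (R/(x_1,\ldots,x_i))^k$). Regularity on $R$ comes down to: each $x_i$ is a nonzerodivisor on $R/(x_1,\ldots,x_{i-1})$, and $R/\mathfrak m\neq 0$. The first part is not automatic from $\mathfrak m=(x_1,\dots,x_n)$ alone for a general local ring; however it \emph{is} part of what we are entitled to assume, because our hypothesis is that $x_1,\ldots,x_n$ is a regular sequence on $M$, and one typically also knows (or can arrange, e.g.\ when $M$ is faithful, or simply by including $R$ among the modules of interest) that it is regular on $R$. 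If one does not want to assume regularity on $R$ separately, an alternative is to run the induction differently: quotient by $x_1$ (regular on $M$) to get that $M/x_1M$ is free over $R/x_1R$ by the inductive hypothesis, lift a basis, and then use that $x_1$ is regular on $M$ together with completeness/Nakayama to lift the freeness back up to $M$ over $R$ — a standard descent along a nonzerodivisor. Either route works; I would present the first (cleaner) one, flagging the regularity-on-$R^k$ verification as the point needing attention.
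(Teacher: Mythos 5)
Your approach matches the paper's exactly: lift a basis of $M/\mathfrak{m}M$ to get a map $R^d\to M$ that is an isomorphism modulo $\mathfrak{m}$, then invoke the preceding lemma. You are also right to flag that the preceding lemma requires the sequence $x_1,\ldots,x_n$ to be regular on the \emph{source} $R^d$ as well as on $M$, a point the paper passes over silently; this is the genuine content of the verification.

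However, your justification of that flagged point is not tight. Saying that one ``typically also knows (or can arrange)'' regularity on $R$ is not a proof, and faithfulness of $M$ is not among the hypotheses. What actually closes the gap is a depth--dimension count using exactly the machinery the paper already set up. Assume $M\neq 0$ (otherwise the statement is vacuous). Since $x_1,\ldots,x_n$ is a regular sequence on $M$ we get $\mathrm{depth}_R(M)\geq n$; on the other hand for any nonzero finitely generated module over a Noetherian local ring one has $\mathrm{depth}_R(M)\leq \dim M\leq \dim R$, and $\dim R\leq \mathrm{embdim}(R)\leq n$ because $\mathfrak{m}$ is generated by $n$ elements. Hence $\dim R=\mathrm{embdim}(R)=n$, so $R$ is a regular local ring and $x_1,\ldots,x_n$ is a regular system of parameters; as recorded just above the statement in the paper, such a system is automatically a regular sequence on $R$, hence on $R^d$ componentwise. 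With that, the application of the lemma is legitimate. Your alternative route --- induct on $n$, use the inductive hypothesis on $M/x_1M$ over $R/x_1R$, then lift freeness along the nonzerodivisor $x_1$ using Krull intersection --- is also valid and avoids the regularity-on-$R$ question entirely, so it is a perfectly reasonable substitute if you prefer not to invoke the structure theory of regular local rings.
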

\begin{pf} Reduce modulo $\mathfrak{m}$ and choose a basis of the finite dimensional $R/\mathfrak{m}$-vector
space $M/\mathfrak{m}M$. Lift this basis to get a map $R^d\to M$ for some $d$ which gives a mod-$\mathfrak{m}$
isomorphism. Now applying the previous lemma we find that the map $R^d\to M$ is an isomorphism and $M$ is free
over $R$.\end{pf}

\subsection{Regular local rings and related algebra}

Given a ring $R$ we define the {\em Krull dimension} of $R$ to be the supremum over the lengths $r$ of all
strictly decreasing chains of prime ideals
$\mathfrak{p}_0\supset\mathfrak{p}_1\supset\ldots\supset\mathfrak{p}_r$. Of particular interest will be power
series rings $R\lpow x_1,\ldots,x_k\rpow$ which, if $R$ is Noetherian of Krull dimension $n$, have Krull
dimension $n+k$. Note that if $(R,\mathfrak{m})$ is a local ring then the Krull dimension of $R$ is zero if and
only if $\mathfrak{m}$ is the only prime ideal of $R$.

\begin{lem}\label{dimension zero local K-algebras are finite dimensional} Let $(R,\mathfrak{m})$ be a local Noetherian $K$-algebra for some field $K$. Suppose that $R$ has Krull dimension
$0$ and that $R/\mathfrak{m}$ is finite dimensional over $K$. Then $R$ is finite dimensional over $K$.\end{lem}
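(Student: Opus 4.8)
The plan is to show that $\mathfrak{m}$ is nilpotent and then to build up $R$ as a finite filtration whose successive quotients are finite-dimensional over $K$. First I would observe that, since $R$ is local of Krull dimension $0$, its only prime ideal is $\mathfrak{m}$; hence $\mathfrak{m}=\nil(R)$, the intersection of all primes. As $R$ is Noetherian, $\mathfrak{m}$ is finitely generated, say $\mathfrak{m}=(x_1,\ldots,x_k)$, and each $x_i$ is nilpotent, so there is an integer $M$ with $\mathfrak{m}^M=0$ (take $M$ large enough that every monomial of that degree in the $x_i$ vanishes).

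Next I would use the filtration $R\supseteq \mathfrak{m}\supseteq \mathfrak{m}^2\supseteq\cdots\supseteq\mathfrak{m}^M=0$. Each quotient $\mathfrak{m}^j/\mathfrak{m}^{j+1}$ is a module over $R/\mathfrak{m}$, hence a vector space over the field $R/\mathfrak{m}$, and it is finitely generated (as $R$ is Noetherian and $\mathfrak{m}^j$ is a finitely generated $R$-module), so it is finite-dimensional over $R/\mathfrak{m}$. Since $R/\mathfrak{m}$ is by hypothesis finite-dimensional over $K$, each $\mathfrak{m}^j/\mathfrak{m}^{j+1}$ is finite-dimensional over $K$. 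Finally, $\dim_K R = \sum_{j=0}^{M-1}\dim_K(\mathfrak{m}^j/\mathfrak{m}^{j+1})$ is a finite sum of finite numbers, so $R$ is finite-dimensional over $K$.

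The only step needing genuine care is the claim that $\mathfrak{m}$ is nilpotent. This uses in an essential way that $\mathfrak{m}$ is the unique prime: then $\nil(R)=\mathfrak{m}$, and a finitely generated nil ideal in a Noetherian (indeed in any) ring is nilpotent. I do not expect a serious obstacle here — the Noetherian hypothesis on $R$ does the work in two places (to get $\mathfrak{m}$ finitely generated, and to get each $\mathfrak{m}^j$ finitely generated so the graded pieces are finite-dimensional), and the rest is the standard length-of-a-filtration argument.
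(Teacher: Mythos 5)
Your proposal is correct and follows essentially the same route as the paper: use Krull dimension $0$ to identify $\mathfrak{m}$ with $\nil(R)$, use the Noetherian hypothesis to make $\mathfrak{m}$ finitely generated (hence nilpotent, and with finite-dimensional graded pieces $\mathfrak{m}^j/\mathfrak{m}^{j+1}$), and then sum up the filtration. No substantive difference.
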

\begin{pf} Since $R$ is Noetherian it follows that $\mathfrak{m}$ is a finitely generated ideal. Hence, each of
the vector spaces $\mathfrak{m}^i/\mathfrak{m}^{i+1}$ are finite dimensional over $R/\mathfrak{m}$ and hence
also over $K$. Further, since $R$ has Krull dimension $0$, $\mathfrak{m}$ is the unique prime ideal of $R$. Thus
$\nil(R)= \mathfrak{m}$ so that, in particular, all the generators of $\mathfrak{m}$ are nilpotent. It follows
that there is $N\in\mathbb{N}$ such that $\mathfrak{m}^{N+1}=0$, whereby
$\mathfrak{m}^N=\mathfrak{m}^N/\mathfrak{m}^{N+1}$ is also finite dimensional. Thus we find that $R\simeq
R/\mathfrak{m}\oplus
\mathfrak{m}/\mathfrak{m}^2\oplus\ldots\oplus\mathfrak{m}^{N-1}/\mathfrak{m}^N\oplus\mathfrak{m}^N$ is finite
dimensional over $K$.\end{pf}

Let $(R,\mathfrak{m})$ be a Noetherian local ring of Krull dimension $n$. Then the dimension of
$\mathfrak{m}/\mathfrak{m}^2$ is called the {\em embedding dimension} of $R$. This is equal to the smallest
number of elements needed to generate $\mathfrak{m}$ over $R$ and hence $\text{embdim}(R)\geq n$. If
$\text{embdim}(R)=n$ then $R$ is called a {\em regular local ring} and a minimal generating set for
$\mathfrak{m}$ is called a {\em regular system of parameters}. Such a generating set is automatically a regular
sequence on $R$ (see \cite[Chapter 5]{Matsumura}).

\begin{lem}\label{M/mM finitely generated implies M finitely generated} Let $(R,\mathfrak{m})$ be a complete local Noetherian
ring. If $M$ is an $R$-module such that $M/\mathfrak{m}M$ is generated over $R/\mathfrak{m}$ by
$\mu_1,\ldots,\mu_r$ and $m_i\in M$ lifts $\mu_i$ then $M$ is generated over $R$ by $m_1,\ldots,m_r$. Hence, if
$M/\mathfrak{m}M$ is finitely generated over $R/\mathfrak{m}$ then $M$ is finitely generated over $R$.\end{lem}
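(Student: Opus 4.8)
The plan is to exploit completeness of $R$ to run a successive-approximation argument, reducing everything modulo powers of $\mathfrak{m}$. Let $m_1,\dots,m_r\in M$ be lifts of generators $\mu_1,\dots,\mu_r$ of $M/\mathfrak{m}M$ over $R/\mathfrak{m}$, and let $N\subseteq M$ be the submodule they generate. The claim is $N=M$. First I would observe that $N+\mathfrak{m}M=M$ by construction, since any $x\in M$ agrees modulo $\mathfrak{m}M$ with some $R$-combination of the $m_i$. The obstacle to concluding directly is that Nakayama's lemma as stated requires $M$ finitely generated, which is exactly what we are trying to prove, so we cannot invoke it; instead we must use completeness to build the required coefficients as convergent series.

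Concretely, I would argue by iteration that $N+\mathfrak{m}^k M=M$ for every $k\geq 1$. The base case $k=1$ is the observation above. For the inductive step, suppose $N+\mathfrak{m}^kM=M$; then $\mathfrak{m}^k M\subseteq \mathfrak{m}^k(N+\mathfrak{m}M)=\mathfrak{m}^kN+\mathfrak{m}^{k+1}M\subseteq N+\mathfrak{m}^{k+1}M$, so $M=N+\mathfrak{m}^kM\subseteq N+\mathfrak{m}^{k+1}M$, giving the step. (Here I am using that $\mathfrak{m}^kN\subseteq N$.) Now fix an arbitrary $x\in M$. Using $N+\mathfrak{m}M=M$, write $x=\sum_i a_i^{(0)}m_i+y_1$ with $a_i^{(0)}\in R$ and $y_1\in\mathfrak{m}M$; but $\mathfrak{m}M$ is generated as an $R$-module by the products $\lambda m_i$ with $\lambda\in\mathfrak{m}$ — no, more carefully: from $N+\mathfrak{m}M=M$ and $y_1\in\mathfrak{m}M$, and the fact that $\mathfrak{m}M=\mathfrak{m}(N+\mathfrak{m}M)\subseteq\mathfrak{m}N+\mathfrak{m}^2M$, I can rewrite $y_1=\sum_i a_i^{(1)}m_i+y_2$ with each $a_i^{(1)}\in\mathfrak{m}$ and $y_2\in\mathfrak{m}^2M$. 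Iterating, I obtain at stage $j$ elements $a_i^{(j)}\in\mathfrak{m}^j$ and a remainder $y_{j+1}\in\mathfrak{m}^{j+1}M$ with $x=\sum_i\bigl(\sum_{l=0}^{j}a_i^{(l)}\bigr)m_i+y_{j+1}$.

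The partial sums $b_i^{(j)}=\sum_{l=0}^{j}a_i^{(l)}$ form a Cauchy sequence in the $\mathfrak{m}$-adic topology on $R$ (consecutive differences lie in $\mathfrak{m}^{j+1}$), so by completeness of $R$ they converge to elements $b_i\in R$ with $b_i\equiv b_i^{(j)}\bmod\mathfrak{m}^{j+1}$. I then claim $x=\sum_i b_i m_i$. Indeed $x-\sum_i b_i m_i = (x-\sum_i b_i^{(j)}m_i)+\sum_i(b_i^{(j)}-b_i)m_i = y_{j+1}+\sum_i(b_i^{(j)}-b_i)m_i$, and both terms lie in $\mathfrak{m}^{j+1}M$; hence $x-\sum_i b_i m_i\in\bigcap_j\mathfrak{m}^j M$. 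The main subtlety — and the step I expect to need the most care — is concluding that this intersection is zero, i.e.\ that $M$ is $\mathfrak{m}$-adically separated. For finitely generated modules over a complete Noetherian local ring this is the Krull intersection theorem, but again we do not yet know $M$ is finitely generated. The clean fix is to run the argument inside the finitely generated submodule $N'=N+Rx$: $N'$ is finitely generated over the Noetherian local ring $R$, so Krull intersection applies to it and $\bigcap_j\mathfrak{m}^jN'=0$; since $x-\sum_i b_im_i\in N'$ and lies in $\bigcap_j\mathfrak{m}^jM\cap N'\subseteq\bigcap_j\mathfrak{m}^jN'=0$, we get $x=\sum_i b_i m_i\in N$. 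As $x$ was arbitrary, $M=N$, which proves the first assertion; the ``hence'' clause then follows by applying this with a finite generating set $\mu_1,\dots,\mu_r$ of $M/\mathfrak{m}M$.
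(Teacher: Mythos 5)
Your successive-approximation strategy is the standard one, and the bulk of the argument is correct: the observation $N+\mathfrak{m}^kM=M$, the iterative construction of coefficients $a_i^{(j)}\in\mathfrak{m}^j$ with remainder $y_{j+1}\in\mathfrak{m}^{j+1}M$, and the passage to limits $b_i\in R$ all work as you describe (and $b_i-b_i^{(j)}\in\mathfrak{m}^{j+1}$ does hold, since $\mathfrak{m}^{j+1}$ is closed in the Noetherian complete ring $R$). Note the paper itself merely cites Matsumura, Theorem~8.4, so you are supplying a proof it omits. The gap is exactly where you sensed trouble, but your ``clean fix'' does not repair it: the inclusion $\bigcap_j\mathfrak{m}^jM\cap N'\subseteq\bigcap_j\mathfrak{m}^jN'$ is false in general. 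An element of the finitely generated $N'$ can lie arbitrarily deep in the $\mathfrak{m}$-adic filtration of the ambient $M$ without lying deep in the intrinsic filtration of $N'$; comparing the two is precisely Artin--Rees for the pair $N'\subseteq M$, which requires $M$ to be finitely generated — the very conclusion being sought. Concretely, with $R=\mathbb{Z}_p$, $M=\mathbb{Q}_p$, $N'=\mathbb{Z}_p\subset M$, one has $\mathfrak{m}^jM=\mathbb{Q}_p$ for all $j$, so $\bigcap_j\mathfrak{m}^jM\cap N'=\mathbb{Z}_p$ while $\bigcap_j\mathfrak{m}^jN'=0$.

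The same example shows the lemma is false as literally stated: take $R=\mathbb{Z}_p$ and $M=\mathbb{Q}_p$; then $M/\mathfrak{m}M=\mathbb{Q}_p/p\mathbb{Q}_p=0$ is generated by the empty set, yet $M\neq 0$. The correct statement — and what the paper actually uses — carries the additional hypothesis that $M$ is $\mathfrak{m}$-adically separated, i.e.\ $\bigcap_n\mathfrak{m}^nM=0$. This holds automatically in every application here, where $M$ is always a quotient of a complete Noetherian local $E^0$-algebra and so is separated for its own maximal ideal and a fortiori for $\mathfrak{m}_{E^0}$. With that hypothesis in place your proof needs no patching: $x-\sum_ib_im_i\in\bigcap_j\mathfrak{m}^jM=0$ directly, and the $N'$ detour (and the problem it introduced) evaporates. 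So the subtlety you flagged is real; the resolution is to add the missing hypothesis, not to argue around it.
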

\begin{pf} This is Theorem 8.4 in \cite{Matsumura}.\end{pf}

We include the following elementary lemmas for reference later on.

\begin{lem}\label{Right exactness for red mod ideals} Let $R$ be a ring, $I$ an ideal in $R$, $A$
an $R$-algebra and $J$ an ideal in $A$. Then the exact sequence
$$0\to J\rightarrowtail A\twoheadrightarrow A/J\to 0$$
induces a (right) exact sequence
$$(R/I)\tensor_R J\to (R/I)\tensor_R A\twoheadrightarrow (R/I)\tensor_R (A/J)\to 0.$$
In particular, the map $(R/I)\tensor_R J\to\ker\big((R/I)\tensor_{R} A\twoheadrightarrow (R/I)\tensor_R
(A/J)\big)$ is surjective.\end{lem}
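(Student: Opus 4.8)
The plan is to deduce this directly from the right-exactness of the tensor product functor, which is entirely standard. First I would recall that for any $R$-module $M$, the functor $(R/I)\tensor_R -$ is right exact; equivalently, $(R/I)\tensor_R -$ is right exact because $R/I\tensor_R M\simeq M/IM$ (Lemma \ref{R/I tensor M = M/IM}) and $M\mapsto M/IM$ visibly preserves cokernels. Applying this functor to the short exact sequence $0\to J\to A\to A/J\to 0$ and discarding the leftmost term (which is where exactness can fail) yields the exact sequence
$$(R/I)\tensor_R J\to (R/I)\tensor_R A\to (R/I)\tensor_R (A/J)\to 0,$$
which is the first claim. For concreteness one could instead argue via the isomorphisms of Lemma \ref{R/I tensor M = M/IM}: the sequence becomes $J/IJ\to A/IA\to (A/J)/(I\cdot A/J)\to 0$, and here the second map is the evident reduction, its kernel is the image of $J$ in $A/IA$, which is exactly the image of $J/IJ$ under the first map — so exactness at the middle and surjectivity on the right are immediate.

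The second (``in particular'') statement is then just a restatement of exactness at the middle term: to say the sequence is exact at $(R/I)\tensor_R A$ is precisely to say that the image of $(R/I)\tensor_R J\to (R/I)\tensor_R A$ equals the kernel of $(R/I)\tensor_R A\to (R/I)\tensor_R(A/J)$, so the induced map $(R/I)\tensor_R J\to\ker\big((R/I)\tensor_R A\to (R/I)\tensor_R(A/J)\big)$ is surjective.

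There is no real obstacle here; the only thing to be slightly careful about is not to claim more than right-exactness gives — in particular the map $(R/I)\tensor_R J\to (R/I)\tensor_R A$ need not be injective, which is exactly why the statement is phrased in terms of surjectivity onto the kernel rather than an isomorphism. I would make sure the write-up flags that the leftmost term of the four-term exact sequence has been deliberately dropped.
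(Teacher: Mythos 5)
Your proof is correct and takes essentially the same approach as the paper, which simply cites right-exactness of the tensor product. The additional concrete reformulation via $R/I\tensor_R M\simeq M/IM$ and the explicit note about dropping the leftmost term are fine elaborations but do not change the substance.
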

\begin{pf} This is an immediate consequence of the right-exactness of the tensor product.\end{pf}

\begin{cor}\label{Mod I reduction of principal ideal surjects} Let $R$ be a ring, $I$ an ideal in $R$, $A$ an $R$-algebra and $a \in A$.
Then reduction modulo $I$ induces a surjective map $Aa/I(Aa)\twoheadrightarrow (A/IA)\overline{a}$, where
$\overline{a}$ is the image of $a$ in $A/IA$.\end{cor}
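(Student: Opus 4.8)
The plan is to apply Lemma \ref{Right exactness for red mod ideals} with the ideal $J$ taken to be the principal ideal $Aa\subseteq A$. First I would use Lemma \ref{R/I tensor M = M/IM} to make the identifications $(R/I)\tensor_R(Aa)\simeq Aa/I(Aa)$, $(R/I)\tensor_R A\simeq A/IA$, and $(R/I)\tensor_R(A/Aa)\simeq (A/Aa)/I(A/Aa)$. Under these identifications the map $(R/I)\tensor_R(Aa)\to(R/I)\tensor_R A$ of the lemma becomes the natural map $Aa/I(Aa)\to A/IA$ induced by the inclusion $Aa\inc A$, which is exactly the ``reduction modulo $I$'' map appearing in the statement.

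Next I would identify the image of this map. By Lemma \ref{Right exactness for red mod ideals} it equals the kernel of the surjection $A/IA\twoheadrightarrow(R/I)\tensor_R(A/Aa)$. Now the image of $Aa$ in $A/IA$ is the principal ideal $(A/IA)\overline{a}$ (here one uses that $A$ is closed under addition, so $Aa=\{xa\mid x\in A\}$, whose image is $\{\overline{x}\,\overline{a}\mid x\in A\}$), and by the third isomorphism theorem $(R/I)\tensor_R(A/Aa)\simeq(A/Aa)/I(A/Aa)\simeq A/(IA+Aa)\simeq(A/IA)/(A/IA)\overline{a}$, with the quotient map $A/IA\to(R/I)\tensor_R(A/Aa)$ corresponding to the evident projection. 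Its kernel is $(A/IA)\overline{a}$, so the composite $Aa/I(Aa)\to A/IA$ has image exactly $(A/IA)\overline{a}$ and hence corestricts to the asserted surjection $Aa/I(Aa)\twoheadrightarrow(A/IA)\overline{a}$.

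There is no substantial obstacle; the corollary is essentially a repackaging of the right-exactness of $(R/I)\tensor_R-$ already recorded in Lemma \ref{Right exactness for red mod ideals}. The only point needing a little care is the bookkeeping that reducing modulo $I$ and modding out by $Aa$ commute as expected, i.e. that $(A/IA)/(A/IA)\overline{a}\simeq A/(IA+Aa)$, which is the third isomorphism theorem. Alternatively one can bypass the tensor-product language entirely: the composite $Aa\inc A\twoheadrightarrow A/IA$ has image $(A/IA)\overline{a}$ and visibly kills $I(Aa)\subseteq IA$, so it factors through a surjection $Aa/I(Aa)\twoheadrightarrow(A/IA)\overline{a}$.
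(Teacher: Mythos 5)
Your proof is correct. You take the same starting point as the paper (apply Lemma \ref{Right exactness for red mod ideals} with $J=Aa$ and make the identifications from Lemma \ref{R/I tensor M = M/IM}), so what remains in both cases is to see that $\ker\bigl(A/IA\twoheadrightarrow (A/Aa)/I(A/Aa)\bigr)$ equals $(A/IA)\overline{a}$. The paper does this by an explicit element chase: lift $x\in K$ to $\tilde{x}\in A$, use that its image in $A/Aa$ lies in $I(A/Aa)$ to subtract off an element of $IA$, and land in $Aa$. You instead do it structurally, by showing via the third isomorphism theorem that both sides of the surjection identify with $A/(IA+Aa)$, so the kernel is visibly $(IA+Aa)/IA=(A/IA)\overline{a}$; this is arguably cleaner and avoids the paper's slight imprecision of writing an element of $I(A/Aa)$ as a single product $s.y$ rather than a finite sum. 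Your closing observation is also worth flagging: the statement follows directly, without the right-exactness lemma at all, from the fact that $Aa\hookrightarrow A\twoheadrightarrow A/IA$ has image $(A/IA)\overline{a}$ and kills $I(Aa)$, hence factors through $Aa/I(Aa)$. That is the shortest route and is probably the proof one would write if the corollary were stated in isolation; the paper presumably routes through the lemma mainly for expository continuity.
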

\begin{pf} This is a special case of Lemma \ref{Right exactness for red mod ideals}. Let $K=\ker\left(A/IA\twoheadrightarrow (A/Aa)/I(A/Aa)\right)$.
Then it is clear that $(A/IA)\overline{a}\subseteq K$ and we must show that $K=(A/IA)\overline{a}$. Take $x\in
K\leqslant A/IA$. Then $x$ lifts to $\tilde{x}\in A$ and, writing $q$ for the quotient map $A\to A/Aa$, we have
$q(\tilde{x})\in I(A/Aa)$, say $q(\tilde{x})=s.y$ for some $s\in I, y\in A/Aa$. Then $y$ lifts to $\tilde{y}\in
A$ and $\tilde{x}-s\tilde{y}\in\ker{q}=Aa$. Reducing mod $I$ we get $x\in (A/IA)\overline{a}$, as
required.\end{pf}

\begin{rem} It is important to note that, in general, $Aa/I(Aa)\twoheadrightarrow(A/IA)\overline{a}$ is not an isomorphism. As an
example, let $R=A=\mathbb{Z}$, $a=p$ and $I=p\mathbb{Z}$. Then $Aa=I$ so that $Aa/I(Aa)=I/I^2\simeq
\mathbb{Z}/p$ whereas $(A/IA)\overline{a}=(I/I^2)p=(\mathbb{Z}/p)p=0$. In particular note that the composite
$Aa/I(Aa)\twoheadrightarrow (A/IA)\overline{a}\rightarrowtail A/IA$ is not injective.\end{rem}

We use the following results from commutative algebra.

\begin{lem}\label{CRT} (Chinese Remainder Theorem) Let $R$ be a commutative ring and $I$ and $J$ ideals in $R$ with $1\in I+J$. Then the map
$R/IJ\to R/I\times R/J,~a\mapsto (a+I,a+J)$ is an isomorphism.\end{lem}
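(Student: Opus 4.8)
The plan is to verify directly that the given map $\phi\colon R/IJ\to R/I\times R/J$ is a well-defined ring homomorphism that is both injective and surjective. First I would note that since $IJ\subseteq I$ and $IJ\subseteq J$, the assignment $a\mapsto (a+I,a+J)$ on $R$ descends to a well-defined homomorphism $\phi$ on $R/IJ$; all the content is in injectivity and surjectivity.

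For injectivity, the kernel of the map $R\to R/I\times R/J$ is exactly $I\cap J$, so it suffices to show $I\cap J=IJ$. The inclusion $IJ\subseteq I\cap J$ is immediate. For the reverse, I would use the hypothesis to fix a decomposition $1=i+j$ with $i\in I$ and $j\in J$; then for any $x\in I\cap J$ we have $x=xi+xj$, and since $x\in J$ the term $xi$ lies in $IJ$, while since $x\in I$ the term $xj$ lies in $IJ$, so $x\in IJ$. Hence $\ker\phi=0$.

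For surjectivity, given a target element $(a+I,b+J)$, I would exhibit the explicit preimage $c=aj+bi$ (with $1=i+j$ as above). Reducing modulo $I$ gives $c\equiv aj=a(1-i)\equiv a\pmod I$, and reducing modulo $J$ gives $c\equiv bi=b(1-j)\equiv b\pmod J$, so $\phi(c+IJ)=(a+I,b+J)$ as required.

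There is no serious obstacle here; the only places where the hypothesis $1\in I+J$ is used in an essential way are the identity $I\cap J=IJ$ (for injectivity) and the construction of the interpolating element $c$ (for surjectivity), and both come directly from a single decomposition $1=i+j$.
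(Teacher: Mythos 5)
Your proof is correct and complete; the paper itself gives no argument, simply citing standard algebra (Lang). Your approach — deriving $I\cap J = IJ$ and the explicit interpolant $c = aj+bi$ from a single decomposition $1=i+j$ — is the standard textbook proof, so there is no divergence to compare.
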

\begin{pf} Standard algebra (see \cite{LangAlgebra}).\end{pf}

\begin{lem} Given a ring $R$ and an ideal $I$ in $R$ we have $\mathbb{Q}\tensor (R/I)\simeq (\mathbb{Q}\tensor
R)/(\mathbb{Q}\tensor I)$.\end{lem}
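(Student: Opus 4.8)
The plan is to exploit the fact that $\mathbb{Q}$ is flat over $\mathbb{Z}$, so that tensoring the short exact sequence of abelian groups $0\to I\to R\to R/I\to 0$ with $\mathbb{Q}$ preserves exactness. This gives a short exact sequence
$$0\to \mathbb{Q}\tensor I\to \mathbb{Q}\tensor R\to \mathbb{Q}\tensor(R/I)\to 0,$$
from which it follows immediately that $\mathbb{Q}\tensor(R/I)\simeq(\mathbb{Q}\tensor R)/\im(\mathbb{Q}\tensor I\to\mathbb{Q}\tensor R)$. So the first step is simply to write down this sequence and identify the target as a quotient of $\mathbb{Q}\tensor R$.

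The one genuine point to check is that the image of $\mathbb{Q}\tensor I$ inside $\mathbb{Q}\tensor R$ is exactly the ideal $\mathbb{Q}\tensor I$ of the ring $\mathbb{Q}\tensor R$ (a priori the notation $\mathbb{Q}\tensor I$ denotes an abelian group with a map to $\mathbb{Q}\tensor R$; we want its image to be the $(\mathbb{Q}\tensor R)$-submodule generated by the elements $1\tensor i$). This is where a small amount of care is needed: by flatness the natural map $\mathbb{Q}\tensor I\to\mathbb{Q}\tensor R$ is injective, so $\mathbb{Q}\tensor I$ maps isomorphically onto its image; and that image, consisting of all finite sums $\sum (q_j\tensor i_j)$, is visibly closed under multiplication by elements $r'\tensor r$ of $\mathbb{Q}\tensor R$ (since $(r'\tensor r)(q\tensor i)=r'q\tensor ri$ and $ri\in I$), hence is the ideal of $\mathbb{Q}\tensor R$ generated by the image of $I$. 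Thus identifying $\mathbb{Q}\tensor I$ with this ideal is harmless, and the isomorphism of the lemma follows.

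I expect no real obstacle here; the only thing to be mildly careful about is not to conflate the abelian-group tensor product $\mathbb{Q}\tensor I$ with the a priori different object $I(\mathbb{Q}\tensor R)$, and the flatness of $\mathbb{Q}$ over $\mathbb{Z}$ (equivalently, that $\mathbb{Q}$ is a localisation of $\mathbb{Z}$, hence flat) is precisely what reconciles them. Alternatively, and perhaps more cleanly, one can avoid exact sequences entirely: the functor $\mathbb{Q}\tensor(-)$ applied to the surjection $R\twoheadrightarrow R/I$ of rings yields a surjection $\mathbb{Q}\tensor R\twoheadrightarrow\mathbb{Q}\tensor(R/I)$ by right-exactness of the tensor product (as used already in Lemma \ref{Right exactness for red mod ideals}), and one checks directly that its kernel is the ideal generated by $\{1\tensor i:i\in I\}$ by using flatness to see that an element of $\mathbb{Q}\tensor R$ killed in $\mathbb{Q}\tensor(R/I)$ already lies in the image of $\mathbb{Q}\tensor I$. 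Either route is short; I would present the exact-sequence version.
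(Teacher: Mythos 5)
Your proof is correct, but your primary route differs from the paper's. You lead with the flatness of $\mathbb{Q}$ over $\mathbb{Z}$: tensoring the short exact sequence $0\to I\to R\to R/I\to 0$ with $\mathbb{Q}$ stays exact, and the result drops out after identifying the injective image of $\mathbb{Q}\tensor I$ with the ideal $I(\mathbb{Q}\tensor R)$. The paper instead takes exactly the ``alternative'' you sketch at the end and decline to present: right-exactness alone gives a surjection $f:\mathbb{Q}\tensor R\twoheadrightarrow\mathbb{Q}\tensor(R/I)$, and the kernel is computed by hand --- given $r\in\ker(f)$, choose $N\in\mathbb{N}$ with $Nr\in R$; then $f(Nr)=Nf(r)=0$ forces $Nr\in I$, so $r=\frac{1}{N}\tensor Nr\in\mathbb{Q}\tensor I$. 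The flatness argument is more conceptual and instantly generalizes to any flat base change, at the cost of quoting a nontrivial fact; the paper's version is entirely self-contained and elementary (the clearing-of-denominators step is really the concrete content of $\mathbb{Q}$-flatness in this setting). You're also more scrupulous than the paper about distinguishing the abelian-group tensor product $\mathbb{Q}\tensor I$ from its image as an ideal of $\mathbb{Q}\tensor R$ --- the paper silently conflates the two, which is harmless but your remark is the cleaner bookkeeping.
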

\begin{pf} The quotient map $R\twoheadrightarrow R/I$ induces a surjection $f:\mathbb{Q}\tensor R\to
\mathbb{Q}\tensor (R/I)$. Given $r\in \ker(f)$ there is an $N\in \mathbb{N}$ such that $Nr\in R$ and then
$f(Nr)=Nf(r)=0$ so that $Nr\in I$. Thus $r=\frac{1}{N}\tensor Nr\in \mathbb{Q}\tensor I$ and $\ker(f)\subseteq
\mathbb{Q}\tensor I$. The reverse inclusion is clear. Thus we have an isomorphism $(\mathbb{Q}\tensor
R)/(\mathbb{Q}\tensor I)\iso \mathbb{Q}\tensor (R/I)$, as required.\end{pf}

We will use the following corollary.

\begin{cor}\label{Q tensor CRT} Let $R$ be a ring and $I$ and $J$ ideals in $R$ with $k\in I+J$ for some $0\neq k\in\mathbb{Z}$. Then the induced map
$\mathbb{Q}\tensor (R/IJ)\to \mathbb{Q}\tensor (R/I)\times \mathbb{Q}\tensor (R/J)$ is an isomorphism.\end{cor}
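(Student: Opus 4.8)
The plan is to tensor everything in sight with $\mathbb{Q}$ first, so that the integer $k$ becomes invertible and the hypothesis $k\in I+J$ is promoted to a genuine coprimality statement in the ring $\mathbb{Q}\tensor R$, and then to quote the ordinary Chinese Remainder Theorem, Lemma \ref{CRT}, in that ring.

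First I would record the flatness bookkeeping. Since $\mathbb{Q}$ is flat over $\mathbb{Z}$, for any ideal $\mathfrak{a}$ of $R$ the map $\mathbb{Q}\tensor\mathfrak{a}\to\mathbb{Q}\tensor R$ is injective and identifies $\mathbb{Q}\tensor\mathfrak{a}$ with an ideal of $\mathbb{Q}\tensor R$, while the preceding lemma identifies $\mathbb{Q}\tensor(R/\mathfrak{a})$ with $(\mathbb{Q}\tensor R)/(\mathbb{Q}\tensor\mathfrak{a})$. Applying this with $\mathfrak{a}=I$, $\mathfrak{a}=J$ and $\mathfrak{a}=IJ$, the statement to be proved becomes: the induced map
$$(\mathbb{Q}\tensor R)/(\mathbb{Q}\tensor IJ)\To (\mathbb{Q}\tensor R)/(\mathbb{Q}\tensor I)\times(\mathbb{Q}\tensor R)/(\mathbb{Q}\tensor J)$$
is an isomorphism. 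To feed this into Lemma \ref{CRT} applied to the ideals $\mathbb{Q}\tensor I$ and $\mathbb{Q}\tensor J$ of $\mathbb{Q}\tensor R$, I must check two things. For coprimality: writing $k=x+y$ with $x\in I$ and $y\in J$, we get $1=\frac{1}{k}\tensor x+\frac{1}{k}\tensor y\in(\mathbb{Q}\tensor I)+(\mathbb{Q}\tensor J)$. For the product of ideals: $(\mathbb{Q}\tensor I)(\mathbb{Q}\tensor J)$ is spanned over $\mathbb{Q}\tensor R$ by the elements $(a\tensor i)(b\tensor j)=ab\tensor ij$ with $i\in I$, $j\in J$, which is precisely the image of $\mathbb{Q}\tensor IJ$; hence $(\mathbb{Q}\tensor I)(\mathbb{Q}\tensor J)=\mathbb{Q}\tensor IJ$ as ideals of $\mathbb{Q}\tensor R$. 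Now Lemma \ref{CRT} gives the displayed isomorphism $a\mapsto(a+\mathbb{Q}\tensor I,\,a+\mathbb{Q}\tensor J)$, and since every identification used is the evident one compatible with the quotient maps, the resulting isomorphism is exactly the map induced (after applying $\mathbb{Q}\tensor-$) by $a\mapsto(a+I,a+J)$, which is what is claimed.

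I do not expect a real obstacle here; the corollary is an assembly of Lemma \ref{CRT} and the two preceding lemmas. The only points deserving a moment's care are the flatness bookkeeping — that $\mathbb{Q}\tensor\mathfrak{a}$ is genuinely an ideal and that the preceding lemma applies verbatim to $\mathfrak{a}=IJ$ — and the identity $\mathbb{Q}\tensor IJ=(\mathbb{Q}\tensor I)(\mathbb{Q}\tensor J)$, both of which are immediate.
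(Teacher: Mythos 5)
Your proof is correct and follows essentially the same route as the paper's: both tensor up to $\mathbb{Q}\tensor R$ via the preceding lemma, observe $\mathbb{Q}\tensor IJ=(\mathbb{Q}\tensor I)(\mathbb{Q}\tensor J)$ and $1=\frac{1}{k}\tensor k\in\mathbb{Q}\tensor I+\mathbb{Q}\tensor J$, and then quote the Chinese Remainder Theorem (Lemma~\ref{CRT}). The paper is slightly terser (it packages the last step as a commuting square) but the content and the key identities are identical.
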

\begin{pf} Note that $\mathbb{Q}\tensor(IJ)=(\mathbb{Q}\tensor I)(\mathbb{Q}\tensor J)$ and $\mathbb{Q}\tensor (I+J)=\mathbb{Q}\tensor I + \mathbb{Q}\tensor J$. Now, $1=\frac{1}{k}\tensor k\in \mathbb{Q}\tensor (I+J)=\mathbb{Q}\tensor I
+\mathbb{Q}\tensor J$ so that the Chinese Remainder Theorem applies and we get
$$
\xymatrix{ (\mathbb{Q}\tensor R)/(\mathbb{Q}\tensor (IJ)) \ar[r]^-\sim  \ar[d]_\wr & (\mathbb{Q}\tensor
R)/(\mathbb{Q}\tensor
I)\times (\mathbb{Q}\tensor R)/(\mathbb{Q}\tensor J) \ar[d]^\wr\\
\mathbb{Q}\tensor (R/IJ) \ar[r] & \mathbb{Q}\tensor (R/I)\times \mathbb{Q}\tensor (R/J)}$$ showing that the
bottom map is an isomorphism, as claimed.
\end{pf}

\subsection{Duality algebras}\label{sec:duality algebras}

Let $A$ be an algebra over a ring $R$. Then the group $\Hom_R(A,R)$ is an $A$-module via the action
$(a.\phi)(b)=\phi(ab)$ and we say that $A$ is a {\em duality algebra} if there is an $A$-module isomorphism
$\Theta:A\iso\Hom_R(A,R)$. Note that such an isomorphism will be determined by $\theta=\Theta(1)$. Thus $A$ is a
duality algebra if and only if there is a $R$-linear map $\theta:A\to R$ such that the map $A\to
\Hom_R(A,R),~a\mapsto a.\theta$ is an isomorphism of $R$-modules. Such a $\theta$ is known as a {\em Frobenius
form}.

\begin{lem}\label{ann I = ann_theta I} Let $A$ be a duality algebra over $R$ and $\theta$ a Frobenius form on
$A$. Let $I$ be any ideal in $A$. Then if $a\in A$ and $\theta(aI)=0$ we have $aI=0$. Hence $$\ann_A(I)=\{a\in
A\mid \theta(aI)=0\}.$$\end{lem}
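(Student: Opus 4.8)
The plan is to exploit the defining property of a duality algebra, namely that the $A$-module map $\Theta\colon A\to\Hom_R(A,R)$, $a\mapsto a.\theta$ (characterised by $\Theta(1)=\theta$, so that $\Theta(a)(b)=(a.\theta)(b)=\theta(ab)$) is an \emph{isomorphism}, and in particular injective. So to prove that some element of $A$ vanishes it suffices to prove that the corresponding functional in $\Hom_R(A,R)$ is zero, i.e.\ that $\theta$ annihilates the whole principal ideal generated by that element.

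Concretely, suppose $a\in A$ and $\theta(aI)=0$, and fix an arbitrary $i\in I$. I claim $ai=0$. By injectivity of $\Theta$ it is enough to show that $(ai).\theta=0$ in $\Hom_R(A,R)$, that is, $\theta(aib)=0$ for every $b\in A$. But here is the one point where the hypothesis that $I$ is an \emph{ideal} is used: for any $b\in A$ we have $ib\in I$, hence $aib=a(ib)\in aI$, and therefore $\theta(aib)=0$ by assumption. Thus $(ai).\theta=0$, so $ai=0$. Since $i\in I$ was arbitrary this gives $aI=0$.

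The "hence" clause is then immediate: by definition $\ann_A(I)=\{a\in A\mid aI=0\}$. If $a$ lies in the right-hand set $\{a\in A\mid\theta(aI)=0\}$ then by the above $aI=0$, so $a\in\ann_A(I)$; conversely if $aI=0$ then trivially $\theta(aI)=\theta(0)=0$. Hence the two sets coincide.

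There is no real obstacle here — the argument is short once one sees the mechanism. The only thing worth highlighting is that the statement would fail if $I$ were merely an $R$-submodule rather than an ideal: the step $ib\in I$ is exactly what lets one convert the pointwise condition $\theta(aI)=0$ into the stronger statement that each $(ai).\theta$ is identically zero, which is what injectivity of $\Theta$ needs as input.
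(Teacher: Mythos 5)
Your proof is correct and uses exactly the same mechanism as the paper's: for $s\in I$ one shows $(as).\theta=0$ by writing $\theta(bas)=\theta(a(bs))$ with $bs\in I$, then invokes injectivity of $\Theta$ to conclude $as=0$. The only difference is cosmetic — you state the injectivity appeal up front, while the paper leaves it implicit in "it follows that $as=0$."
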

\begin{pf} Let $s\in I$. Then $\theta(as)=0$. Now, for any $b\in A$ we have
$\theta(bas)=\theta(a(bs))\in\theta(aI)=0$. Thus $(as).\theta$ is the zero map and it follows that $as=0$. Hence
$a$ annihilates $I$, as claimed.\end{pf}

\begin{cor}\label{ann P is a summand} Let $A$ be a duality algebra over $R$ and suppose that $P$ is an $R$-module summand in $A$. Then
$\ann_A(P)$ is a summand in $A$.\end{cor}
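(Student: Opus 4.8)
The plan is to push a splitting across the duality isomorphism $\Theta$. Since $P$ is an $R$-module summand, I would first choose an $R$-submodule $Q$ with $A = P \oplus Q$. Applying $\Hom_R(-,R)$ to this splitting produces a decomposition $\Hom_R(A,R) = \Hom_R(P,R) \oplus N$, where $N$ is the submodule $\{\phi \in \Hom_R(A,R) \mid \phi|_P = 0\}$ of functionals killing $P$ (canonically identified with $\Hom_R(Q,R)$, and in fact independent of the choice of $Q$). Since $\Theta$ is an isomorphism of $R$-modules, this transports to a decomposition $A = \Theta^{-1}(\Hom_R(P,R)) \oplus \Theta^{-1}(N)$ of $A$ as an $R$-module, so everything reduces to showing that $\Theta^{-1}(N) = \ann_A(P)$.

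To carry out that identification I would proceed as follows. Writing $\Theta(a) = a.\theta$ with $(a.\theta)(b) = \theta(ab)$, one reads off directly that $\Theta^{-1}(N) = \{a \in A \mid \theta(aP) = 0\}$. For the other description, put $I = AP$ for the ideal of $A$ generated by $P$; since $a\cdot I = A\cdot(aP)$ for every $a$, one has $\ann_A(P) = \ann_A(I)$, and Lemma \ref{ann I = ann_theta I}, applied to the ideal $I$, gives $\ann_A(I) = \{a \in A \mid \theta(aI) = 0\}$. So the whole statement comes down to the identity $\{a \mid \theta(aP)=0\} = \{a \mid \theta(aI)=0\}$; the inclusion $\supseteq$ is clear because $P \subseteq I$, and the substance is the reverse inclusion, which I would try to establish by exploiting the $A$-module structure on $\Hom_R(A,R)$ — specifically, that the submodule $\{\phi \mid \phi|_I = 0\}$, which corresponds under $\Theta$ to the ideal $\ann_A(I)$, contains $N$, together with a careful use of the hypothesis that $P$ is a summand rather than merely a submodule.

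That reverse inclusion is the step I expect to be the real obstacle: it amounts to showing that the orthogonal complement of $P$ for the Frobenius pairing already coincides with the annihilator of the ideal $P$ generates, which for a general submodule need not hold, so this is where the summand hypothesis must do its work. Once that identity is in hand, $\ann_A(P) = \Theta^{-1}(N)$ is by construction a direct summand of $A$, with complement $\Theta^{-1}(\Hom_R(P,R))$, and the proof is complete. The remaining ingredients — existence of the splitting of $A$, its preservation by $\Hom_R(-,R)$, and transport along the $R$-linear isomorphism $\Theta$ — are entirely formal.
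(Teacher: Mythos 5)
You have correctly transported the splitting through $\Theta$ and correctly observed that Lemma \ref{ann I = ann_theta I} applies only to ideals, which is why you introduce $I = AP$; the reduction $\ann_A(P) = \ann_A(I) = \{a : \theta(aI) = 0\}$ is sound. The difficulty is that the inclusion you flag as ``the real obstacle,'' $\{a : \theta(aP) = 0\} \subseteq \{a : \theta(aI) = 0\}$ (equivalently: every $\phi \in \Hom_R(A,R)$ vanishing on $P$ must vanish on $I = AP$), is false, and the summand hypothesis does not rescue it. Take $R = K$ a field, $A = K[x]/(x^3)$ with Frobenius form $\theta(1)=\theta(x)=0$, $\theta(x^2)=1$, and $P = K\cdot 1$, which is a $K$-module summand of $A$. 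Then $\theta$ itself vanishes on $P$ but not on $I = AP = A$; concretely $\Theta^{-1}(N)=\ker\theta$ is two-dimensional while $\ann_A(P) = 0$. So $\{a : \theta(aP)=0\}$ and $\ann_A(P)$ are genuinely different submodules, and the identification you are after cannot be established.

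This is not peculiar to your write-up: the paper's own final equivalence $\theta(xP)=0 \iff x\in\ann_A(P)$ is exactly Lemma \ref{ann I = ann_theta I} and therefore needs $P$ to be an ideal of $A$, not merely an $R$-submodule. The corollary should be read with that extra hypothesis, which is all its sole application (Corollary \ref{ann ker alpha = ker beta}, where $P$ is $\ker\alpha$ or $\ker\beta$) requires. Once $P$ is an ideal, your detour through $I$ becomes unnecessary: Lemma \ref{ann I = ann_theta I} applies directly to $P$, giving $\ann_A(P) = \{a:\theta(aP)=0\} = \Theta^{-1}(N)$, and your remaining observation --- that $\Theta^{-1}(N)$ is a summand because $N$ is and $\Theta$ is an $R$-linear isomorphism --- finishes the argument, which is precisely the paper's proof.
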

\begin{pf} Write $A=P\oplus A/P$ and so $\Hom_R(A,R)=\Hom_R(P,R)\oplus \Hom_R(A/P,R)$. Put
$Q=\Theta^{-1}(\Hom_R(A/P,R))$. Then $Q$ is a summand in $A$ and, with the notation of the opening paragraph,
\begin{eqnarray*}
x\in Q & \iff & \Theta(x) \in \Hom_R(A/P,R)\\
& \iff & (x.\theta)(P)=0\\
& \iff & \theta(xP)=0\\
& \iff & x\in\ann_A(P).\end{eqnarray*} Thus $Q=\ann_A(P)$ is a summand in $A$.\end{pf}

Now suppose that $K$ is a field and $A$ is a local and finite-dimensional duality algebra over $K$. Recall from
Section \ref{sec:local rings} that the socle of $A$ is defined to be the annihilator of its maximal ideal. We
will shortly have a useful characterisation of such duality algebras in terms of the dimension of their socles,
but first we need a couple of lemmas.

\begin{lem}\label{(A,I) local iff I^N=0}
Let $K$ be field, $A$ a finite-dimensional $K$-algebra and $I$ an ideal in $A$. Then $A$ is a local ring with
maximal ideal $I$ if and only if $A/I$ is a field and $I^N=0$ for some $N$.
\end{lem}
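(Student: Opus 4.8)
The plan is to prove the two implications separately, using that $A$ is finite-dimensional over $K$ throughout.

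First I would handle the forward direction. Suppose $(A, I)$ is local. Then $A/I = A/\mathfrak{m}$ is a field essentially by definition of a local ring (the quotient by the maximal ideal). For the nilpotence of $I$, consider the descending chain of ideals $I \supseteq I^2 \supseteq I^3 \supseteq \cdots$. Since $A$ is finite-dimensional over $K$, this chain of $K$-subspaces must stabilise, say $I^N = I^{N+1}$. Now $I$ is the unique maximal ideal, hence $I = \text{rad}(A)$ is the Jacobson radical, so $I$ is a proper ideal with $I \cdot I^N = I^N$; applying Nakayama's lemma (the version stated earlier, with $M = I^N$ finitely generated since $A$ is Noetherian, being finite-dimensional over a field) gives $I^N = 0$.

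For the converse, suppose $A/I$ is a field and $I^N = 0$ for some $N$. I must show $A$ is local with maximal ideal $I$. Since $A/I$ is a field, $I$ is a maximal ideal, so it suffices to show $I$ is the \emph{only} maximal ideal — equivalently (by the characterisation recalled in Section \ref{sec:local rings}) that every element of $A \setminus I$ is a unit. So take $a \notin I$. Then the image $\bar a$ in the field $A/I$ is nonzero, hence invertible: there is $b \in A$ with $ab = 1 - x$ for some $x \in I$. But $x$ is nilpotent since $x \in I$ and $I^N = 0$, so $1 - x$ is a unit in $A$ with inverse $1 + x + x^2 + \cdots + x^{N-1}$. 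Therefore $a$ has a right inverse, namely $b(1-x)^{-1}$; since $A$ is commutative this is a two-sided inverse, so $a \in A^\times$. Hence $A \setminus I \subseteq A^\times$, and combined with $I$ proper this shows $A \setminus A^\times = I$ is an ideal, so $A$ is local with maximal ideal $I$.

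I do not anticipate a serious obstacle here; the only point requiring a little care is invoking Nakayama in the forward direction rather than arguing the chain $I \supseteq I^2 \supseteq \cdots$ directly — alternatively one can note that each generator of $I = \text{nil}(A)$ (all nilpotent, as in the proof of Lemma \ref{dimension zero local K-algebras are finite dimensional}) is nilpotent and $I$ is finitely generated, giving $I^N = 0$ concretely without Nakayama. Either route is routine; I would present whichever is shorter.
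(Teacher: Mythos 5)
Your proof is correct. You take a somewhat different route from the paper in both directions, and it's worth noting the contrast.

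For the direction ``$A/I$ a field and $I^N=0$ $\Rightarrow$ $A$ local with maximal ideal $I$'', you argue concretely: any $a\notin I$ has $\bar a$ invertible in $A/I$, so $ab=1-x$ with $x$ nilpotent, and $1-x$ is inverted by the explicit geometric series $1+x+\cdots+x^{N-1}$. The paper instead runs the radical sandwich $I\subseteq\nil(A)\subseteq\rad(A)\subseteq I$ (the first inclusion from $I^N=0$, the last from $I$ maximal), concluding $I=\rad(A)$ is the unique maximal ideal. Your version is more elementary and self-contained; the paper's is shorter if one already has the nilradical/Jacobson inclusions at hand.

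For the direction ``$A$ local $\Rightarrow$ $A/I$ a field and $I^N=0$'', you and the paper both observe that the chain $I\supseteq I^2\supseteq\cdots$ of finite-dimensional $K$-subspaces stabilises. The paper then appeals to Artinian-ness to get $I=\nil(A)$ and asserts $I^N=0$ ``follows''; you make the final step explicit by applying Nakayama's lemma (as stated earlier in Section~2) to $M=I^N$ with $I^N=I\cdot I^N$, which is cleaner. Your alternative suggestion --- finitely many nilpotent generators of $I$ force $I$ nilpotent --- is also fine and is closer in spirit to what the paper seems to have in mind via $I=\nil(A)$.

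No gaps; either write-up is acceptable.
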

\begin{pf}First suppose that $A/I$ is a field and $I^N=0$ for some $N$. Then $I$ is a maximal ideal and $I\subseteq
\text{nil}(A)$. Further, since $\text{nil}(A)$ is equal to the intersection of all the prime ideals of $A$, we
have $I\subseteq \text{nil}(A)\subseteq\text{\text{rad}}(A)\subseteq I$. It follows that $I$ is the unique
maximal ideal of $A$ and $A$ is local.

For the converse, since $A$ is a finite-dimensional vector space it is Artinian and hence all prime ideals are
maximal (see, for example, \cite[pg 30]{Matsumura}). Thus if $A$ is local with maximal ideal $I$ we have
$I=\text{rad}(A)=\text{nil}(A)$ so that every element of $I$ is nilpotent. Since the descending chain of vector
spaces $I\geqslant I^2\geqslant I^3\geqslant \ldots$ must be eventually constant it follows that we must have
$I^N=0$ for some $N$.\end{pf}

\begin{lem}\label{soc in ideals}
Let $K$ be field and $(A,\mathfrak{m})$ be a finite-dimensional local $K$-algebra and suppose that $soc(A)$ is
one-dimensional over $A/\mathfrak{m}$. Then if $I$ is any non-trivial ideal of $A$ we have $soc(A)\subseteq I$.
\end{lem}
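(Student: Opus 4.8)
The plan is to use the fact that every nonzero ideal $I$ in a finite-dimensional local algebra must contain a nonzero element killed by the maximal ideal, i.e. that $I$ meets the socle nontrivially; combined with the hypothesis that $\soc(A)$ is one-dimensional over $A/\mathfrak{m}$, this forces $\soc(A)\subseteq I$. First I would show that $I\cap\soc(A)\neq 0$. Since $A$ is finite-dimensional, $\mathfrak{m}$ is nilpotent by Lemma \ref{(A,I) local iff I^N=0}, say $\mathfrak{m}^N=0$. Given a nonzero ideal $I$, consider the descending chain $I\supseteq \mathfrak{m}I\supseteq\mathfrak{m}^2 I\supseteq\cdots\supseteq\mathfrak{m}^N I=0$ and let $k$ be the largest index with $\mathfrak{m}^k I\neq 0$. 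Then $\mathfrak{m}^k I$ is a nonzero ideal annihilated by $\mathfrak{m}$ (since $\mathfrak{m}(\mathfrak{m}^k I)=\mathfrak{m}^{k+1}I=0$), so $0\neq\mathfrak{m}^k I\subseteq\soc(A)\cap I$.

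Next, pick a nonzero element $x\in I\cap\soc(A)$. Since $\soc(A)$ is one-dimensional over the field $A/\mathfrak{m}$ and $x$ is a nonzero element of it, $x$ spans $\soc(A)$ as an $A/\mathfrak{m}$-vector space, hence $\soc(A)=(A/\mathfrak{m})\cdot x\subseteq Ax\subseteq I$. This gives $\soc(A)\subseteq I$, as required.

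The only subtlety worth flagging is the claim that $\mathfrak{m}^k I$ is genuinely an \emph{ideal} (it is, being a product of ideals) and, more importantly, that it is a \emph{nonzero} subspace of $\soc(A)$ — this is exactly where finite-dimensionality enters, via nilpotence of $\mathfrak{m}$, guaranteeing the chain $\mathfrak{m}^j I$ terminates at $0$ so that a largest nonvanishing $k$ exists. I don't expect any real obstacle here; the argument is a standard socle-chasing argument and everything needed (nilpotence of $\mathfrak{m}$, the socle being an $A/\mathfrak{m}$-vector space) has already been recorded earlier in the excerpt. One alternative, if one prefers to avoid choosing $k$ explicitly: take any nonzero $x\in I$ and look at the smallest $j$ with $\mathfrak{m}^j x=0$; then any nonzero element of $\mathfrak{m}^{j-1}x$ lies in $I\cap\soc(A)$, and the rest goes through identically.
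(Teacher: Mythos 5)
Your proof is correct and follows essentially the same route as the paper's: both use nilpotence of $\mathfrak{m}$ to find the largest $k$ with $\mathfrak{m}^k I \neq 0$, observe that $\mathfrak{m}^k I$ is a nonzero $A/\mathfrak{m}$-subspace of the one-dimensional $\soc(A)$, and conclude $\soc(A)\subseteq I$. The only cosmetic difference is that the paper concludes directly that $\mathfrak{m}^t I = \soc(A)$ by dimension count, whereas you pick a spanning element; the argument is the same.
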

\begin{pf} By Lemma \ref{(A,I) local iff I^N=0} we must have $\mathfrak{m}^N=0$ for some $N$. Thus we have a descending chain
$$I\geqslant \mathfrak{m}I\geqslant\mathfrak{m}^2 I\geqslant\ldots
\geqslant \mathfrak{m}^N I=0$$ and hence there exists $t\geq 0$ such that $\mathfrak{m}^{t+1}I=0$ but
$\mathfrak{m}^t I\neq 0$. Since $\mathfrak{m}.(\mathfrak{m}^t I)=\mathfrak{m}^{t+1} I=0$ we see that
$\mathfrak{m}^t I$ is a non-zero $A/\mathfrak{m}$-subspace of the one-dimensional vector space $\soc(A)$. Thus
we have $\soc(A)=\mathfrak{m}^t I\leqslant I$, as required.\end{pf}

We are now able to prove some useful results.

\begin{prop}\label{gorenstein} Let $K$ be field and $(A,\mathfrak{m})$ a finite-dimensional local $K$-algebra such that the
composition $K\to A\to A/\mathfrak{m}$ is an isomorphism. Then $A$ is a duality algebra if and only if $\soc(A)$
has dimension one over $K$.
\end{prop}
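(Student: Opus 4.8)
The plan is to prove both directions separately, using the socle-dimension characterisation developed in Lemmas~\ref{(A,I) local iff I^N=0} and~\ref{soc in ideals}. Throughout we identify $K$ with its image in $A$, so that $A=K\oplus\mathfrak{m}$ as $K$-modules and every element of $A$ is uniquely $\lambda\cdot 1 + m$ with $\lambda\in K$, $m\in\mathfrak{m}$.

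For the forward direction, suppose $A$ is a duality algebra with Frobenius form $\theta:A\to K$. I would first show $\soc(A)\neq 0$: since $A$ is finite-dimensional and local, Lemma~\ref{(A,I) local iff I^N=0} gives $\mathfrak{m}^N=0$ for some minimal $N\geq 1$ (assuming $A\neq K$; the case $A=K$ is trivial since then $\soc(A)=A$ is one-dimensional), and then $0\neq\mathfrak{m}^{N-1}\subseteq\soc(A)$. Next I would show $\theta|_{\soc(A)}$ is injective: if $x\in\soc(A)$ with $\theta(x)=0$, then for any $b\in A$, writing $b=\lambda+m$, we get $\theta(bx)=\lambda\theta(x)+\theta(mx)=0$ since $mx\in\mathfrak{m}\cdot\soc(A)=0$; hence $x.\theta=0$ and so $x=0$ by injectivity of $a\mapsto a.\theta$. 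Finally, to bound the dimension above by one, I would use the dual statement: the annihilator condition. Pick any nonzero $x\in\soc(A)$; the functional $x.\theta$ is nonzero, and I claim $\ker(x.\theta)=\mathfrak{m}$. Indeed $x.\theta$ vanishes on $\mathfrak{m}$ (as above), so $\ker(x.\theta)\supseteq\mathfrak{m}$, and since $x.\theta\neq0$ this containment is an equality of codimension-one subspaces. Now if $y\in\soc(A)$ is arbitrary, then $y.\theta$ also vanishes on $\mathfrak{m}$, hence $y.\theta$ and $x.\theta$ are both supported on the one-dimensional space $A/\mathfrak{m}$, so $y.\theta=c\,(x.\theta)$ for some $c\in K$, giving $(y-cx).\theta=0$ and thus $y=cx$. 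Therefore $\soc(A)=Kx$ is one-dimensional.

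For the converse, suppose $\soc(A)$ is one-dimensional over $K$, say $\soc(A)=Ks$. I would construct a Frobenius form by choosing any $K$-linear $\theta:A\to K$ with $\theta(s)\neq0$ (for instance, extend a basis of $\mathfrak{m}$ containing $s$ and send $s\mapsto1$, everything else to $0$); concretely one can take $\theta$ to be projection onto the $s$-coordinate. It remains to check that $\Phi:A\to\Hom_K(A,K)$, $a\mapsto a.\theta$, is an isomorphism. Since $\dim_K A=\dim_K\Hom_K(A,K)<\infty$, it suffices to show $\Phi$ is injective. Suppose $a\neq0$ and $a.\theta=0$, i.e.\ $\theta(aA)=0$; equivalently $\theta(I)=0$ where $I=Aa$ is the nonzero principal ideal generated by $a$. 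By Lemma~\ref{soc in ideals}, $\soc(A)\subseteq I$, so $s\in I=Aa$ and hence $\theta(s)\in\theta(Aa)=0$, contradicting $\theta(s)\neq0$. Thus $\Phi$ is injective, hence an isomorphism, and $A$ is a duality algebra.

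The main obstacle, and the only place requiring genuine care, is the converse direction's appeal to Lemma~\ref{soc in ideals}: one must make sure the hypothesis ``$\soc(A)$ one-dimensional over $A/\mathfrak{m}$'' matches the present hypothesis ``$\soc(A)$ one-dimensional over $K$'', which it does precisely because $K\to A/\mathfrak{m}$ is an isomorphism. A secondary subtlety is handling the degenerate case $A=K$ (equivalently $\mathfrak{m}=0$) uniformly; this is immediate since then $\soc(A)=\ann_A(0)=A=K$ is one-dimensional and $\theta=\id_K$ is trivially a Frobenius form. Everything else is linear algebra over the finite-dimensional space $A$ plus the two preceding lemmas.
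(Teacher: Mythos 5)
Your proof is correct and follows essentially the same route as the paper in both directions: nonvanishing of $\soc(A)$ via the top nonzero power of $\mathfrak{m}$, injectivity of $\theta|_{\soc(A)}$ in the forward direction, and an appeal to Lemma~\ref{soc in ideals} (every nonzero ideal contains the socle) for the converse. One small observation: once you have shown $\theta|_{\soc(A)}\colon\soc(A)\to K$ is injective, the bound $\dim_K\soc(A)\le 1$ is immediate since the target is one-dimensional, so the final step of your forward direction (comparing the codimension-one kernels of $x.\theta$ and $y.\theta$) is correct but redundant.
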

\begin{pf} Let $\theta$ be a Frobenius form on $A$ and let $\Theta:A\iso \Hom_K(A,K)$ be the associated isomorphism of
$A$-modules. The inclusion $\mathfrak{m}\hookrightarrow A$ gives us a splitting $A\simeq(A/\mathfrak{m})\oplus
\mathfrak{m}\simeq K\oplus \mathfrak{m}$ of $K$-vector spaces. If $a\in \soc(A)$ with $\theta(a)=0$ then, since
$a$ annihilates $\mathfrak{m}$, we have $Aa=(K\oplus \mathfrak{m})a=Ka$ and so
$\theta(Aa)=\theta(Ka)=K\theta(a)=0$. It follows that $\Theta(a):A\to K$ is the zero map and hence, since
$\Theta$ is an isomorphism, $a=0$. Thus $\theta:\soc(A)\to K$ is an injective map of vector spaces meaning
$\text{dim}_K(\text{soc(A)})\leq 1$. Letting $t$ be the largest non-zero power of $\mathfrak{m}$ (necessarily
finite) we see that $0<\mathfrak{m}^t\leqslant\soc(A)$ so that $\soc(A)$ is non-zero and therefore that
$\dim_K(\soc(A))=1$, as required.

Conversely, suppose that $\dim_K(\soc(A))=1$. Let $0\neq v\in \soc(A)$ so that $\soc(A)=Kv$. Extend $\{v\}$ to a
basis $\{v,v_1,\ldots,v_{d-1}\}$ for $A$ over $K$ and let $\phi:A\to K$ be the linear map sending $v\mapsto 1$
and $v_i\mapsto 0$ for $i=1,\ldots,d-1$. We show that $\phi$ is a Frobenius form.

Let $\Phi:A\to \Hom_K(A,K)$ be the $K$-linear map defined by $\Phi(a)=a.\phi$. Hence, if $a\in A$ and
$\Phi(a)=0$ then $\phi(aA)=0$. But $aA$ is an ideal in $A$ and every non-trivial ideal contains $\soc(A)$ by
Lemma \ref{soc in ideals}. Thus, since $\phi(\soc(A))=K\neq 0$, we must have $aA=0$ and hence $a=0$, so $\Phi$
is injective. Since $\text{dim}_K(\Hom_K(A,K))=\text{dim}_K(A)$ (and both are finite) it follows that $\Phi$
must be an isomorphism of vector spaces. Thus $\phi$ is a Frobenius form on $A$.\end{pf}

\begin{prop}\label{A^G has duality if A does}
Let $A$ and $K$ be as above. Suppose $\text{char}(K)=p$ and let $G$ be a group of $K$-algebra automorphisms of
$A$ such that $p\nmid |G|$. Then if $A$ admits a $G$-invariant Frobenius form its restriction is a Frobenius
form on $A^G$ and thus $A^G$ has duality over $K$.
\end{prop}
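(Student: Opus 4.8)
The plan is to exploit the averaging (Reynolds) operator made available by the hypothesis $p \nmid |G|$. Since $|G|$ is then invertible in $K$, define $\pi : A \to A$ by $\pi(a) = \frac{1}{|G|}\sum_{g \in G} g.a$. As each $g$ is a $K$-algebra automorphism, $\pi$ is $K$-linear, takes values in $A^G$, restricts to the identity on $A^G$, and satisfies $\pi(ba) = b\,\pi(a)$ for $b \in A^G$ (so $\pi$ is a retraction of $A$ onto $A^G$ as $A^G$-modules). Note also that $A^G$ is a $K$-subalgebra of $A$, hence a finite-dimensional $K$-algebra, and that $\Hom_K(A^G, K)$ is an $A^G$-module exactly as in Section \ref{sec:duality algebras}. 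Writing $\bar\theta := \theta|_{A^G} : A^G \to K$, the task is to show that the $K$-linear map $\Theta' : A^G \to \Hom_K(A^G, K)$, $a \mapsto a.\bar\theta$, is an isomorphism.

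First I would prove injectivity of $\Theta'$. Suppose $a \in A^G$ with $a.\bar\theta = 0$, i.e.\ $\theta(ab) = 0$ for all $b \in A^G$. For an arbitrary $b \in A$ and any $g \in G$, the $G$-invariance of $\theta$ gives $\theta(ab) = \theta\big(g^{-1}.(ab)\big) = \theta\big((g^{-1}.a)(g^{-1}.b)\big) = \theta\big(a\,(g^{-1}.b)\big)$, using $a \in A^G$. Averaging over $g \in G$ yields $\theta(ab) = \theta\big(a\,\pi(b)\big)$, and since $\pi(b) \in A^G$ this is $0$. Hence $a.\theta = 0$ in $\Hom_K(A,K)$; as $\theta$ is a Frobenius form on $A$, we conclude $a = 0$.

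Since $\dim_K \Hom_K(A^G, K) = \dim_K A^G$ and both are finite, the injective $K$-linear map $\Theta'$ is automatically surjective, hence an isomorphism, so $\bar\theta$ is a Frobenius form on $A^G$ and $A^G$ has duality over $K$. (Alternatively one can argue surjectivity directly: given $\psi \in \Hom_K(A^G, K)$, the functional $\psi\circ\pi : A \to K$ is $G$-invariant, so the Frobenius property of $\theta$ produces $a \in A$ with $\theta(ab) = \psi(\pi(b))$ for all $b \in A$; one then checks, using $A^G$-linearity of $\pi$ and $G$-invariance of $\theta$, that $\pi(a) \in A^G$ represents $\psi$.)

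I do not expect a serious obstacle here; the one point needing care is the bridge in the injectivity step, where the $G$-invariance of $\theta$ together with the averaging operator promotes the vanishing of $\theta$ on $aA^G$ to vanishing on all of $aA$ — without the invariance hypothesis on the form this would fail. It is also worth recording in passing that $G$ preserves $\mathfrak{m}$ (being the unique maximal ideal), so $A^G$ is itself local with residue field $K$; combined with the above, Proposition \ref{gorenstein} then gives $\dim_K \soc(A^G) = 1$, although this is not needed for the proof.
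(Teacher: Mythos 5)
Your proof is correct and takes essentially the same route as the paper's: both use the averaging (Reynolds) operator $r(b) = \frac{1}{|G|}\sum_{g\in G} g.b$ to show that if $\theta(a A^G) = 0$ with $a \in A^G$ then $\theta(a A) = 0$ (so injectivity of $a \mapsto a.\theta|_{A^G}$ follows from that of $a \mapsto a.\theta$), and then both conclude surjectivity by a dimension count. Your observation that the verification of locality of $A^G$ is not actually needed for the duality conclusion is correct — the paper includes it but never uses it in the remainder of its argument.
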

\begin{pf} As in the proof of Lemma \ref{gorenstein} we have $A\simeq K\oplus \mathfrak{m}$ as $K$-modules. Since $G$ acts
by $K$-algebra automorphisms we have $A^G=(K\oplus \mathfrak{m})^G=K\oplus\mathfrak{m}^G$ so that
$A^G/\mathfrak{m}^G\simeq K$. Further, since $\mathfrak{m}^G\subseteq \mathfrak{m}$ and $\mathfrak{m}^N=0$ for
some $N$ we have $(\mathfrak{m}^G)^N=0$. Hence, by Lemma \ref{(A,I) local iff I^N=0}, $A^G$ is local and
$\mathfrak{m}^G$ its maximal ideal.

Let $\theta:A\to K$ be the $G$-invariant Frobenius form on A, so that $\theta(g.a)=\theta(a)$ for all $a\in A$
and $g\in G$, and let $\Theta:A\iso \Hom_K(A,K)$ be the associated isomorphism of vector spaces. Let $\Phi$ be
the composite $A^G\rightarrowtail A \overset{\Theta}{\longrightarrow}
\Hom_K(A,K)\overset{\text{res}}{\longrightarrow} \Hom_K(A^G,K)$. Then $\Phi$ is an $A^G$-linear map of finite
dimensional $K$-vector spaces of the same dimension.

Now, define an $A^G$-linear map $r:A\to A^G$ by $r(b)=\frac{1}{|G|}\sum_{g\in G}g.b$. Then, for $a\in A^G$ we
have $ar(b)=a\frac{1}{|G|}\sum_{g\in G}g.b=\frac{1}{|G|}\sum_{g\in G}g.(ab)=r(ab)$ so that
$$(\Phi(a)\circ r)(b)=((a.\theta)\circ r)(b)=\theta(ar(b))=\theta(r(ab)).$$
But $\theta\circ r=\theta$ since $\theta$ is $G$-invariant. Hence we have $\Phi(a)\circ r= \Theta(a)$ as maps
from $A\to K$. Thus, if $\Phi(a)=0$ we have $\Theta(a)=\Phi(a)\circ r=0$ so that $a=0$. Hence $\Phi$ is
injective and therefore an isomorphism. Since $\Phi$ is $A^G$-linear it follows that $A^G$ has duality over $K$
with Frobenius form $\Phi(1)=\theta|_{A^G}$.\end{pf}

\subsection{The $p$-divisibility of $k^s-1$.}

For this section we will assume that $p$ is an odd prime and let $k$ be any integer. For reasons that should
become clear later we aim to get a good understanding of the $p$-divisibility of $k^s-1$ for varying $s\in
\mathbb{N}$. That is, in the notation of Section \ref{sec:p-adics}, we are looking to calculate $v_p(k^s-1)$.
Note that if $k$ is divisible by $p$ then $v_p(k^s-1)=0$ for all $s$. Hence we can assume that $k$ is coprime to
$p$ and start with the case where $k=1$ mod $p$.

\begin{lem}\label{v_p(k^s-1) for (s,p)=1} Suppose $v_p(k-1)=v>0$ and take $s\geq 1$ with $(s,p)=1$. Then $v_p(k^s-1)=v$.\end{lem}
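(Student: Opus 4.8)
The plan is to write $k = 1 + p^v m$ with $p \nmid m$ and expand $k^s$ using the binomial theorem, tracking the $p$-adic valuation of each term. First I would note that
$$k^s - 1 = (1 + p^v m)^s - 1 = \sum_{i=1}^{s} \binom{s}{i} p^{vi} m^i = s p^v m + \sum_{i=2}^{s} \binom{s}{i} p^{vi} m^i.$$
The first term $s p^v m$ has $v_p$-value exactly $v$, since $p \nmid s$, $p \nmid m$. So the whole thing comes down to showing every remaining term has strictly larger valuation, i.e. $v_p\!\left(\binom{s}{i} p^{vi} m^i\right) > v$ for $2 \le i \le s$.

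For the main step, I would show $v_p\!\left(\binom{s}{i} p^{vi}\right) \ge vi \ge 2v > v$ for $i \ge 2$. The inequality $vi \ge 2v$ is immediate from $v \ge 1$ and $i \ge 2$; dropping the (nonnegative) contribution of $\binom{s}{i}$ only helps. Then since $p \nmid m$, multiplying by $m^i$ does not change the valuation, so each term with $i \ge 2$ has valuation at least $2v$, hence strictly greater than $v$. Therefore the sum $\sum_{i=2}^{s}\binom{s}{i}p^{vi}m^i$ has valuation strictly greater than $v$, while $sp^v m$ has valuation exactly $v$, and the valuation of a sum of two terms of distinct valuations equals the smaller one. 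Hence $v_p(k^s - 1) = v$.

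The only mild subtlety — and the closest thing to an obstacle — is handling the degenerate cases cleanly: if $s = 1$ the sum over $i \ge 2$ is empty and the claim is just the hypothesis $v_p(k-1) = v$; and one should be a little careful that when $v$ is large and $i$ small the crude bound $v_p(\binom{s}{i}p^{vi}) \ge vi$ is still all that is needed (we never need the exact power of $p$ dividing $\binom{s}{i}$). I would also remark that the hypothesis $p$ odd is not actually used in this particular lemma — it is carried along for the later lemmas in the section — so the argument goes through verbatim for $p = 2$ as well, though I would not belabour this point. Finally, I would phrase the ``valuation of a sum'' step via the elementary fact that if $v_p(a) < v_p(b)$ then $v_p(a+b) = v_p(a)$, which follows directly from the ultrametric inequality for $v_p$ together with the observation that equality must hold when the valuations differ.
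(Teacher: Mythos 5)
Your argument is correct and is essentially the paper's proof: both write $k = 1 + p^v m$ with $p\nmid m$, expand $(1+p^v m)^s$ by the binomial theorem, observe that the $i=1$ term $sp^v m$ has valuation exactly $v$, and check that every term with $i\ge 2$ has strictly larger valuation. The only cosmetic difference is that the paper bounds the tail by $p^{v+1}$ using $(i-1)v - 1\ge 0$, whereas you bound it by $p^{2v}$ using $vi\ge 2v$; both suffice since $v\ge 1$.
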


\begin{pf} Write $k=1+ap^v$ with $(a,p)=1$. If $s=1$ the result is clear. Otherwise, $s>1$ and for all
$1<i\leq s$ we have $(p^v)^i=p^{iv}=p^{v+1}.p^{(i-1)v-1}$ which is divisible by $p^{v+1}$ since $(i-1)v-1\geq
0$. Then
\begin{eqnarray*}
k^s & = & (1+ap^v)^s\\
&=& 1+s.ap^v + \sum_{i=2}^{s}{s \choose i}(ap^v)^i\\
&=& 1 + s.ap^v + p^{v+1}.b\end{eqnarray*} for some $b$. Hence $k^s-1=p^v(sa+pb)$ whereby $v_p(k^s-1)=v$ since
$p\nmid sa$.\end{pf}

\begin{lem} For $0<i<p$ we have $v_p \left({p \choose i}\right)=1$.\end{lem}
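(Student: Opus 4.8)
The plan is to compute $v_p\left({p \choose i}\right)$ directly from the factorial expression ${p \choose i} = \dfrac{p!}{i!\,(p-i)!}$, using additivity of the $p$-adic valuation over products (equivalently, Legendre's formula for $v_p(n!)$). First I would note that $v_p(p!) = 1$: among $1,\ldots,p$ the only multiple of $p$ is $p$ itself, and since $p < p^2$ it contributes exactly one factor of $p$. Next, the hypothesis $0 < i < p$ forces both $i < p$ and $p - i < p$, so neither $i!$ nor $(p-i)!$ involves any multiple of $p$; hence $v_p(i!) = v_p((p-i)!) = 0$. Combining, $v_p\left({p \choose i}\right) = v_p(p!) - v_p(i!) - v_p((p-i)!) = 1$, which is the claim.

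An alternative route, which dovetails with the observation already made in Section \ref{sec:finite fields} that $p \mid {p \choose i}$ for $0 < i < p$, is via the identity $i\,{p \choose i} = p\,{p-1 \choose i-1}$. Since $p \nmid i$, this immediately yields $v_p\left({p \choose i}\right) \geq 1$. For the reverse inequality one writes ${p \choose i} = \dfrac{p(p-1)\cdots(p-i+1)}{i!}$ and observes that the numerator is a product of $i$ consecutive integers of which exactly one, namely $p$, is divisible by $p$ (and only to the first power, as $i < p$), while the denominator $i!$ is coprime to $p$; hence $v_p\left({p \choose i}\right) \leq 1$.

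There is essentially no obstacle here: the only place the strict inequality $i < p$ (rather than $i \leq p$) is used is the vanishing of $v_p(i!)$ and $v_p((p-i)!)$, and the only place $i > 0$ matters is to ensure ${p \choose i}$ is a genuine ``interior'' coefficient and not just $1$. This lemma is presumably a stepping stone towards evaluating $v_p(k^s - 1)$ when $k \equiv 1 \pmod p$ for $s$ divisible by $p$, complementing Lemma \ref{v_p(k^s-1) for (s,p)=1} via a lifting-the-exponent style induction.
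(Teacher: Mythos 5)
Your first argument is exactly the paper's: rewrite $i!\,(p-i)!\,\binom{p}{i} = p!$, note $v_p(p!) = 1$ and $v_p(i!) = v_p((p-i)!) = 0$ since $i, p-i < p$, and conclude by additivity of $v_p$. The alternative via $i\binom{p}{i} = p\binom{p-1}{i-1}$ is a nice bonus but not needed; the proposal is correct and matches the paper's proof.
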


\begin{pf} We have ${p \choose i}=\frac{p!}{i!(p-i)!}$ so that
$i!(p-i)!{p \choose i}=p!$. Since $v_p(p!)=1$ and $v_p(i!)=v_p((p-i)!)=0$ we see that $v_p{p \choose i}=1$, as
required.\end{pf}

\begin{cor}\label{v_p(k^p-1)} Let $v_p(k-1)=v>0$. Then $v_p(k^p-1)=v+1$.\end{cor}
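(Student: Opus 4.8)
The plan is to write $k^p - 1 = (k-1)(k^{p-1} + k^{p-2} + \cdots + k + 1)$ and analyze the $p$-adic valuation of each factor. The first factor contributes $v_p(k-1) = v$ by hypothesis, so it remains to show that the second factor $S := k^{p-1} + \cdots + k + 1$ has $v_p(S) = 1$.

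To evaluate $v_p(S)$, I would write $k = 1 + ap^v$ with $(a,p) = 1$ (recall $v > 0$). Then for each $j$ with $0 \le j \le p-1$ we have $k^j = (1+ap^v)^j \equiv 1 + jap^v \pmod{p^{v+1}}$, using that $\binom{j}{i}(ap^v)^i$ is divisible by $p^{2v} \subseteq p^{v+1}$ for $i \ge 2$ (here $2v \ge v+1$ since $v \ge 1$). Summing over $j = 0, \ldots, p-1$ gives
$$S \equiv p + ap^v\cdot\frac{(p-1)p}{2} \pmod{p^{v+1}}.$$
Since $p$ is odd, $\frac{p-1}{2}$ is an integer, so the second term is $ap^{v+1}\cdot\frac{p-1}{2}$, which is divisible by $p^{v+1}$. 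Hence $S \equiv p \pmod{p^{v+1}}$, and since $v \ge 1$ this forces $v_p(S) = 1$. Combining, $v_p(k^p - 1) = v_p(k-1) + v_p(S) = v + 1$.

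The main thing to be careful about is the use of the hypothesis that $p$ is odd: this is exactly what makes $\frac{(p-1)p}{2}$ divisible by $p$, and the result genuinely fails for $p = 2$ (e.g. $k = 3$ gives $v_2(k-1) = 1$ but $v_2(k^2-1) = v_2(8) = 3$). Otherwise everything is a routine binomial estimate, and no real obstacle arises. One could alternatively cite the preceding lemma on $v_p\binom{p}{i}$ and expand $(1+ap^v)^p - 1 = \sum_{i=1}^p \binom{p}{i}(ap^v)^i$ directly: the $i=1$ term is $ap^{v+1}$ with valuation exactly $v+1$, the $i = p$ term has valuation $pv \ge v+2$, and for $1 < i < p$ the term has valuation $1 + iv \ge 1 + 2v \ge v+2$; so the sum has valuation exactly $v+1$. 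I would probably present this second, more direct argument since it reuses the lemma just proved and avoids introducing the geometric-series factor.
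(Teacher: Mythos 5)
Your preferred second argument — expanding $(1+ap^v)^p-1$ via the binomial theorem, isolating the $i=1$ term $ap^{v+1}$, and using the lemma $v_p\binom{p}{i}=1$ for $0<i<p$ together with $v\geq 1$ and $p>2$ to push the remaining terms to valuation $\geq v+2$ — is exactly the proof given in the paper. Your first route via the factorization $k^p-1=(k-1)\sum_{j=0}^{p-1}k^j$ and the congruence $\sum_j k^j\equiv p+ap^{v+1}\frac{p-1}{2}\pmod{p^{v+1}}$ is a correct alternative (it again uses $p$ odd, now through divisibility of $\binom{p}{2}$ by $p$ rather than $v_p\binom{p}{i}=1$), but it is slightly less economical here since the surrounding lemmas have already set up the direct binomial estimate; your instinct to prefer the second version is the right one.
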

\begin{pf} Writing $k=1+ap^v$ with $(a,p)=1$ we have
\begin{eqnarray*}
k^p = (1+ap^v)^p & = & 1 + p.ap^v + \left(\sum_{i=2}^{p-1} {p \choose i}(ap^v)^i\right) + (ap^v)^p\\
& = & 1 + ap^{v+1} + \left(\sum_{i=2}^{p-1} {p \choose i}a^ip^{iv}\right) + a^pp^{vp}.\end{eqnarray*} But for
$2\leq i\leq p-1$ we have
\begin{eqnarray*}
v_p\left({p \choose i}a^ip^{iv}\right) & = & v_p{p \choose i} + v_p(ap^{iv})\\
& = & 1 + iv\\
& \geq & v+2\end{eqnarray*} since $v\geq 1$ and $i\geq 2$. Similarly, $v_p(a^pp^{vp})=vp\geq v+2$ since $v\geq
1$ and $p>2$. Hence we have
$$k^p-1=ap^{v+1} + p^{v+2}b=p^{v+1}(a+pb)$$
for some $b$ and the result follows.\end{pf}

Assembling the above results we get the following.

\begin{lem}\label{v_p(k^s-1) for v_p(k-1)>0} Let $v_p(k-1)=v>0$. Then $v_p(k^s-1)=v+v_p(s)$.\end{lem}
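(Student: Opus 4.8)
The plan is to induct on $v_p(s)$, peeling off one factor of $p$ at a time and reducing to the two special cases already established. Write $s = p^j \cdot m$ with $(m,p)=1$, so that $v_p(s) = j$, and proceed by induction on $j$.

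\textbf{Base case.} When $j = 0$, we have $(s,p)=1$ and the claim $v_p(k^s-1) = v = v + v_p(s)$ is precisely Lemma \ref{v_p(k^s-1) for (s,p)=1}.

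\textbf{Inductive step.} Suppose $j \geq 1$ and the result holds for all exponents with $p$-adic valuation less than $j$. Write $s = p \cdot t$ where $t = p^{j-1} m$, so $v_p(t) = j-1$. Set $k' = k^{p^{j-1} m} = k^t$. The key observation is that $v_p(k' - 1) = v + (j-1)$ by the inductive hypothesis applied to $t$ (note $v > 0$ guarantees the hypothesis of each cited lemma is met, and in particular $k' \equiv 1 \pmod p$ so $v_p(k'-1) > 0$). Now $k^s - 1 = (k')^p - 1$, and applying Corollary \ref{v_p(k^p-1)} with $k'$ in place of $k$ gives $v_p((k')^p - 1) = v_p(k'-1) + 1 = v + (j-1) + 1 = v + j = v + v_p(s)$, as required.

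\textbf{Main obstacle.} There is no real obstacle here; the argument is a routine assembly of the two preceding lemmas via induction. The only point requiring a little care is ensuring that at each stage the intermediate quantity $k^t - 1$ is a positive power of $p$ times a unit — that is, that $v_p(k^t - 1) > 0$ — so that Corollary \ref{v_p(k^p-1)} genuinely applies; but this is automatic since $v > 0$ forces every intermediate valuation $v + v_p(t)$ to be strictly positive. An alternative, essentially equivalent, presentation would be a direct induction on $j$ writing $k^{p^j m} - 1 = \big(k^{p^{j-1}m}\big)^p - 1$ and invoking Corollary \ref{v_p(k^p-1)} for the single extra factor of $p$ together with Lemma \ref{v_p(k^s-1) for (s,p)=1} to handle the coprime-to-$p$ part $m$ at the bottom of the induction; I would likely write it in that slightly more streamlined form.
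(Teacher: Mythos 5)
Your proof is correct and uses exactly the same two ingredients as the paper's (the coprime-to-$p$ lemma and the corollary on $v_p(k^p-1)$); you merely package the "repeated use" of the corollary as a formal induction on $v_p(s)$ and peel the coprime part off at the base rather than at the top, which is an immaterial reordering.
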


\begin{pf} Write $s=ap^w$ with $(a,p)=1$. Noting that $v_p(k^s-1)>0$ for any $s$ (since $k-1$ divides $k^s-1$) we have
\begin{eqnarray*}
v_p(k^s-1) & = & v_p(k^{ap^w}-1)\\
&=& v_p((k^{p^w})^a-1)\\
&=& v_p(k^{p^w}-1)\quad\text{by Lemma \ref{v_p(k^s-1) for (s,p)=1}}\\
&=& v_p(k-1)+w\end{eqnarray*} by repeated use of Corollary \ref{v_p(k^p-1)}. Thus $v_p(k^s-1)=v + v_p(s)$, as
claimed.\end{pf}

We can now deal with the general case.

\begin{prop}\label{v_p(k^s-1) for general k} Let $a$ be the order of $k$ in $(\mathbb{Z}/p)^\times$.
Then
$$v_p(k^s-1)=\left\{\begin{array}{ll} 0 & \text{if $a\nmid s$}\\
v_p(k^a-1) + v_p(s) & \text{otherwise.}\end{array}\right.$$\end{prop}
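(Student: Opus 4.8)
The plan is to reduce the general case to the already-established Lemma \ref{v_p(k^s-1) for v_p(k-1)>0} by passing from $k$ to $k^a$, where $a$ is the multiplicative order of $k$ modulo $p$. The point is that $k^a \equiv 1 \pmod p$, so $v_p(k^a - 1) > 0$ and the earlier machinery applies to the base $k^a$.

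First I would dispose of the case $a \nmid s$. If $a \nmid s$, then $k^s \not\equiv 1 \pmod p$, since the order of $k$ in $(\mathbb{Z}/p)^\times$ is exactly $a$; hence $p \nmid (k^s - 1)$, i.e. $v_p(k^s - 1) = 0$. This handles the first branch.

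Now suppose $a \mid s$, say $s = a t$. Then $k^s - 1 = (k^a)^t - 1$. Set $m = k^a$, so that $v_p(m - 1) = v_p(k^a - 1) = v > 0$ by definition of $a$. Applying Lemma \ref{v_p(k^s-1) for v_p(k-1)>0} with base $m$ and exponent $t$ gives
\[
v_p(k^s - 1) = v_p(m^t - 1) = v_p(m - 1) + v_p(t) = v_p(k^a - 1) + v_p(t).
\]
It remains to check that $v_p(t) = v_p(s)$. Since $s = at$ and $(a, p) = 1$ — because $a$ divides $p - 1$, as it is the order of an element of the group $(\mathbb{Z}/p)^\times$ of order $p-1$ — we have $v_p(s) = v_p(a) + v_p(t) = v_p(t)$. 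Substituting, $v_p(k^s - 1) = v_p(k^a - 1) + v_p(s)$, which is the second branch.

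The only place requiring any care is the observation $(a, p) = 1$, which is what lets us pass $v_p$ freely between $s$ and $t$; this is immediate from Lagrange's theorem since $a \mid |(\mathbb{Z}/p)^\times| = p - 1$ and $p \nmid p - 1$. Everything else is a direct appeal to the preceding lemmas, so I do not anticipate a genuine obstacle here.
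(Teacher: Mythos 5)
Your proof is correct and follows essentially the same route as the paper: dispose of the $a \nmid s$ case via the definition of order, then substitute $k^a$ for $k$ and invoke Lemma \ref{v_p(k^s-1) for v_p(k-1)>0}, using $(a,p)=1$ (from $a \mid p-1$) to replace $v_p(s/a)$ by $v_p(s)$.
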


\begin{pf} We have
\begin{eqnarray*}
v_p(k^s-1)>0 & \iff & p\mid k^s-1\\
& \iff & k^s=1 \text{ mod $p$}\\
& \iff & a\mid s.\end{eqnarray*} Thus $v_p(k^s-1)=0$ when $a\nmid s$. If $a\mid s$, write $k'=k^a$. Then
$v_p(k'-1)>0$ and Lemma \ref{v_p(k^s-1) for v_p(k-1)>0} gives us
$$v_p(k^s-1)=v_p((k')^{(s/a)}-1)=v_p(k'-1)+v_p(s/a)=v_p(k^a-1)+v_p(s)$$
where we have used the fact that $v_p(s/a)=v_p(s)$ since $a|p-1$ and so is coprime to $p$.\end{pf}

%-----------------------------------------------------------------

\chapter{The $\mathbf{p}$-local structure of the symmetric and general linear groups}\label{ch:groups}

There is a close connection between the Sylow $p$-subgroups of $\Sigma_d$ and those of $GL_d(K)$, where $K$ is a
finite field of characteristic different to $p$. We begin with an analysis of the former.

\section{The Sylow $p$-subgroups of the symmetric groups}

The work here is well-known; similar expositions can be found in \cite{AdemMilgram} and \cite{Hall}.

\begin{lem}\label{v_p(n!)} Take $d\in\mathbb{N}$ and write $d=\sum_{i=0}^r a_ip^i$ with $0\leq a_i<p$. Then
$$v_p(d!)=\frac{d-\sum_ia_i}{p-1}$$
where $v_p$ is the $p$-adic valuation of Section \ref{sec:p-adics}.\end{lem}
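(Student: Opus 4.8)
The plan is to induct on $d$, using the fact that among $1,2,\dots,d$ exactly $\lfloor d/p\rfloor$ of the integers are divisible by $p$. Throughout, write $s(n)$ for the sum of the base-$p$ digits of a natural number $n$, so the claim is $v_p(d!)=\frac{d-s(d)}{p-1}$.

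For the base case $d<p$, every factor in $d!=1\cdot 2\cdots d$ is coprime to $p$, so $v_p(d!)=0$; and since $d=a_0$ with all higher digits zero we have $s(d)=d$, so $\frac{d-s(d)}{p-1}=0$ as well. For the inductive step, write $d=pm+a_0$ with $0\le a_0<p$, so that $m=\sum_{i=1}^{r}a_ip^{i-1}<d$ has base-$p$ digits $a_1,\dots,a_r$; by uniqueness of base-$p$ expansions this gives $s(d)=a_0+s(m)$. The integers in $\{1,\dots,d\}$ divisible by $p$ are exactly $p,2p,\dots,mp$, and the remaining factors contribute nothing to $v_p$, so
\[
v_p(d!)=v_p\bigl(p^m\, m!\bigr)=m+v_p(m!).
\]
Applying the inductive hypothesis to $m$ and then using $pm=d-a_0$ and $s(d)=a_0+s(m)$,
\[
v_p(d!)=m+\frac{m-s(m)}{p-1}=\frac{pm-s(m)}{p-1}=\frac{d-a_0-s(m)}{p-1}=\frac{d-s(d)}{p-1},
\]
which completes the induction.

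I do not expect any real obstacle here: the only thing requiring care is the digit bookkeeping under division by $p$, namely that $\lfloor d/p\rfloor$ has digits $a_1,\dots,a_r$ and hence $s(d)=a_0+s(\lfloor d/p\rfloor)$, which is immediate. As an alternative I could instead first prove Legendre's formula $v_p(d!)=\sum_{j\ge 1}\lfloor d/p^j\rfloor$ by counting, for each $j$, the multiples of $p^j$ in $\{1,\dots,d\}$, and then evaluate the (finite) sum via $\lfloor d/p^j\rfloor=\sum_{i\ge j}a_ip^{i-j}$ and the geometric series to get $\sum_{j\ge 1}\lfloor d/p^j\rfloor=\sum_{i\ge 1}a_i\frac{p^i-1}{p-1}=\frac{d-\sum_i a_i}{p-1}$; I would favour the inductive argument above since it avoids manipulating the sum of floor functions.
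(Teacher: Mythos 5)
Your proof is correct, and it takes a genuinely different route from the paper. The paper first invokes Legendre's formula $v_p(d!)=\sum_{j\ge 1}\lfloor d/p^j\rfloor$ (justified by a footnote counting multiples of $p^j$), then substitutes $\lfloor d/p^j\rfloor=\sum_{i\ge j}a_ip^{i-j}$, interchanges the order of summation, and evaluates the resulting geometric series; this is essentially the ``alternative'' you mention at the end and decide against. Your inductive argument instead isolates the single identity $v_p(d!)=\lfloor d/p\rfloor+v_p(\lfloor d/p\rfloor!)$ and the digit-shift fact $s(d)=a_0+s(\lfloor d/p\rfloor)$, then lets the algebra telescope. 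What you gain is that you never have to manipulate a double sum of floor functions or worry about reindexing; what the paper's approach gains is that it produces Legendre's formula as an explicit intermediate statement, which is a standard and independently useful fact. Both are short and correct; your version trades one well-known named formula for a cleaner recursion.
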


\begin{pf} Noting that the integer part of $\frac{d}{p^j}$ is just $\sum_{i=j}^{r} a_ip^{i-j}$ and that, by the
usual arguments\footnote{There are $\left\lfloor\frac{d}{p}\right\rfloor$ terms in the sequence $1,2,3\ldots,d$
divisible by $p$, $\left\lfloor\frac{d}{p^2}\right\rfloor$ terms divisible by $p^2$ and so on.},
$v_p(d!)=\sum_{j=1}^{\infty} \left\lfloor\frac{d}{p^j}\right\rfloor$ we get
$$v_p(d!)=\sum_{j=1}^{\infty} \left\lfloor\frac{d}{p^j}\right\rfloor=\sum_{j=1}^\infty \sum_{i=j}^{r}
a_ip^{i-j}=\sum_{k=1}^r \left(a_k\sum_{l=0}^{k-1}p^l\right)=\sum_{k=1}^r a_k\left(\frac{p^k-1}{p-1}\right).$$
But
$$\sum_{k=1}^r a_k\left(\frac{p^k-1}{p-1}\right)=\frac{\sum_{k=1}^r a_k p^k-\sum_{k=1}^r
a_k}{p-1}=\frac{(d-a_0)-\sum_{k=1}^r a_k}{p-1}=\frac{d-\sum_i a_i}{p-1},$$ and we have the claimed
result.\end{pf}

\begin{cor}\label{v_p(p^k!)} For any $k>0$ we have $\ds v_p(p^{k}!)=(p^k-1)/(p-1)$.\end{cor}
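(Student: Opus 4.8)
\begin{pf} This is immediate from Lemma \ref{v_p(n!)}. The plan is simply to apply that lemma to $d=p^k$ and read off the base-$p$ digits. First I would note that the base-$p$ expansion of $p^k$ is $p^k=\sum_{i=0}^{k}a_ip^i$ with $a_k=1$ and $a_i=0$ for all $i\neq k$, so that $\sum_i a_i=1$. Substituting into the formula of Lemma \ref{v_p(n!)} then gives $v_p(p^k!)=\frac{p^k-\sum_i a_i}{p-1}=\frac{p^k-1}{p-1}$, as claimed. There is no real obstacle here; the only thing to check is that the digit sum of $p^k$ in base $p$ is indeed $1$, which is clear since $0<p$ forces the expansion to be the single term $p^k$.\end{pf}
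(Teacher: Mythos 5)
Your proof is correct and is exactly the paper's argument: apply Lemma \ref{v_p(n!)} to $d=p^k$, observing that the base-$p$ digit sum of $p^k$ is $1$. The paper states this more tersely ("writing $p^k = 1.p^k$"), but the content is identical.
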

\begin{pf} Writing $p^k=1.p^k$ the result is immediate from the preceding lemma.\end{pf}

\begin{prop}\label{Syl_p(Sigma_p^k)} Let $C_p=\langle\gamma_p\rangle\leqslant \Sigma_p$ be the cyclic
group of order $p$ generated by the standard $p$-cycle. Then, for any $k\geq 1$, the $k$-fold wreath product
$C_p\wr\ldots\wr C_p$ is a Sylow $p$-subgroup of $\Sigma_{p^k}$.\end{prop}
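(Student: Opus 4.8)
The plan is to induct on $k$, establishing simultaneously that the $k$-fold wreath product $P_k:=C_p\wr\cdots\wr C_p$ embeds in $\Sigma_{p^k}$ and that $|P_k|=p^{(p^k-1)/(p-1)}$. Since $(p^k-1)/(p-1)=v_p(p^k!)$ by Corollary \ref{v_p(p^k!)}, this exponent is precisely the largest power of $p$ dividing $|\Sigma_{p^k}|=(p^k)!$, so a $p$-subgroup of $\Sigma_{p^k}$ of that order is automatically a Sylow $p$-subgroup, which gives the claim.

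First, by the associativity part of Lemma \ref{A wr B wr G}, the iterated wreath product $C_p\wr\cdots\wr C_p$ is independent, up to canonical isomorphism, of how it is bracketed, so I may take $P_1=C_p$ and $P_k=C_p\wr P_{k-1}=C_p\ltimes (P_{k-1})^p$. For the embedding I induct: $P_1=C_p\leqslant\Sigma_p$ is given, and if $P_{k-1}\leqslant\Sigma_{p^{k-1}}$ then applying the embedding statement of Lemma \ref{A wr B wr G} with $A=C_p\leqslant\Sigma_p$ and $B=P_{k-1}\leqslant\Sigma_{p^{k-1}}$ produces an embedding $P_k=C_p\wr P_{k-1}\rightarrowtail\Sigma_{p\cdot p^{k-1}}=\Sigma_{p^k}$. (This also legitimises the use of associativity at each stage, since the left-hand factors arising are then genuinely subgroups of symmetric groups.)

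For the order, from $P_k=C_p\ltimes (P_{k-1})^p$ and the fact that a semidirect product has order the product of the orders of its factors, we get the recursion $|P_k|=p\,|P_{k-1}|^p$ with $|P_1|=p$. An easy induction yields $|P_k|=p^{(p^k-1)/(p-1)}$: if $|P_{k-1}|=p^{(p^{k-1}-1)/(p-1)}$ then $|P_k|=p\cdot p^{p(p^{k-1}-1)/(p-1)}=p^{(p-1+p^k-p)/(p-1)}=p^{(p^k-1)/(p-1)}$. Combining this with Corollary \ref{v_p(p^k!)}, which gives $v_p((p^k)!)=(p^k-1)/(p-1)$, we conclude that $P_k$ is a $p$-subgroup of $\Sigma_{p^k}$ whose order is exactly the $p$-part of $|\Sigma_{p^k}|$, hence a Sylow $p$-subgroup. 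The only real care needed is lining up the inductive embedding with the hypotheses of Lemma \ref{A wr B wr G}; everything else is routine arithmetic.
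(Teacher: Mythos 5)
Your proof is correct and follows essentially the same line as the paper's: induct on $k$, use Lemma \ref{A wr B wr G} to embed $C_p\wr P_{k-1}$ into $\Sigma_{p^k}$, and compare the order of $P_k$ (equivalently its $p$-valuation) with $v_p(p^k!)$ via Corollary \ref{v_p(p^k!)}. The only cosmetic difference is that you track the order $|P_k|=p^{(p^k-1)/(p-1)}$ directly rather than its $p$-adic valuation, and you spell out the independence-of-bracketing point a little more explicitly; neither changes the substance.
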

\begin{pf} We prove this by induction on $k$. The result is clear for $k=1$ since
$v_p(|\Sigma_p|)=v_p(p!)=1=v_p(C_p)$. Next suppose that the $k$-fold wreath product $P_k=C_p\wr\ldots\wr C_p$ is
a Sylow $p$-subgroup of $\Sigma_{p^k}$. Then an application of Lemma \ref{A wr B wr G} shows that
$P_{k+1}=C_p\wr P_{k}$ is a subgroup of $\Sigma_{p.p^k}=\Sigma_{p^{k+1}}$. Noting that $|P_{k+1}|=p|P_k|^p$,
using Corollary \ref{v_p(p^k!)} we have
\begin{eqnarray*}
v_p(|P_{k+1}|) = 1 + p.v_p(|P_k|) = 1 + p.v_p(p^k!) &=& 1 + p.\frac{p^k-1}{p-1}\\
&=& \frac{p-1 + p^{k+1} - p}{p-1}\\
&=& v_p(p^{k+1}!)\end{eqnarray*} so that $P_{k+1}$ is a Sylow $p$-subgroup of $\Sigma_{p^{k+1}}$.\end{pf}

\begin{prop}\label{Syl_p(Sigma_d)} Let $d\in \mathbb{N}$ and write $d=\sum_{i=0}^r a_ip^i$. By partitioning $\{1,\ldots,d\}$ appropriately, there is an embedding
$\prod_i (\Sigma_{p^i})^{a_i}\rightarrowtail\Sigma_d$ and any Sylow $p$-subgroup of
$\prod_i(\Sigma_{p^i})^{a_i}$ is a Sylow $p$-subgroup of $\Sigma_d$. In particular, $Syl_p(\Sigma_d)$ is a
product of iterated wreath products of $C_p$.\end{prop}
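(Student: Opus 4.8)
The plan is to build the embedding explicitly from a block partition, compare orders using the $p$-adic valuation formulae already established, and then identify the Sylow subgroup of the product as a product of iterated wreath products.

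First I would partition $\{1,\ldots,d\}$ into blocks: since $d=\sum_{i=0}^r a_ip^i$, I can write $\{1,\ldots,d\}$ as a disjoint union of $a_i$ blocks of size $p^i$ for each $i=0,\ldots,r$ (ordered, say, with all the size-$p^0$ blocks first, then the size-$p^1$ blocks, and so on). Letting each copy of $\Sigma_{p^i}$ act on the corresponding block gives a monomorphism $\prod_i (\Sigma_{p^i})^{a_i}\rightarrowtail\Sigma_d$, since permutations supported on disjoint sets commute and no nontrivial element acts trivially on all blocks. Fix $P\leqslant \prod_i(\Sigma_{p^i})^{a_i}$ a Sylow $p$-subgroup; via the embedding, $P$ is a $p$-subgroup of $\Sigma_d$, so it suffices to show $|P|=|\Syl_p(\Sigma_d)|$, i.e.\ $v_p(|P|)=v_p(d!)$.

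For this I would compute $v_p\big(|\prod_i(\Sigma_{p^i})^{a_i}|\big)=\sum_{i=0}^r a_i\, v_p\big((p^i)!\big)$. Using Corollary \ref{v_p(p^k!)} for $i\geq 1$ and the trivial observation that $v_p(p^0!)=v_p(1)=0=\frac{p^0-1}{p-1}$ for $i=0$, this equals $\sum_{i=0}^r a_i\frac{p^i-1}{p-1}=\frac{\sum_i a_i p^i-\sum_i a_i}{p-1}=\frac{d-\sum_i a_i}{p-1}$, which is exactly $v_p(d!)$ by Lemma \ref{v_p(n!)}. Hence $P$, having maximal possible $p$-power order, is a Sylow $p$-subgroup of $\Sigma_d$.

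Finally, I would note that a Sylow $p$-subgroup of a finite direct product is the direct product of Sylow $p$-subgroups of the factors, so I may take $P=\prod_i \big(\Syl_p(\Sigma_{p^i})\big)^{a_i}$; by Proposition \ref{Syl_p(Sigma_p^k)} each $\Syl_p(\Sigma_{p^i})$ is the $i$-fold iterated wreath product $C_p\wr\cdots\wr C_p$ (with the convention that this is trivial for $i=0$), so $\Syl_p(\Sigma_d)$ is a direct product of iterated wreath products of $C_p$, as claimed. I do not expect any real obstacle here; the only mildly delicate points are getting the block partition and the valuation identity to line up cleanly and handling the $i=0$ term, both of which are routine given the earlier lemmas.
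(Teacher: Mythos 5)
Your proof is correct and follows essentially the same line as the paper's: partition $\{1,\ldots,d\}$ into $a_i$ blocks of size $p^i$ to get the embedding, compare $v_p\big(\prod_i|\Sigma_{p^i}|^{a_i}\big)$ with $v_p(d!)$ via Lemma \ref{v_p(n!)} and Corollary \ref{v_p(p^k!)}, and then identify $\Syl_p$ of the product as the product of the $\Syl_p(\Sigma_{p^i})$ using Proposition \ref{Syl_p(Sigma_p^k)}. The only slight refinement over the paper's writeup is that you explicitly observe the $i=0$ term also satisfies $v_p(p^0!)=(p^0-1)/(p-1)=0$, since Corollary \ref{v_p(p^k!)} is stated for $k>0$; this is a harmless gap in the paper's exposition that you are right to close.
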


\begin{pf} We partition $d$ as
$$d=\underbrace{1+\ldots+1}_{a_0\text{ times}}+\underbrace{p+\ldots+p}_{a_1\text{ times}}+\ldots+\underbrace{p^r+\ldots+p^r}_{a_r\text{
times}}.$$ This induces the required embedding of $\prod_i (\Sigma_{p^i})^{a_i}$ in $\Sigma_d$. Now, using Lemma
\ref{v_p(n!)} and Corollary \ref{v_p(p^k!)}, we get
\begin{eqnarray*}
v_p\left(|\prod_i (\Sigma_{p^i})^{a_i}|\right) = v_p\left(\prod_i \left|\Sigma_{p^i}\right|^{a_i}\right) &=& \sum_i a_i v_p(p^i!)\\
&=& \sum_i a_i (p^i-1)/(p-1)\\
&=& \frac{\sum_i a_ip^i - \sum_i a_i}{p-1}\\
&=& \frac{d-\sum_i a_i}{p-1}\\
&=& v_p(d!)\\
&=& v_p(|\Sigma_d|).\end{eqnarray*} Thus $\Syl_p(\Sigma_d)\simeq \Syl_p(\prod_i (\Sigma_{p^i})^{a_i})\simeq
\prod_i \Syl_p(\Sigma_{p^i})^{a_i}$ which is of the form claimed using Proposition \ref{Syl_p(Sigma_p^k)}.
\end{pf}

\subsection{The normalizer of $\Syl_p(\Sigma_p)$}\label{sec:normalizer of C_p in Sigma_p}

Here we consider the Sylow $p$-subgroup $C_p=\langle \gamma_p\rangle$ of $\Sigma_p$.

\begin{lem}\label{Aut(C_p) in Sigma_p} There is an embedding $\phi:\Aut(C_p)\rightarrowtail \Sigma_p$ such that for all
$\alpha\in\Aut(C_p)$ we have $\alpha.\gamma_p=\phi(\alpha)\gamma_p\phi(\alpha)^{-1}$.\end{lem}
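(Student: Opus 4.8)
The statement to prove is: there is an embedding $\phi:\Aut(C_p)\rightarrowtail\Sigma_p$ such that $\alpha.\gamma_p = \phi(\alpha)\gamma_p\phi(\alpha)^{-1}$ for all $\alpha\in\Aut(C_p)$. The plan is to use the natural action of $\Aut(C_p)$ on the set $C_p$ itself and massage it into an action on $\{1,\dots,p\}$ so that it intertwines with conjugation of $\gamma_p$. First I would fix the standard generator $\gamma_p$ and use it to identify the underlying set of $C_p$ with $\mathbb{Z}/p$, or more usefully with the permuted set: since $\gamma_p=(1\,2\,\ldots\,p)$, label the point $i\in\{1,\dots,p\}$ by the power $\gamma_p^{i-1}$ (so $1\leftrightarrow e$). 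Then any $\alpha\in\Aut(C_p)$ permutes $C_p=\{e,\gamma_p,\dots,\gamma_p^{p-1}\}$ and hence, under this labelling, gives a permutation $\phi(\alpha)\in\Sigma_p$ defined by $\phi(\alpha)(i)=j$ iff $\alpha(\gamma_p^{\,i-1})=\gamma_p^{\,j-1}$.

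Next I would check the two required properties. That $\phi$ is a homomorphism is immediate from the definition, since composing automorphisms composes the induced permutations of $C_p$; that it is injective follows because an automorphism of the cyclic group $C_p$ is determined by where it sends $\gamma_p$, so if $\phi(\alpha)=\id$ then $\alpha$ fixes $\gamma_p$ and hence $\alpha=\id$. The key identity $\alpha.\gamma_p=\phi(\alpha)\gamma_p\phi(\alpha)^{-1}$ is the computation to carry out: here $\alpha.\gamma_p$ means the image $\alpha(\gamma_p)\in C_p$, which is some power $\gamma_p^{\,c}$ where $c$ is the integer mod $p$ corresponding to $\alpha$ under $\Aut(C_p)\simeq(\mathbb{Z}/p)^\times$. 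On the other hand, conjugating the $p$-cycle $\gamma_p$ by a permutation $\pi$ relabels its entries: $\pi\gamma_p\pi^{-1}=(\pi(1)\,\pi(2)\,\ldots\,\pi(p))$. Writing $\gamma_p^{\,c}$ as a $p$-cycle and matching it against $(\phi(\alpha)(1)\,\ldots\,\phi(\alpha)(p))$ using the labelling $i\leftrightarrow\gamma_p^{\,i-1}$ gives exactly the claimed equality, because $\phi(\alpha)$ sends $\gamma_p^{\,i-1}\mapsto\alpha(\gamma_p^{\,i-1})=\alpha(\gamma_p)^{\,i-1}=\gamma_p^{\,c(i-1)}$, i.e. $\phi(\alpha)$ is the map $i-1\mapsto c(i-1)$ on exponents, and conjugating $\gamma_p$ by this precisely multiplies the cycle structure by $c$.

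The main obstacle is purely bookkeeping: keeping the off-by-one shift between "point $i$" and "exponent $i-1$" consistent, and being careful about the direction of conjugation (whether $\pi\gamma_p\pi^{-1}$ uses $\pi$ or $\pi^{-1}$ on the entries) so that the identity comes out with the signs as stated rather than with an inverse. I expect no genuine mathematical difficulty — once the labelling is fixed, everything reduces to the standard fact about conjugating cycles in $\Sigma_p$ — so the write-up should be short, essentially defining $\phi$ via the action on $C_p$ and verifying the three assertions (homomorphism, injective, intertwining) in turn.
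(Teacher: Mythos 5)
Your proof is correct and takes essentially the same route as the paper: both define $\phi(\alpha)$ as the permutation of $\{1,\dots,p\}$ induced by $\alpha$ acting on $C_p$ under the labelling $i\leftrightarrow\gamma_p^{i-1}$, and both verify the intertwining via the standard formula $\pi\gamma_p\pi^{-1}=(\pi(1)\;\pi(2)\;\cdots\;\pi(p))$. If anything your formulation is slightly more careful: the paper exhibits $\phi(s)$ as the single $(p-1)$-cycle $(s+1\;\;s^2+1\;\cdots\;s^{p-1}+1)$, whose entries are pairwise distinct only when $s$ is a primitive root mod $p$, whereas your description (equivalently, the formula $i\mapsto s(i-1)+1$ on $\{1,\dots,p\}$) defines the same map uniformly for every $s\in(\mathbb{Z}/p)^\times$ without any assumption on the order of $s$.
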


\begin{pf}  We begin by noting that $\Aut(C_p)\simeq(\mathbb{Z}/p)^\times$ where $s.\gamma_p=\gamma_p^s$. Let $1\neq s\in
(\mathbb{Z}/p)^\times$. Then $s+1,s^2+1,\ldots,s^{p-1}+1$ are all distinct modulo $p$ and (writing $p$ for $0\in
\mathbb{Z}/p$) we have a $(p-1)$-cycle $(s+1~~s^2+1~ \ldots~s^{p-1}+1)$. We define a map $\phi:\Aut(C_p)\to
\Sigma_p$ by $\phi(s)=(s+1~~s^2+1~\ldots~s^{p-1}+1)$. Then it is straightforward to check that this gives an
embedding of $\Aut(C_p)$ in $\Sigma_p$. Further, the action of $\Aut(C_p)$ on $C_p$ is now given by conjugation,
that is $\phi(s)\gamma_p\phi(s)^{-1}=\gamma_p^s=s.\gamma_p$.\end{pf}

\begin{cor} With the embedding of Lemma \ref{Aut(C_p) in Sigma_p} we have $N_{\Sigma_p}(C_p)=\Aut(C_p)\ltimes C_p$.\end{cor}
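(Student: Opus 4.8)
The plan is to verify the two hypotheses of Proposition \ref{GH as semidirect product} with $G = \Aut(C_p)$ (embedded in $\Sigma_p$ via Lemma \ref{Aut(C_p) in Sigma_p}), $H = C_p$, and $K = \Sigma_p$, and then to check that the resulting subgroup $\Aut(C_p)\cdot C_p$ is in fact all of $N_{\Sigma_p}(C_p)$, not merely contained in it. First I would observe that Lemma \ref{Aut(C_p) in Sigma_p} already tells us $\phi(\alpha)\gamma_p\phi(\alpha)^{-1} = \alpha.\gamma_p \in C_p$ for every $\alpha \in \Aut(C_p)$, so $\phi(\Aut(C_p)) \leqslant N_{\Sigma_p}(C_p)$; in particular $\phi(\Aut(C_p))$ normalises $C_p$, which is exactly the condition $G \leqslant N_K(H)$. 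For $G \cap H = 1$: any nontrivial element of $C_p$ is a $p$-cycle, hence has order $p$, whereas the image $\phi(s)$ for $1 \neq s$ is a $(p-1)$-cycle (fixing the point $1$), and $\phi(1)$ is the identity; since $\gcd(p, p-1) = 1$ a common element would have order dividing both $p$ and $p-1$, forcing it to be trivial. An even cleaner way: every nonidentity element of $\phi(\Aut(C_p))$ fixes the symbol corresponding to $1 \in \mathbb{Z}/p$ (as $s^i + 1 \neq 1$ for all $i$, since $s^i \neq 0$), while no nonidentity element of $C_p = \langle\gamma_p\rangle$ fixes any symbol.

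Having checked these, Proposition \ref{GH as semidirect product} gives that $\phi(\Aut(C_p)) \cdot C_p$ is a subgroup of $\Sigma_p$ isomorphic to $\Aut(C_p) \ltimes C_p$, and it is contained in $N_{\Sigma_p}(C_p)$. It remains to prove the reverse containment $N_{\Sigma_p}(C_p) \leqslant \phi(\Aut(C_p))\cdot C_p$, equivalently that these two groups have the same order. The standard argument is via the orbit–stabiliser / $N/C$ theorem: the conjugation action gives a homomorphism $N_{\Sigma_p}(C_p) \to \Aut(C_p)$ with kernel the centraliser $C_{\Sigma_p}(C_p)$, and I would argue this map is surjective (its image contains $\phi(\Aut(C_p))$ acting by all automorphisms, by Lemma \ref{Aut(C_p) in Sigma_p}) and that $C_{\Sigma_p}(C_p) = C_p$. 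For the latter: since $\gamma_p$ acts on $\{1,\ldots,p\}$ as a single $p$-cycle, a permutation commuting with $\gamma_p$ is determined by where it sends one symbol, and each of the $p$ choices is realised by a power of $\gamma_p$; so $C_{\Sigma_p}(C_p)$ has order exactly $p$ and equals $C_p$. Hence $|N_{\Sigma_p}(C_p)| = |C_{\Sigma_p}(C_p)|\cdot|\Aut(C_p)| = p(p-1) = |\phi(\Aut(C_p))\cdot C_p|$, which forces equality of the subgroups and completes the proof.

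The main obstacle — really the only nonroutine point — is pinning down $C_{\Sigma_p}(C_p) = C_p$ and the surjectivity of $N_{\Sigma_p}(C_p)\to\Aut(C_p)$ cleanly; everything else is a direct appeal to Proposition \ref{GH as semidirect product} and Lemma \ref{Aut(C_p) in Sigma_p}. An alternative to the counting argument, which avoids computing the centraliser, is to note that $N_{\Sigma_p}(C_p)$ acts on $C_p\setminus\{1\}$, hence on the set of Sylow-generated... more simply, one can invoke that $N_{S_p}(C_p)$ is a Frobenius group of order $p(p-1)$ (classical), but I would prefer the self-contained orbit–stabiliser computation above since it uses only material already in the excerpt. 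Either way the conclusion $N_{\Sigma_p}(C_p) = \Aut(C_p)\ltimes C_p$ follows immediately.
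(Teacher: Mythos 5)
Your proposal is correct. The first half (establishing $\Aut(C_p)\cap C_p = 1$ and $\Aut(C_p)\leqslant N_{\Sigma_p}(C_p)$, then invoking Proposition \ref{GH as semidirect product}) matches the paper. For the reverse containment the paper and your argument both hinge on the same underlying fact — that the only permutations commuting with $\gamma_p$ are the powers of $\gamma_p$ — but organize it differently. The paper picks $\tau\in\Aut(C_p)$ realizing the same conjugation as a given $\sigma\in N_{\Sigma_p}(C_p)$, observes $\tau^{-1}\sigma$ centralizes $\gamma_p$, and identifies it directly as some $\gamma_p^k$ by reading off the cycle identity $(1\ldots p)=\bigl((\tau^{-1}\sigma)(1)\ldots(\tau^{-1}\sigma)(p)\bigr)$; this yields the membership $\sigma=\tau\gamma_p^k\in\Aut(C_p)\cdot C_p$ element by element. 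You instead frame it via the $N/C$ theorem: the conjugation map $N_{\Sigma_p}(C_p)\to\Aut(C_p)$ is surjective (by Lemma \ref{Aut(C_p) in Sigma_p}) with kernel $C_{\Sigma_p}(C_p)=C_p$, so $|N_{\Sigma_p}(C_p)|=p(p-1)$ and the inclusion already established must be equality. The paper's version is slightly more self-contained (no appeal to $N/C$ or to an order count), while yours is more structural and exposes the Frobenius-group shape $N/C\simeq\Aut(C_p)$ explicitly. Both are sound, and the centralizer computation underpinning each is the same: commuting with $\gamma_p$ forces $a\mapsto a+k$.
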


\begin{pf} By Lemma \ref{Aut(C_p) in Sigma_p} we can view $\Aut(C_p)$ as a subgroup of $\Sigma_p$ and,
further, $\Aut(C_p)\leqslant N_{\Sigma_p}(C_p)$. Since $\Aut(C_p)\cap C_p=1$ we have
$\Aut(C_p).C_p=\Aut(C_p)\ltimes C_p$ by Proposition \ref{GH as semidirect product}. Thus, as $C_p$ is normal in
$\Aut(C_p)\ltimes C_p$, it remains to show that $N_{\Sigma_p}(C_p)\subseteq\Aut(C_p)\ltimes C_p$.

Take $\sigma\in N_{\Sigma_p}(C_p)$. Then $\sigma\gamma_p\sigma^{-1}=\gamma_p^s$ for some $s$ and therefore there
is $\tau\in \Aut(C_p)$ with $\sigma\gamma_p\sigma^{-1}=\tau\gamma_p\tau^{-1}$ so that
$(\tau^{-1}\sigma)\gamma_p(\tau^{-1}\sigma)^{-1}=\gamma_p$. But, remembering that $\gamma_p=(1\ldots p)$, this
identity means that $(1\ldots p)=((\tau^{-1}\sigma)(1)\ldots (\tau^{-1}\sigma)(p))$ whereby we must have
$(\tau^{-1}\sigma)(a)=a+k$ for some $k$; that is $\tau^{-1}\sigma=\gamma_p^k$ so that
$\sigma=\tau.\gamma_p^k\in\Aut(C_p).C_p=\Aut(C_p)\ltimes C_p$.\end{pf}

\section{The Sylow $p$-subgroups of the finite general linear groups}\label{sec:Syl_p(GL_d(K))}

Let $K$ be a finite field. Let $T_d=(K^\times)^d$ be the maximal torus of $GL_d(K)$ and recall from Section
\ref{sec:Symmetric and General Linear Groups} the embedding of $\Sigma_d$ as a subgroup of $GL_d(K)$ given by
$\sigma\mapsto \left(\sigma_{ij}\right)$, where
$$\sigma_{ij}=\left\{
\begin{array}{ll}
1 & \mbox{if $\sigma(j)=i$}\\
0 & \mbox{otherwise.}\end{array}\right.
$$
We are interested in the structure of $N_d=\Sigma_dT_d$, that is the set $$\{g\in GL_d(K)\mid
g=\sigma(b_1,\ldots,b_d)\text{ for some $\sigma\in\Sigma_d$ and some $(b_1,\ldots,b_d)\in T_d$}\}.$$ We will
show that this is a subgroup of $GL_d(K)$ and that, whenever $v_p(|K^\times|)>0$, it contains one of $GL_d(K)$'s
Sylow $p$-subgroups.

\begin{lem}\label{N_d products}
Let $(b_1,\ldots,b_d)\in T_d$ and $\sigma\in\Sigma_d$. Then
$$\sigma(b_1,\ldots,b_d)\sigma^{-1}=(b_{\sigma^{-1}(1)},\ldots,b_{\sigma^{-1}(d)})$$ and hence $\Sigma_d\leqslant
N_{GL_d(K)}(T_d)$.\end{lem}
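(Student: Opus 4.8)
The plan is to verify the displayed conjugation formula by a direct matrix computation, working with the permutation matrices $(\sigma_{ij})$ defined in Section \ref{sec:Symmetric and General Linear Groups} and the diagonal matrix $(b_1,\ldots,b_d)$. First I would write $D=\mathrm{diag}(b_1,\ldots,b_d)$, so that $D_{ij}=b_i\delta_{ij}$, and recall that the matrix $P_\sigma=(\sigma_{ij})$ has $(P_\sigma)_{ij}=\delta_{i,\sigma(j)}$, and correspondingly $(P_\sigma^{-1})_{ij}=(P_{\sigma^{-1}})_{ij}=\delta_{i,\sigma^{-1}(j)}=\delta_{\sigma(i),j}$. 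Then I would compute the $(i,j)$ entry of $P_\sigma D P_\sigma^{-1}$ by expanding the matrix product: $(P_\sigma D P_\sigma^{-1})_{ij}=\sum_{k,l}(P_\sigma)_{ik}D_{kl}(P_\sigma^{-1})_{lj}=\sum_{k}\delta_{i,\sigma(k)}\,b_k\,\delta_{\sigma(k),j}$, and the only surviving term is $k=\sigma^{-1}(i)$ (which also forces $i=j$), giving $b_{\sigma^{-1}(i)}\delta_{ij}$. Hence $P_\sigma D P_\sigma^{-1}=\mathrm{diag}(b_{\sigma^{-1}(1)},\ldots,b_{\sigma^{-1}(d)})$, which is exactly the claimed identity.

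From this the second assertion is immediate: the right-hand side lies in $T_d$, so conjugation by any $\sigma\in\Sigma_d$ carries $T_d$ into $T_d$; since $\sigma^{-1}$ also does, conjugation by $\sigma$ is a bijection of $T_d$, so $\sigma\in N_{GL_d(K)}(T_d)$. As $\sigma$ was arbitrary, $\Sigma_d\leqslant N_{GL_d(K)}(T_d)$.

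There is essentially no obstacle here — the only thing to be careful about is bookkeeping with the index conventions (the convention that $\sigma_{ij}$ is the entry in row $i$, column $j$, and that $\sigma_{ij}=1$ iff $\sigma(j)=i$), so that the inverse of $P_\sigma$ is correctly identified as $P_{\sigma^{-1}}$ and the permutation lands on the subscripts in the order $b_{\sigma^{-1}(1)},\ldots,b_{\sigma^{-1}(d)}$ rather than its inverse. One could alternatively phrase the computation basis-free: $P_\sigma$ sends the standard basis vector $e_j$ to $e_{\sigma(j)}$, $D$ scales $e_k$ by $b_k$, so $P_\sigma D P_\sigma^{-1}$ sends $e_i\mapsto e_{\sigma^{-1}(i)}\mapsto b_{\sigma^{-1}(i)}e_{\sigma^{-1}(i)}\mapsto b_{\sigma^{-1}(i)}e_i$, which is the cleanest way to see the result and avoids the double sum entirely.
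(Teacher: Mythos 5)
Your proof is correct, and it takes essentially the same approach as the paper: a direct matrix computation with the index convention $\sigma_{ij}=\delta_{i,\sigma(j)}$. The only cosmetic difference is that the paper avoids writing out $\sigma^{-1}$ explicitly by verifying $\sigma(b_1,\ldots,b_d)=(b_{\sigma^{-1}(1)},\ldots,b_{\sigma^{-1}(d)})\sigma$ and then multiplying by $\sigma^{-1}$ on the right, whereas you expand the triple product $P_\sigma D P_\sigma^{-1}$ directly (and also give the slicker basis-free version); all three routes are the same bookkeeping.
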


\begin{pf} It is a straight forward calculation to check that
\[(\sigma(b_1,\ldots,b_d))_{ij} = \left\{
  \begin{array}{ll}
    b_j & \mbox{if $\sigma(j)=i$}\\
    0 & \mbox{otherwise}
  \end{array}\right. = ((b_{\sigma^{-1}(1)},\ldots,b_{\sigma^{-1}(d)})\sigma)_{ij}.\qedhere\]
\end{pf}

\begin{cor}\label{N_d as a semi direct product} $N_d$ is a subgroup of $GL_d(K)$ isomorphic to $\Sigma_d\ltimes T_d$.\end{cor}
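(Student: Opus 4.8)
The plan is to invoke Proposition \ref{GH as semidirect product}, taking the ambient group to be $GL_d(K)$ and the two subgroups to be $\Sigma_d$ and $T_d$. That proposition requires two hypotheses: that $\Sigma_d\cap T_d=1$, and that $\Sigma_d\leqslant N_{GL_d(K)}(T_d)$. Once both are checked, the proposition hands us directly that $N_d=\Sigma_dT_d$ is a subgroup of $GL_d(K)$ and that the map $(\sigma;b)\mapsto\sigma b$ is an isomorphism $\Sigma_d\ltimes T_d\iso N_d$, where $\Sigma_d$ acts on $T_d$ by conjugation.

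The normaliser condition is precisely the content of Lemma \ref{N_d products}, which has just been established, so there is nothing further to do there. For the intersection condition I would argue directly from the matrix descriptions: an element of $T_d$ is a diagonal matrix, while an element of $\Sigma_d$ is a permutation matrix, having exactly one nonzero entry (equal to $1$) in each row and column; a matrix with both properties must have that sole nonzero entry in the diagonal position of each row, hence equals the identity. Thus $\Sigma_d\cap T_d=1$.

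Finally I would remark that, by Lemma \ref{N_d products}, the conjugation action occurring in this semidirect product is $\sigma.(b_1,\ldots,b_d)=(b_{\sigma^{-1}(1)},\ldots,b_{\sigma^{-1}(d)})$, which is exactly the permutation action of $\Sigma_d$ on $T_d\simeq(K^\times)^d$ used in Section \ref{sec:Symmetric and General Linear Groups} and in the definition of wreath products; so the identification $N_d\simeq\Sigma_d\ltimes T_d$ is the expected one. I do not anticipate any real obstacle: the entire proof is an application of Proposition \ref{GH as semidirect product}, with the only genuinely new input being the elementary observation $\Sigma_d\cap T_d=1$.
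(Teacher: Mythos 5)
Your proposal is correct and follows exactly the paper's own route: note $\Sigma_d\cap T_d=1$, use Lemma \ref{N_d products} for the normaliser condition, and apply Proposition \ref{GH as semidirect product}. The extra detail you supply (why a permutation matrix that is also diagonal must be the identity, and that the resulting conjugation action is the coordinate permutation) is just a fleshing-out of what the paper leaves implicit.
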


\begin{pf} Since $\Sigma_d\cap T_d=1$, this is a straight application of Proposition \ref{GH as semidirect product}.\end{pf}

\begin{cor}\label{N_d as wreath product} The map $\Sigma_d\wr K^\times\to N_d$
given by $(\sigma;b_1,\ldots,b_d)\mapsto \sigma(b_1,\ldots, b_d)$ is an isomorphism of groups.\end{cor}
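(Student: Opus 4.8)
The plan is to observe that this statement is essentially a repackaging of Corollary \ref{N_d as a semi direct product}, once one checks that the two relevant actions of $\Sigma_d$ coincide. By definition of the wreath product (Section on semidirect and wreath products), $\Sigma_d\wr K^\times=\Sigma_d\ltimes(K^\times)^d$ where $\Sigma_d$ acts on $(K^\times)^d$ by $\sigma.(b_1,\ldots,b_d)=(b_{\sigma^{-1}(1)},\ldots,b_{\sigma^{-1}(d)})$. Since $T_d=(K^\times)^d$ as groups, the underlying set of $\Sigma_d\wr K^\times$ is $\Sigma_d\times T_d$, and its multiplication is the semidirect-product multiplication for this permutation action.

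Next I would invoke Lemma \ref{N_d products}, which says that the conjugation action of $\sigma\in\Sigma_d$ on $(b_1,\ldots,b_d)\in T_d$ inside $GL_d(K)$ is precisely $\sigma(b_1,\ldots,b_d)\sigma^{-1}=(b_{\sigma^{-1}(1)},\ldots,b_{\sigma^{-1}(d)})$. Hence the conjugation action of $\Sigma_d$ on $T_d$ agrees with the permutation action used to form the wreath product, so $\Sigma_d\wr K^\times$ and $\Sigma_d\ltimes T_d$ (with the conjugation action from Proposition \ref{GH as semidirect product}) are literally the same group.

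Finally, I would apply Proposition \ref{GH as semidirect product} with $G=\Sigma_d$, $H=T_d$, and ambient group $GL_d(K)$: the hypotheses hold because $\Sigma_d\cap T_d=1$ and $\Sigma_d\leqslant N_{GL_d(K)}(T_d)$ by Lemma \ref{N_d products}. This yields a group isomorphism $\Sigma_d\ltimes T_d\to\Sigma_dT_d=N_d$ sending $(\sigma;b_1,\ldots,b_d)\mapsto\sigma(b_1,\ldots,b_d)$, which is exactly the asserted map. There is no real obstacle here; the only point requiring care is the identification of the two $\Sigma_d$-actions, and that is immediate from Lemma \ref{N_d products}.
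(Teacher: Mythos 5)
Your proof is correct and follows the same route the paper intends: the paper simply states that the result "follows immediately" from Lemma \ref{N_d products} and Corollary \ref{N_d as a semi direct product}, and you have filled in exactly the details that make this immediate — namely that the conjugation action in Proposition \ref{GH as semidirect product} matches the coordinate-permutation action defining the wreath product, by Lemma \ref{N_d products}. No gaps.
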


\begin{pf} Follows immediately from \ref{N_d products} and \ref{N_d as a semi direct product}.
\end{pf}

We have the following alternative characterisation of $N_d$.

\begin{prop}\label{N(T_d) and W(T_d)} $N_d$ is the normalizer of $T_d$ in $GL_d(K)$ with associated Weyl group $\Sigma_d$.\end{prop}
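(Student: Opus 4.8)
The plan is to establish the set equality $N_{GL_d(K)}(T_d)=N_d$ and then read off the Weyl group from the already-known structure of $N_d$. One inclusion is immediate: by Lemma~\ref{N_d products} every permutation matrix normalises $T_d$, and diagonal matrices certainly do, so $N_d=\Sigma_d T_d\leqslant N_{GL_d(K)}(T_d)$; moreover $T_d$ is normal in $N_d$, being the kernel of the projection $N_d\cong\Sigma_d\ltimes T_d\to\Sigma_d$ from Corollary~\ref{N_d as a semi direct product}. So the substance is the reverse inclusion, which I would prove geometrically, using the action of $T_d$ on the standard module $K^d$. (Throughout I shall assume $|K|>2$: when $K=\mathbb{F}_2$ the torus $T_d$ is trivial and the proposition fails for $d\geq 3$, so the hypothesis is genuinely needed — but in all our applications $p$ divides $|K^\times|$, so it holds automatically.)

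First I would characterise the coordinate lines $L_i=Ke_i$ intrinsically: I claim that, with $K^d=\bigoplus_{i=1}^d L_i$ and each $L_i$ a $T_d$-submodule, the $L_i$ are exactly the minimal non-zero $T_d$-submodules of $K^d$. To see this, take a non-zero $T_d$-submodule $V$ and $0\neq v=\sum_i c_i e_i\in V$; picking $a\in K^\times$ with $a\neq 1$ and letting $t_i\in T_d$ be the diagonal matrix with $a$ in the $i$-th place and $1$ elsewhere, we get $t_i v - v = (a-1)c_i e_i\in V$, so $e_i\in V$ whenever $c_i\neq 0$. Hence $V$ contains some $L_i$, and in fact $V$ is the sum of those $L_j$ it contains; in particular the minimal non-zero $T_d$-submodules are precisely the $L_i$.

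Now let $g\in N_{GL_d(K)}(T_d)$. Since $gT_dg^{-1}=T_d$, conjugation by $g$ sends $T_d$-submodules to $T_d$-submodules, so $g$ permutes the minimal ones: $gL_i=L_{\sigma(i)}$ for some bijection $\sigma$ of $\{1,\dots,d\}$. Thus $ge_i=\lambda_i e_{\sigma(i)}$ with $\lambda_i\in K^\times$, which exhibits $g$ as a monomial matrix — a permutation matrix times a diagonal matrix — i.e.\ as an element of the form $\sigma'(b_1,\dots,b_d)$ in the description of $N_d$ from Corollary~\ref{N_d as wreath product}. Hence $g\in N_d$, so $N_{GL_d(K)}(T_d)=N_d$. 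Finally, the associated Weyl group is by definition $N_{GL_d(K)}(T_d)/T_d=N_d/T_d$, and Corollary~\ref{N_d as a semi direct product} identifies this quotient with $\Sigma_d$, acting on $T_d$ by the coordinate permutation of Lemma~\ref{N_d products}. The only step carrying real content is the minimal-submodule characterisation of the $L_i$ together with the observation that it forces any normalising element to be monomial; the edge case $K=\mathbb{F}_2$ is the one genuine pitfall, and everything else is bookkeeping or a citation of results already in hand.
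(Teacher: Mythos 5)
Your proposal is correct, but it takes a genuinely different route from the paper. The paper's proof is a direct matrix computation: it picks $a\in K^\times$ with $a\neq 1$, considers the diagonal element $e_s$ with $1$ in the $s$-th slot and $a$ elsewhere, and analyses the entries of $ge_s=(b_1,\dots,b_d)g$ to conclude that each column of $g$ has exactly one non-zero entry, placed according to a permutation. Your argument repackages the same essential input (the existence of $a\neq 1$) into a module-theoretic statement — the $L_i=Ke_i$ are precisely the minimal non-zero $T_d$-submodules of $K^d$ — and then observes that conjugation by a normalising element permutes $T_d$-submodules and hence permutes the $L_i$, forcing $g$ to be monomial. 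This is a cleaner way to see \emph{why} the result holds and makes the role of the hypothesis $|K|>2$ transparent; the paper's version is more computational and does not comment on that hypothesis at all.

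Two small points. First, the Weyl group is by definition $N_{GL_d(K)}(T_d)/C_{GL_d(K)}(T_d)$, not $N/T_d$, so your last step quietly assumes $C_{GL_d(K)}(T_d)=T_d$. The paper flags this and says it follows ``by a similar calculation''. In your framework it drops out neatly: $T_d$ acts on $L_i$ via the $i$-th coordinate character, and since $|K^\times|>1$ these characters are pairwise distinct, so a centralising element must fix each $L_i$ rather than merely permute them, hence be diagonal. You should say this. Second, your remark that the proposition fails over $\mathbb{F}_2$ is apt, but it fails already at $d=2$: there $T_2$ is trivial, $N_2=\Sigma_2$ has order $2$, while $|GL_2(\mathbb{F}_2)|=6$.
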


\begin{pf} Let $g\in N_{GL_d(K)}(T_d)$ and choose $a\in K^\times$ with $a\neq 1$. Take any $1\leq s\leq d$ and
define $e_s=(a,\ldots,a,1,a,\ldots,a)$ with $1$ in the $s^\text{th}$ place. Write $ge_sg^{-1}=(b_1,\ldots,b_d)$.
Then $ge_s = (b_1,\ldots,b_d)g$. By consideration of the $(i,k)^\text{th}$ entry we get the equations $g_{is} =
b_i g_{is}$ and $g_{ik}a = b_i g_{ik}$ for $k\neq s$. Now, since $g$ is invertible we can find $i$ with
$g_{is}\neq 0$. Then $g_{is}=b_i g_{is}$ whereby $b_i=1$. Hence, for $k\neq s$, we have $g_{ik}=g_{ik}a$ so
that, since $a\neq 1$, we get $g_{ik}=0$.

Summarising, for any $s$ there is $i=i(s)$ such that $g_{is}\neq 0$ and $g_{ik}=0$ for all $k\neq s$. Since $g$
is invertible each $i(s)$ must be distinct, that is $i$ is a permutation of $\{1,\ldots,d\}$. It follows that
$g\in \Sigma_dT_d=N_d$. Thus $N_{GL_d(K)}(T_d)\leqslant N_d\leqslant N_{GL_d(K)}(T_d)$ so we have equality.

For the Weyl group, it is not hard to show that the centralizer of $T_d$ in $GL_d(K)$ is just $T_d$ by a similar
calculation to the above so that
\[W_{GL_d(K)}(T_d)=N_{GL_d(K)}(T_d)/C_{GL_d(K)}(T_d)=N_d/T_d=\Sigma_d.\qedhere\]\end{pf}

\begin{lem}\label{v_p(GL_d(Fl))} Let $l$ be a prime different to $p$ and let $q=l^r$ for some $r$. Let $a$ be the order of $q$ in $(\mathbb{Z}/p)^\times$ and put $m=\left\lfloor\frac{d}{a}\right\rfloor$. Then
$$v_p(|GL_d(\mathbb{F}_q)|)=m v_p(q^a-1) + v_p(m!).$$\end{lem}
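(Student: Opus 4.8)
The plan is to use the well-known formula for the order of $GL_d(\mathbb{F}_q)$, namely
$$|GL_d(\mathbb{F}_q)| = \prod_{i=0}^{d-1}(q^d - q^i) = q^{d(d-1)/2}\prod_{j=1}^{d}(q^j - 1),$$
and then compute its $p$-adic valuation term by term. Since $p \nmid q$, the factor $q^{d(d-1)/2}$ contributes nothing to $v_p$, so $v_p(|GL_d(\mathbb{F}_q)|) = \sum_{j=1}^{d} v_p(q^j - 1)$.

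Next I would invoke Proposition \ref{v_p(k^s-1) for general k} with $k = q$: writing $a$ for the order of $q$ in $(\mathbb{Z}/p)^\times$, we have $v_p(q^j - 1) = 0$ unless $a \mid j$, and $v_p(q^j - 1) = v_p(q^a - 1) + v_p(j)$ when $a \mid j$. So only the multiples $j = a, 2a, \ldots, ma$ (where $m = \lfloor d/a \rfloor$) contribute, and
$$v_p(|GL_d(\mathbb{F}_q)|) = \sum_{t=1}^{m}\big(v_p(q^a-1) + v_p(ta)\big) = m\,v_p(q^a-1) + \sum_{t=1}^{m} v_p(ta).$$
Since $a \mid p-1$, we have $p \nmid a$, so $v_p(ta) = v_p(t)$ for every $t$, and therefore $\sum_{t=1}^{m} v_p(ta) = \sum_{t=1}^{m} v_p(t) = v_p(m!)$. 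This gives the claimed identity $v_p(|GL_d(\mathbb{F}_q)|) = m\,v_p(q^a-1) + v_p(m!)$.

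There is no real obstacle here; the only point requiring a little care is the bookkeeping in the middle step — correctly identifying that the contributing indices are exactly $\{a, 2a, \ldots, ma\}$ and that reindexing by $t = j/a$ turns $\sum v_p(j)$ into $\sum_{t=1}^m v_p(t)$, using $v_p(ta)=v_p(t)$. The formula $v_p(m!) = \sum_{t=1}^m v_p(t)$ itself is immediate from counting (or from Lemma \ref{v_p(n!)}), so the whole argument is a short assembly of the $p$-divisibility results already established in Section \ref{sec:p-adics} and the preceding subsection.
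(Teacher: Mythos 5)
Your proof is correct and follows essentially the same route as the paper: compute $|GL_d(\mathbb{F}_q)|$, extract the $p$-adic valuation term by term using Proposition \ref{v_p(k^s-1) for general k}, and reindex the surviving terms $j=ka$ noting $v_p(ka)=v_p(k)$ because $a\mid p-1$. No meaningful differences.
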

\begin{pf} By the fact that $GL_d(\mathbb{F}_q)$ consists of all $d\times d$
matrices of maximal rank, there are $q^d-1$ choices for the first column, $q^d-q$ choices for the second and so
on. Hence we get \begin{eqnarray*} |GL_d(\mathbb{F}_q)| & = & (q^d-1)(q^d-q)\ldots(q^d-q^{d-1})\\ & = &
q^{1+\ldots+(d-1)}(q^d-1)(q^{d-1}-1)\ldots(q-1). \end{eqnarray*} Thus, since $q$ is coprime to $p$, using
Proposition \ref{v_p(k^s-1) for general k} we have
\begin{eqnarray*}
v_p(|GL_d(\mathbb{F}_q)|) & = & v_p((q^d-1)(q^{d-1}-1)\ldots(q-1))\\
& = & \sum_{s=1}^d v_p(q^s-1)\\
& = & \sum_{k=1}^m (v_p(q^a-1)+v_p(ka))\\
& = & mv_p(q^a-1)+v_p(m!),
\end{eqnarray*}
as claimed, where we have used the fact that $v_p(ka)=v_p(k)$ since $a$ is coprime to $p$.\end{pf}

\begin{prop}\label{Syl_p(G)} Let $K$ be a finite field such that $v_p(|K^\times|)>0$. Let
$P_0$ be a Sylow $p$-subgroup of $\Sigma_d$ and $P_1=\{a\in K^\times\mid a^{p^k}=1\text{ for some $k$}\}$ be the
$p$-part of $K^\times$. Let $P$ be the image of $P_0\wr P_1$ in $N_d$. Then $P$ is a Sylow $p$-subgroup of
$GL_d(K)$.\end{prop}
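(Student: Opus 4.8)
The plan is to check two things: that $P$ is a $p$-subgroup of $GL_d(K)$, and that $|P|$ realises the full power of $p$ dividing $|GL_d(K)|$; a $p$-subgroup of that order is automatically Sylow, so these two facts finish the proof.

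First I would observe that $P_0\wr P_1$ is a $p$-group. Indeed $P_0$ is a $p$-group by choice, and $P_1$ is a cyclic $p$-group, being the $p$-part of the cyclic group $K^\times$; hence $P_1^d$ is a $p$-group and so is $P_0\ltimes P_1^d=P_0\wr P_1$, of order $|P_0|\cdot|P_1|^d$. Since $P_0\leqslant\Sigma_d$ and $P_1\leqslant K^\times$, the group $P_0\wr P_1$ is a subgroup of $\Sigma_d\wr K^\times$, and Corollary \ref{N_d as wreath product} identifies $\Sigma_d\wr K^\times$ with $N_d\leqslant GL_d(K)$. Therefore the image $P$ of $P_0\wr P_1$ in $N_d$ is a $p$-subgroup of $GL_d(K)$ with $|P|=|P_0|\cdot|P_1|^d$.

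Next I would compare orders. Every finite field is $\mathbb{F}_q$ for a prime power $q=l^r$, and the hypothesis $v_p(|K^\times|)=v_p(q-1)>0$ forces $q\equiv1\pmod p$ — in particular $l\neq p$, since $p\nmid p^r-1$ — so the order $a$ of $q$ in $(\mathbb{Z}/p)^\times$ is $1$ and $m=\lfloor d/a\rfloor=d$. Lemma \ref{v_p(GL_d(Fl))} then gives $v_p(|GL_d(\mathbb{F}_q)|)=d\,v_p(q-1)+v_p(d!)$. On the other hand $v_p(|P_1|)=v_p(q-1)$ by definition of the $p$-part, and $v_p(|P_0|)=v_p(|\Sigma_d|)=v_p(d!)$ since $P_0$ is Sylow in $\Sigma_d$. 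Hence $v_p(|P|)=v_p(|P_0|)+d\,v_p(|P_1|)=v_p(d!)+d\,v_p(q-1)=v_p(|GL_d(\mathbb{F}_q)|)$, so $P$ is a Sylow $p$-subgroup of $GL_d(K)$.

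There is no genuine obstacle here; the only points needing care are the reduction to the case $K=\mathbb{F}_q$ (valid since every finite field has this form), the remark that the hypothesis forces $a=1$ (so that Lemma \ref{v_p(GL_d(Fl))} simplifies), and the appeal to Corollary \ref{N_d as wreath product} to ensure the image $P$ has the same order as the abstract wreath product $P_0\wr P_1$.
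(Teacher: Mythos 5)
Your argument is correct and follows essentially the same route as the paper: identify $K\cong\mathbb{F}_q$, apply Lemma \ref{v_p(GL_d(Fl))} to compute $v_p(|GL_d(K)|)$, and match it against $v_p(|P|)=v_p(|P_0|)+d\,v_p(|P_1|)$. Your extra observations — that the hypothesis $v_p(q-1)>0$ forces $a=1$ and $m=d$ in Lemma \ref{v_p(GL_d(Fl))}, and that Corollary \ref{N_d as wreath product} guarantees $|P|=|P_0\wr P_1|$ — are points the paper leaves implicit but are worth spelling out.
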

\begin{pf} Choose an isomorphism $K\simeq \mathbb{F}_{q}$, where $q=l^r$ for some prime $l$ necessarily
different from $p$. Applying Lemma \ref{v_p(GL_d(Fl))} then gives $v_p(|GL_d(K)|) = dv_p(q-1) + v_p(d!)$. On the
other hand, since $P_0$ and $P_1$ are Sylow $p$-subgroups of $\Sigma_d$ and $K^\times$ respectively, we have
$v_p(|P_0|)=v_p(d!)$ and $v_p(|P_1|)=v_p(q-1)$ so that
$$v_p(|P|)=v_p(|P_0\wr P_1|)=v_p(|P_0|)+v_p(|P_1^d|)=v_p(d!)+dv_p(q-1)$$
showing that $P$ is indeed a Sylow $p$-subgroup of $GL_d(K)$.\end{pf}

\begin{rem}\label{Syl_p(Gfl)} Suppose that $v_p(|K^\times|)>0$. Note that for $d<p$ we have $v_p(d!)=0$ so that $\Syl_p(\Sigma_d)=1$ and hence
$\Syl_p(GL_d(K))=\Syl_p(T_d)$, which is abelian. For $d\geq p$ the Sylow $p$-subgroup $P_0$ of $\Sigma_d$ is non
trivial and the corresponding Sylow $p$-subgroup of $GL_d(K)$ is no longer abelian.\end{rem}

The Sylow $p$-subgroups of $GL_d(K)$ for $v_p(|K^\times|)=0$ are harder to get a handle on, although we do have
the following result, valid when $d<p$.

\begin{prop}\label{Sylow p-subgroup for d<p and v_p(K^x)=0} Let $d<p$ and let $K=\mathbb{F}_q$, where $q=l^r$ for some prime $l$ different to $p$ and some $r$. Let $a$ be the order
of $q$ in $(\mathbb{Z}/p)^\times$ and put $m=\left\lfloor\frac{d}{a}\right\rfloor$. Choose a basis for
$\mathbb{F}_{q^a}$ over $\mathbb{F}_{q}$ to get an embedding $\mathbb{F}_{q^a}\rightarrowtail
GL_a(\mathbb{F}_{q})$. Then, using this embedding, we can view $(\mathbb{F}_{q^a}^\times)^m$ as a subgroup of
$GL_d(\mathbb{F}_q)$ and, writing $P_2=\Syl_p(\mathbb{F}_{q^a}^\times)$, we find that $P_2^m$ is a Sylow
$p$-subgroup of $GL_d(\mathbb{F}_q)= GL_d(K)$.\end{prop}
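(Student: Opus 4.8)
The plan is to reduce everything to a comparison of $p$-adic valuations of group orders, in exactly the style of the proof of Proposition~\ref{Syl_p(G)}. First I would pin down the subgroup. The regular representation of $\mathbb{F}_{q^a}$ over $\mathbb{F}_q$ (sending $c\in\mathbb{F}_{q^a}$ to the $\mathbb{F}_q$-linear map ``multiplication by $c$'') is a ring embedding $\mathbb{F}_{q^a}\hookrightarrow \End_{\mathbb{F}_q}(\mathbb{F}_{q^a})\simeq M_a(\mathbb{F}_q)$, and restricting to units gives a faithful homomorphism $\mathbb{F}_{q^a}^\times\hookrightarrow GL_a(\mathbb{F}_q)$; this is the embedding referred to in the statement. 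Since $am\leq d$, taking $m$ copies and filling the last $d-am$ diagonal entries with the identity yields embeddings $(\mathbb{F}_{q^a}^\times)^m\hookrightarrow GL_a(\mathbb{F}_q)^m\hookrightarrow GL_{am}(\mathbb{F}_q)\hookrightarrow GL_d(\mathbb{F}_q)$, so in particular $P_2^m$ really is a $p$-subgroup of $GL_d(\mathbb{F}_q)$.

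Next I would compute the two valuations. By Lemma~\ref{v_p(GL_d(Fl))}, $v_p(|GL_d(\mathbb{F}_q)|)=m\,v_p(q^a-1)+v_p(m!)$. Because $a\geq 1$ we have $m=\lfloor d/a\rfloor\leq d<p$, so $v_p(m!)=0$ (by Lemma~\ref{v_p(n!)}, or simply because no multiple of $p$ occurs among $1,\ldots,m$), whence $v_p(|GL_d(\mathbb{F}_q)|)=m\,v_p(q^a-1)$. On the other hand $\mathbb{F}_{q^a}^\times$ is cyclic of order $q^a-1$, so its Sylow $p$-subgroup $P_2$ has order $p^{\,v_p(q^a-1)}$, giving $v_p(|P_2^m|)=m\,v_p(q^a-1)$. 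Comparing the two, $|P_2^m|$ equals the full $p$-part of $|GL_d(\mathbb{F}_q)|$, and a $p$-subgroup of this order is necessarily a Sylow $p$-subgroup.

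I do not expect a genuine obstacle here. The only points needing care are that the regular-representation map is multiplicative (automatic, since it is a ring homomorphism) and the degenerate case $a>d$, where $m=0$ and $P_2^m$ is trivial: this is consistent, since then $a\nmid s$ for every $1\leq s\leq d$ and Proposition~\ref{v_p(k^s-1) for general k} forces $v_p(|GL_d(\mathbb{F}_q)|)=\sum_{s=1}^d v_p(q^s-1)=0$.
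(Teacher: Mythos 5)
Your argument is correct and follows essentially the same approach as the paper's (one-line) proof: both compare $v_p$ of the relevant group orders via Lemma~\ref{v_p(GL_d(Fl))} and use $v_p(m!)=0$ because $m\leq d<p$. You simply make explicit the embedding via the regular representation and the degenerate case $m=0$, which the paper leaves implicit.
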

\begin{pf} We have $v_p(|GL_d(K)|)=mv_p(q^a-1) + v_p(m!) = v_p((q^a-1)^m) = v_p(|(\mathbb{F}_{q^a}^\times)^m|)$.\end{pf}

\section{The Abelian $p$-subgroups of $GL_p(K)$ for $v_p(|K^\times|)>0$.}\label{sec:abelian subgroups of
GL_p(K)}

Here we specialise to the general linear groups of dimension $p$. To begin with we look at the abelian
$p$-subgroups of $N_p=\Sigma_p.T_p\leqslant GL_p(K)$ which, by earlier work, contains a Sylow $p$-subgroup of
$GL_p(K)$. Let $\pi:N_p\twoheadrightarrow \Sigma_p$ denote the projection $\sigma(b_0,\ldots,b_{p-1})\mapsto
\sigma$. For the remainder of this chapter we will omit the subscripts and write $N$ for $N_p$ and $T$ for
$T_p$. We will also write $v=v_p(|K^\times|)$, which will be positive by assumption.

\begin{lem}\label{A in B^p or epic} Let $A$ be a $p$-subgroup of $N$. Then either
$A\leqslant T$ or $\pi(A)\leq \Sigma_p$ is cyclic of order $p$, generated by a $p$-cycle.\end{lem}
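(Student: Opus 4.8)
The plan is straightforward, since everything here is controlled by the image $\pi(A)\leqslant\Sigma_p$. First I would observe that $\pi(A)$, being a homomorphic image of the $p$-group $A$, is itself a $p$-subgroup of $\Sigma_p$. By Proposition \ref{Syl_p(Sigma_p^k)} with $k=1$ --- equivalently, since Corollary \ref{v_p(p^k!)} gives $v_p(p!)=1$ --- a Sylow $p$-subgroup of $\Sigma_p$ is cyclic of order $p$, so $|\pi(A)|\in\{1,p\}$.

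If $|\pi(A)|=1$ then $A\leqslant\ker(\pi)=T$, which is the first alternative. If instead $|\pi(A)|=p$, then $\pi(A)$ is cyclic, generated by some $\sigma\in\Sigma_p$ of order $p$, and I would finish by checking that such a $\sigma$ must be a $p$-cycle: writing $\sigma$ as a product of disjoint cycles, its order is the least common multiple of the cycle lengths, and since $p$ is prime this forces at least one cycle of length exactly $p$; as $\sigma$ permutes only $p$ symbols, it is therefore a single $p$-cycle, giving the second alternative.

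I do not expect any real obstacle here: the statement reduces immediately to the known description of $\Syl_p(\Sigma_p)$ together with the elementary cycle-type observation, the only genuinely used hypothesis being the primality of $p$ (so that an order-$p$ permutation of $p$ points cannot be a product of several shorter cycles). In particular the standing assumption $v=v_p(|K^\times|)>0$ is not needed for this lemma.
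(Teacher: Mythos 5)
Your argument is correct and follows essentially the same route as the paper: both reduce to the observation that $\pi(A)$ is a $p$-subgroup of $\Sigma_p$ with $v_p(|\Sigma_p|)=v_p(p!)=1$, so $\pi(A)$ is trivial or cyclic of order $p$, and then use the cycle decomposition to see that an order-$p$ element of $\Sigma_p$ must be a single $p$-cycle. The only cosmetic difference is that the paper frames the first step via the short exact sequence $T\rightarrowtail N\twoheadrightarrow\Sigma_p$ from Proposition \ref{N_d as wreath product}, whereas you invoke the Sylow computation directly.
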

\begin{pf} By Proposition \ref{N_d as wreath product} there is an exact sequence of groups $T \hookrightarrow
N \overset{\pi}{\twoheadrightarrow} \Sigma_p$ and this shows that either $A\leqslant \ker(\pi)=T$ or $\pi(A)$ is
a non-trivial $p$-subgroup of $\Sigma_p$. Since $v_p(|\Sigma_p|)=v_p(p!)=1$ the latter case means $\pi(A)$ is
cyclic of order $p$. To see that $\pi(A)$ is generated by a $p$-cycle, take a generator of $\pi(A)$; the cycle
decomposition of this generator contains only $p$-cycles (by consideration of its order). As the cycles are
disjoint there can only be one.\end{pf}

\begin{lem}\label{conj_a permutes B^p} Let $a\in N$ with $\pi(a)\neq 1$ in $\Sigma_p$. Then if $b\in
T$ we have $\conj_a(b)=\conj_{\pi(a)}(b)$. Hence $\conj_a$ permutes the coordinates of $T$ by a non-trivial
cyclic permutation.\end{lem}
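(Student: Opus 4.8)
The plan is to exploit the semidirect-product decomposition $N \cong \Sigma_p \ltimes T$ from Corollary \ref{N_d as a semi direct product} (equivalently, the wreath-product description of Corollary \ref{N_d as wreath product}) together with the commutativity of $T = (K^\times)^p$. Since $N = \Sigma_p T$ with $\Sigma_p \cap T = 1$, I would write $a = \tau c$ uniquely with $\tau = \pi(a) \in \Sigma_p$ (viewed inside $N \leqslant GL_p(K)$ via permutation matrices) and $c \in T$. Then for $b \in T$ the computation is carried out directly in $GL_p(K)$:
\[
\conj_a(b) = a b a^{-1} = \tau\,c\,b\,c^{-1}\,\tau^{-1} = \tau\,b\,\tau^{-1} = \conj_\tau(b) = \conj_{\pi(a)}(b),
\]
where the middle equality uses $c b c^{-1} = b$, which holds because $b$ and $c$ both lie in the abelian group $T$. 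This gives the first assertion.

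For the ``hence'', Lemma \ref{N_d products} identifies $\conj_\tau$ restricted to $T$ with the coordinate permutation $(b_1,\dots,b_p) \mapsto (b_{\tau^{-1}(1)},\dots,b_{\tau^{-1}(p)})$, which is non-trivial precisely because $\tau = \pi(a) \neq 1$. To see that it is cyclic I would invoke the standing hypothesis of this section that $a$ is a $p$-element: then $\tau = \pi(a)$ is a non-trivial $p$-element of $\Sigma_p$, and since $v_p(|\Sigma_p|) = v_p(p!) = 1$ it has order exactly $p$, so its cycle decomposition consists of $p$-cycles only; as there are just $p$ symbols, $\tau$ is a single $p$-cycle --- the same observation already made in the proof of Lemma \ref{A in B^p or epic}. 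Hence $\conj_a$ permutes the coordinates of $T$ by a non-trivial $p$-cycle.

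The argument carries no real difficulty: its entire content is the interaction of the factorisation $a = \tau c$ with the commutativity of $T$, followed by a citation of Lemma \ref{N_d products}. The only point worth flagging is that the word ``cyclic'' in the conclusion genuinely uses that $a$ has $p$-power order --- a permutation $\tau \neq 1$ need not be a single cycle in general --- so I would make that hypothesis explicit, or read the lemma as applying to elements of the $p$-subgroups under study throughout the section.
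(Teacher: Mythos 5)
Your proof is correct and follows essentially the same route as the paper's: factor $a = \pi(a)\,c$ with $c \in T$, cancel $c$ against $b$ by commutativity of $T$, and cite Lemma \ref{N_d products} for the coordinate-permutation description. Your concern about the word ``cyclic'' is well-founded: the paper's own proof gets $\pi(a)$ to be a $p$-cycle by invoking Lemma \ref{A in B^p or epic}, which applies to $p$-subgroups, so the lemma is implicitly being read with $a$ a $p$-element (as it always is in the downstream applications in this section); you have simply made that implicit hypothesis explicit.
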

\begin{pf} For the first statement, using Lemma \ref{A in B^p or epic}, write $\pi(a)=\sigma$ for
some non-trivial $p$-cycle $\sigma$ so that $a=\sigma a'$ for some $a'\in \ker(\pi)=T$. Then for $b\in T$ we
have
\begin{eqnarray*}
\conj_{a}(b) & = & \sigma a' b(a')^{-1} \sigma^{-1}\\
& = & \sigma b\sigma^{-1}\\
& = & \conj_{\pi(a)}(b)
\end{eqnarray*}
where the second equality uses the fact that $T$ is abelian. By Lemma \ref{N_d products} we see that $\conj_a$
permutes the coordinates of $T$ by a non-trivial cyclic permutation, as required.\end{pf}

\begin{defn} Let $\Delta\simeq K^\times$ denote the diagonal subgroup of $T$, and let $\Delta_p\leqslant\Delta$
denote the $p$-elements of $\Delta$. Note that $\Delta_p$ is cyclic of order $p^v$.\end{defn}

\begin{lem}\label{A' in Delta} Let $A$ be an abelian $p$-subgroup of $N$ with $A\to \Sigma_p$ non-trivial. Then we have $A\cap T\leqslant\Delta_p$.\end{lem}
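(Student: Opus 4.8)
The plan is to exploit that $A$ is abelian together with Lemmas \ref{A in B^p or epic} and \ref{conj_a permutes B^p}. Since the map $A\to\Sigma_p$ is non-trivial, Lemma \ref{A in B^p or epic} tells us $\pi(A)$ is cyclic of order $p$, generated by a $p$-cycle; in particular I can choose $a\in A$ with $\sigma:=\pi(a)$ a $p$-cycle. Now take any $b\in A\cap T$. Because $A$ is abelian, $a$ and $b$ commute, so $\conj_a(b)=b$.

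On the other hand, since $\pi(a)=\sigma\neq 1$, Lemma \ref{conj_a permutes B^p} gives $\conj_a(b)=\conj_\sigma(b)$, and by Lemma \ref{N_d products} this is precisely the tuple obtained from $b=(b_1,\ldots,b_p)$ by permuting its coordinates via $\sigma$. Combining the two identities, $b$ is fixed by the coordinate action of the $p$-cycle $\sigma$ on $T=(K^\times)^p$. A $p$-cycle on $p$ letters acts transitively, so the only fixed tuples are the constant ones; hence $b\in\Delta$. Since $A$ is a $p$-group, $b$ is a $p$-element, so in fact $b\in\Delta_p$. As $b\in A\cap T$ was arbitrary, this gives $A\cap T\leqslant\Delta_p$.

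I do not expect a serious obstacle here: once the two cited lemmas are available, everything reduces to the elementary observation that a $p$-cycle fixes only the diagonal of $(K^\times)^p$. The only point needing a little care is that the element $a$ can be taken inside $A$ rather than merely in $N$, which is immediate because $\pi(A)$ is non-trivial.
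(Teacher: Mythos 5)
Your proof is correct and follows exactly the same route as the paper: take $a\in A$ with $\pi(a)\neq 1$, use commutativity of $A$ to see that $\conj_a$ fixes every $b\in A\cap T$, apply Lemma \ref{conj_a permutes B^p} to deduce $b$ is constant, and then invoke the fact that $A$ is a $p$-group to land in $\Delta_p$. The only difference is that you spell out the transitivity of a $p$-cycle, which the paper leaves implicit in the phrase ``all coordinates of $a'$ must be equal.''
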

\begin{pf} Let $a\in A$ with $\pi(a)\neq 1$. Then, since $A$ is abelian, we get $\conj_a(a')=a'$
for all $a'\in A$. If $a'\in A\cap T$, then since $\pi(a)\neq 1$ we can use Lemma \ref{conj_a permutes B^p} to
see that all coordinates of $a'$ must be equal. That is, we must have $a'\in\Delta$. Since $A$ is a $p$-group we
get $A\cap T\leqslant \Delta_p$, as required.\end{pf}

\begin{cor}\label{a^p in Delta} Let $A$ be an abelian $p$-subgroup of $N$ with $A\to \Sigma_p$
non-trivial and let $a\in A$ map to a generator of $\pi(A)$. Then $a^p\in \Delta_p$.\end{cor}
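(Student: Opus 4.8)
The plan is to use Corollary \ref{N_d as wreath product} to identify $N$ with $\Sigma_p\wr K^\times$, so that $a$ corresponds to a pair $(\sigma;b_0,\ldots,b_{p-1})$ where $\sigma=\pi(a)$ is a non-trivial $p$-cycle (by Lemma \ref{A in B^p or epic}, since $\pi(a)$ generates $\pi(A)$ which is cyclic of order $p$, generated by a $p$-cycle). Since $\sigma$ is a $p$-cycle, $\sigma^p=1$, so computing $a^p$ inside the wreath product gives $a^p=(1;\text{product of the }b_i\text{ in some order})$, which visibly lies in $T$. Concretely, if I relabel so that $\sigma=\gamma_p=(0\,1\,\ldots\,p-1)$, then $a^p$ has all coordinates equal to $b_0b_1\cdots b_{p-1}$ (the cyclic structure forces every coordinate of $a^p$ to be the full product, just permuted trivially), so $a^p\in\Delta$. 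The first main step, then, is this direct computation in the wreath product showing $a^p\in\Delta\cap A$.

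Once $a^p\in\Delta$, I still need $a^p\in\Delta_p$, i.e.\ that $a^p$ is a $p$-element of $\Delta$. But this is immediate: $a\in A$ and $A$ is a $p$-group, so $a^p\in A$ is a $p$-element, and an element of $\Delta$ that is a $p$-element lies in $\Delta_p$ by definition of $\Delta_p$ as the $p$-part of $\Delta$. Alternatively — and this is even cleaner — one can simply invoke Lemma \ref{A' in Delta}: we have $a^p\in A$, and $a^p\in T$ by the wreath-product computation, so $a^p\in A\cap T\leqslant\Delta_p$. This sidesteps the need to argue separately that $a^p\in\Delta$ by hand.

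So the cleanest route is: (i) note $\pi(a^p)=\pi(a)^p=1$ since $\pi(a)$ has order $p$ (it generates $\pi(A)\cong C_p$), hence $a^p\in\ker(\pi)=T$ by the exact sequence $T\hookrightarrow N\overset{\pi}{\twoheadrightarrow}\Sigma_p$ of Corollary \ref{N_d as wreath product}; (ii) since $a\in A$ we also have $a^p\in A$; (iii) therefore $a^p\in A\cap T$, and Lemma \ref{A' in Delta} (whose hypothesis that $A\to\Sigma_p$ is non-trivial is exactly our hypothesis) gives $A\cap T\leqslant\Delta_p$, so $a^p\in\Delta_p$. There is no real obstacle here — the only thing to be careful about is checking that the hypotheses of Lemma \ref{A' in Delta} are met, which they are verbatim, and that $\pi(a)$ genuinely has order $p$, which follows because $a$ maps to a \emph{generator} of $\pi(A)$ and $\pi(A)$ is cyclic of order $p$ by Lemma \ref{A in B^p or epic}.
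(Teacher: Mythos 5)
Your ``cleanest route'' is precisely the paper's proof: note $\pi(a^p)=\pi(a)^p=1$, so $a^p\in\ker(\pi)=T$, hence $a^p\in A\cap T\leqslant\Delta_p$ by Lemma \ref{A' in Delta}. The direct wreath-product computation you sketch first is a valid alternative but is unnecessary extra work once Lemma \ref{A' in Delta} is available.
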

\begin{pf} We have $\pi(a^p)=\pi(a)^p=1$ so that
$a^p\in\ker(\pi)=T$. Thus $a^p\in T\cap A\leqslant \Delta_p$ by Lemma \ref{A' in Delta}.\end{pf}

\begin{cor}\label{A leq <A>.Delta} Let $A$ be an abelian $p$-subgroup of $N$ with $A\to \Sigma_p$
non-trivial and let $a\in A$ map to a generator of $\pi(A)$. Then $A\leq \langle a\rangle.\Delta_p$.\end{cor}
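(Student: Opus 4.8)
The plan is to show that every element of $A$ can be written as a power of $a$ times an element of $\Delta_p$. Take an arbitrary $x \in A$. Since $\pi(a)$ generates $\pi(A)$, there is some integer $k$ with $\pi(x) = \pi(a)^k = \pi(a^k)$. Then $a^{-k}x \in \ker(\pi) = T$, and of course $a^{-k}x \in A$ since $A$ is a group containing both $a$ and $x$. Thus $a^{-k}x \in A \cap T$.

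Now I split into cases depending on whether $k$ is divisible by $p$ or not — actually it is cleaner to just observe that $A \cap T \leqslant \Delta_p$ directly from Lemma \ref{A' in Delta}, which applies precisely because $A$ is an abelian $p$-subgroup of $N$ with $A \to \Sigma_p$ non-trivial (exactly our hypothesis). So $a^{-k}x \in \Delta_p$, giving $x = a^k \cdot (a^{-k}x) \in \langle a \rangle . \Delta_p$. Since $x \in A$ was arbitrary, $A \leqslant \langle a \rangle . \Delta_p$, as required.

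I should also note for cleanliness that $\langle a \rangle . \Delta_p$ is genuinely a subgroup of $N$: the previous corollary (Corollary \ref{a^p in Delta}) gives $a^p \in \Delta_p$, and $\Delta_p \leqslant \Delta \leqslant T$ with $a$ normalizing $T$ (indeed $\conj_a$ permutes the coordinates of $T$ by Lemma \ref{conj_a permutes B^p}, and it fixes $\Delta_p$ pointwise since $\Delta$ is the diagonal), so $\langle a \rangle$ normalizes $\Delta_p$ and the product is a subgroup by Proposition \ref{GH as semidirect product} (or just directly). This makes the inclusion $A \leqslant \langle a \rangle . \Delta_p$ a statement about subgroups rather than merely about subsets, though for the proof only the set-theoretic containment is needed.

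There is no real obstacle here; the entire content has been assembled in the preceding lemmas and corollaries, and this is essentially a one-line deduction. The only point requiring the slightest care is making sure the hypotheses of Lemma \ref{A' in Delta} are met when we apply it to conclude $A \cap T \leqslant \Delta_p$ — but these are literally the standing hypotheses of the corollary ($A$ abelian $p$-subgroup of $N$, $A \to \Sigma_p$ non-trivial), so there is nothing to check.
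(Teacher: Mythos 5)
Your proof is correct and follows exactly the same route as the paper's: pick $x\in A$, choose $k$ with $\pi(x)=\pi(a)^k$, observe $a^{-k}x\in A\cap T\leqslant\Delta_p$ by Lemma \ref{A' in Delta}, and conclude $x\in\langle a\rangle.\Delta_p$. The paper's closing remark that $\langle a\rangle.\Delta_p$ is a subgroup relies on the slightly slicker observation that $\Delta_p$ is central in $N$, whereas you argue via normalization, but this is an immaterial difference.
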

\begin{pf} By Corollary \ref{a^p in Delta} we have $a^p\in \Delta_p$. Now, let $a'\in A$ and write $\pi(a')=\pi(a)^k$ for some $0\leq k<p$. Then
$a^{-k}a'\in\ker(\pi)\cap A\leqslant\Delta_p$ so $a'\in\langle a\rangle.\Delta_p$. Thus we have $A\leqslant
\langle a\rangle.\Delta_p$. Note that $\langle a\rangle.\Delta_p$ is a subgroup of $N$ since $\Delta_p$ is
contained in the centre of $N$.\end{pf}

We are now ready to give a coarse classification of the abelian $p$-subgroups of $N$.

\begin{prop}\label{Classified Subgroups} Let $A$ be an abelian $p$-subgroup of $N$. Then either
\begin{enumerate}
\item $A\leqslant T$, \item $\pi(A)$ is non-trivial and $A$ is cyclic of
order $p^{v+1}$, or \item $\pi(A)$ is non-trivial and $A$ is $N$-conjugate to a subgroup of $\langle
\gamma\rangle.\Delta$, where $\gamma$ denotes the standard $p$-cycle $(1 \ldots p)\in\Sigma_p$.\end{enumerate}
Further, all those of type 2 are $N$-conjugate.\end{prop}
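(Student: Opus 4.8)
The plan is to run through the gross dichotomy of Lemma \ref{A in B^p or epic}, use Corollaries \ref{a^p in Delta} and \ref{A leq <A>.Delta} to pin down the structure when $\pi(A)$ is non-trivial, and then reduce everything to two explicit conjugation computations inside the wreath product $N$. If $A\leqslant T$ we are in alternative (1), so assume $\pi(A)$ is cyclic of order $p$ generated by a $p$-cycle, and choose $a\in A$ mapping to a generator of $\pi(A)$. Corollary \ref{a^p in Delta} gives $a^p\in\Delta_p$ and Corollary \ref{A leq <A>.Delta} gives $A\leqslant\langle a\rangle.\Delta_p$, where $\Delta_p$ is cyclic of order $p^v$ and lies in the centre of $N$.

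Now split on whether $a^p$ generates $\Delta_p$. If it does, then $\Delta_p\leqslant\langle a\rangle$, so $\langle a\rangle.\Delta_p=\langle a\rangle$, hence $A=\langle a\rangle$ is cyclic of order $p^{v+1}$ and we are in alternative (2). If it does not, then $a^p$ lies in the unique index-$p$ subgroup of $\Delta_p$, which is exactly the set of $p$-th powers; write $a^p=c^p$ with $c\in\Delta_p$. Since $c$ is central, $a':=ac^{-1}$ satisfies $(a')^p=1$, still maps to a generator of $\pi(A)$, and $A\leqslant\langle a'\rangle.\Delta_p$. To reach alternative (3), conjugate $a'$ by a permutation so that $\pi(a')=\gamma$, and write $a'=\gamma(t_1,\dots,t_p)$ with $t_i\in K^\times$. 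Iterating the associated linear map $e_j\mapsto t_j e_{\gamma(j)}$ shows $(a')^p=(\nu,\dots,\nu)\in\Delta$ with $\nu=t_1\cdots t_p$, so $(a')^p=1$ forces $\nu=1$. Conjugating $a'$ by $(s_1,\dots,s_p)\in T$ replaces the torus part by $(t'_1,\dots,t'_p)$ with $t'_j=s_{\gamma(j)}t_j/s_j$, and solving $t'_j=1$ for all $j$ amounts to the recursion $s_{\gamma(j)}=s_j/t_j$, which closes up consistently precisely because $\nu=1$. Hence some $N$-conjugate of $a'$ equals $\gamma$, and conjugating the inclusion $A\leqslant\langle a'\rangle.\Delta_p$ by the same element — which fixes the central $\Delta_p$ — puts $A$ inside $\langle\gamma\rangle.\Delta_p\leqslant\langle\gamma\rangle.\Delta$. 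This exhausts the alternatives.

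For the final assertion, let $A_1,A_2$ be of type (2). After permutation conjugations I may write $A_i=\langle\gamma(t^{(i)}_1,\dots,t^{(i)}_p)\rangle$ with the displayed generator mapping to $\gamma$: this is legitimate because $\pi$ restricts to a surjection $A_i\twoheadrightarrow\pi(A_i)$ and any preimage of a generator lies outside the unique maximal subgroup of the cyclic group $A_i$, so generates it. The computation above gives $(\gamma(t^{(i)}))^p=(\nu_i,\dots,\nu_i)$ with $\nu_i=t^{(i)}_1\cdots t^{(i)}_p$, and since this element has order $p^v$ each $\nu_i$ generates the $p$-part of $K^\times$; thus $\nu_2=\nu_1^{\,j}$ for some $j$ coprime to $p$. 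Now replace the generator of $A_1$ by its $j$-th power, whose permutation part is the $p$-cycle $\gamma^j$; conjugating $\gamma^j$ back to $\gamma$ by an element of $\Sigma_p$ rewrites $A_1$ as $\langle\gamma(t'')\rangle$, and raising to the $p$-th power (using that diagonal matrices are conjugation-invariant) shows its coordinate product is $\nu_1^{\,j}=\nu_2$. Finally a torus conjugation, solving the recursion $s_{\gamma(j)}=s_j\,t^{(2)}_j/t''_j$ — consistent because the two coordinate products now agree — carries $\langle\gamma(t'')\rangle$ onto $A_2$.

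The step I expect to be the main obstacle is this last one: a single torus conjugation can only match type-(2) subgroups with equal coordinate product $\nu$, so one must first exploit the freedom of re-choosing the cyclic generator (passing to the $j$-th power and re-straightening the permutation part) in order to reduce to that case. Once this is seen, the remainder is routine bookkeeping with the wreath-product description $N\simeq\Sigma_p\wr K^\times$ of Corollary \ref{N_d as wreath product} together with the lemmas above.
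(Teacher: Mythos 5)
Your proposal is correct and follows essentially the same strategy as the paper: the dichotomy of Lemma \ref{A in B^p or epic}, the sub-dichotomy on whether $a^p$ generates $\Delta_p$, and explicit conjugations in the wreath product $\Sigma_p\wr K^\times$ to reach the standard forms. Two minor points of divergence are worth noting. In case (3) you first pass to $a'=ac^{-1}$ with $(a')^p=1$ and conjugate it to $\gamma$ itself; the paper instead conjugates $\gamma(b_1,\dots,b_p)$ directly to $\gamma\delta$ with a hand-written torus element $u$ — these amount to the same computation. More substantively, in the final ``all of type (2) are conjugate'' assertion the paper simply writes ``we can assume, without loss of generality, \ldots with $b_1\ldots b_p=b_1'\ldots b_p'$'' without explaining the generator-rechoice that makes this legitimate; you make this explicit by passing to the $j$-th power of the generator and re-straightening the permutation part so that the coordinate products agree, after which a single torus conjugation closes the argument. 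This is the same underlying mechanism, spelled out more carefully, and correctly identifies the point that is easiest to miss.
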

\begin{pf} Suppose $A\nleqslant T$. Then we know from Lemma \ref{A in B^p or epic} that
$\pi(A)$ is cyclic of order $p$, generated by a $p$-cycle, $\sigma$ say. Take $a\in A$ mapping to $\sigma$. Then
$a^p\in \Delta_p$ by Corollary \ref{a^p in Delta}.

If $a^p$ is a generator of $\Delta_p$ then $a^{p^{v+1}}=1$ and, from Corollary \ref{A leq <A>.Delta},
$$A\leqslant\langle a \rangle.\Delta_p=\langle a \rangle.\langle a^p\rangle=\langle a\rangle\leqslant
A$$ so that $A=\langle a\rangle$ is cyclic of order $p^{v+1}$.

Otherwise, $a^p=\delta^p$ for some $\delta=(\delta,\ldots,\delta)\in\Delta_p$. Since $\sigma$ is a $p$-cycle, by
basic combinatorics there is a permutation $\tau\in\Sigma_p$ such that $\tau\sigma\tau^{-1}=\gamma$. Put
$A'=\tau A \tau^{-1}$ and $a'=\tau a \tau^{-1}$. Then we have $\pi(a')=\tau\sigma\tau^{-1}=\gamma$ and
$\pi(A')=\langle\gamma\rangle$. Further, $(a')^p=\tau a^p \tau^{-1}=\tau \delta^p\tau^{-1} = \delta^p$.

Write $a'=\gamma.(b_1,\ldots,b_p)$ for some $(b_1,\ldots,b_p)\in T$. Then an application of Lemma \ref{N_d
products} gives
$$(a')^p=\gamma.(b_1,\ldots,b_p)\ldots\gamma.(b_1,\ldots,b_p)=\gamma^p.(b_1\ldots
b_p,\ldots,b_1\ldots b_p)=(b_1\ldots b_p,\ldots,b_1\ldots b_p)$$ so that $b_1\ldots b_p=\delta^p$. Now, putting
$u=(1,b_1\delta^{-1},b_1 b_2\delta^{-2},\ldots,b_1 \ldots b_{p-1}\delta^{-(p-1)})$ we see that
\begin{eqnarray*}
u\gamma u^{-1} & = & (1,b_1\delta^{-1},\ldots,b_1 \ldots
b_{p-1}\delta^{-(p-1)})\gamma(1,b_1\delta^{-1},\ldots,b_1
\ldots b_{p-1}\delta^{-(p-1)})^{-1}\\
& = & \gamma.(b_1\delta^{-1},\ldots,b_1\ldots b_{p-1}\delta^{-(p-1)},1)(1,b_1^{-1}\delta,\ldots,b_1^{-1} \ldots
b_{p-1}^{-1}\delta^{p-1})\\
& = & \gamma.(b_1\delta^{-1},\ldots,b_p\delta^{-1})\\
& = & a'\delta^{-1}.
\end{eqnarray*}
It follows that $u^{-1}a' u=\gamma\delta$ so that
$$(u^{-1}\tau).A.(u^{-1}\tau)^{-1}=u^{-1}A' u\leq u^{-1}(\langle a'\rangle.\Delta) u=u^{-1}\langle a'\rangle
u.\Delta=\langle \gamma\delta\rangle.\Delta=\langle\gamma\rangle.\Delta$$ using Corollary \ref{A leq <A>.Delta}.
Hence $A$ is conjugate to a subgroup of $\langle\gamma\rangle.\Delta$.

For the final statement, take $A=\langle a\rangle$ of type 2, that is cyclic of order $p^{v+1}$. Then we know
that there is a generator $\delta\in\Delta_p$ with $a^p=\delta$.

Now, $\pi(a)$ is a $p$-cycle and so we can choose $\tau\in\Sigma_p$ with $\tau\pi(a)\tau^{-1}=\gamma=(1\ldots
p)$. Note that $(\tau a\tau^{-1})^p=\tau\delta\tau^{-1}=\delta$ since $\delta$ is in the centre of $N$. Hence
$A$ is conjugate to the group $\tau A\tau^{-1}$ which is cyclic of order $p^{v+1}$ generated by an element of
the form $\gamma(b_1,\ldots,b_p)$ with the property that $(b_1\ldots b_p,\ldots,b_1\ldots b_p)=\delta$.

By the above working, taking two subgroups of type 2, $A$ and $A'$ say, we can assume, without loss of
generality, that they are generated by elements $a=\gamma(b_1,\ldots,b_p)$ and $a'=\gamma(b_1',\ldots,b_p')$
with $b_1\ldots b_p=b_1'\ldots b_p'$. Now putting $u=(b_1 (b_1')^{-1},\ldots,b_1\ldots b_{p-1} (b_1' \ldots
b_{p-1})^{-1},1)$ it is a straight forward calculation to check that
$$u.a.u^{-1}=u.\gamma(b_1\,\ldots,b_p).u^{-1}=\gamma(b_1',\ldots,b_p')=a'$$
showing that $A$ and $A'$ are $N$-conjugate, as required.
\end{pf}

We are now able to give a stronger statement about the abelian $p$-subgroups of $GL_p(K)$. Let $a_0$ denote a
generator of the $p$-part of $K^\times\simeq C_{p^v}$. As usual, we let $\gamma=\gamma_p=(1\ldots p)\in
\Sigma_p$ denote the standard $p$-cycle. Put $a=\gamma(a_0,1,\ldots,1)\in \Sigma_p\wr K^\times\subseteq GL_p(K)$
and let $A=\langle a\rangle$. Note that $a^p=(a_0,\ldots,a_0)$ so that $a^{p^{v+1}}=1$ and $A$ is a cyclic
subgroup of $GL_p(K)$ of order $p^{v+1}$.

\begin{prop}\label{Abelian p-subgroups of GL_p(K)} Let $H$ be an abelian $p$-subgroup of $GL_p(K)$. Then $H$ is $GL_p(K)$-conjugate to either a
subgroup of $T$ or to $A$.\end{prop}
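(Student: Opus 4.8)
The plan is to reduce the statement about abelian $p$-subgroups of the full group $GL_p(K)$ to the classification already obtained inside the normalizer $N = N_p = \Sigma_p T_p$. The key input is that every $p$-subgroup of $GL_p(K)$ is contained in some Sylow $p$-subgroup, and by Proposition \ref{Syl_p(G)} the group $N$ contains a Sylow $p$-subgroup $P$ of $GL_p(K)$. So, given an abelian $p$-subgroup $H \leqslant GL_p(K)$, after conjugating in $GL_p(K)$ we may assume $H \leqslant P \leqslant N$. Thus it suffices to show that an abelian $p$-subgroup of $N$ is $GL_p(K)$-conjugate either to a subgroup of $T$ or to $A$.

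Now I would feed such an $H \leqslant N$ into Proposition \ref{Classified Subgroups}. If $H$ is of type 1 there is nothing to do. If $H$ is of type 2, it is cyclic of order $p^{v+1}$, and all such are $N$-conjugate, hence $GL_p(K)$-conjugate; since $A$ itself is visibly of type 2 (it is cyclic of order $p^{v+1}$, generated by $a = \gamma(a_0,1,\ldots,1)$ with $a^p = (a_0,\ldots,a_0)$ a generator of $\Delta_p$), we get $H$ conjugate to $A$. The remaining case is type 3: $H$ is $N$-conjugate to a subgroup of $\langle\gamma\rangle.\Delta$. So I must handle abelian $p$-subgroups of $\langle\gamma\rangle.\Delta$, and show each is $GL_p(K)$-conjugate into $T$ (or is of type 2, already dealt with).

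The main obstacle is therefore this last case: analysing the abelian $p$-subgroups of $\langle\gamma\rangle.\Delta$ and conjugating them, using the extra room in $GL_p(K)$ beyond $N$. Here is the idea. The group $\langle\gamma\rangle.\Delta \cong C_p \times \Delta$ (as $\Delta$ is central in $N$ and $\gamma^p=1$); its $p$-part is $\langle\gamma\rangle \times \Delta_p \cong C_p \times C_{p^v}$. An abelian $p$-subgroup $H$ here either lies inside $\Delta_p \leqslant T$ (done), or projects onto $\langle\gamma\rangle$. In the latter case pick $h \in H$ with $\pi(h) = \gamma$; then $h$ has the form $\gamma\cdot(\delta,\ldots,\delta)$ for some $\delta \in K^\times$ whose order is a power of $p$ (since $h$ is a $p$-element and $h^p = (\delta^p,\ldots,\delta^p)$). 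Such an $h$ is conjugate in $GL_p(K)$ to a \emph{diagonal} matrix: it is an invertible matrix satisfying $h^{p^{w}} = 1$ for suitable $w$, hence is diagonalizable over $K$ provided $K$ contains the relevant $p^{w}$-th roots of unity — and it does, precisely because $v = v_p(|K^\times|) > 0$ forces all $p$-power roots of unity that can occur here (the eigenvalues are $p^w$-th roots of $\delta^{p}$, lying in $\mu_{p^{w}} \leqslant K^\times$ exactly when $w \leqslant v$; one checks $p^{w} \mid p^{v+1}$ i.e.\ $w\le v+1$, and the case $w=v+1$ is the cyclic type-2 case where $\delta$ generates $\Delta_p$). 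After conjugating $h$ into $T$, I would argue the whole of $H$ — being abelian and containing $h$ — is simultaneously conjugated into $T$, using that the centralizer in $GL_p(K)$ of a regular semisimple element is a (conjugate of a) maximal torus; when $h$ has distinct eigenvalues this centralizer is exactly a conjugate of $T$, so $H \leqslant C_{GL_p(K)}(h)$ lands in that torus. The one subtlety is when $h$'s eigenvalues are not all distinct, i.e. $\delta^{p} = 1$ so $h$ has order $p$ with eigenvalues the full set of $p$-th roots of unity (these \emph{are} distinct as $v>0$), so in fact $h$ is always regular semisimple here and this subtlety does not arise. Assembling these pieces gives the claim.
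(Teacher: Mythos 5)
Your proposal is correct and follows the paper's overall strategy: conjugate $H$ into the normalizer $N$ via Sylow's theorems, apply Proposition~\ref{Classified Subgroups}, note that types~1 and~2 are immediate, and reduce the type-3 case to showing that a subgroup of $\langle\gamma\rangle.\Delta$ is $GL_p(K)$-conjugate into $T$. The last step is where you diverge slightly. The paper exhibits explicit eigenvectors for $\gamma$ over $K$ (built from $p$-th roots of unity, available since $v>0$) and then uses that every element of $H$ is a power of $\gamma$ times a \emph{central} scalar, so the single conjugation diagonalising $\gamma$ simultaneously carries all of $H$ into $T$. You instead show that an element $h=\gamma(\delta,\ldots,\delta)\in H$ with $\pi(h)=\gamma$ is diagonalisable over $K$ with distinct eigenvalues, and then invoke the fact that the centraliser in $GL_p(K)$ of a diagonal matrix with distinct entries is exactly $T$, so that $H\leqslant C_{GL_p(K)}(h)$ is carried into $T$ along with $h$. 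Both arguments rest on the same root-of-unity input and both are valid; the paper's route is slightly more elementary in that it needs no centraliser lemma. Two small points of confusion in your writeup, neither of which affects correctness: the parenthetical about $w=v+1$ being ``the type-2 case'' is moot, since a type-3 subgroup sits inside $C_p\times\Delta_p$ which has exponent $p^v$, so $w\leq v$ automatically; and the ``one subtlety'' paragraph about possibly repeated eigenvalues is a false alarm that you yourself dismiss --- the eigenvalues of $\gamma(\delta,\ldots,\delta)$ are $\delta,\delta\zeta,\ldots,\delta\zeta^{p-1}$ for $\zeta$ a primitive $p$-th root of unity, hence always distinct.
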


\begin{pf} By Sylow's theorems we know that $H$ is $GL_p(K)$-conjugate to a subgroup of $P\leqslant N$.
Thus, by Proposition \ref{Classified Subgroups}, it is conjugate to either a subgroup of $T$, $A$ or a subgroup
of $\langle \gamma\rangle.\Delta_p$. We will assume the latter case and show that $H$ is actually
$GL_p(K)$-conjugate to a subgroup of $T$.

We can assume, without loss of generality, that $H$ is itself a subgroup of $\langle\gamma\rangle.\Delta$. Let
$\gamma(b,\ldots,b)\in H$ where $b\in K^\times$. Then, for any $g\in GL_p(K)$, we have
$g.\gamma(b,\ldots,b).g^{-1}=g\gamma g^{-1}(b,\ldots,b)$ since $(b,\ldots,b)\in Z(GL_p(K))$. Thus it remains to
show that $\gamma$ is diagonalisable.

Let $u$ be a generator of $K^\times\simeq C_{p^v}$. For $k=0,\ldots,p-1$ put
$$v_k=\left(\begin{array}{c} 1\\ u^{kp^{v-1}}\\
\vdots \\ u^{k(p-1)p^{v-1}}\end{array}\right)\in (K^\times)^p.$$ Then
$$\gamma.v_k=\left(\begin{array}{c} u^{kp^{v-1}}\\
\vdots \\ u^{k(p-1)p^{v-1}}\\ 1\end{array}\right)=u^{kp^{v-1}}\left(\begin{array}{c} 1\\ u^{kp^{v-1}}\\
\vdots \\ u^{k(p-1)p^{v-1}}\end{array}\right)=u^{kp^{v-1}}v_k.$$ Thus $v_k$ is an eigenvector of $\gamma$ with
eigenvalue $u^{kp^{v-1}}$. Hence $\gamma$ has distinct eigenvalues $1,u^{p{v-1}},\ldots,u^{(p-1)p^{v-1}}$ and
so, putting $g=(v_0 | \ldots | v_{p-1})$, we find that $g$ is invertible and \[g^{-1}.\gamma(b,\ldots,b).g = g
\gamma g^{-1} (b,\ldots,b)= (1,u^{p^{v-1}},\ldots,u^{(p-1)p^{v-1}})(b,\ldots,b)\in T.\qedhere\]\end{pf}

Later we will need some understanding of the action of $N_{GL_p(K)}(A)$ on $A$. We can understand this as
follows.

\begin{lem}\label{Normalizer of A} Let $g\in N_{GL_p(K)}(A)$. Then, writing $a$ for the usual generator of $A$, we have $gag^{-1}=a^{1+kp^v}$ for some $k$.\end{lem}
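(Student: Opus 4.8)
The plan is to combine two simple observations: first, that conjugation by $g$ must act as an automorphism of the cyclic group $A$, and second, that $a^p$ is a central scalar matrix in $GL_p(K)$.

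First I would note that since $g \in N_{GL_p(K)}(A)$ and $A = \langle a \rangle$ is cyclic of order $p^{v+1}$, the map $\conj_g$ restricts to an automorphism of $A$. Hence $gag^{-1} = a^j$ for some integer $j$, which is necessarily coprime to $p$ so that it represents a unit in $\mathbb{Z}/p^{v+1}$. The goal is to show $j \equiv 1 \pmod{p^v}$.

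Next, recall from the computation recorded just before the statement that $a^p = (a_0, \ldots, a_0)$; that is, $a^p$ is the scalar matrix $a_0 I \in GL_p(K)$, which lies in the centre of $GL_p(K)$. In particular $g$ commutes with $a^p$, so
$$a^p = g a^p g^{-1} = (gag^{-1})^p = a^{jp}.$$
Therefore $a^{p(j-1)} = 1$, and since $a$ has order exactly $p^{v+1}$ this forces $p^{v+1} \mid p(j-1)$, i.e. $p^v \mid j-1$. Writing $j = 1 + kp^v$ then gives $gag^{-1} = a^{1+kp^v}$, as required.

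This is a short argument and I do not anticipate a genuine obstacle; the only point worth isolating is the identification of $a^p$ with the central scalar $a_0 I$, which is immediate from the definition $a = \gamma(a_0, 1, \ldots, 1)$ together with Lemma \ref{N_d products}. Everything else is elementary cyclic-group arithmetic.
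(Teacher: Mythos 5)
Your proof is correct and follows essentially the same path as the paper's: both observe that $gag^{-1}=a^j$ for some $j$ coprime to $p$, use that $a^p$ is central (the paper phrases this as $a^p\in\Delta$) to deduce $a^{jp}=a^p$, and then conclude $j\equiv 1\pmod{p^v}$ from the order of $a$.
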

\begin{pf} If $g\in N_{GL_p(K)}(A)$ then $gag^{-1}=a^s$
for some $s$ not divisible by $p$. But $a^p\in \Delta$ and hence $a^p=ga^pg^{-1}=(gag^{-1})^p=a^{sp}=(a^p)^s$.
It follows that $p=ps$ mod $p^{v+1}$ so that $p(s-1)=0$ mod $p^{v+1}$ and therefore $s=1$ mod $p^v$.\end{pf}

%-----------------------------------------------------------------

\chapter{Formal group laws and the Morava $\mathbf{E}$-theories}\label{ch:FGLs}

\section{Formal group laws}

We outline the basic theory of formal group laws, covering only the material needed for the development of this
thesis. For more comprehensive accounts of the area see \cite{Frohlich}, \cite{Hazewinkel} or \cite{RavenelCC}.
As before, all rings and algebras are commutative and unital.

\subsection{Basic definitions and results}

\begin{defn}\label{Axioms of a FGL} Let $R$ be a commutative ring. A \emph{formal group law} over $R$ is a power series $F(x,y)\in R\lpow x,y\rpow$ such that
\begin{enumerate}
\item $F(x,0)=x,$
\item $F(x,y)=F(y,x)$,
\item $F(F(x,y),z)=F(x,F(y,z))$ in $R\lpow x,y,z\rpow$.
\end{enumerate}
We sometimes refer to axioms 1-3 above as identity, commutativity and associativity for $F$ respectively.
\end{defn}

\begin{examples} The two easiest examples of a formal group law (defined over any ring) are the \emph{additive formal group law},
$F_a(x,y)=x+y$, and the \emph{multiplicative formal group law}, $F_m(x,y)=x+y+xy$. These examples are atypical,
however: most formal group laws are genuine power series as opposed to polynomials.\end{examples}

It is perhaps not too surprising that a lot can be said about the form of such power series. We start with the
following lemma. From here on $F$ will denote a formal group law over a commutative ring $R$.

\begin{lem}\label{F(x,y)=x+y mod xy} $F(x,y)=x+y$ mod $(xy)$.\end{lem}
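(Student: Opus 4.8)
The plan is to expand $F$ as a general power series and use the three axioms to pin down the low-degree terms. Write $F(x,y)=\sum_{i,j\geq 0} a_{ij}x^iy^j$ with $a_{ij}\in R$. The identity axiom $F(x,0)=x$ immediately forces $a_{00}=0$, $a_{10}=1$, and $a_{i0}=0$ for all $i\geq 2$; by the commutativity axiom $F(x,y)=F(y,x)$ we also get $a_{01}=1$ and $a_{0j}=0$ for $j\geq 2$, and moreover $a_{ij}=a_{ji}$ throughout. So the only terms of total degree $\leq 1$ are $x$ and $y$, and every remaining monomial $a_{ij}x^iy^j$ with $(i,j)\notin\{(1,0),(0,1)\}$ has both $i\geq 1$ and $j\geq 1$ (since the pure-$x$ and pure-$y$ terms have been killed), hence is divisible by $xy$.

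Concretely, I would argue: $F(x,y)-x-y=\sum_{i,j\geq 1} a_{ij}x^iy^j$, and each term on the right lies in the ideal $(xy)$ of $R\lpow x,y\rpow$, so $F(x,y)\equiv x+y \pmod{(xy)}$. This does not even need the associativity axiom — identity and commutativity suffice.

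The main (and only) obstacle is really just bookkeeping: making sure the vanishing of the pure-power coefficients is correctly deduced from $F(x,0)=x$ and then transported to the pure-$y$ coefficients via symmetry, and being careful that "mod $(xy)$" is interpreted as working in the quotient ring $R\lpow x,y\rpow/(xy)$ where every monomial involving both variables vanishes. There is nothing deep here; it is a warm-up lemma, and the write-up should be just a few lines comparing coefficients.
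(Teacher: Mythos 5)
Your proof is correct and is essentially identical to the paper's: expand $F$ as a power series, use $F(x,0)=x$ to kill the pure-$x$ terms, use commutativity to kill the pure-$y$ terms, and observe that what remains lies in $(xy)$. The paper says ``similarly'' where you spell out the appeal to commutativity, and your observation that associativity is not needed is a correct (and implicit in the paper) aside.
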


\begin{pf} Write $F(x,y)=\sum_{i,j}
a_{ij}x^iy^j$ with $a_{ij}\in R$. Since $F(x,0)=x$ we get $a_{10}=1$ and $a_{i0}=0$ for $i\neq 1$. Similarly
$a_{01}=1$ and $a_{0j}=0$ for $j\neq 1$. But modulo $(xy)$ we have $\sum_{i,j} a_{ij}x^iy^j = a_{00} +
\sum_{i>0} a_{i0}x^i + \sum_{j>0} a_{0j}x^i = x+y$, as required.\end{pf}

\begin{lem}\label{Formal Inverses} (Formal inverse) For any formal group law $F$ there exists a unique power series $\iota(x)\in R\lpow x\rpow$ such that
$F(x,\iota(x))=0$.\end{lem}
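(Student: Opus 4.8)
The plan is to build $\iota(x)$ by successive approximation, determining its coefficients one degree at a time so that each homogeneous component of $F(x,\iota(x))$ is killed; carrying this out carefully yields existence and uniqueness simultaneously.

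First I would record that any solution has zero constant term: by commutativity and the identity axiom $F(0,y)=F(y,0)=y$, so the constant term of $F(x,\iota(x))$ is $\iota(0)$, forcing $\iota(0)=0$. So write $\iota(x)=\sum_{n\geq 1}c_n x^n$. Using Lemma \ref{F(x,y)=x+y mod xy} in the form $F(x,y)=x+y+xy\,G(x,y)$ for some $G\in R\lpow x,y\rpow$, one has
$$F(x,\iota(x)) = x+\iota(x) + x\,\iota(x)\,G(x,\iota(x)).$$
The coefficient of $x$ on the right is $1+c_1$, since $x\,\iota(x)$ has order $\geq 2$; so $c_1$ is forced to be $-1$, and that choice annihilates the linear part.

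For the inductive step, I would suppose $n\geq 2$ and that $c_1,\dots,c_{n-1}$ have been chosen so that $F(x,\iota(x))\equiv 0\pmod{x^n}$ (for which purpose $\iota$ may be replaced by the truncation $\sum_{k<n}c_k x^k$, since higher coefficients do not affect degrees below $n$). The coefficient of $x^n$ in $F(x,\iota(x))$ is then $c_n$ — contributed by the summand $\iota(x)$ — plus a term $P_n(c_1,\dots,c_{n-1};a_{ij})$ coming from $x\,\iota(x)\,G(x,\iota(x))$, where $a_{ij}$ are the coefficients of $F$; the point is that in that product any monomial of total degree $n$ uses $\iota$ at total degree $\leq n-1$, hence depends only on $c_1,\dots,c_{n-1}$. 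So the vanishing of the degree-$n$ component reads $c_n=-P_n(c_1,\dots,c_{n-1};a_{ij})$, which has a unique solution in $R$. This defines $c_n$ and the recursion proceeds.

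Finally I would conclude that the $c_n$ produced this way assemble into a power series $\iota(x)\in R\lpow x\rpow$ with every homogeneous component of $F(x,\iota(x))$ zero, i.e. $F(x,\iota(x))=0$, and that it is the only such series because at each stage $c_n$ was forced. The hard part is not conceptual but bookkeeping: one must verify cleanly that the degree-$n$ equation is linear in $c_n$ with the remaining contributions depending only on strictly lower coefficients, and this is precisely where the divisibility of $F(x,y)-x-y$ by $xy$ from Lemma \ref{F(x,y)=x+y mod xy} does the work. (A fixed-point/implicit-function argument would also work, but the explicit recursion is the most transparent.)
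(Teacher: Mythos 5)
Your proposal is correct and is essentially the paper's proof: both are the same successive-approximation recursion, using the fact that $F(x,y)-x-y$ is divisible by $xy$ to see that the degree-$n$ equation is linear in the new data and involves only lower-order information. The only cosmetic difference is that you determine the coefficients $c_n$ one at a time, whereas the paper builds the truncations $\iota_k(x)$ and passes to the limit, but these are the same construction in different notation.
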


\begin{pf} We define $\iota(x)$ inductively. Put $\iota_1(x)=-x$. Then we have $F(x,\iota_1(x))=x+(-x)=0$ mod $(x^2)$ by
the previous lemma. Suppose now that we have a power series $\iota_k(x)$ such that $F(x,\iota_k(x))=0$ mod
$(x^{k+1})$ and $\iota_k(x)=0$ mod $(x)$. Write $F(x,\iota_k(x))=ax^{k+1}$ mod $(x^{k+2})$ and put
$\iota_{k+1}(x)=\iota_{k}(x)-ax^{k+1}$. Then for any $j>0$, working modulo $(x^{k+2})$ we have
$\iota_{k+1}(x)^j=(\iota_k(x)-ax^{k+1})^j=\iota_k(x)^j$ (since $x|\iota_k(x)$) and similarly
$x^j\iota_{k+1}(x)=x^j(\iota_k(x)-ax^{k+1})=x^j\iota_{k}(x)$. It follows that
$F(x,\iota_{k+1}(x))=F(x,\iota_k(x))-ax^{k+1}=0$ mod $(x^{k+2})$.

Put $\iota(x)=\lim_{k\to\infty} \iota_k(x)$ which, since $\iota_k(x)=\iota_{k+1}(x)$ mod $(x^{k+1})$, is a well
defined power series with $F(x,\iota(x))=0$ mod $(x^k)$ for all $k$; that is $F(x,\iota(x))=0$. It is not hard
to see that any other $f(x)$ with this property must have $f(x)=\iota_k(x)=\iota(x)$ mod $(x^{k+1})$ for each
$k$ so that $f(x)=\iota(x)$, proving uniqueness.\end{pf}

\begin{cor} With the notation of Lemma \ref{Formal Inverses} we have $\iota(\iota(x))=x$.\end{cor}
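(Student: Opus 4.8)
The plan is to run the standard group-theoretic argument that inverses are involutive, $(g^{-1})^{-1}=g$, now in the formal setting. Write $\iota$ for the formal inverse power series supplied by Lemma~\ref{Formal Inverses}, so that $F(x,\iota(x))=0$ by definition. First I would substitute the power series $\iota(x)$ for the variable $x$ in this defining relation; since $\iota(x)\equiv -x$ modulo $(x^2)$ has zero constant term, this substitution is legitimate in $R\lpow x\rpow$, and it yields $F(\iota(x),\iota(\iota(x)))=0$ as well.

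The conclusion then drops out of a short manipulation with the three axioms for $F$ (identity, associativity, commutativity), together with the two instances $F(x,\iota(x))=0$ and $F(\iota(x),\iota(\iota(x)))=0$ just noted:
\[
x = F(x,0) = F\big(x,F(\iota(x),\iota(\iota(x)))\big) = F\big(F(x,\iota(x)),\iota(\iota(x))\big) = F(0,\iota(\iota(x))) = \iota(\iota(x)),
\]
where the middle step is associativity, and the first and last steps use $F(x,0)=x$ together with commutativity in the form $F(0,z)=F(z,0)=z$. Hence $\iota(\iota(x))=x$.

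I do not expect a genuine obstacle here; the only point needing a moment's care is the legitimacy of substituting a power series with zero constant term into the relation $F(x,\iota(x))=0$, which is routine in the $(x)$-adic topology. One could alternatively phrase the argument via uniqueness: both $x$ and $\iota(\iota(x))$ are formal inverses of $\iota(x)$, hence equal by Lemma~\ref{Formal Inverses}; but that requires first observing the evident extension of the lemma from the variable $x$ to an arbitrary power series with zero constant term, so the direct computation above is the cleaner route.
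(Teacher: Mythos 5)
Your proof is correct, but it takes a different route from the paper. The paper disposes of the corollary in one line as ``an immediate consequence of uniqueness and commutativity'': by commutativity $F(\iota(x),x)=0$, so $x$ is an inverse of $\iota(x)$, and uniqueness of the formal inverse forces $x=\iota(\iota(x))$. You instead give the direct chain
\[
x = F(x,0) = F\bigl(x,F(\iota(x),\iota(\iota(x)))\bigr) = F\bigl(F(x,\iota(x)),\iota(\iota(x))\bigr) = F(0,\iota(\iota(x))) = \iota(\iota(x)),
\]
which uses identity, associativity, and commutativity but not the uniqueness clause of Lemma~\ref{Formal Inverses}. You even identify the paper's argument as your ``alternative'' and correctly flag the small point it glosses over: that the uniqueness statement, as phrased for the variable $x$, needs to be re-read after substituting the power series $\iota(x)$ (legitimate since it has zero constant term). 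So your computation is slightly more self-contained and avoids that substitution remark entirely, at the cost of being a few lines longer; the paper's version is terser but leans on the reader to supply the substitution step. Either is acceptable.
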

\begin{pf} This is an immediate consequence of uniqueness and commutativity.\end{pf}

\begin{defn} We usually write $x+_F y$ for $F(x,y)$ and refer to this as the \emph{formal sum} of $x$ and $y$. The axioms
of Definition \ref{Axioms of a FGL} then translate as
\begin{enumerate}\item $x+_F 0= x,$
\item $x+_F y=y+_F x,$
\item $(x+_F y)+_F z = x+_F (y+_F z)$.
\end{enumerate}
Note that we may now use expressions of the form $x+_F y+_F z$ with no ambiguity.\end{defn}

\begin{lem}\label{iota(x+y)=iota(x)+iota(y)} $\iota(x+_F y)=\iota(x)+_F\iota(y)$.\end{lem}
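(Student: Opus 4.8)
The plan is to exploit the uniqueness clause of Lemma~\ref{Formal Inverses}: the formal inverse of any power series (with zero constant term) is the unique series that formally sums with it to zero, so it suffices to check that $\iota(x)+_F\iota(y)$ plays that role for $x+_F y$. Concretely, I would compute
$$(x+_F y)+_F\big(\iota(x)+_F\iota(y)\big)$$
and show it equals $0$, after which the desired identity follows because the formal inverse is unique. Strictly speaking Lemma~\ref{Formal Inverses} is stated for a single variable, so the cleanest route is to set $z=\iota(x)+_F\iota(y)\in R\lpow x,y\rpow$ and verify directly that $F(x+_F y,\,z)=0$ in $R\lpow x,y\rpow$; alternatively, one can substitute $x\mapsto x+_F y$ into the identity $F(w,\iota(w))=0$ and then identify $\iota(x+_F y)$.

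The key step is a purely formal manipulation using commutativity and associativity of $+_F$ (the translated axioms in the Definition following Lemma~\ref{iota(x+y)=iota(x)+iota(y)}'s predecessor), together with the defining property $w+_F\iota(w)=0$ for $w=x$ and $w=y$. Reassociating and commuting,
$$(x+_F y)+_F\big(\iota(x)+_F\iota(y)\big)
= \big(x+_F\iota(x)\big)+_F\big(y+_F\iota(y)\big)
= 0+_F 0 = 0,$$
where the first equality is a rearrangement valid since $x+_F(\cdot)$ is unambiguous, the second uses $x+_F\iota(x)=0=y+_F\iota(y)$, and the last uses axiom~1 ($0+_F0=0$). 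Then uniqueness in Lemma~\ref{Formal Inverses} gives $\iota(x+_F y)=\iota(x)+_F\iota(y)$.

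I do not expect a serious obstacle here; the only mild subtlety is making sure the reassociation of a formal sum of four terms is justified from the binary associativity axiom, but this is the standard ``generalised associativity'' argument and the paper has already flagged that expressions like $x+_F y+_F z$ are unambiguous, so the four-term rearrangement is immediate. A secondary point of care is that $\iota$ as constructed takes series with zero constant term to series with zero constant term, so $\iota(x)$, $\iota(y)$, $x+_F y$ and $\iota(x)+_F\iota(y)$ all lie in the maximal ideal of $R\lpow x,y\rpow$ and all the formal sums and the limit defining $\iota$ make sense; this is automatic from the shape of the formulas.
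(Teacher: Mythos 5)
Your proof is correct and takes the same route as the paper: both compute $(x+_F y)+_F(\iota(x)+_F\iota(y))$, rearrange by commutativity and associativity to obtain $0$, and conclude by the uniqueness clause of Lemma~\ref{Formal Inverses}. The paper's version is just slightly terser in the rearrangement step.
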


\begin{pf} We have $ (x+_F y)+_F(\iota(x)+_F\iota(y))= x+_F y+_F\iota(y)+_F\iota(x) = 0.$
Thus, by uniqueness, $\iota(x+_F y)= \iota(x)+_F\iota(y)$, as required.\end{pf}

\begin{defn} We sometimes write $-_Fx$ for $\iota(x)$ and define $x-_F y$ to be $F(x,\iota(y))$. For any $m\in \mathbb{N}$ we define $[m]_F(x)=x+_F\ldots +_F x$ ($m$
times) and $[-m]_F(x)=[m](\iota(x))$. We call $[m]_F(x)$ the \emph{$m$-series on $x$}. When there is no
ambiguity we may simply write $[m](x)$ for $[m]_F(x)$.\end{defn}

\begin{lem} For any $m\in \mathbb{Z}$ we have $[m](x+_F y)=[m](x)+_F [m](y)$.\end{lem}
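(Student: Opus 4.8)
The plan is to establish the identity first for $m\geq 0$ by induction on $m$, and then to deduce the case $m<0$ from the compatibility of the formal inverse with formal addition already recorded in Lemma \ref{iota(x+y)=iota(x)+iota(y)}.

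For the non-negative case I would treat the base cases directly: $[0](z)=0$ for any $z$ (the empty formal sum), so both sides of the claimed identity equal $0+_F 0=0$ when $m=0$, while for $m=1$ it is the tautology $x+_F y=x+_F y$. For the inductive step, assuming $[m](x+_F y)=[m](x)+_F[m](y)$, I would write
\[
[m+1](x+_F y)=(x+_F y)+_F[m](x+_F y)=(x+_F y)+_F[m](x)+_F[m](y),
\]
using the inductive hypothesis together with the convention, recorded earlier, that iterated formal sums require no parenthesisation. Commutativity and associativity of $+_F$ (axioms 2 and 3 of Definition \ref{Axioms of a FGL}) then allow the right-hand side to be regrouped as
\[
\big(x+_F[m](x)\big)+_F\big(y+_F[m](y)\big)=[m+1](x)+_F[m+1](y),
\]
which closes the induction.

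For $m<0$ I would write $m=-n$ with $n>0$. By definition $[-n](z)=[n](\iota(z))$, so
\[
[-n](x+_F y)=[n]\big(\iota(x+_F y)\big)=[n]\big(\iota(x)+_F\iota(y)\big)
\]
by Lemma \ref{iota(x+y)=iota(x)+iota(y)}, and applying the already-proved non-negative case to the right-hand side yields $[n](\iota(x))+_F[n](\iota(y))=[-n](x)+_F[-n](y)$, as required. I do not expect a genuine obstacle here: the only points demanding a little care are the degenerate base cases $m=0,1$ and checking that the regrouping step invokes nothing beyond the commutativity and associativity axioms together with the unambiguity of iterated formal sums, while the negative case rests entirely on Lemma \ref{iota(x+y)=iota(x)+iota(y)}.
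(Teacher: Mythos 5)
Your proof is correct and is essentially the same argument the paper intends: the paper's one-line proof ("follows straight from symmetry, associativity and Lemma \ref{iota(x+y)=iota(x)+iota(y)} since we can reorder the terms in the formal sum however we like") is exactly your induction on $m\geq 0$ via regrouping, together with the reduction of the negative case to Lemma \ref{iota(x+y)=iota(x)+iota(y)}. You have simply spelled out the details that the paper leaves implicit.
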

\begin{pf} This follows straight from symmetry, associativity and Lemma \ref{iota(x+y)=iota(x)+iota(y)} since we can reorder the terms in the formal sum however we like.\end{pf}

\begin{lem} For any $m,n\in\mathbb{Z}$ we have $[m+n](x)=[m](x)+_F[n](x)$.\end{lem}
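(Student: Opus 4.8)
The plan is to reduce everything to the case of non-negative exponents, where the identity is immediate, and then to bootstrap to all of $\mathbb{Z}$ using the formal inverse. First I would dispose of the case $m,n\geq 0$: here $[m+n](x)$ is the formal sum of $m+n$ copies of $x$, and grouping the first $m$ copies together and the last $n$ together, the (unambiguous) associativity of $+_F$ gives $[m+n](x)=[m](x)+_F[n](x)$ at once; the cases $m=0$ or $n=0$ are covered by the identity axiom $0+_F x=x$, since $[0](x)=0$.

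Next I would record the cancellation identity $[k](x)+_F[-k](x)=0$, valid for every $k\in\mathbb{Z}$. Using the corollary $\iota(\iota(x))=x$ one has $[-k](x)=[k](\iota(x))$ for all $k$ (for $k\geq 0$ this is just the definition), so by the preceding lemma that $[k](x+_F y)=[k](x)+_F[k](y)$, by Lemma \ref{Formal Inverses} which gives $x+_F\iota(x)=0$, and by the evident fact $[k](0)=0$,
\[
[k](x)+_F[-k](x)=[k](x)+_F[k](\iota(x))=[k](x+_F\iota(x))=[k](0)=0.
\]
From this the case $m,n\leq 0$ drops out by applying the non-negative case with $\iota(x)$ in place of $x$: writing $m=-m'$, $n=-n'$ with $m',n'\geq 0$, the identity $[m'+n'](\iota(x))=[m'](\iota(x))+_F[n'](\iota(x))$ is exactly $[m+n](x)=[m](x)+_F[n](x)$.

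Finally I would treat mixed signs; by commutativity of $+_F$ we may assume $m\leq 0\leq n$, and write $m=-m'$ with $m'\geq 0$. If $n\geq m'$ then $n+m\geq 0$, and splitting $[n](x)=[n-m'](x)+_F[m'](x)$ by the non-negative case, then using associativity and the cancellation identity, gives $[n](x)+_F[-m'](x)=[n-m'](x)+_F\big([m'](x)+_F[-m'](x)\big)=[n-m'](x)=[n+m](x)$. If instead $n<m'$ then $m+n=-(m'-n)$, so $[m+n](x)=[m'-n](\iota(x))$; splitting $[m'](\iota(x))=[m'-n](\iota(x))+_F[n](\iota(x))$ by the non-negative case and then using commutativity, associativity and the cancellation identity yields $[n](x)+_F[-m'](x)=[n](x)+_F[m'-n](\iota(x))+_F[n](\iota(x))=[m'-n](\iota(x))=[m+n](x)$. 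The main obstacle is exactly this mixed-sign step: it is the only place where copies of $x$ and copies of $\iota(x)$ genuinely cancel, so it cannot be handled by reordering alone but needs the formal-inverse identity of the second paragraph together with the subsidiary split on whether $|m|$ or $|n|$ is larger; everything else is routine bookkeeping with the identity and associativity axioms.
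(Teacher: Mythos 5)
Your proof is correct and follows essentially the same route as the paper: dispose of the non-negative case by associativity, reduce the both-negative case by substituting $\iota(x)$, and split the mixed-sign case into two subcases according to the sign of $m+n$, using the formal inverse (your cancellation identity $[k](x)+_F[-k](x)=0$, which the paper phrases via $-_F$) to handle the actual cancellation. The only differences are cosmetic — the paper compresses the non-negative and both-negative cases into "exercises in counting," and writes the mixed-sign argument in $-_F$ notation rather than spelling out the cancellation identity explicitly.
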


\begin{pf} If one of $m$ or $n$ is zero the result follows immediately since $x+_F 0 = x$. The cases $m,n>0$ and $m,n<0$ are exercises in counting.
Hence we can assume, without loss of generality, that $m>0>n$. Note that $[-n](x)=[n](\iota(x))=\iota([n](x))$
by Lemma \ref{iota(x+y)=iota(x)+iota(y)} so that we can take $m,n>0$ and prove $[m-n](x)=[m](x)-_F[n](x)$.

Suppose first that $m-n\geq0$. Then $[m-n](x)+_F[n](x)=[m](x)$ and hence
$[m-n](x)=[m](x)+_F\iota([n](x))=[m](x)-_F[n](x)$, as required. If, on the other hand, $m-n<0$ we have
$[n-m](x)=[n](x)-_F[m](x)$ by the previous workings and then
\[[m-n](x)=[n-m](\iota(x))=\iota([n-m](x))=\iota([n](x)-_F[m](x))=[m](x)-_F[n](x).\qedhere\]\end{pf}

\begin{lem} $[m](x)=mx$ mod $(x^2)$\end{lem}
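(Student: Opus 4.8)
The plan is to prove the statement $[m](x)=mx$ mod $(x^2)$ by induction on $m$, using the additivity of the $m$-series together with Lemma \ref{F(x,y)=x+y mod xy}, which tells us that $x+_F y = x+y$ modulo $(xy)$, and in particular modulo $(x^2)$ when both arguments are multiples of $x$. First I would observe that the case $m=0$ is immediate since $[0](x)=0$ by convention (the empty formal sum), and the case $m=1$ holds trivially as $[1](x)=x$.

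For the inductive step with $m\geq 1$, I would write $[m+1](x)=[m](x)+_F x$. By the inductive hypothesis $[m](x)\equiv mx$ mod $(x^2)$, so $[m](x)=mx+(\text{terms in }(x^2))$. Since both $[m](x)$ and $x$ are divisible by $x$, their product lies in $(x^2)$, so Lemma \ref{F(x,y)=x+y mod xy} gives $[m](x)+_F x \equiv [m](x)+x \equiv mx+x = (m+1)x$ mod $(x^2)$. This handles all $m>0$. For $m<0$, I would use the fact, established just above in the excerpt, that $[-m](x)=\iota([m](x))$ together with $\iota(x)=-x$ mod $(x^2)$; indeed $\iota_1(x)=-x$ from the construction in Lemma \ref{Formal Inverses}, and $\iota(x)\equiv\iota_1(x)$ mod $(x^2)$, so $\iota$ reduces to negation modulo $(x^2)$. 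Writing $\iota(x)=-x+(\text{terms in }(x^2))$ and feeding this into the positive case (formally, $[-m](x)=[m](\iota(x))$ with $-m>0$, expanding each formal summand modulo $(x^2)$), one gets $[-m](x)\equiv -mx$ mod $(x^2)$ as required. Alternatively, and perhaps more cleanly, I would use the identity $[m+n](x)=[m](x)+_F[n](x)$ from the preceding lemma to extend the congruence from $\mathbb{N}$ to $\mathbb{Z}$ in one stroke: if it holds for $m$ and for $-1$ (i.e. for $\iota(x)$), it holds for all integers by the same reduction modulo $(x^2)$.

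There is no real obstacle here — the only point requiring a moment's care is making sure that when one expands a formal sum of several terms each divisible by $x$, the cross terms all land in $(x^2)$, which is a straightforward consequence of iterating Lemma \ref{F(x,y)=x+y mod xy}. I would state that reduction as a one-line remark rather than belabour it.
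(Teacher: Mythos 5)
Your proposal is correct and follows essentially the same induction as the paper: base case $[1](x)=x$, inductive step via $[k+1](x)=[k](x)+_F x$ reduced modulo $(x^2)$ using Lemma \ref{F(x,y)=x+y mod xy}, and negative $m$ handled through the formal inverse $\iota$. The paper phrases the negative case as $[m](x)=-_F[-m](x)$ rather than $[m'](\iota(x))$, but these are interchangeable by Lemma \ref{iota(x+y)=iota(x)+iota(y)}, so there is no substantive difference.
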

\begin{pf} This is a simple induction argument. We know $[1](x)=x$. If $[k](x)=kx$ mod $(x^2)$ for some $k$
then $[k+1](x)=[k](x)+_F x=[k](x) + x$ mod $(x[k](x))$. It follows that, modulo $(x^2)$, we have
$[k+1](x)=[k](x)+x=kx+x=(k+1)x$. Hence $[m](x)=mx$ mod $(x^2)$ for $m\geq 0$. For $m<0$ we have
$[m](x)=-_F[-m](x)$ and the result follows.\end{pf}

\begin{cor}\label{[m](x) twiddles x} $[m](x)$ is a unit multiple of $x$ in $R\lpow x\rpow$ if and only if $m\in R^\times$.\end{cor}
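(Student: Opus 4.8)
The plan is to reduce everything to the elementary fact, already used in Section~\ref{sec:local rings} (citing \cite[Proposition 1]{Frohlich}), that a power series $h(x)\in R\lpow x\rpow$ is a unit if and only if $h(0)\in R^\times$. The key observation is that $[m](x)$ has zero constant term, so it is divisible by $x$, and the resulting cofactor has constant term exactly $m$.

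First I would check that $[m]_F(x)$ has no constant term. This is a quick induction on $m\geq 0$: $[1](x)=x$ has zero constant term, and $[k+1](x)=F([k](x),x)$, so evaluating at $x=0$ and using $F(0,0)=0$ (which follows from $F(x,0)=x$) gives $[k+1](0)=0$; the case $m<0$ follows since $[m](x)=\iota([-m](x))$ and $\iota(x)=0$ mod $(x)$ by Lemma~\ref{Formal Inverses}. Hence we may write $[m](x)=x\,h(x)$ for a unique $h(x)\in R\lpow x\rpow$ (uniqueness because multiplication by $x$ is injective on $R\lpow x\rpow$, comparing coefficients). By the preceding lemma, $[m](x)=mx$ mod $(x^2)$, so comparing the coefficient of $x$ gives $h(0)=m$.

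Now $[m](x)\sim x$ in $R\lpow x\rpow$ means $[m](x)=u\,x$ for some unit $u$. If $m\in R^\times$ then $h(0)=m\in R^\times$, so $h$ is a unit and $[m](x)=x\,h(x)\sim x$. Conversely, if $[m](x)=u\,x$ with $u$ a unit, then $x\,h(x)=x\,u$, and injectivity of multiplication by $x$ forces $h=u$; thus $m=h(0)=u(0)\in R^\times$.

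I do not expect any real obstacle here: the only points needing a word of care are the divisibility of $[m](x)$ by $x$ and the cancellation of the factor $x$, both of which are immediate from comparing coefficients. Everything else is a direct appeal to Lemma~\ref{Formal Inverses}, the lemma $[m](x)=mx$ mod $(x^2)$, and the unit criterion for power series.
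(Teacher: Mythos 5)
Your proof is correct and follows essentially the same route as the paper: factor $[m](x)=x\,h(x)$ using the previous lemma $[m](x)=mx$ mod $(x^2)$, note $h(0)=m$, and invoke the criterion that a power series is a unit iff its constant term is. The paper leaves the converse (cancellation of the factor $x$) implicit; you spell it out, but the argument is the same.
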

\begin{pf} By the previous result we have $[m](x)=x.f(x)$ for some power series $f(x)$ with constant term
$m$. Any such series is a unit in $R\lpow x\rpow$ if and only if the constant term is invertible.\end{pf}

\begin{defn} We write $\langle m\rangle(x)$ for the {\em divided $m$-series on $x$} which is defined to be
$[m](x)/x$. Note that, by the above result, this is a unit in $R\lpow x\rpow$ if and only if $m\in
R^\times$.\end{defn}

\begin{lem}\label{x-_F y twiddles x-y} $x-_F y$ is a unit multiple of $x-y$ in $R\lpow x,y\rpow$.\end{lem}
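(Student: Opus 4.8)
The plan is to show that $x -_F y$ is a unit multiple of $x - y$ in $R\lpow x, y\rpow$. Recall that $x -_F y = F(x, \iota(y))$ by definition.

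First I would observe that the key structural fact is that $x -_F x = 0$, which holds by the definition of the formal inverse: $F(x, \iota(x)) = 0$. This means that the power series $G(x,y) := x -_F y = F(x, \iota(y))$ vanishes when $x = y$. The strategy is then to exploit a ``factor theorem'' for power series: if a power series $G(x,y) \in R\lpow x, y\rpow$ satisfies $G(x, x) = 0$, then $(x - y)$ divides $G(x,y)$. One clean way to see this is to perform the change of variables $y \mapsto y$, $x \mapsto x + y$ (an automorphism of $R\lpow x, y\rpow$), so that $\widetilde{G}(x,y) := G(x+y, y)$ satisfies $\widetilde{G}(0, y) = G(y,y) = 0$; hence every monomial in $\widetilde{G}$ has positive $x$-degree, so $\widetilde{G}(x,y) = x \cdot H(x,y)$ for some $H \in R\lpow x, y\rpow$, and reversing the substitution gives $G(x,y) = (x - y) H(x - y, y)$.

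Next I would pin down the unit: I claim the cofactor has invertible constant term, so it is a unit in $R\lpow x, y\rpow$. By Lemma \ref{F(x,y)=x+y mod xy}, $F(x,y) = x + y \pmod{(xy)}$, and $\iota(y) = -y \pmod{(y^2)}$ (from the construction in Lemma \ref{Formal Inverses}, where $\iota_1(y) = -y$). Therefore $x -_F y = F(x, \iota(y)) = x + \iota(y) + (\text{terms in } (x\iota(y))) = x - y + (\text{higher order})$. More precisely, modulo $(x,y)^2$ we have $x -_F y \equiv x - y$. Feeding this into the factorization $x -_F y = (x-y) H(x-y, y)$ and comparing lowest-degree terms shows that $H$ has constant term $1$, hence $H(x-y,y) \in (R\lpow x, y\rpow)^\times$ (a power series over a ring is a unit iff its constant term is a unit).

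The main obstacle, such as it is, will be making the factor-theorem step rigorous over an arbitrary commutative ring rather than hand-waving it --- in particular justifying that the substitution $x \mapsto x + y$ is a well-defined continuous ring automorphism of $R\lpow x, y\rpow$ and that ``no constant term in $x$'' genuinely yields divisibility by $x$ in the power series ring. This is routine but deserves a sentence. An alternative that avoids the substitution entirely is a direct induction: writing $F(x, \iota(y)) = \sum_{i,j} c_{ij} x^i y^j$, the relation $F(x, \iota(x)) = 0$ forces $\sum_{i+j = n} c_{ij} = 0$ for each $n$, and one then constructs the cofactor degree by degree; but the substitution argument is cleaner and I would present that.
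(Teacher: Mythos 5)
Your proof is correct, but it takes a genuinely different route from the paper's. You use a general-purpose ``factor theorem'' for power series: since $G(x,y) := x -_F y$ vanishes on the diagonal $x=y$, the substitution $x\mapsto x+y$ converts this into a vanishing-at-$x=0$ condition, forcing divisibility by $x$, and then reversing the substitution gives divisibility of $G$ by $x-y$; you then identify the cofactor as a unit by comparing lowest-degree terms. The paper instead exploits the formal-group identity directly: from $x = (x -_F y) +_F y$ and Lemma \ref{F(x,y)=x+y mod xy} it writes $x = (x-_F y) + y + (x-_F y)\,y\,f(x,y)$, which rearranges in one step to $(x -_F y)\bigl(1 + y f(x,y)\bigr) = x - y$ with the unit cofactor $1 + yf(x,y)$ handed to you for free. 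Your argument is more general (it would factor any $G$ vanishing on the diagonal and is a reusable lemma), at the cost of needing to justify the substitution automorphism and a separate lowest-degree comparison; the paper's is shorter and more tightly tied to the formal-group structure, producing the explicit unit in a single algebraic rearrangement. Both are rigorous.
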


\begin{pf} Using Lemma \ref{F(x,y)=x+y mod xy} we have $x=x-_F y +_F y=x-_F y + y + (x-_F y)y f(x,y)$ for some $f(x,y)\in R\lpow
x,y\rpow$. Thus we get $(x-_F y)(1+yf(x,y))=x-y$. Since $1+yf(x,y)$ has invertible constant term it is a unit in
$R\lpow x,y\rpow$ and we are done.\end{pf}

\subsection{Formal logarithms and $p$-typical formal group laws}

We fix a prime $p$ and make the following definitions.

\begin{defn} Given formal groups $F$ and $G$ over a ring $R$, we define a homomorphism from $F$ to $G$ to be a power series $f(x)\in R\lpow x\rpow$ with
zero constant term such that $f(F(x,y))=G(f(x),f(y))$. Such a power series is an isomorphism if and only if
$f(x)$ is invertible under composition, that is if there is $g(x)$ with $f(g(x))=x=g(f(x))$. Note that this
occurs precisely when the coefficient of $x$ is invertible in $R$. We call an isomorphism of formal groups
\emph{strict} if $f(x)=x$ mod $x^2$.\end{defn}

This construction allows us, should we so desire, to form a category of formal groups laws $FGL(R)$ over any
ring $R$.

\begin{prop}\label{FGLs over Q-algebras are iso to additive} Let $F(x,y)$ be a formal group law over a $\mathbb{Q}$-algebra $R$. Then there is a unique power series $l_F(x)\in R\lpow x\rpow$
such that $l_F(0)=0$, ${l_F}'(0)=1$, and $l_F (F(x,y))=l_F(x)+l_F(y)$. That is, $F$ is canonically isomorphic to
the additive formal group law $F_a$.\end{prop}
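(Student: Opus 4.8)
The plan is to construct $l_F$ by formally integrating the invariant differential and then verify directly that it linearises $F$. Write $F(x,y)=\sum_{i,j}a_{ij}x^iy^j$ and set $F_2(x,y)=\partial F/\partial y\,(x,y)\in R\lpow x,y\rpow$. Since $R$ is a $\mathbb{Q}$-algebra the power series $F_2(x,0)$ has constant term $a_{10}=1$ (by Lemma \ref{F(x,y)=x+y mod xy}) and hence is invertible in $R\lpow x\rpow$; put $\omega(x)=F_2(x,0)^{-1}$, a power series with constant term $1$. Now define $l_F(x)=\int_0^x \omega(t)\,dt$ formally, i.e. if $\omega(x)=\sum_k c_k x^k$ then $l_F(x)=\sum_k \frac{c_k}{k+1}x^{k+1}$; this makes sense precisely because we can divide by the positive integers $k+1$. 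By construction $l_F(0)=0$ and $l_F'(0)=c_0=1$.

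The main step is to show $l_F(F(x,y))=l_F(x)+l_F(y)$. First I would establish the functional equation $l_F'(F(x,y))\,F_2(x,y)=l_F'(y)$. To see this, differentiate the associativity law $F(F(x,y),z)=F(x,F(y,z))$ with respect to $z$ and then set $z=0$: the left side gives $F_2(F(x,y),0)\cdot(\text{something})$ — more carefully, differentiating $F(x,F(y,z))$ in $z$ at $z=0$ gives $F_2(x,F(y,0))F_2(y,0)=F_2(x,y)F_2(y,0)$, while differentiating $F(F(x,y),z)$ in $z$ at $z=0$ gives $F_2(F(x,y),0)$. Hence $F_2(F(x,y),0)=F_2(x,y)\,F_2(y,0)$, which rearranges to $\omega(F(x,y))\,F_2(x,y)=\omega(y)$, i.e. $l_F'(F(x,y))\,F_2(x,y)=l_F'(y)$. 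Now fix $x$ and regard both sides of the desired identity as power series in $y$; differentiating $l_F(F(x,y))$ with respect to $y$ via the chain rule gives $l_F'(F(x,y))F_2(x,y)=l_F'(y)$, which is exactly the $y$-derivative of $l_F(x)+l_F(y)$. Since the two sides of $l_F(F(x,y))=l_F(x)+l_F(y)$ have equal $y$-derivatives and agree at $y=0$ (both equal $l_F(x)$, using $F(x,0)=x$), and since $R\lpow x,y\rpow$ is torsion-free over $\mathbb{Q}$ so that formal antiderivatives are unique up to a constant, the identity follows.

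For uniqueness of $l_F$: if $g(x)$ is another such series, then $h(x)=l_F(x)-g(x)$ satisfies $h(0)=0$, $h'(0)=0$, and $h(F(x,y))=h(x)+h(y)$. Applying $\partial/\partial y$ at $y=0$ gives $h'(x)F_2(x,0)=h'(0)=0$, and since $F_2(x,0)$ is a unit this forces $h'(x)=0$, hence $h(x)=0$ because $h(0)=0$ and $R$ is a $\mathbb{Q}$-algebra (no additive torsion to obstruct integrating $h'=0$). Finally, the last sentence of the proposition is just a restatement: $l_F$ is by construction invertible under composition (coefficient of $x$ equal to $1$) and $l_F(F(x,y))=l_F(x)+l_F(y)=F_a(l_F(x),l_F(y))$, so $l_F$ is a strict isomorphism $F\iso F_a$.

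I expect the main obstacle to be bookkeeping with the formal partial derivatives — in particular justifying the chain rule and the "equal derivative plus equal constant term implies equal" principle at the level of formal power series in two variables. None of this is deep, but it needs the $\mathbb{Q}$-algebra hypothesis in an essential way (to divide by integers when antidifferentiating), and one should be slightly careful that all the manipulations take place in $R\lpow x,y\rpow$ where they are literally valid rather than appealing to analytic intuition.
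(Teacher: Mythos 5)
Your proposal is correct and is essentially the same argument the paper invokes: the paper's proof simply cites \cite[Theorem A2.1.6]{RavenelCC} and records the formula $l_F(x)=\int_0^x dt/F_2(t,0)$, and what you have written out is precisely the standard verification (via the associativity-derived identity $F_2(F(x,y),0)=F_2(x,y)F_2(y,0)$ and formal integration) that this formula works. One small slip worth correcting: the constant term of $F_2(x,0)$ is $a_{01}$, not $a_{10}$ (though both equal $1$ by Lemma \ref{F(x,y)=x+y mod xy}, so nothing breaks), and the invertibility of $F_2(x,0)$ holds over any ring — the $\mathbb{Q}$-algebra hypothesis is used only for the formal antiderivative and for concluding $h'=0,\ h(0)=0 \Rightarrow h=0$, exactly as you say later in the argument.
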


\begin{pf} This is proved in \cite[Theorem A2.1.6]{RavenelCC}; if we write $F_2(x,y)=\partial F/\partial y$ the series
$l_F(x)$ which does the job is given by \[l_F(x)=\int_0^x \frac{dt}{F_2(t,0)}.\qedhere\]\end{pf}

\begin{defn} A strict isomorphism between a formal group law $F$ and the additive formal
group law $x+y$ is known as a \emph{formal logarithm} for $F$ and written $\log_F(x)$. By Proposition \ref{FGLs
over Q-algebras are iso to additive}, such a thing exists uniquely if $R$ is a $\mathbb{Q}$-algebra. Of course
they can also occur for other rings.\end{defn}

\begin{defn} Let $F(x,y)$ be a formal group law over a torsion-free $\mathbb{Z}_{(p)}$-algebra $R$. Then we call $F$
\emph{$p$-typical} if it has a formal logarithm of the form $\log_F(x)=x+\sum_{i>0} l_ix^{p^i}$ over
$\mathbb{Q}\tensor R$.\end{defn}

Our next result concerns one such $p$-typical formal group law.

\begin{prop}\label{Universal deformation FGL} Let $n$ be a positive integer. Then there is a $p$-typical formal group law $F$ over $\mathbb{Z}_p\lpow u_1,\ldots,u_{n-1}\rpow$ such that
$$[p](x)=\exp_F(px)+_F u_1x^p+_F\ldots+_F u_{n-1}x^{p^{n-1}}+_F x^{p^n},$$
where $\exp_F(x)$ is the inverse to $\log_F(x)$. In particular,
$$[p](x)=u_ix^{p^i}\text{ mod }(p,u_1,\ldots,u_{i-1},x^{p^{i+1}}).$$\end{prop}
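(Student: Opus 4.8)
The statement to prove is Proposition~\ref{Universal deformation FGL}: the existence of a $p$-typical formal group law $F$ over $\mathbb{Z}_p\lpow u_1,\ldots,u_{n-1}\rpow$ whose $p$-series satisfies $[p](x)=\exp_F(px)+_F u_1x^p+_F\ldots+_F u_{n-1}x^{p^{n-1}}+_F x^{p^n}$, together with the congruence $[p](x)=u_ix^{p^i}$ mod $(p,u_1,\ldots,u_{i-1},x^{p^{i+1}})$. The natural approach is to invoke the theory of $p$-typical formal group laws over torsion-free $\mathbb{Z}_{(p)}$-algebras: a $p$-typical formal group law is determined by its logarithm $\log_F(x)=\sum_{i\geq 0}l_ix^{p^i}$ (with $l_0=1$) over $\mathbb{Q}\tensor R$, and conversely any such logarithm with $l_0=1$ defines a $p$-typical formal group law \emph{provided} the resulting power series $F(x,y)=\exp_F(\log_F(x)+\log_F(y))$ actually has coefficients in $R$ rather than just in $\mathbb{Q}\tensor R$. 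So the first step is to set $R=\mathbb{Z}_p\lpow u_1,\ldots,u_{n-1}\rpow$ (which is torsion-free, being a power series ring over the domain $\mathbb{Z}_p$), and to use the prescribed $p$-series equation to solve recursively for the coefficients $l_i$ of the logarithm.

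\textbf{Solving for the logarithm.} Applying $\log_F$ to the defining equation for $[p](x)$ turns formal sums into ordinary sums: $\log_F([p](x))=px+u_1\log_F(x^p)+\ldots+u_{n-1}\log_F(x^{p^{n-1}})+\log_F(x^{p^n})$. On the other hand $\log_F([p](x))=p\log_F(x)$ since $\log_F$ is a homomorphism to the additive group. Writing $\log_F(x)=\sum_{i\geq 0}l_ix^{p^i}$ and comparing coefficients of $x^{p^k}$ gives the recursion $p\,l_k = l_{k-1}^{p}\,? $ — more precisely, $\log_F(x^{p^j})=\sum_i l_i x^{p^{i+j}}$, so the coefficient of $x^{p^k}$ on the right is $p\,[k=0] + \sum_{j=1}^{n-1}u_j l_{k-j} + l_{k-n}$ (with $l_m=0$ for $m<0$), yielding $p\,l_k = l_{k-n} + \sum_{j=1}^{n-1}u_j\,l_{k-j}$ for $k\geq 1$, and $l_0=1$. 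This is a Hazewinkel-type functional equation; it determines each $l_k$ uniquely in $\mathbb{Q}\tensor R$, and the standard functional equation lemma (see \cite{Hazewinkel} or \cite{RavenelCC}) guarantees that the associated $F(x,y)$, $\exp_F$, $[p](x)$, and indeed $[m](x)$ for all $m$ all have coefficients in $R$ itself. This is the step I expect to be the main obstacle: verifying the integrality hypotheses of the functional equation lemma in the form needed here (one must check the congruence condition relating the $u_j$, the prime $p$, and the Frobenius-type operator, which here is just the identity on $R$ since the $u_j$ are the free generators), rather than any of the coefficient bookkeeping.

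\textbf{The congruence.} Once $F$ is in hand, the final congruence is almost immediate from the defining equation for $[p](x)$. Working modulo $(p,u_1,\ldots,u_{i-1})$, every term $\exp_F(px)$ (divisible by $p$) and $u_1x^p,\ldots,u_{i-1}x^{p^{i-1}}$ vanishes, so $[p](x)\equiv u_ix^{p^i}+_F u_{i+1}x^{p^{i+1}}+_F\ldots+_F x^{p^n}$. Since $x+_F y = x+y$ mod $(xy)$ by Lemma~\ref{F(x,y)=x+y mod xy}, and every summand after the first is divisible by $x^{p^{i+1}}$, reducing further modulo $(x^{p^{i+1}})$ kills all of them and leaves $[p](x)\equiv u_ix^{p^i}$, as claimed. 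I would present the existence/integrality part by citing the functional equation lemma from \cite{Hazewinkel} or \cite{RavenelCC} and the $p$-typicality criterion, then spell out the two short congruence arguments explicitly.
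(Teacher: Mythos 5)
Your overall strategy mirrors the paper's: both work with the logarithm over $\mathbb{Q}\tensor R$ and appeal to the functional equation lemma of Hazewinkel for integrality of $F$; the paper writes down the explicit closed-form for $\log_F$ (as a sum over index sequences $I$ with coefficients $u_I/p^{|I|}$) and then verifies the $p$-series formula directly, whereas you go in the opposite direction and try to derive a recursion for the $l_k$ from the $p$-series formula. That direction is fine in principle, but there is a concrete error in your derivation.

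You wrote ``Applying $\log_F$ to the defining equation for $[p](x)$ turns formal sums into ordinary sums: $\log_F([p](x))=px+u_1\log_F(x^p)+\ldots+u_{n-1}\log_F(x^{p^{n-1}})+\log_F(x^{p^n})$.'' This treats $\log_F$ as though it were $R$-linear in its argument, which it is not: $\log_F$ is a power series $y\mapsto y+l_1y^p+l_2y^{p^2}+\cdots$, so $\log_F(u_j x^{p^j})=u_jx^{p^j}+l_1u_j^px^{p^{j+1}}+l_2u_j^{p^2}x^{p^{j+2}}+\cdots\neq u_j\log_F(x^{p^j})$. The correct comparison of coefficients of $x^{p^k}$ gives the Araki--Hazewinkel recursion $p\,l_k=\sum_{j=1}^{n}u_j^{p^{k-j}}l_{k-j}$ (with $u_n=1$ and $l_m=0$ for $m<0$), not $p\,l_k=\sum_j u_j\,l_{k-j}$ as you wrote. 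The missing Frobenius powers $u_j^{p^{k-j}}$ are not a cosmetic detail: the functional equation lemma you plan to cite requires the recursion to be precisely of this twisted form, and solving the untwisted recursion you wrote down would produce a different power series, whose associated $F$ would generally not have coefficients in $R$. The tentative ``$p\,l_k=l_{k-1}^p\,?$'' you started to write was actually closer to the truth than what you replaced it with (in the $n=1$ case that recursion is exactly right). Your final congruence argument, reducing first mod $(p,u_1,\ldots,u_{i-1})$ and then mod $(x^{p^{i+1}})$ using $x+_Fy\equiv x+y\bmod(xy)$, is correct and matches the paper's conclusion.
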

\begin{pf} We give the logarithm for $F$ explicitly, following \cite[pp.204-205]{FSFG} which in turn follows
\cite[Section 4.3]{RavenelCC}. Let $\mathcal{I}$ be the set of non-empty sequences of the form
$I=(i_1,\ldots,i_m)$ with $0<i_k\leq n$ for each $k$. We define $|I|=m$ and $\|I\|=i_1+\ldots+i_m$. Letting
$j_k=\sum_{1\leq l<k} i_l$ we put $u_I=\prod_{k=1}^m u_{i_k}^{p^{j_k}}=u_{i_1}^{p^0}.u_{i_2}^{p^{i_1}}\ldots
u_{i_m}^{p^{i_1+\ldots+i_{m-1}}}$ (where we use the convention $u_n=1$). We then let
$$l(x)=x+\sum_{I\in\mathcal{I}} \frac{u_I}{p^{|I|}}x^{p^{\|I\|}}\in \mathbb{Q}_p\lpow u_1,\ldots,u_{n-1}\rpow\lpow x\rpow$$
and put $F(x,y)=l^{-1}(l(x)+l(y))$. It can be shown that $l(x)$ satisfies a functional equation of a suitable
form so that the functional equation lemma can be applied (see \cite{Hazewinkel}) and $F$ is in fact a formal
group law over $\mathbb{Z}_p\lpow u_1,\ldots,u_{n-1}\rpow$.

For the statement concerning the $p$-series, it suffices to show that
$$l([p](x))=l(\exp_F(px)+_F u_1x^p+_F\ldots+_F u_{n-1}x^{p^{n-1}}+_F x^{p^n})$$
as the result would follow on applying $l^{-1}$. Now, using the notation $u_n=1$ for simplicity, we have
\begin{eqnarray*}
l(\exp_F(px)+_F u_1x^p+_F\ldots+_F u_{n-1}x^{p^{n-1}}+_F x^{p^n}) & = & px + l(u_1x^p)+ \ldots+
l(x^{p^n})\\
&=& px + \sum_{j=1}^n l(u_j x^{p^j}).
\end{eqnarray*}
Given $I\in \mathcal{I}$, write $I(j)=(i_1,\ldots,i_m,j)$. Then $|I(j)|=|I|+1$, $\|I(j)\|=\|I\|+j$ and
$u_{I(j)}=u_I.u_j^{p^{\|I\|}}$. Thus
\begin{eqnarray*}
l(u_j x^{p^j}) & = & u_j x^{p^j} + \sum_{I\in\mathcal{I}} \frac{u_I}{p^{|I|}}(u_j x^{p^j})^{p^{\|I\|}}\\
& = & p.\frac{u_j}{p} x^{p^j} + p\sum_{I\in\mathcal{I}} \frac{u_I}{p^{|I|+1}}u_j^{p^{\|I\|}} x^{p^{\|I\|+j}}\\
& = & p\left(\frac{u_{(j)}}{p^{|(j)|}} x^{p^{\|(j)\|}} + \sum_{I\in\mathcal{I}}
\frac{u_{I(j)}}{p^{|I(j)|}}x^{p^{\|I(j)\|}}\right).\end{eqnarray*} Hence
\begin{eqnarray*}
px + \sum_{j=1}^n l(u_j x^{p^j}) & = & p\left(x + \sum_{j=1}^n\left(\frac{u_{(j)}}{p^{|(j)|}} x^{p^{\|(j)\|}} +
\sum_{I\in\mathcal{I}} \frac{u_{I(j)}}{p^{|I(j)|}}x^{p^{\|I(j)\|}}\right)\right)\\
&=& p\left(x+\sum_{I\in\mathcal{I}} \frac{u_I}{p^{|I|}}x^{p^{\|I\|}}\right)\\
&=& p(l(x))\\
&=& l([p](x))\end{eqnarray*} and we are done.\end{pf}

\begin{cor}\label{p-typical FGL over Z} Let $n$ be a positive integer and let $l(x)=\sum_{i\geq 0} x^{p^{ni}}/p^i\in \mathbb{Q}_p\lpow x\rpow$. Then there is
a formal group law $F_1(x,y)\in \mathbb{Z}_p\lpow x,y\rpow$ with $\log_{F_1}(x)=l(x)$ over $\mathbb{Q}_p$.
Further, $[p](x)=l^{-1}(px)+_{F_1} x^{p^n}$.\end{cor}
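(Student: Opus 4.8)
The plan is to obtain this as the specialisation of Proposition \ref{Universal deformation FGL} in which all of the parameters $u_1,\ldots,u_{n-1}$ are set to zero. Let $\phi:\mathbb{Z}_p\lpow u_1,\ldots,u_{n-1}\rpow\to\mathbb{Z}_p$ be the (continuous) ring homomorphism sending each $u_i$ to $0$, and let $F_1=\phi_*F$ be the power series over $\mathbb{Z}_p$ obtained by applying $\phi$ to the coefficients of the $p$-typical formal group law $F$ produced by that proposition. Since the three axioms of Definition \ref{Axioms of a FGL} are preserved by any ring homomorphism, $F_1$ is again a formal group law, now over $\mathbb{Z}_p$.

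First I would identify $\log_{F_1}$. In the proof of Proposition \ref{Universal deformation FGL} the logarithm is $l_F(x)=x+\sum_{I\in\mathcal{I}}\frac{u_I}{p^{|I|}}x^{p^{\|I\|}}\in\mathbb{Q}_p\lpow u_1,\ldots,u_{n-1}\rpow\lpow x\rpow$, and $\phi$ extends to the evaluation-at-the-origin map $\mathbb{Q}_p\lpow u_1,\ldots,u_{n-1}\rpow\to\mathbb{Q}_p$. Using the convention $u_n=1$, the only sequences $I=(i_1,\ldots,i_m)$ with $\phi(u_I)\neq 0$ are those with every $i_k=n$, for which $|I|=m$, $\|I\|=nm$ and $\phi(u_I)=1$; hence $\phi(l_F(x))=x+\sum_{m\geq1}x^{p^{nm}}/p^m=\sum_{i\geq0}x^{p^{ni}}/p^i=l(x)$. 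Applying $\phi$ to the identity $l_F(F(x,y))=l_F(x)+l_F(y)$ shows $l(x)$ is a strict isomorphism from $F_1$ to the additive formal group law over $\mathbb{Q}_p$, so by the uniqueness in Proposition \ref{FGLs over Q-algebras are iso to additive} we get $\log_{F_1}(x)=l(x)$; in particular $F_1(x,y)=l^{-1}(l(x)+l(y))$ and $F_1$ has coefficients in $\mathbb{Z}_p$ as a subring of $\mathbb{Q}_p$.

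Next I would read off the $p$-series. By Proposition \ref{Universal deformation FGL}, $[p]_F(x)=\exp_F(px)+_Fu_1x^p+_F\cdots+_Fu_{n-1}x^{p^{n-1}}+_Fx^{p^n}$ over $\mathbb{Z}_p\lpow u_1,\ldots,u_{n-1}\rpow$. The operations $+_F$, $[p]_F$ and $\exp_F$ are obtained from the coefficients of $F$ (and of $\log_F$) by substitution into power series, hence are compatible with applying $\phi$; so $\phi_*[p]_F=[p]_{F_1}$, $\phi_*\exp_F=\exp_{F_1}$, and $x+_{F_1}y$ is $F_1(x,y)$. Applying $\phi$, using $\phi(u_i)=0$ and $a+_{F_1}0=a$, gives $[p]_{F_1}(x)=\exp_{F_1}(px)+_{F_1}x^{p^n}$. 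Since $\exp_{F_1}=(\log_{F_1})^{-1}=l^{-1}$ this is precisely $[p](x)=l^{-1}(px)+_{F_1}x^{p^n}$.

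The only point that genuinely needs checking — the "main obstacle", modest as it is — is that base change along $\phi$ commutes with all of these constructions: that $\phi_*F$ is a formal group law, that $\phi$ extends compatibly to the relevant rationalised power-series rings so that $\phi_*\log_F=\log_{F_1}$, and that $\phi_*[p]_F=[p]_{F_1}$ and $\phi_*\exp_F=\exp_{F_1}$. Each of these is routine once one observes that $\phi$ is continuous and each construction is a composite of polynomial operations on the coefficients. Alternatively, one could avoid Proposition \ref{Universal deformation FGL} entirely: $l(x)$ satisfies the functional equation $l(x)=x+\tfrac1p\,l(x^{p^n})$, so Hazewinkel's functional equation lemma (with the single parameter $1/p$) shows directly that $F_1(x,y)=l^{-1}(l(x)+l(y))$ has coefficients in $\mathbb{Z}_p$, and the $p$-series identity then follows by the same manipulation of $l$ applied to the right-hand side, exactly as in the proof of Proposition \ref{Universal deformation FGL}.
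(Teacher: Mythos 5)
Your proof is correct and takes essentially the same approach as the paper: both specialise Proposition \ref{Universal deformation FGL} along $u_i\mapsto 0$, the only real difference being that the paper re-derives the $p$-series identity $l([p](x))=pl(x)$ directly as an illustrative simplification, while you obtain it by pushing the general formula for $[p]_F$ forward along $\phi$ and simplifying. Your supporting observations---that $\phi(u_I)$ vanishes unless every entry of $I$ equals $n$, and that base change along $\phi$ commutes with $\log$, $\exp$, $+_F$ and $[p]$---are exactly the points the paper leaves implicit in asserting that the claims follow by reduction.
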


\begin{pf} These claims all follow easily by reducing the results of Proposition \ref{Universal deformation FGL}
modulo $(u_1,\ldots,u_{n-1})$. The proof of the final statement is, perhaps, worth including as a simplified
version of the proof of the corresponding result in \ref{Universal deformation FGL}. We have
\begin{eqnarray*}
l(l^{-1}(px)+_{F_1}x^{p^n}) & = & l(l^{-1}(px))+l(x^{p^n})\\
&=& px + \sum_{i\geq 0} (x^{p^n})^{p^{ni}}/p^i\\
&=& px + p\sum_{i\geq 0} x^{p^{(n+1)i}}/p^{i+1}\end{eqnarray*} which is easily seen to be $pl(x)=l([p](x))$.
Applying $l^{-1}$ we get the result.\end{pf}

\begin{rem} If we define $F_0(x,y)$ to be $F_1(x,y)$ reduced mod $p$ (a formal group law over $\mathbb{F}_p$) then it can be shown that
$F$ is the universal deformation of $F_0$ (see \cite{FSFG} and \cite{LubinTate}). We will later use $F$ to
define our cohomology theory.\end{rem}

\subsection{Formal group laws over fields of characteristic $p$}

\begin{lem}\label{height of FGL} Let $F$ be a formal group law over a field $K$ of characteristic $p$. Then either $[p](x)=0$ or there exists an integer $n> 0$ such that
$[p](x)=ax^{p^n}$ mod $(x^{p^n + 1})$ for some $a\in K^\times$.\end{lem}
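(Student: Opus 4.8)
The plan is to start from the observation that $p=0$ in $K$, so the earlier identity $[p](x)=px$ mod $(x^2)$ shows $[p](x)$ has no constant or linear term: write $[p](x)=\sum_{k\ge 2}c_kx^k$ with $c_k\in K$. If all $c_k$ vanish we are in the first case, $[p](x)=0$, and there is nothing more to do. So assume some $c_k\ne 0$ and let $d\ge 2$ be minimal with $c_d\ne 0$. The goal is to prove $d=p^n$ for some $n\ge 1$ and then take $a=c_d$.

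The key input is that $[p]$ is an endomorphism of $F$, i.e. $[p](x+_Fy)=[p](x)+_F[p](y)$ (proved earlier). I would compare the degree-$d$ homogeneous parts of the two sides, working in $K\lpow x,y\rpow$. On the left, $x+_Fy=x+y$ mod $(xy)$ by Lemma \ref{F(x,y)=x+y mod xy}, so substituting $z=x+_Fy$ into $[p](z)=c_dz^d+\cdots$ shows that the degree-$d$ homogeneous part of $[p](x+_Fy)$ is exactly $c_d(x+y)^d$ (every other contribution has total degree $>d$). On the right, $[p](x)+_F[p](y)=[p](x)+[p](y)$ mod $([p](x)[p](y))$, again by Lemma \ref{F(x,y)=x+y mod xy}, and $[p](x)[p](y)\in(x^dy^d)$ has total degree $\ge 2d>d$, so the degree-$d$ part of $[p](x)+_F[p](y)$ is $c_dx^d+c_dy^d$. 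Equating gives $c_d(x+y)^d=c_d(x^d+y^d)$ in $K[x,y]$; since $K[x,y]$ is an integral domain and $c_d\ne 0$, we conclude $(x+y)^d=x^d+y^d$.

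Finally I would deduce that $d$ is a power of $p$. Write $d=p^nm$ with $p\nmid m$. In characteristic $p$ the Frobenius identity gives $(x+y)^{p^n}=x^{p^n}+y^{p^n}$, so with $X=x^{p^n}$ and $Y=y^{p^n}$ (still algebraically independent over $K$) the relation becomes $(X+Y)^m=X^m+Y^m$; if $m>1$ the coefficient of $XY^{m-1}$ on the left is $m$, which is nonzero in $K$, a contradiction. Hence $m=1$ and $d=p^n$, and $d\ge 2$ forces $n\ge 1$. Setting $a=c_d\in K^\times$ (nonzero in the field $K$, hence a unit) we get $[p](x)=ax^{p^n}$ mod $(x^{p^n+1})$, as claimed. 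The only real care needed is in the bookkeeping of which homogeneous degree is being extracted and in checking that the ``mod $(uv)$'' corrections coming from Lemma \ref{F(x,y)=x+y mod xy} genuinely raise the total degree past $d$; the rest is routine.
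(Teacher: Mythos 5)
Your proof is correct and complete, but it takes a more self-contained route than the paper does. The paper simply cites Ravenel's textbook and the stronger structural fact that every nontrivial endomorphism of a formal group law over a field of characteristic $p$ has the form $g(x^{p^n})$ with $g'(0)\neq 0$, then reads off the leading term of $[p](x)$. You instead prove exactly what is asserted, from first principles: you take the smallest $d\geq 2$ with $c_d\neq 0$, extract the degree-$d$ homogeneous component of the endomorphism identity $[p](x+_Fy)=[p](x)+_F[p](y)$ (carefully checking that all ``corrections'' from $F(u,v)-u-v$ land in total degree $>d$), arrive at $(x+y)^d=x^d+y^d$ in $K[x,y]$, and then a clean Frobenius argument forces $d$ to be a power of $p$. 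This is essentially the computational core of the result the paper cites, specialised to the leading coefficient of $[p]$; your version is more elementary (it avoids the full classification of endomorphisms) and fits the paper's conventions neatly since the endomorphism property of $[p]$ and the mod-$(xy)$ form of $F$ have both been established earlier. The only thing the paper's citation buys over your argument is the stronger statement that the full $[p]$-series, not just its leading term, is a power series in $x^{p^n}$ -- but that stronger statement is not used in the lemma.
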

\begin{pf} This is covered in \cite{RavenelCC}. In fact this is a special case of a more general result, namely
that any endomorphism $f$ of $F$ is either trivial or is such that $f(x)=g(x^{p^n})$ for some $g(x)\in R\lpow
x\rpow$ with $g'(0)\neq 0$ and some $n\geq 0$. Since $[p](x)$ is an endomorphism of $F$ and $[p](x)=px=0$ modulo
$x^2$ it follows that either $[p](x)=0$ or $[p](x)$ has leading term $ax^{p^n}$ for $a\in K^\times$ and some
$n>0$.\end{pf}

Given a formal group law $F$ over $K$ of characteristic $p$ we define the {\em height} of $F$ to be the integer
$n$ occurring in Lemma \ref{height of FGL} or $\infty$ if $[p](x)=0$. This is an isomorphism invariant (that is,
isomorphic formal group laws have the same height). Further, for any field $K$ of characteristic $p$ there
exists a formal group law of height $n$ for each $n>0$ (see \cite{RavenelCC}).

Given a complete local $\mathbb{Z}_p$-algebra $(R,\mathfrak{m})$ (that is, a complete local ring with
$p\in\mathfrak{m}$) and a formal group law $F$ over $R$ we can reduce the coefficients of $F$ modulo
$\mathfrak{m}$ to get a formal group law $F_0$ over $R/\mathfrak{m}$ which is a field of characteristic $p$. We
then define the height of $F$ to be the height of its mod-$\mathfrak{m}$ reduction $F_0$.

\subsection{Lazard's ring and the universal formal group law}

\begin{defn} Given a ring homomorphism $\phi:R\to S$ and a formal group law $F$ over $R$ we obtain a new formal
group law $\phi F$ over $S$ by applying $\phi$ to the coefficients of $F$. Note that if $\log_F$ exists then, by
uniqueness, $\log_{\phi F}$ exists and is equal to $\phi \log_F$.\end{defn}

The following is a result of Lazard.

\begin{prop} There is a ring $L$ and a formal group law $F_{\text{univ}}$ over $L$ such that for any ring $R$ and any formal
group law $F$ over $R$ there is a unique homomorphism $\phi:L\to R$ such that $\phi
F_{\text{univ}}=F$.\end{prop}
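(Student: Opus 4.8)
The plan is to construct $L$ by brute force as the quotient of a polynomial ring by the relations that the formal group law axioms impose. First I would set $A=\mathbb{Z}[a_{ij}\mid i,j\geq 1]$ and form the tautological power series
$$G(x,y)=x+y+\sum_{i,j\geq 1}a_{ij}x^iy^j\in A\lpow x,y\rpow.$$
Every monomial carrying a coefficient $a_{ij}$ is divisible by $xy$, so $G(x,0)=x=G(0,y)$ and the identity axiom is automatic; also $G$ has zero constant term, so the composites $G(G(x,y),z)$ and $G(x,G(y,z))$ are honest elements of $A\lpow x,y,z\rpow$. I would then let $C$ be the ideal of $A$ generated by all coefficients of the two power series $G(x,y)-G(y,x)$ and $G(G(x,y),z)-G(x,G(y,z))$ --- explicitly, $C$ is generated by the elements $a_{ij}-a_{ji}$ together with an (unwieldy but perfectly explicit) family of polynomials in the $a_{ij}$ coming from associativity. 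Put $L=A/C$ and let $F_{\mathrm{univ}}\in L\lpow x,y\rpow$ be the image of $G$. By the definition of $C$ this $F_{\mathrm{univ}}$ is commutative and associative, and it inherits the identity axiom from $G$, so it is a formal group law over $L$.

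Next I would verify universality. Given a formal group law $F$ over a ring $R$, Lemma~\ref{F(x,y)=x+y mod xy} (a consequence of the identity and commutativity axioms) lets me write $F(x,y)=x+y+\sum_{i,j\geq 1}b_{ij}x^iy^j$, so the rule $a_{ij}\mapsto b_{ij}$ defines a ring homomorphism $\psi:A\to R$, and it is the unique one satisfying $\psi G=F$ when $\psi$ is applied coefficientwise. Since applying a ring homomorphism to a power series commutes with substituting power series that have zero constant term, I get $\psi(G(x,y)-G(y,x))=F(x,y)-F(y,x)$ and $\psi(G(G(x,y),z)-G(x,G(y,z)))=F(F(x,y),z)-F(x,F(y,z))$, and both of these vanish because $F$ is a formal group law. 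Hence $\psi$ annihilates every generator of $C$ and therefore factors uniquely as $\psi=\phi\circ q$, where $q:A\to L$ is the quotient map and $\phi:L\to R$ is a ring homomorphism; by construction $\phi F_{\mathrm{univ}}=F$.

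For uniqueness of $\phi$, suppose $\phi':L\to R$ also satisfies $\phi'F_{\mathrm{univ}}=F$. Comparing the coefficient of $x^iy^j$ on both sides shows that $\phi'$ sends the class of $a_{ij}$ to $b_{ij}$, which is exactly the value $\phi$ assigns; since $L$ is generated as a ring by the classes of the $a_{ij}$, we conclude $\phi'=\phi$. I do not anticipate a genuine obstacle here: the construction is entirely formal, and the only step demanding a little care is the compatibility of ring homomorphisms with substitution into power series, which is a routine bookkeeping check on coefficients. (The genuinely deep part of Lazard's theory --- that $L$ is a polynomial ring $\mathbb{Z}[x_1,x_2,\ldots]$ --- is a separate statement, not needed for the universal property asserted here.)
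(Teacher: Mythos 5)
Your proof is correct and follows essentially the same approach as the paper's: form a polynomial ring on the coefficients of a generic power series and quotient by the ideal of relations imposed by the formal group law axioms, then read off universality directly from the construction. The only cosmetic difference is that you build the identity axiom into the shape of $G$ from the outset (via the leading $x+y$ and the restriction to indices $i,j\geq 1$), whereas the paper's $G$ runs over all $a_{i,j}$ with $i,j\geq 0$ and imposes the identity axiom as part of the ideal.
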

\begin{pf} Let $S$ be the polynomial ring over $\mathbb{Z}$ generated by symbols $\{a_{i,j}\mid i,j\geq 0\}$. Let $G$ be the power series $G(x,y)=\sum_{i,j} a_{i,j}x^iy^j\in S\lpow
x,y\rpow$. Then, letting $I$ be the ideal in $S$ generated by the relations that would force $G$ to be a formal
group law, on passing to the quotient ring $L=S/I$ we get a formal group law $F_{\text{univ}}$ over $S/I$. Given
any formal group law $F$ over $R$ there is a unique map $S\to R$ sending $a_{i,j}$ to the coefficient of
$x^iy^j$ in $F(x,y)$. It is clear this map factors through a map $\phi:S/I\to R$ which has the properties
claimed.\end{pf}

The ring $L$ is often referred to as {\em Lazard's ring} and $F_{\text{univ}}$ the {\em universal formal group
law}, for obvious reasons. We will see later that $L$ has a fundamental role in the development of a certain
class of cohomology theories, of which the Morava $E$-theories are examples.

\subsection{The Weierstrass preparation theorem}

Let $(R,\mathfrak{m})$ be a complete local ring. Then $f(x)=\sum_ia_ix^i\in R\lpow x\rpow$ is a {\em Weierstrass
series of degree $d$} if $a_0,\ldots,a_{d-1}\in\mathfrak{m}$ and $a_d\in R^\times$. We call $f$ a {\em
Weierstrass polynomial (of degree $d$)} if, in addition, $a_i=0$ for all $i>d$, that is if $f$ is in fact a
polynomial of degree $d$. We have the following theorem.

\begin{lem} (Weierstrass preparation theorem) Let $(R,\mathfrak{m})$ be a (graded) complete local ring. If $f(x)\in R\lpow x\rpow$ is a Weierstrass series of degree $d$ there is a unique factorisation $f(x)=u(x)g(x)$ where $g(x)$
is a Weierstrass polynomial of degree $d$ and $u(x)$ is a unit in $R\lpow x\rpow$.\end{lem}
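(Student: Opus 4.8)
The plan is to deduce the theorem from a division algorithm for $R\lpow x\rpow$ with divisor $f$, to establish that by a contraction--mapping argument exploiting completeness, and then to extract the factorisation.

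\emph{Reduction to a division statement.} Write $f(x)=\sum_i a_i x^i$ with $a_0,\dots,a_{d-1}\in\mathfrak{m}$ and $a_d\in R^\times$. I claim it suffices to prove: for every $h\in R\lpow x\rpow$ there are unique $q\in R\lpow x\rpow$ and $r\in R[x]$ with $\deg r<d$ such that $h=qf+r$. Assuming this, apply it with $h=x^d$ to get $x^d=qf+r$. Reducing modulo $\mathfrak{m}$ gives $\bar f=x^d\bar w$ in $(R/\mathfrak{m})\lpow x\rpow$ with $\bar w$ a unit (its constant term $\bar a_d\ne 0$ in the field $R/\mathfrak{m}$), so $x^d=\bar q\,x^d\bar w+\bar r$; matching the multiple of $x^d$ against the part of degree $<d$ forces $\bar r=0$ and $\bar q\bar w=1$. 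Hence each coefficient of $r$ lies in $\mathfrak{m}$, and the constant term of $q$ is a unit in $R$, so $q\in R\lpow x\rpow^\times$. Then $f=q^{-1}(x^d-r)$, and $g(x):=x^d-r(x)$ is a monic polynomial of degree $d$ with lower coefficients in $\mathfrak{m}$ (in particular a Weierstrass polynomial of degree $d$) while $u:=q^{-1}$ is a unit. For uniqueness, if $f=u_1g_1=u_2g_2$ with $g_i$ monic Weierstrass polynomials of degree $d$ then $x^d=u_i^{-1}f+(x^d-g_i)$ are two instances of division of $x^d$ by $f$ with remainder of degree $<d$, so $u_1=u_2$ and $g_1=g_2$; allowing a non-monic $g$ introduces only the evident rescaling of $g$ by a unit (and of $u$ by its inverse).

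\emph{The division algorithm.} Introduce the $R$-linear operators $P,T\colon R\lpow x\rpow\to R\lpow x\rpow$ with $P(\sum c_i x^i)=\sum_{i<d}c_i x^i$ and $T(\sum c_i x^i)=\sum_{j\ge 0}c_{j+d}x^j$, so that $s=P(s)+x^dT(s)$ for every $s$ and $T(x^d s)=s$. Then $f=P(f)+x^dT(f)$ with $P(f)\in\mathfrak{m}\lpow x\rpow$ and $T(f)$ of unit constant term $a_d$, hence $T(f)\in R\lpow x\rpow^\times$. A short computation gives $T(qf)=T(qP(f))+qT(f)$, so that $h=qf+r$ with $T(r)=0$ (equivalently $\deg r<d$) holds precisely when $q$ is a fixed point of $\Phi(q):=T(f)^{-1}\bigl(T(h)-T(qP(f))\bigr)$, in which case $r=h-qf$. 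Because $P(f)$ has coefficients in $\mathfrak{m}$, if $q-q'\in\mathfrak{m}^k\lpow x\rpow$ then $\Phi(q)-\Phi(q')=-T(f)^{-1}T\bigl((q-q')P(f)\bigr)\in\mathfrak{m}^{k+1}\lpow x\rpow$; since $R$, hence $R\lpow x\rpow$ with its coefficientwise ($\mathfrak{m}$-adic) topology, is complete and separated, $\Phi$ has a unique fixed point. This yields both existence and uniqueness of $(q,r)$.

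\emph{Main obstacle.} The crux is the bookkeeping around $T$: one must see that ``multiply by $f$, then truncate'' equals multiplication by the unit $T(f)$ up to the error operator $q\mapsto T(qP(f))$, and that this error operator is $\mathfrak{m}$-adically small. This is exactly the point at which the hypothesis $a_0,\dots,a_{d-1}\in\mathfrak{m}$ is used, and it is what makes $\Phi$ a contraction. The only other care needed is that in the graded case ``complete'' refers to the topology generated by $\mathfrak{m}$, for which $R\lpow x\rpow$ is indeed complete and separated; the remaining verifications ($R$-linearity of $P,T$, the identity $T(qf)=T(qP(f))+qT(f)$, and the mod-$\mathfrak{m}$ comparison) are routine.
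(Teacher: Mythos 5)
Your proof is correct, and since the paper simply cites Lang for this result there is no in-text argument to compare against; your approach — reducing the preparation theorem to a Weierstrass division statement ($h = qf + r$ with $\deg r < d$), proving that by exhibiting the quotient $q$ as the unique fixed point of the affine contraction $\Phi(q) = T(f)^{-1}\bigl(T(h) - T(qP(f))\bigr)$ on $R\lpow x\rpow$, and using $\mathfrak{m}$-adic completeness and separatedness of $R$ to run the iteration — is the standard one and is essentially what appears in the cited reference. The only small point worth flagging is that the uniqueness assertion in the statement really requires the Weierstrass polynomial $g$ to be monic; as the paper's definition of ``Weierstrass polynomial'' does not literally impose $a_d = 1$, the factorisation is only unique up to rescaling $g$ by a unit of $R$ and $u$ by its inverse, which you correctly note at the end of your reduction step.
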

\begin{pf} This is proved in \cite{Lang}.\end{pf}

\begin{cor} If $f(x)\in R\lpow x\rpow$ is a Weierstrass polynomial of degree $d$ then $R\lpow
x\rpow/(f(x))$ is a free $R$-module of rank $d$ with basis $\{1,x,\ldots,x^{d-1}\}$.\end{cor}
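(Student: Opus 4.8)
The plan is to prove the Weierstrass division theorem for $f$: every $g(x)\in R\lpow x\rpow$ can be written \emph{uniquely} as $g=qf+r$ with $q\in R\lpow x\rpow$ and $r\in R[x]$ of degree less than $d$. Granting this, the $R$-linear map $\psi\colon\bigoplus_{i=0}^{d-1}Rx^i\to R\lpow x\rpow/(f)$ induced by inclusion is surjective (given a class $[g]$, write $g=qf+r$, so $[g]=[r]$) and injective (if $r\in(f)$ with $\deg r<d$, say $r=qf$, then $r$ has the two division expressions $qf+0$ and $0\cdot f+r$, hence $r=0$ by uniqueness); so $R\lpow x\rpow/(f)$ is free on $\{1,x,\dots,x^{d-1}\}$. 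First I would reduce to the monic case: if $f=a_dx^d+\dots+a_0$ with $a_d\in R^\times$ and $a_0,\dots,a_{d-1}\in\mathfrak{m}$, then $a_d^{-1}f$ is monic, generates the same ideal $(f)$, and still has all lower coefficients in $\mathfrak{m}$; so we may assume $f=x^d+\mu(x)$ with $\mu(x)\in\mathfrak{m}[x]$ of degree $<d$.

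For existence I would iterate a ``reduce the tail'' step. Set $g_0=g$; given $g_n$, split it as $g_n=r_n+x^dh_n$ where $r_n\in R[x]$ is the truncation to degrees $<d$ and $h_n\in R\lpow x\rpow$, and put $g_{n+1}:=g_n-h_nf=r_n-\mu(x)h_n$. Since $\mu(x)$ has all coefficients in $\mathfrak{m}$, an easy induction gives $h_n\in\mathfrak{m}^nR\lpow x\rpow$. As $R\lpow x\rpow$ is $\mathfrak{m}$-adically complete, $q:=\sum_{n\ge0}h_n$ converges, and $r:=\lim_n r_n$ exists coefficient-wise and lies in $R[x]$ of degree $<d$; passing to the limit in $g-\big(\sum_{n<N}h_n\big)f=g_N=r_N+x^dh_N$ yields $g=qf+r$.

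For uniqueness it suffices to show that $qf\in R[x]$ with $\deg(qf)<d$ forces $q=0$. Reducing modulo $\mathfrak{m}$ we get $\bar f=x^d$, so $\bar q\,x^d$ is a polynomial over $R/\mathfrak{m}$ of degree $<d$; but every monomial of $\bar q\,x^d$ has degree $\ge d$, so $\bar q=0$, i.e.\ $q\in\mathfrak{m}R\lpow x\rpow$. Running the same argument modulo $\mathfrak{m}^{n+1}$ — where the contribution $\bar q\bar\mu$ vanishes because $q\in\mathfrak{m}^nR\lpow x\rpow$ and $\mu\in\mathfrak{m}[x]$ — pushes $q$ into $\mathfrak{m}^{n+1}R\lpow x\rpow$, so by induction $q\in\bigcap_n\mathfrak{m}^nR\lpow x\rpow=0$.

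The main obstacle is really the convergence/descent bookkeeping in the last two paragraphs, both of which rest on $R$ (hence $R\lpow x\rpow$) being separated and complete for the $\mathfrak{m}$-adic topology, i.e.\ on $\bigcap_n\mathfrak{m}^n=0$; this is part of the standing meaning of ``complete local ring'' here and is in any case automatic for the (Noetherian) complete local rings occurring later, by Krull's intersection theorem. Everything else — the reduction to monic $f$ and the verification that the limits stay inside $\bigoplus_{i=0}^{d-1}Rx^i$ — is routine.
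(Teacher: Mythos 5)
Your proof is correct, and it supplies exactly the ingredient the paper leaves implicit. The paper's own proof is a one-liner: it invokes the Weierstrass preparation theorem (which is actually redundant here, since $f$ is already a Weierstrass polynomial, so $a_d^{-1}f$ is monic by definition) and then says ``the result follows'', silently relying on the Weierstrass \emph{division} theorem --- that every $g\in R\lpow x\rpow$ has a unique expression $g=qf+r$ with $\deg r<d$. You prove that division theorem from scratch: existence by the ``strip off the tail'' iteration, with the $\mathfrak{m}$-adic estimates $h_n\in\mathfrak{m}^nR\lpow x\rpow$ guaranteeing convergence of $q=\sum h_n$ because $R\lpow x\rpow$ inherits $\mathfrak{m}$-adic completeness from $R$; uniqueness by the successive-approximation argument $q\in\mathfrak{m}^nR\lpow x\rpow\Rightarrow q\in\mathfrak{m}^{n+1}R\lpow x\rpow$, using separatedness at the end. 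This is the standard route (it is how one proves WPT itself in references like Lang), so it is the same circle of ideas as the paper, but your version is self-contained and, frankly, closer to an actual proof than what the paper prints. The one thing worth flagging is that you correctly identified the hidden hypothesis --- separatedness ($\bigcap_n\mathfrak{m}^n=0$) --- which is genuinely needed for the uniqueness step and is not explicitly built into the paper's definition of ``complete local ring'', though it is automatic in the Noetherian case via Krull intersection.
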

\begin{pf} By the Weierstrass preparation theorem $f(x)$ is a unit multiple of a monic polynomial of degree $d$
and the result follows.\end{pf}

We see the relevance of this diversion in the following result.

\begin{prop} Let $F$ be a formal group law of height $n$ over a (graded) complete local $\mathbb{Z}_p$-algebra
$(R,\mathfrak{m})$. Then, for each $r$, $[p^r](x)$ is a Weierstrass series of degree $p^{nr}$.\end{prop}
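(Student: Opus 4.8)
The plan is to reduce the whole question modulo $\mathfrak{m}$, where the height hypothesis becomes directly usable, and then transfer the conclusion back to $R$ using that $R$ is local. Write $k=R/\mathfrak{m}$, a field of characteristic $p$ (recall $p\in\mathfrak{m}$), and let $F_0$ be the mod-$\mathfrak{m}$ reduction of $F$, which has height $n$ by definition. First I would record that, since the $m$-series is built from $F$ by iterated formal addition, reduction modulo $\mathfrak{m}$ commutes with it, so that the image of $[p^r]_F(x)=\sum_i c_i x^i$ in $k\lpow x\rpow$ is exactly $[p^r]_{F_0}(x)$. Thus it suffices to control the low-degree coefficients of $[p^r]_{F_0}(x)$ over the field $k$.

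The key step is the claim that, over $k$, one has $[p^r]_{F_0}(x)=u_r x^{p^{nr}}+(\text{terms of degree }>p^{nr})$ for some $u_r\in k^\times$, which I would prove by induction on $r\ge 1$. For $r=1$ this is precisely the statement that $F_0$ has height $n$ (Lemma~\ref{height of FGL}): $[p]_{F_0}(x)=u_1 x^{p^n}$ mod $(x^{p^n+1})$ with $u_1\in k^\times$, so in particular all coefficients of $x^i$ for $i<p^n$ (including the constant term) vanish. For the inductive step write $[p^r]_{F_0}(x)=[p]_{F_0}\big(g(x)\big)$ with $g(x)=[p^{r-1}]_{F_0}(x)$; by hypothesis $g(x)$ has lowest-degree term $u_{r-1}x^{p^{n(r-1)}}$, so $g(x)^{p^n}$ has lowest-degree term $u_{r-1}^{p^n}x^{p^{nr}}$ and $g(x)^m$ for $m>p^n$ involves only strictly higher powers of $x$; feeding this into $[p]_{F_0}(y)=u_1 y^{p^n}+(\text{higher})$ gives
\[
[p]_{F_0}(g(x))=u_1 u_{r-1}^{p^n}\,x^{p^{nr}}+(\text{terms of degree }>p^{nr}),
\]
and $u_1 u_{r-1}^{p^n}\neq 0$ because $k$ is a field. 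This proves the claim with $u_r=u_1 u_{r-1}^{p^n}$.

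Finally I would transfer back to $R$: the claim says $\overline{c_i}=0$ in $k$ for $0\le i<p^{nr}$ and $\overline{c_{p^{nr}}}=u_r\neq 0$, hence $c_i\in\mathfrak{m}$ for $i<p^{nr}$ while $c_{p^{nr}}\notin\mathfrak{m}$; since $R$ is local, $\mathfrak{m}=R\setminus R^\times$, so $c_{p^{nr}}\in R^\times$, which is exactly the assertion that $[p^r]_F(x)$ is a Weierstrass series of degree $p^{nr}$. The \emph{main obstacle} is really just the leading-term bookkeeping over $k$ in the induction --- in particular the point that taking a $(p^n)$-th power preserves having a unit leading coefficient, which uses that $k$ is a domain, together with correctly tracking the exponent $p^n\cdot p^{n(r-1)}=p^{nr}$; everything else is formal.
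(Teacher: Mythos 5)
Your argument is correct and follows essentially the same route as the paper: reduce modulo $\mathfrak{m}$ so the height hypothesis controls the low-degree coefficients, and use $[p^{i+1}]=[p]\circ[p^i]$ to propagate to higher $r$. Two small points in your favour: you supply the full induction that the paper compresses into ``it then follows easily,'' and you conclude that $c_{p^{nr}}$ is a unit simply from $R\setminus\mathfrak{m}=R^\times$ (locality), whereas the paper builds an inverse via a geometric series; both work, but yours is the slightly tighter way to finish.
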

\begin{pf} Since $F$ has height $n$ we know that $[p](x)=ax^{p^n}$ modulo $\mathfrak{m},x^{p^n+1}$ for some
$a\in(R/\mathfrak{m})^\times$. Thus, writing $[p](x)=\sum_i a_ix^i$, we have
$a_0,\ldots,a_{p^n-1}\in\mathfrak{m}$ and, since $a_{p^n}$ is invertible modulo $\mathfrak{m}$, there is $b\in
R$ with $a_{p^n}b=1-m$ for some $m\in\mathfrak{m}$. Then $a_{p^n}b(1+m+m^2+\ldots)=(1-m)(1+m+m^2+\ldots)=1$ so
that $a_{p^n}\in R^\times$ and $[p](x)$ is a Weierstrass series of degree $p^n$. Using the fact that
$[p^{i+1}](x)=[p]([p^i](x))$ it then follows easily that $[p^r](x)$ is a Weierstrass series of degree
$p^{nr}$.\end{pf}

\begin{cor}\label{Basis for R[[x]]/[p](x)} Let $F$ and $R$ be as above. Then $R\lpow x\rpow/([p^r](x))$ is free over $R$ of rank $p^{nr}$ with
basis $\{1,x,\ldots,x^{p^{nr}-1}\}$.\end{cor}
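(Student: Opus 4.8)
The plan is to deduce this immediately from the Proposition just proved, which says that $[p^r](x)$ is a Weierstrass series of degree $p^{nr}$, together with the Weierstrass preparation theorem and the Corollary characterising the quotient by a Weierstrass \emph{polynomial}. So the first step is to invoke the preceding Proposition to know that $[p^r](x)\in R\lpow x\rpow$ is a Weierstrass series of degree $d:=p^{nr}$ (here we use that $R$, being a graded complete local $\mathbb{Z}_p$-algebra, satisfies the hypotheses needed for the Weierstrass machinery).

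Next I would apply the Weierstrass preparation theorem to write $[p^r](x)=u(x)g(x)$ where $u(x)\in (R\lpow x\rpow)^\times$ and $g(x)$ is a Weierstrass polynomial of degree $d$. The one small point to record is that since $u(x)$ is a unit, the principal ideals $([p^r](x))$ and $(g(x))$ of $R\lpow x\rpow$ are equal, and hence there is an equality of quotient rings $R\lpow x\rpow/([p^r](x))=R\lpow x\rpow/(g(x))$.

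Finally I would appeal to the Corollary stating that for a Weierstrass polynomial $g(x)$ of degree $d$ the quotient $R\lpow x\rpow/(g(x))$ is a free $R$-module of rank $d$ with basis $\{1,x,\ldots,x^{d-1}\}$. Substituting $d=p^{nr}$ and combining with the identification of the previous paragraph gives exactly the claim: $R\lpow x\rpow/([p^r](x))$ is free over $R$ of rank $p^{nr}$ with basis $\{1,x,\ldots,x^{p^{nr}-1}\}$.

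There is essentially no real obstacle here; the content is entirely in the preceding Proposition and the Weierstrass preparation theorem, both of which are already available. The only thing to be careful about is the bookkeeping step that passing from the series $[p^r](x)$ to its associated Weierstrass polynomial $g(x)$ leaves the quotient ring unchanged, which is immediate from $u(x)$ being a unit.
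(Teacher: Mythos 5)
Your argument is correct and is exactly the implicit derivation the paper intends (the paper gives this corollary with no proof, treating it as immediate from the preceding proposition, the Weierstrass preparation theorem, and the earlier corollary about quotients by Weierstrass polynomials). The one bookkeeping observation you highlight — that multiplying by a unit leaves the principal ideal, and hence the quotient ring, unchanged — is precisely the step being silently used.
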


\begin{defn} Let $F$ be the formal group law over $\mathbb{Z}_p\lpow u_1,\ldots,u_{n-1}\rpow$ of Proposition
\ref{Universal deformation FGL}. Then, using the Weierstrass preparation theorem, we define $g_r(t)$ to be the
Weierstrass polynomial of degree $p^{nr}$ which is a unit multiple of $[p^r]_F(t)$ in $\mathbb{Z}_p\lpow
u_1,\ldots,u_n\rpow[t]$.\end{defn}

\subsection{Formal group laws over complete local $\mathbb{Z}_p$-algebras}

\begin{lem}\label{[p^r](x) in m^r} Let $(R,\mathfrak{m})$ be a complete local $\mathbb{Z}_p$-algebra and let $F$ be a formal group law over $R$.
Then, writing $\mathfrak{m}_{R\lpow x\rpow}$ for the maximal ideal of $R\lpow x\rpow$, we have
$[p^r](x)\in(\mathfrak{m}_{R\lpow x\rpow})^r$.\end{lem}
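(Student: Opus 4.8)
The statement to prove is that for a formal group law $F$ over a complete local $\mathbb{Z}_p$-algebra $(R,\mathfrak{m})$, the $p^r$-series satisfies $[p^r](x)\in(\mathfrak{m}_{R\lpow x\rpow})^r$, where $\mathfrak{m}_{R\lpow x\rpow}=\mathfrak{m}\lpow x\rpow+(x)$ is the maximal ideal of $R\lpow x\rpow$ (as identified in the first lemma of the excerpt). The natural approach is induction on $r$.

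**The plan.**

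First I would dispose of the base case $r=0$, which is trivial since $(\mathfrak{m}_{R\lpow x\rpow})^0=R\lpow x\rpow$. For the step $r=1$, I would observe that $[p](x)=px$ mod $(x^2)$ (this is Lemma ``$[m](x)=mx$ mod $(x^2)$'' from the excerpt), so writing $[p](x)=px+x^2h(x)$ for some $h\in R\lpow x\rpow$, the first term lies in $\mathfrak{m}_{R\lpow x\rpow}$ because $p\in\mathfrak{m}$ (as $R$ is a $\mathbb{Z}_p$-algebra, so $p$ is a non-unit), and the second term lies in $(x)^2\subseteq\mathfrak{m}_{R\lpow x\rpow}$. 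Hence $[p](x)\in\mathfrak{m}_{R\lpow x\rpow}$.

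**The inductive step.**

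Assume $[p^r](x)\in(\mathfrak{m}_{R\lpow x\rpow})^r$; I want $[p^{r+1}](x)\in(\mathfrak{m}_{R\lpow x\rpow})^{r+1}$. Using $[p^{r+1}](x)=[p]([p^r](x))$, I would write $[p](y)=py+y^2 h(y)$ as above and substitute $y=[p^r](x)$, giving $[p^{r+1}](x)=p\cdot[p^r](x)+[p^r](x)^2\,h([p^r](x))$. The first summand lies in $\mathfrak{m}\cdot(\mathfrak{m}_{R\lpow x\rpow})^r\subseteq(\mathfrak{m}_{R\lpow x\rpow})^{r+1}$ by the inductive hypothesis together with $p\in\mathfrak{m}\subseteq\mathfrak{m}_{R\lpow x\rpow}$. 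The second summand lies in $\big((\mathfrak{m}_{R\lpow x\rpow})^r\big)^2=(\mathfrak{m}_{R\lpow x\rpow})^{2r}\subseteq(\mathfrak{m}_{R\lpow x\rpow})^{r+1}$ since $2r\geq r+1$ for $r\geq1$ (note $h([p^r](x))\in R\lpow x\rpow$ is well-defined because $[p^r](x)$ has zero constant term, so the substitution converges $\mathfrak{m}_{R\lpow x\rpow}$-adically). Adding the two contributions gives the claim.

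**The main obstacle.**

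There is no serious obstacle here; the only point requiring mild care is making sure the substitution $y\mapsto[p^r](x)$ into a power series is legitimate and that ``mod $(x^2)$'' translates correctly — i.e., that $[p^r](x)$ having no constant term means $[p^r](x)^2\,h([p^r](x))$ genuinely lies in the ideal $([p^r](x))^2$. One should also note the slightly sharper fact, used implicitly, that $[p^r](x)\in(x)$ always (zero constant term), which is what lets $(\mathfrak{m}_{R\lpow x\rpow})^{2r}$ appear rather than just $(\mathfrak{m}_{R\lpow x\rpow})^{r+1}$ directly. Everything else is bookkeeping with ideal powers in $R\lpow x\rpow$.
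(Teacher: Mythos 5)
Your proof is correct and runs along essentially the same lines as the paper's, but with one noticeable difference in the base case. The paper establishes $[p](x)\in\mathfrak{m}_{R\lpow x\rpow}$ by reducing mod $\mathfrak{m}$ and invoking the height result (Lemma \ref{height of FGL}): over the residue field $R/\mathfrak{m}$ the $p$-series is either zero or begins at $x^{p^n}$ with $n>0$, so in particular the coefficient of $x$ vanishes mod $\mathfrak{m}$. You instead use the elementary identity $[p](x)\equiv px\pmod{x^2}$ together with the running hypothesis $p\in\mathfrak{m}$ (which the paper builds into its definition of ``complete local $\mathbb{Z}_p$-algebra''). Both routes yield the same crucial fact — the linear coefficient of $[p]$ lies in $\mathfrak{m}$ — but your version is more elementary and also makes the inductive step completely transparent, since you explicitly exhibit $[p](y)=py+y^2h(y)$ and substitute. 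The paper's inductive step is stated only in outline (``noting that $[p](0)=0$ a simple induction argument shows\ldots''), and your write-up is exactly what that outline is pointing at. One small tidying remark: rather than appealing to $2r\geq r+1$ for $r\geq 1$, you can uniformly argue that $[p^r](x)^2\in\mathfrak{m}_{R\lpow x\rpow}^r\cdot(x)\subseteq\mathfrak{m}_{R\lpow x\rpow}^{r+1}$, using the inductive hypothesis for one factor and the vanishing of the constant term for the other; this handles all $r\geq 0$ in the same stroke and removes the need for a separate $r=1$ base case.
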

\begin{pf} Working modulo $\mathfrak{m}$ we find that either $[p](x)=0$ or $[p](x)=ax^{p^n}$ mod $(x^{p^n+1})$ for some $n$; in either
case we conclude that $[p](x)\in \mathfrak{m}_{R\lpow x\rpow}$. Noting that the $[p](0)=0$ a simple induction
argument shows that $[p^{r+1}](x)=[p]([p^r](x))\in (\mathfrak{m}_{R\lpow x\rpow})^{r+1}$, as required.\end{pf}

This gives us the following useful result.

\begin{lem}\label{p-adic formal series} Let $R$ be a $\mathbb{Z}_p$-algebra and $F$ a formal group law over $R$. Then given any $a\in \Zp$ there is a well-defined power series $[a](x)\in R\lpow
x\rpow$ such that
\begin{enumerate}
\item if $a\in\mathbb{Z}$ then $[a](x)$ coincides with the
standard $a$-series on $x$,
\item $[a]([b](x))=[a.b](x)$, and
\item if $(a_i)$ is a sequence in $\Zp$ converging to $a$ then
$[a_i](x)$ converges to $[a](x)$ in $R\lpow x\rpow$.
\end{enumerate}
\end{lem}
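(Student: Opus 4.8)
The plan is to define $[a](x)$ as a limit of the integer $m$-series. Given $a \in \Zp$, write its $p$-adic expansion $a = \sum_{i=0}^\infty a_i p^i$ with $a_i \in \{0,\ldots,p-1\}$ and set $m_k = \sum_{i=0}^{k-1} a_i p^i$, so that $(m_k)$ is a sequence of non-negative integers with $m_k \to a$ in $\Zp$ and $m_{k+1} - m_k = a_k p^k$. First I would show the sequence $[m_k](x)$ is Cauchy in $R\lpow x\rpow$: since $[m_{k+1}](x) = [m_k](x) +_F [a_k p^k](x)$ and $[a_k p^k](x) = [a_k]([p^k](x))$, Lemma \ref{[p^r](x) in m^r} gives $[p^k](x) \in (\mathfrak{m}_{R\lpow x\rpow})^k$ (note $R$ is a $\Zp$-algebra, so $p \in \mathfrak{m}_{R\lpow x\rpow}$, though one must be slightly careful: $R$ itself need not be local, so "Cauchy" here means convergence in the $(p,x)$-adic topology on $R\lpow x\rpow$, which is the relevant completion since $R$ is a $\Zp$-algebra and $R\lpow x\rpow$ is $(p,x)$-adically complete). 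Hence $[m_{k+1}](x) \equiv [m_k](x)$ mod $(p,x)^k$, and we define $[a](x) = \lim_k [m_k](x)$.

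Next I would verify the three listed properties. For (1), if $a \in \mathbb{Z}_{\geq 0}$ then eventually $m_k = a$, so the limit is the usual $a$-series; for negative integers one reduces to this case via $[a](x) = \iota([-a](x))$, or simply notes that the $p$-adic expansion argument applies to the $\Zp$-element $a$ directly with the appropriate (eventually constant modulo each $p^k$) approximations. For (3), suppose $a_i \to a$ in $\Zp$; then for each $k$ we have $a_i \equiv a$ mod $p^k$ for $i$ large, and I would show this forces $[a_i](x) \equiv [a](x)$ mod $(p,x)^k$ — the key sub-claim being that $b \equiv b'$ mod $p^k$ in $\Zp$ implies $[b](x) \equiv [b'](x)$ mod $(\mathfrak{m}_{R\lpow x\rpow})^k$, which follows because $b - b'$ is $p^k$ times a $\Zp$-element and $[p^k c](x) = [c]([p^k](x)) \in (\mathfrak{m}_{R\lpow x\rpow})^k$ by Lemma \ref{[p^r](x) in m^r}. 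This sub-claim also shows the limit defining $[a](x)$ is independent of the chosen approximating sequence, so $[a](x)$ is well-defined. For (2), I would first establish $[a]([b](x)) = [ab](x)$ when $a, b \in \mathbb{Z}$ (a counting exercise, essentially already in the excerpt), then pass to the limit: choose integer sequences $a_i \to a$, $b_i \to b$; by continuity of composition of power series (in the $(p,x)$-adic topology) and property (3) applied in both variables, $[a_i]([b_i](x)) \to [a]([b](x))$, while $[a_i b_i](x) \to [ab](x)$ since $a_i b_i \to ab$, giving the identity.

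The main obstacle I anticipate is handling the topology cleanly when $R$ is not assumed local. The statement of Lemma \ref{[p^r](x) in m^r} is phrased for complete local $\Zp$-algebras, so I would need to either (a) observe that its proof only uses $[p](x) \equiv 0$ mod $(p, x)$, which holds over any $\Zp$-algebra since $[p](x) = px \pmod{x^2}$, and re-derive $[p^r](x) \in (p,x)^r$ directly, or (b) reduce to the universal/complete-local case by naturality. Option (a) is cleaner: the only real content is that $[p](x)$ has no unit constant or linear term mod $p$, hence lies in $(p,x)$, and then $[p^{r+1}](x) = [p]([p^r](x))$ lies in $(p,x)^{r+1}$ by substituting a power series in $(p,x)^r$ with zero constant term into one lying in $(p,x)$. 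Once this ideal-membership fact is in hand over an arbitrary $\Zp$-algebra, everything else is routine manipulation of Cauchy sequences and the formal-group identities already proved in the excerpt.
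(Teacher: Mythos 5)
Your proof is correct and takes the same approach as the paper's: define $[a](x)$ as the limit of $[\alpha_k](x)$ along the partial sums of the $p$-adic expansion, with convergence coming from $[p^k](x)$ lying in the $k$-th power of the appropriate ideal, and then check the three properties by continuity. You have, however, correctly spotted a subtlety that the paper glosses over: its proof cites Lemma \ref{[p^r](x) in m^r}, which is stated only for complete local $\mathbb{Z}_p$-algebras, while the present lemma is claimed for an arbitrary $\mathbb{Z}_p$-algebra $R$; your repair --- observing that $[p](x)\in(p,x)$ follows directly from $[p](x)\equiv px\pmod{x^2}$, deriving $[p^r](x)\in(p,x)^r$ by induction without any local hypothesis, and then working in the $(p,x)$-adic topology on $R\lpow x\rpow$ --- is exactly the right way to close that gap.
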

\begin{pf} Take $a\in\mathbb{Z}_p$ and write $a=\sum_{i=0}^\infty a_ip^i$ and put $\alpha_k=\sum_{i=0}^{k} a_ip^i$.
Then, using Lemma \ref{x-_F y twiddles x-y}, we have $$\textstyle
\left[\alpha_{k+1}\right](x)-\left[\alpha_k\right](x)\sim
\left[\alpha_{k+1}\right](x)-_F\left[\alpha_k\right](x)=\left[a_{k+1}p^{k+1}\right](x)=[a_{k+1}]([p^{k+1}](x)),$$
where $a\sim b$ denotes that $a$ is a unit multiple of $b$. Hence
$\left[\alpha_{k+1}\right](x)-\left[\alpha_k\right](x)\in (\mathfrak{m}_{R\lpow x\rpow})^{k+1}$ using Lemma
\ref{[p^r](x) in m^r} and the limit $$\textstyle [a](x)=\left[\sum_{i=0}^{\infty}
a_ip^i\right](x)=\displaystyle{\lim_{k\to \infty}} \left[\alpha_k\right](x)$$ is well defined. It is
straightforward to check that this definition of $[a](x)$ satisfies the properties listed.\end{pf}

\begin{lem} Let $R$ be a torsion-free $\mathbb{Z}_p$-algebra and $F$ a formal group law over $R$. Then, for any
$a\in\mathbb{Z}_p$, we have $\log_F([a](x))=a\log_F(x)$.\end{lem}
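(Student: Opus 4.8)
The plan is to reduce the identity to a statement over the $\mathbb{Q}$-algebra $\mathbb{Q}\tensor R$ (into which $R$ injects, since $R$ is torsion-free), where $\log_F$ is available by Proposition~\ref{FGLs over Q-algebras are iso to additive}, and then to prove it by showing that the power series $x\mapsto[a](x)$ is an endomorphism of $F$ for every $a\in\mathbb{Z}_p$, so that $\log_F\circ[a]$ is a homomorphism from $F$ to the additive formal group law $F_a$; homomorphisms $F\to F_a$ over a $\mathbb{Q}$-algebra are rigid, being scalar multiples of $\log_F$ pinned down by their linear term.

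First I would establish that $[a](F(x,y))=F([a](x),[a](y))$ for all $a\in\mathbb{Z}_p$. This is already known for $a\in\mathbb{Z}$, where it was proved earlier that $[m](x+_Fy)=[m](x)+_F[m](y)$. For general $a$, take the integer truncations $\alpha_k=\sum_{i=0}^{k}a_ip^i$ of the $p$-adic expansion $a=\sum_{i\ge0}a_ip^i$, so that $\alpha_k\to a$ in $\mathbb{Z}_p$; by Lemma~\ref{p-adic formal series} we then have $[\alpha_k](x)\to[a](x)$ in $R\lpow x\rpow$. Substituting the zero-constant-term series $F(x,y)$ for $x$, and likewise substituting $[\alpha_k](x)$, $[\alpha_k](y)$ into $F$, are continuous operations for the topology in which this convergence takes place, so passing to the limit in the integer identity $[\alpha_k](F(x,y))=F([\alpha_k](x),[\alpha_k](y))$ gives the claim. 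Note that everything here happens inside $R\lpow x,y\rpow$, with no denominators involved.

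With that in hand, $h(x):=\log_F([a](x))$ is a homomorphism $F\to F_a$, being the composite of the homomorphism $[a]\colon F\to F$ with the homomorphism (indeed strict isomorphism) $\log_F\colon F\to F_a$. Writing $\exp_F=\log_F^{-1}$ for the inverse isomorphism $F_a\to F$ over $\mathbb{Q}\tensor R$, the series $\phi(t):=h(\exp_F(t))$ is additive, $\phi(t+s)=\phi(t)+\phi(s)$; comparing the coefficients of $t^{n-1}s$ forces $nb_n=0$ for the coefficient $b_n$ of $t^n$ in $\phi$, so over the $\mathbb{Q}$-algebra $\mathbb{Q}\tensor R$ we get $\phi(t)=b_1 t$ with $b_1=\phi'(0)=h'(0)$. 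Hence $h(x)=h'(0)\log_F(x)$. Finally the coefficient of $x$ in $[a](x)$ is the $\mathbb{Z}_p$-limit of the coefficients $\alpha_k$ of $[\alpha_k](x)$, namely $a$, so $h'(0)=a$ and therefore $\log_F([a](x))=a\log_F(x)$, as required.

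The main obstacle is the limiting step in the second paragraph: one has to check that substitution of power series and the formal-sum operation are genuinely continuous for the inverse-limit (maximal-ideal-adic) topology of Lemma~\ref{p-adic formal series}, so that the integer-level identity survives the passage to the limit. This is routine but a little fiddly; once it is done, the remainder of the argument is purely formal. (A more computational alternative would be to apply $\log_F$ coefficientwise to $[a](x)=\lim_k[\alpha_k](x)$ and check directly that $\alpha_k\log_F(x)\to a\log_F(x)$, but that route forces one to keep track of the powers of $p$ in the denominators of $\log_F$, which the endomorphism argument avoids entirely.)
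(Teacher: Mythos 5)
Your proof is correct, but it takes a genuinely different route from the paper's. The paper proceeds directly: it first shows $\log_F(x-_Fy)=\log_F(x)-\log_F(y)$, writes $\log_F([a](x))-\log_F([\alpha_k](x))=\log_F\bigl([\sum_{i>k}a_ip^i](x)\bigr)$, argues this difference dies in $(\mathfrak{m}_{R\lpow x\rpow})^{k+1}$, and passes to the limit using $\log_F([\alpha_k](x))=\alpha_k\log_F(x)$ for integers. This is exactly the computational alternative you mention in your last parenthesis and correctly flag as the one that forces careful bookkeeping of the $p$-adic denominators in $\log_F$. Your approach instead establishes, purely inside $R\lpow x,y\rpow$ where no denominators appear, that $[a]$ is an endomorphism of $F$ for every $a\in\mathbb{Z}_p$ (a fact of independent interest that the paper never isolates), and then closes the argument by the rigidity of additive power series over a $\mathbb{Q}$-algebra -- comparing coefficients of $t^{n-1}s$ to force linearity. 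The trade-off: the paper's proof is shorter and confines the topological estimate to a single difference, but it needs a continuity property of $\log_F$ itself, whose coefficients live in $\mathbb{Q}\tensor R$ rather than $R$; your version pushes all the topology into the integral ring $R\lpow x,y\rpow$ (the endomorphism identity), where convergence is cleaner, at the cost of invoking the additional (standard) rigidity lemma afterward. Both are valid; yours is the more structural argument and slightly more robust to the denominator subtlety the paper's proof glosses over.
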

\begin{pf} First note that, for any $x$ and $y$, $$\log_F(x)=(\log_F(x)-\log_F(y)) +
\log_F(y)=\log_F(\log_F^{-1}(\log_F(x)-\log_F(y))+_F y).$$ Hence $x=\log_F^{-1}(\log_F(x)-\log_F(y))+_F y$ so
that $\log_F(x-_F y)=\log_F(x)-\log_F(y)$. Then for $a\in\mathbb{Z}_p$, using the notation of the proof of Lemma
\ref{p-adic formal series}, we have
$$\textstyle\log_F([a](x))-\log_F([\alpha_k](x))=\log_F([a](x)-_F[\alpha_k](x))=\log_F([\sum_{i=k+1}^\infty a_ip^i](x))$$ which lies in $(\mathfrak{m}_{R\lpow
x\rpow})^{k+1}$, so that \[\log_F([a](x))=\lim_{k\to \infty} \log_F([\alpha_k](x))=\lim_{k\to \infty} \alpha_k
\log_F(x)=a\log_F(x).\qedhere\]\end{pf}

\begin{lem}\label{Teichmuller Series} Let $R$ be a torsion-free $\mathbb{Z}_p$-algebra and $F$ a $p$-typical
formal group law over $R$. Then, for all $k\in(\mathbb{Z}/p)^\times$, we have $[\hat{k}](x)=\hat{k}x$, where
$\hat{k}$ denotes the Teichm\"uller lift of $k$ of section \ref{sec:Theichmuller lift map}.\end{lem}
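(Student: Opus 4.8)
The plan is to pass to $\mathbb{Q}\tensor R$, where a formal logarithm is available, and to combine the relation $\log_F([a](x))=a\log_F(x)$ from the preceding lemma with the shape of the logarithm forced by $p$-typicality and the fact that $\hat{k}$ is a $(p-1)^{\text{th}}$ root of unity. First I would observe that $\hat{k}\in\Zp$, so $[\hat{k}](x)$ is a well-defined element of $R\lpow x\rpow$ via the $p$-adic series construction, and $\hat{k}x$ also has coefficients in $R$ (through the $\Zp$-algebra structure of $R$). Since $R$ is torsion-free the natural map $R\lpow x\rpow\to(\mathbb{Q}\tensor R)\lpow x\rpow$ is injective, so it is enough to prove $[\hat{k}](x)=\hat{k}x$ after tensoring with $\mathbb{Q}$.

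Over $\mathbb{Q}\tensor R$ the logarithm $\log_F$ exists and, being a strict isomorphism onto the additive formal group law, is invertible under composition and hence injective as an operation on power series. By the preceding lemma, $\log_F([\hat{k}](x))=\hat{k}\log_F(x)$, so it suffices to show $\log_F(\hat{k}x)=\hat{k}\log_F(x)$. Writing $\log_F(x)=x+\sum_{i>0}l_ix^{p^i}$ (using that $F$ is $p$-typical), one has $\log_F(\hat{k}x)=\hat{k}x+\sum_{i>0}l_i\hat{k}^{p^i}x^{p^i}$, so the claim comes down to the identity $\hat{k}^{p^i}=\hat{k}$ for all $i\geq 1$. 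This is immediate: $\hat{k}^{p-1}=1$ gives $\hat{k}^{p}=\hat{k}^{p-1}\cdot\hat{k}=\hat{k}$, and induction on $i$ gives $\hat{k}^{p^i}=\hat{k}$ throughout. Feeding this back in yields $\log_F(\hat{k}x)=\hat{k}\bigl(x+\sum_{i>0}l_ix^{p^i}\bigr)=\hat{k}\log_F(x)=\log_F([\hat{k}](x))$, and injectivity of $\log_F$ gives $[\hat{k}](x)=\hat{k}x$ over $\mathbb{Q}\tensor R$, hence over $R$.

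I expect no serious obstacle here; the computation is short. The only points that need a little care are the bookkeeping of coefficient rings (so that the descent from $\mathbb{Q}\tensor R$ back to $R$ via torsion-freeness is legitimate), the fact that $p$-typicality supplies a logarithm whose only nonzero coefficients beyond the linear term sit in degrees $p^i$, and the observation $\hat{k}^{p^i}=\hat{k}$, which is precisely what makes $x\mapsto\hat{k}x$ an endomorphism of $\log_F$ and thus the correct candidate for $[\hat{k}](x)$.
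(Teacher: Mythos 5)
Your proof is correct and follows essentially the same route as the paper: pass to $\mathbb{Q}\tensor R$, use $\log_F([\hat{k}](x))=\hat{k}\log_F(x)$ together with the $p$-typical shape of the logarithm and the identity $\hat{k}^{p^i}=\hat{k}$, then invert $\log_F$. The only difference is that you make explicit the descent from $\mathbb{Q}\tensor R$ back to $R$ via torsion-freeness, a point the paper leaves implicit.
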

\begin{pf} Recall that, by definition, $F$ has a logarithm over $\mathbb{Q}\tensor R$ of the form $\log_F(x)=x+\sum_{i>0}
l_ix^{p^i}$. Let $k\in(\mathbb{Z}/p)^\times$. Then, since $\hat{k}^{p-1}=1$ we have $\hat{k}^{p^i}=\hat{k}$ for
all $i>0$. Hence
\begin{eqnarray*}
\log_F([\hat{k}](x)) &=& \hat{k}\log_F(x)\\
&=& \hat{k}x+\sum_{i>0}l_i \hat{k}x^{p^i}\\
&=& \hat{k}x + \sum_{i>0}l_i (\hat{k}x)^{p^i}\\
&=& \log_F(\hat{k}x)\end{eqnarray*} The result follows on applying $\log_F^{-1}$ to both sides.\end{pf}

\begin{cor}\label{<p>(kx)=<p>(x) for k a teichmuller lift} Let $R$ be a torsion-free $\mathbb{Z}_p$-algebra and $F$ a $p$-typical
formal group law over $R$. Then $\langle p\rangle([\hat{k}](x))=\langle p \rangle(x)$ for all
$k\in(\mathbb{Z}/p)^\times$.\end{cor}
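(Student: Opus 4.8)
The plan is to reduce everything to the two facts already established about $p$-typical formal group laws over torsion-free $\mathbb{Z}_p$-algebras: Lemma~\ref{Teichmuller Series}, which says $[\hat k](x)=\hat k x$, and the composition law $[a]([b](x))=[ab](x)$ for $a,b\in\mathbb{Z}_p$ from Lemma~\ref{p-adic formal series}. Unwinding the definition, $\langle p\rangle([\hat k](x))$ means the power series $\langle p\rangle(t)=[p](t)/t$ with $t$ replaced by $[\hat k](x)$, i.e.
\[
\langle p\rangle([\hat k](x))=\frac{[p]([\hat k](x))}{[\hat k](x)}.
\]

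First I would handle the denominator: by Lemma~\ref{Teichmuller Series} we have $[\hat k](x)=\hat k x$ directly, so the denominator is simply $\hat k x$. Next I would rewrite the numerator using the composition law from Lemma~\ref{p-adic formal series}: since $p,\hat k\in\mathbb{Z}_p$ commute, $[p]([\hat k](x))=[p\hat k](x)=[\hat k]([p](x))$. Now apply Lemma~\ref{Teichmuller Series} again, this time substituting the zero-constant-term power series $[p](x)$ for the variable, to get $[\hat k]([p](x))=\hat k\,[p](x)$. Hence the numerator equals $\hat k\,[p](x)$.

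Combining the two computations,
\[
\langle p\rangle([\hat k](x))=\frac{\hat k\,[p](x)}{\hat k x}=\frac{[p](x)}{x}=\langle p\rangle(x),
\]
where cancelling $\hat k$ is legitimate because $\hat k\in\mathbb{Z}_p^\times\subseteq R^\times$. This gives the claim.

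There is no real obstacle here: the only thing to be slightly careful about is that $[\hat k](y)=\hat k y$ is an identity of power series (a consequence of Lemma~\ref{Teichmuller Series}), so it remains valid after substituting $y=[p](x)$, and that all the manipulations take place over $R$ itself (no passage to $\mathbb{Q}\tensor R$ is needed at this stage), which is why the torsion-free $\mathbb{Z}_p$-algebra hypothesis suffices. One could equally avoid the composition law and argue directly that $[p]$ is an endomorphism of $F$ while multiplication by $\hat k$ is also an endomorphism, so the two commute; but routing through Lemma~\ref{p-adic formal series} is cleanest.
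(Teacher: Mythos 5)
Your proposal is correct and essentially identical to the paper's proof: both use Lemma~\ref{Teichmuller Series} together with the composition law $[p\hat k]=[p]\circ[\hat k]=[\hat k]\circ[p]$ to compute $[p]([\hat k](x))=\hat k\,[p](x)$, and then cancel the non-zero-divisor $\hat k x=[\hat k](x)$ against $[p]([\hat k](x))=[\hat k](x)\langle p\rangle([\hat k](x))$. The only cosmetic difference is that you phrase the cancellation as a fraction while the paper equates two expressions for $[p]([\hat k](x))$ and then cancels.
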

\begin{pf} By Lemma \ref{Teichmuller Series} we have
$[p]([\hat{k}](x))=[p\hat{k}](x)=[\hat{k}]([p](x))=\hat{k}[p](x)=\hat{k}x\langle p\rangle(x)$. But
$[p]([\hat{k}](x))=[\hat{k}](x)\langle p\rangle([\hat{k}](x))=\hat{k}x\langle p\rangle([\hat{k}](x))$ and so,
since $\hat{k}$ is a unit and $x$ is not a zero divisor in $R\lpow x\rpow$, we get $\langle
p\rangle([\hat{k}](x))=\langle p \rangle(x)$.\end{pf}

\section{The Morava $E$-theories}

We outline the development of the cohomology theories that we will be using. There is some variation in the
literature, but this definition is consistent with relevant earlier work of Strickland and others. Full accounts
of this material are not easy to come by, but good starting points are \cite{RavenelNil} and \cite{RavenelCC}. A
thorough treatment of related theory is found in \cite{HoveyStrickland}.

\subsection{Complex oriented cohomology theories}

A  {\em (multiplicative) cohomology theory} is a contravariant functor from topological spaces to graded rings
satisfying the first three of the Eilenberg-Steenrod axioms. More precisely, we make the following definitions.

\begin{defn} We define the category of {\em CW pairs} to be the category with objects $(X,A)$, where
$A$ is a subcomplex of the CW complex $X$, and morphisms $(X,A)\to (Y,B)$ given by continuous cellular maps
$X\to Y$ which restrict to a map $A\to B$. We sometimes write $X$ for the object $(X,\emptyset)$.

A {\em generalised cohomology theory}, $h$, is a contravariant functor from CW pairs to $\mathbb{Z}$-graded
abelian groups satisfying the following conditions.
\begin{itemize}
\item If $f,g:(X,A)\to (Y,B)$ are homotopic then $f^*=g^*:h^*(Y,B)\to h^*(X,A)$.
\item Writing $i:A\hookrightarrow X$ and $j:X\hookrightarrow (X,A)$ there are {\em connecting homomorphisms}
$\partial^q:h^q(A)\to h^{q+1}(X,A)$ for each $q$ such that there is a natural long exact sequence
$$\ldots\overset{\partial^{q-1}}{\longrightarrow} h^q(X,A) \overset{h^q(j)}{\longrightarrow}h^q(X)
\overset{h^q(i)}{\longrightarrow}
h^q(A)\overset{\partial^q}{\longrightarrow}h^{q+1}(X,A)\overset{h^{q+1}(j)}{\longrightarrow}\ldots$$
\item If $U$ is an open subset of $X$ with $\overline{U}$ contained in the interior of $A$ and such that $(X\setminus U,A\setminus U)$ can be given a CW structure, then the map
$j:(X\setminus U,A\setminus U)\hookrightarrow (X,A)$ induces an isomorphism
$$h^*(j):h^*(X,A)\iso h^*(X\setminus U,A\setminus U).$$
\end{itemize}

An immediate consequence of the definition is that if $X$ is homotopy equivalent to $Y$ then $h^*(X)\simeq
h^*(Y)$. We define the {\em coefficients} of the cohomology theory to be the graded abelian group $h^*=h^*(pt)$,
where $pt$ is the one-point space. Note that for any space $X$ there is a unique map $X\to pt$ giving a map
$h^*(pt)\to h^*(X)$ which makes $h^*(X)$ a module over $h^*$.

We define a functor $\tilde{h}$ from topological spaces to graded abelian groups known as the {\em reduced
theory} by $\tilde{h}^*(X)=\coker(h^*(pt)\to h^*(X))$. Note that, by choosing a map $pt\to X$, we get a
splitting $h^*(X)\simeq \tilde{h}^*(X)\oplus h^*(pt)$.

Often a cohomology theory will have additional structure making the groups $h^*(X,A)$ into graded rings,
commutative in the graded sense so that if $a\in h^i(X)$ and $b\in h^j(X)$ then $ab=(-1)^{ij}ba$. In such a
situation we say that $h$ is {\em multiplicative}.
\end{defn}

We call a cohomology theory $h$ \textit{complex oriented} if there is a class $x\in h^2(\mathbb{C}P^\infty)$
such that its restriction to $h^2(\mathbb{C}P^1)$ generates $\tilde{h}^2(\mathbb{C}P^1)$ as an $h^0$-module (see
\cite{Adams}). The class $x$ is known as a {\em complex orientation} for $h$.

Any complex oriented cohomology theory $h$ with complex orientation $x$ satisfies
$h^*(\mathbb{C}P^\infty)=h^*\lpow x\rpow$ and $h^*(\mathbb{C}P^\infty\times\mathbb{C}P^\infty)=h^*\lpow
\pi_1^*(x),\pi_2^*(x)\rpow$ where $\pi_1$ and $\pi_2$ are the two projection maps
$\mathbb{C}P^\infty\times\mathbb{C}P^\infty\to \mathbb{C}P^\infty$ (again, see \cite{Adams}). Since
$\mathbb{C}P^\infty=BS^1$, the commutative multiplication map $S^1\times S^1\to S^1$ gives a product
$\mu:\mathbb{C}P^\infty\times\mathbb{C}P^\infty\to \mathbb{C}P^\infty$ making $\mathbb{C}P^\infty$ into an
$H$-space. The induced map $\mu^*:h^*(\mathbb{C}P^\infty)\to h^*(\mathbb{C}P^\infty\times\mathbb{C}P^\infty)$
sends the complex orientation $x$ to a power series $F(\pi_1^*(x),\pi_2^*(x))=F(x_1,x_2)$.

\begin{lem} The power series $F(x_1,x_2)=\mu^*(x)$ is a formal group law over $h^*$.\end{lem}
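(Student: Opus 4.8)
The plan is to verify the three axioms of Definition~\ref{Axioms of a FGL} for $F$, each being the image under the functor $h^*$ of the corresponding property of the $H$-space $\mathbb{C}P^\infty=BS^1$; throughout I use that homotopic maps induce the same map in $h^*$. We may assume the orientation $x$ is reduced, i.e.\ restricts to $0$ under the inclusion $*\hookrightarrow\mathbb{C}P^\infty$ of the basepoint coming from $1\in S^1$. Since $\mu$ preserves basepoints, it follows that $F(x_1,x_2)=\mu^*(x)$ has zero constant term, so it lies in the maximal ideal of $h^*\lpow x_1,x_2\rpow$ and substitution of power series without constant term into $F$ is legitimate.

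For \emph{identity}, let $\iota\colon\mathbb{C}P^\infty\to\mathbb{C}P^\infty\times\mathbb{C}P^\infty$ be $z\mapsto(z,*)$; since $*$ is the $H$-space unit, $\mu\circ\iota\simeq\id$, so $\iota^*F(x_1,x_2)=\iota^*\mu^*(x)=x$, while $\iota^*(x_1)=x$ (as $\pi_1\iota=\id$) and $\iota^*(x_2)=0$ (as $\pi_2\iota$ is constant at the basepoint and $x$ is reduced) give $\iota^*F(x_1,x_2)=F(x,0)$; hence $F(x,0)=x$. For \emph{commutativity}, let $\tau$ be the coordinate swap on $\mathbb{C}P^\infty\times\mathbb{C}P^\infty$; since the product on $\mathbb{C}P^\infty$ is homotopy-commutative, $\mu\circ\tau\simeq\mu$, and as $\tau^*(x_1)=x_2$, $\tau^*(x_2)=x_1$ we get $F(x_2,x_1)=\tau^*\mu^*(x)=\mu^*(x)=F(x_1,x_2)$.

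For \emph{associativity}, iterating the computation of $h^*$ on products of copies of $\mathbb{C}P^\infty$ gives $h^*(\mathbb{C}P^\infty\times\mathbb{C}P^\infty\times\mathbb{C}P^\infty)=h^*\lpow x_1,x_2,x_3\rpow$ with $x_i=\pi_i^*(x)$, and homotopy-associativity of the product gives $\mu\circ(\mu\times\id)\simeq\mu\circ(\id\times\mu)$ as maps from the triple product to $\mathbb{C}P^\infty$. By naturality of the orientation, $(\mu\times\id)^*$ is the continuous $h^*$-algebra homomorphism with $x_1\mapsto\mu^*(x)=F(x_1,x_2)$ and $x_2\mapsto x_3$, so $(\mu\times\id)^*\mu^*(x)=F(F(x_1,x_2),x_3)$, and symmetrically $(\id\times\mu)^*\mu^*(x)=F(x_1,F(x_2,x_3))$; equating the two yields associativity for $F$.

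I expect the only point needing genuine care to be the substitution step in the associativity argument: one must know that $(\mu\times\id)^*\colon h^*\lpow x_1,x_2\rpow\to h^*\lpow x_1,x_2,x_3\rpow$ is continuous for the topologies given by powers of the respective maximal ideals, so that it commutes with the infinite sum defining the power series $F$ — this is precisely where the zero-constant-term observation of the first paragraph is used. All the remaining verifications are pure functoriality and involve no estimates, so this routine continuity bookkeeping should be the main obstacle.
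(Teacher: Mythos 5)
Your proof is correct and takes essentially the same approach as the paper's: verify each axiom by applying $h^*$ to the corresponding homotopy-commutative diagram expressing the $H$-space structure on $\mathbb{C}P^\infty=BS^1$ (the paper phrases the diagrams at the level of $S^1$ and then applies $B$, but this is only a cosmetic difference). Your extra remarks about the orientation being reduced and about continuity of the substitution $(\mu\times\id)^*$ are sound and a bit more careful than the paper, which treats those points as implicit.
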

\begin{pf} We check that the axioms for a formal group law hold. Firstly, write $j:S^1\to S^1\times S^1$ for the map $z\mapsto (z,1)$. Then the commutative diagram
$$
\begin{array}{ccc}
\xymatrix{ S^1 \ar[d]_j \ar[dr]^{\id_{S^1}}\\
S^1\times S^1 \ar[r]_-\mu & S^1} & \begin{array}{c} ~\\~\\\text{induces}\end{array} & \xymatrix{ h^*(\mathbb{C}P^\infty)\\
h^*(\mathbb{C}P^\infty\times\mathbb{C}P^\infty) \ar[u]^{j^*} & h^*(\mathbb{C}P^\infty) \ar[l]^-{\mu^*}
\ar@{=}[ul]}\end{array}
$$
and one can check that $j^*(x_1)=x$ and $j^*(x_2)=0$ so that $F(x,0)=j^*(F(x_1,x_2))=j^*(\mu^*(x))=x$. It is
easy to see that, writing $\tau:S^1\times S^1\to S^1\times S^1$ for the twist map, we have $\mu\circ\tau=\mu$
and that this, on passing to cohomology, gives $F(x_1,x_2)=F(x_2,x_1)$. The final axiom is a consequence of the
associativity diagram
\[
\xymatrix{ S^1\times S^1\times S^1 \ar[r]^-{1\times\mu} \ar[d]_{\mu\times 1} & S^1\times S^1 \ar[d]^\mu\\
S^1\times S^1 \ar[r]_\mu & S^1.}\]
\end{pf}

\subsection{Defining the Morava $E$-theories}\label{Choice of coord on E^0(CP^infty)}

We aim to define our cohomology theory $E$ and, in doing so, fix a complex orientation with favourable
properties.

\begin{lem}\label{h is complex oriented if in even degrees} Let $h$ be a cohomology theory such that $h^*$ is
concentrated in even degrees. Then there exists $y\in h^2(\mathbb{C}P^\infty)$ such that, for each $n>0$,
$h^*(\mathbb{C}P^n)=h^*\lpow y\rpow/y^n$ and $h^*(\mathbb{C}P^\infty)=h^*\lpow y\rpow$. In particular $h$ is
complex oriented.\end{lem}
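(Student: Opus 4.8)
The plan is to derive everything from the Atiyah--Hirzebruch spectral sequence, run first for the finite skeleta $\mathbb{C}P^n$ and then transported to the colimit $\mathbb{C}P^\infty=\bigcup_n\mathbb{C}P^n$. Recall this is a spectral sequence $E_2^{s,t}=H^s(X;h^t)\Rightarrow h^{s+t}(X)$ whose differentials raise total degree by one and which, $h$ being multiplicative, is a spectral sequence of bigraded rings converging to $h^*(X)$ with its skeletal filtration $F^sh^*(X)=\ker\bigl(h^*(X)\to h^*(X^{(s-1)})\bigr)$; note $(\mathbb{C}P^n)^{(2k)}=(\mathbb{C}P^n)^{(2k+1)}=\mathbb{C}P^k$.

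Take $X=\mathbb{C}P^n$. Since both $H^*(\mathbb{C}P^n;\mathbb{Z})$ and, by hypothesis, $h^*$ live in even degrees, $E_2^{s,t}=0$ whenever $s+t$ is odd, so every differential vanishes and the spectral sequence collapses at $E_2$. Hence $h^*(\mathbb{C}P^n)$ is filtered over $h^*$ with associated graded $H^*(\mathbb{C}P^n)\tensor h^*$, which has $h^*$-basis $1,t,\dots,t^n$ (with $|t|=2$); in particular $h^*(\mathbb{C}P^n)$ is free over $h^*$ of rank $n+1$, concentrated in even degrees, and the restriction $h^*(\mathbb{C}P^n)\to h^*(\mathbb{C}P^{n-1})$ is surjective (being so on associated gradeds). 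Pick $y_n\in\tilde h^2(\mathbb{C}P^n)=F^2h^2(\mathbb{C}P^n)$ mapping to the generator $t\in E_\infty^{2,0}=H^2(\mathbb{C}P^n)$; by the surjectivity just obtained the $y_n$ may be chosen compatibly, $y_n\mapsto y_{n-1}$. Multiplicativity of the spectral sequence gives $y_n^{\,k}\in F^{2k}h^{2k}(\mathbb{C}P^n)$ with image $t^k$ in $E_\infty^{2k,0}=H^{2k}(\mathbb{C}P^n)$, so that $y_n^{\,n+1}\in F^{2n+2}h^{2n+2}(\mathbb{C}P^n)=0$ while $1,y_n,\dots,y_n^{\,n}$ map to the $h^*$-basis of the associated graded; as the filtration is finite, these therefore form an $h^*$-basis of $h^*(\mathbb{C}P^n)$, giving $h^*(\mathbb{C}P^n)\cong h^*\lpow y_n\rpow/(y_n^{\,n+1})$.

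For $\mathbb{C}P^\infty$, apply the Milnor sequence $0\to{\lim}^1_n h^{*-1}(\mathbb{C}P^n)\to h^*(\mathbb{C}P^\infty)\to\lim_n h^*(\mathbb{C}P^n)\to 0$; the $\lim^1$ term vanishes since the tower maps are surjective (Mittag--Leffler), so $h^*(\mathbb{C}P^\infty)\cong\lim_n h^*\lpow y_n\rpow/(y_n^{\,n+1})=h^*\lpow y\rpow$ with $y=(y_n)_n$. Finally $y$ is a complex orientation: restricting $y$ along $\mathbb{C}P^1\hookrightarrow\mathbb{C}P^\infty$ yields $y_1$, which by construction generates $E_\infty^{2,0}=H^2(\mathbb{C}P^1)$ in the (trivially collapsing) spectral sequence for $\mathbb{C}P^1=S^2$, where $\tilde h^2(S^2)=E_\infty^{2,0}\cong h^0$; hence $y_1$ generates $\tilde h^2(\mathbb{C}P^1)$ over $h^0$, exactly as the definition of a complex orientation demands.

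The one step needing real care is promoting the collapse of the spectral sequence — which a priori controls only the $h^*$-module structure — to the ring-theoretic statement: this is where one must invoke the multiplicative structure of the Atiyah--Hirzebruch spectral sequence together with the (elementary but worth spelling out) fact that lifts of a homogeneous basis of the associated graded of a module with finite filtration form a basis. A more elementary route to the module statements alone is induction on $n$ via the cofibre sequence $\mathbb{C}P^{n-1}\to\mathbb{C}P^n\to S^{2n}$, whose connecting maps vanish for parity reasons and thus yield split short exact sequences $0\to\tilde h^*(S^{2n})\to\tilde h^*(\mathbb{C}P^n)\to\tilde h^*(\mathbb{C}P^{n-1})\to 0$; but even then one still needs the multiplicativity input to identify $y_n^{\,n}$ with a generator of $\ker\bigl(h^*(\mathbb{C}P^n)\to h^*(\mathbb{C}P^{n-1})\bigr)$.
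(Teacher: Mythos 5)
Your argument is essentially the same as the paper's: collapse of the Atiyah--Hirzebruch spectral sequence for $\mathbb{C}P^n$ by parity, a lift of the generator of $H^2$ to get $y_n$, compatibility via surjectivity of the restriction maps, and the Milnor $\lim^1$ sequence to pass to $\mathbb{C}P^\infty$. You are more explicit than the paper about invoking multiplicativity of the spectral sequence to identify $y_n^k$ with a generator of $E_\infty^{2k,0}$ and about why lifts of a basis of the associated graded give a basis of the filtered module; these are points the paper glosses over but that genuinely need to be said, so this is an improvement in rigour rather than a different route. One small but worth noting: you correctly obtain $h^*(\mathbb{C}P^n)\cong h^*\lpow y_n\rpow/(y_n^{n+1})$ (rank $n+1$ over $h^*$, matching $H^*(\mathbb{C}P^n)=\mathbb{Z}[t]/(t^{n+1})$), whereas the lemma as stated in the paper says $h^*\lpow y\rpow/y^n$ --- that is an off-by-one slip in the paper's statement and proof, and your version is the right one.
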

\begin{pf} This is an application of the Atiyah-Hirzebruch spectral sequence
$$H^*(\mathbb{C}P^n;h^*)\Rightarrow
h^*(\mathbb{C}P^n)$$ where $H$ denotes ordinary (singular) cohomology (see \cite{Adams} for further details). By
consideration of the cellular structure of $\mathbb{C}P^n$ we have
$$H^k(\mathbb{C}P^n;A)=\left\{\begin{array}{ll} A & \text{for $k=0,2,\ldots,2n$}\\ 0 & \text{otherwise}\end{array}\right.$$ which, in particular, lies in even degrees. Since $h^*$
is also concentrated in even degrees it follows that all terms with at least one degree odd in the $E_2$-page
are zero and the spectral sequence collapses. As usual, writing $J_k=\ker(h^*(\mathbb{C}P^n)\to
h^*(\skel^{2k}(\mathbb{C}P^n)))$ we have a canonical isomorphism $J_k/J_{k+1}\simeq H^{2k}(\mathbb{C}P^n;h^*)$
so that, in particular, $J_1/J_2\simeq h^*.x$, where $x$ is the chern class of the tautological line bundle over
$\mathbb{C}P^n$. Lifting $x$ under this map gives a homogeneous element $y_n\in J_1\subset h^*(\mathbb{C}P^n)$
of degree 2 such that $h^*(\mathbb{C}P^n)=h^*\lpow y_n\rpow/y_n^n$. By naturality, we can make sure the elements
$y_i\in h^2(\mathbb{C}P^i)$ are compatible for each $i$ and hence we get $y\in \limit h^2(\mathbb{C}P^n)$. Since
the maps $h^*(\mathbb{C}P^n)\to h^*(\mathbb{C}P^{n-1})$ are all surjective, an application of the
Milnor-sequence (again, see \cite{Adams}) gives $h^*(\mathbb{C}P^\infty)=\limit h^*(\mathbb{C}P^n)=h^*\lpow
y\rpow$ and $y$ is a complex orientation for $h$.\end{pf}

We turn our attention to complex cobordism and have the following well known result.

\begin{lem}\label{MU has canonical coordinate} The complex cobordism spectrum $MU$ is complex oriented and there is a canonical orientation
$x_{MU}\in MU^2(\mathbb{C}P^\infty)$.\end{lem}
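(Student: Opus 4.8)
The plan is to read the orientation straight off the Thom-spectrum structure of $MU$. Recall that $MU$ is the spectrum whose $2k$-th space is the Thom space $MU(k)=\mathrm{Th}(\gamma_k)$ of the universal $k$-plane bundle $\gamma_k\to BU(k)$, with structure maps $S^2\wedge MU(k)\to MU(k+1)$ induced by the bundle maps $\gamma_k\oplus\underline{\mathbb{C}}\to\gamma_{k+1}$ covering the inclusions $BU(k)\hookrightarrow BU(k+1)$; in particular $MU(0)=\mathrm{Th}(\gamma_0\to\ast)=S^0$ and the unit of $MU$ is the bottom structure map $S^0=MU(0)\to MU$. The only special feature needed is the case $k=1$: since $BU(1)=\mathbb{C}P^\infty$ and the Thom space of the tautological line bundle over $\mathbb{C}P^{n-1}$ is $\mathbb{C}P^n$, passing to colimits gives a canonical homeomorphism $MU(1)=\mathrm{Th}(\gamma_1)\cong\mathbb{C}P^\infty$ under which the standard skeletal filtration $\mathbb{C}P^0\subset\mathbb{C}P^1\subset\cdots$ of $\mathbb{C}P^\infty$ corresponds to $\mathrm{Th}(\gamma_1|_{\mathbb{C}P^{-1}})\subset\mathrm{Th}(\gamma_1|_{\mathbb{C}P^0})\subset\cdots$.

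First I would produce the class. The $2$-nd space $MU(1)$ of the spectrum $MU$ carries the canonical stabilisation map $\Sigma^\infty MU(1)\to\Sigma^2 MU$ (the inclusion of the $2$-nd space into the spectrum, desuspended twice); composing with the homeomorphism above yields a stable map $\Sigma^\infty\mathbb{C}P^\infty\to\Sigma^2 MU$, i.e.\ a class $x_{MU}\in\widetilde{MU}^2(\mathbb{C}P^\infty)$, which we regard as an element of $MU^2(\mathbb{C}P^\infty)$ via the splitting $MU^2(\mathbb{C}P^\infty)\simeq\widetilde{MU}^2(\mathbb{C}P^\infty)\oplus MU^2(\ast)$ noted earlier. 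Equivalently, writing $\widetilde{MU}^2(\mathbb{C}P^\infty)=\lim_n\widetilde{MU}^2(\mathbb{C}P^n)$ (the $\lim^1$ term vanishing since the restriction maps are split surjective), $x_{MU}$ is the compatible family whose $n$-th term is represented by the inclusion $\mathbb{C}P^n\cong\mathrm{Th}(\gamma_1|_{\mathbb{C}P^{n-1}})\hookrightarrow MU(1)$ followed by the stabilisation map. No choices are made anywhere, so $x_{MU}$ is canonical; conceptually it is nothing but the tautological Thom class of $\gamma_1$, available precisely because $MU$ is built from Thom spaces.

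Next I would verify the orientation axiom of the definition, namely that the image of $x_{MU}$ in $\widetilde{MU}^2(\mathbb{C}P^1)$ is a generator over $MU^0$. Under $\mathbb{C}P^\infty\cong MU(1)$ the subspace $\mathbb{C}P^1$ corresponds to $\mathrm{Th}(\gamma_1|_{\mathbb{C}P^0})=\mathrm{Th}(\underline{\mathbb{C}}\to\ast)=S^2$, the bottom cell of $MU(1)$, and this $S^2$ is the image of the bottom structure map $S^2=S^2\wedge MU(0)\to MU(1)$. Hence the restriction of $x_{MU}$ to $\mathbb{C}P^1$ is the composite $S^2\hookrightarrow MU(1)\to\Sigma^2 MU$, which by compatibility of the structure maps is exactly $\Sigma^2$ of the unit $S^0=MU(0)\to MU$. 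This represents $1\in\pi_0 MU=MU^0=\widetilde{MU}^2(S^2)$, and therefore generates $\widetilde{MU}^2(\mathbb{C}P^1)$ as an $MU^0$-module, so $x_{MU}$ is a complex orientation.

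The argument has no conceptual obstacle; the one thing requiring care is the geometric bookkeeping in identifying $MU(1)$ with $\mathbb{C}P^\infty$ compatibly with skeleta, so that the bottom cell $S^2\subset MU(1)$ really is the image of $\mathbb{C}P^1$ and of the unit of $MU$, and so that the resulting class is independent of the skeletal approximations used. Granting this standard description of $MU$ (see \cite[Part II]{Adams} and \cite{RavenelCC}), the proof is complete.
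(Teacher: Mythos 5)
Your construction of the canonical class is the same one the paper uses: the map $\mathbb{C}P^\infty = BU(1) \simeq MU(1) \to \Sigma^2 MU$ coming from the inclusion of the second space of the Thom spectrum. Where you diverge is in how orientability is established. The paper first notes that $MU^* = \mathbb{Z}[a_1,a_2,\ldots]$ is concentrated in even degrees and invokes the earlier Lemma on even-graded theories (which runs the Atiyah--Hirzebruch spectral sequence for $\mathbb{C}P^n$ and observes it collapses) to conclude that $MU$ is complex orientable, and only then points to the class from $MU(1)$ as a canonical representative. You instead bypass the spectral-sequence argument entirely: you construct $x_{MU}$ directly as the Thom class of $\gamma_1$ and verify the orientation axiom by restricting to the bottom cell $\mathbb{C}P^1 \cong S^2 \subset MU(1)$ and observing that the resulting composite is the double suspension of the unit $S^0 = MU(0) \to MU$, hence represents $1 \in \pi_0 MU$. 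Your route is more self-contained and more elementary (no appeal to the AHSS or to the computation of $MU^*$), and it has the advantage of exhibiting the generator of $\widetilde{MU}^2(\mathbb{C}P^1)$ explicitly as the unit, which the paper leaves implicit. The paper's route is shorter given that the even-degree lemma is already in place, and it makes the parallel with the later treatment of Morava $E$-theory (which relies on the same lemma) more visible. Both are correct; the $\lim^1$-vanishing point you raise when passing from $\mathbb{C}P^n$ to $\mathbb{C}P^\infty$ is exactly the Milnor-sequence observation used in the paper's Lemma \ref{h is complex oriented if in even degrees}, so you haven't skipped anything the paper relies on.
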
 \begin{pf} For more details see, for example, \cite[Chapter
4]{RavenelCC}. It is known that $MU^*=\mathbb{Z}[a_1,a_2,\ldots]$ with $|a_i|=-2i$, so that $MU^*$ is
concentrated in even degrees and Lemma \ref{h is complex oriented if in even degrees} applies. A canonical
orientation is the class corresponding to the map
\[\mathbb{C}P^\infty=BU(1)\simeq MU(1)\longrightarrow \Sigma^2 MU.\qedhere\]\end{pf}

Now, using the coordinate $x_{MU}$ of Lemma \ref{MU has canonical coordinate} we get an identification
$MU^*(\mathbb{C}P^\infty)=MU^*\lpow x_{MU}\rpow$ and, as usual, we use the multiplication map
$\mu:\mathbb{C}P^\infty\times \mathbb{C}P^\infty\to \mathbb{C}P^\infty$ to get formal group law
$$F_{MU}(x_1,x_2)=\mu^*(x_{MU})\in MU^*(\mathbb{C}P^\infty\times \mathbb{C}P^\infty)=MU^*\lpow x_1,x_2\rpow.$$ This
is classified by a map $L\to MU^*$ where $L$ is the Lazard ring and we have the following famous
theorem of Quillen.

\begin{prop}[Quillen's theorem]\label{Quillen's theorem}The map $L\to MU^*$ classifying $F_{MU}$ is an
isomorphism.\end{prop}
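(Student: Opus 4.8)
The plan is to prove the statement in three stages: identify both $L$ and $MU^*$ abstractly as polynomial rings, check that the classifying map $\theta\colon L\to MU^*$ becomes an isomorphism after $\otimes\,\mathbb{Q}$, and finally pin down the remaining integral ambiguity on indecomposables.

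For the abstract identifications I would invoke two classical results (citing rather than reproducing them). First, Lazard's structure theorem: $L\cong\mathbb{Z}[x_1,x_2,\ldots]$ with $x_i$ in degree $-2i$, so in particular $L$ is torsion-free and finitely generated in each degree. The essential input is the symmetric $2$-cocycle lemma — the group of homogeneous symmetric $2$-cocycles of degree $n$ over $\mathbb{Z}$ is infinite cyclic, generated by $\tfrac{1}{d}\big((x+y)^n-x^n-y^n\big)$ with $d=p$ when $n+1$ is a power of a prime $p$ and $d=1$ otherwise — after which one constructs the polynomial generators of $L$ by induction up its degree filtration. Second, Milnor's computation of complex cobordism: $MU^*\cong\mathbb{Z}[a_1,a_2,\ldots]$ with $|a_i|=-2i$, also torsion-free; this comes from the Thom isomorphism $H_*(MU;\mathbb{Z})\cong H_*(BU;\mathbb{Z})=\mathbb{Z}[b_1,b_2,\ldots]$ together with the (Adams spectral sequence, or $p$-local $BP$-splitting) argument that $MU$ has no torsion and a computable Hurewicz map.

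Next I would verify that $\theta\otimes\mathbb{Q}$ is an isomorphism. On the source, every formal group law over a $\mathbb{Q}$-algebra is canonically isomorphic to the additive one (the proposition on formal group laws over $\mathbb{Q}$-algebras above), and it follows that the coefficients of the universal logarithm generate $L\otimes\mathbb{Q}$ freely as a polynomial ring. On the target, Mishchenko's formula gives $\log_{F_{MU}}(x)=\sum_{i\ge 0}\tfrac{[\mathbb{C}P^i]}{i+1}\,x^{i+1}$, and the classes $[\mathbb{C}P^i]$ are known to generate $MU^*\otimes\mathbb{Q}$ polynomially; hence $\theta\otimes\mathbb{Q}$ sends a polynomial generating set to a polynomial generating set and is an isomorphism. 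Since $L$ is torsion-free, $\theta$ is already injective.

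Finally, for surjectivity: both $L$ and $MU^*$ are polynomial rings on generators concentrated in degrees $-2i$, $i\ge1$, so it suffices to show $\theta$ induces a surjection on indecomposable quotients $QL\to QMU^*$; in degree $-2i$ this is multiplication by some integer $n_i$, and I must show $n_i=\pm1$. The trick is to compose with the Hurewicz map into $H_*(MU;\mathbb{Z})=\mathbb{Z}[b_1,b_2,\ldots]$: the composite $L\to MU^*\to H_*MU$ classifies the formal group law whose logarithm has $i$-th coefficient the Hurewicz image of $[\mathbb{C}P^i]$, and the standard Newton-identity computation shows that a generator of $QL$ and a generator of $Q(\pi_*MU)$ each map to $s_i$ times a generator of $Q(H_*MU)$, where $s_i=p$ if $i=p^k-1$ for some prime $p$ and $k\ge1$, and $s_i=1$ otherwise. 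Comparing forces $n_i s_i=\pm s_i$, so $n_i=\pm1$, $\theta$ is surjective, and being an injective surjective ring homomorphism it is an isomorphism. The main obstacle is honestly stages one and two — Lazard's theorem and Milnor's computation of $MU_*$ are each substantial and would be black-boxed — so the genuinely self-contained content is the rational comparison together with the indecomposable-quotient bookkeeping, the delicate point being that the factors $s_i$ arising on the $L$-side and on the $\pi_*MU$-side really do coincide.
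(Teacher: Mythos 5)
The paper does not give a proof of Quillen's theorem at all---it simply cites \cite{Quillen} and \cite{RavenelNil}. Your sketch is the standard proof found in those references (and in Adams's \emph{Stable Homotopy and Generalised Homology}), so it is a genuine filling-in rather than a different route: Lazard's structure theorem, Milnor's computation of $\pi_*MU$, a rational comparison via Mishchenko's formula, and the bookkeeping of the ``Milnor numbers'' $s_i$ on indecomposables via the Hurewicz map. The architecture and all the load-bearing claims are right, including your correct identification that the delicate point is the coincidence of the integers $s_i$ arising on the Lazard side and on the cobordism side.

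One slip in your statement of the symmetric $2$-cocycle lemma: the greatest common divisor $d$ of the nontrivial binomial coefficients in $(x+y)^n - x^n - y^n$ equals $p$ when $n$ (not $n+1$) is a power of the prime $p$, and $1$ otherwise---for instance $(x+y)^4 - x^4 - y^4 = 4x^3y + 6x^2y^2 + 4xy^3$ has $d=2$ because $4 = 2^2$, not $d=5$. The confusion is understandable because the $2$-cocycle of polynomial degree $n$ governs the polynomial generator $x_{n-1}$ of $L$ in homological degree $2(n-1)$, so the numbers $s_i$ attached to generators are nontrivial exactly when $i = p^k-1$, i.e.\ when $i+1$ is a prime power; you have this downstream statement correct, so the error does not propagate and the proof stands.
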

\begin{pf} This is the main result of \cite{Quillen} and is covered in \cite{RavenelNil}.\end{pf}

For each prime $p$ and $k\geq 0$, let $v_{p,k}\in MU^*$ be the coefficient of $x^{p^k}$ in the $p$-series for
$F_{MU}$ and let $I_{p,k}=(v_{p,0},v_{p,1},\ldots,v_{p,k-1})\vartriangleleft MU^*$. Note that $v_{p,0}=p$ and
$I_{p,0}$ is defined to be $0$. We use the following result of Landweber.

\begin{prop}[Exact functor theorem]\label{Exact functor theorem} Let $M$ be an $MU^*$-module. Then the functor $X\mapsto M\tensor_{MU^*} MU_*(X)$ defines a homology
theory if and only if for each prime $p$ and each $k\geq 0$ multiplication by $v_{p,k}$ in $M/I_{p,k}M$ is
injective. In particular, there is a spectrum $E$ with $E_*(X)=M\tensor_{MU^*} MU_*(X)$.\end{prop}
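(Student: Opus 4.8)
The plan is to check the axioms of a homology theory for the functor $h_*(X) = M \tensor_{MU^*} MU_*(X)$, to isolate the long exact sequence of a pair as the only axiom that can fail, and then to translate that single requirement into the vanishing of $\operatorname{Tor}_1^{MU^*}(M,-)$ on suitable $MU_*MU$-comodules, which in turn is equivalent to the regularity condition in the statement.

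First I would observe that homotopy invariance, excision, and the additivity (wedge) axiom are automatic for $h_*$: each of these holds for $MU_*(-)$ as an isomorphism or direct-sum statement, and $M \tensor_{MU^*} -$, being additive and right exact, preserves them. The content is therefore entirely in exactness. Given a cofibre sequence, the long exact sequence for $MU_*$ breaks into short exact sequences $0 \to A \to B \to C \to 0$ of $MU_*$-modules that are in fact exact sequences of $MU_*MU$-comodules, since $A$, $B$, $C$ are kernels, images and cokernels of comodule maps. As $M \tensor_{MU^*} -$ is right exact, $h_*$ is exact on all cofibre sequences precisely when $\operatorname{Tor}_1^{MU^*}(M,C) = 0$ for every such $C$; and by a standard reduction (passing to finite CW pairs and using that $\operatorname{Tor}$ commutes with filtered colimits) it suffices to demand $\operatorname{Tor}_1^{MU^*}(M,N) = 0$ for every $MU_*MU$-comodule $N$ finitely presented over $MU_*$.

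The crucial input is Landweber's filtration theorem: every finitely presented $MU_*MU$-comodule $N$ carries a finite filtration $0 = N_0 \subset N_1 \subset \cdots \subset N_s = N$ by subcomodules with each $N_i/N_{i-1}$ isomorphic, up to a grading shift, to $MU_*/I_{p_i,k_i}$ for some prime $p_i$ and some $k_i \geq 0$; this itself rests on Landweber's classification of the invariant prime ideals of $MU_*$ as exactly the $I_{p,k}$. Running the long exact sequence of $\operatorname{Tor}^{MU^*}(M,-)$ up such a filtration, $\operatorname{Tor}_1^{MU^*}(M,N) = 0$ for all finitely presented comodules $N$ if and only if $\operatorname{Tor}_1^{MU^*}(M, MU_*/I_{p,k}) = 0$ for every prime $p$ and every $k \geq 0$ --- the reverse implication being the real content, the forward one immediate since each $MU_*/I_{p,k}$ is itself such a comodule.

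Finally I would carry out the algebraic computation linking this to the stated condition. It is a standard fact that $v_{p,0} = p,\, v_{p,1},\, v_{p,2},\ldots$ is a regular sequence in $MU^*$, so $v_{p,k}$ is a non-zero-divisor modulo $I_{p,k}$ and there is a short exact sequence of comodules $0 \to MU_*/I_{p,k} \xrightarrow{\,v_{p,k}\,} MU_*/I_{p,k} \to MU_*/I_{p,k+1} \to 0$ (up to a grading shift on the left-hand term). Applying $M \tensor_{MU^*} -$ and its left derived functors gives, for each $k$, an exact sequence relating $\operatorname{Tor}_1^{MU^*}(M, MU_*/I_{p,k})$, $\operatorname{Tor}_1^{MU^*}(M, MU_*/I_{p,k+1})$ and the map $v_{p,k}\colon M/I_{p,k}M \to M/I_{p,k}M$; since $\operatorname{Tor}_1^{MU^*}(M, MU_*) = 0$ ($MU_*$ being free), an induction on $k$ shows that all the groups $\operatorname{Tor}_1^{MU^*}(M, MU_*/I_{p,k})$ vanish if and only if every $v_{p,k}$ acts injectively on $M/I_{p,k}M$. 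This gives the ``if'' direction; the ``only if'' direction follows similarly, using in addition that each $I_{p,k}$ is a finitely generated invariant ideal, so $MU_*/I_{p,k}$ is realized by an $MU$-module spectrum and the cofibre sequence realizing multiplication by $v_{p,k}$ forces the condition. For the last clause, once $h_*$ is known to be a homology theory it is representable by a spectrum $E$ with $E_*(X) = M \tensor_{MU^*} MU_*(X)$, by Brown representability in the form valid for homology theories. The main obstacle is Landweber's filtration theorem and the classification of invariant prime ideals of $MU_*$ underlying it; everything else is formal homological algebra together with standard properties of $MU$.
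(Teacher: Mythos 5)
The paper does not actually prove this result; the ``proof'' in the text is simply a citation to Landweber's original paper and to Hovey--Strickland. What you have written out is a sketch of the argument those references contain: reduce exactness of $M\otimes_{MU^*}MU_*(-)$ to vanishing of $\operatorname{Tor}_1^{MU^*}(M,-)$ on finitely presented $MU_*MU$-comodules, use the Landweber filtration theorem (resting on the classification of invariant prime ideals of $MU_*$ as exactly the $I_{p,k}$) to reduce to the subquotients $MU_*/I_{p,k}$, and then unwind the Tor long exact sequences associated to $0\to MU_*/I_{p,k}\xrightarrow{v_{p,k}} MU_*/I_{p,k}\to MU_*/I_{p,k+1}\to 0$ by induction on $k$. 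That is indeed Landweber's argument and your sketch captures it correctly, so your proof is essentially the same as the one the paper is pointing to.

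A few places where you are compressing details that deserve a remark rather than a gloss: (i) passing from ``the long exact sequence of a pair stays exact'' to ``$\operatorname{Tor}_1(M,-)$ vanishes on the relevant comodules'' requires splitting the long exact sequence into short exact sequences at the image/kernel terms and noting those images and kernels are themselves comodules --- you say this, but it is the one homological-algebra step that actually needs care. (ii) The ``only if'' direction is genuinely more delicate than ``follows similarly''; it needs the realizability of $MU/I_{p,k}$ and of a self-map inducing $v_{p,k}$, which at the prime $p$ is cleanest via $BP$ and the spectra $P(k)$. This is present in Landweber and is implicit in the Hovey--Strickland account, but one should not present it as formally symmetric to the ``if'' direction. (iii) Brown representability for \emph{homology} theories (the Adams form) carries a countability or set-theoretic hypothesis that ordinary (cohomological) Brown representability does not; it applies here, but it is worth naming which representability theorem you are invoking. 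None of these is a gap in the sense of a wrong step --- the proposal is sound --- but since the paper deliberately offloaded all of this to the references, anyone fleshing out the citation into an actual proof should be careful at exactly these three points.
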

\begin{pf} See \cite{Landweber} and \cite{HoveyStrickland}.\end{pf}

Now, fixing a prime $p$ and an integer $n>0$, let $R=\mathbb{Z}_p\lpow u_1,\ldots,u_{n-1}\rpow[u,u^{-1}]$ and
let $F$ be the $p$-typical formal group law over $\mathbb{Z}_p\lpow u_1,\ldots,u_{n-1}\rpow$ of Proposition
\ref{Universal deformation FGL}. Define a $\mathbb{Z}_p$-algebra map $\phi:\mathbb{Z}_p\lpow
u_1,\ldots,u_{n-1}\rpow\to R$ by $u_i\mapsto u^{p^i-1}.u_i$ and let $\phi F$ be the formal group law obtained by
applying $\phi$ to the coefficients of $F$. We give $R$ a grading by letting each $u_i$ lie in degree 0 and $u$
lie in degree $-2$. Then, using Quillen's theorem, $\phi F$ is classified by a map $MU^*\to R$ which respects
the grading. We show that, equipped with this map, the $MU^*$-module $R$ satisfies the the exact functor
theorem. Recall that we have
$$[p]_F(x)=\exp_F(px)+_F u_1 x^p+_F\ldots+_F u_{n-1}x^{p^{n-1}}+_F x^{p^n}\in \mathbb{Z}_p\lpow u_1,\ldots,u_{n-1}\rpow \lpow x\rpow$$ so that
$$[p]_{\phi F}(y)=\exp_{\phi F}(py)+_{\phi F} u^{p-1}u_1 y^p+_{\phi F}\ldots+_{\phi F} u^{p^{n-1}-1}u_{n-1}y^{p^{n-1}}+_{\phi F} y^{p^n}\in R\lpow y\rpow.$$ We use the following lemma.

\begin{lem}\label{R/I_p,k R} For any $1\leq k< n$ we have $R/I_{p,k}R=\mathbb{F}_p\lpow u_k,\ldots,u_{n-1}\rpow[u,u^{-1}]$ and $v_{p,k}$
acts as multiplication by $u^{p^k-1}u_k$. Further $R/I_{p,n}R=\mathbb{F}_p[u,u^{-1}]$ and $v_{p,n}$ acts as the
identity map.\end{lem}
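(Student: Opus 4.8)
The plan is to reduce the whole statement to a single ideal‑theoretic claim, proved by induction on $k$ for $1\le k\le n$: namely that $I_{p,k}R=(p,u_1,\ldots,u_{k-1})R$ inside $R$. Granting this, the description of the quotient ring is immediate, since $\mathbb{Z}_p\lpow u_1,\ldots,u_{n-1}\rpow/(p,u_1,\ldots,u_{k-1})\cong\mathbb{F}_p\lpow u_k,\ldots,u_{n-1}\rpow$ and the passage from a ring $A$ to $A[u,u^{-1}]$ commutes with forming this quotient; and the statement about the action of $v_{p,k}$ is just the assertion, which I will verify en route, that the image of $v_{p,k}$ in $R/I_{p,k}R$ is $u^{p^k-1}u_k$ for $k<n$ and is $1$ for $k=n$ (an element of $MU^*$ acts on the $MU^*$‑module $R/I_{p,k}R$ as multiplication by its image there).

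First I would pin down the images of the $v_{p,k}$. Because $\phi$ is a ring homomorphism, the $p$‑series of $\phi F$ is obtained by applying $\phi$ to the coefficients of the $p$‑series of $F$, so $v_{p,k}$ maps under the classifying map $MU^*\to R$ to $\phi(a_{p^k})$, where $a_{p^k}$ is the coefficient of $x^{p^k}$ in $[p]_F(x)$, and $v_{p,0}$ maps to $\phi(p)=p$. By Proposition \ref{Universal deformation FGL}, for $1\le k\le n-1$ we have $[p]_F(x)\equiv u_k x^{p^k}$ modulo $(p,u_1,\ldots,u_{k-1},x^{p^k+1})$, hence $a_{p^k}\equiv u_k$ modulo $(p,u_1,\ldots,u_{k-1})$, and applying $\phi$ gives $\phi(a_{p^k})\equiv u^{p^k-1}u_k$ modulo $(p,u^{p-1}u_1,\ldots,u^{p^{k-1}-1}u_{k-1})R$; for $k=n$ the same proposition shows the top term of $[p]_F$ is $x^{p^n}$, so $a_{p^n}\equiv 1$ modulo $(p,u_1,\ldots,u_{n-1})$ and hence $\phi(a_{p^n})\equiv 1$ modulo that ideal of $R$.

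Now the induction. For $k=1$, $I_{p,1}=(v_{p,0})=(p)$, so $I_{p,1}R=(p)R$. Assuming $I_{p,k}R=(p,u_1,\ldots,u_{k-1})R$ for some $1\le k\le n-1$, I would use that $u$ is a unit of $R$ to rewrite this ideal as $(p,u^{p-1}u_1,\ldots,u^{p^{k-1}-1}u_{k-1})R$; the previous paragraph then shows the image of $v_{p,k}$ in $R/I_{p,k}R$ is $u^{p^k-1}u_k$, which is the asserted action, and consequently $I_{p,k+1}R=I_{p,k}R+(v_{p,k})R=(p,u_1,\ldots,u_{k-1},u^{p^k-1}u_k)R=(p,u_1,\ldots,u_k)R$, once more because $u$ is invertible. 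This closes the induction, and the analogous computation at $k=n$ gives $I_{p,n}R=(p,u_1,\ldots,u_{n-1})R$, whence $R/I_{p,n}R\cong\mathbb{F}_p[u,u^{-1}]$, with $v_{p,n}$ mapping to $1$, i.e.\ acting as the identity.

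The main thing to be careful about — rather than any genuine difficulty — is the bookkeeping forced by the grading twist $\phi$: one has to carry the factors $u^{p^j-1}$ along, repeatedly invoke $u\in R^\times$ to absorb them when they occur as ideal generators, and treat the boundary case $k=n$ separately, where the relevant coefficient of $[p]_F$ is the literal $1$ coming from the untwisted top term $x^{p^n}$, so that $v_{p,n}$ becomes a unit and in fact the identity rather than a twisted copy of $u_n$.
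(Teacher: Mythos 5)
Your proof is correct and takes essentially the same route as the paper: induction on $k$, reading off the image of $v_{p,k}$ from the $p$-series of the twisted formal group law $\phi F$ and absorbing the $u^{p^j-1}$ factors into the ideal since $u$ is a unit. The only difference is presentational — you make the ideal-theoretic claim $I_{p,k}R=(p,u_1,\ldots,u_{k-1})R$ explicit as the inductive hypothesis, where the paper carries it implicitly by describing the successive quotient rings.
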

\begin{pf} We proceed by induction on $k$. For $k=1$ we have $$R/I_{p,k}R=R/pR=\mathbb{F}_p\lpow
u_1,\ldots,u_{n-1}\rpow[u,u^{-1}]$$ and $[p]_{\phi F}(y)=u^{p-1}u_1 y^p+_{\phi F}\ldots+_{\phi F} y^{p^n}$. It
follows that the coefficient of $y^p$ in $[p]_{\phi F}(y)$ is $u^{p-1}u_1$, so that $v_{p,1}\mapsto u^{p-1}u_1$
in $R/I_{p,1}R$. The induction step is similar, noting that $v_{p,k}$ acts as a unit multiple of $u_k$ in
$R/I_{p,k}R$ we have $R/I_{p,k+1}R=(R/I_{p,k}R)/v_{p,k}(R/I_{p,k}R)=\mathbb{F}_p\lpow
u_{k+1},\ldots,u_{n-1}\rpow[u,u^{-1}]$.\end{pf}

Thus we get the following corollary.

\begin{prop} The $MU^*$-module $R$ satisfies the conditions of the exact functor theorem and hence there is a
spectrum $E$ with $E_*(X)=R\tensor_{MU^*} MU_*(X)$.\end{prop}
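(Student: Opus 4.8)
The plan is to verify the hypotheses of the exact functor theorem (Proposition \ref{Exact functor theorem}) directly for the $MU^*$-module $R$, splitting the check according to whether the prime $q$ appearing there equals $p$ or not. Recall the condition: for each prime $q$ and each $k\geq 0$, multiplication by $v_{q,k}$ on $R/I_{q,k}R$ must be injective.

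First I would dispose of the primes $q\neq p$. Since $R$ is a $\mathbb{Z}_p$-algebra and $q$ is a unit in $\mathbb{Z}_p$, it is a unit in $R$. As $v_{q,0}=q$ and $I_{q,0}=0$, multiplication by $v_{q,0}$ on $R/I_{q,0}R=R$ is an isomorphism, hence injective; moreover $R/I_{q,1}R=R/qR=0$, so $R/I_{q,k}R=0$ for all $k\geq 1$ and the condition is vacuous there. Next, the case $q=p$, where I invoke Lemma \ref{R/I_p,k R}. For $k=0$, $v_{p,0}=p$ acts as multiplication by $p$ on $R$, which is injective because $R=\mathbb{Z}_p\lpow u_1,\ldots,u_{n-1}\rpow[u,u^{-1}]$ is $p$-torsion-free (it is a Laurent polynomial ring over a power series ring over $\mathbb{Z}_p$). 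For $1\leq k<n$, Lemma \ref{R/I_p,k R} identifies $R/I_{p,k}R$ with $\mathbb{F}_p\lpow u_k,\ldots,u_{n-1}\rpow[u,u^{-1}]$ and says $v_{p,k}$ acts as multiplication by $u^{p^k-1}u_k$; this ring is an integral domain in which $u$ is invertible and $u_k\neq 0$, so multiplication by $u^{p^k-1}u_k$ is injective. For $k=n$, $R/I_{p,n}R=\mathbb{F}_p[u,u^{-1}]$ and $v_{p,n}$ acts as the identity, so injectivity is clear. Finally, for $k>n$ we have $R/I_{p,n+1}R=(R/I_{p,n}R)/(v_{p,n})=0$ since $v_{p,n}$ is a unit there, and hence $R/I_{p,k}R=0$, making the condition vacuous once more.

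Having checked every prime and every $k$, Proposition \ref{Exact functor theorem} applies and yields a spectrum $E$ with $E_*(X)=R\tensor_{MU^*} MU_*(X)$, as claimed.

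I do not expect a genuine obstacle here: essentially all of the content has been front-loaded into Lemma \ref{R/I_p,k R}, and given that lemma the verification is routine. The only point requiring a little care is not to overlook the edge cases $k=0$ (where $I_{p,0}=0$ rather than being covered by the lemma's formula) and $k>n$ (where the quotient vanishes), since these lie outside the range $1\leq k<n$ handled directly by the lemma.
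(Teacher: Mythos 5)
Your proof is correct and follows essentially the same route as the paper: dispose of primes $q\neq p$ by invertibility of $q$ in $R$, handle $k=0$ at $p$ by torsion-freeness, invoke Lemma \ref{R/I_p,k R} for $1\leq k\leq n$ (noting $u^{p^k-1}u_k$ is a nonzerodivisor in the identified integral domain), and observe the quotients vanish for $k>n$. The only cosmetic difference is that the paper handles $k=0$ uniformly across all primes via torsion-freeness rather than splitting that case by whether $q=p$.
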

\begin{pf} The cases with $k=0$ are immediate since $R$ is torsion-free. The cases at the prime $p$ with $1\leq k\leq n$ are covered by Lemma \ref{R/I_p,k R} since
multiplication by $u_k$ is injective. For $k> n$ we have $R/I_{p,k}R=0$ since $v_{p,n}\in I_{p,k}$ and hence
$1\in I_{p,k}R$. If $q$ is a prime different to $p$ then $q$ is invertible in $R$ and $q\in I_{q,k}$ for all
$k\geq 1$, so that $R/I_{q,k}R=0$ and, again, there is nothing to check. Hence the conditions of the Exact
Functor Theorem hold.\end{pf}

As usual, we can now use the spectrum $E$ to define a cohomology theory. This has the following properties.

\begin{prop} The cohomology theory $E$ outlined above is multiplicative and complex oriented and there is a canonical map $\theta_X:MU^*(X)\to
E^*(X)$ for each $X$. In particular, the map $MU^*(\mathbb{C}P^\infty)\to E^*(\mathbb{C}P^\infty)$ sends the
complex orientation $x_{MU}$ to an orientation $x=\theta(x_{MU})$ which gives rise to the formal group law $\phi
F$.\end{prop}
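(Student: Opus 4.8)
The plan is to use the fact that the module $R$ appearing in the Exact Functor Theorem (Proposition \ref{Exact functor theorem}) is not merely an $MU^*$-module but an $MU^*$-\emph{algebra}, the structure map $\phi\colon MU^*\to R$ being the ring homomorphism classifying $\phi F$ via Quillen's theorem (Proposition \ref{Quillen's theorem}). For a Landweber-exact $MU^*$-algebra the associated spectrum $E$ can be given the structure of a homotopy-commutative ring spectrum together with a map of ring spectra $\theta\colon MU\to E$ realising $\phi$ on homotopy; this is the content of the relevant results of \cite{HoveyStrickland}. Granting this, $E$ is multiplicative, and $\theta$ induces natural ring homomorphisms $\theta_X\colon MU^*(X)\to E^*(X)$ for all $X$; I would simply take these as the maps asserted in the statement. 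This is the one genuinely non-formal ingredient; everything else is bookkeeping.

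Next I would establish complex orientability and pin down the orientation. Since each $u_i$ lies in degree $0$ and $u$ in degree $-2$, the coefficient ring $E^*=R$ is concentrated in even degrees, so Lemma \ref{h is complex oriented if in even degrees} shows $E$ is complex oriented. Put $x:=\theta_{\mathbb{C}P^\infty}(x_{MU})\in E^2(\mathbb{C}P^\infty)$. Restricting to $\mathbb{C}P^1$ and using that $\theta$ is a unital ring map: the class $x_{MU}$ restricts to a generator of the free rank-one $MU^0$-module $\widetilde{MU}^2(\mathbb{C}P^1)$, and under the canonical identifications $\widetilde{MU}^2(\mathbb{C}P^1)\cong MU^0$, $\widetilde{E}^2(\mathbb{C}P^1)\cong E^0$ the map induced by $\theta$ is the unit $MU^0\to E^0$, which sends $1$ to $1$. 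Hence $x$ restricts to a generator of $\widetilde E^2(\mathbb{C}P^1)$ as an $E^0$-module, i.e.\ $x$ is a complex orientation for $E$; in particular $E^*(\mathbb{C}P^\infty)=E^*\lpow x\rpow$ and $E^*(\mathbb{C}P^\infty\times\mathbb{C}P^\infty)=E^*\lpow x_1,x_2\rpow$.

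Finally I would identify the resulting formal group law. Writing $\mu\colon\mathbb{C}P^\infty\times\mathbb{C}P^\infty\to\mathbb{C}P^\infty$ for the $H$-space product, the formal group law of $E$ attached to $x$ is $F_E(x_1,x_2)=\mu^*(x)$. Since $\theta\colon MU\to E$ is a map of spectra it is natural in the space variable, so $\theta$ commutes with $\mu^*$ and with the external maps $\pi_i^*$; therefore $\mu^*(x)=\mu^*(\theta(x_{MU}))=\theta(\mu^*(x_{MU}))=\theta(F_{MU}(x_1,x_2))$, which is exactly $F_{MU}$ with its coefficients pushed along $\phi\colon MU^*\to R$. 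As $\phi$ classifies $\phi F$ by Quillen's theorem, we conclude $F_E=\phi F$, as claimed.

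The main obstacle is the very first point: one must invoke, rather than prove from scratch, the passage from a Landweber-exact $MU^*$-algebra to a ring spectrum equipped with a ring map from $MU$. In principle one could reconstruct the multiplication by hand — Landweber exactness gives $E_*E\cong R\tensor_{MU^*}MU_*MU\tensor_{MU^*}R$, a flat, evenly graded Hopf algebroid, and a short obstruction-theory argument then produces a map $E\wedge E\to E$ — but it is far cleaner to cite \cite{HoveyStrickland}. Once $\theta$ is available as a ring-spectrum map, the remaining steps above are routine.
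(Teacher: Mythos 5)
Your proposal is correct, and the overall skeleton matches the paper's: both of you cite Hovey--Strickland for the multiplicative structure and the ring-spectrum map $\theta:MU\to E$, both invoke Lemma~\ref{h is complex oriented if in even degrees} for complex orientability, and both identify the formal group law by naturality of $\theta$ with respect to $\mu^*$ and the projections. The one genuine divergence is in how you show $\theta(x_{MU})$ is a complex orientation. The paper invokes naturality of the Atiyah--Hirzebruch spectral sequence: comparing the $E_\infty$-pages for $MU^*(\mathbb{C}P^\infty)$ and $E^*(\mathbb{C}P^\infty)$, it observes that $x_{MU}$ lifts the canonical generator coming from singular cohomology, and hence $\theta(x_{MU})$ lifts the corresponding generator in $E$-theory. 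You instead argue directly at the level of $\mathbb{C}P^1$: under the suspension identifications $\widetilde{MU}^2(\mathbb{C}P^1)\cong MU^0$ and $\widetilde{E}^2(\mathbb{C}P^1)\cong E^0$ (with which $\theta$ is compatible by naturality), the restriction of $x_{MU}$ is a unit and the induced map $MU^0\to E^0$ is unital, so the restriction of $\theta(x_{MU})$ is a unit in $E^0$ and hence a generator. This is a leaner, spectral-sequence-free verification of exactly the condition in the paper's Definition of complex orientation, and it buys you a self-contained argument that does not depend on having already set up the AHSS machinery; the paper's route, on the other hand, reuses the AHSS analysis that it needs anyway for Lemma~\ref{h is complex oriented if in even degrees}, so within the paper it is arguably no extra work. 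Both arguments are valid.
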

\begin{pf} Since $E^*=R$ is concentrated in even degrees we see that $E$ is complex oriented by
Lemma \ref{h is complex oriented if in even degrees}. That $E$ is multiplicative is covered in \cite[Proposition
2.21]{HoveyStrickland} and using \cite[Proposition 2.20]{HoveyStrickland} we get a map of spectra $\theta:MU\to
E$ which induces the map $\theta_X:MU^*(X)\to E^*(X)$ for each $X$. It remains to show that $\theta(x_{MU})\in
E^2(\mathbb{C}P^\infty)$ is a complex orientation for $E$.

By naturality of the Atiyah-Hirzebruch spectral sequence, there is a commutative diagram
$$
\xymatrix{ H^*(\mathbb{C}P^\infty;MU^*) \ar@{=>}[r] \ar[d] & MU^*(\mathbb{C}P^\infty) \ar[d]\\
H^*(\mathbb{C}P^\infty;E^*) \ar@{=>}[r] & E^*(\mathbb{C}P^\infty)}.
$$
By the same arguments as in the proof of Lemma \ref{h is complex oriented if in even degrees} the map of the
$E_\infty$-pages just corresponds to the map $MU^*\lpow x\rpow \to E^*\lpow x\rpow, \sum_i a_i x^i\mapsto \sum_i
\theta(a_i)x^i$. Since $x_{MU}$ is a lift of the class $x$ to $MU^2(\mathbb{C}P^\infty)$ it follows that
$\theta(x_{MU})$ is a lift of $x$ to $E^2(\mathbb{C}P^\infty)$ and hence is an orientation for $E$. By
naturality we then have a commutative square
$$
\xymatrix{ MU^*(\mathbb{C}P^\infty) \ar[r]^-{\mu^*} \ar[d]_\theta & MU^*(\mathbb{C}P^\infty \times \mathbb{C}P^\infty) \ar[d]^\theta\\
E^*(\mathbb{C}P^\infty) \ar[r]_-{\mu^*} & E^*(\mathbb{C}P^\infty\times\mathbb{C}P^\infty)}
$$
and it follows that $\mu^*(\theta(x_{MU}))=\theta(\mu^*(x_{MU}))=\theta(F(x_1,x_2))=(\phi
F)(\theta(x_1),\theta(x_2))$, showing that the formal group law associated to $\theta(x_{MU})$ is $\phi F$, as
required.\end{pf}

\begin{cor} Let $y=\theta(x_{MU})$ be the complex orientation for $E^*$ defined above and put $x=u.y\in
E^0(\mathbb{C}P^\infty)$. Then $E^0(\mathbb{C}P^\infty)=E^0\lpow x\rpow$ and
$$\mu^*(x)=F(x_1,x_2)\in
E^0(\mathbb{C}P^\infty\times \mathbb{C}P^\infty)=E^0\lpow x_1,x_2\rpow,$$ where $F$ is the standard $p$-typical
formal group law of Proposition \ref{Universal deformation FGL}.\end{cor}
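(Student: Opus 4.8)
The first assertion is purely formal, and I would dispose of it first. By the preceding proposition $y=\theta(x_{MU})$ is a complex orientation for $E$, so, since $E^*=R$ is concentrated in even degrees, Lemma \ref{h is complex oriented if in even degrees} gives $E^*(\mathbb{C}P^\infty)=E^*\lpow y\rpow$ and $E^*(\mathbb{C}P^\infty\times\mathbb{C}P^\infty)=E^*\lpow y_1,y_2\rpow$, where $y_i=\pi_i^*(y)$ has degree $2$. Since $u\in E^{-2}$ is a unit, multiplication by $u^k$ is an isomorphism $E^0\iso E^{-2k}$, so a degree-$0$ element of $E^*\lpow y\rpow$, which a priori has the form $\sum_k a_k y^k$ with $a_k\in E^{-2k}$, is uniquely of the form $\sum_k b_k u^k y^k=\sum_k b_k x^k$ with $b_k\in E^0$; extracting degree-$0$ parts then yields $E^0(\mathbb{C}P^\infty)=E^0\lpow x\rpow$, and likewise $E^0(\mathbb{C}P^\infty\times\mathbb{C}P^\infty)=E^0\lpow x_1,x_2\rpow$ with $x_i=\pi_i^*(x)=uy_i$.

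For the formal group law I would argue as follows. The preceding proposition tells us that $\mu^*(y)=(\phi F)(y_1,y_2)$, where $\phi F$ arises from the formal group law $F$ of Proposition \ref{Universal deformation FGL} by applying $\phi\colon u_i\mapsto u^{p^i-1}u_i$ to the coefficients. The key identity to establish is the rescaling relation $u^{-1}F(us,ut)=(\phi F)(s,t)$ in $R\lpow s,t\rpow$. Granting this, I would substitute $s=y_1$, $t=y_2$ and use $\pi_i^*(uy)=u\,\pi_i^*(y)$ together with $E^*$-linearity of $\mu^*$ (so $\mu^*(uy)=u\,\mu^*(y)$) to get $\mu^*(x)=u\,\mu^*(y)=u\,(\phi F)(y_1,y_2)=F(uy_1,uy_2)=F(x_1,x_2)$, which is the claim.

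To prove the rescaling relation I would carry out the weight computation that motivates the twist in $\phi$. Grade $\mathbb{Z}_p\lpow u_1,\ldots,u_{n-1}\rpow$ by $|u_i|=-2(p^i-1)$. In the explicit logarithm $l(x)=x+\sum_{I\in\mathcal{I}}\tfrac{u_I}{p^{|I|}}x^{p^{\|I\|}}$ of Proposition \ref{Universal deformation FGL}, with the notation there and with the extra index $j_{m+1}=\|I\|$, so that $j_1=0$ and $j_{k+1}=j_k+i_k$, a telescoping sum gives $|u_I|=\sum_k p^{j_k}|u_{i_k}|=-2\sum_k(p^{j_{k+1}}-p^{j_k})=-2(p^{\|I\|}-1)$; hence $l(x)$, and therefore $F(x,y)=l^{-1}(l(x)+l(y))$, is homogeneous of degree $2$ when $x$ and $y$ carry degree $2$. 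It follows that the coefficient $c_{ij}$ of $x^iy^j$ in $F$ is homogeneous of degree $-2(i+j-1)$, i.e. a $\mathbb{Z}_p$-linear combination of monomials $\prod_k u_k^{\alpha_k}$ with $\sum_k\alpha_k(p^k-1)=i+j-1$, and for any such monomial $\phi\big(\prod_k u_k^{\alpha_k}\big)=u^{\sum_k\alpha_k(p^k-1)}\prod_k u_k^{\alpha_k}=u^{i+j-1}\prod_k u_k^{\alpha_k}$, so $\phi(c_{ij})=u^{i+j-1}c_{ij}$. Summing over $i,j$ then gives $(\phi F)(s,t)=\sum_{i,j}u^{i+j-1}c_{ij}s^it^j=u^{-1}\sum_{i,j}c_{ij}(us)^i(ut)^j=u^{-1}F(us,ut)$. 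I expect the only genuine obstacle to be this grading bookkeeping, namely verifying that the particular substitution $u_i\mapsto u^{p^i-1}u_i$ is exactly the one for which $F$ becomes homogeneous of degree $2$; once that is secured the remaining manipulations with power series and with $\pi_i^*$, $\mu^*$ are routine.
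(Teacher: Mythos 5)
Your proof is correct. The first part of your argument is essentially identical to the paper's: extract the degree-$0$ part of $E^*\lpow y\rpow$ using invertibility of $u$. For the second part, both proofs ultimately establish the rescaling identity $(\phi F)(s,t)=u^{-1}F(us,ut)$, but by different routes. The paper works through the logarithm: from the explicit formula it shows $\log_{\phi F}(u^{-1}t)=u^{-1}\log_F(t)$ (the factor $u^{p^{\|I\|}-1}$ from $\phi$ cancels $(u^{-1})^{p^{\|I\|}}$, leaving $u^{-1}$), deduces $u^{-1}\log_F^{-1}(s)=\log_{\phi F}^{-1}(u^{-1}s)$, and then chains these through $F=\log_F^{-1}(\log_F(x_1)+\log_F(x_2))$. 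You instead grade $\mathbb{Z}_p\lpow u_1,\ldots,u_{n-1}\rpow$ with $|u_i|=-2(p^i-1)$, compute $|u_I|=-2(p^{\|I\|}-1)$ by the same telescoping sum (your $j_{m+1}=\|I\|$ convention is correct, since $j_{k+1}=j_k+i_k$), conclude $l$ and hence $F$ are homogeneous of degree $2$, and then observe that $\phi$ scales a degree-$(-2k)$ element by $u^k$, giving $\phi(c_{ij})=u^{i+j-1}c_{ij}$ and the rescaling identity directly. The core computation is the same telescoping identity, but your reformulation in terms of homogeneity of $F$ is somewhat cleaner and more conceptual: it makes transparent why the specific substitution $u_i\mapsto u^{p^i-1}u_i$ is the right one, and it isolates the structural fact (homogeneity) from the bookkeeping, whereas the paper's proof keeps everything inside the log manipulations. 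Both are complete and correct.
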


\begin{pf} For the first statement, recall that $E^*(\mathbb{C}P^\infty)=E^*\lpow y\rpow=\mathbb{Z}_p\lpow
u_1,\ldots,u_{n-1}\rpow [u,u^{-1}]\lpow y\rpow$ with $|u|=-2$ and $|y|=2$. First note that $E^0\lpow x\rpow$ is
clearly contained in $E^0(\mathbb{C}P^\infty)$ since $x$ has degree zero. Now, take $a\in
E^0(\mathbb{C}P^\infty)$ and write $a=\sum_i a_i y^i$, where $a_i\in E^*$ for each $i$. Then $|a_i|=-2i$ so that
we have $a_i=u^i a_i'$ for some $a_i'\in E^0$. Hence $a=\sum_i a_i'(uy)^i=\sum_i a_i'x^i\in E^0\lpow x\rpow$.

For the second statement, we have $$\mu^*(x)=\mu^*(uy)=u\mu^*(y)=u(\phi F)(y_1,y_2)=u(\phi
F)(u^{-1}x_1,u^{-1}x_2).$$ Note that, by uniqueness of the logarithm, we have $\log_{\phi F}(t)=(\phi
\log_F)(t)$. Further
$$\log_{\phi F}(u^{-1}t)=(\phi \log_F)(u^{-1}t)=u^{-1}t + \sum_I \frac{u_I}{p^{|I|}} (u^{-1}t)^{p^{\|I\|}}.u^N$$
where $u^N=(u^{p^{i_1}-1}).(u^{p^{i_2}-1})^{p^{i_1}}\ldots
(u^{p^{i_m}-1})^{p^{i_1+\ldots+i_{m-1}}}=u^{p^{\|I\|}-1}$ is the factor coming from the application of $\phi$ to
the coefficients. Hence $\log_{\phi F}(u^{-1}t)=u^{-1}\log_F(t)$. Further, $u^{-1}t=\log_{\phi
F}^{-1}(u^{-1}\log_F(t))$ so that $u^{-1}\log_F^{-1}(s)=\log_{\phi F}^{-1}(u^{-1}s)$. Hence
\begin{eqnarray*}
u(\phi F)(u^{-1}x_1,u^{-1}x_2) & = & u\log_{\phi F}^{-1}(\log_{\phi F}(u^{-1}x_1)+\log_{\phi F} (u^{-1}x_2))\\
& = & u\log_{\phi F}^{-1}(u^{-1}\log_F(x_1)+u^{-1}\log_F(x_2))\\
& = & uu^{-1}\log_F^{-1}(\log_F(x_1)+\log_F(x_2))\\
& = & F(x_1,x_2)
\end{eqnarray*}
so that $\mu^*(x)=F(x_1,x_2)$, as claimed.\end{pf}

\begin{defn} We refer to the theory $E$ developed above as the {\em Morava $E$-theory of height $n$ at the prime $p$}.
Clearly there is one such theory for each choice of prime $p$ and each integer $n>0$. Note that the coefficient
ring $E^*=\mathbb{Z}_p\lpow u_1,\ldots,u_{n-1}\rpow[u,u^{-1}]$ is concentrated in even degrees and there is an
invertible element, $u$ in degree $-2$. It follows that multiplication by $u$ gives rise to an isomorphism
$E^{k+2}(X)\iso E^{k}(X)$ for all $X$ and all $k$. We refer to the class $x=\theta(x_{MU})\in
E^2(\mathbb{C}P^\infty)$ as the {\em standard complex orientation for $E$} and the class $u.x\in
E^0(\mathbb{C}P^\infty)$ as the {\em standard complex coordinate for $E$}. Often, when working in degree 0, we
will abuse notation slightly and write the latter simply as $x$.\end{defn}

\begin{rem} Using a modified exact functor theorem due to Yagita (\cite{Yagita}) one can define, for each prime $p$ and each $n>0$, a related
cohomology theory $K$ with $K^*=\mathbb{F}_p[u,u^{-1}]$ where $u\in K^{-2}$. We refer to this as the {\em Morava
$K$-theory of height $n$ at $p$}. The convention here is slightly non-standard: in the literature, the term
Morava $K$-theory is often used with reference to a theory $K(n)$ with $K(n)^*=\mathbb{F}_p[v_n,v_n^{-1}]$ where
$v_n\in K(n)^{-2(p^n-1)}$. In fact $K$ is just a modified version of $K(n)$ obtained by setting
$K^*(X)=\mathbb{F}_p[u,u^{-1}]\tensor_{K(n)^*} K(n)^*(X)$, where $\mathbb{F}_p[u,u^{-1}]$ is made into a
$K(n)^*$-module by letting $v_n$ act as $u^{p^n-1}$. See \cite{RavenelNil} for further details on these
theories.\end{rem}

\subsection{The cohomology of finite abelian groups}

\begin{lem}\label{E^*(C_m)=E^*[[x]]/[m](x)} Let $C_m$ be the cyclic subgroup of $S^1$ of order $m$. Then, writing $x$
for the restriction of the complex orientation $x\in E^*(\mathbb{C}P^\infty)=E^*(BS^1)$ to $E^*(BC_m)$, we have
$E^*(BC_m)=E^*\lpow x\rpow/([m](x))$.\end{lem}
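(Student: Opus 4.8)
The plan is to compute $E^*(BC_m)$ via the cofibre sequence relating $BC_m$ to $\mathbb{C}P^\infty = BS^1$. Recall that $C_m \hookrightarrow S^1$ gives a fibration $S^1/C_m \to BC_m \to BS^1$, and since $S^1/C_m \cong S^1$ via the degree-$m$ map, $BC_m$ is the total space of the circle bundle over $\mathbb{C}P^\infty$ classified by the line bundle $L^{\otimes m}$, where $L$ is the tautological line bundle. More concretely, $BC_m$ can be identified (up to homotopy) with the sphere bundle $S(L^{\otimes m})$ over $\mathbb{C}P^\infty$, and there is a cofibre sequence $S(L^{\otimes m})_+ \to D(L^{\otimes m})_+ \to \mathrm{Th}(L^{\otimes m})$. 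Since $D(L^{\otimes m}) \simeq \mathbb{C}P^\infty$ and the Thom space $\mathrm{Th}(L^{\otimes m})$ has, by the Thom isomorphism for the complex-oriented theory $E$, reduced cohomology a free rank-one module over $E^*(\mathbb{C}P^\infty) = E^*\lpow x\rpow$ generated by a Thom class $t$.

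**The key computation.** First I would write down the long exact sequence in $E$-cohomology associated to this cofibre sequence. The map $E^*(\mathbb{C}P^\infty) \to E^*(BC_m)$ is the restriction; the map from the Thom space fits in as $\widetilde{E}^*(\mathrm{Th}(L^{\otimes m})) \to E^*(\mathbb{C}P^\infty)$, and by the standard identification of the Euler class this map is multiplication by the Euler class $e(L^{\otimes m})$ after the Thom isomorphism. The Euler class of $L^{\otimes m}$ is exactly $[m]_F(x)$, the $m$-series of the formal group law $F$ associated to $E$, since the orientation $x$ is the Euler class of $L$ and tensor powers of line bundles correspond to the formal sum. Because $E^*$ is concentrated in even degrees and everything in sight is a free $E^*$-module in even degrees, the long exact sequence splits into short exact sequences
\[
0 \to E^*\lpow x\rpow \xrightarrow{\;\cdot\,[m](x)\;} E^*\lpow x\rpow \to E^*(BC_m) \to 0,
\]
where the first map is injective because $[m](x) = mx + O(x^2)$ is a non-zero-divisor in $E^*\lpow x\rpow$ (its leading behaviour shows it is not a zero divisor: $E^*$ is a domain and $[m](x)$ has a unit times a power of $x$ as lowest term after reduction, or more simply $E^*\lpow x\rpow$ is an integral domain). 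Hence $E^*(BC_m) \cong E^*\lpow x\rpow/([m](x))$, with $x$ mapping to the restriction of the complex orientation, as claimed.

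**The main obstacle.** The delicate point is justifying that the connecting-map/Euler-class description really gives a \emph{short} exact sequence — i.e., that the long exact sequence degenerates. This needs: (i) $\widetilde{E}^*(\mathrm{Th}(L^{\otimes m}))$ is free over $E^*$ on a Thom class in even degree (Thom isomorphism for complex-oriented $E$), (ii) $E^*(\mathbb{C}P^\infty) = E^*\lpow x\rpow$ is free over $E^*$ in even degrees, and (iii) the map between them is, up to the Thom iso, multiplication by $e(L^{\otimes m}) = [m]_F(x)$ — this last identity being where one invokes that the line bundle $L^{\otimes m}$ is classified by $\mu \circ (\Delta\text{-style map})$ so its Euler class is obtained by substituting $x$ into the $m$-series, exactly as in the formal-group-law formalism set up earlier. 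Granting these, injectivity of multiplication by $[m](x)$ plus right-exactness forces the middle term to be the stated quotient. An alternative, perhaps cleaner route that avoids Thom spaces: use the Gysin sequence for the circle bundle $BC_m \to \mathbb{C}P^\infty$ directly, whose Euler class is $[m](x)$; the even-degree concentration again collapses it to the short exact sequence above. Either way, the substantive input is the identification of the Euler class of $L^{\otimes m}$ with $[m]_F(x)$ and the vanishing of the relevant $\mathrm{Tor}$/odd-degree obstructions, both of which follow from material already developed (the formal group law of $E$ and the even-degree concentration of $E^*$).
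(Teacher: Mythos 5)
Your argument is correct; the paper itself offers no proof, deferring to \cite{HKR} Lemma~5.7, whose argument is precisely the Gysin/Thom-space computation you lay out: the Euler class of $L^{\tensor m}$ is $[m]_F(x)$, and multiplication by $[m](x)$ is injective on $E^*\lpow x\rpow$ since the latter is an integral domain. One small clarification of emphasis: the long exact sequence collapses because the injectivity of $\cdot[m](x)$ on $E^*\lpow x\rpow$ forces the connecting map $E^*(BC_m)\to\widetilde{E}^{*+1}(\mathrm{Th}(L^{\tensor m}))$ to vanish, not because of even-degree concentration per se --- a priori $E^*(BC_m)$ is not known to be concentrated in even degrees (that comes out a posteriori), so it is the injectivity argument, which you do supply, that actually closes the loop.
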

\begin{pf} This is Lemma 5.7 in \cite{HKR}.\end{pf}

\begin{cor}\label{Cyclic group iso to p-part} Let $m=ap^r$ where $a$ is coprime to $p$. Then $C_{p^r}$ is a subgroup of $C_m$ and the restriction map $E^*(BC_m)\to
E^*(BC_{p^r})$ is an isomorphism.\end{cor}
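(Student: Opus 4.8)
The plan is to reduce the statement to an identity of ideals in $E^*\lpow x\rpow$ using Lemma \ref{E^*(C_m)=E^*[[x]]/[m](x)}. Since $p^r$ divides $m$, the subgroup $C_{p^r}=\{z\in S^1\mid z^{p^r}=1\}$ is contained in $C_m=\{z\in S^1\mid z^m=1\}$, which gives the claimed inclusion and hence a restriction map $E^*(BC_m)\to E^*(BC_{p^r})$. By naturality of the complex orientation, this restriction map is compatible with the restrictions from $E^*(BS^1)=E^*(\mathbb{C}P^\infty)$, so under the identifications $E^*(BC_m)=E^*\lpow x\rpow/([m](x))$ and $E^*(BC_{p^r})=E^*\lpow x\rpow/([p^r](x))$ it is the map induced by the identity on $E^*\lpow x\rpow$. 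Hence it suffices to show that the ideals $([m](x))$ and $([p^r](x))$ of $E^*\lpow x\rpow$ coincide; then the two quotient rings are literally equal and the restriction map is the identity, in particular an isomorphism.

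To prove the ideals coincide I would write $m=ap^r$ with $(a,p)=1$ and use the multiplicativity of the $m$-series, $[m](x)=[a]\big([p^r](x)\big)$ (for instance part (2) of Lemma \ref{p-adic formal series}, though for positive integers this follows directly from associativity of $+_F$). Since $a$ is coprime to $p$ it is a unit in $\mathbb{Z}_p$, hence in $E^*=\mathbb{Z}_p\lpow u_1,\ldots,u_{n-1}\rpow[u,u^{-1}]$, so Corollary \ref{[m](x) twiddles x} gives $[a](y)=y\,w(y)$ with $w(y)$ a unit in $E^*\lpow y\rpow$. Substituting $y=[p^r](x)$, which has zero constant term, yields $[m](x)=[p^r](x)\cdot w\big([p^r](x)\big)$; and $w([p^r](x))$ is a well-defined power series with unit constant term $w(0)$, hence a unit in $E^*\lpow x\rpow$. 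Therefore $([m](x))=([p^r](x))$, as required.

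The only genuinely substantive step is the observation, via Corollary \ref{[m](x) twiddles x}, that $[a]$ is a unit multiple of the identity power series whenever $a$ is a $p$-adic unit; the rest is formal. A minor point to handle is that composing a unit of $E^*\lpow y\rpow$ with the zero-constant-term series $[p^r](x)$ again produces a unit, which is immediate from the $x$-adic topology on $E^*\lpow x\rpow$. Note that no rank-counting or Nakayama-type argument is needed here, since the two ideals agree exactly rather than merely having the same colength.
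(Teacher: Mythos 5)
Your argument is correct and is essentially the paper's proof: both use $[m](x)=[a]([p^r](x))$ together with Corollary \ref{[m](x) twiddles x} to see that $[m](x)$ and $[p^r](x)$ generate the same ideal of $E^*\lpow x\rpow$. The additional remarks about naturality of the orientation and why $w([p^r](x))$ remains a unit just make explicit details the paper leaves implicit.
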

\begin{pf} Since $[m](x)=[ap^r](x)=[a]([p^r](x))$ and $[a](x)$ is a unit multiple of $x$ by Corollary \ref{[m](x) twiddles x} we see that $[m](x)$ is a unit multiple of $[p^r](x)$. Hence
\[E^*(BC_m)=E^*\lpow x\rpow/([m](x))=E^*\lpow x\rpow/([p^r](x))\iso E^*(BC_{p^r}).\qedhere\]\end{pf}

\begin{prop} [K\"unneth isomorphism] Let $X$ be any space and $Y$ be a space with $E^*(Y)$ free and finitely
generated over $E^*$. Then the map $E^*(X)\tensor_{E^*} E^*(Y)\to E^*(X\times Y)$ is an isomorphism.
\end{prop}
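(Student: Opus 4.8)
The plan is to fix $Y$, regard both $E^*(-)\tensor_{E^*}E^*(Y)$ and $E^*(-\times Y)$ as cohomology theories in the variable $X$, and check that the external (cross) product is an isomorphism between them by the standard ``agree on a point, hence agree everywhere'' comparison argument. Since $E^*(Y)$ is free and finitely generated over $E^*$, choose a homogeneous $E^*$-basis $b_1,\dots,b_k\in E^*(Y)$, so that $E^*(Y)\simeq\bigoplus_{i=1}^k\Sigma^{|b_i|}E^*$ as graded $E^*$-modules. Writing $\pi_X$ and $\pi_Y$ for the two projections off $X\times Y$, the map in the statement carries $a\tensor b$ to $\pi_X^*(a)\cdot\pi_Y^*(b)$, so it may be identified with the $E^*(X)$-linear map
\[
\Psi_X\colon\ \bigoplus_{i=1}^k\Sigma^{|b_i|}E^*(X)\ \To\ E^*(X\times Y),\qquad (a_1,\dots,a_k)\longmapsto\sum_{i=1}^k\pi_X^*(a_i)\cdot\pi_Y^*(b_i),
\]
and the proposition becomes the assertion that $\pi_Y^*(b_1),\dots,\pi_Y^*(b_k)$ form a free $E^*(X)$-basis of $E^*(X\times Y)$.

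First I would verify that both sides of $\Psi$ are cohomology theories in $X$, natural on CW pairs (with the convention $(X,A)\times Y=(X\times Y,\,A\times Y)$). The source is a finite direct sum of shifted copies of the theory $E^*$, hence certainly one; here we use that $E^*(Y)$ is free, hence flat, so that $(-)\tensor_{E^*}E^*(Y)$ preserves the long exact sequence of a pair. For the target, the functor $(-)\times Y$ sends CW pairs to CW pairs and cofibre sequences to cofibre sequences (for a pair, $(X\times Y)/(A\times Y)=(X/A)\wedge Y_+$), so $(X,A)\mapsto E^*(X\times Y,A\times Y)$ is $E^*$ precomposed with an exact functor of spaces and is therefore a cohomology theory in $X$. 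Both theories also satisfy the wedge axiom: the source visibly, and the target because $\bigl(\bigvee_\alpha X_\alpha\bigr)\times Y$ collapses to $\bigvee_\alpha(X_\alpha\times Y)$ (smash products distribute over wedges) while $E$, being represented by a spectrum, sends wedges to products. Finally the cross product is natural in $X$ and compatible with connecting homomorphisms, by naturality of the external product together with the $E^*(X\times Y)$-module structures involved; so $\Psi$ is a natural transformation of cohomology theories satisfying the wedge axiom.

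Now evaluate at a point: $\Psi_{pt}$ is just the identity map of $E^*(Y)=\bigoplus_i\Sigma^{|b_i|}E^*=E^*(pt\times Y)$. By the standard comparison theorem for cohomology theories satisfying the wedge axiom --- proved by induction on the number of cells, using Mayer--Vietoris and the five lemma to reach all finite CW pairs, and then passing to colimits over finite subcomplexes --- it follows that $\Psi_X$ is an isomorphism for every space $X$, which is the claim.

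I do not expect a serious obstacle. The hypothesis is used in two mild ways: freeness of $E^*(Y)$ makes the source a cohomology theory, and finite generation keeps the source's wedge-axiom behaviour matched to the target's (and lets us avoid any $\lim^1$ bookkeeping in the infinite-complex step). The one place needing genuine --- though wholly routine --- care is checking that $\Psi$ really is a morphism of cohomology theories, i.e.\ that it commutes with the connecting homomorphisms; after that the argument is the black-box comparison theorem applied to a transformation that is an isomorphism on a point.
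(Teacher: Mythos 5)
Your argument is correct and is the standard proof: fix $Y$, note both $E^*(-)\tensor_{E^*}E^*(Y)$ and $E^*(-\times Y)$ are cohomology theories in $X$ (freeness of $E^*(Y)$ gives exactness of the tensor, finite generation gives the wedge axiom for the source as a finite direct sum of shifted copies of $E^*$), observe the cross product is a natural transformation, and check it is an isomorphism on a point. The paper does not present a proof of its own but simply cites Lemma~5.9 of Hopkins--Kuhn--Ravenel, and the comparison-of-cohomology-theories argument you give is precisely the one that underlies that lemma, so in substance the two approaches coincide. One small quibble: your parenthetical about "avoiding $\lim^1$ bookkeeping" is not quite the right way to describe the role of finite generation --- the Milnor $\lim^1$ sequence is used in the passage from finite to arbitrary CW complexes regardless, and works because you have isomorphisms on all skeleta; what finite generation actually buys you is that the finite direct sum $\bigoplus_{i=1}^k\Sigma^{|b_i|}E^*(-)$ commutes with arbitrary products, hence the wedge axiom holds for the source theory.
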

\begin{pf} This is Lemma 5.9 in \cite{HKR}.\end{pf}

\begin{cor}\label{E^*(B(GxC_m)) as tensor product} If $G$ is any group then, for any $m>0$, $$E^*(B(G\times C_m))\simeq E^*(BG)\tensor_{E^*}
E^*(BC_m).$$\end{cor}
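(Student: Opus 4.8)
The plan is to deduce this immediately from the K\"unneth isomorphism stated just above, taken with $X=BG$ and $Y=BC_m$. The one hypothesis to verify is that $E^*(BC_m)$ is free and finitely generated over $E^*$; granting that, and using that $B(G\times C_m)$ is homotopy equivalent to $BG\times BC_m$ (legitimate here since $BC_m$ is finite-dimensional in each degree and has only countably many cells), the K\"unneth map
$$E^*(BG)\tensor_{E^*} E^*(BC_m)\To E^*(BG\times BC_m)\simeq E^*(B(G\times C_m))$$
is an isomorphism, which is exactly the assertion.

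So the real content is the freeness of $E^*(BC_m)$ over $E^*$. First I would invoke Lemma \ref{E^*(C_m)=E^*[[x]]/[m](x)} to write $E^*(BC_m)=E^*\lpow x\rpow/([m](x))$. Writing $m=ap^r$ with $(a,p)=1$, Corollary \ref{Cyclic group iso to p-part} identifies this ring with $E^*\lpow x\rpow/([p^r](x))$. Since the standard orientation for $E$ carries the height-$n$ $p$-typical formal group law $F$ of Proposition \ref{Universal deformation FGL}, the series $[p^r](x)$ is (a unit multiple of) a Weierstrass polynomial of degree $p^{nr}$, so Corollary \ref{Basis for R[[x]]/[p](x)} gives that $E^*\lpow x\rpow/([p^r](x))$ is free over $E^*$ of rank $p^{nr}$ with basis $\{1,x,\ldots,x^{p^{nr}-1}\}$. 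In particular it is finitely generated and free, so the K\"unneth proposition applies.

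The only genuine (and minor) obstacle is bookkeeping about gradings and base change: the Weierstrass material of the previous subsection is phrased for a formal group law over a complete local $\mathbb{Z}_p$-algebra, whereas $E^*$ is such a ring only after adjoining the invertible degree $-2$ element $u$. One checks that $E^*\lpow x\rpow/([p^r](x))\simeq\big(\mathbb{Z}_p\lpow u_1,\ldots,u_{n-1}\rpow\lpow x\rpow/([p^r]_F(x))\big)[u,u^{-1}]$, so freeness over $\mathbb{Z}_p\lpow u_1,\ldots,u_{n-1}\rpow$ passes to freeness over $E^*$ with the same monomial basis. Everything else is a direct appeal to the results cited above.
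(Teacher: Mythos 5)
Your proposal is correct and follows exactly the same route as the paper: reduce $E^*(BC_m)$ to $E^*(BC_{p^r})=E^*\lpow x\rpow/([p^r](x))$ via Corollary \ref{Cyclic group iso to p-part}, invoke Weierstrass preparation to get freeness and finite generation over $E^*$, and then apply the K\"unneth isomorphism. The extra remarks about $B(G\times C_m)\simeq BG\times BC_m$ and the grading/base-change bookkeeping are sound and simply make explicit details the paper elides.
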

\begin{pf} As in \ref{Cyclic group iso to p-part} we have $E^*(BC_m)\simeq E^*(BC_{p^r})=E^0\lpow x\rpow/([p^r](x))$ for some
$r$ and the latter is finitely generated and free over $E^*$ by the Weierstrass preparation theorem. Hence the
K\"unneth isomorphism holds.\end{pf}

We are now able to compute the Morava $E$-theory of any finite abelian group $A$ by writing $A$ as a product of
cyclic groups and applying Corollary \ref{E^*(B(GxC_m)) as tensor product} repeatedly. That is, we have the
following.

\begin{prop}\label{E^0(B prod C_m_i)} Let $A$ be a finite abelian group with $A\simeq \prod_i C_{m_i}$. Then there is an isomorphism
$$E^*(BA) \simeq E^*\lpow x_1,\ldots,x_r\rpow/([m_1](x_1),\ldots,[m_r](x_r))$$
where, writing $\alpha_i$ for the map $A\twoheadrightarrow C_{m_i} \rightarrowtail S^1$, we have
$x_i=\alpha_i^*(x)$.\end{prop}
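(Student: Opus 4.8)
The plan is to proceed by induction on the number of cyclic factors $r$, using Corollary \ref{E^*(B(GxC_m)) as tensor product} as the inductive step and Lemma \ref{E^*(C_m)=E^*[[x]]/[m](x)} as the base case. For $r=1$ we have $A\simeq C_{m_1}$ and the statement is exactly Lemma \ref{E^*(C_m)=E^*[[x]]/[m](x)}, with $\alpha_1$ the canonical inclusion $C_{m_1}\rightarrowtail S^1$ and $x_1=\alpha_1^*(x)$ the restriction of the complex orientation. For the inductive step, suppose the result holds for abelian groups that are products of $r-1$ cyclic groups, and write $A\simeq A'\times C_{m_r}$ where $A'\simeq\prod_{i=1}^{r-1}C_{m_i}$. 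Since $B(A'\times C_{m_r})\simeq BA'\times BC_{m_r}$ (the groups are finite, hence countable, so the product-of-classifying-spaces identification holds), Corollary \ref{E^*(B(GxC_m)) as tensor product} gives
$$E^*(BA)\simeq E^*(BA')\tensor_{E^*}E^*(BC_{m_r}).$$

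Next I would feed in the inductive hypothesis $E^*(BA')\simeq E^*\lpow x_1,\ldots,x_{r-1}\rpow/([m_1](x_1),\ldots,[m_{r-1}](x_{r-1}))$ together with $E^*(BC_{m_r})\simeq E^*\lpow x_r\rpow/([m_r](x_r))$ from the base case, and identify the tensor product over $E^*$ of these two quotient rings with
$$E^*\lpow x_1,\ldots,x_r\rpow/([m_1](x_1),\ldots,[m_r](x_r)).$$
This is a routine algebra check: the tensor product over $E^*$ of $E^*\lpow x_1,\ldots,x_{r-1}\rpow$ with $E^*\lpow x_r\rpow$ is $E^*\lpow x_1,\ldots,x_r\rpow$ (using that $E^*(BC_{m_r})$ is finitely generated and free over $E^*$, so no completion subtleties arise — one can write $E^*\lpow x_r\rpow/([m_r](x_r))$ as a free $E^*$-module of finite rank and commute the tensor product past the finite direct sum), and the ideal generated by the two sets of relations is the sum of the extended ideals.

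Finally I would check the naturality statement that $x_i=\alpha_i^*(x)$ under this identification, where $\alpha_i:A\twoheadrightarrow C_{m_i}\rightarrowtail S^1$. For $i=r$ this is the content of the base case applied to the projection $A\twoheadrightarrow C_{m_r}$; for $i<r$ it follows because $\alpha_i$ factors as $A\twoheadrightarrow A'\to\cdots\to C_{m_i}\rightarrowtail S^1$ and the isomorphism of the inductive step is induced by the inclusion $BA'\inc BA$ (equivalently, the projection $A\twoheadrightarrow A'$) on the first tensor factor, so naturality of pullbacks in $E$-theory transports the classes $x_i$ correctly. The main obstacle, such as it is, is purely bookkeeping: making sure the tensor product of completed power series rings really is the completed power series ring in all the variables, which is where finiteness and freeness of $E^*(BC_{m_r})$ over $E^*$ is used, and keeping the naturality of the coordinates straight through the induction. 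There is no serious topological input beyond the Künneth isomorphism already established.
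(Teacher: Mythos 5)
Your proof is correct and matches the paper's intent exactly: the paper states this proposition without proof, remarking only that it follows by "applying Corollary \ref{E^*(B(GxC_m)) as tensor product} repeatedly," which is precisely your induction. Your added care about why the ordinary (not completed) tensor product suffices — because $E^*(BC_{m_r})$ is finitely generated free over $E^*$ by Weierstrass preparation — and your naturality check for the coordinates $x_i$ are the right things to spell out.
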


\begin{cor}\label{E^0(BA) to E^0(BA_p) is iso} Let $A$ be a finite abelian group and let $A_{(p)}=\{a\in A\mid
a^{p^s}=1\text{ for some $s$}\}$ be the $p$-part of $A$. Then the restriction map $E^*(BA)\to E^*(BA_{(p)})$ is
an isomorphism.\end{cor}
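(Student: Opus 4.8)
The plan is to pass to the explicit presentations supplied by Proposition \ref{E^0(B prod C_m_i)} and observe that the defining ideals for $E^*(BA)$ and $E^*(BA_{(p)})$ literally coincide. First I would fix an isomorphism $A\simeq\prod_{i=1}^r C_{m_i}$ and, for each $i$, write $m_i=a_ip^{r_i}$ with $a_i$ coprime to $p$. Then the $p$-part of $C_{m_i}$ is its unique subgroup of order $p^{r_i}$, namely $C_{p^{r_i}}$, so $A_{(p)}\simeq\prod_{i=1}^r C_{p^{r_i}}$ and the inclusion $j\colon A_{(p)}\inc A$ is the product of the inclusions $C_{p^{r_i}}\inc C_{m_i}$.

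Next I would do the naturality bookkeeping for the coordinates. Let $\alpha_i\colon A\twoheadrightarrow C_{m_i}\inc S^1$ and $\alpha_i'\colon A_{(p)}\twoheadrightarrow C_{p^{r_i}}\inc S^1$ be the maps used in Proposition \ref{E^0(B prod C_m_i)} for $A$ and $A_{(p)}$, and set $x_i=\alpha_i^*(x)$, $x_i'=(\alpha_i')^*(x)$. Since the projection $A\twoheadrightarrow C_{m_i}$ restricted to $A_{(p)}$ is the projection $A_{(p)}\twoheadrightarrow C_{p^{r_i}}$ followed by $C_{p^{r_i}}\inc C_{m_i}$, and since $C_{p^{r_i}}\inc C_{m_i}\inc S^1$ is just the standard inclusion $C_{p^{r_i}}\inc S^1$, we get $\alpha_i\circ j=\alpha_i'$ as maps to $S^1$, hence $j^*(x_i)=x_i'$. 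Therefore, under the identifications of Proposition \ref{E^0(B prod C_m_i)}, the restriction map $j^*\colon E^*(BA)\to E^*(BA_{(p)})$ is the $E^*$-algebra homomorphism
$$E^*\lpow x_1,\ldots,x_r\rpow/([m_1](x_1),\ldots,[m_r](x_r))\longrightarrow E^*\lpow x_1,\ldots,x_r\rpow/([p^{r_1}](x_1),\ldots,[p^{r_r}](x_r))$$
sending each $x_i$ to $x_i$.

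Finally I would invoke the formal-group-law fact underlying Corollary \ref{Cyclic group iso to p-part}: as $a_i$ is coprime to $p$ it is a unit in $\mathbb{Z}_p$ and hence in $E^*$, so by Corollary \ref{[m](x) twiddles x} the series $[a_i](y)$ is a unit multiple of $y$ in $E^*\lpow y\rpow$; substituting $y=[p^{r_i}](x_i)$ and noting the resulting unit factor has invertible constant term shows $[m_i](x_i)=[a_i]([p^{r_i}](x_i))\sim[p^{r_i}](x_i)$ in $E^*\lpow x_i\rpow$. Thus the two ideals displayed above are equal, the map is the identity map of a single ring, and in particular $j^*$ is an isomorphism.

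The only point that needs genuine care is the naturality step: one must check that the coordinate $x_i$ on $BA$ restricts along $BA_{(p)}\to BA$ to precisely the coordinate used in the presentation of $E^*(BA_{(p)})$, and not merely to some unit multiple of it or a more complicated power series in the $x_j'$. Once that is pinned down, everything else is immediate from Proposition \ref{E^0(B prod C_m_i)} and Corollaries \ref{Cyclic group iso to p-part} and \ref{[m](x) twiddles x}. (Alternatively, one could induct on the number of cyclic factors using Corollary \ref{E^*(B(GxC_m)) as tensor product} together with Corollary \ref{Cyclic group iso to p-part}, but the presentation argument is shorter.)
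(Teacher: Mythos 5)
Your proof is correct and follows essentially the same route as the paper: decompose $A$ into cyclic factors $C_{m_i}$ and reduce to the single-cyclic-group case, where $[m_i](x)\sim[p^{r_i}](x)$ by Corollary \ref{[m](x) twiddles x}. The paper phrases this as a commutative K\"unneth square with the two horizontal tensor-product isomorphisms and cites Corollary \ref{Cyclic group iso to p-part} for the right-hand vertical map, whereas you unpack the same content through the explicit presentation of Proposition \ref{E^0(B prod C_m_i)} and check the defining ideals coincide; the naturality bookkeeping you flag is exactly what makes the paper's square commute, so the two arguments are equivalent.
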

\begin{pf} We write $A$ as a product of cyclic groups, say $A\simeq\prod_i C_{m_i}$ where $m_i=a_ip^{r_i}$. Then, using the K\"unneth isomorphism, we have
$$
\xymatrix{ E^*(BA) \ar[r]^-\sim \ar[d] & \bigotimes_{E^*} E^*(BC_{m_i}) \ar[d]\\
E^*(BA_{(p)}) \ar[r]^-\sim & \bigotimes_{E^*} E^*(BC_{p^{r_i}}).}
$$

By Corollary \ref{Cyclic group iso to p-part} the right hand map is an isomorphism and hence so is the left-hand
one.\end{pf}

\section{The cohomology of classifying spaces}

We outline some general theory that will be used in proving our results.

\begin{prop}\label{E^*(BG) finitely generated} If $G$ is a finite group then $E^*(BG)$ is finitely generated as an $E^*$-module.\end{prop}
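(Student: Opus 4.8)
**Planning the proof of $E^*(BG)$ finitely generated over $E^*$.**

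The plan is to reduce to the abelian case via a standard transfer/induction argument. First I would recall that $E^*$ is a complete local Noetherian ring with maximal ideal $\mathfrak{m}=(p,u_1,\ldots,u_{n-1})$. By Lemma~\ref{M/mM finitely generated implies M finitely generated}, it suffices to show that $E^*(BG)/\mathfrak{m}E^*(BG)$ is finitely generated over the residue field $E^*/\mathfrak{m}\simeq\mathbb{F}_p[u,u^{-1}]$; in fact it is enough to produce any finite bound. The natural route is to map to $K$-theory: there is a map $E^*(BG)\to K^*(BG)$ (reduction mod $\mathfrak{m}$) and it would suffice to know $K^*(BG)$ is finite-dimensional over $K^*=\mathbb{F}_p[u,u^{-1}]$ in each degree, together with enough control that generators of $K^*(BG)$ lift. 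But cleaner still is to argue directly with $E$-theory using the structure of the category of $E$-modules.

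The key steps, in order: (1) Note the result holds when $G=A$ is a finite abelian group, since by Proposition~\ref{E^0(B prod C_m_i)} we have $E^*(BA)\simeq E^*\lpow x_1,\ldots,x_r\rpow/([m_1](x_1),\ldots,[m_r](x_r))$, and since each $[m_i](x_i)$ is (a unit multiple of) a Weierstrass polynomial by the Weierstrass preparation theorem, this is a finitely generated — indeed free — $E^*$-module by the Corollary to that theorem together with the Künneth isomorphism. (2) For general finite $G$, use the Hopkins--Kuhn--Ravenel / Hovey--Strickland machinery: $E^*(BG)$ is a retract, after passing to a suitable localization or as a consequence of the generalized character theory, of a product of $E^*(BA)$ for $A$ ranging over abelian subgroups of $G$ (the abelian $p$-subgroups suffice). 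More elementarily, one can use the stable transfer: for $H\leqslant G$ of index prime to $p$ — e.g. $H$ a Sylow $p$-subgroup — the composite $E^*(BG)\to E^*(BH)\xrightarrow{\mathrm{tr}} E^*(BG)$ is multiplication by $[G:H]$, a unit in $E^*$, so $E^*(BG)$ is a summand of $E^*(BH)$. This reduces us to $G$ a $p$-group. (3) For $G$ a $p$-group, induct on $|G|$: choose a central subgroup $C\simeq C_p$; then $E^*(BG)$ is a module over $E^*(BC)$, which is finitely generated over $E^*$, and one analyzes the fibration $BC\to BG\to B(G/C)$ via the Atiyah--Hirzebruch or Serre-type spectral sequence, or invokes that $E^*(BG)$ is Noetherian by the general finiteness theorem for Morava $E$-theory of classifying spaces.

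The honest way the thesis most likely proceeds: cite the Hovey--Strickland results (\cite{HoveyStrickland}), where it is proved that $K(n)^*(BG)$ is finite-dimensional over $K(n)^*$ for every finite $G$ (this is the hard input, due originally to Ravenel and to Hopkins--Kuhn--Ravenel), and then deduce the $E$-theory statement by the comparison: $E^*(BG)$ is Landweber exact in an appropriate sense, or more directly, $K^*(BG)$ finite-dimensional plus the fact that $E^*(BG)$ is $\mathfrak{m}$-complete (as $E^*(BG)=\limit E^*(BG)/\mathfrak{m}^k$, since $BG$ is built from finite skeleta and $E^*$ is complete) forces, via Nakayama in the completed form (Lemma~\ref{M/mM finitely generated implies M finitely generated}), that $E^*(BG)$ is finitely generated over $E^*$.

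\textbf{The main obstacle} is the finiteness of $K^*(BG)$ (equivalently $K(n)^*(BG)$) for arbitrary finite $G$: this is a genuinely deep theorem and not something provable by a short argument from what precedes. So I expect the actual proof here to be a citation to \cite{HoveyStrickland} (and ultimately \cite{HKR}), with the only original content being the bookkeeping that translates finiteness over $K^*$ into finite generation over $E^*$ using completeness and the Nakayama-type lemma already established. The lifting step — checking that a basis of $E^*(BG)/\mathfrak{m}E^*(BG)$ can be chosen and that it generates — is then routine given Lemma~\ref{M/mM finitely generated implies M finitely generated}.
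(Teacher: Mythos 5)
Your instinct is right: the paper's proof is a one-line citation (to Corollary 4.4 of Greenlees--Strickland rather than Hovey--Strickland, but to the same circle of results), with the remark that the underlying $K(n)$-finiteness goes back to Ravenel, exactly as you predicted. Your surrounding discussion of how the deduction from $K$-theory finiteness plus $\mathfrak{m}$-completeness and Nakayama would go is a fair account of what those references do, and you correctly flagged that the transfer/induction sketch on its own does not supply the hard $p$-group step.
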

\begin{pf} This is Corollary 4.4 in \cite{GreenleesStrickland}, although the related proof that $K(n)^*(BG)$ is finitely generated goes back to Ravenel \cite{RavenelK}. Note that for a $G$-space $Z$ they define $E_G^*(Z)=E^*(EG\times_G Z)$ and letting $Z$ be a single point gives $E_G^*(Z)=E^*(BG)$.\end{pf}

\begin{prop}\label{E^*(X) in even degrees} Suppose $X$ is a space with $E^*(X)$ finitely generated over $E^*$ and with $K(n)^*(X)$ concentrated in
even degrees. Then $E^*(X)$ is free over $E^*$ and concentrated in even degrees.\end{prop}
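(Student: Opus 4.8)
The plan is a Bockstein-type induction along the regular sequence $p, u_1, \ldots, u_{n-1}$, which generates the maximal ideal $\mathfrak{m} = (p, u_1, \ldots, u_{n-1})$ of the Noetherian local ring $E^0 = \mathbb{Z}_p\lpow u_1, \ldots, u_{n-1}\rpow$ and along which $E$ reduces to $K$. Put $E_0 = E$ and let $E_{i+1}$ be the cofibre of multiplication by $x_{i+1}$ on $E_i$, where $x_1 = p$ and $x_{i+1} = u_i$ for $1 \leq i \leq n-1$; all the $x_j$ lie in degree $0$. Then $E_i$ has coefficient ring $E^*/(x_1, \ldots, x_i)$, and in particular $E_n^* = \mathbb{F}_p[u, u^{-1}] = K^*$ with $E_n \simeq K$ (see the Remark introducing the Morava $K$-theory, and \cite{HoveyStrickland}), so $E_n^*(X) \cong K^*(X)$. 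Since $|v_n|$ and $|u|$ are both even, $K^*(X) = \mathbb{F}_p[u, u^{-1}] \otimes_{K(n)^*} K(n)^*(X)$ is concentrated in even degrees exactly when $K(n)^*(X)$ is; so by hypothesis $E_n^*(X)$ lies in even degrees. Each cofibre sequence $E_i \to E_i \to E_{i+1}$ yields a long exact sequence
$$\cdots \to E_i^m(X) \xrightarrow{\,x_{i+1}\,} E_i^m(X) \to E_{i+1}^m(X) \to E_i^{m+1}(X) \xrightarrow{\,x_{i+1}\,} E_i^{m+1}(X) \to \cdots .$$

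First I would check, by induction on $i$ starting from $E_0^*(X) = E^*(X)$, that each $E_i^*(X)$ is finitely generated over the Noetherian ring $E_i^*$: from the long exact sequence, $E_{i+1}^*(X)$ is an extension of a submodule of $E_i^{*+1}(X)$ by a quotient of $E_i^*(X)$, and finite generation is preserved. Next I would prove $E_i^{\text{odd}}(X) = 0$ by \emph{downward} induction on $i$. The base case $i = n$ is the hypothesis. For the step, if $E_{i+1}^{\text{odd}}(X) = 0$ then for $m$ odd the exact sequence makes $x_{i+1}: E_i^m(X) \to E_i^m(X)$ surjective; but $x_{i+1}$ lies in the maximal ideal of the local ring $E_i^0$ (which is $\mathbb{F}_p\lpow u_i, \ldots, u_{n-1}\rpow$ for $i \geq 1$ and $E^0$ for $i = 0$), so Nakayama's lemma applied to the finitely generated module $E_i^m(X)$ forces $E_i^m(X) = 0$. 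Taking $i = 0$ shows $E^*(X)$ is concentrated in even degrees.

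With the odd groups gone, the long exact sequences collapse to short exact sequences $0 \to E_i^{2k}(X) \xrightarrow{x_{i+1}} E_i^{2k}(X) \to E_{i+1}^{2k}(X) \to 0$; in degree $0$ these identify $E_i^0(X)$ with $E^0(X)/(x_1, \ldots, x_i)E^0(X)$ and exhibit $x_{i+1}$ as regular on it. Hence $p, u_1, \ldots, u_{n-1}$ is a regular sequence on the finitely generated $E^0$-module $E^0(X)$ — unless $K^0(X) = 0$, in which case running the short exact sequences backwards with Nakayama gives $E^0(X) = 0$ and the statement is trivial. Since this sequence generates the maximal ideal of $E^0$, Corollary \ref{If m is regular on M then M is free over R} shows $E^0(X)$ is free over $E^0$, and $2$-periodicity (multiplication by $u$) promotes this to freeness of $E^*(X)$ over $E^*$. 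The only non-formal input is the identification $E_n \simeq K$, i.e.\ that reducing $E$ modulo $(p, u_1, \ldots, u_{n-1})$ produces the Morava $K$-theory, so that the even-degree hypothesis on $K(n)^*(X)$ is exactly what feeds the induction; granting that, the rest is an exercise with Nakayama's lemma and Corollary \ref{If m is regular on M then M is free over R}.
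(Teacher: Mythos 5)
Your proof is correct. The paper does not give its own argument---it cites Proposition 3.5 of \cite{StricklandSymmetricGroups}---and your Bockstein-style induction along the regular sequence $p,u_1,\ldots,u_{n-1}$, followed by Nakayama and Corollary \ref{If m is regular on M then M is free over R}, is exactly the standard proof underlying that citation.
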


\begin{pf} This is Proposition 3.5 from \cite{StricklandSymmetricGroups}.\end{pf}

Recall that $K^*=\mathbb{F}_p [u,u^{-1}]$. This is a module over $E^*=\mathbb{Z}_p\lpow
u_1,\ldots,u_{n-1}\rpow[u,u^{-1}]$ under the map sending $u_i\mapsto 0$ for $i=0,\ldots,n-1$ (with $u_0$
understood to be $p$). We find that we can often recover $K^*(BG)$ from $E^*(BG)$.

\begin{prop}\label{K^*(BG)=K^* tensor E^*(BG)} If $E^*(BG)$ is free over $E^*$ then $K^*(BG)=K^*\tensor_{E^*} E^*(BG)$.\end{prop}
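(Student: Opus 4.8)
The plan is to exploit the fact that $K$ is obtained from $E$ by killing a regular sequence. Write $\mathfrak{m}=(u_0,u_1,\ldots,u_{n-1})\vartriangleleft E^*$, where as usual $u_0$ denotes $p$, so that $K^*=\mathbb{F}_p[u,u^{-1}]=E^*/\mathfrak{m}$ and the $E^*$-module structure on $K^*$ is the quotient map. The ring $E^0=\mathbb{Z}_p\lpow u_1,\ldots,u_{n-1}\rpow$ is a complete regular local Noetherian ring of Krull dimension $n$ with maximal ideal $(u_0,\ldots,u_{n-1})$, so this is a regular system of parameters and hence $u_0,\ldots,u_{n-1}$ is a regular sequence on $E^0$; since $E^*=E^0[u,u^{-1}]$ is a localisation of $E^0$ it remains a regular sequence on $E^*$. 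In particular, for each $j$ the class $u_j$ is a non-zero-divisor on $E^*/(u_0,\ldots,u_{j-1})$.

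Next I would realise $K$ as the spectrum obtained from $E$ by killing this sequence. Using the quotient-spectrum construction of \cite{HoveyStrickland}, put $E_0=E$ and inductively let $E_{j+1}$ be the cofibre of $u_j\colon E_j\to E_j$; then $E_j^*=E^*/(u_0,\ldots,u_{j-1})$, there is a cofibre sequence $E_j\xrightarrow{u_j}E_j\to E_{j+1}\to\Sigma E_j$, and $E_n$ represents the Morava $K$-theory $K$ (defined in the excerpt via Yagita's modified exact functor theorem). I would then prove by induction on $j$ that $E_j^*(BG)\cong E^*(BG)/(u_0,\ldots,u_{j-1})E^*(BG)$ and that this module is free over $E_j^*$. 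The case $j=0$ is the hypothesis. For the inductive step, freeness of $E_j^*(BG)$ over $E_j^*$ together with the fact that $u_j$ is a non-zero-divisor on $E_j^*$ shows that multiplication by $u_j$ is injective on $E_j^*(BG)$; hence in the long exact sequence obtained by applying the cofibre sequence to $BG$ all connecting homomorphisms vanish, and it breaks into short exact sequences
$$0\longrightarrow E_j^k(BG)\xrightarrow{\ u_j\ }E_j^k(BG)\longrightarrow E_{j+1}^k(BG)\longrightarrow 0.$$
Thus $E_{j+1}^*(BG)=E_j^*(BG)/u_jE_j^*(BG)=(E_j^*/u_j)\tensor_{E_j^*}E_j^*(BG)$ by Lemma \ref{R/I tensor M = M/IM}, which is again free over $E_{j+1}^*=E_j^*/u_j$; composing the identifications telescopes to $E_{j+1}^*(BG)\cong E^*(BG)/(u_0,\ldots,u_j)E^*(BG)$, completing the induction. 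Taking $j=n$ gives $K^*(BG)=E_n^*(BG)\cong E^*(BG)/\mathfrak{m}E^*(BG)=(E^*/\mathfrak{m})\tensor_{E^*}E^*(BG)=K^*\tensor_{E^*}E^*(BG)$, once more by Lemma \ref{R/I tensor M = M/IM}.

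I should remark that finite generation of $E^*(BG)$ over $E^*$ (Proposition \ref{E^*(BG) finitely generated}) plays no role here: only freeness is used, through the injectivity of the $u_j$. The one genuinely non-formal point — and the step I expect to need the most care — is the identification of the iterated quotient spectrum $E/(u_0,\ldots,u_{n-1})$ with the Morava $K$-theory spectrum $K$, i.e.\ matching the spectrum-level construction against the exact-functor-theorem definition and checking the two sets of defining data agree; for this I would lean on \cite{HoveyStrickland}. Once that is granted, everything else is the routine homological algebra of the long exact sequences together with Lemma \ref{R/I tensor M = M/IM}.
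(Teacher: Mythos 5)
Your argument is correct and reconstructs precisely the proof underlying the result the paper cites without further detail (Corollary 3.8 of \cite{StricklandSymmetricGroups}): one kills the regular sequence $p, u_1, \ldots, u_{n-1}$ spectrum by spectrum to obtain a tower $E = E_0 \to E_1 \to \cdots \to E_n = K$, and freeness propagates down the tower because injectivity of each $u_j$ forces the connecting maps in the associated long exact sequences to vanish. You also correctly isolate the single non-formal input, namely the identification of the iterated cofibre $E/(p,u_1,\ldots,u_{n-1})$ with the Morava $K$-theory spectrum defined via the exact functor theorem, which is indeed supplied by \cite{HoveyStrickland}. One small imprecision: $E^* = E^0[u,u^{-1}]$ is not a localisation of $E^0$ but a (free, hence flat) Laurent extension; flatness is however all that is needed for $p, u_1, \ldots, u_{n-1}$ to remain regular on $E^*$, so the conclusion is unaffected.
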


\begin{pf} This is Corollary 3.8 in \cite{StricklandSymmetricGroups}.\end{pf}

We assemble the above results to arrive at the following.

\begin{prop}\label{E^0(BG) only interesting group} Let $G$ be a finite group with $K(n)^*(BG)$ concentrated in even degrees. Then $E^*(BG)$ is free over $E^*$ and
concentrated in even degrees. Further
$$
E^i(BG) \simeq \left\{\begin{array}{ll} E^0(BG) & \text{if $i$ is even}\\
                                 0 & \text{otherwise}\end{array}\right.
\text{ and}\quad
K^i(BG)  \simeq \left\{\begin{array}{ll} K^0\tensor_{E^0} E^0(BG) & \text{if $i$ is even}\\
                                 0 & \text{otherwise.}\end{array}\right.
$$\end{prop}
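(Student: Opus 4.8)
The plan is to assemble the three cited results in the obvious order. First I would invoke Proposition~\ref{E^*(X) in even degrees} with $X=BG$: by hypothesis $K(n)^*(BG)$ is concentrated in even degrees, and by Proposition~\ref{E^*(BG) finitely generated} the module $E^*(BG)$ is finitely generated over $E^*$, so $E^*(BG)$ is free over $E^*$ and concentrated in even degrees. (One small point to address: Proposition~\ref{E^*(X) in even degrees} is phrased in terms of $K(n)$ whereas the rest of the chapter uses the modified theory $K$ with $K^*=\mathbb{F}_p[u,u^{-1}]$; since $K^*(X)=\mathbb{F}_p[u,u^{-1}]\tensor_{K(n)^*}K(n)^*(X)$ is obtained by base change along a map concentrated in even degrees, $K(n)^*(BG)$ being in even degrees is equivalent to $K^*(BG)$ being in even degrees, so the two formulations agree.)

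Next, since $E^*(BG)$ is free over $E^*$, Proposition~\ref{K^*(BG)=K^* tensor E^*(BG)} applies directly to give $K^*(BG)=K^*\tensor_{E^*}E^*(BG)$. It then remains to unwind what ``concentrated in even degrees'' means for the graded pieces. Because $u\in E^{-2}$ is invertible, multiplication by $u$ gives isomorphisms $E^{i+2}(BG)\iso E^i(BG)$ for all $i$; combined with the vanishing in odd degrees this yields $E^i(BG)\simeq E^0(BG)$ for $i$ even and $E^i(BG)=0$ for $i$ odd. The analogous statement for $K$ follows the same way using invertibility of $u\in K^{-2}$, and one identifies $K^i(BG)=K^i\tensor_{K^0}\big(K^0\tensor_{E^0}E^0(BG)\big)$; since $K^i=K^0$ for $i$ even and $0$ for $i$ odd, together with $K^0\tensor_{E^*}E^*(BG)$ in degree $0$ being $K^0\tensor_{E^0}E^0(BG)$ (using that $E^*(BG)$ is even and $E^*$-free so the tensor product respects the grading summand-wise), we obtain the displayed formula.

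I do not expect any genuine obstacle here — the statement is a formal corollary of the three quoted propositions plus the periodicity coming from the invertible class $u$. The only thing that needs a sentence of care is the bookkeeping between $K(n)$ and $K$ noted above, and the observation that base-changing a graded-free module along a graded ring map lets one read off each graded component from $E^0(BG)$. No new input is required.
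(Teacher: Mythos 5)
Your proposal is correct and follows exactly the same route as the paper: apply Proposition~\ref{E^*(X) in even degrees} together with Proposition~\ref{E^*(BG) finitely generated} to get freeness and evenness, use the invertible class $u\in E^{-2}$ for periodicity, and apply Proposition~\ref{K^*(BG)=K^* tensor E^*(BG)} for the Morava $K$-theory statement. The extra bookkeeping you supply (the $K(n)$ versus $K$ comparison and the graded tensor-product remark) is harmless elaboration of details the paper leaves implicit.
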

\begin{pf} The first statement follows straight from Propositions \ref{E^*(X) in even degrees} and \ref{E^*(BG) finitely
generated}. Since $E^*$ contains the unit $u\in E^{-2}$, multiplication by $u^{-i}$ provides an isomorphism
$E^0(BG)\iso E^{2i}(BG)$, proving the statements about $E^i(BG)$. The final statement follows from an
application of Proposition \ref{K^*(BG)=K^* tensor E^*(BG)}.\end{pf}

\begin{lem}\label{I_i.I_j contained in I_i+j} Let $X$ be a connected CW-complex and $X_k$ denote its $k$-skeleton. Suppose that $X_0$ is a single
point and let $I_k=\ker(E^0(X)\overset{\res}{\longrightarrow} E^0(X_{k-1}))$. Then, for any $i$ and $j$ we have
$I_iI_j\subseteq I_{i+j}$.\end{lem}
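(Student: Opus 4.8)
The plan is to realise the filtration $\{I_k\}$ geometrically using cofibre sequences of the skeletal filtration, and then use the multiplicativity of $E^*$. The key point is that $I_k = \ker(E^0(X) \to E^0(X_{k-1}))$ is, by the long exact sequence of the pair $(X, X_{k-1})$, exactly the image of the relative group $E^0(X, X_{k-1}) \to E^0(X)$. Since $E$ is a multiplicative cohomology theory, there is an external relative product $E^0(X, A) \otimes E^0(X, B) \to E^0(X \times X, (A \times X) \cup (X \times B))$, and pulling back along the diagonal $X \to X \times X$ (which sends $X_{i+j-1}$ into $(X_{i-1} \times X) \cup (X \times X_{j-1})$, since a cell of dimension $< i+j$ must have one of its two diagonal factors of dimension $< i$ or $< j$) gives an internal relative product $E^0(X, X_{i-1}) \otimes E^0(X, X_{j-1}) \to E^0(X, X_{i+j-1})$.

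First I would set up this relative product and check the compatibility square asserting that the diagram relating the relative products to the absolute product on $E^0(X)$ commutes; this is a standard naturality property of the cup product pairing in any multiplicative cohomology theory, applied to the inclusions $j_k : X \hookrightarrow (X, X_{k-1})$. Concretely: given $a \in I_i$ and $b \in I_j$, lift them to $\tilde a \in E^0(X, X_{i-1})$ and $\tilde b \in E^0(X, X_{j-1})$; form the relative product $\tilde a \cdot \tilde b \in E^0(X, X_{i+j-1})$; and observe that its image in $E^0(X)$ is $ab$ by the compatibility of relative and absolute products. Hence $ab$ lies in the image of $E^0(X, X_{i+j-1}) \to E^0(X)$, which is $I_{i+j}$.

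The main obstacle — or rather the only thing requiring care — is verifying that the diagonal map respects the skeletal filtrations in the claimed way, i.e.\ that the diagonal $X \to X \times X$ can be taken to be cellular with $\Delta(X_{i+j-1}) \subseteq (X_{i-1} \times X) \cup (X \times X_{j-1})$. Up to cellular approximation the diagonal is homotopic to a cellular map, and for the product CW structure on $X \times X$ a cell of $X \times X$ lying over a cell of $X$ of dimension $n = i+j-1$ has the form $e^a \times e^b$ with $a + b \le n$, forcing $a \le i-1$ or $b \le j-1$; so the image of the $(i+j-1)$-skeleton under a cellular diagonal lands in the required subcomplex. One small point to note is that $X_0$ being a single point guarantees $X$ is connected and the filtration is exhaustive with $I_0 = E^0(X)$, so the statement is non-vacuous, but this is not needed for the inclusion itself. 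Once the filtration compatibility is in hand, the argument above is a short diagram chase.
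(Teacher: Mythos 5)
Your proposal is correct and is essentially the same argument the paper gives: identify $I_k$ with the image of the relative (equivalently, reduced quotient) group via the long exact sequence, use cellular approximation of the diagonal to get $\Delta(X_{i+j-1}) \subseteq (X_{i-1}\times X)\cup (X\times X_{j-1})$, and then pass the multiplicative structure through. The paper phrases this with quotients and smash products ($\tilde{E}^0(X/X_{k-1})$ and $(X/X_{i-1})\wedge (X/X_{j-1})$) rather than pairs and relative products, but for CW pairs these are equivalent formulations of the same diagram.
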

\begin{pf} Let $\Delta:X\to X\times X$ denote the diagonal map. Then, by standard topological arguments, $\Delta$ is
homotopic to a skeleton-preserving map $\Delta'$, that is a map $\Delta':X\to X\times X$ such that
$\Delta'(X_k)\subseteq (X\times X)_k=\bigcup_{i+j=k} X_i\times X_j$. In fact, if $i$ and $j$ are any integers
with $i+j=k$ we have $\Delta'(X_{k-1})\subseteq (X_{i-1}\times X)\cup (X\times X_{j-1})$. Notice also that
$$(X\times X)/((X_{i-1}\times X)\cup (X\times X_{j-1}))=(X/X_{i-1})\wedge (X/X_{j-1}).$$ Thus we get an induced
map $\Delta':X/X_{k-1} \to (X/X_{i-1})\wedge (X/X_{j-1})$ fitting into the commutative diagram
$$
\xymatrix{ X_{k-1}  \ar[rr]^-{\Delta'} \ar[d] & & (X_{i-1}\times X)\cup (X\times X_{j-1}) \ar[d]\\
X \ar[rr]^-{\Delta'} \ar[d]  & & X\times X \ar[d]\\
X/X_{k-1} \ar[rr]^-{\Delta'} & &  (X/X_{i-1})\wedge (X/X_{j-1}).}
$$
On looking at the lower square and passing to cohomology we get
$$
\xymatrix{E^0(X)\\
\tilde{E}^0(X/X_{k-1}) \ar[u] & \tilde{E}^0(X/X_{i-1})\tensor_{E^0} \tilde{E}^0(X/X_{j-1}).\ar[l] \ar[ul]}
$$
Now, by consideration of the cofibre sequence $X_{k-1}\to X\to X/X_{k-1}$ and the associated long exact sequence
in $E$-theory it follows that $I_k=\im(\tilde{E}^0(X/X_{k-1})\to E^0(X))$. Hence the above diagram gives the
result.\end{pf}

\begin{lem}\label{E^0(X) local} Let $X$ be a connected CW-complex and $X_k$ denote its $k$-skeleton. Suppose that $X_0$ is a single
point and let $\mathfrak{m}=\ker(E^0(X)\overset{\epsilon}{\to} E^0\overset{\pi}{\to} \mathbb{F}_p)$. Then
$\mathfrak{m}$ is the unique maximal ideal of $E^0(X)$ and $E^0(X)$ is local.\end{lem}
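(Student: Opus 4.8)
The claim is that $\mathfrak{m}=\ker\big(E^0(X)\xrightarrow{\epsilon} E^0\xrightarrow{\pi}\mathbb{F}_p\big)$ is the unique maximal ideal of $E^0(X)$, where $\epsilon$ is restriction to the basepoint $X_0$. The plan is to show two things: first that $\mathfrak{m}$ is a maximal ideal (clear, since the quotient is the field $\mathbb{F}_p$), and second that every element of $\mathfrak{m}$ is a non-unit, so that $\mathfrak{m}=E^0(X)\setminus E^0(X)^\times$; by the characterisation of local rings recalled in Section \ref{sec:local rings} this forces $E^0(X)$ to be local with maximal ideal $\mathfrak{m}$.

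\textbf{The main idea.}

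Write $\mathfrak{m}_0=\ker(\pi)=p\mathbb{Z}_p+(u_1,\dots,u_{n-1})$, the maximal ideal of the complete local ring $E^0=\mathbb{Z}_p\lpow u_1,\dots,u_{n-1}\rpow$. Then $\mathfrak{m}=\epsilon^{-1}(\mathfrak{m}_0)+\ker(\epsilon)$, or more precisely $\mathfrak{m}$ is the preimage under $\epsilon$ of $\mathfrak{m}_0$ together with the augmentation ideal $I_1=\ker(\epsilon)$ sitting inside it. The key structural input is Lemma \ref{I_i.I_j contained in I_i+j}: the augmentation ideal $I_1$ satisfies $I_1^k\subseteq I_k$, and since $E^0(X)$ is finitely generated over $E^0$ (Proposition \ref{E^*(BG) finitely generated}, when $X=BG$; in general this is a hypothesis one uses), $X$ has finitely many cells through which a given power of $I_1$ vanishes — so $I_1$ is a \emph{nilpotent} ideal, at least after we observe that $E^0(X)$ is a finitely generated module and the skeletal filtration is finite in each degree. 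Granting nilpotence of $I_1$: take $y\in\mathfrak{m}$. Then $\epsilon(y)\in\mathfrak{m}_0$, so $1-\text{(lift of a unit correcting }y)$... more cleanly: since $\epsilon(y)\in\mathfrak{m}_0$ and $E^0$ is local, $\epsilon(y)$ is a non-unit; I want to conclude $y$ is a non-unit in $E^0(X)$. Suppose $y$ were a unit; applying $\epsilon$ (a ring map) gives $\epsilon(y)\in (E^0)^\times$, contradicting $\epsilon(y)\in\mathfrak{m}_0$. Hence every element of $\mathfrak{m}$ is a non-unit, $\mathfrak{m}=E^0(X)\setminus E^0(X)^\times$ is an ideal, and $E^0(X)$ is local with maximal ideal $\mathfrak{m}$.

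\textbf{Where the real content lies.}

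In fact the argument above shows directly, without needing nilpotence, that $\mathfrak{m}\subseteq E^0(X)\setminus E^0(X)^\times$: any $y\notin\mathfrak{m}$ has $\epsilon(y)\in (E^0)^\times$, so I would want the converse — that $y\notin\mathfrak{m}$ implies $y$ is a unit in $E^0(X)$. This is the one genuine step. Write $c=\epsilon(y)\in (E^0)^\times\subseteq E^0(X)^\times$ and set $z=1-c^{-1}y$; then $\epsilon(z)=0$, so $z\in I_1$. If $I_1$ is nilpotent, $z$ is nilpotent, hence $1-z=c^{-1}y$ is a unit, hence $y$ is a unit. So the whole proof reduces to: \emph{$I_1=\ker(\epsilon:E^0(X)\to E^0)$ is a nil ideal.} I expect this to be the main obstacle, and the plan is to extract it from Lemma \ref{I_i.I_j contained in I_i+j} together with finite generation: since $E^0(X)$ is a finitely generated $E^0$-module and the $I_k$ form a descending chain with $\bigcap_k I_k$ eventually... one argues that because the associated graded pieces $I_k/I_{k+1}\cong \tilde E^0(X_k/X_{k-1})$ assemble $E^0(X)$ and $E^0(X)$ is finitely generated, $I_N=0$ for some $N$ on the relevant skeleton, whence $I_1^N\subseteq I_N=0$. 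I would cite the standard fact that for $X$ with $E^0(X)$ finitely generated the skeletal filtration is finite (each element is detected on a finite skeleton, and finite generation bounds the skeleton uniformly), conclude $I_1$ is nilpotent, and then the two-line unit argument above finishes it. Finally, uniqueness of the maximal ideal is automatic once $E^0(X)$ is shown to be local.
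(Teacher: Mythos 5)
Your argument has the right overall shape — reduce to showing that any $y\notin\mathfrak{m}$ is a unit, note $c=\epsilon(y)\in(E^0)^\times$, set $z=1-c^{-1}y\in I_1=\ker(\epsilon)$, and try to invert $1-z$ — and this exactly matches the paper's strategy up to that point. But the step you flag as ``the one genuine step,'' that $I_1$ is nilpotent (or nil), is a real gap, and it cannot be repaired: the lemma is stated for an arbitrary connected CW-complex $X$ with $X_0$ a point, not just for $X=BG$ with $G$ finite. For $X=\mathbb{C}P^\infty$ one has $E^0(X)\simeq E^0\lpow x\rpow$, which the lemma (correctly) asserts is local with maximal ideal $\mathfrak{m}_{E^0}+(x)$, but $I_1=(x)$ is manifestly not nilpotent. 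So the hypothesis you quietly insert (finite generation of $E^0(X)$ over $E^0$, and nilpotence of $I_1$) is not available and in general false. Even restricting to $X=BG$, the fact that $I_1^r=0$ for some $r$ is only proved later in the chapter, using Nakayama's lemma, and that proof in turn relies on $E^0(BG)$ being local — i.e.\ on the very lemma you are trying to prove.

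What the paper does instead is finer and avoids nilpotence entirely. Having shown $z\in I_1$, Lemma \ref{I_i.I_j contained in I_i+j} gives $z^k\in I_k$ for all $k$, and the skeletal filtration $\{I_k\}$ defines a topology on $E^0(X)$ in which the geometric series $\sum_{k\geq 0}z^k$ converges (the partial sums are eventually constant after restriction to any finite skeleton). Calling the sum $w$, one has $(1-z)w=1$, so $1-z=c^{-1}y$ is a unit and hence so is $y$. In other words, the ideal $I_1$ is topologically nilpotent with respect to the skeletal topology even when it is not algebraically nilpotent, and that suffices. The fix to your proposal is therefore to replace ``if $I_1$ is nilpotent, $z$ is nilpotent, hence $1-z$ is a unit'' with a convergence argument in the skeletal topology; once you do that, the rest of your proof goes through verbatim.
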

\begin{pf} Since $E^0(X)/\mathfrak{m}\simeq \mathbb{F}_p$ we know that $\mathfrak{m}$ is maximal. Take $x\in
E^0(X)\setminus\mathfrak{m}$. We will show that $x$ is invertible. Note that $x$ is non-zero mod $\mathfrak{m}$
and so $\epsilon(x)$ lies in $E^0\setminus \mathfrak{m}_{E^0}$. Since $E^0$ is local it follows that
$\epsilon(x)$ is invertible in $E^0$. Let $y=1-\epsilon(x)^{-1}x$. Then $y\in\ker(\epsilon)=I_1$ (where $I_k$
($k> 0$) are the ideals of Lemma \ref{I_i.I_j contained in I_i+j}). It follows that $y^k\in I_k$ so that
$z=\sum_{k=0}^\infty y^k$ converges in $E^0(X)$ with respect to the skeletal topology. But $1+yz=z$ so that
$(1-y)z=1$ and hence $1-y=\epsilon(x)^{-1}x$ is invertible. It follows that $x$ is invertible and $E^0(X)$ is
local, as claimed.\end{pf}

\begin{lem}\label{Noetherian algebra is noetherian} Let $R$ be a Noetherian ring and $A$ an algebra over $R$, finitely generated as an $R$-module. Then
$A$ is Noetherian.\end{lem}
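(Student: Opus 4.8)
The plan is to reduce the statement about ideals of $A$ to a statement about $R$-submodules of $A$, exploiting that $A$ is a Noetherian $R$-module.

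First I would recall the standard module-theoretic fact that a finitely generated module over a Noetherian ring is a \emph{Noetherian module}, that is, all of its submodules are finitely generated (see, for example, \cite[Chapter 1]{Matsumura} or \cite{LangAlgebra}). Concretely: for each $n$ the free module $R^n$ is Noetherian by induction on $n$, using that in a short exact sequence $0\to M'\to M\to M''\to 0$ the middle module $M$ is Noetherian whenever $M'$ and $M''$ are, together with the observation that any quotient of a Noetherian module is Noetherian. Since $A$ is generated as an $R$-module by finitely many elements $a_1,\ldots,a_n$, there is a surjection $R^n\twoheadrightarrow A$ of $R$-modules, so $A$ is a Noetherian $R$-module.

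Next I would pass from $R$-submodules to ideals. Let $I\vartriangleleft A$ be any ideal. The structure map $R\to A$ makes $I$ an $R$-module, indeed an $R$-submodule of $A$, so by the previous step $I$ is finitely generated as an $R$-module, say $I=Rb_1+\cdots+Rb_k$ with $b_j\in I$. Since the image of $R$ in $A$ is contained in $A$ we have $Rb_j\subseteq Ab_j$, hence $I=Rb_1+\cdots+Rb_k\subseteq Ab_1+\cdots+Ab_k\subseteq I$, so $I=Ab_1+\cdots+Ab_k$ is finitely generated as an ideal of $A$. As $I$ was arbitrary, every ideal of $A$ is finitely generated, which is precisely the assertion that $A$ is Noetherian.

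There is no real obstacle here: the only point requiring any care is keeping the distinction between ``Noetherian $R$-module'' and ``Noetherian ring'' straight, and noticing that a set of $R$-module generators of an ideal $I$ automatically serves as a set of $A$-ideal generators because the scalars from $R$ act through $A$. The inductive proof that $R^n$ is a Noetherian module, and that extensions and quotients of Noetherian modules are Noetherian, are entirely standard, and I would simply cite them rather than reproduce them.
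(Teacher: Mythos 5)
Your proof is correct and follows essentially the same route as the paper: reduce to the fact that $A$ is a Noetherian $R$-module and observe that ideals of $A$ are $R$-submodules of $A$. The only cosmetic difference is that the paper invokes the ascending chain condition directly (so it need not even remark that $R$-module generators of an ideal serve as $A$-ideal generators), whereas you use the finitely-generated-ideals characterization and supply that small extra observation explicitly.
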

\begin{pf} See, for example, \cite{Sharp}. Every finitely generated $R$-module is Noetherian and every ideal of $A$ is
an $R$-submodule of $A$, so it follows that every ascending chain of ideals of $A$ is necessarily eventually
constant.\end{pf}

\begin{prop} Let $G$ be a finite group. Then $E^0(BG)$ is a complete local Noetherian ring.\end{prop}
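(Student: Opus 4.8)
The plan is to combine three ingredients: locality of $E^0(BG)$, available from Lemma \ref{E^0(X) local}; module-finiteness of $E^0(BG)$ over the Noetherian ring $E^0=\mathbb{Z}_p\lpow u_1,\ldots,u_{n-1}\rpow$, which yields Noetherianity; and the fact that a finitely generated module over a complete local ring is complete for the relevant adic topology. First I would give $BG$ a CW structure with a single $0$-cell, possible since $BG$ is connected, so that Lemma \ref{E^0(X) local} applies verbatim and shows $E^0(BG)$ is local with maximal ideal $\mathfrak{m}=\ker\big(E^0(BG)\overset{\epsilon}{\to}E^0\overset{\pi}{\to}\mathbb{F}_p\big)$.

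For Noetherianity, note that $E^0=\mathbb{Z}_p\lpow u_1,\ldots,u_{n-1}\rpow$ is an iterated power series ring over the Noetherian ring $\mathbb{Z}_p$, hence Noetherian. By Proposition \ref{E^*(BG) finitely generated}, $E^*(BG)$ is finitely generated over $E^*=E^0[u,u^{-1}]$. I would split $E^*(BG)$ into its even and odd graded parts; these are $E^*$-submodules, since $E^*$ is concentrated in even degrees and so multiplication preserves parity, and each is finitely generated as $E^*$ is Noetherian. Choosing homogeneous generators of the even part and multiplying them by appropriate powers of the unit $u\in E^{-2}$ to bring them into degree $0$ then exhibits $E^0(BG)$ as a finitely generated $E^0$-module, in particular a finitely generated $E^0$-algebra, so Lemma \ref{Noetherian algebra is noetherian} shows $E^0(BG)$ is Noetherian.

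For completeness, observe that $E^0$ is in fact complete local, being an iterated power series ring over the complete local ring $\mathbb{Z}_p$ (cf. the lemma of Section \ref{sec:local rings}). I would first check that the $\mathfrak{m}$-adic topology on $E^0(BG)$ coincides with its $\mathfrak{m}_{E^0}$-adic topology: one has $\mathfrak{m}_{E^0}\cdot E^0(BG)\subseteq\mathfrak{m}$ since $\mathfrak{m}_{E^0}=\ker(\pi)$, while $E^0(BG)/\mathfrak{m}_{E^0}E^0(BG)$ is a finite-dimensional local $\mathbb{F}_p$-algebra, so the image of $\mathfrak{m}$ in it is nilpotent and $\mathfrak{m}^N\subseteq\mathfrak{m}_{E^0}E^0(BG)$ for some $N$. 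Since $E^0(BG)$ is a finitely generated module over the complete Noetherian local ring $E^0$, it is $\mathfrak{m}_{E^0}$-adically complete by a standard result (see \cite{Matsumura}), hence $\mathfrak{m}$-adically complete. I expect the main obstacle to be this final step: the proposition provides only finite generation of $E^*(BG)$ over $E^*$, so one must carefully extract both the module-finiteness of $E^0(BG)$ over $E^0$ and the coincidence of the two adic topologies before invoking completeness of finite modules over complete local rings.
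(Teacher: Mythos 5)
Your proof is correct and follows essentially the same three-part route as the paper: locality via Lemma \ref{E^0(X) local}, Noetherianity via finite generation over the Noetherian ring $E^0$ and Lemma \ref{Noetherian algebra is noetherian}, and completeness by transferring $\mathfrak{m}_{E^0}$-adic completeness of a finite $E^0$-module to the $\mathfrak{m}$-adic topology after matching the two topologies. You are also right to flag the passage from finite generation of $E^*(BG)$ over $E^*$ (Proposition \ref{E^*(BG) finitely generated}) to finite generation of $E^0(BG)$ over $E^0$: the paper elides this, and your even/odd decomposition followed by shifting generators into degree zero by powers of the unit $u$ is the intended gloss.

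Where you genuinely diverge is the sub-step showing $\mathfrak{m}^N\subseteq\mathfrak{m}_{E^0}E^0(BG)$ for some $N$. You pass to the finite-dimensional local $\mathbb{F}_p$-algebra $E^0(BG)/\mathfrak{m}_{E^0}E^0(BG)$ and observe that its maximal ideal, the image of $\mathfrak{m}$, is nilpotent; this is immediate from the characterisation of finite-dimensional local $K$-algebras in Lemma \ref{(A,I) local iff I^N=0}. The paper instead works with the augmentation ideal $I=\ker(E^0(BG)\to E^0)$: it forms the descending chain of finite-dimensional $\mathbb{F}_p$-vector spaces $I^k/\mathfrak{m}_{E^0}I^k$, extracts $I^{r+1}=I^r$ from eventual stability via Proposition \ref{M/IM=N/IN implies M=N}, concludes $I^r=0$ by Nakayama, and then uses $\mathfrak{m}=I+\mathfrak{m}_{E^0}E^0(BG)$ to get $\mathfrak{m}^r\subseteq\mathfrak{m}_{E^0}E^0(BG)$. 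Both arguments are ultimately Nakayama applied to a descending chain that stabilises by finite dimensionality; yours is more direct and shorter, while the paper's variant also establishes in passing that the augmentation ideal is nilpotent, a fact it does not need elsewhere. Either way the conclusion $\mathfrak{m}^{Nk}\subseteq\mathfrak{m}_{E^0}^{k}E^0(BG)$ for all $k$ gives the equivalence of topologies and hence completeness, so your argument is sound.
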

\begin{pf} By Lemma \ref{Noetherian algebra is noetherian}, $E^0(BG)$ is Noetherian as it is a finitely generated
module over the Noetherian ring $E^0$. By Lemma \ref{E^0(X) local} $E^0(BG)$ is local with maximal ideal
$\mathfrak{m}=\ker(E^0(BG)\overset{\epsilon}{\to} E^0\overset{\pi}{\to} \mathbb{F}_p)$. It remains to show that
$E^0(BG)$ is complete with respect to the $\mathfrak{m}$-adic topology.

Now, $E^0(BG)$ inherits a topology from $E^0$, namely the $\mathfrak{m}_{E^0}$-adic topology generated by open
balls of the form $a+ \mathfrak{m}_{E^0}^rE^0(BG)$ and an application of the Artin-Rees lemma (specifically,
Theorem 8.7 in \cite{Matsumura}) shows that $E^0(BG)$ is complete with respect to this topology. The discussion
in \cite[p55]{Matsumura} shows that the $\mathfrak{m}_{E^0}$-adic topology coincides with the
$\mathfrak{m}$-adic one if and only if for each $N\in\mathbb{N}$ there exist $r$ and $s$ such that
$\mathfrak{m}^r\subseteq \mathfrak{m}_{E^0}^N E^0(BG)$ and $\mathfrak{m}_{E^0}^s E^0(BG)\subseteq
\mathfrak{m}^N$. Since $\mathfrak{m}_{E^0}E^0(BG) \subseteq \mathfrak{m}$ the latter of these conditions is
easily satisfied and it remains to show that the former holds.

Let $I=I_1=\ker(E^0(BG)\to E^0)$. Take the descending chain of ideals $$E^0(BG)\vartriangleright I\geqslant
I^2\geqslant I^3\geqslant\ldots$$ Then, writing $J_k=I^k/\mathfrak{m}_{E^0}I^k$, we get a descending chain of
finite dimensional $\mathbb{F}_p$-vector spaces
$$J_1\geqslant J_2\geqslant J_3\geqslant\ldots$$ which must therefore be eventually constant. In particular, there exists $r\in \mathbb{N}$ with
$J_{r+1}=J_r$ so that $I^{r+1}/\mathfrak{m}_{E^0} I^{r+1}=I^{r}/\mathfrak{m}_{E^0} I^{r}$. An application of
Proposition \ref{M/IM=N/IN implies M=N} then gives $I^{r+1}=I^r$ whereby, since $I\subseteq\mathfrak{m}$,
Nakayama's lemma leaves us with $I^r=0$. But $E^0(BG)/I_1=E^0$ and $E^0/\mathfrak{m}_{E^0}=\mathbb{F}_p$ so it
follows that $\mathfrak{m}=I_1+\mathfrak{m}_{E^0}E^0(BG)$ and $\mathfrak{m}^r\subseteq
I_1^r+\mathfrak{m}_{E^0}E^0(BG)=\mathfrak{m}_{E^0}E^0(BG)$. Hence we see that
$\mathfrak{m}^{rN}\subseteq\mathfrak{m}_{E^0}^N E^0(BG)$ and we are done.\end{pf}

\subsection{Transfers and the double coset formula}

Given a finite group $G$ and a subgroup $H$ of $G$ there is a map of $E^*$-modules $\transfer_H^G:E^*(BH)\to
E^*(BG)$ known as the \emph{transfer map} generalising an analogous map for ordinary cohomology (see
\cite{BensonRepAndCohy1}). The map is characterised by the following key properties.

\begin{lem}\label{Transfers} Let $G$ and $G'$ be finite groups. Then the
following hold.
\begin{enumerate}
\item If $K\leqslant H\leqslant G$ then $\transfer_K^G=\transfer_H^G\circ\transfer_K^H$.
\item If $H\leqslant G$ and $H'\leqslant G'$ then $\transfer_{H\times H'}^{G\times G'}=\transfer_H^G\tensor
\transfer_{H'}^{G'}$ as maps $E^*(BH)\tensor E^*(BH')\to E^*(BG)\tensor E^*(BG')$.
\item (Frobenius Reciprocity) If $H\leqslant G$ and then $\transfer_H^G(\res_H^G(a).b)=a.\transfer_H^G(b)$ for all $a\in E^*(BG),~b\in
E^*(BH)$. That is, viewing $E^*(BH)$ as an $E^*(BG)$-module, $\transfer_H^G$ is an $E^*(BG)$-module map.
\item If $N$ is normal in $G$ and $\pi$ denotes the projection $G\to G/N$ then
$\transfer_N^G(1)=\pi^*\transfer_1^{G/N}(1)$.
\item (Double Coset Formula) Suppose $H$ and $K$ are
subgroups of $G$. Then, considered as maps $E^*(BK)\to E^*(BH)$, we have the identity
$$\res_H^G\tr_K^G=\sum_{g\in H\setminus G/K} \tr_{H\cap (g K g^{-1})}^H \res_{H\cap (g K g^{-1})}^{g K
g^{-1}}\conj_g^*.$$ where $H\setminus G/K$ denotes the set of double cosets $\{HgK\mid g\in G\}$.
\end{enumerate}\end{lem}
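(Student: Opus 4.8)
The strategy is to take all five identities from the standard topological construction of the transfer rather than reprove it from scratch; indeed one may simply appeal to \cite{BensonRepAndCohy1}. I will indicate the mechanism. Recall that for a subgroup $H$ of finite index in $G$ the map $BH\to BG$ is a finite covering with fibre the $G$-set $G/H$; embedding this covering fibrewise in a trivial vector bundle and applying the Pontryagin--Thom collapse produces a stable map $c_H^G\colon \Sigma^\infty_+ BG\to \Sigma^\infty_+ BH$, and $\transfer_H^G$ is the effect of $c_H^G$ in $E$-cohomology. Two structural features of this construction drive everything: \textbf{(i)} the collapse map is compatible with pullback of coverings, so that for a pullback square of classifying-space coverings the associated transfers and restrictions commute (the \emph{base-change property}); and \textbf{(ii)} the collapse map for a covering whose total space is a disjoint union is the wedge of the individual collapse maps, so that transfer along a disjoint union of coverings is the sum of the pieces (using $E^*(\coprod_i Y_i)=\prod_i E^*(Y_i)$).

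Properties 1--4 are then quick. For (1), the composite $BK\to BH\to BG$ is the covering corresponding to the $G$-set $G/K$, and the Pontryagin--Thom collapse is compatible with composites of coverings, so $c_K^G$ is stably homotopic to $c_K^H\circ c_H^G$; applying $E^*$ gives $\transfer_K^G=\transfer_H^G\circ\transfer_K^H$. For (2), since $B(H\times H')\simeq BH\times BH'$ the covering for $H\times H'\leqslant G\times G'$ is the external product of the coverings for $H$ and $H'$, whence the collapse map is a smash product and, using multiplicativity of $E$, $\transfer_{H\times H'}^{G\times G'}=\transfer_H^G\tensor\transfer_{H'}^{G'}$. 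For (3), one checks that $c_H^G$ is a map of $\Sigma^\infty_+ BG$-module spectra (the module structure on the target coming from $BH\to BG$), and this $E^*(BG)$-linearity is exactly Frobenius reciprocity. Property (4) is the case $H=1$ of base change: the covering $BN\to BG$ is the pullback of $B1=\mathrm{pt}\to B(G/N)$ along the map $BG\to B(G/N)$ induced by $\pi$, so base change applied to the unit gives $\transfer_N^G(1)=\pi^*\transfer_1^{G/N}(1)$.

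The double coset formula, property 5, is the substantive point. Form the pullback square with $BK\to BG$ and $BH\to BG$; its apex is the covering of $BH$ with fibre the $H$-set obtained by restricting $G/K$ to $H$. Decomposing that $H$-set into orbits $H/(H\cap gKg^{-1})$ indexed by the double cosets $g\in H\setminus G/K$ identifies the apex with $\coprod_{g}B(H\cap gKg^{-1})$, with the map to $BH$ being $\coprod_g$ of the inclusions $H\cap gKg^{-1}\hookrightarrow H$ and the map to $BK$ being, on the $g$-th summand, a conjugation isomorphism $H\cap gKg^{-1}\to g^{-1}Hg\cap K$ followed by the inclusion into $K$. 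Now base-change gives $\res_H^G\circ\transfer_K^G=\transfer_{\mathrm{apex}}\circ\res_{\mathrm{apex}}$, and additivity over the disjoint union rewrites the right-hand side as $\sum_{g}$ of the transfer-then-restriction along the $g$-th summand; unwinding the two legs of the $g$-th summand yields exactly $\transfer_{H\cap gKg^{-1}}^H\circ\res_{H\cap gKg^{-1}}^{gKg^{-1}}\circ\conj_g^*$.

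The main obstacle is establishing the base-change property \textbf{(i)} rigorously at the level of spectra --- that the Pontryagin--Thom collapse maps associated to a covering and to its pullback fit into a commuting square --- together with the care required to pin down the conjugation maps and the left/right coset conventions so that $\conj_g^*$ appears with the correct $g$ (as opposed to $g^{-1}$) in the final sum. Everything else is formal manipulation with coverings of classifying spaces, and in a write-up it would be reasonable to cite \cite{BensonRepAndCohy1} for (i) and (ii) and only spell out the double-coset bookkeeping.
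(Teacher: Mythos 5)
Your proposal is correct and, like the paper, ultimately rests on citing \cite{Benson} and \cite{BensonRepAndCohy1} for these standard transfer properties; the paper's proof is in fact nothing more than that citation. The sketch you give of the Pontryagin--Thom construction, base change and additivity over disjoint unions is an accurate account of what those references establish and would be a reasonable expansion if one wanted to make the argument self-contained.
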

\begin{pf} See \cite{Benson} and \cite{BensonRepAndCohy1}.\end{pf}

There are many useful applications of this map, as we will see in subsequent sections.

\begin{rem} Note that if $H$ and $K$ are subgroups of $G$ with $K$ normal in $G$ then the double coset formula
gives $\res_H^G\tr_K^G=\sum_{g\in H\setminus G/K} \tr_{H\cap K}^H \res_{H\cap K}^K\conj_g^*.$ If, in addition,
$H=K$ this simplifies further to leave us with $\res_H^G\tr_H^G=\sum_{g\in G/H}\conj_g^*.$\end{rem}

\subsection{Further results in $E$-theory}

\begin{prop}\label{Conjugation induces identity} Let $g\in G$ and write $\conj_g$ for the conjugation map $G\to G$. Then $\conj_g^*:E^*(BG)\to
E^*(BG)$ is the identity map.\end{prop}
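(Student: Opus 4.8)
The plan is to reduce this to the topological fact already recorded in Proposition \ref{conj_g:BG to BG is homotopic to identity}. That result tells us that the self-map $B\conj_g : BG \to BG$ induced by functoriality of the classifying-space construction is homotopic to $\id_{BG}$. So the only thing that remains is to feed this homotopy into the cohomology theory $E$.

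First I would recall that $E$, being a (generalised) cohomology theory, satisfies the homotopy axiom: if $f_0, f_1 : X \to Y$ are homotopic then $f_0^* = f_1^* : E^*(Y) \to E^*(X)$. Applying this with $X = Y = BG$, $f_0 = B\conj_g$ and $f_1 = \id_{BG}$, and using Proposition \ref{conj_g:BG to BG is homotopic to identity} to supply the homotopy $f_0 \simeq f_1$, we conclude $(B\conj_g)^* = (\id_{BG})^* = \id_{E^*(BG)}$. Since by definition $\conj_g^* : E^*(BG) \to E^*(BG)$ denotes exactly the map $(B\conj_g)^*$, this is the claim.

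There is essentially no obstacle here; the content is entirely in Proposition \ref{conj_g:BG to BG is homotopic to identity}, which in turn rests on the categorical fact that a natural transformation between functors induces a homotopy between the maps of classifying spaces. The one point worth a sentence of care is just the bookkeeping that the map called $\conj_g^*$ in the statement is indeed the map induced by $B\conj_g$ and not, say, some other a priori candidate — this is immediate from the functoriality of $G \mapsto BG$ used throughout the excerpt.
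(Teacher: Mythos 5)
Your proof is correct and is exactly the route the paper takes: cite Proposition \ref{conj_g:BG to BG is homotopic to identity} and then invoke the homotopy axiom for the cohomology theory $E$. The paper simply records this as an immediate corollary without spelling out the homotopy-invariance step.
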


\begin{pf} This is a simple corollary to Proposition \ref{conj_g:BG to BG is homotopic to identity}.\end{pf}

\begin{prop}\label{Restriction to Sylow injective} Let $G$ be a finite group and let $H$ be a subgroup of $G$ with $[G:H]$ coprime to $p$. Then the restriction map $E^*(BG)\to E^*(BH)$ is
injective.\end{prop}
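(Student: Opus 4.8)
The plan is to run the standard transfer argument, using the transfer map $\transfer_H^G\colon E^*(BH)\to E^*(BG)$ together with Frobenius reciprocity. By part (3) of Lemma~\ref{Transfers}, applied with $b=1\in E^0(BH)$, we get $\transfer_H^G(\res_H^G(a))=a\cdot\transfer_H^G(1)$ for every $a\in E^*(BG)$; in other words, the composite $\transfer_H^G\circ\res_H^G$ is multiplication by the single element $e:=\transfer_H^G(1)\in E^0(BG)$. So it suffices to show that $e$ is a unit in $E^0(BG)$, for then $\res_H^G$ has a left inverse (up to a unit) and is in particular injective.

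To prove $e$ is a unit, I would invoke Lemma~\ref{E^0(X) local} (applied to $X=BG$, taken with a single $0$-cell): $E^0(BG)$ is a local ring with maximal ideal $\mathfrak{m}=\ker\big(E^0(BG)\xrightarrow{\epsilon}E^0\xrightarrow{\pi}\mathbb{F}_p\big)$, where the augmentation $\epsilon$ is the restriction $\res_1^G$ to the basepoint. Hence $e$ is a unit precisely when $\pi(\epsilon(e))\neq 0$ in $\mathbb{F}_p$. To compute $\epsilon(e)=\res_1^G\,\transfer_H^G(1)$, apply the double coset formula, part (5) of Lemma~\ref{Transfers}, with outer subgroup $1$ and inner subgroup $H$: the double cosets $1\backslash G/H$ are just the $[G:H]$ left cosets $gH$, and $1\cap gHg^{-1}=1$ for all $g$, so each summand sends $1\mapsto 1\in E^0$. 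Thus $\epsilon(e)=[G:H]\cdot 1\in E^0$. Since $p\nmid[G:H]$ by hypothesis, $\pi([G:H])$ is the nonzero element $[G:H]\bmod p$ of $\mathbb{F}_p$, so $e\notin\mathfrak{m}$ and $e$ is a unit in $E^0(BG)$.

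It then follows that $\transfer_H^G\circ\res_H^G$, being multiplication by a unit of $E^0(BG)$ on the $E^0(BG)$-module $E^*(BG)$, is an isomorphism; in particular it is injective, and hence so is $\res_H^G$. The only step requiring genuine care is the double coset computation showing $\epsilon(\transfer_H^G(1))=[G:H]$, but this is a routine application of the properties of the transfer recorded in Lemma~\ref{Transfers}; the rest of the argument is formal, so I do not anticipate a serious obstacle.
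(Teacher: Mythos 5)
Your proof is correct and takes essentially the same route as the paper's: Frobenius reciprocity to reduce to showing $\transfer_H^G(1)$ is a unit, the double coset formula to compute $\res_1^G\transfer_H^G(1)=[G:H]$, and the local ring structure of $E^0(BG)$ to conclude.
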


\begin{pf} Take $a\in E^*(BG)$. Then, by Frobenius reciprocity, we have
$\transfer_H^G(\res_H^G(a))=a.\transfer_H^G(1)$. Now, by an application of the double coset formula,
$\res_1^G\transfer_H^G(1)=[G:H]$ which is coprime to $p$ and hence a unit in $E^0$. Thus, reducing modulo the
maximal ideal of $E^0(BG)$ we find that $\transfer_H^G(1)$ maps to a unit in $\mathbb{F}_p^\times$ and so, by
the general theory of local rings, it follows that it is a unit in $E^0(BG)$. Thus if $\res_H^G(a)=0$ then
$a=0$, that is $\res_H^G$ is injective.\end{pf}

\begin{lem}\label{E^*(BG) lands in invariants} Let $H$ and $K$ be subgroups of $G$ with $K\subseteq N_G(H)$. Then $K$ acts on $E^*(BH)$ and the map $E^*(BG)\to E^*(BH)$
lands in the $K$-invariants.\end{lem}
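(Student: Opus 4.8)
The plan is to obtain the $K$-action on $E^*(BH)$ from conjugation automorphisms and then exploit the fact that the corresponding action on $E^*(BG)$ is trivial, by Proposition \ref{Conjugation induces identity}.

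First I would pin down the action. Since $K\subseteq N_G(H)$, each $k\in K$ restricts $\conj_k\colon G\to G$ to a group automorphism $c_k\colon H\to H$, $h\mapsto khk^{-1}$, and $c_{k_1}\circ c_{k_2}=c_{k_1k_2}$. Applying the classifying-space functor and then $E^*(-)$ produces ring endomorphisms $c_k^*\colon E^*(BH)\to E^*(BH)$ with $c_{k_1k_2}^*=c_{k_2}^*\circ c_{k_1}^*$; setting $k\cdot x:=c_{k^{-1}}^*(x)$ turns this into a (left) action of $K$ on $E^*(BH)$. The handedness of this convention is immaterial for what follows, since $x$ is $K$-invariant precisely when it is fixed by every $c_k^*$.

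Next comes the key observation: writing $i\colon H\hookrightarrow G$ for the inclusion, the square with horizontal edges $i$, left vertical edge $c_k$, and right vertical edge $\conj_k\colon G\to G$ commutes \emph{strictly}, because both composites send $h$ to $khk^{-1}\in G$. Applying $B(-)$ and then $E$-cohomology — which reverses the horizontal arrows — gives the identity $i^*\circ\conj_k^* = c_k^*\circ i^*$ of maps $E^*(BG)\to E^*(BH)$. The point of insisting on strict commutativity is that no homotopies have to be tracked when passing to $E^*$.

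Finally I invoke Proposition \ref{Conjugation induces identity}, which gives $\conj_k^*=\id$ on $E^*(BG)$. The identity above then reduces to $i^*=c_k^*\circ i^*$, i.e.\ $c_k^*\bigl(\res_H^G(a)\bigr)=\res_H^G(a)$ for every $a\in E^*(BG)$ and every $k\in K$. Since the $K$-action is generated by the maps $c_k^*$, this shows $\res_H^G(a)\in E^*(BH)^K$, so the map $E^*(BG)\to E^*(BH)$ lands in the $K$-invariants. I do not expect a genuine obstacle here; the only things needing care are the variance bookkeeping in defining the $K$-action and the (routine) check that the conjugation square commutes on the nose.
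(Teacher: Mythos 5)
Your proof is correct and follows essentially the same route as the paper: the strictly commuting square relating conjugation on $H$ to conjugation on $G$, followed by an appeal to Proposition \ref{Conjugation induces identity} to kill $\conj_k^*$ on $E^*(BG)$. The only difference is that you spell out the variance bookkeeping for the $K$-action more explicitly, which the paper leaves implicit.
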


\begin{pf} Since $K\subseteq N_G(H)$, taking $k\in K$ we have a commutative diagram
$$
\xymatrix{ H \ar[d]_{\conj_k} \ar@{^{(}->}[r] & G \ar[d]^{\conj_k}\\
H \ar@{^(->}[r] & G}
$$
and thus, since $\conj_k^*:E^*(BG)\to E^*(BG)$ is just the identity by Proposition \ref{Conjugation induces
identity}, we get
$$
\xymatrix{ E^*(BH) & E^*(BG) \ar[l]\\
E^*(BH) \ar[u]^{\conj_k^*} & E^*(BG) \ar[l] \ar@{=}[u]}
$$
showing that $E^*(BG)\to E^*(BH)$ lands in $E^*(BH)^K$.\end{pf}

\begin{prop}\label{E^0(BG)=E^0(BN)^G/N if N is normal in G} Let $N$ be a normal subgroup of $G$ with $[G:N]$ coprime to $p$. Then the restriction map induces an isomorphism $E^*(BG)\iso
E^*(BN)^{G/N}$.\end{prop}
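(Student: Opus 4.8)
The plan is to verify that the restriction map $\res_N^G\colon E^*(BG)\to E^*(BN)$ is injective and has image exactly $E^*(BN)^{G/N}$. Injectivity is immediate from Proposition~\ref{Restriction to Sylow injective}, since $[G:N]$ is coprime to $p$. For the target: because $N$ is normal in $G$ we have $G\subseteq N_G(N)$, so Lemma~\ref{E^*(BG) lands in invariants} (applied with $H=N$ and $K=G$) shows that the image of $\res_N^G$ lies in $E^*(BN)^G$; and by Proposition~\ref{Conjugation induces identity} conjugation by elements of $N$ acts trivially on $E^*(BN)$, so the $G$-action on $E^*(BN)$ factors through $G/N$ and $E^*(BN)^G=E^*(BN)^{G/N}$ is precisely the subring appearing in the statement.

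The only substantive point is surjectivity onto $E^*(BN)^{G/N}$, which I would obtain from the transfer $\tr_N^G\colon E^*(BN)\to E^*(BG)$. Since $N$ is normal, the simplified double coset formula recorded in the remark following Lemma~\ref{Transfers} gives $\res_N^G\circ\tr_N^G=\sum_{g\in G/N}\conj_g^*$ as an $E^*$-linear endomorphism of $E^*(BN)$; each $\conj_g^*$ here depends only on the coset $gN$, again by Proposition~\ref{Conjugation induces identity}, so this sum is just the sum over the $G/N$-action. Consequently, for $b\in E^*(BN)^{G/N}$ we get $\res_N^G(\tr_N^G(b))=\sum_{g\in G/N}\conj_g^*(b)=[G:N]\,b$. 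As $[G:N]$ is a unit in $\mathbb{Z}_p\subseteq E^0$, the element $a=[G:N]^{-1}\tr_N^G(b)\in E^*(BG)$ satisfies $\res_N^G(a)=b$, so $\res_N^G$ surjects onto $E^*(BN)^{G/N}$.

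Putting the three observations together yields the isomorphism $E^*(BG)\iso E^*(BN)^{G/N}$. I do not expect any genuine obstacle: the argument is entirely formal, reducing to the transfer calculus of Lemma~\ref{Transfers} and the injectivity statement of Proposition~\ref{Restriction to Sylow injective}. The only places meriting a line of care are checking that the $G/N$-action on $E^*(BN)$ is the one induced from conjugation (so that ``$G/N$-invariant'' is literally what the transfer formula produces) and that $[G:N]$ is genuinely invertible in the coefficient ring, which holds since it is a $p$-adic unit.
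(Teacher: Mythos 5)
Your proposal follows essentially the same route as the paper's own proof: injectivity via Proposition~\ref{Restriction to Sylow injective}, landing in the invariants via Lemma~\ref{E^*(BG) lands in invariants} together with the fact that inner automorphisms by $N$ act trivially, and surjectivity via the double coset formula for $\res_N^G\circ\tr_N^G$ combined with invertibility of $[G:N]$ in $E^0$. The argument is correct as written.
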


\begin{pf} Since $[G:N]$ is coprime to $p$ it follows from Proposition \ref{Restriction to Sylow injective} that the
restriction map $E^*(BG)\to E^*(BN)$ is injective. Further, by Lemma \ref{E^*(BG) lands in invariants}, the map
lands in the $G$-invariants of $E^*(BN)$. But, since $N$ acts trivially on $E^*(BN)$, the $G$-action on
$E^*(BN)$ factors through a $G/N$ action and thus we have an injective map $E^*(BG)\to E^*(BN)^{G/N}$.

For surjectivity, the double coset formula gives
$$\res_N^G\transfer_N^G=\sum_{g\in N\backslash G/N} \transfer_N^N \res_N^N\conj_g^*=\sum_{g\in G/N} \conj_g^*$$
(where we have used the fact that $N$ is normal) so that if $a\in E^*(BN)^{G/N}$ we get
$$\res_N^G\transfer_N^G(a)=\sum_{g\in G/N} \conj_g^*(a)=|G/N|a.$$
Thus, since $|G/N|$ is coprime to $p$ and hence a unit in $E^*$, we have
$$\res_N^G\left(\frac{1}{|G/N|}\transfer_N^G(a)\right)=\frac{1}{|G/N|}.|G/N|a=a$$
so that $\res_N^G:E^*(BG)\to E^*(BN)^{G/N}$ is surjective, as required.
\end{pf}

\subsection{Understanding $E^*(BG)$ as a categorical limit}

For a finite group $G$ let $\mathcal{A}(G)$ be the category with objects the abelian subgroups of $G$ and
morphisms from $B$ to $A$ being the maps of $G$-sets $G/B\to G/A$. These can be understood as shown below.

\begin{prop} Let $G$ be a finite group and $A$ and $B$ be abelian subgroups of $G$. If $f:G/B\to G/A$ is a map
of $G$-sets then $f$ is determined by $f(B)$ and, writing $f(B)=gA$, we have $g^{-1}Bg\subseteq A$.

Conversely, if there is $g\in G$ with $g^{-1}Bg\subseteq A$ then there is a map of $G$-sets $G/B\to G/A$ given
by $f(B)=gA$.\end{prop}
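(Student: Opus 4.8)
The plan is to treat this as a routine exercise on $G$-sets, handling the two implications separately; the only point requiring genuine care is checking that the formula proposed in the converse direction actually gives a well-defined map. Note that abelianness of $A$ and $B$ will play no role, so the argument is really a statement about arbitrary subgroups.

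For the first implication, I would start from the defining property of a map of $G$-sets, namely that $f(h\cdot xB)=h\cdot f(xB)$ for all $h\in G$ and all $xB\in G/B$, where the action on cosets is by left multiplication. Taking $x=e$ gives $f(hB)=h\cdot f(B)$, and since every element of $G/B$ has the form $hB$ this shows that $f$ is completely determined by the single value $f(B)$. Writing $f(B)=gA$, the key observation is that for $b\in B$ we have $bB=B$ as cosets, hence $gA=f(B)=f(bB)=b\cdot f(B)=bgA$. This says $g^{-1}bg\in A$, and letting $b$ range over $B$ we conclude $g^{-1}Bg\subseteq A$, as required.

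For the converse, given $g$ with $g^{-1}Bg\subseteq A$ I would define $f\colon G/B\to G/A$ by $f(hB)=hgA$. The one thing to verify is well-definedness: if $hB=h'B$ then $h'=hb$ for some $b\in B$, so $h'gA=hbgA=hg(g^{-1}bg)A=hgA$ since $g^{-1}bg\in A$. Equivariance is then immediate from the formula, as $f(k\cdot hB)=khgA=k\cdot f(hB)$, and $f(B)=gA$ by construction, so this $f$ does the job.

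The main obstacle is essentially the only nontrivial step, namely the well-definedness check in the converse; this is precisely the place where the hypothesis $g^{-1}Bg\subseteq A$ is consumed, and everything else is formal manipulation of the coset action.
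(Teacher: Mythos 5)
Your proof is correct and matches the paper's argument essentially line by line: both directions use the same computations with coset representatives, and the well-definedness check in the converse is the same rewriting $h'gA = hg(g^{-1}bg)A = hgA$. Your side remark that abelianness of $A$ and $B$ is not actually used is accurate and worth noting.
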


\begin{pf} Let $f:G/B\to G/A$ be a map of $G$-sets. Then, for all $g\in G$, we have $f(gB)=gf(B)$ so that $f$
is determined by $f(B)$. Write $f(B)=gA$. Then for all $b\in B$ we have
$$gA=f(B)=f(bB)=bf(B)=bgA$$
so that $g^{-1}bg\in A$. Hence $g^{-1}Bg\subseteq A$.

For the converse, if $g\in G$ with $g^{-1}Bg\subseteq A$ the $G$-map $f:G/B\to G/A$ given by $f(B)=gA$ is well
defined since if $hB=kB$ we have $k=hb$ for some $b\in B$ whereby
\[f(kB)=kf(B)=hbf(B)=hbgA=hg(g^{-1}bg)A=hgA=hf(B)=f(hB).\qedhere\]\end{pf}

The following result is due to Hopkins, Kuhn and Ravenel (\cite[Theorem A]{HKR}).

\begin{thm} Let $E$ be a complex oriented cohomology theory and
$G$ be a finite group. Then the map
$$\frac{1}{|G|}E^*(BG) \to \lim_{A\in\mathcal{A}(G)}
\frac{1}{|G|}E^*(BA)$$ is an isomorphism.\end{thm}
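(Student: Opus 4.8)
The plan is to realise the target of the map as an equivariant $E$-cohomology group and then reduce to classical induction theory. First, since $E^*=E^0[u,u^{-1}]$ with $u$ an invertible element of degree $-2$, multiplication by powers of $u$ identifies $E^{2k}(BH)$ with $E^0(BH)$ for every $H$, compatibly with all restriction and conjugation maps, so it is enough to work in degree zero and show that
$$\frac{1}{|G|}E^0(BG)\longrightarrow\lim_{A\in\mathcal{A}(G)}\frac{1}{|G|}E^0(BA)$$
is an isomorphism. This map is well defined: the transition maps of the limit are those induced by the $G$-maps $G/B\to G/A$, and by Proposition \ref{Conjugation induces identity} conjugation acts trivially on $E^0(BG)$, so the restrictions $\res_A^G$ assemble into a cone.

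Next I would choose a classifying $G$-space $Y$ for the family of abelian subgroups of $G$: a $G$-CW complex built only from cells $G/A\times D^k$ with $A$ abelian, whose fixed-point spaces $Y^H$ are contractible for $H$ abelian and empty otherwise. Identifying $\mathcal{A}(G)$ with the associated orbit category gives $Y\simeq\mathrm{hocolim}_{\mathcal{A}(G)}G/A$, hence $EG\times_G Y\simeq\mathrm{hocolim}_{\mathcal{A}(G)}BA$, so there is a Bousfield--Kan spectral sequence
$$\lim\nolimits^{s}_{\mathcal{A}(G)}E^t(BA)\;\Longrightarrow\;E^{s+t}(EG\times_G Y),$$
whose $E_2$-term is the limit appearing in the statement together with its higher derived functors. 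The theorem then reduces to two claims: (i) after inverting $|G|$ the higher limits $\lim\nolimits^{s}_{\mathcal{A}(G)}\frac{1}{|G|}E^t(B-)$ vanish for $s>0$, so the spectral sequence degenerates to the edge isomorphism $\frac{1}{|G|}E^*(EG\times_G Y)\cong\lim_{\mathcal{A}(G)}\frac{1}{|G|}E^*(BA)$; and (ii) the map $\frac{1}{|G|}E^*(BG)\to\frac{1}{|G|}E^*(EG\times_G Y)$ induced by $Y\to\mathrm{pt}$ is an isomorphism, equivalently $\frac{1}{|G|}E^*(EG_+\wedge_G\widetilde Y)=0$ where $\widetilde Y$ is the cofibre of $Y_+\to S^0$. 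One checks that the composite $\frac{1}{|G|}E^*(BG)\to\frac{1}{|G|}E^*(EG\times_G Y)\to\lim_{\mathcal{A}(G)}\frac{1}{|G|}E^*(BA)$ is the map above.

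These two claims are where I expect the real work to lie; both are instances of classical induction theory applied to the Mackey functor $G\mapsto E^*(BG)$ with its transfers (Lemma \ref{Transfers}). For (i) a standard argument with the double coset formula shows that the $\mathcal{A}(G)$-indexed higher limits of a cohomological Mackey functor are annihilated by a power of $|G|$. For (ii), Artin's induction theorem expresses $1\in\frac{1}{|G|}A(G)$ as a $\frac{1}{|G|}$-linear combination of classes $[G/A]$ with $A$ abelian (indeed cyclic); feeding this through the transfer and Frobenius-reciprocity structure of $E^*(B-)$, together with the fact that $\widetilde Y$ is assembled only from the orbits $G/H$ with $H$ non-abelian, forces $\frac{1}{|G|}E^*(EG_+\wedge_G\widetilde Y)=0$. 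Assembling (i), (ii) and the reduction above gives the theorem; extracting the precise induction-theoretic inputs from the Mackey-functor formalism (or from \cite{HKR}) is the main obstacle.

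For the Morava $E$-theory of this thesis there is a more concrete route, and it is the one exploited in later chapters. Following \cite{HKR}, I would base change along the faithfully flat homomorphism $\frac{1}{|G|}E^0\to L$, where $L$ is obtained by inverting a suitable Euler class in $E^0(B(\mathbb{Z}/p^k)^n)$ and passing to the colimit over $k$, so that the $p$-power torsion of the formal group of $E$ becomes constant over $L$; since $\mathcal{A}(G)$ is finite, $L\tensor_{\frac{1}{|G|}E^0}(-)$ commutes with the limit. The generalised character isomorphism of \cite{HKR} then identifies $L\tensor_{E^0}E^0(BG)$ naturally with the ring of $G$-conjugation-invariant $L$-valued functions on $\Hom(\Zp^n,G)$, the set of commuting $n$-tuples of $p$-power-order elements of $G$; under this identification $\res_A^G$ becomes restriction of functions along $\Hom(\Zp^n,A)\hookrightarrow\Hom(\Zp^n,G)$, and a morphism $G/B\to G/A$ acts through the induced conjugation map on $\Hom(\Zp^n,-)$. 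Because every homomorphism $\Zp^n\to G$ factors through the finite abelian $p$-subgroup it generates, a compatible family over $\mathcal{A}(G)$ is precisely a single $G$-invariant function on $\Hom(\Zp^n,G)$; hence $\lim_{\mathcal{A}(G)}L\tensor_{E^0}E^0(BA)$ is carried to the same ring compatibly with the map in question, and faithfully flat descent along $\frac{1}{|G|}E^0\to L$ finishes the proof.
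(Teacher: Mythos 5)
The paper does not prove this theorem; it is quoted as \cite[Theorem A]{HKR} and used as a black box, so there is no in-paper argument to compare against. Judging your proposal on its own merits, the first sketch follows the general shape of the original argument (a classifying $G$-space for the family of abelian subgroups, a spectral sequence whose $E_2$-page consists of higher limits, and induction-theoretic vanishing), but the Artin-induction step it hinges on is false as stated. You claim that $1\in\frac{1}{|G|}A(G)$ is a $\frac{1}{|G|}$-linear combination of classes $[G/A]$ with $A$ abelian. Take marks at $H=G$: the $G$-fixed-point count of $G/G$ is $1$, while every $[G/A]$ with $A$ a proper subgroup has mark $0$ at $G$. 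Since the $\mathbb{Z}$-span of the $[G/A]$ for abelian $A$ is already an ideal of $A(G)$ (because $B^g\cap A$ is abelian whenever $A$ is), inverting $|G|$ does not bring $[G/G]$ into it. Artin's theorem does give $1=\frac{1}{|G|}\sum_C n_C\,\mathrm{Ind}_C^G(1)$ in $R(G)$ for cyclic $C$, but you need the statement in $A(G)$ because the transfer action on $E^*(B-)$ is $A(G)$-linear, not a priori $R(G)$-linear, so that does not rescue the step. HKR instead reduce to $p$-groups (restriction to Sylow subgroups is split after inverting $|G|$) and then run an induction on the order of the $p$-group using its subgroup structure.

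Your second, ``more concrete'' route is circular. The generalised character isomorphism you invoke (HKR's Theorem C, stated here as Proposition~\ref{HKR}) is itself proved in \cite{HKR} using Theorem~A, so you cannot then deduce Theorem~A from it.
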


\begin{rem} Note that $\lim_{A\in\mathcal{A}(G)} \frac{1}{|G|}
E^*(BA)$ is a subring of $\prod_{A\in\mathcal{A}(G)} \frac{1}{|G|}E^*(BA)$. Note also that $E^*$ is $p$-local
and $E^*(BG)$ is trivial if $p\nmid |G|$. It follows that inverting the order of $G$ can be replaced by
inverting $p$ or, equivalently, by tensoring with $\mathbb{Q}$.\end{rem}

In fact we can simplify the right-hand limit somewhat. We will use the following modified version of the
theorem.

\begin{prop}\label{E^*(BG) as limit over p-groups} Let $\mathcal{A}(G)_{(p)}$ denote the full subcategory of $\mathcal{A}(G)$ consisting of the abelian $p$-subgroups of
$G$. Then the map
$$\mathbb{Q}\tensor E^*(BG) \to \lim_{A\in\mathcal{A}(G)_{(p)}}
\mathbb{Q}\tensor E^*(BA)$$ is an isomorphism.\end{prop}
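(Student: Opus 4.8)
The plan is to deduce this from the full Hopkins–Kuhn–Ravenel limit (the theorem quoted just above, with $\mathbb{Q}\tensor(-)$ replacing $\frac{1}{|G|}(-)$ as in the remark) by showing that the inclusion of the full subcategory $\mathcal{A}(G)_{(p)}\hookrightarrow\mathcal{A}(G)$ is cofinal in the sense appropriate for inverse limits, i.e. that restricting a compatible family along this inclusion loses no information. First I would observe that for \emph{any} abelian subgroup $A\leqslant G$, writing $A=A_{(p)}\times A'$ with $A'$ the (canonical) prime-to-$p$ complement, the restriction map $E^*(BA)\to E^*(BA_{(p)})$ is an isomorphism: this is exactly Corollary \ref{E^0(BA) to E^0(BA_p) is iso}. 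Moreover the inclusion $A_{(p)}\hookrightarrow A$ and the projection $A\twoheadrightarrow A_{(p)}$ are both morphisms in $\mathcal{A}(G)$ (the first obviously, the second because a surjection $A\twoheadrightarrow A_{(p)}$ of abelian groups is conjugation by $1$ with $1^{-1}A_{(p)}1\subseteq A$ read the other way — more precisely the relevant $G$-map $G/A\to G/A_{(p)}$ exists since $A_{(p)}\subseteq A$, and $G/A_{(p)}\to G/A$ exists too), so that $E^*(BA)$ and $E^*(BA_{(p)})$ are canonically identified \emph{inside the diagram} $\mathcal{A}(G)$.

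The key step is then a formal one: given a compatible family $(\xi_A)_{A\in\mathcal{A}(G)_{(p)}}$ over the smaller category, I would extend it to all of $\mathcal{A}(G)$ by setting $\xi_A := (\res^{A}_{A_{(p)}})^{-1}(\xi_{A_{(p)}})$, using the isomorphism above. One must check this extended family is compatible over all of $\mathcal{A}(G)$, i.e. that for every $G$-map $G/B\to G/A$ the induced square commutes; since every such map, on passing to $p$-parts, is covered by a $G$-map $G/B_{(p)}\to G/A_{(p)}$ (a conjugating element $g$ with $g^{-1}Bg\subseteq A$ automatically satisfies $g^{-1}B_{(p)}g\subseteq A_{(p)}$), this reduces by naturality of restriction to the already-assumed compatibility over $\mathcal{A}(G)_{(p)}$. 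This shows the restriction functor on inverse limits $\lim_{\mathcal{A}(G)}\mathbb{Q}\tensor E^*(BA)\to\lim_{\mathcal{A}(G)_{(p)}}\mathbb{Q}\tensor E^*(BA)$ is an isomorphism (injectivity being clear since the $p$-subgroups are among the objects), and composing with the HKR isomorphism $\mathbb{Q}\tensor E^*(BG)\iso\lim_{\mathcal{A}(G)}\mathbb{Q}\tensor E^*(BA)$ gives the claim.

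The main obstacle I anticipate is the bookkeeping around morphisms: verifying carefully that the assignment $A\mapsto A_{(p)}$ together with the restriction isomorphisms is genuinely functorial on $\mathcal{A}(G)$ — in particular that a $G$-map $G/B\to G/A$ determined by $g$ with $g^{-1}Bg\subseteq A$ descends compatibly to the $p$-parts and that the two natural identifications one could write down (via the inclusion $A_{(p)}\hookrightarrow A$ versus via a complement) agree. Once that naturality is pinned down, the limit comparison and the appeal to Corollary \ref{E^0(BA) to E^0(BA_p) is iso} and the HKR theorem are routine. An alternative, perhaps cleaner, route would be to note that $A_{(p)}$ is a \emph{retract} of $A$ in $\mathcal{A}(G)$ with the retraction inducing an isomorphism in $\mathbb{Q}\tensor E^*(B-)$, so that $\mathcal{A}(G)_{(p)}$ is (after applying the functor $\mathbb{Q}\tensor E^*(B-)$) equivalent to $\mathcal{A}(G)$ for the purpose of computing the limit; I would present whichever of these makes the compatibility check shortest.
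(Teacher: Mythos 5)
Your main construction is exactly the paper's argument and is correct: given a compatible family $(\xi_A)_{A\in\mathcal{A}(G)_{(p)}}$ you extend it to all of $\mathcal{A}(G)$ by $\xi_A := (\res_{A_{(p)}}^{A})^{-1}(\xi_{A_{(p)}})$, using Corollary \ref{E^0(BA) to E^0(BA_p) is iso}, and you check compatibility by observing that $g^{-1}Bg\subseteq A$ forces $g^{-1}B_{(p)}g\subseteq A_{(p)}$, so the needed square commutes by naturality of restriction along the inclusion of $p$-parts.

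However, your parenthetical claim that the projection $A\twoheadrightarrow A_{(p)}$ is a morphism in $\mathcal{A}(G)$ is false. A morphism $A\to A_{(p)}$ in $\mathcal{A}(G)$ is by definition a $G$-map $G/A\to G/A_{(p)}$, which corresponds to an element $g\in G$ with $g^{-1}Ag\subseteq A_{(p)}$; since $|g^{-1}Ag|=|A|\geq|A_{(p)}|$, such a $g$ exists only when $A=A_{(p)}$. The containment $A_{(p)}\subseteq A$ gives the $G$-map $G/A_{(p)}\to G/A$ (a morphism $A_{(p)}\to A$), not one in the other direction, so $A_{(p)}$ is \emph{not} a retract of $A$ in $\mathcal{A}(G)$ and your proposed ``alternative, perhaps cleaner'' route via retracts does not go through. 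Fortunately this is irrelevant to the direct argument: all that is used there is the single morphism $A_{(p)}\to A$ together with the fact that it induces an isomorphism $\mathbb{Q}\tensor E^*(BA)\iso\mathbb{Q}\tensor E^*(BA_{(p)})$, which is precisely what the paper does. So your first route stands; drop the retract framing.
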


\begin{pf} This was remarked on in \cite[Remark 3.5]{HKR}. By abstract category theory there is a unique map $\lim_{A\in\mathcal{A}(G)}
\mathbb{Q}\tensor E^*(BA)\to \lim_{A\in\mathcal{A}(G)_{(p)}} \mathbb{Q}\tensor E^*(BA)$ commuting with the
arrows. We will show this is an isomorphism.

Given any $C\in \mathcal{A}(G)$ we have $C_{(p)}\in \mathcal{A}(G)_{(p)}$ and hence, using the fact that
$\res_{C_{(p)}}^C$ is an isomorphism (Proposition \ref{E^0(BA) to E^0(BA_p) is iso}), a composite map
$$\lim_{A\in\mathcal{A}(G)_{(p)}} \mathbb{Q}\tensor E^*(BA)\to \mathbb{Q}\tensor
E^*(BC_{(p)})\iso \mathbb{Q}\tensor E^*(BC).$$ Now, take any map $C\to D$ in $\mathcal{A}(G)$ corresponding to
an element $g\in G$. Then $g$ induces a morphism $C_{(p)}\to D_{(p)}$ and we have the following commutative
diagram.
$$
\xymatrix{\\ & \mathbb{Q}\tensor E^*(BD_{(p)})  \ar[dd]_{\conj_g^*} \ar[r]^-\sim& \mathbb{Q}\tensor E^*(BD) \ar[dd]^{\conj_g^*}\\
\ds \lim_{A\in\mathcal{A}(G)_{(p)}} \mathbb{Q}\tensor E^*(BA) \ar[ur] \ar@/^4pc/@{-->}[urr] \ar[dr] \ar@/_4pc/@{-->}[drr]\\
& \mathbb{Q}\tensor E^*(BC_{(p)})  \ar[r]_-\sim& \mathbb{Q}\tensor E^*(BC).\\
&&}
$$
Thus by category theory we have a map $\lim_{A\in\mathcal{A}(G)_{(p)}} \mathbb{Q}\tensor E^*(BA)\to
\lim_{A\in\mathcal{A}(G)} \mathbb{Q}\tensor E^*(BA)$ which must be the inverse to our original map.\end{pf}

Before a corollary, we make the following definition.

\begin{defn} Let $f_i:R\to S_i$ ($i\in I$) be a family of maps. Then we say that the maps $(f_i)_{i\in I}$ are {\em
jointly injective} if the map $\prod_i f_i:R\to \prod_i S_i$ is injective.\end{defn}

\begin{cor}\label{Maximal abelian p-subgroups give jointly injective maps} Let $G$ be a finite group with $E^*(BG)$ free over $E^*$. Let $A_1,\ldots,A_s$ be abelian subgroups of $G$ such
that for each abelian $p$-subgroup $A$ of $G$ there is $g\in G$ such that $gAg^{-1}\subseteq A_i$ for some $i$.
Then the restriction maps $E^*(BG)\to E^*(BA_i)$ are jointly injective.\end{cor}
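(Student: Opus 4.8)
The plan is to reduce everything to a rational statement and then feed it into the Hopkins--Kuhn--Ravenel limit description of $\mathbb{Q}\tensor E^*(BG)$ (Proposition \ref{E^*(BG) as limit over p-groups}). First, since $E^*(BG)$ is assumed free over $E^*$ and $E^*=\mathbb{Z}_p\lpow u_1,\ldots,u_{n-1}\rpow[u,u^{-1}]$ is torsion-free, the group $E^*(BG)$ is torsion-free as an abelian group, so the localisation map $E^*(BG)\to \mathbb{Q}\tensor E^*(BG)$ is injective. Hence it suffices to prove that the rationalised restriction maps $\mathbb{Q}\tensor E^*(BG)\to \mathbb{Q}\tensor E^*(BA_i)$ are jointly injective; that is, that an element $x\in \mathbb{Q}\tensor E^*(BG)$ with $\res_{A_i}^G(x)=0$ for all $i$ must vanish.

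Next I would invoke Proposition \ref{E^*(BG) as limit over p-groups}: the restriction maps assemble into an isomorphism $\mathbb{Q}\tensor E^*(BG)\iso \lim_{A\in\mathcal{A}(G)_{(p)}} \mathbb{Q}\tensor E^*(BA)$, and this limit is a subring of the product $\prod_{A} \mathbb{Q}\tensor E^*(BA)$ taken over all abelian $p$-subgroups $A$ of $G$. Consequently, to conclude that $x=0$ it is enough to show $\res_A^G(x)=0$ for every abelian $p$-subgroup $A\leqslant G$. So the task reduces to: assuming $\res_{A_i}^G(x)=0$ for all $i$, deduce $\res_A^G(x)=0$ for an arbitrary abelian $p$-subgroup $A$.

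To do this, fix such an $A$ and use the hypothesis to choose $g\in G$ with $gAg^{-1}\subseteq A_i$ for some $i$. Conjugation by $g$ gives a commutative square relating the inclusions $A_i\hookrightarrow G$ and $gA_ig^{-1}\hookrightarrow G$; passing to classifying spaces and applying $E^*$, and using that $\conj_g^*$ is the identity on $E^*(BG)$ by Proposition \ref{Conjugation induces identity}, one gets $\conj_g^*\circ \res_{gA_ig^{-1}}^G=\res_{A_i}^G$, where $\conj_g^*\colon E^*(B(gA_ig^{-1}))\iso E^*(BA_i)$ is an isomorphism. Hence $\res_{gA_ig^{-1}}^G(x)=0$, and since $A\subseteq gA_ig^{-1}$, transitivity of restriction gives $\res_A^G(x)=\res_A^{gA_ig^{-1}}\big(\res_{gA_ig^{-1}}^G(x)\big)=0$, completing the argument.

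I do not expect a genuine obstacle here: the content is entirely contained in Propositions \ref{E^*(BG) as limit over p-groups} and \ref{Conjugation induces identity}. The only point requiring care is bookkeeping --- the subgroups $A_i$ are merely abelian rather than $p$-subgroups, so one must make sure the limit description is applied to the family of all abelian $p$-subgroups (it is), and the conjugation isomorphisms $\conj_g^*$ must be tracked correctly when relating $\res_{gA_ig^{-1}}^G$ to $\res_{A_i}^G$. These are routine rather than hard.
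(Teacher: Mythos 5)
Your proof is correct in substance and takes essentially the same route as the paper: pass to $\mathbb{Q}\tensor E^*(BG)$ using freeness, invoke the Hopkins--Kuhn--Ravenel limit description over abelian $p$-subgroups, and use that inner conjugation acts trivially on $E^*(BG)$ to propagate vanishing from the $A_i$ to every abelian $p$-subgroup. One small slip in the bookkeeping: from $gAg^{-1}\subseteq A_i$ you get $A\subseteq g^{-1}A_ig$, not $A\subseteq gA_ig^{-1}$, so you should conjugate $A_i$ by $g^{-1}$ (equivalently, do as the paper does and restrict $E^*(BA_i)\to E^*(B(gAg^{-1}))$ first, then apply $\conj_g^*$ to land in $E^*(BA)$); with that sign corrected the argument closes.
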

\begin{pf} Take $a\in E^*(BG)$ and suppose that $a$ maps to $0$
in $\prod_{i=1}^s E^*(BA_i)$, that is $a$ maps to $0$ in each $E^*(BA_i)$. Take any abelian $p$-subgroup $A$ of
$G$. Then, by the hypothesis, we have $g\in G$ with $gAg^{-1}\subseteq A_i$ for some $i$. Hence we get
$$
\xymatrix{ E^*(BG) \ar@{=}[d]_{\conj_g^*} \ar[r]^-{\res} & E^*(BA_i) \ar[r]^-{\res} & E^*(B(gAg^{-1}))
\ar[d]_\wr^{\conj_g^*}\\
E^*(BG) \ar[rr]^{\res} & & E^*(BA)}
$$
showing that $a$ maps to $0$ in $E^*(BA)$. Thus $a$ maps to $0$ in $\lim_{A\in\mathcal{A}(G)_{(p)}}
\mathbb{Q}\tensor E^*(BA)$ and hence in $\mathbb{Q}\tensor E^*(BG)$ by Proposition \ref{E^*(BG) as limit over
p-groups}. But $E^*(BG)$ is free over $E^*$ so that $E^*(BG)\to\mathbb{Q}\tensor E^*(BG)$ is injective. Thus
$a=0$, as required.\end{pf}

\subsection{Hopkins, Kuhn and Ravenel's good groups.}

We use the following concept from \cite{HKR}.

\begin{defn}\label{def:good groups} Let $G$ be a finite group. Then we define $G$ to be \emph{good} if $K(n)^*(BG)$ is generated over $K(n)^*$ by
transfers of Euler classes of complex representations of subgroups of $G$. Note that if $G$ is good then
$K(n)^*(BG)$ is concentrated in even degrees and Proposition \ref{E^0(BG) only interesting group}
applies.\end{defn}

Hence for good groups the only interesting cohomology is $E^0(BG)$ and we can recover $K^0(BG)$ as
$K^0\tensor_{E^0} E^0(BG)$. The following is Theorem E from \cite{HKR}.

\begin{prop}\label{Theorem E, HKR} Using the terminology of Definition \ref{def:good groups},\begin{enumerate}
\item every finite abelian group is good;
\item if $G_1$ and $G_2$ are good then so is $G_1\times G_2$;
\item if $\Syl_p(G)$ is good then so is $G$;
\item if $G$ is good then so is the wreath product $C_p\wr G$.\end{enumerate}
\end{prop}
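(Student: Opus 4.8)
The plan is to verify, one property at a time, the four closure statements, and throughout it will be convenient to write $\mathrm{Good}(G)\subseteq K(n)^*(BG)$ for the $K(n)^*$-submodule spanned by all classes $\transfer_H^G(e(V))$ with $H\leqslant G$ and $e(V)$ the Euler (top Chern) class of a complex representation $V$ of $H$; thus $G$ is good in the sense of \ref{def:good groups} exactly when $\mathrm{Good}(G)=K(n)^*(BG)$. The first thing I would record is a formal stability lemma: $\mathrm{Good}(G)$ is a subring of $K(n)^*(BG)$, and for $H\leqslant G$ one has $\res_H^G(\mathrm{Good}(G))\subseteq\mathrm{Good}(H)$ and $\transfer_H^G(\mathrm{Good}(H))\subseteq\mathrm{Good}(G)$. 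All three assertions follow by combining the double coset formula (Lemma \ref{Transfers}(5)), transitivity of transfers (Lemma \ref{Transfers}(1)), and Frobenius reciprocity (Lemma \ref{Transfers}(3)): for instance $e(V)\cdot\transfer_H^G(e(W))=\transfer_H^G(e((\res_H^G V)\oplus W))$, and a product $\transfer_H^G(e(V))\cdot\transfer_K^G(e(W))$ expands via the double coset formula into a sum of transfers from the subgroups $K\cap gHg^{-1}$ of Euler classes of representations of those subgroups. Granting this lemma, each of the four parts reduces to exhibiting a $K(n)^*$-module generating set for the appropriate $K(n)^*(BG)$ lying in $\mathrm{Good}(G)$, and we may freely restrict, transfer and multiply Euler classes while doing so.

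For part (1), let $A\simeq\prod_i C_{m_i}$. By the K\"unneth isomorphism and the computation $K(n)^*(BC_m)=K(n)^*\lpow x\rpow/([m](x))$ (the $K(n)$-analogue of \ref{E^0(B prod C_m_i)}), the module $K(n)^*(BA)$ is free over $K(n)^*$ on the monomials $x_1^{a_1}\cdots x_r^{a_r}$, where $x_i$ is the first Chern class of the line bundle attached to the $1$-dimensional character $\alpha_i\colon A\twoheadrightarrow C_{m_i}\rightarrowtail S^1$. Since $x_1^{a_1}\cdots x_r^{a_r}=e\bigl(\bigoplus_i\alpha_i^{\oplus a_i}\bigr)$ is the Euler class of an honest representation of $A$ itself, and the transfer $A\to A$ is the identity, $K(n)^*(BA)\subseteq\mathrm{Good}(A)$, so $A$ is good. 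For part (2), the K\"unneth isomorphism gives $K(n)^*(B(G_1\times G_2))\simeq K(n)^*(BG_1)\tensor_{K(n)^*}K(n)^*(BG_2)$, and a generator $\transfer_{H_1}^{G_1}(e(V_1))\tensor\transfer_{H_2}^{G_2}(e(V_2))$ equals, by multiplicativity of the transfer (Lemma \ref{Transfers}(2)), $\transfer_{H_1\times H_2}^{G_1\times G_2}\bigl(e(V_1)\tensor e(V_2)\bigr)$; and under K\"unneth $e(V_1)\tensor e(V_2)=e(\pi_1^*V_1\oplus\pi_2^*V_2)$ is an Euler class of a representation of $H_1\times H_2$. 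Hence $G_1\times G_2$ is good.

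For part (3), put $P=\Syl_p(G)$, so $[G:P]$ is prime to $p$. Arguing as in Proposition \ref{Restriction to Sylow injective}: the double coset formula gives $\res_1^G\transfer_P^G(1)=[G:P]$, a unit modulo $p$; since $K(n)^0(BG)$ is a local ring with residue field $\mathbb{F}_p$ (its augmentation ideal being nilpotent by the skeletal-filtration argument of Lemmas \ref{I_i.I_j contained in I_i+j} and \ref{E^0(X) local}, which needs only that $K(n)$ is complex oriented and even), it follows that $u:=\transfer_P^G(1)$ is a unit in $K(n)^0(BG)$. Frobenius reciprocity then gives, for any $a\in K(n)^*(BG)$, $a=u^{-1}\transfer_P^G(\res_P^G a)=\transfer_P^G\bigl(\res_P^G(u^{-1})\cdot\res_P^G(a)\bigr)$, so $\transfer_P^G$ is surjective. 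Writing $\res_P^G(u^{-1})\cdot\res_P^G(a)\in\mathrm{Good}(P)$ as a $K(n)^*$-combination of classes $\transfer_{H_i}^P(e(V_i))$ and applying $\transfer_P^G$, transitivity $\transfer_P^G\circ\transfer_{H_i}^P=\transfer_{H_i}^G$ shows $a\in\mathrm{Good}(G)$. Thus $G$ is good.

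Part (4) is the substantial one and where I expect the real work to lie. Write $W=C_p\wr G=C_p\ltimes G^p$, let $N=G^p\trianglelefteq W$, and let $\pi\colon BW\to BC_p$ be induced by $W\twoheadrightarrow W/N=C_p$, giving a fibration $BN\to BW\xrightarrow{\pi}BC_p$. By part (2) the group $N$ is good, so by the stability lemma the transfer ideal $\transfer_N^W(K(n)^*(BN))$ lies in $\mathrm{Good}(W)$; moreover every power $\pi^*(x)^j$ of the pulled-back Euler class of the standard character of $C_p$ is the Euler class of a representation of $W$ factoring through $C_p$, hence also lies in $\mathrm{Good}(W)$. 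The heart of the matter is the HKR computation of $K(n)^*(B(C_p\wr G))$: one must show that $K(n)^*(BW)$ is generated over $K(n)^*$ by the transfer ideal together with products of the $\pi^*(x)^j$ with the image of the diagonal $BG\to BN$ applied to a basis of $K(n)^*(BG)$ — this analysis exploits the $C_p$-equivariant structure of $K(n)^*(BN)=K(n)^*(BG)^{\tensor p}$ and the fact that $K(n)^*(BC_p)$ is completely explicit, Euler-class generated, and concentrated in even degrees (which is exactly why one restricts to $C_p$ rather than an arbitrary finite group). Once that generating set is in hand, the stability lemma of the first paragraph — in particular that $\mathrm{Good}(W)$ is a ring — immediately places all the generators in $\mathrm{Good}(W)$, so $K(n)^*(BW)=\mathrm{Good}(W)$ and $W$ is good. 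I expect the structural computation of $K(n)^*(B(C_p\wr G))$ modulo the transfer ideal to be the only genuine obstacle; the remaining bookkeeping with transfers and Euler classes is formal, resting on Lemma \ref{Transfers} and the stability observations, and for the precise argument I would follow \cite{HKR}.
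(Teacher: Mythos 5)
The paper offers no proof here: it simply cites the result as Theorem E of \cite{HKR}, so there is no argument of its own for your proposal to match or diverge from. Your stability lemma is correct and is exactly the right scaffolding: closure of $\mathrm{Good}(G)$ under products, restriction and transfer follows just as you say from the double coset formula, transitivity of transfers and Frobenius reciprocity, and parts (1)--(3) then reduce to straightforward generating-set checks. Your sketches of those three parts are sound --- in (1) the monomials $x_1^{a_1}\cdots x_r^{a_r}$ are Euler classes of honest representations of $A$ itself; in (2) the K\"unneth isomorphism holds unconditionally over the graded field $K(n)^*$ and multiplicativity of transfers finishes it; in (3) the unit-transfer trick (identical in spirit to Proposition \ref{Restriction to Sylow injective}) combined with $\mathrm{Good}(P)=K(n)^*(BP)$ and transitivity does the job --- and these are essentially the arguments in \cite{HKR}.

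Part (4), however, has a genuine gap, which you flag yourself. You assert that $K(n)^*(B(C_p\wr G))$ is generated over $K(n)^*$ by the transfer ideal $\transfer_N^W(K(n)^*(BN))$ together with products of $\pi^*(x)^j$ with pullbacks along the diagonal $G\to G^p\hookrightarrow W$, and remark that establishing this is ``the only genuine obstacle.'' That obstacle is the entire content of the wreath-product half of HKR's Theorem E; it is not bookkeeping. One needs the spectral sequence $H^*(BC_p;K(n)^*(BG^p))\Rightarrow K(n)^*(BW)$ of the fibration $BG^p\to BW\to BC_p$ (the same tool used in the proof of Proposition \ref{E^0(BSigma_p wr GL_1) as a module}) together with an identification of its $E_\infty$-page: the free-orbit part of $K(n)^*(BG)^{\tensor p}$ realises as transfers from $N$, while the $C_p$-invariant diagonal part, tensored with $H^*(BC_p)$, realises via the diagonal $C_p\times G\to W$ multiplied by powers of $\pi^*(x)$. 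Without carrying that computation out --- or citing it precisely, as this paper does --- your argument for (4) remains a plan rather than a proof. The formal reductions are all in place; the structural computation of $K(n)^*(B(C_p\wr G))$ is where the actual work lives.
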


We apply the results to our finite general linear groups to get the following.

\begin{prop} Let $K$ be a finite field with $v_p(|K^\times|)>0$. Then, for any $d>0$, the finite group $GL_d(K)$ is good.\end{prop}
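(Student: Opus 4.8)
The plan is to pass to a Sylow $p$-subgroup, identify it as an iterated wreath product built out of copies of $C_p$ and a finite abelian group, and then let Theorem E of Hopkins--Kuhn--Ravenel (Proposition \ref{Theorem E, HKR}) do all the work; there is essentially no topological input beyond that proposition, only some bookkeeping with wreath products.

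First I would invoke part (3) of Proposition \ref{Theorem E, HKR}, which reduces the problem to showing that $\Syl_p(GL_d(K))$ is good. Since $v_p(|K^\times|)>0$, Proposition \ref{Syl_p(G)} applies and identifies this Sylow subgroup, up to isomorphism, with $P_0\wr P_1$, where $P_0=\Syl_p(\Sigma_d)$ and $P_1$ is the $p$-part of $K^\times$, a finite cyclic (hence abelian) $p$-group. Next, writing $d=\sum_i a_ip^i$, Propositions \ref{Syl_p(Sigma_d)} and \ref{Syl_p(Sigma_p^k)} present $P_0$, as a block-diagonal subgroup of $\Sigma_d$, as the product $\prod_i P_{0,i}^{a_i}$ in which $P_{0,i}\leqslant\Sigma_{p^i}$ is the $i$-fold iterated wreath product $C_p\wr\cdots\wr C_p$. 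Repeated use of Lemma \ref{Wreath products distribute over times} then gives a canonical isomorphism
\[
P_0\wr P_1\;\cong\;\prod_i\bigl(P_{0,i}\wr P_1\bigr)^{a_i},
\]
and repeated use of the associativity isomorphism of Lemma \ref{A wr B wr G} identifies each factor $P_{0,i}\wr P_1=(C_p\wr\cdots\wr C_p)\wr P_1$ with the iterated wreath product $C_p\wr(C_p\wr(\cdots\wr(C_p\wr P_1)\cdots))$ of $i$ copies of $C_p$ acting on $P_1$ (the base case $i=0$ being just $P_1$ itself).

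Now I would assemble goodness from the bottom up. The group $P_1$ is abelian, hence good by part (1) of Proposition \ref{Theorem E, HKR}. An induction on the number of wreathed copies of $C_p$, applying part (4) at each step, shows that $C_p\wr\cdots\wr C_p\wr P_1$ is good, so each $P_{0,i}\wr P_1$ is good. By part (2), the finite product $\prod_i(P_{0,i}\wr P_1)^{a_i}\cong P_0\wr P_1\cong\Syl_p(GL_d(K))$ is good, and then part (3) yields that $GL_d(K)$ itself is good.

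The main thing requiring care — rather than a genuine obstacle — is the combinatorial rearrangement: one must check that the abstract isomorphisms of Lemmas \ref{A wr B wr G} and \ref{Wreath products distribute over times} really do convert "$P_0\wr P_1$ with $P_0$ a product of iterated wreaths of $C_p$" into "a product of iterated wreaths of $C_p$ over $P_1$" compatibly with the relevant permutation actions, and that the hypothesis $v_p(|K^\times|)>0$ is exactly what licenses the appeal to Proposition \ref{Syl_p(G)}. Without that hypothesis $P_1$ would be trivial and the Sylow subgroup would not have this shape — though when $d<p$ it is merely abelian (Remark \ref{Syl_p(Gfl)}) and the conclusion is immediate in any case.
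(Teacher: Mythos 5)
Your proposal is correct and follows essentially the same route as the paper: reduce to $\Syl_p(GL_d(K))$ via part (3) of Proposition \ref{Theorem E, HKR}, identify it as $P_0\wr P_1$ using Proposition \ref{Syl_p(G)}, decompose $P_0$ via Propositions \ref{Syl_p(Sigma_p^k)} and \ref{Syl_p(Sigma_d)}, redistribute the wreath product over the cross product with Lemma \ref{Wreath products distribute over times}, and build goodness up from $P_1$ using parts (1), (4), and (2). You merely make the appeal to the associativity isomorphism of Lemma \ref{A wr B wr G} explicit where the paper leaves it implicit.
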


\begin{pf} By Proposition \ref{Theorem E, HKR} part 3 it is sufficient to show that the Sylow $p$-subgroup of $GL_d(K)$ is good. By Proposition \ref{Syl_p(G)} we know that $\Syl_p(GL_d(K))=P_0\wr P_1$ where
$P_0=\Syl_p(\Sigma_d)$ and $P_1$ is the $p$-part of $K^\times$. Clearly $P_1$ is good by Proposition
\ref{Theorem E, HKR} part 1. Hence if $d<p$ then $\Syl_p(GL_d(K))=P_1$, which is good.

If $d=p^k$ for some $k>0$ then, by Proposition \ref{Syl_p(Sigma_p^k)}, $P_0$ is an iterated wreath product of
copies of $C_p$ so that $P_0\wr P_1=C_p\wr\ldots\wr C_p\wr P_0$ which is good by repeated use of Proposition
\ref{Theorem E, HKR} part 4.

For arbitrary $d$ we write $d=\sum_i a_ip^i$ and then, by Proposition \ref{Syl_p(Sigma_d)}, $P_0=\prod_i
\Syl_p(\Sigma_{p^i})^{a_i}$ with each $\Syl_p(\Sigma_{p^i})^{a_i}$ an iterated wreath product of copies of
$C_p$. Thus, using Lemma \ref{Wreath products distribute over times}, we have
$$P_0\wr P_1=\left(\prod_i \Syl_p(\Sigma_{p^i})^{a_i}\right)\wr P_1 = \prod_i
\left(\Syl_p(\Sigma_{p^i})\wr P_1\right)^{a_i}$$ with each term in the product good. Thus $P_0\wr P_1$ is
good.\end{pf}

\subsection{The cohomology of general linear groups over algebraically closed fields}\label{sec:cohomology of general linear groups}

Our main starting point for the cohomology of the finite general linear groups comes from the well understood
theory of general linear groups over the relevant algebraically closed fields. For any prime $l$ different to
$p$ we briefly outline the relevant results.

\begin{prop} Let $T\simeq S^1\times\ldots\times S^1$ denote the maximal torus of $GL_d(\mathbb{C})$. Then restriction
induces an isomorphism $H^*(BGL_d(\mathbb{C});\mathbb{Z}_{(p)})\iso
H^*(BT;\mathbb{Z}_{(p)})^{\Sigma_d}$.\end{prop}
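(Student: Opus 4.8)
The plan is to reduce to the classical Chern-class description of $H^*(BU(d))$. First I would note that the inclusion $U(d)\hookrightarrow GL_d(\mathbb{C})$ is a homotopy equivalence (polar decomposition exhibits $GL_d(\mathbb{C})$ as $U(d)$ times a contractible space), so $BGL_d(\mathbb{C})\simeq BU(d)$, and the maximal torus $T\simeq (S^1)^d$ has $BT\simeq (\mathbb{C}P^\infty)^d$. By the standard calculation $H^*(BU(d);\mathbb{Z})=\mathbb{Z}[c_1,\ldots,c_d]$, polynomial on the Chern classes $c_i$ of the tautological bundle, while the K\"unneth theorem gives $H^*((\mathbb{C}P^\infty)^d;\mathbb{Z})=\mathbb{Z}[x_1,\ldots,x_d]$ with each $|x_i|=2$; since $\mathbb{Z}_{(p)}$ is flat over $\mathbb{Z}$ the same statements hold with $\mathbb{Z}_{(p)}$ coefficients. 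The Weyl group $N_{U(d)}(T)/T$ is $\Sigma_d$, acting on $T$ by permuting coordinates and hence on $H^*(BT)$ by $\tau.x_i=x_{\tau(i)}$, so the target of the statement is $\mathbb{Z}_{(p)}[x_1,\ldots,x_d]^{\Sigma_d}$.

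Next I would compute the restriction map $\beta\colon H^*(BU(d);\mathbb{Z}_{(p)})\to H^*(BT;\mathbb{Z}_{(p)})$ explicitly. Along $BT\to BU(d)$ the tautological $d$-plane bundle pulls back to $L_1\oplus\cdots\oplus L_d$, where $L_j$ is the pullback of the tautological line bundle from the $j$-th factor of $(\mathbb{C}P^\infty)^d$, so $c_1(L_j)=x_j$; the Whitney sum formula then gives $\beta(c_i)=e_i(x_1,\ldots,x_d)=\sigma_i$, the $i$-th elementary symmetric function. (That $\beta$ lands in the $\Sigma_d$-invariants is also forced abstractly, since conjugation by an element of $N_{U(d)}(T)$ is homotopically trivial on $BU(d)$; compare Proposition \ref{conj_g:BG to BG is homotopic to identity} and the argument of Lemma \ref{E^*(BG) lands in invariants}.)

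Finally I would conclude. The fundamental theorem of symmetric functions --- the polynomial analogue of the identity recorded in Section \ref{sec:Elementary symmetric functions} --- says $\mathbb{Z}_{(p)}[x_1,\ldots,x_d]^{\Sigma_d}=\mathbb{Z}_{(p)}[\sigma_1,\ldots,\sigma_d]$ with $\sigma_1,\ldots,\sigma_d$ algebraically independent. Hence the ring map $\mathbb{Z}_{(p)}[c_1,\ldots,c_d]\to\mathbb{Z}_{(p)}[\sigma_1,\ldots,\sigma_d]$, $c_i\mapsto\sigma_i$, is a morphism between polynomial rings on $d$ generators carrying a free generating set to a free generating set, so it is an isomorphism, and its image is precisely the full invariant subring. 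Thus $\beta$ is the required isomorphism onto $H^*(BT;\mathbb{Z}_{(p)})^{\Sigma_d}$. The step that needs genuine care is the splitting-principle computation $\beta(c_i)=\sigma_i$ together with the algebraic independence of the $\sigma_i$, i.e.\ pinning down the image of $\beta$ on the nose rather than merely up to the skeletal filtration; everything else is bookkeeping. As an alternative one could instead run a Leray--Hirsch argument on the bundle $U(d)/T\to BT\to BU(d)$, whose fibre is the complex flag manifold with torsion-free cohomology concentrated in even degrees, to see that $H^*(BT)$ is free of rank $d!$ over $H^*(BU(d))$ and then identify the invariants; but the Chern-class route is shorter and works integrally, which is convenient here since $d!$ need not be a unit in $\mathbb{Z}_{(p)}$.
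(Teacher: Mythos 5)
Your proof is correct. The paper itself does not prove this proposition but simply cites Borel's classical theorem, so you have filled in a genuine argument where the text defers to a reference. The route you take is the standard one: pass to $U(d)\simeq GL_d(\mathbb{C})$ via polar decomposition, use the Chern-class description $H^*(BU(d))=\mathbb{Z}[c_1,\ldots,c_d]$, compute the restriction $\beta(c_i)=\sigma_i$ from the Whitney sum formula applied to the splitting $L_1\oplus\cdots\oplus L_d$ of the tautological bundle over $BT$, and invoke the fundamental theorem of symmetric polynomials to identify $\mathbb{Z}_{(p)}[\sigma_1,\ldots,\sigma_d]$ with the full invariant ring. Your caution about working integrally (rather than after inverting $|\Sigma_d|=d!$, which need not be a unit in $\mathbb{Z}_{(p)}$) is well placed and is exactly why the Chern-class computation of the image, rather than a transfer argument, is the right tool here. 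The only thing worth flagging is that the statement $H^*(BU(d);\mathbb{Z})=\mathbb{Z}[c_1,\ldots,c_d]$ is itself typically established by a Leray--Hirsch or splitting-principle argument of the same flavor, so there is an implicit circularity to guard against: one should either take the Chern-class ring structure of $BU(d)$ as the known starting point (as you do) or run the Leray--Hirsch argument on $U(d)/T\to BT\to BU(d)$ directly, as you note in your alternative; either is fine, but they should not be mixed.
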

\begin{pf} This is a classical theorem of Borel (\cite{Borel}).\end{pf}

\begin{cor} With the above notation, the restriction map gives an isomorphism $$E^*(BGL_d(\mathbb{C}))\iso
E^*(BT)^{\Sigma_d}.$$\end{cor}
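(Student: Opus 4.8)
The plan is to pass from the integral cohomology statement of Borel to the analogous statement in Morava $E$-theory by means of the Atiyah--Hirzebruch spectral sequence, exactly as in Lemma~\ref{h is complex oriented if in even degrees}. First I would recall that $BGL_d(\mathbb{C})$ has a cell structure with cells only in even dimensions (coming from the Schubert stratification of the Grassmannians, or equivalently from the fact that $H^*(BGL_d(\mathbb{C});\mathbb{Z})$ is a polynomial ring on the Chern classes $c_1,\ldots,c_d$ in even degrees). Since $E^*$ is concentrated in even degrees, the Atiyah--Hirzebruch spectral sequence $H^*(BGL_d(\mathbb{C});E^*)\Rightarrow E^*(BGL_d(\mathbb{C}))$ has $E_2$-page concentrated in even total degree, so it collapses; thus $E^*(BGL_d(\mathbb{C}))$ is free over $E^*$ with $E^*(BGL_d(\mathbb{C}))\simeq E^*\lpow c_1,\ldots,c_d\rpow$ where $c_i$ are the $E$-theoretic Chern classes of the tautological bundle. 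The same argument applied to $BT\simeq (\mathbb{C}P^\infty)^d$ gives $E^*(BT)=E^*\lpow x_1,\ldots,x_d\rpow$.

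Next I would identify the restriction map $E^*(BGL_d(\mathbb{C}))\to E^*(BT)$ with the inclusion $E^*\lpow c_1,\ldots,c_d\rpow\hookrightarrow E^*\lpow x_1,\ldots,x_d\rpow$ sending $c_i$ to the $i$-th elementary symmetric polynomial $\sigma_i(x_1,\ldots,x_d)$ (up to the usual sign/orientation conventions), using the splitting principle and naturality of Chern classes. By the discussion in Section~\ref{sec:Elementary symmetric functions}, $E^*\lpow x_1,\ldots,x_d\rpow^{\Sigma_d}=E^*\lpow\sigma_1,\ldots,\sigma_d\rpow$, so the image of restriction is exactly $E^*(BT)^{\Sigma_d}$; and the map is injective since $\sigma_1,\ldots,\sigma_d$ are algebraically independent over $E^*$ in the power series ring. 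This gives the claimed isomorphism $E^*(BGL_d(\mathbb{C}))\iso E^*(BT)^{\Sigma_d}$.

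An alternative, cleaner route that avoids re-deriving the structure of $E^*(BGL_d(\mathbb{C}))$ from scratch is to use the Milnor exact sequence together with the tower of Grassmannians $BGL_d(\mathbb{C})=\colim Gr_d(\mathbb{C}^N)$, or simply to appeal directly to the general principle that for complex-oriented $E$ the Chern classes give $E^*(BU(d))=E^*\lpow c_1,\ldots,c_d\rpow$ and restriction to the torus is the symmetric-function inclusion; this is standard and could be cited, but since the thesis works things out explicitly I would probably spell out the Atiyah--Hirzebruch argument.

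The main obstacle is not conceptual but bookkeeping: one must be careful that the orientation and grading conventions of Section~\ref{Choice of coord on E^0(CP^infty)} (where the degree-$0$ coordinate $x=u\cdot y$ is used) are applied consistently, so that the elementary symmetric functions land in the right place and the identification of the image with $E^*(BT)^{\Sigma_d}$ is literally the statement of Proposition~\ref{Basis for Sigma_d-invariants} (in its power-series, rather than truncated, form). Beyond that, the only thing to check with any care is that the restriction map really is the symmetric-function inclusion --- this follows from the splitting principle applied to the tautological $d$-plane bundle over $BGL_d(\mathbb{C})$, whose restriction to $BT$ splits as a sum of line bundles with Euler classes $x_1,\ldots,x_d$, together with the Whitney sum formula for $E$-theoretic Chern classes.
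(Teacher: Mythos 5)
Your argument is correct and is exactly the standard unfolding of the paper's one‑line proof, which simply cites Tanabe's Corollary 2.10: one lifts Borel's theorem to $E$-theory via the even-concentrated Atiyah--Hirzebruch spectral sequence, identifies $E^*(BGL_d(\mathbb{C}))$ and $E^*(BT)$ as power series rings on Chern/Euler classes, and observes that restriction is the symmetric-function inclusion $c_i\mapsto\sigma_i$ whose image is precisely $E^*(BT)^{\Sigma_d}$.
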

\begin{pf} The proof is analogous to that of \cite[Corollary 2.10]{Tanabe}.\end{pf}

\begin{prop}\label{Friedlander and Mislin} Let $h$ be any multiplicative cohomology theory for which $p=0$ in
$h^*$ and let $l$ be a prime different to $p$. Then, writing $\overline{T}_d$ for the maximal torus of
$\Gflbar$, there are compatible maps $B\overline{T}_d\to BT$ and $BGL_d(\Flbar)\to BGL_d(\mathbb{C})$ which
induce isomorphisms $h^*(BT)\to h^*(B\overline{T}_d)$ and $h^*(BGL_d(\mathbb{C}))\iso
h^*(BGL_d(\Flbar))$.\end{prop}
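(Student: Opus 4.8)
The plan is to get the two maps from the Friedlander--Mislin comparison theorem and then to promote the mod $p$ cohomology isomorphism it supplies to an $h^*$-isomorphism by comparing Atiyah--Hirzebruch spectral sequences.

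\textbf{Step 1: the maps.} I would apply the Friedlander--Mislin theorem (\cite{FriedlanderMislin}, \cite{Friedlander}) to the closed immersion of group schemes $\mathbb{G}_m^d\hookrightarrow GL_d$ over $\mathbb{Z}$. Since the comparison map is natural in the reductive group scheme, this produces a homotopy-commutative square whose horizontal arrows are the torus inclusions $B\overline{T}_d\to BGL_d(\Flbar)$ and $BT\to BGL_d(\mathbb{C})$ (here $\overline{T}_d\simeq(\Flbar^\times)^d$ is the maximal torus of $\Gflbar$) and whose vertical arrows are $BGL_d(\Flbar)\to BGL_d(\mathbb{C})$ and $B\overline{T}_d\to BT$; by the theorem the vertical arrow for $GL_d$ induces an isomorphism on mod $p$ singular cohomology. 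For the torus arrow one can invoke the same theorem for $\mathbb{G}_m$, or argue directly: with the embedding $\Flbar^\times\hookrightarrow S^1$ fixed in Section~\ref{sec:finite fields}, the prime-to-$p$ torsion of $\Flbar^\times$ has mod $p$ acyclic classifying space, while its $p$-power torsion is $\mathbb{Z}/p^\infty$ with $B\mathbb{Z}/p^\infty\simeq\colim BC_{p^k}$; a Milnor-sequence computation shows the transition maps on mod $p$ cohomology are isomorphisms in even degrees and zero in odd degrees, so $H^*(B\mathbb{Z}/p^\infty;\mathbb{F}_p)\simeq\mathbb{F}_p[x]$ and $B\mathbb{Z}/p^\infty\to BS^1$ is a mod $p$ isomorphism; taking $d$-fold products and using the K\"unneth theorem over $\mathbb{F}_p$ shows $B\overline{T}_d\to BT$ is a mod $p$ cohomology isomorphism.

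\textbf{Step 2: upgrading to $h$.} The key point is that if $f\colon X\to Y$ is a map with $H^*(Y;\mathbb{F}_p)$ of finite type and $f^*\colon H^*(Y;\mathbb{F}_p)\iso H^*(X;\mathbb{F}_p)$, and $h$ is a multiplicative cohomology theory with $p=0$ in $h^*$, then $f^*\colon h^*(Y)\iso h^*(X)$. Since $p=0$, each $h^t$ is an $\mathbb{F}_p$-vector space, and finite type of $H^*(X;\mathbb{F}_p)$ --- which holds for $Y=BGL_d(\mathbb{C})$ since $H^*(BGL_d(\mathbb{C});\mathbb{F}_p)=\mathbb{F}_p[c_1,\ldots,c_d]$, and hence also for $X=BGL_d(\Flbar)$ whose mod $p$ cohomology agrees with that of $Y$ by Step 1 --- gives (by replacing $C_*(X;\mathbb{F}_p)$ by its homology over the field $\mathbb{F}_p$) a natural isomorphism $H^*(-;\mathbb{F}_p)\tensor_{\mathbb{F}_p}h^t\iso H^*(-;h^t)$. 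So $f$ induces an isomorphism on the $E_2$-pages $H^*(-;h^*)$ of the two Atiyah--Hirzebruch spectral sequences; these converge by the finite-type hypothesis, exactly as in the proof of Lemma~\ref{h is complex oriented if in even degrees}, so the comparison theorem yields $f^*\colon h^*(Y)\iso h^*(X)$. Applying this to the two vertical arrows of the square gives the two asserted isomorphisms, compatibility being inherited from the square.

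\textbf{Expected obstacle.} The delicate point is Step 2 for a theory $h$ that is neither bounded below nor an $H\mathbb{F}_p$-module (for example $h=K$, whose coefficient ring is $\mathbb{F}_p[u,u^{-1}]$): one cannot simply say that a mod $p$ equivalence is an $h$-equivalence, and instead must control the convergence of the Atiyah--Hirzebruch spectral sequence and the identification of its $E_2$-term $H^*(X;h^t)$ with $H^*(X;\mathbb{F}_p)\tensor_{\mathbb{F}_p}h^t$ when $h^t$ is infinite-dimensional. Both are governed by the finite-type property of $H^*(X;\mathbb{F}_p)$, which is why it matters that the Friedlander--Mislin isomorphism transports the finite type of $H^*(BGL_d(\mathbb{C});\mathbb{F}_p)$ to $BGL_d(\Flbar)$; everything else --- the existence and functoriality of the comparison map, the direct torus computation --- is routine given the cited results.
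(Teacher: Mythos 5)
Your proposal takes the same two-step route as the paper's proof: cite Friedlander and Mislin for the mod-$p$ cohomology isomorphism, then upgrade it to an $h^*$-isomorphism by comparing Atiyah--Hirzebruch spectral sequences. The convergence subtlety you flag in Step 2 is genuine and the paper is silent on it too; for the record, it can be bypassed entirely by noting that $p=0$ in $\pi_0 h$ makes multiplication by $p$ nullhomotopic on $h$, so $h$ is a wedge summand of $h\wedge S/p$, and since the cofibre of a mod-$p$ homology equivalence of simply connected spaces becomes contractible after smashing with the finite spectrum $S/p$, a retract plus Spanier--Whitehead duality argument gives $h^*(Y)\iso h^*(X)$ with no spectral sequence and no finite-type hypothesis needed.
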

\begin{pf} The main result of \cite{FriedlanderMislin} gives maps which induce an isomorphism on mod-$p$
homology and hence also on mod-$p$ cohomology. The proof then relies on a comparison of the relevant
Atiyah-Hirzebruch spectral sequences $H^*(X;h^*)\implies h^*(X)$.\end{pf}

\begin{lem}\label{K(n)^*(X)=0 implies E^*(X)=0} Let $X$ be a CW-complex with $X_0$ a single point. Then if $K(n)^*(X)=0$ we have $E^*(X)=0$.\end{lem}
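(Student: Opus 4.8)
The plan is to reduce the statement to two standard facts: that the Morava $E$-theory spectrum $E$ built in this chapter is $K(n)$-local, so that $E$-cohomology of a space depends only on its $K(n)$-localisation, and that $K(n)^*(X)=0$ forces $X$ to be $K(n)$-acyclic. Since $X$ is pointed (its unique $0$-cell serving as basepoint) and hence connected, the cohomology groups in the lemma are the reduced theories applied to the suspension spectrum $\Sigma^\infty X$, so "$E^*(X)=0$" should be read as $\widetilde{E}^*(X)=0$; equivalently one may argue with the spectrum $\Sigma^\infty X$ from the outset.

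First I would use that $K(n)^*$ is a graded field: the universal coefficient spectral sequence for $K(n)$ degenerates, giving $K(n)^*(X)\cong\Hom_{K(n)^*}(K(n)_*(X),K(n)^*)$, so the hypothesis $K(n)^*(X)=0$ is equivalent to $K(n)_*(X)=0$. Hence $K(n)\wedge\Sigma^\infty X$ is a module spectrum over the field spectrum $K(n)$ with vanishing homotopy, so it is contractible; that is, $\Sigma^\infty X$ is $K(n)$-acyclic and $L_{K(n)}\Sigma^\infty X\simeq *$. Then I would invoke the theorem of Hovey and Strickland (\cite{HoveyStrickland}) that the Landweber-exact spectrum $E$ associated to the universal deformation formal group law is $K(n)$-local. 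Because maps of spectra into a $K(n)$-local object factor uniquely through the $K(n)$-localisation, for every pointed space $Y$ one has $\widetilde{E}^*(Y)=[\Sigma^\infty Y,\Sigma^*E]\cong[L_{K(n)}\Sigma^\infty Y,\Sigma^*E]$. Taking $Y=X$ and using $L_{K(n)}\Sigma^\infty X\simeq *$ gives $\widetilde{E}^*(X)\cong[*,\Sigma^*E]=0$, as required.

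The only real work here is assembling the ingredients; there is no hard computation, and the main obstacle is simply knowing the right background results. The points to be careful about are that $K(n)^*$ really behaves like a field, so that $K(n)$-homology and $K(n)$-cohomology vanish together; that the $E$ of this chapter is genuinely $K(n)$-local, which is what lets one discard everything about $X$ beyond its $K(n)$-homotopy type; and the universal property of Bousfield localisation. If one prefers to avoid localisation machinery, there is a parallel argument using the tower of $E$-module spectra $E=E/I_0\to E/I_1\to\cdots\to E/I_n\simeq K$, where $I_j=(v_0,\dots,v_{j-1})$ and $v_0=p,v_1,\dots,v_{n-1}$ is, up to units, a regular system of parameters for $E^*$, with cofibre sequences $\Sigma^{|v_j|}E/I_j\xrightarrow{v_j}E/I_j\to E/I_{j+1}$: one has $(E/I_n)^*(X)=K^*(X)=0$ since $K^*(X)=\mathbb{F}_p[u,u^{-1}]\tensor_{K(n)^*}K(n)^*(X)$, and running the associated long exact sequences downwards shows each $v_j$ acts invertibly on $(E/I_j)^*(X)$; combined with the $\mathfrak{m}$-adic separatedness that $K(n)$-locality provides, this forces every $(E/I_j)^*(X)$, and in particular $E^*(X)$, to vanish.
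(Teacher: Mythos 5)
Your argument is correct, but it takes a genuinely different route from the paper's. The paper proves this by applying the finiteness and evenness transfer results of Hovey and Strickland: since $K(n)^*(X)=0$ is trivially finitely generated over $K(n)^*$ and concentrated in even degrees, $E^*(X)$ is finitely generated over $E^*$ and $K(n)^0(X)\cong E^0(X)/\mathfrak{m}_{E^0}E^0(X)$, whence Nakayama's lemma (together with the locality of $E^0(X)$, which is where the hypothesis that $X_0$ is a single point is used) forces $E^0(X)=0$ and so $E^*(X)=0$. Your first route instead goes through Bousfield localisation: $K(n)^*$ is a graded field, so $K(n)^*(X)=0$ is equivalent to $K(n)$-acyclicity of $\Sigma^\infty X$, and since $E$ is $K(n)$-local every map from a $K(n)$-acyclic spectrum to (a suspension of) $E$ is null. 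Both arguments ultimately lean on the Hovey--Strickland memoir, but they quote different parts of it: the paper uses the algebraic comparison propositions, while you use the $K(n)$-locality of $E$ directly, which is also the underlying input to those propositions. Your version is shorter and more conceptual, and it is also more general, since it does not use the hypothesis that $X$ has a single $0$-cell; the paper's is more elementary modulo citations and stays closer to the commutative algebra toolkit developed in Chapter 2. Your second, tower-based route is in the same spirit as the technology behind the Hovey--Strickland finiteness results and so is structurally closer to what the paper's proof secretly depends on, though it still differs from the written argument. Your observation that the statement must be read in terms of reduced cohomology (otherwise the hypothesis is vacuous for a non-empty $X$) is correct and is a point the paper leaves implicit.
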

\begin{pf} If $K(n)^*(X)=0$ then $K(n)^*(X)$ is (trivially) finitely generated over $K(n)^*$. Proposition 2.4 of
\cite{HoveyStrickland} then tells us that $E^*(X)$ is finitely generated over $E^*$. Further, $K(n)^*(X)$ is
(trivially) concentrated in even degrees so Proposition 2.5 of \cite{HoveyStrickland} tells us that $E^*(X)$ is
concentrated in even degrees and $0=K(n)^0(X)=E^0(X)/\mathfrak{m}_{E^0}E^0(X)$. But $E^0(X)$ is local by
Proposition \ref{E^0(X) local} and an application of Nakayama's lemma gives $E^0(X)=0$. It follows that
$E^*(X)=0$.\end{pf}

\begin{cor}\label{K(n)^*(X) to K(n)^*(Y) iso implies E-theory iso} Let $X$ and $Y$ be spaces as in Lemma \ref{K(n)^*(X)=0 implies E^*(X)=0}. Let $f:X\to Y$ be such that the induced map $f^*:K(n)^*(Y)\to K(n)^*(X)$ is an isomorphism. Then
$f^*:E^*(Y)\to E^*(X)$ is also an isomorphism.\end{cor}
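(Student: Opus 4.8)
The plan is to resolve this by passing to the mapping cone of $f$ and showing that $E$-theory does not see it, so that $f^*$ is forced to be an isomorphism. Since $X$ and $Y$ each have a single $0$-cell they are connected, so after a homotopy I may assume $f$ is cellular and based; write $C_f=Y\cup_f CX$ for its mapping cone, which is then a CW-complex sitting in a cofibre sequence $X\overset{f}{\to}Y\to C_f$. For any cohomology theory $h$ the associated Puppe sequence gives a long exact sequence
$$\cdots\to\tilde{h}^{q-1}(X)\to\tilde{h}^q(C_f)\to\tilde{h}^q(Y)\overset{f^*}{\to}\tilde{h}^q(X)\to\tilde{h}^{q+1}(C_f)\to\cdots,$$
and, since $f$ is based, the natural splitting $h^*(Z)\simeq\tilde{h}^*(Z)\oplus h^*$ makes $h^*(f)$ an isomorphism if and only if $\tilde{h}^*(f)$ is. Taking $h=K(n)$, the hypothesis therefore says $f^*\colon\tilde{K(n)}^*(Y)\to\tilde{K(n)}^*(X)$ is an isomorphism, and a diagram chase in the sequence above forces $\tilde{K(n)}^*(C_f)=0$; equivalently $K(n)^*(C_f)\simeq K(n)^*$ via the basepoint.

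Next I would repeat the argument of Lemma \ref{K(n)^*(X)=0 implies E^*(X)=0} in its reduced form to conclude $\tilde{E}^*(C_f)=0$. As $K(n)^*(C_f)\simeq K(n)^*$ is free of rank one and concentrated in even degrees, Propositions 2.4 and 2.5 of \cite{HoveyStrickland} give that $E^*(C_f)$ is finitely generated over $E^*$, concentrated in even degrees, with $E^0(C_f)/\mathfrak{m}_{E^0}E^0(C_f)\simeq K(n)^0(C_f)\simeq\mathbb{F}_p$. Nakayama's lemma over the local ring $E^0$ then shows that $E^0(C_f)$ is generated over $E^0$ by the class $1$, hence coincides with the split summand $E^0$, so $\tilde{E}^0(C_f)=0$; multiplying by powers of the unit $u\in E^{-2}$ and using that $E^*(C_f)$ lies in even degrees gives $\tilde{E}^*(C_f)=0$. (If one prefers to quote Proposition \ref{E^0(X) local} literally, first collapse a maximal tree in the $1$-skeleton of $C_f$ so that its $0$-skeleton is a single point; this is a homotopy equivalence and changes nothing.)

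Finally, feeding $\tilde{E}^*(C_f)=0$ back into the Puppe long exact sequence for $E$ shows that $f^*\colon\tilde{E}^*(Y)\to\tilde{E}^*(X)$ is an isomorphism, and restoring the coefficient summand (on which $f^*$ is the identity, $f$ being based) shows that $f^*\colon E^*(Y)\to E^*(X)$ is an isomorphism, as required.

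The step I expect to be the main obstacle is not a deep one but a bookkeeping one: correctly translating between reduced and unreduced cohomology so that the ``single $0$-cell'' hypothesis is used legitimately on $C_f$ (which a priori carries two $0$-cells), and checking that the Hovey--Strickland comparison results quoted in Lemma \ref{K(n)^*(X)=0 implies E^*(X)=0} do apply to $C_f$ and yield $E^0(C_f)/\mathfrak{m}_{E^0}E^0(C_f)\simeq K(n)^0(C_f)$. The remaining diagram chases are routine.
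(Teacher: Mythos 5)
Your argument is the same as the paper's: form the cofibre of $f$, use the Puppe sequence to kill its (reduced) $K(n)$-cohomology, transfer that vanishing to $E$-theory by the method of Lemma \ref{K(n)^*(X)=0 implies E^*(X)=0}, and feed the result back into the long exact sequence. You are more careful than the paper's two-line sketch about the reduced/unreduced bookkeeping and about legitimising the ``single $0$-cell'' hypothesis on the cofibre; that caution is warranted, since taken at face value the hypothesis $K(n)^*(X)=0$ of the cited lemma can never hold for a nonempty $X$, so the lemma must be read in reduced terms exactly as you do.
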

\begin{pf} The cofibre sequence $X\to Y\to Y/X$ gives, on passing to cohomology, $K(n)^*(X/Y)=0$ since
$K(n)^*(Y) \to K(n)^*(X)$ is an isomorphism. Hence, by Lemma \ref{K(n)^*(X)=0 implies E^*(X)=0}, we find that
$E^*(X/Y)=0$ and, reversing the previous argument, an isomorphism $E^*(Y)\to E^*(X)$.\end{pf}

\begin{prop}\label{E^0(BGL_d(Flbar)) iso to E^0(BT_d)^Sigma_d} The maps of Proposition \ref{Friedlander and Mislin} give rise to compatible isomorphisms $E^*(BT)\simeq E^*(B\overline{T}_d)$ and
$E^*(BGL_d(\mathbb{C}))\simeq E^*(BGL_d(\Flbar)).$ Hence restriction induces an isomorphism
$E^*(BGL_d(\Flbar))\simeq E^*(B\overline{T}_d)^{\Sigma_d}$.\end{prop}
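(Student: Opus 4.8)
The plan is to combine Borel's description of $E^*(BGL_d(\mathbb{C}))$ recorded above with the Friedlander--Mislin comparison maps of Proposition \ref{Friedlander and Mislin}, using Corollary \ref{K(n)^*(X) to K(n)^*(Y) iso implies E-theory iso} to pass from $K(n)$-theory to $E$-theory.

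First I would apply Proposition \ref{Friedlander and Mislin} with $h=K(n)$, which is permitted since $K(n)$ is multiplicative with $p=0$ in $K(n)^*$. This produces maps $f\colon B\overline{T}_d\to BT$ and $g\colon BGL_d(\Flbar)\to BGL_d(\mathbb{C})$, \emph{compatible} in the sense that, together with the maps induced by the inclusions $\overline{T}_d\hookrightarrow GL_d(\Flbar)$ and $T\hookrightarrow GL_d(\mathbb{C})$, they fit into a commutative square, and such that $f^*$ and $g^*$ are isomorphisms on $K(n)^*$. On the level of tori the map $f$ is the $d$-fold product of a single map $B\Flbar^\times\to BS^1$ induced by a chosen embedding $\Flbar^\times\hookrightarrow S^1$, hence $f$ is equivariant for the actions of $\Sigma_d$ permuting coordinates. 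All the classifying spaces in sight are connected and may be given CW structures with a single $0$-cell; replacing $f$ and $g$ by cellular maps and applying Corollary \ref{K(n)^*(X) to K(n)^*(Y) iso implies E-theory iso} upgrades $f^*$ and $g^*$ to isomorphisms $f^*\colon E^*(BT)\iso E^*(B\overline{T}_d)$ and $g^*\colon E^*(BGL_d(\mathbb{C}))\iso E^*(BGL_d(\Flbar))$. Applying the functor $E^*$ to the commutative square of spaces shows that $f^*$ and $g^*$ intertwine the restriction maps to the maximal tori, which gives the first assertion of the proposition.

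For the second assertion, recall from the $E$-theoretic form of Borel's theorem above that $\res\colon E^*(BGL_d(\mathbb{C}))\to E^*(BT)$ is injective with image exactly $E^*(BT)^{\Sigma_d}$. Since $f$ is $\Sigma_d$-equivariant, $f^*$ is an isomorphism of $\Sigma_d$-modules, hence restricts to an isomorphism $E^*(BT)^{\Sigma_d}\iso E^*(B\overline{T}_d)^{\Sigma_d}$. By the commutative square, the restriction map $\res\colon E^*(BGL_d(\Flbar))\to E^*(B\overline{T}_d)$ equals $f^*\circ\res_{GL_d(\mathbb{C})}\circ(g^*)^{-1}$, a composite of an isomorphism, an injection with image $E^*(BT)^{\Sigma_d}$, and an isomorphism; therefore it is injective with image $E^*(B\overline{T}_d)^{\Sigma_d}$, which is the claimed isomorphism. (Consistency of the image with Lemma \ref{E^*(BG) lands in invariants} serves as a sanity check here.)

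The only steps requiring genuine care are bookkeeping ones: arranging that the Friedlander--Mislin maps can be taken so that the square of classifying spaces commutes and so that $f$ is $\Sigma_d$-equivariant (this relies on the coordinate-wise description of the torus comparison map), and verifying that the infinite-dimensional classifying spaces involved satisfy the CW hypotheses needed to invoke Corollary \ref{K(n)^*(X) to K(n)^*(Y) iso implies E-theory iso}. Neither is a serious obstacle, but the $\Sigma_d$-equivariance of $f$ is exactly what the passage to $\Sigma_d$-invariants rests on, so that is the point I would want to pin down most carefully.
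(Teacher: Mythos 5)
Your proposal is correct and follows essentially the same route as the paper: apply Proposition \ref{Friedlander and Mislin} with $h = K(n)$, lift the resulting isomorphisms to $E$-theory via Corollary \ref{K(n)^*(X) to K(n)^*(Y) iso implies E-theory iso}, and combine with the $E$-theoretic form of Borel's theorem to identify the image of restriction with $E^*(B\overline{T}_d)^{\Sigma_d}$. The paper's proof is terser and leaves the $\Sigma_d$-equivariance of the torus comparison map and the CW bookkeeping implicit, but you have correctly identified those as the points needing care, and the argument is the same.
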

\begin{pf} By Proposition \ref{Friedlander and Mislin} we know that we get such maps in $K(n)$-theory. But an
application of Corollary \ref{K(n)^*(X) to K(n)^*(Y) iso implies E-theory iso} shows that we get the same result
in $E$-theory.\end{pf}

Now, recall from Section \ref{sec:finite fields} that we have a chosen embedding $\Flbar^\times\to S^1$. Let
$x\in E^2(B\Flbar^\times)$ be the restriction of the complex orientation for $E$ under the induced map
$E^*(\mathbb{C}P^\infty)\to E^*(B\Flbar^\times)$. Write $\pi_i$ for the $i^\text{th}$ projection
$\overline{T}_d\simeq (\Flbar^\times)^d\to \Flbar^\times$. Proposition \ref{E^0(BGL_d(Flbar)) iso to
E^0(BT_d)^Sigma_d} then gives us following corollary.

\begin{cor}\label{E^0(BGL_d(Flbar)=E^0[[c_1...c_d]])} Restriction induces an isomorphism
$$E^*(BGL_d(\Flbar))\iso E^*(B\overline{T}_d)^{\Sigma_d}=E^*\lpow x_1,\ldots,x_d\rpow^{\Sigma_d}= E^*\lpow \sigma_1,\ldots,\sigma_d\rpow$$
where $\sigma_i$ is the $i^\text{th}$ elementary symmetric function in
$x_1=\pi_1^*(x),\ldots,x_d=\pi_d^*(x)$.\end{cor}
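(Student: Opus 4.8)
The plan is to assemble this corollary directly from the two facts already in hand: Proposition \ref{E^0(BGL_d(Flbar)) iso to E^0(BT_d)^Sigma_d}, which tells us that restriction gives an isomorphism $E^*(BGL_d(\Flbar))\iso E^*(B\overline{T}_d)^{\Sigma_d}$, and the computation of $E^*$ of a torus via the K\"unneth isomorphism (Corollary \ref{E^*(B(GxC_m)) as tensor product}) together with $E^*(\mathbb{C}P^\infty)=E^*\lpow x\rpow$. So the real content is just identifying $E^*(B\overline{T}_d)$ as a power series ring $E^*\lpow x_1,\ldots,x_d\rpow$ in the classes $x_i=\pi_i^*(x)$, and then invoking the elementary-symmetric-functions material of Section \ref{sec:Elementary symmetric functions} to rewrite the ring of $\Sigma_d$-invariants.

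First I would address $E^*(B\overline{T}_d)$. Since the chosen embedding $\Flbar^\times\to S^1$ induces an isomorphism on $p$-torsion subgroups (Section \ref{sec:finite fields}), the map $B\overline{T}_d\to (BS^1)^d=(\mathbb{C}P^\infty)^d$ induces an isomorphism in $E$-theory: componentwise this is the statement that $E^*(B\Flbar^\times)\to E^*(B\mathbb{C}^\times)$ is an isomorphism, which follows because $E$-theory of an abelian group sees only the $p$-part (Corollary \ref{E^0(BA) to E^0(BA_p) is iso}) applied to the filtered colimit $\Flbar^\times=\colim \mathbb{F}_{l^r}^\times$, and then one takes the $d$-fold product using the K\"unneth isomorphism (which applies since each factor $E^*(\mathbb{C}P^\infty)=E^*\lpow x\rpow$, while not finitely generated, is a limit of finitely-generated free pieces $E^*\lpow x\rpow/x^N$ — exactly the argument used in Corollary \ref{E^*(B(GxC_m)) as tensor product}). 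Hence $E^*(B\overline{T}_d)\cong E^*\lpow x_1,\ldots,x_d\rpow$ with $x_i=\pi_i^*(x)$, and the $\Sigma_d$-action permutes the $x_i$.

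Next I would identify the invariant subring. The action of $\Sigma_d$ on $E^*\lpow x_1,\ldots,x_d\rpow$ by $\tau.x_i=x_{\tau(i)}$ is precisely the situation of Section \ref{sec:Elementary symmetric functions}, whose opening paragraph records that $R\lpow x_1,\ldots,x_d\rpow^{\Sigma_d}=R\lpow \sigma_1,\ldots,\sigma_d\rpow$ for any ring $R$; taking $R=E^*$ gives $E^*\lpow x_1,\ldots,x_d\rpow^{\Sigma_d}=E^*\lpow \sigma_1,\ldots,\sigma_d\rpow$ with $\sigma_i$ the $i^\text{th}$ elementary symmetric function in the $x_j$. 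Splicing the three isomorphisms together — the restriction isomorphism of Proposition \ref{E^0(BGL_d(Flbar)) iso to E^0(BT_d)^Sigma_d}, the K\"unneth identification $E^*(B\overline{T}_d)\cong E^*\lpow x_1,\ldots,x_d\rpow$, and the invariant-ring identification — yields the chain of isomorphisms in the statement; compatibility with restriction is automatic since every map in sight is a restriction map or induced by the fixed embeddings.

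The only mild subtlety, and the step I would be most careful about, is the K\"unneth/colimit argument for $E^*(B\overline{T}_d)$: one must check that the relevant $E$-theory groups are well-behaved under the colimit $\Flbar^\times=\colim \mathbb{F}_{l^r}^\times$ and that the K\"unneth isomorphism is legitimate for $\mathbb{C}P^\infty$, which is not finitely generated over $E^*$ but is a limit of the finite-type pieces $E^*(\mathbb{C}P^N)$ — exactly as handled in the proof of Corollary \ref{E^*(B(GxC_m)) as tensor product} and in Lemma \ref{h is complex oriented if in even degrees}. Everything else is bookkeeping.
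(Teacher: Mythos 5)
Your proposal is correct and follows essentially the paper's (unwritten) route: Proposition~\ref{E^0(BGL_d(Flbar)) iso to E^0(BT_d)^Sigma_d} for the first isomorphism, identification of $E^*(B\overline{T}_d)$ as a power-series ring, and the symmetric-function material of Section~\ref{sec:Elementary symmetric functions} for the last step. The paper itself states this as an immediate corollary with no proof, so the content you are supplying is exactly the bookkeeping you describe.

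One small point worth tightening: you needn't re-derive the isomorphism $E^*(B\overline{T}_d)\simeq E^*\lpow x_1,\ldots,x_d\rpow$ from scratch. Proposition~\ref{E^0(BGL_d(Flbar)) iso to E^0(BT_d)^Sigma_d} (via Proposition~\ref{Friedlander and Mislin} and Corollary~\ref{K(n)^*(X) to K(n)^*(Y) iso implies E-theory iso}) already hands you $E^*(B\overline{T}_d)\simeq E^*(BT)$ for the complex torus $T$, after which $E^*(BT)=E^*\lpow x_1,\ldots,x_d\rpow$ is the standard complex-oriented computation cited from \cite{Adams}. Your alternative argument --- invoking Corollary~\ref{E^0(BA) to E^0(BA_p) is iso} together with the colimit $\Flbar^\times=\colim \mathbb{F}_{l^r}^\times$ --- is in spirit the same, but that corollary is stated only for finite abelian groups, so running your version cleanly would require handling the $\text{lim}^1$ term in the Milnor sequence for the colimit. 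It is easier to just quote the proposition already in the text. Also note a minor directional slip: the embedding $\Flbar^\times\to S^1$ induces a map $E^*(BS^1)\to E^*(B\Flbar^\times)$, not the reverse.
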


\subsection{The cohomology of the symmetric group $\Sigma_p$}\label{sec:Sigma_p}

Using the analysis of Section \ref{sec:normalizer of C_p in Sigma_p} we can get an understanding of the
cohomology of $\Sigma_p$.

\begin{prop}\label{Cohomology of Sigma_p} Let $C_p=\langle \gamma_p\rangle$ denote standard cyclic subgroup of $\Sigma_p$ of order $p$. Then the inclusion $C_p\rightarrowtail\Sigma_p$ induces an isomorphism
$\ds E^0(B\Sigma_p)\simeq E^0(BC_p)^{\Aut(C_p)}.$\end{prop}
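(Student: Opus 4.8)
The plan is to deduce the isomorphism $E^0(B\Sigma_p)\simeq E^0(BC_p)^{\Aut(C_p)}$ from the general machinery already assembled, applied to the subgroup $C_p$ of $\Sigma_p$. Since $v_p(|\Sigma_p|)=v_p(p!)=1$, the group $C_p=\langle\gamma_p\rangle$ is a Sylow $p$-subgroup of $\Sigma_p$, and by the corollary in Section \ref{sec:normalizer of C_p in Sigma_p} its normalizer is $N_{\Sigma_p}(C_p)=\Aut(C_p)\ltimes C_p$. First I would invoke Proposition \ref{E^0(BG)=E^0(BN)^G/N if N is normal in G}: applied with $G=N_{\Sigma_p}(C_p)$ and $N=C_p$, which is normal in its own normalizer with quotient $\Aut(C_p)$ of order $p-1$ coprime to $p$, this gives an isomorphism $E^*(BN_{\Sigma_p}(C_p))\iso E^*(BC_p)^{\Aut(C_p)}$, where the action of $\Aut(C_p)$ on $E^*(BC_p)$ is the natural one (note that by Lemma \ref{Aut(C_p) in Sigma_p} this action is realised by conjugation inside $\Sigma_p$, so it agrees with the action coming from the $G/N$-structure). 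Restricting to degree $0$ and using Proposition \ref{E^0(BG) only interesting group} (recalling that $\Sigma_p$, and hence all the groups involved, are good, so their $E$-theory is concentrated in even degrees and free over $E^*$) then yields $E^0(BN_{\Sigma_p}(C_p))\simeq E^0(BC_p)^{\Aut(C_p)}$.

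It remains to show the restriction map $E^0(B\Sigma_p)\to E^0(BN_{\Sigma_p}(C_p))$ is an isomorphism. Injectivity is immediate from Proposition \ref{Restriction to Sylow injective}, since $[\Sigma_p:N_{\Sigma_p}(C_p)]=(p-1)!/(p-1)=(p-2)!$ is coprime to $p$. For surjectivity I would use a transfer argument in the style of Proposition \ref{E^0(BG)=E^0(BN)^G/N if N is normal in G}, but since $N_{\Sigma_p}(C_p)$ is not normal in $\Sigma_p$ one must run the double coset formula (Lemma \ref{Transfers}, part 5) more carefully. The key point is Burnside's fusion observation: because $C_p$ is a Sylow $p$-subgroup of $\Sigma_p$ that is abelian, control of fusion passes to its normalizer, and concretely, for $g\in\Sigma_p$ the intersection $N_{\Sigma_p}(C_p)\cap (g N_{\Sigma_p}(C_p) g^{-1})$ either contains $C_p$ (exactly when $g\in N_{\Sigma_p}(C_p)$) or has order coprime to $p$. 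Feeding this into $\res_{N}^{\Sigma_p}\tr_{N}^{\Sigma_p}$ (with $N=N_{\Sigma_p}(C_p)$) and then further restricting along $C_p\hookrightarrow N$ shows, after dividing by the appropriate unit in $E^0$, that every element of $E^0(BC_p)^{\Aut(C_p)}=E^0(BN)$ is hit; equivalently, one checks that the composite $E^0(B\Sigma_p)\to E^0(BN)\to E^0(BC_p)$ has image exactly $E^0(BC_p)^{\Aut(C_p)}$, and since the second map is already known to be an isomorphism onto that subring, the first map is onto.

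The main obstacle I anticipate is the surjectivity step: handling the double coset formula for the non-normal subgroup $N_{\Sigma_p}(C_p)\leqslant\Sigma_p$ and verifying that all the ``off-diagonal'' double coset terms $\tr_{N\cap gCg^{-1}}^{N}\res_{N\cap gCg^{-1}}^{gCg^{-1}}\conj_g^*$ vanish after reducing modulo the maximal ideal (because the relevant index is divisible by $p$, or the intersection is a $p'$-group so its contribution is divisible by $p$ upon further restriction to $C_p$). This is a standard fusion-theoretic computation — essentially the cohomological version of Burnside's normal $p$-complement setup restricted to $\Sigma_p$ — but it requires care in bookkeeping the $\Sigma_p$-orbits of $p$-cycles. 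Once that is in place, combining injectivity, the transfer-surjectivity, and the earlier identification $E^0(BN_{\Sigma_p}(C_p))\simeq E^0(BC_p)^{\Aut(C_p)}$ gives the result. An alternative route, which may sidestep the double coset bookkeeping, is to apply Corollary \ref{Maximal abelian p-subgroups give jointly injective maps} together with Proposition \ref{E^*(BG) as limit over p-groups}: since every abelian $p$-subgroup of $\Sigma_p$ is conjugate to a subgroup of $C_p$, the category $\mathcal{A}(\Sigma_p)_{(p)}$ is equivalent to the category with single object $C_p$ and morphisms $\Aut(C_p)$, so $\mathbb{Q}\tensor E^0(B\Sigma_p)\simeq (\mathbb{Q}\tensor E^0(BC_p))^{\Aut(C_p)}$ rationally, and then one uses freeness of $E^0(B\Sigma_p)$ over $E^0$ plus the injectivity above to descend to the integral statement.
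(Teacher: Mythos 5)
Your proposal reaches the same conclusion but by a genuinely different route from the paper, and the comparison is worth making explicit. For the key step---showing that the restriction $E^0(B\Sigma_p)\to E^0(BN_{\Sigma_p}(C_p))$ is an isomorphism---the paper simply cites the classical fact that $H^*(B\Sigma_p;\mathbb{F}_p)\to H^*(BN_{\Sigma_p}(C_p);\mathbb{F}_p)$ is an isomorphism (a transfer argument from Benson), runs the Atiyah--Hirzebruch spectral sequence to deduce the same for $K(n)^*$, and then invokes Corollary~\ref{K(n)^*(X) to K(n)^*(Y) iso implies E-theory iso} to lift to $E$-theory. This is short precisely because it outsources the fusion argument to the mod-$p$ literature. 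Your route attempts to do the fusion argument directly in $E$-theory via the double coset formula, which is more self-contained but correspondingly requires more care. The remaining step---identifying $E^0(BN_{\Sigma_p}(C_p))$ with $E^0(BC_p)^{\Aut(C_p)}$ via Proposition~\ref{E^0(BG)=E^0(BN)^G/N if N is normal in G}---is the same in both.

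Two comments on the execution of the transfer step. Your sketch says the off-diagonal double coset terms ``vanish after reducing modulo the maximal ideal,'' which would only give surjectivity mod $\mathfrak{m}$; you would then still need Nakayama. In fact a sharper statement is available and is what you want: writing $M=N_{\Sigma_p}(C_p)$, every non-identity intersection $M\cap gMg^{-1}$ is a $p'$-group (since $C_p$ is the unique Sylow $p$-subgroup of $M$, any element of order $p$ in the intersection would force $gC_pg^{-1}=C_p$), and for a $p'$-group $H$ one has $E^0(BH)=E^0$ (the restriction to the trivial subgroup is injective and split). Hence, for $a$ in the augmentation ideal $\tilde{E}^0(BM)$, the off-diagonal terms $\transfer_{H_g}^M\res_{H_g}^{gMg^{-1}}\conj_g^*(a)$ vanish \emph{on the nose}, because $\res_{H_g}$ factors through $E^0(BH_g)=E^0$ and kills the augmentation ideal; so $\res_M^{\Sigma_p}\transfer_M^{\Sigma_p}(a)=a$ exactly and surjectivity follows without any Nakayama step. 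You should also fix the notation in your double coset display: the intersections are $N\cap gNg^{-1}$, not $N\cap gCg^{-1}$.

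Finally, be careful with the ``alternative route.'' Freeness over $E^0$ together with injectivity of $E^0(B\Sigma_p)\hookrightarrow E^0(BC_p)^{\Aut(C_p)}$ and a rational isomorphism does \emph{not} by itself imply the map is onto: an injection of free $E^0$-modules of equal rank inducing a rational isomorphism can have nontrivial cokernel (compare $2\mathbb{Z}\subseteq\mathbb{Z}$). The HKR character theory gives a clean proof of the rational statement but something extra (e.g.\ the transfer argument above, or a reduction to $K(n)$-theory as in the paper) is needed to close the gap integrally. So the alternative as stated does not substitute for the main argument.
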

\begin{pf} Let $M=\Aut(C_p)\ltimes C_p\leqslant
\Sigma_p$. Then a transfer argument gives an isomorphism $H^*(B\Sigma_p;\mathbb{F}_p)\iso H^*(BM;\mathbb{F}_p)$
(see, for example, \cite[p74]{BensonRepAndCohy1}). Hence, an application of the Atiyah-Hirzebruch spectral
sequence shows that restriction gives a $K(n)^*$-isomorphism and, by Corollary \ref{K(n)^*(X) to K(n)^*(Y) iso
implies E-theory iso}, we get an isomorphism $E^0(B\Sigma_p)\iso E^0(BM)$. But $C_p$ is normal in $M$ and
$|M/C_p|=|\Aut(C_p)|=p-1$ which is coprime to $p$ so that we can use Proposition \ref{E^0(BG)=E^0(BN)^G/N if N
is normal in G} to get $E^0(BM)\iso E^0(BC_p)^{\Aut(C_p)}$. Hence $E^0(B\Sigma_p)\simeq
E^0(BC_p)^{\Aut(C_p)}$.\end{pf}

\begin{lem}\label{hat(k)(x)=k(x) mod p(x)} In $E^0\lpow x\rpow/[p](x)$ we have $[k](x)=[\hat{k}](x)$ for all $k\in (\mathbb{Z}/p)^\times$.\end{lem}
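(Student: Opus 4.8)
The plan is to exploit two facts: the Teichm\"uller lift satisfies $\hat k\equiv k\pmod p$, so $\hat k-k\in p\mathbb{Z}_p$; and the element $[p](x)$ that we are killing absorbs everything coming from $p\mathbb{Z}_p$. So I would try to show directly that $[\hat k](x)-[k](x)$ lies in the ideal $([p](x))\vartriangleleft E^0\lpow x\rpow$.

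First I would write $\hat k=k+pm$ with $m\in\mathbb{Z}_p$; this is legitimate precisely because $\omega(k)=\hat k$ is \emph{defined} as the unique $(p-1)$-th root of unity congruent to $k$ mod $p$ (Section \ref{sec:Theichmuller lift map}). Using the $p$-adic $m$-series of Lemma \ref{p-adic formal series} — in particular the additivity $[a+b](x)=[a](x)+_F[b](x)$, which holds for $a,b\in\mathbb{Z}$ by the integer version and extends to $a,b\in\mathbb{Z}_p$ by property 3 of that lemma together with continuity of substitution into $F$ — one gets $[\hat k](x)=[k+pm](x)=[k](x)+_F[pm](x)$. By property 2 of Lemma \ref{p-adic formal series}, $[pm](x)=[m]\big([p](x)\big)$, and since $[m](y)\equiv 0\pmod y$ we may write $[m](y)=y\,g(y)$, so $[pm](x)=[p](x)\,g\big([p](x)\big)\in([p](x))$.

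Finally I would note that formally adding an element of this ideal does not move the class modulo $([p](x))$: since $F(x_1,0)=x_1$, the series $F(x_1,x_2)-x_1$ is divisible by $x_2$, say $F(x_1,x_2)-x_1=x_2\,h(x_1,x_2)$ in $E^0\lpow x_1,x_2\rpow$, and evaluating at $x_1=[k](x)$, $x_2=[pm](x)$ (both with zero constant term) gives
$$[\hat k](x)-[k](x)=F\big([k](x),[pm](x)\big)-[k](x)=[pm](x)\,h\big([k](x),[pm](x)\big)\in([p](x)),$$
which is exactly the assertion. The only point needing any care is the extension of additivity of the $m$-series from $\mathbb{Z}$ to $\mathbb{Z}_p$; if one prefers to avoid it, one can instead expand $\hat k=k+\sum_{i\ge1}a_ip^i$, put $\alpha_j=k+\sum_{i=1}^{j}a_ip^i\in\mathbb{Z}$, and check inductively via Lemma \ref{x-_F y twiddles x-y} and integer additivity that $[\alpha_j](x)-[\alpha_{j-1}](x)$ is a unit multiple of $[a_j]\big([p^j](x)\big)\in([p^j](x))\subseteq([p](x))$, so $[\alpha_j](x)\equiv[k](x)\pmod{([p](x))}$ for all $j$; then pass to the limit $[\hat k](x)=\lim_j[\alpha_j](x)$, using that $E^0\lpow x\rpow/([p](x))$ is a finitely generated $E^0$-module (Corollary \ref{Basis for R[[x]]/[p](x)}), hence the reduction map is continuous. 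I expect this bookkeeping, rather than any genuine difficulty, to be the main obstacle.
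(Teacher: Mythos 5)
Your proof is correct and follows essentially the same route as the paper's: write the Teichm\"uller lift as $k$ plus a multiple of $p$, use additivity of the $a$-series to split off $[pm](x)=[m]([p](x))$, and observe that this term vanishes in the quotient. The paper's version is a one-liner that silently uses $\mathbb{Z}_p$-additivity of $[-](x)$ (not actually listed among the properties in Lemma \ref{p-adic formal series}) and the fact that formally adding an element of $([p](x))$ is invisible mod $[p](x)$; you are right to flag both as points needing a line of justification, and your fallback via partial sums $\alpha_j$ and continuity is a clean way to supply it without invoking unproved additivity.
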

\begin{pf} We know that $\hat{k}=k$ mod $p$, so that $k=\hat{k}+ap$ for some $a \in \mathbb{Z}_p$. Then, modulo $[p](x)$, we have
$[k](x)=[\hat{k}+ap](x)=[\hat{k}](x)+_F[ap](x)=[\hat{k}](x)+_F[a]([p](x))=[\hat{k}](x),$ as claimed.\end{pf}

\begin{lem} With the obvious embedding $C_p\rightarrowtail S^1$, let $x$ denote the corresponding generator of $E^0(BC_p)$.
Then $x^{p-1}\in E^0(BC_p)^{\Aut(C_p)}$.\end{lem}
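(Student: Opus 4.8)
The plan is to make the $\Aut(C_p)$-action on $E^0(BC_p)=E^0\lpow x\rpow/([p](x))$ completely explicit and then observe that $x^{p-1}$ is fixed for the trivial reason that $(p-1)^{\text{th}}$ roots of unity are killed by raising to the $(p-1)^{\text{th}}$ power. Recall from Section~\ref{sec:normalizer of C_p in Sigma_p} that $\Aut(C_p)\simeq(\mathbb{Z}/p)^\times$ with $k$ acting by $\gamma_p\mapsto\gamma_p^k$. First I would identify how such an automorphism acts in $E$-theory: under the chosen embedding $C_p\rightarrowtail S^1$, the automorphism $k$ is the restriction of the $k^\text{th}$-power map $S^1\to S^1$, which on $BS^1=\mathbb{C}P^\infty$ is the map classifying the $k^\text{th}$ tensor power of the tautological line bundle and hence sends the standard coordinate $x$ to $[k]_F(x)$. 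Restricting along $BC_p\rightarrowtail \mathbb{C}P^\infty$, the induced map on $E^0(BC_p)$ therefore sends $x\mapsto[k](x)$.

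The key steps are then short. Working in $E^0(BC_p)=E^0\lpow x\rpow/([p](x))$, Lemma~\ref{hat(k)(x)=k(x) mod p(x)} gives $[k](x)=[\hat{k}](x)$, and since $E^0=\mathbb{Z}_p\lpow u_1,\ldots,u_{n-1}\rpow$ is a torsion-free $\mathbb{Z}_p$-algebra and $F$ is the $p$-typical formal group law of Proposition~\ref{Universal deformation FGL}, Lemma~\ref{Teichmuller Series} gives $[\hat{k}](x)=\hat{k}x$ (this identity holds already in $E^0\lpow x\rpow$, so a fortiori modulo $([p](x))$). Hence the action of $k\in(\mathbb{Z}/p)^\times$ on $E^0(BC_p)$ is simply $x\mapsto\hat{k}x$. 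Since $\hat{k}$ is a $(p-1)^\text{th}$ root of unity in $\mathbb{Z}_p^\times$, we get $x^{p-1}\mapsto\hat{k}^{p-1}x^{p-1}=x^{p-1}$, so $x^{p-1}$ is invariant under every element of $\Aut(C_p)$, i.e.\ $x^{p-1}\in E^0(BC_p)^{\Aut(C_p)}$.

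The only real point requiring care — the "main obstacle", such as it is — is the first step: pinning down that the automorphism $k$ of $C_p$ acts on $E^0(BC_p)$ by $x\mapsto[k](x)$. This rests on the standard fact that the $m^\text{th}$-power map $S^1\to S^1$ induces $x\mapsto[m]_F(x)$ on $E^0(\mathbb{C}P^\infty)=E^0\lpow x\rpow$, which follows from the way the formal group law and its $m$-series were set up in Chapter~\ref{ch:FGLs} together with naturality along $BC_p\rightarrowtail BS^1$; once this is in hand the rest is a two-line computation using the lemmas already proved. Everything else (torsion-freeness of $E^0$, $p$-typicality of $F$) is immediate from the definitions in Section~\ref{Choice of coord on E^0(CP^infty)}.
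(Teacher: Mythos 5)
Your proof is correct. Both you and the paper hinge on the same key computational input — that $k\in(\mathbb{Z}/p)^\times \simeq \Aut(C_p)$ acts on $E^0(BC_p)$ by $x\mapsto[k](x)=[\hat{k}](x)=\hat{k}x$, via Lemma~\ref{hat(k)(x)=k(x) mod p(x)} and the $p$-typicality identity $[\hat{k}](x)=\hat{k}x$ of Lemma~\ref{Teichmuller Series} — but you then diverge. You conclude directly: $\hat{k}$ is a $(p-1)^{\text{th}}$ root of unity, so $k.x^{p-1}=\hat{k}^{p-1}x^{p-1}=x^{p-1}$. The paper instead takes the "norm" route: it observes that the orbit product $\prod_{k=1}^{p-1}[k](x)$ is automatically $\Aut(C_p)$-invariant, then evaluates it to be $-x^{p-1}$ by invoking Wilson's theorem (Lemma~\ref{Wilson's theorem}, $\prod_k\hat{k}=-1$). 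Your argument is shorter and avoids Wilson's theorem entirely; the paper's is slightly longer but buys something you don't get for free, namely that $-x^{p-1}$ is \emph{exhibited} as the product $\prod_k[k](x)$, which is what motivates taking $d=-x^{p-1}$ (rather than, say, $x^{p-1}$) as the preferred generator in Proposition~\ref{E^0(BC)^Aut(C) basis} and the subsequent identification $E^0(BC_p)^{\Aut(C_p)}\simeq E^0\lpow d\rpow/df(d)$. So for the bare statement of this lemma your route is preferable, while the paper's produces the normalisation it wants downstream. One small point of care you handled correctly: the identity $[\hat{k}](x)=\hat{k}x$ from Lemma~\ref{Teichmuller Series} is stated in $E^0\lpow x\rpow$, and you explicitly note it therefore holds in the quotient $E^0(BC_p)$; the paper's proof uses the same fact but does not cite the lemma, so your write-up is actually the more careful of the two on that step.
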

\begin{pf} First note that $\Aut(C_p)\simeq (\mathbb{Z}/p)^\times$ acts on $E^0(BC_p)$ by $k.x=[k](x)$. It
follows that $\prod_{k=1}^{p-1} [k](x)\in E^0(BC_p)^{\Aut(C_p)}$. But, by Lemmas \ref{hat(k)(x)=k(x) mod p(x)}
and \ref{Wilson's theorem}, we have $\prod_{k=1}^{p-1} [k](x)=\prod_{k=1}^{p-1}[\hat{k}](x)=\prod_{k=1}^{p-1}
\hat{k}x=-x^{p-1}$ so that $x^{p-1}=-\prod_{k=1}^{p-1} [k](x)\in E^0(BC_p)^{\Aut(C_p)}$.\end{pf}

\begin{prop}\label{E^0(BC)^Aut(C) basis} Put $d=-x^{p-1}\in E^0(BC_p)^{\Aut(C_p)}$. Then $E^0(BC_p)^{\Aut(C_p)}$ is free over $E^0$ with basis
$\{1,d,\ldots,d^{(p^n-1)/(p-1)}\}$.\end{prop}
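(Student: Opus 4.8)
The plan is to first pin down $E^0(BC_p)$ as a module over $E^0$ and then understand the $\Aut(C_p)$-action well enough to exhibit the claimed basis. By Lemma~\ref{E^*(C_m)=E^*[[x]]/[m](x)} and Corollary \ref{Cyclic group iso to p-part} we have $E^0(BC_p) = E^0\lpow x\rpow/[p](x)$, and since the Morava $E$-theory here has height $n$, the $p$-series $[p](x)$ is a Weierstrass series of degree $p^n$ (the $p$-typical formal group law $F$ of Proposition~\ref{Universal deformation FGL} has height $n$). Hence by Corollary~\ref{Basis for R[[x]]/[p](x)}, $E^0(BC_p)$ is free over $E^0$ of rank $p^n$ with basis $\{1,x,\ldots,x^{p^n-1}\}$. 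The action of $\Aut(C_p)\simeq(\mathbb{Z}/p)^\times$ is by $k.x = [k](x)$, which modulo $[p](x)$ equals $[\hat k](x) = \hat k x$ by Lemma~\ref{hat(k)(x)=k(x) mod p(x)} and Lemma~\ref{Teichmuller Series}; so in fact the action is \emph{linear}: $k.x = \hat k x$, and more generally $k.x^j = \hat k^j x^j$.

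Given this, the invariants are easy to read off monomial-by-monomial: $x^j$ is fixed by all of $\Aut(C_p)$ iff $\hat k^{\,j} = 1$ for all $k\in(\mathbb{Z}/p)^\times$, i.e. iff $(p-1)\mid j$. So $E^0(BC_p)^{\Aut(C_p)}$, being the $E^0$-span of the $\Aut(C_p)$-fixed basis elements among $\{1,x,\ldots,x^{p^n-1}\}$, is free over $E^0$ on $\{x^{j(p-1)} : 0\le j(p-1) \le p^n-1\}$, that is on $\{1, x^{p-1}, x^{2(p-1)},\ldots,x^{m(p-1)}\}$ where $m = \lfloor (p^n-1)/(p-1)\rfloor$. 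One checks $(p^n-1)/(p-1)$ is an integer (it equals $1+p+\cdots+p^{n-1}$), so $m = (p^n-1)/(p-1)$. Since $d = -x^{p-1}$ differs from $x^{p-1}$ by the unit $-1$, the set $\{1, d, d^2,\ldots,d^{m}\}$ is obtained from $\{1,x^{p-1},\ldots,x^{m(p-1)}\}$ by an invertible diagonal change of basis, hence is itself a basis. This gives the claim with $m = (p^n-1)/(p-1)$.

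The one point requiring a little care — and the main obstacle, such as it is — is justifying that taking invariants of a free module commutes with restricting to the fixed basis monomials. Concretely: an element $y\in E^0(BC_p)$ is uniquely $\sum_{j=0}^{p^n-1} a_j x^j$ with $a_j\in E^0$; applying $k\in\Aut(C_p)$ gives $\sum_j a_j \hat k^{\,j} x^j$, and by uniqueness of the expansion $k.y = y$ for all $k$ forces $a_j(\hat k^{\,j}-1)=0$ for all $k$ and all $j$. For $j$ not divisible by $p-1$ there is some $k$ with $\hat k^{\,j}\ne 1$, and $\hat k^{\,j}-1$ is then a unit in $E^0$ (it is a unit mod $p$, since the $\hat k$ reduce to the distinct nonzero elements of $\mathbb{F}_p$ and $k^j\ne 1$ there, hence a unit in the local ring $E^0$), so $a_j = 0$. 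This shows every invariant lies in the $E^0$-span of the fixed monomials; the reverse inclusion is immediate. Linear independence over $E^0$ is inherited from that of $\{1,x,\ldots,x^{p^n-1}\}$. Finally one records the elementary identity $\sum_{i=0}^{n-1}p^i = (p^n-1)/(p-1)$ to identify the top exponent, completing the proof.
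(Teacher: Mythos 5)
Your proof is correct and follows essentially the same route as the paper: identify $\{1,x,\ldots,x^{p^n-1}\}$ as a free $E^0$-basis, use Lemmas~\ref{hat(k)(x)=k(x) mod p(x)} and~\ref{Teichmuller Series} to see the action is diagonal via $k.x^j=\hat k^j x^j$, and read off the invariants coefficientwise, with the observation that $(p-1)\mid j$ is the fixing condition. The only difference is cosmetic: you spell out why $a_j(\hat k^j-1)=0$ forces $a_j=0$ by noting $\hat k^j-1$ is a unit when $(p-1)\nmid j$, a point the paper leaves implicit.
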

\begin{pf} We have a basis $\{1,x,\ldots,x^{p^n-1}\}$ for $E^0(BC_p)$ over $E^0$. Thus, for
$k\in\Aut(C_p)\simeq(\mathbb{Z}/p)^\times$, we have $k.x^i=[k](x)^i=[\hat{k}](x)^i=\hat{k}^ix^i$. Taking any
$\sum_i a_i x^i\in E^0(BC_p)$, for all $k\in \Aut(C_p)$ we have
\begin{eqnarray*}
\textstyle \sum_i a_i x^i\in E^0(BC_p)^{\Aut(C_p)} & \iff & \textstyle \sum_i a_i x^i=k.\sum_i a_i x^i\\
& \iff & \textstyle \sum_i a_i x^i=\sum_i a_i \hat{k}^i x^i\\
& \iff & \textstyle a_i\hat{k}^i=a_i\quad\text{for all $i$}\\
& \iff & \textstyle a_i(\hat{k}^i-1)=0\quad\text{for all $i$.}
\end{eqnarray*} But $\hat{k}^i=1$ if and only if $i=0$ mod $p-1$. Hence $\sum_i a_i x^i\in E^0(BC_p)^{\Aut(C_p)}$ if and only if $a_i=0$ for $i\neq 0$
mod $p-1$. Thus \[E^0(BC_p)^{\Aut(C_p)}=E^0\{x^{j(p-1)}\mid 0\leq j \leq \text{$\textstyle
\frac{p^n-1}{p-1}$}\}=E^0\{1,d,\ldots,d^{(p^n-1)/(p-1)}\}.\qedhere\]\end{pf}

\begin{lem} With $x$ and $d$ as above, $\langle p\rangle(x)\in E^0(BC_p)^{\Aut(C_p)}$. Hence there is a polynomial
$f(t)\in E^0[t]$ such that $\langle p\rangle(x)=f(d)$. Further, $f(0)=p$.\end{lem}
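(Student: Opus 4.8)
The plan is to exploit the fact that $\langle p\rangle(x) = [p](x)/x$ is, by Corollary \ref{<p>(kx)=<p>(x) for k a teichmuller lift} (applied in the quotient $E^0\lpow x\rpow/[p](x)$, together with Lemma \ref{hat(k)(x)=k(x) mod p(x)}), invariant under the $\Aut(C_p)$-action $k.x = [k](x)$. Here one must be a little careful: Corollary \ref{<p>(kx)=<p>(x) for k a teichmuller lift} gives $\langle p\rangle([\hat k](x)) = \langle p\rangle(x)$ as power series over any torsion-free $\mathbb{Z}_p$-algebra, hence in $E^0\lpow x\rpow$, and this identity descends to $E^0(BC_p) = E^0\lpow x\rpow/[p](x)$. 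Combined with Lemma \ref{hat(k)(x)=k(x) mod p(x)}, which says $[k](x) = [\hat k](x)$ in this quotient, we get $k.\langle p\rangle(x) = \langle p\rangle([\hat k](x)) = \langle p\rangle(x)$, so indeed $\langle p\rangle(x) \in E^0(BC_p)^{\Aut(C_p)}$.

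Next I would invoke Proposition \ref{E^0(BC)^Aut(C) basis}, which identifies $E^0(BC_p)^{\Aut(C_p)}$ as the free $E^0$-module on $\{1, d, \ldots, d^{(p^n-1)/(p-1)}\}$ where $d = -x^{p-1}$. Since $\langle p\rangle(x)$ lies in this subring, it can be written uniquely as $\sum_{j=0}^{(p^n-1)/(p-1)} c_j d^j$ for some $c_j \in E^0$; setting $f(t) = \sum_j c_j t^j \in E^0[t]$ gives the desired polynomial with $\langle p\rangle(x) = f(d)$.

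Finally, to compute $f(0) = c_0$: reduce modulo the maximal ideal $(x)$ of $E^0\lpow x\rpow$ — equivalently, evaluate at $x = 0$. On the one hand $d = -x^{p-1} \mapsto 0$, so $f(d) \mapsto f(0)$. On the other hand, by the formula $[p](x) = px \bmod (x^2)$ (or Corollary \ref{[m](x) twiddles x} / the definition of $\langle m\rangle(x)$), $\langle p\rangle(x) = [p](x)/x$ has constant term $p$, so $\langle p\rangle(x) \mapsto p$ under $x \mapsto 0$. One must check this reduction is compatible with passing to the quotient $E^0(BC_p) = E^0\lpow x\rpow/[p](x)$: since $[p](x) \in (x)$, the ideal $(x)$ is well-defined in the quotient and the evaluation map $E^0\lpow x\rpow \to E^0$, $x\mapsto 0$, factors through it (as $[p](0) = 0$). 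Hence $f(0) = p$.

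The main obstacle — really the only subtlety — is making sure all the identities are read in the correct ring: Corollary \ref{<p>(kx)=<p>(x) for k a teichmuller lift} lives over $E^0\lpow x\rpow$, Lemma \ref{hat(k)(x)=k(x) mod p(x)} lives in $E^0\lpow x\rpow/[p](x)$, and Proposition \ref{E^0(BC)^Aut(C) basis} describes the invariants of $E^0(BC_p)$; one should state clearly that $\langle p\rangle(x)$ denotes its image in $E^0(BC_p)$ throughout, and that the evaluation-at-zero map is well-defined on this quotient. Everything else is a routine unwinding of definitions.
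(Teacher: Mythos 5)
Your proof follows exactly the same route as the paper's: establish $\Aut(C_p)$-invariance of $\langle p\rangle(x)$ via Lemma \ref{hat(k)(x)=k(x) mod p(x)} and Corollary \ref{<p>(kx)=<p>(x) for k a teichmuller lift}, invoke the basis of Proposition \ref{E^0(BC)^Aut(C) basis} to produce $f$, and evaluate at $x=0$ to get $f(0)=p$. The only difference is that you spell out the well-definedness of the evaluation map on the quotient, which the paper leaves implicit.
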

\begin{pf} Taking $k\in \Aut(C_p)\simeq(\mathbb{Z}/p)^\times$ we have $k.\langle p\rangle(x)=\langle p\rangle([k](x))=\langle p\rangle([\hat{k}](x))=\langle p
\rangle(x)$ by Lemma \ref{<p>(kx)=<p>(x) for k a teichmuller lift}. Thus, using Proposition \ref{E^0(BC)^Aut(C)
basis} we can write $\langle p\rangle(x)=\sum_i a_i d^i=f(d)$ for some $a_i\in E^0$, as claimed. Putting $x=0$
then gives $f(0)=\langle p\rangle(0)=p$.\end{pf}

\begin{prop}\label{E^0(BSigma_p) in terms of d} Let $x,~d$ and $f$ be as above. Then $E^0(BC_p)^{\Aut(C_p)}\simeq E^0\lpow
d\rpow/df(d).$\end{prop}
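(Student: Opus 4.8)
Write $m=(p^n-1)/(p-1)$ for the integer appearing in Proposition \ref{E^0(BC)^Aut(C) basis}. The plan is to exhibit a ring map from $E^0\lpow d\rpow/(df(d))$ onto $E^0(BC_p)^{\Aut(C_p)}$ and check it carries a basis to a basis. First I would pin down $f$ precisely. By the preceding lemma $f\in E^0[t]$, $f(0)=p$, and $\deg f\le m$ (since $f(d)=\langle p\rangle(x)$ is being expressed in the basis $\{1,d,\ldots,d^m\}$). Reducing modulo the maximal ideal $\mathfrak m_{E^0}=(p,u_1,\ldots,u_{n-1})$, Proposition \ref{Universal deformation FGL} gives $[p](x)=x^{p^n}$ over $E^0/\mathfrak m_{E^0}=\mathbb F_p$, so in $E^0(BC_p)/\mathfrak m_{E^0}E^0(BC_p)=\mathbb F_p\lpow x\rpow/(x^{p^n})$ we have $\langle p\rangle(x)=x^{p^n-1}$ and $\bar d=-x^{p-1}$. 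Writing $\bar f(t)=\sum_{i=0}^m c_it^i$ and using that $1,x^{p-1},\ldots,x^{m(p-1)}=x^{p^n-1}$ are distinct monomials of degree $<p^n$, comparison of $\bar f(\bar d)=\sum_i c_i(-1)^ix^{i(p-1)}$ with $x^{p^n-1}$ forces $c_i=0$ for $i<m$ and $c_m=(-1)^m$. Hence $f$ has unit leading coefficient and all lower coefficients in $\mathfrak m_{E^0}$; that is, $f$ is a Weierstrass polynomial of degree $m$, and therefore $tf(t)$ is a Weierstrass polynomial of degree $m+1$.

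Next I would construct the comparison map. Consider the $E^0$-algebra homomorphism $E^0[t]\to E^0(BC_p)$ sending $t\mapsto d=-x^{p-1}$; its image lies in $E^0(BC_p)^{\Aut(C_p)}$ because $d$ is $\Aut(C_p)$-fixed. Since $[p](x)=x\langle p\rangle(x)=xf(d)$ in $E^0\lpow x\rpow$, we get $df(d)=-x^{p-1}f(d)=-x^{p-2}[p](x)=0$ in $E^0(BC_p)$, so the map kills $tf(t)$ and descends to a ring homomorphism $\psi:E^0[t]/(tf(t))\to E^0(BC_p)^{\Aut(C_p)}$. Because $tf(t)$ is a Weierstrass polynomial of degree $m+1$, the Weierstrass preparation theorem shows $E^0[t]/(tf(t))$ is free over $E^0$ with basis $\{1,t,\ldots,t^m\}$; the same fact identifies it with $E^0\lpow t\rpow/(tf(t))$, which justifies the notation of the statement (one may equally run the argument with power series, noting $d$ is topologically nilpotent in $E^0(BC_p)$ since $x$ is nilpotent modulo $\mathfrak m_{E^0}$).

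Finally, $\psi$ sends the basis elements $1,t,\ldots,t^m$ to $1,d,\ldots,d^m$, which by Proposition \ref{E^0(BC)^Aut(C) basis} form an $E^0$-basis of $E^0(BC_p)^{\Aut(C_p)}$; an $E^0$-linear map carrying a basis bijectively to a basis is an isomorphism, and being a ring map $\psi$ is then a ring isomorphism, giving $E^0(BC_p)^{\Aut(C_p)}\simeq E^0\lpow d\rpow/(df(d))$. The only step requiring real care is the first one — identifying $f$ closely enough (via the mod-$\mathfrak m_{E^0}$ reduction and the explicit $p$-series) to see that $f$, hence $tf(t)$, is a Weierstrass polynomial of the expected degree; everything after that is formal.
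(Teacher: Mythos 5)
Your proof is correct, and it takes a genuinely different route from the paper's. The paper argues directly that the kernel of the surjection $E^0\lpow t\rpow\to E^0(BC_p)^{\Aut(C_p)}$, $t\mapsto d$, is exactly $(tf(t))$: given $g(t)$ in the kernel it lifts $g(d)$ to a multiple $h(x)[p](x)$ of $[p](x)$ in $E^0\lpow x\rpow$, deduces $g(0)=0$ and then $d\mid xh(x)$ by tracking $x$-divisibility against the fact that $\langle p\rangle(x)$ has nonzero constant term, and finally averages over $\Aut(C_p)$ to land the cofactor in the invariant subring. Your argument instead invests in identifying $f$: reducing mod $\mathfrak m_{E^0}$ shows $\bar f(t)=(-1)^m t^m$ with $m=(p^n-1)/(p-1)$, so $f$ (hence $tf(t)$) is a Weierstrass polynomial; then Weierstrass preparation gives $E^0\lpow t\rpow/(tf(t))$ the basis $\{1,t,\ldots,t^m\}$, which the evaluation map carries bijectively onto the basis $\{1,d,\ldots,d^m\}$ of $E^0(BC_p)^{\Aut(C_p)}$ already established in Proposition \ref{E^0(BC)^Aut(C) basis}. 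Your route buys a cleaner conclusion (a basis-to-basis count, rather than a divisibility chase plus averaging) and some bonus information — the precise leading behaviour of $f$ modulo the maximal ideal, which is useful knowledge in its own right — at the cost of the mod-$p$ computation up front; the paper's route is more self-contained but notably harder to parse, and in fact is a little muddled in its final sentence (it concludes ``$g(d)=df(d)k(x)$'' which is just $0=0$ in $E^0(BC_p)$, where what it means is that the lift of $g$ in $E^0\lpow t\rpow$ lies in $(tf(t))$). One small slip in your write-up worth flagging: the displayed chain $[p](x)=x\langle p\rangle(x)=xf(d)$ is asserted ``in $E^0\lpow x\rpow$,'' but the identity $\langle p\rangle(x)=f(d)$ only holds after reduction to $E^0(BC_p)$ (in $E^0\lpow x\rpow$ the invariant element $\langle p\rangle(x)$ is an honest power series in $d$, not the polynomial $f$); the computation $df(d)=-x^{p-2}[p](x)=0$ is still valid, provided you carry it out in $E^0(BC_p)$ throughout. (The paper's proof actually makes the identical slip.)
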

\begin{pf} Firstly, note that we have $df(d)=-x^{p-1}\langle p\rangle(x)=0$ in $E^0(BC_p)=E^0\lpow x\rpow/x\langle p\rangle(x)$. Take any $g(t)\in E^0\lpow t\rpow$ with $g(d)=0$ in
$E^0(BC_p)$. Then $g(d)=h(x)[p](x)$ in $E^0\lpow x\rpow$ for some $h$. Putting $x=0$ we see that
$g(0)=h(0)[p](0)=0$ and so $d|g(d)=h(x)[p](x)=xh(x)\langle p\rangle(x)=xh(x)f(d)$. Since $f(0)=p\neq 0$ we
conclude that $d|xh(x)$. Hence $g(d)\in df(d)E^0(BC_p)$; that is, $g(d)=df(d)k_0(x)$ for some $k_0(x)$. But then
$k(x)=\frac{1}{p-1}\sum_{\alpha\in\Aut(C_p)} \alpha.k_0(x)$ is $\Aut(C_p)$-invariant and $g(d)=df(d)k(x)\in
df(d)E^0(BC_p)^{\Aut(C_p)}$, as required.\end{pf}

We will need the following standard results later.

\begin{lem}\label{Transfer_1^C_p(1)} Let $x,~d$ and $f$ be as above. Then
$\transfer_1^{C_p}(1)=\langle p\rangle(x)$ and $\transfer_1^{\Sigma_p}(1)=(p-1)!f(d)$.\end{lem}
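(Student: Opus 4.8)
The plan is to compute the two transfers separately, handling $\transfer_1^{C_p}(1)$ first and then deducing $\transfer_1^{\Sigma_p}(1)$ from it via the factorisation of transfers through intermediate subgroups.

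For the first claim, I would invoke the double coset formula (Lemma \ref{Transfers}, part 5) or rather the simpler special case noted in the remark following it: since $1 \leqslant C_p$ with $1$ normal, and indeed taking $H = 1$, $K = 1$, $G = C_p$, we get $\res_1^{C_p}\transfer_1^{C_p}(1) = |C_p| = p$. But this alone does not pin down $\transfer_1^{C_p}(1) \in E^0(BC_p)$. Instead I would recall the standard fact (see \cite{HKR}, or it follows from the Gysin sequence for the sphere bundle $S^1 \to BC_p \to BC_p$ associated to the inclusion $C_p \rightarrowtail S^1$, equivalently $B C_p = S^\infty/C_p$ as a circle bundle over $\mathbb{C}P^\infty$) that $\transfer_1^{C_p}(1)$ is the Euler class of the cofibre, which in $E^0(BC_p) = E^0\lpow x\rpow/([p](x))$ is exactly $[p](x)/x = \langle p\rangle(x)$, modulo checking the unit ambiguity is trivial with our chosen orientation. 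Concretely: $BC_p$ is the sphere bundle of the line bundle $L$ over $\mathbb{C}P^\infty$ with $e(L) = x$, and for such a bundle the transfer of $1$ along the fibrewise free action equals the Euler class $\langle p \rangle(x)$ of the quotient; this is precisely Lemma 3.7 or the surrounding material in \cite{HKR}, which I would cite.

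For the second claim, I would factor $\transfer_1^{\Sigma_p} = \transfer_{C_p}^{\Sigma_p} \circ \transfer_1^{C_p}$ using Lemma \ref{Transfers} part 1. By Frobenius reciprocity (part 3), $\transfer_{C_p}^{\Sigma_p}(\langle p\rangle(x)) = \transfer_{C_p}^{\Sigma_p}(\res_{C_p}^{\Sigma_p}(y) \cdot \ldots)$ is not quite immediate since $\langle p\rangle(x)$ need not be a restriction; instead I would compute $\transfer_{C_p}^{\Sigma_p}(1)$ and relate things. Actually the cleanest route: we showed $E^0(B\Sigma_p) \iso E^0(BM)$ where $M = \Aut(C_p)\ltimes C_p$, and $\transfer$ is natural under this identification, so it suffices to compute in $M$. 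Factor $\transfer_1^M = \transfer_{C_p}^M \circ \transfer_1^{C_p}$. Now $C_p$ is normal in $M$ with $M/C_p = \Aut(C_p)$ of order $p-1$ coprime to $p$, so by the double coset remark $\res_{C_p}^M \transfer_{C_p}^M(1) = \sum_{g \in M/C_p} \conj_g^* = \sum_{k \in \Aut(C_p)} k\cdot(-)$; applied to $1$ this gives $p-1$, and since $\transfer_{C_p}^M$ is $E^0(BM)$-linear, $\transfer_{C_p}^M(\langle p\rangle(x))$ has restriction $\sum_k k\cdot\langle p\rangle(x) = (p-1)\langle p\rangle(x)$ by Corollary \ref{<p>(kx)=<p>(x) for k a teichmuller lift}. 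Since $\res_{C_p}^M = \res_{C_p}^{\Sigma_p}$ is injective (Proposition \ref{Restriction to Sylow injective}, as $[\Sigma_p : C_p]$... wait, that index is not coprime to $p$ in general — but $[M : C_p] = p-1$ is, and the composite $E^0(B\Sigma_p) \to E^0(BM) \to E^0(BC_p)$ is the restriction with the first map an isomorphism and the second injective by Proposition \ref{Restriction to Sylow injective} applied to $C_p \leqslant M$). Hence $\transfer_1^{\Sigma_p}(1)$ corresponds to an element whose image in $E^0(BC_p)$ is $(p-1)\langle p\rangle(x) \cdot (\text{correction})$; pinning down the exact scalar $(p-1)!$ requires also accounting for $\transfer_1^{C_p}$ contributing the factor from before.

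**The main obstacle** I anticipate is bookkeeping the precise numerical constant: getting $(p-1)!$ rather than some other product of small factors. The factor $(p-1)$ comes from $\transfer_{C_p}^{\Sigma_p}$ (sum over the $p-1$ cosets of $C_p$ in $M \simeq_{K(n)} \Sigma_p$), but we need $(p-1)!$, so there must be an additional $(p-2)!$ appearing. I expect this comes from being more careful: $\langle p \rangle(x) = f(d)$ and one computes $\transfer_{C_p}^{\Sigma_p}(1)$ directly as an element of $E^0(B\Sigma_p) \simeq E^0\lpow d\rpow/df(d)$, where its restriction to $E^0(BC_p)$ is $\sum_{k=1}^{p-1} 1 = p-1$ — no wait. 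Let me reconsider: perhaps one should instead use that $\transfer_1^{\Sigma_p}(1) = \res^{?} \circ \transfer$ and the index $|\Sigma_p|/|1| = p!$ enters via $\res_1^{\Sigma_p}\transfer_1^{\Sigma_p}(1) = p! = p \cdot (p-1)!$; combined with $\transfer_1^{C_p}(1) \mapsto p$ under $\res_1^{C_p}$, consistency forces the $(p-1)!$ in front of $f(d) = \langle p \rangle(x)$. So the honest argument is: establish $\transfer_1^{\Sigma_p}(1) = c \cdot f(d)$ for some constant $c \in E^0$ using injectivity of restriction to $C_p$ and the fact that the image is divisible by $\langle p \rangle(x) = f(d)$ (by the description of the transfer image / Euler class structure), then determine $c = (p-1)!$ by applying $\res_1^{\Sigma_p}$ to both sides and comparing with $p! = \res_1^{\Sigma_p}\transfer_1^{\Sigma_p}(1)$, using $\res_1(f(d)) = f(0) = p$. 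That last identity $f(0) = p$ is exactly what was proved in the preceding lemma, so everything is in place; I just need to assemble it carefully.
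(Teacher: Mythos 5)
Your treatment of the first claim, via the circle bundle $S^1\to BC_p\to\mathbb{C}P^\infty$ and the identification of $\transfer_1^{C_p}(1)$ with a divided Euler class, is a genuinely different route from the paper's: the paper works entirely inside $E^0(BC_p)=E^0\lpow x\rpow/[p](x)$, using Frobenius reciprocity with $x$ (which restricts to $0$ on the trivial subgroup) to get $x\cdot\transfer_1^{C_p}(1)=0$, deducing $\transfer_1^{C_p}(1)=\langle p\rangle(x)h(0)$ for a constant $h(0)$, and then normalising $h(0)=1$ via $\res_1^{C_p}\transfer_1^{C_p}(1)=|C_p|=p$. Your version trades self-containedness for geometric transparency; either is acceptable.

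For the second claim there is a real gap. You want $\transfer_1^{\Sigma_p}(1)=cf(d)$ for a constant $c$ and propose to derive this from injectivity of $\res_{C_p}^{\Sigma_p}$ together with divisibility of the image by $\langle p\rangle(x)$. That implication fails: injectivity of a ring map does not pull divisibility back from target to source, and even if $\res_{C_p}^{\Sigma_p}\transfer_1^{\Sigma_p}(1)=\langle p\rangle(x)\cdot b$, nothing forces $b$ to be a constant. Two correct ways to close the gap: (a) mirror the first part by applying Frobenius reciprocity with $d$ itself — $\res_1^{\Sigma_p}(d)=0$, hence $d\cdot\transfer_1^{\Sigma_p}(1)=0$ in $E^0(B\Sigma_p)=E^0\lpow d\rpow/df(d)$, and since $E^0\lpow d\rpow$ is a domain this forces $\transfer_1^{\Sigma_p}(1)=cf(d)$ for some constant $c$, after which your normalisation $cp=p!$ gives $c=(p-1)!$; or (b) compute $\res_{C_p}^{\Sigma_p}\transfer_1^{\Sigma_p}(1)$ directly by the double coset formula with $H=C_p$, $K=1$: the $(p-1)!$ cosets in $C_p\backslash\Sigma_p$ each contribute $\transfer_1^{C_p}(1)=\langle p\rangle(x)$, giving $(p-1)!\langle p\rangle(x)$ exactly, and injectivity of $\res_{C_p}^{\Sigma_p}$ (valid since $[\Sigma_p:C_p]=(p-1)!$ is prime to $p$) finishes. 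The paper uses (b); it is shorter than what you attempted and avoids the divisibility question entirely — you circled near it in your double-coset musings but then backed away from it.
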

\begin{pf} Write $\transfer_1^{C_p}(1)=g(x)$ mod $[p](x)$ for some $g(x)\in E^0\lpow x\rpow$. Then, using Frobenius reciprocity (Lemma \ref{Transfers}) we have
$x.\transfer_1^{C_p}(1)=\transfer_1^{C_p}(\res_1^{C_p}(x).1)=0$ so that we must have $xg(x)=0$ mod $[p](x)$.
Thus $xg(x)=x\langle p\rangle(x)h(x)$ for some $h$ and hence $g(x)=\langle p\rangle(x)h(x)$. Thus, mod $[p](x)$,
we find that $g(x)=\langle p\rangle(x)h(0)$. But $g(0)=\res_1^{C_p}\transfer_1^{C_p}(1)=|C_p|=p$ by an
application of the double-coset formula. Hence $p=g(0)=\langle p\rangle(0)h(0)=ph(0)$ so that $h(0)=1$. Thus
$\transfer_1^{C_p}(1)=\langle p\rangle(x)$, as claimed. For the second statement, we have
\begin{eqnarray*}
\res_{C_p}^{\Sigma_p}\transfer_1^{\Sigma_p}(1) &=& \sum_{\sigma\in {C_p}\backslash
\Sigma_p/1}\transfer_{{C_p}\cap 1}^{~C_p}
\res_{{C_p}\cap 1}^{~1}(\conj_\sigma^*(1))\\
&=& \sum_{\sigma\in {C_p}\backslash \Sigma_p} \transfer_1^{C_p}(1)\\
&=& (p-1)!\langle p\rangle(x).\end{eqnarray*} But $\res_{C_p}^{\Sigma_p}$ is injective so we find that
$\transfer_1^{\Sigma_p}(1)=(p-1)!\langle p\rangle(x)=(p-1)!f(d)$.\end{pf}

\subsection{$l$-Chern classes}

The results of Section \ref{sec:cohomology of general linear groups} allow us to construct convenient cohomology
classes which are analogous to the construction of ordinary Chern classes.

\begin{defn} Let $G$ be a group. Recall that $E^0(BGL_d(\Flbar))\simeq E^0\lpow \sigma_1,\ldots,\sigma_d\rpow$. Given a group homomorphism $\alpha:G\to GL_d(\Flbar)$ (equivalently, a
finite-dimensional $\Flbar$-representation of $G$) and a natural number $k$ we define the \emph{$k^\text{th}$
$l$-Chern class of $\alpha$} by
$$\hat{c}_k(\alpha)=\left\{\begin{array}{ll}\alpha^*(\sigma_k) & \text{for $0\leq k\leq d$}\\
0 & \text{otherwise,}\end{array}\right.\\$$ (where $\sigma_0$ is understood to be 1).\end{defn}

\begin{prop}\label{l-Chern Properties} The $l$-Chern classes satisfy the following properties.
\begin{enumerate}
\item For any group homomorphism $\alpha:G\to GL_d(\Flbar)$ we have $\hat{c}_0(\alpha)=1$.
\item (Functoriality) Given group homomorphisms $\alpha:G\to GL_d(\Flbar)$ and $f:H\to G$ we get
$\hat{c}_k(\alpha\circ f)=f^*(\hat{c}_k(\alpha))$ for all $k$.
\item Given group homomorphisms $\alpha:G\to GL_{d_1}(\Flbar)$ and $\beta:G\to GL_{d_2}(\Flbar)$ we get
$$\hat{c}_k(\alpha\oplus\beta)=\sum_{i+j=k}\hat{c}_i(\alpha)\hat{c}_j(\beta)$$
where $\alpha\oplus\beta:G\to GL_{d_1+d_2}(\Flbar)$ is given by $g\mapsto \alpha(g)\oplus\beta(g)$.
\item Let $\text{id}_l:\Flbar^\times\to\Flbar^\times$ be the identity. Then $\hat{c}_1(\text{id}_l)=x$, the
restriction of our complex orientation.\end{enumerate}
\end{prop}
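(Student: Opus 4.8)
The plan is to obtain parts 1, 2 and 4 directly from the definitions and functoriality, and to reserve the real work for the Whitney sum formula of part 3.

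Parts 1, 2 and 4 are formal. Since $\alpha^*\colon E^0(BGL_d(\Flbar))\to E^0(BG)$ is a ring homomorphism and $\sigma_0=1$ by convention, part 1 is just $\hat c_0(\alpha)=\alpha^*(1)=1$. For part 2, contravariant functoriality of $E^0(B-)$ gives $(\alpha\circ f)^*=f^*\circ\alpha^*$; hence for $0\le k\le d$ we have $\hat c_k(\alpha\circ f)=f^*(\alpha^*(\sigma_k))=f^*(\hat c_k(\alpha))$, while for $k>d$ both sides vanish. For part 4, recall that $GL_1(\Flbar)=\overline T_1=\Flbar^\times$, so in the presentation of Corollary \ref{E^0(BGL_d(Flbar)=E^0[[c_1...c_d]])} we have $E^0(BGL_1(\Flbar))=E^0\lpow x_1\rpow$ with $x_1=\pi_1^*(x)=x$ (as $\pi_1$ is the identity of $\Flbar^\times$) and $\sigma_1=x_1=x$. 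Since $\id_l^*$ is the identity map, $\hat c_1(\id_l)=\id_l^*(x)=x$.

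For part 3, I would first factor $\alpha\oplus\beta$ as the composite
$$G\xrightarrow{\ (\alpha,\beta)\ } GL_{d_1}(\Flbar)\times GL_{d_2}(\Flbar)\xrightarrow{\ \iota\ }GL_{d_1+d_2}(\Flbar),$$
with $\iota$ the block-diagonal embedding, so that $\hat c_k(\alpha\oplus\beta)=(\alpha,\beta)^*\big(\iota^*(\sigma_k)\big)$. The heart of the matter is then to identify $\iota^*(\sigma_k)$ in $E^0(BGL_{d_1}(\Flbar)\times BGL_{d_2}(\Flbar))$. I would do this by restricting one step further to the maximal torus: the composite $\overline T_{d_1}\times\overline T_{d_2}\hookrightarrow GL_{d_1}(\Flbar)\times GL_{d_2}(\Flbar)\xrightarrow{\iota}GL_{d_1+d_2}(\Flbar)$ is exactly the inclusion of $\overline T_{d_1+d_2}$, and Proposition \ref{E^0(BGL_d(Flbar)) iso to E^0(BT_d)^Sigma_d} identifies each restriction map in sight with an inclusion of an elementary-symmetric subring into a power series ring. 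On the torus level the map is the identity on coordinates $E^0\lpow x_1,\dots,x_{d_1+d_2}\rpow\to E^0\lpow x_1,\dots,x_{d_1}\rpow\,\widehat{\tensor}\,E^0\lpow x_{d_1+1},\dots,x_{d_1+d_2}\rpow$, and it sends $\sigma_k$ to the $k$th elementary symmetric function in all $d_1+d_2$ variables; by the classical splitting identity $\sigma_k(x_1,\dots,x_{m+n})=\sum_{i+j=k}\sigma_i(x_1,\dots,x_m)\sigma_j(x_{m+1},\dots,x_{m+n})$ this equals $\sum_{i+j=k}\sigma'_i\sigma''_j$, where $\sigma'_\bullet$ and $\sigma''_\bullet$ are the elementary symmetric functions in the two blocks. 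Since $\sigma'_i$, $\sigma''_j$ are the torus restrictions of $\hat c_i(\id_{GL_{d_1}(\Flbar)})$ and $\hat c_j(\id_{GL_{d_2}(\Flbar)})$, and the restriction to the torus is injective, this gives $\iota^*(\sigma_k)=\sum_{i+j=k}p_1^*\hat c_i(\id_{GL_{d_1}(\Flbar)})\cdot p_2^*\hat c_j(\id_{GL_{d_2}(\Flbar)})$, with $p_1,p_2$ the projections.

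Applying $(\alpha,\beta)^*$ finishes the argument: since $p_1\circ(\alpha,\beta)=\alpha$ and $p_2\circ(\alpha,\beta)=\beta$ on classifying spaces, $(\alpha,\beta)^*(p_1^*(a)\cdot p_2^*(b))=\alpha^*(a)\cdot\beta^*(b)$, so
$$\hat c_k(\alpha\oplus\beta)=\sum_{i+j=k}\alpha^*\big(\hat c_i(\id_{GL_{d_1}(\Flbar)})\big)\,\beta^*\big(\hat c_j(\id_{GL_{d_2}(\Flbar)})\big)=\sum_{i+j=k}\hat c_i(\alpha)\,\hat c_j(\beta),$$
the last step by part 2 applied to $\id_{GL_{d_1}(\Flbar)}\circ\alpha=\alpha$ and likewise for $\beta$; the terms with $i>d_1$ or $j>d_2$ drop out by the vanishing convention, and the case $k>d_1+d_2$ is trivial. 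The main obstacle is the bookkeeping in the middle step: one must check that $E^0(BGL_{d_1}(\Flbar)\times BGL_{d_2}(\Flbar))$ is governed by a \emph{completed} K\"unneth isomorphism --- these $E$-cohomology rings are pro-free power series rings over $E^0$, not finitely generated free modules, so the version quoted earlier in the text does not literally apply --- and that the restriction maps down to $B(\overline T_{d_1}\times\overline T_{d_2})$ are jointly injective, so that the identity among classes can legitimately be verified there.
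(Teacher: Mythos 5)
Your proposal is correct and follows essentially the same route as the paper: parts 1, 2 and 4 by direct functoriality, and part 3 by factoring $\alpha\oplus\beta$ through the block-diagonal embedding, restricting to the maximal tori where the splitting identity $\sigma_k=\sum_{i+j=k}\sigma_{d_1,i}\sigma_{d_2,j}$ (the paper's Lemma \ref{Elementary Symmetric Split}) applies, and using injectivity of the restriction maps to pull the identity back. Your closing remark about needing the \emph{completed} K\"unneth isomorphism for $E^0(BGL_{d_1}(\Flbar)\times BGL_{d_2}(\Flbar))$ is a fair point of care that the paper glosses over by simply writing $\widehat{\tensor}$ in its diagram, but it does not change the argument.
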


\begin{proof} Properties 1 and 4 follow straight from the definition. For property 2, if $k\leq 0 $ or $k>d$
the result is clear since $f^*(0)=0$ and $f^*(1)=1$. Otherwise $1\leq k\leq d$ and we have
$$\hat{c}_k(\alpha\circ f)=(\alpha\circ f)^*(\sigma_k)=(f^*\circ
\alpha^*)(\sigma_k)=f^*(\alpha^*(\sigma_k))=f^*(\hat{c}_k(\alpha)),$$ as required. It remains to prove property
3.

We note first that the diagram
$$\xymatrix{
(\Flbar^\times)^{d_1}\times(\Flbar^\times)^{d_2} \ar[rr]^-\sim \ar[d] & & (\Flbar^\times)^{d_1+d_2} \ar[d]\\
GL_{d_1}(\Flbar)\times GL_{d_2}(\Flbar) \ar[rr]^-\mu & & GL_{d_1+d_2}(\Flbar)}
$$
induces
$$\xymatrix{
E^0(B(\Flbar^\times)^{d_1})\widehat{\tensor}_{E^0} E^0(B(\Flbar^\times)^{d_2})  & & E^0(B(\Flbar^\times)^{d_1+d_2}) \ar[ll]_-\sim\\
E^0(BGL_{d_1}(\Flbar))\widehat{\tensor}_{E^0} E^0(BGL_{d_2}(\Flbar)) \ar[u] & & E^0(BGL_{d_1+d_2}(\Flbar))
\ar[u]
\ar[ll]_-{\mu^*}.}\\
$$
In the usual way, we can write $E^0(B(\Flbar^\times)^{d_1+d_2})\simeq E^0\lpow
x_1,\ldots,x_{d_1},x_{d_1+1},\ldots,x_{d_1+d_2}\rpow$ to get $E^0(B(\Flbar^\times)^{d_1})\simeq E^0\lpow
x_1,\ldots,x_{d_1}\rpow$ and $E^0(B(\Flbar^\times)^{d_2})\simeq E^0\lpow x_{d_1+1},\ldots,x_{d_1+d_2}\rpow$
where $x_i=\hat{c}_1((\Flbar^\times)^{d_1+d_2}\overset{\pi_i}{\longrightarrow} \Flbar^\times)$. We use the
following lemma.

\begin{lem}\label{Elementary Symmetric Split} Let $d_1,d_2\in\mathbb{N}$ and $d=d_1+d_2$. For $1\leq k\leq d$ let $\sigma_k$ be the $k^\text{th}$
elementary symmetric function in $x_1,\ldots,x_d$. Let $\sigma_{d_1,i}$ (respectively $\sigma_{d_2,j}$) be the
$i^\text{th}$ (respectively $j^\text{th}$) elementary symmetric function in $x_1,\ldots,x_d$ (respectively
$x_{d_1+1},\ldots,x_d$). Then
$$\sigma_k=\sum_{i+j=k}\sigma_{d_1,i}\sigma_{d_2,j}.$$
\end{lem}
\begin{pf} Straightforward combinatorics.\end{pf}
Using the notation of Lemma \ref{Elementary Symmetric Split} we then have
$E^0(BGL_{d_1}(\Flbar))=E^0\lpow\sigma_{d_1,1}\ldots,\sigma_{d_1,d_1}\rpow$ and
$E^0(BGL_{d_2}(\Flbar))=E^0\lpow\sigma_{d_2,1}\ldots,\sigma_{d_2,d_2}\rpow$. Thus, chasing the diagram and using
injectivity of the vertical arrows, we see that
$\mu^*(\sigma_k)=\sum_{i+j=k}\sigma_{d_1,i}\tensor\sigma_{d_2,j}$.

Now given homomorphisms $\alpha:G\to GL_{d_1}(\Flbar)$ and $\beta:G\to GL_{d_2}(\Flbar)$, the map
$\alpha\oplus\beta$ is given by the composition
$G\overset{\alpha\times\beta}{\longrightarrow}GL_{d_1}(\Flbar)\times
GL_{d_2}(\Flbar)\overset{\mu}{\to}GL_d(\Flbar)$, whereby
\begin{eqnarray*}
\hat{c}_k(\alpha\oplus\beta) & = & (\alpha\oplus\beta)^*\mu^*(\sigma_k)\\
& = & (\alpha\oplus\beta)^*\left(\sum_{i+j=k} \sigma_{d_1,i}\tensor \sigma_{d_2,j}\right)\\
& = & \sum_{i+j=k} \alpha^*(\sigma_{d_1,i})\tensor \beta^*(\sigma_{d_2,j})\\
& = & \sum_{i+j=k} \hat{c}_i(\alpha)\hat{c}_j(\beta).
\end{eqnarray*}\end{proof}

\begin{rem} The properties of Proposition \ref{l-Chern Properties} are enough to completely determine the
$l$-Chern classes. That is, they can be viewed as axioms for the $l$-Chern classes.\end{rem}

\begin{prop} Given one-dimensional $\Flbar$-representations $\alpha,\beta:G\to \Flbar^\times$ we have $$\hat{c}_1 (\alpha\tensor\beta)=\hat{c}_1 (\alpha)+_F
\hat{c}_1(\beta).$$\end{prop}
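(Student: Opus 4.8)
The plan is to reduce the statement to the fact, established when $E$ was defined, that the multiplication map on $\mathbb{C}P^\infty=BS^1$ induces the formal group law $F$ on the standard coordinate.

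First I would note that, since $\Flbar^\times$ is abelian, a one‑dimensional $\Flbar$-representation is just a homomorphism to $GL_1(\Flbar)=\Flbar^\times$, and the tensor product $\alpha\tensor\beta$ is the pointwise product $g\mapsto\alpha(g)\beta(g)$. Hence $\alpha\tensor\beta$ factors as
\[
G\xrightarrow{\ \alpha\times\beta\ }\Flbar^\times\times\Flbar^\times\xrightarrow{\ \mu\ }\Flbar^\times ,
\]
where $\mu$ is the multiplication map of $\Flbar^\times$. Since $\hat{c}_1(\text{id}_l)=x$ (Proposition \ref{l-Chern Properties}(4)), two applications of functoriality (Proposition \ref{l-Chern Properties}(2)) give $\hat{c}_1(\alpha\tensor\beta)=(\alpha\times\beta)^*\bigl(\mu^*(x)\bigr)$, so everything comes down to identifying $\mu^*(x)\in E^0(B(\Flbar^\times)^2)$.

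Next I would compute $\mu^*(x)$. By the discussion of Section \ref{sec:cohomology of general linear groups} (Corollary \ref{E^0(BGL_d(Flbar)=E^0[[c_1...c_d]])} with $d=2$) we have $E^0(B(\Flbar^\times)^2)=E^0\lpow x_1,x_2\rpow$ with $x_i=\pi_i^*(x)$. The chosen embedding $\Flbar^\times\to S^1$ is a group homomorphism, and $x\in E^0(B\Flbar^\times)$ is by construction the restriction of the coordinate on $E^0(\mathbb{C}P^\infty)=E^0\lpow x\rpow$; so the square of classifying spaces relating the two multiplications commutes, and naturality of the induced maps in $E$-theory, combined with the identity $\mu^*(x)=F(x_1,x_2)$ over $\mathbb{C}P^\infty$ (the corollary following the construction of $E$ and the formal group law $F$ of Proposition \ref{Universal deformation FGL}), forces $\mu^*(x)=F(x_1,x_2)$ in $E^0(B(\Flbar^\times)^2)$ as well.

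Finally, $(\alpha\times\beta)^*$ is a continuous $E^0$-algebra homomorphism with $(\alpha\times\beta)^*(x_1)=\hat{c}_1(\alpha)$ and $(\alpha\times\beta)^*(x_2)=\hat{c}_1(\beta)$, so applying it to $F(x_1,x_2)=\sum_{i,j}a_{ij}x_1^ix_2^j$ yields
\[
\hat{c}_1(\alpha\tensor\beta)=(\alpha\times\beta)^*\bigl(F(x_1,x_2)\bigr)=F\bigl(\hat{c}_1(\alpha),\hat{c}_1(\beta)\bigr)=\hat{c}_1(\alpha)+_F\hat{c}_1(\beta),
\]
as required. I expect the only point needing real care is the middle step: pinning down $\mu^*(x)$ by transporting the known formula over $\mathbb{C}P^\infty$ along the map $B(\Flbar^\times)^2\to B(S^1)^2$, i.e.\ checking that the relevant square of classifying spaces commutes and that the coordinate $x$ on $B\Flbar^\times$ is genuinely pulled back from the one on $\mathbb{C}P^\infty$.
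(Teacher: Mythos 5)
Your proof is correct and follows essentially the same route as the paper, which simply exhibits the commutative triangle $\alpha\tensor\beta=\mu\circ(\alpha\times\beta)$ and says ``on passing to cohomology, gives the result.'' You have just made explicit the step the paper leaves to the reader: that the class $x$ on $B\Flbar^\times$ is by definition pulled back from $\mathbb{C}P^\infty$ along the chosen embedding $\Flbar^\times\to S^1$, so the identity $\mu^*(x)=F(x_1,x_2)$ established over $\mathbb{C}P^\infty$ transports to $E^0(B(\Flbar^\times)^2)$ by naturality, whence applying the (continuous) $E^0$-algebra map $(\alpha\times\beta)^*$ gives $F(\hat{c}_1(\alpha),\hat{c}_1(\beta))$.
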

\begin{pf} We have a commutative diagram
$$
\xymatrix{ G \ar[rr]^-{(\alpha,\beta)} \ar[rrd]_{\alpha\tensor\beta} & &  \Flbar^\times\times\Flbar^\times \ar[d]^\mu\\
& & \Flbar^\times}
$$
which, on passing to cohomology, gives the result.\end{pf}

\begin{defn} Given a group homomorphism $\alpha:G\to GL_d(\Flbar)$ we define the \textit{$l$-Euler class of
$\alpha$} to be the top $l$-Chern class, that is $\text{euler}_l(\alpha)=\hat{c}_d(\alpha)$.\end{defn}

%------------------------------------------------------

\chapter{Generalised character theory}\label{ch:HKR}

\section{Generalised characters}

In this section we outline the generalised character theory developed by Hopkins, Kuhn and Ravenel in
\cite{HKR}. We start by recalling some results on Pontryagin duality.

\subsection{Locally compact groups}

A topological group $G$ is called \emph{locally compact} if there is a neighbourhood of the identity which is
contained in a compact set. Given a locally compact abelian group $G$ we define its \emph{Pontryagin dual} or
\emph{character group}, $G^*$, by $G^*=\Hom_{\text{cts}}(G,S^1)$. We give $G^*$ the weakest topology such that
the maps $G^*\to S^1, \chi\mapsto \chi(g)$ are continuous for each $g\in G$. If $G$ is locally compact then
$G^*$ is also locally compact and the assignment $G\mapsto G^*$ is a contravariant endofunctor on locally
compact abelian groups. Further, for each $G$ there is a canonical (continuous) isomorphism $G\to (G^*)^*$ given
by $g\mapsto (G^*\to S^1, \chi\mapsto \chi(g))$. From here on $G$ may be identified with $(G^*)^*$ without
comment.

Every discrete (and hence every finite) group is locally compact. Other examples of locally compact groups
include $\mathbb{R},S^1$ and $\mathbb{Z}_p$. If $G$ and $H$ are locally compact then $G\oplus H$ is locally
compact. Using the result $\Hom(\colim A_i, B)=\limit \Hom (A_i, B)$ from general category theory, we see that
$(\colim A_i)^*=\limit (A_i)^*$ wherever the limits exist. Of particular interest to us will be the result
$(\mathbb{Z}/p^\infty)^*=(\colim \mathbb{Z}/p^r)^*=\limit (\mathbb{Z}/p^r)^*=\mathbb{Z}_p$.

For each $m>1$ there is a canonical isomorphism $\mathbb{Z}/m \iso (\mathbb{Z}/m)^*$ given by $1\mapsto
(\mathbb{Z}/m \to S^1, 1\mapsto e^{2\pi i/m})$. Further, if $G$ and $H$ are locally compact abelian groups then
$$(G\oplus
H)^*=\Hom_{\text{cts}}(G\oplus H,S^1)\simeq \Hom_{\text{cts}}(G,S^1)\oplus \Hom_{\text{cts}}(H,S^1)= G^*\oplus
H^*.$$ Hence, if $G$ is any finite abelian group then there is an isomorphism $G\simeq G^*$ but in general this
will be non-canonical.

We will mainly be looking at a group $\Theta\simeq (\mathbb{Z}/p^\infty)^n$. From the remarks earlier it is
clear that $\Theta^*\simeq \mathbb{Z}_p^n$.

\subsection{Generalised characters}

As usual, let $x\in E^0(\mathbb{C}P^\infty)$ be our standard complex coordinate and $F$ the associated standard
$p$-typical formal group law. Recall that for each $m\geq 0$ we let $g_m(t)$ denote the Weierstrass polynomial
of degree $p^{nm}$ which is a unit multiple of $[p^m]_F(t)$ in $E^0\lpow t\rpow$.

Let $Q(E^0)$ denote the field of fractions of $E^0$ and fix an algebraic closure $\overline{Q(E^0)}$. For each
$m\geq 0$ let $\Theta_m=\{a\in \overline{Q(E^0)}\mid g_m(a)=0\}$. Note that $\Theta_m\subseteq \Theta_{m+1}$ and
define $\Theta=\bigcup_m \Theta_m$.

\begin{prop} There are abelian group structures on $\Theta_m$ and $\Theta$ given by $+_F$ and (non-canonical) isomorphisms $\Theta_m \simeq (\mathbb{Z}/p^m)^n$ and $\Theta \simeq (\mathbb{Z}/p^\infty)^n$.
\end{prop}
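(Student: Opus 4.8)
The plan is to view $(\Theta_m,+_F)$ as the group of $p^m$-torsion points of the formal group $F$ over the algebraically closed field $\overline{Q(E^0)}$, and to pin down its isomorphism type from its order, its exponent and its $p$-torsion subgroup. First I would check the group structure. Since $g_m$ is a unit multiple of $[p^m]_F(t)$, we have $\Theta_m=\{a\in\overline{Q(E^0)}\mid [p^m]_F(a)=0\}$. Using the identity $[p^m](x+_F y)=[p^m](x)+_F[p^m](y)$ proved earlier, $a,b\in\Theta_m$ gives $[p^m](a+_F b)=0+_F 0=0$, and likewise $[p^m](-_F a)=0$ and $[p^m](0)=0$; commutativity and associativity are inherited from $F$, so $(\Theta_m,+_F)$ is an abelian group. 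Because $[p^m](a)=0$ forces $[p^{m+1}](a)=[p]([p^m](a))=0$, the inclusions $\Theta_m\subseteq\Theta_{m+1}$ are homomorphisms, so $\Theta=\bigcup_m\Theta_m$ is an abelian group too. Finally, in $(\Theta_m,+_F)$ the operation $[p^m]_F$ is literally multiplication by $p^m$, so $\Theta_m$ is killed by $p^m$ and $\Theta$ is a $p$-torsion group.

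The main point, and the step I expect to be the real obstacle, is the exact count $|\Theta_m|=p^{nm}$. Since $g_m$ is a polynomial of degree $p^{nm}$ (Weierstrass preparation) we get $|\Theta_m|\le p^{nm}$, with equality precisely when $g_m$ is separable. To prove separability I would exploit translation invariance: for $a\in\Theta_m$ the identity $[p^m]_F(a+_F t)=[p^m]_F(a)+_F[p^m]_F(t)=[p^m]_F(t)$ holds in $\overline{Q(E^0)}\lpow t\rpow$, and differentiating in $t$ at $t=0$ gives $([p^m]_F)'(a)\cdot F_2(a,0)=([p^m]_F)'(0)=p^m$, where $F_2=\partial F/\partial y$. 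Because $g_m$ is a Weierstrass polynomial, each root $a$ is integral over $E^0$ and reduces into the maximal ideal of the finite complete local $E^0$-algebra $E^0[a]$ (whose residue field is $\mathbb{F}_p$, since $g_m\equiv t^{p^{nm}}$ mod the maximal ideal); there $F_2(a,0)=1+(\text{element of the maximal ideal})$ is a unit, so $([p^m]_F)'(a)=p^m/F_2(a,0)\neq 0$ and $a$ is a simple root. Hence $|\Theta_m|=p^{nm}$, and in particular $|\Theta_1|=p^n$.

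Next I would identify $\Theta_m$. It is a finite abelian $p$-group of order $p^{nm}$ and exponent dividing $p^m$, and its $p$-torsion subgroup is $\Theta_m[p]=\Theta_1$ (again because $[p](a)=0$ forces $[p^m](a)=0$), of order $p^n$. By the structure theorem for finite abelian groups, a $p$-group whose $p$-torsion has order $p^n$ is a direct sum of exactly $n$ cyclic groups $\bigoplus_{i=1}^n\mathbb{Z}/p^{e_i}$; the exponent bound forces $e_i\le m$ and the order condition forces $\sum_i e_i=nm$, so $e_i=m$ for every $i$ and $\Theta_m\simeq(\mathbb{Z}/p^m)^n$. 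This isomorphism is non-canonical, depending on a choice of basis (and on the chosen algebraic closure).

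To identify $\Theta$ I would show it is $p$-divisible. The map $[p]\colon\Theta_{m+1}\to\Theta_m$ is well defined, has kernel $\Theta_{m+1}[p]=\Theta_1$ of order $p^n$, so its image has order $p^{n(m+1)}/p^n=p^{nm}=|\Theta_m|$ and it is surjective; passing to the union, every element of $\Theta$ is divisible by $p$. A $p$-torsion abelian group that is $p$-divisible is divisible, and a divisible abelian group whose $p$-torsion subgroup is $(\mathbb{Z}/p)^n$ is $(\mathbb{Z}/p^\infty)^n$ by the classification of divisible abelian groups; hence $\Theta\simeq(\mathbb{Z}/p^\infty)^n$. (Alternatively one chooses the bases of the $\Theta_m$ compatibly with the inclusions $\Theta_m\hookrightarrow\Theta_{m+1}$ and takes the colimit directly.) Everything except the separability count is formal-group bookkeeping and elementary abelian group theory; the separability step is where the finite-height hypothesis is really used.
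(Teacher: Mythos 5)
Your proof is correct, and in two places it takes a more self-contained route than the paper.  For the group structure and the identification $\Theta_m\simeq(\mathbb{Z}/p^m)^n$ your argument is essentially identical to the paper's: closure under $+_F$ via $[p^m](a+_Fb)=[p^m](a)+_F[p^m](b)$, then counting cyclic summands from the $p$-torsion $\Theta_1$ and bounding their orders by the exponent $p^m$.  The real divergence is the count $|\Theta_m|=p^{nm}$.  The paper simply cites Proposition~27 of \cite{FSFG} for distinctness of the roots of $g_m$; you instead give a direct separability argument by differentiating the translation identity $[p^m]_F(a+_Ft)=[p^m]_F(t)$ at $t=0$ to get $([p^m]_F)'(a)\cdot F_2(a,0)=p^m\neq 0$, using integrality of $a$ over $E^0$ and the completeness of $E^0[a]$ to make sense of the power series $F_2(a,0)$.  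This is sound and has the virtue of being self-contained (the unit claim on $F_2(a,0)$ is actually a little more than you need — once the product equals $p^m\neq 0$, neither factor can vanish — but you do need $a$ topologically nilpotent for convergence, which your maximal-ideal argument supplies).  For the passage from the $\Theta_m$ to $\Theta$, the paper just chooses compatible isomorphisms and takes the colimit; you give an alternative via $p$-divisibility of $\Theta$ and the classification of divisible abelian groups, which is cleaner in that it avoids having to exhibit the compatible choices explicitly, and you note the paper's route as an aside.  Both approaches are valid; yours trades a citation and a choice for two short structural arguments.
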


\begin{proof}
If $a,b\in\Theta_m$ then $[p^m](a+_F b)=[p^m](a)+_F[p^m](b)=0$, so $\Theta_m$ is closed under $+_F$. The
identity element is $0$. Given $a\in\Theta_m$ we have $[p^m](-_F a)=-_F[p^m](a)=0$ which gives inverses.
Finally, associativity and commutativity follow easily from properties of $+_F$. Thus $\Theta_m$ is an abelian
group for all $m$ and hence so is $\Theta=\bigcup_m \Theta_m$.

The roots of $g_m$ are distinct, so that $|\Theta_m|=p^{mn}$ (see, for example, \cite[Proposition 27]{FSFG}).
Hence $\Theta_m$ is a finite abelian $p$-group and we can write $\Theta_m\simeq
\mathbb{Z}/p^{r_1}\oplus\ldots\oplus\mathbb{Z}/p^{r_s}$ for some $r_1,\ldots,r_s$ and some $s$ with
$r_1+\ldots+r_s=mn$. Since $\Theta_m(p)=\{a\in \overline{Q(E^0)}\mid [p](a)=0\}=\Theta_1$ we get
$|\Theta_m(p)|=p^n$ and it follows that $s=n$. The fact that $[p^m](a)=0$ for all $a\in\Theta_m$ gives $r_i\leq
m$ for each $i$. Thus we conclude that $r_i=m$ for all $i$ and $\Theta_m\simeq (\mathbb{Z}/p^m)^n$. Choosing
compatible isomorphisms for each $m$ then gives an isomorphism $\Theta\simeq
(\mathbb{Z}/p^\infty)^n$.\end{proof}

We now outline the development of the generalised character map for $E^0(BG)$ for finite groups $G$.

\begin{defn} Let $A\subseteq \overline{Q(E^0)}$ be a finite abelian
group under $+_F$. Then we define $L_A\subseteq\overline{Q(E^0)}$ to be the ring generated by $\mathbb{Q}\tensor
E^0$ and $A$. Note that given a map of two such groups $f:A\to B$ there is an induced $\mathbb{Q}\tensor
E^0$-algebra map $f_*:L_A\to L_B$ sending each $a\in A\subseteq L_A$ to $f(a)\in B\subseteq L_B$. We define
$L_m=L_{\Theta_m}$.\end{defn}

\begin{prop} Let $A\subseteq \overline{Q(E^0)}$ be a finite abelian
$p$-group under $+_F$. Then there is a unique $E^0$-algebra map $\psi_A:E^0(BA^*)\to L_A$ such that
$\psi_A(a^*(x))=a$ for all $a\in A=(A^*)^*=\Hom_\text{cts}(A^*,S^1)$. Further these maps are natural for
homomorphisms of such groups.\end{prop}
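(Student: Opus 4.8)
The plan is to fix, once and for all, a decomposition of the finite abelian $p$-group $A^*$ into cyclic factors and to read off both the source and the target of $\psi_A$ from it. Concretely, choose an isomorphism $A^*\simeq \prod_{i=1}^r C_{p^{k_i}}$. Taking Pontryagin duals and using the canonical identification $\mathbb{Z}/m\iso(\mathbb{Z}/m)^*$ yields a compatible isomorphism $A=(A^*)^*\simeq\prod_{i=1}^r C_{p^{k_i}}$; since this is an isomorphism of groups and $A$ carries the operation $+_F$, the standard generator $e_i$ of the $i^\text{th}$ factor is an element of $A$ of order exactly $p^{k_i}$ under $+_F$, so $[p^{k_i}]_F(e_i)=0$ in $L_A$. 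On the topological side, Proposition \ref{E^0(B prod C_m_i)} together with the K\"unneth isomorphism \ref{E^0(B(GxC_m)) as tensor product} identifies $E^0(BA^*)$ with $E^0\lpow x_1,\ldots,x_r\rpow/([p^{k_1}](x_1),\ldots,[p^{k_r}](x_r))$, where $x_i=a_i^*(x)$ and $a_i\colon A^*\to S^1$ is the character corresponding to $e_i$; by the Weierstrass preparation theorem (Corollary \ref{Basis for R[[x]]/[p](x)}) this quotient is already the polynomial algebra $E^0[x_1,\ldots,x_r]/(g_{k_1}(x_1),\ldots,g_{k_r}(x_r))$, with $g_{k_i}$ the Weierstrass polynomial that is a unit multiple of $[p^{k_i}]_F$. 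Working with this polynomial model (or, equivalently, after applying $\mathbb{Q}\tensor-$, so that $L_A$ need not be complete) makes everything purely algebraic.

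Given this, existence and uniqueness of $\psi_A$ are quick. I would first define an $E^0$-algebra map $E^0[x_1,\ldots,x_r]\to L_A$ by $x_i\mapsto e_i$, using $E^0\hookrightarrow\mathbb{Q}\tensor E^0\subseteq L_A$; since $g_{k_i}(e_i)\sim[p^{k_i}]_F(e_i)=0$ in $L_A$, it factors through the quotient to give $\psi_A\colon E^0(BA^*)\to L_A$. Uniqueness is immediate because the classes $x_i=a_i^*(x)$ generate $E^0(BA^*)$ as an $E^0$-algebra. To check that $\psi_A(a^*(x))=a$ for \emph{every} $a\in A$, not just for the $e_i$, I would write $a$ as a $+_F$-sum of elements $[c_i]_F(e_i)$ with $c_i\in\mathbb{Z}$ (possible since the $e_i$ generate $A$); since the corresponding character $A^*\to S^1$ is the pointwise product of the $a_i^{\,c_i}$, functoriality of the complex coordinate and the $H$-space structure on $BS^1$ give that $a^*(x)$ is the corresponding $+_F$-sum of the $[c_i](x_i)$ in $E^0(BA^*)$, and applying $\psi_A$ (a ring map, hence compatible with $F$) yields the $+_F$-sum of the $[c_i]_F(e_i)$, which is $a$.

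For naturality, let $f\colon A\to B$ be a homomorphism of such groups, with Pontryagin dual $f^*\colon B^*\to A^*$, inducing $(Bf^*)^*\colon E^0(BA^*)\to E^0(BB^*)$. Both $\psi_B\circ(Bf^*)^*$ and $f_*\circ\psi_A$ are $E^0$-algebra maps $E^0(BA^*)\to L_B$, so it suffices to compare them on the generators $a^*(x)$. Here $(Bf^*)^*(a^*(x))$ is the class induced by the composite $A^*$-character $a\circ f^*\colon B^*\to A^*\to S^1$, and under the canonical double-dual identifications $a\circ f^*$ is precisely the element $f(a)\in(B^*)^*=B$; hence $\psi_B\big((Bf^*)^*(a^*(x))\big)=f(a)=f_*(a)=f_*(\psi_A(a^*(x)))$, as required.

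The only genuinely substantive step is the first one: matching the defining relation $g_{k_i}(x_i)=0$ of $E^0(BA^*)$ with the vanishing $[p^{k_i}]_F(e_i)=0$ in $L_A$, which rests on correctly identifying the $+_F$-order of $e_i\in A$ via the double-dual isomorphism and on the fact that $g_{k_i}$ is a unit multiple of $[p^{k_i}]_F$. After that, everything is bookkeeping with the K\"unneth formula, the formal group law, and Pontryagin duality; the one small point to be careful about is to stay in the polynomial (or rationalised) setting so that no completeness of $L_A$ is invoked.
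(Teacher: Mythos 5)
Your proposal is correct and takes essentially the same route as the paper: pick a cyclic decomposition (you start from $A^*$ and dualize, the paper starts from $A$ and dualizes implicitly — the same choice up to Pontryagin duality), use Proposition \ref{E^0(B prod C_m_i)} to present $E^0(BA^*)$, send $x_i$ to the corresponding generator of $A\subseteq L_A$, and verify the identity $\psi_A(a^*(x))=a$ on general $a$ via the relation $(a+b)^*(x)=a^*(x)+_F b^*(x)$. The one point you make explicit that the paper leaves implicit is the use of the Weierstrass-polynomial model (or rationalization) to avoid invoking completeness of $L_A$ when mapping out of the power-series presentation; this is a genuine, if small, tightening of the argument rather than a different approach.
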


\begin{pf} Choose an isomorphism $A\iso \prod_{i=1}^m
C_{p^{r_i}}$ to get generators $a_1,\ldots,a_m$ for $A$. Then, by Lemma \ref{E^0(B prod C_m_i)}, we get
$E^0(BA^*)\simeq E^0\lpow x_1,\ldots,x_m \rpow /([p^{r_1}](x_1),\ldots,[p^{r_m}](x_m))$ where, viewing $a_i$ as
an element of $(A^*)^*$, we have $x_i={a_i}^*(x)$. Hence, since $[p^{r_i}](a_i)=0$ in $L_A$, there is an
$E^0$-algebra map $\psi_A:E^0(BA^*)\to L_A$ given by ${a_i}^*(x)=x_i\mapsto a_i$.

Now, for any $a,b\in A=(A^*)^*$, the commutative diagram
$$
\begin{array}{ccc}
\xymatrix{A^* \ar^{a\times b}[r] \ar_{a+b}[rd]& S^1\times S^1 \ar^\mu[d]\\
& S^1} & \begin{array}{c}\phantom{ \text{induces}}\\\phantom{ \text{induces}}\\
\text{induces}\end{array} &
\xymatrix{E^0(BA^*) & E^0\lpow x_{(1)},x_{(2)} \rpow \ar_{(a\times b)^*}[l]\\
& E^0\lpow x \rpow \ar^{(a+b)^*}[lu] \ar_{\mu^*}[u]}
\end{array}
$$
which translates to $(a+b)^*(x)=F((a\times b)^*(x_{(1)}),(a\times b)^*(x_{(2)}))=a^*(x)+_Fb^*(x)$. Hence, given
an arbitrary $a\in A$ we can write $a=\sum_i{a_{j_i}}$ to get
$$\textstyle \psi_A(a^*(x))=\psi_A((\sum_i{a_{j_i}})^*(x))=\psi_A(\sum_F x_{j_i})=\sum_F a_{j_i}=a.$$
Thus the map $\psi_A$ has the required property and is unique by construction. For naturality, let $B\subseteq
\overline{Q(E^0)}$ be another finite abelian group under $+_F$ and $f:A\to B$ a group homomorphism. Then we have
maps $f_*:E^0(BA^*)\to E^0(BB^*)$ and $f_*:L_A\to L_B$ and
$$\psi_B(f_*(a^*(x)))=\psi_B(f(a)_*(x))=f(a)=f_*(a)=f_*(\psi_A(a^*(x)))$$
which is the claimed naturality condition.
\end{pf}

This gives us the following immediate corollary.

\begin{cor} The inclusions $\Theta_m\hookrightarrow\Theta_{m+1}$
induce commutative squares
$$\xymatrix{ E^0(B(\Theta_m)^*) \ar[d] \ar^-{\psi_m}[rr] & & L_m \ar@{^(->}[d]\\
E^0(B(\Theta_{m+1})^*) \ar_-{\psi_{m+1}}[rr] &  & L_{m+1}.}$$ Hence, writing $L=\bigcup_m L_m$ we get an induced
map $\psi:\colim E^0(B(\Theta_m)^*)\to L$.
\end{cor}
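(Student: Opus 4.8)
The plan is to derive the corollary almost formally from the naturality clause of the preceding proposition, together with the universal property of the colimit. Recall that $\Theta_m=\{a\in\overline{Q(E^0)}\mid g_m(a)=0\}$ is the $p^m$-torsion of $(\overline{Q(E^0)},+_F)$, so the set-theoretic inclusion $\Theta_m\subseteq\Theta_{m+1}$ is a homomorphism of finite abelian $p$-groups under $+_F$, and the proposition applies to it.

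First I would identify the two vertical arrows. Applying the dual functor to $f:\Theta_m\hookrightarrow\Theta_{m+1}$ gives the restriction map $f^*:(\Theta_{m+1})^*\to(\Theta_m)^*$, hence a map $Bf^*:B(\Theta_{m+1})^*\to B(\Theta_m)^*$, and applying $E^0(-)$ contravariantly gives the left-hand arrow $f_*:E^0(B(\Theta_m)^*)\to E^0(B(\Theta_{m+1})^*)$; this is precisely the map ``induced by $\Theta_m\hookrightarrow\Theta_{m+1}$'' referred to in the statement. On the coefficient side, since $\Theta_m\subseteq\Theta_{m+1}$ as subsets of $\overline{Q(E^0)}$, the subring $L_m=L_{\Theta_m}$ of $\overline{Q(E^0)}$ generated over $\mathbb{Q}\tensor E^0$ by $\Theta_m$ is contained in $L_{m+1}=L_{\Theta_{m+1}}$; and since $f$ is an inclusion the map $f_*:L_m\to L_{m+1}$ of the proposition, $a\mapsto f(a)=a$, is exactly this subring inclusion, which is the right-hand arrow. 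The naturality identity $\psi_{m+1}\circ f_*=f_*\circ\psi_m$ furnished by the proposition is then exactly commutativity of the displayed square.

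Next I would assemble these squares over all $m\geq 0$. The left-hand column forms the direct system $E^0(B(\Theta_0)^*)\to E^0(B(\Theta_1)^*)\to\cdots$ whose colimit is by definition $\colim E^0(B(\Theta_m)^*)$; the right-hand column forms the direct system of subring inclusions $L_0\hookrightarrow L_1\hookrightarrow\cdots$, whose colimit is the nested union $\bigcup_m L_m=L$. Because each square commutes, the family $(\psi_m)_m$ is a morphism of direct systems, so the universal property of the colimit yields a unique map $\psi:\colim E^0(B(\Theta_m)^*)\to L$ whose composite with each structure map $E^0(B(\Theta_m)^*)\to\colim E^0(B(\Theta_m)^*)$ is $\psi_m$ followed by the inclusion $L_m\hookrightarrow L$.

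There is no substantive obstacle; the only point needing care is the bookkeeping --- verifying that the arrow $f_*$ of the proposition, applied to the inclusion $\Theta_m\hookrightarrow\Theta_{m+1}$, really is the $E$-cohomology map on one side and the subring inclusion $L_m\hookrightarrow L_{m+1}$ on the other, so that the square of the corollary is literally the naturality square of the proposition rather than merely isomorphic to it.
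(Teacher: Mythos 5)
Your proof is correct and is essentially the argument the paper has in mind: the paper calls this an ``immediate corollary'' of the naturality clause of the preceding proposition and gives no further details, and your careful unwinding --- identifying the left vertical as the $E^0$-cohomology map induced by dualising the inclusion, identifying $f_*:L_m\to L_{m+1}$ with the subring inclusion since $f$ is an inclusion of subsets of $\overline{Q(E^0)}$, reading off commutativity from the stated naturality, and then invoking the universal property of the colimit --- is exactly what is being left to the reader.
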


\begin{defn} Given topological groups $G$ and $H$ we have an action of $G$ on
$\Hom_\text{cts}(H,G)$ given by $(g.\alpha)(h)=g\alpha(h)g^{-1}$ for $g\in G, \alpha\in\Hom_\text{cts}(H,G)$ and
$h\in H$. We define the set $\Rep(H,G)$ by $\Rep(H,G)=\Hom_\text{cts}(H,G)/G$.\end{defn}

\begin{rem} Notice that, by Proposition \ref{Conjugation induces identity}, if $\alpha\in \Hom_\text{cts}(H, G)$ then
the induced map $\alpha^*:E^0(BG)\to E^0(BH)$ depends only on the class of $\alpha$ in
$\Hom_\text{cts}(H,G)/G=\Rep(H,G)$. Also note that if $G$ is a finite group, then any homomorphism $f:\thstar\to
G$ must factor through $\thstar/\ker f$ which is a finite quotient of $\thstar$ and hence discrete. It follows
that $f$ is automatically continuous; that is, $\Hom_{\text{cts}}(\thstar,G)=\Hom(\thstar,G)$.\end{rem}

\begin{defn} Let $G$ be a finite group and let $\alpha\in\Hom(\thstar,G)$. Then $\alpha$ induces
maps $\alpha_m^*:E^0(BG)\to E^0(B\Theta_m^*)$ for each $m$ which fit into the diagram
$$
\xymatrix{ E^0(BG) \ar^{\alpha_m^*}[r] \ar_{\alpha_{m+1}^*}[dr] & E^0(B\Theta_m^*) \ar[d]\\
& E^0(B\Theta_{m+1}^*)}
$$
and hence a map $\alpha^*:E^0(BG)\to \colim E^0(B\Theta_m^*)$. We define $\chi_\alpha$ to be the composite
$$E^0(BG)\overset{\alpha^*}{\longrightarrow} \colim E^0(B\Theta_m^*)\overset{\psi}{\longrightarrow} L.$$

Finally, writing $\Map(S,T)$ for the set of functions (that is, set-maps) $S\to T$, we define $\chi:E^0(BG)\to
\Map(\Rep(\thstar,G),L)$ by $(\chi(a))(\alpha)=\chi_\alpha(a)$ for $a\in E^0(BG)$ and $\alpha\in\Hom(\thstar,
G)$ and refer to it as the {\em generalised character map for $G$}. By the remarks above this is a well defined
map of $E^0$-algebras.\end{defn}

We are now able to state the result of Hopkins, Kuhn and Ravenel.

\begin{prop}\label{HKR} (Hopkins, Kuhn, Ravenel) Let $G$ be a finite group. The map $\chi$ defined above
induces an isomorphism of $L$-algebras
$$L\tensor_{E^0} E^0(BG) \iso \Map(\Rep(\thstar,G),L).$$\end{prop}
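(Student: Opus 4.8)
The plan is to prove this by a reduction to the abelian case, exploiting the categorical-limit description of $E^0(BG)$ from Theorem~\ref{E^*(BG) as limit over p-groups} (the Hopkins--Kuhn--Ravenel limit over $\mathcal{A}(G)_{(p)}$), together with the known behaviour of the $\psi_A$ maps on abelian groups. The first step is to observe that $\Map(\Rep(\thstar,G),L)$ is itself a limit: since any $\alpha\in\Hom(\thstar,G)$ has image a finite abelian $p$-subgroup $A\leqslant G$ (being a quotient of $\thstar$), the set $\Rep(\thstar,G)$ decomposes according to the conjugacy class of the image, and for a fixed abelian $p$-subgroup $A$ the relevant piece of $\Map(\Rep(\thstar,G),L)$ matches $\Map(\Hom(\thstar,A),L)^{W}$-type data with the appropriate Weyl actions. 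Chasing this carefully, $\alpha\mapsto(\text{image of }\alpha,\ \alpha)$ identifies $\Map(\Rep(\thstar,G),L)$ with $\lim_{A\in\mathcal{A}(G)_{(p)}}\Map(\Hom(\thstar,A),L)$, the limit taken over the same category.

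Next I would verify the abelian case directly: for $A$ a finite abelian $p$-group, $\chi:L\tensor_{E^0}E^0(BA)\to\Map(\Hom(\thstar,A),L)$ is an isomorphism. Here $\Hom(\thstar,A)=\Hom(\mathbb{Z}_p^n,A)\simeq\Hom(A^*,(\thstar)^*)$, and using Proposition~\ref{E^0(B prod C_m_i)} we have $E^0(BA)\simeq E^0\lpow x_1,\ldots,x_r\rpow/([m_1](x_1),\ldots,[m_r](x_r))$. Writing $A$ as a product of cyclic groups and using that $L$ contains all the roots of each $[m_i](x)$ (that is, all of $\Theta$), the ring $L\tensor_{E^0}E^0(BA)$ becomes $L\tensor$ a product of truncated polynomial algebras whose quotients split, by the Chinese Remainder Theorem, as a product indexed exactly by the $r$-tuples of compatible roots, i.e.\ by $\Hom(\thstar,A)$; one checks the resulting idempotent decomposition is precisely what $\chi$ records, via $\chi_\alpha=\psi\circ\alpha^*$ and the defining property $\psi_A(a^*(x))=a$. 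This is essentially the computation already implicit in the construction of $\psi_A$, extended by flatness of $L$ over $E^0$.

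Then I would assemble the two limit descriptions. Both source and target of $\chi$ (after applying $L\tensor_{E^0}-$) are limits over $\mathcal{A}(G)_{(p)}$: on the source side one needs $L\tensor_{E^0}(-)$ to commute with the finite limit $\lim_{A}E^0(BA)$, which holds because $L$ is flat over $E^0$ (it is a localisation followed by an integral/algebraic extension, hence flat) and the limit is finite; on the target side the identification was made in the first step. The character map $\chi$ is natural in $G$ and compatible with the structure maps $G/B\to G/A$ of $\mathcal{A}(G)$, so it induces a map of the two limit systems which is the abelian-case isomorphism at every object. Hence it is an isomorphism on limits, which gives the result.

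The main obstacle I anticipate is the first step: pinning down precisely that $\Map(\Rep(\thstar,G),L)\simeq\lim_{A\in\mathcal{A}(G)_{(p)}}\Map(\Hom(\thstar,A),L)$, with the correct morphisms in the index category (maps of $G$-sets $G/B\to G/A$, equivalently subconjugacy data $g^{-1}Bg\subseteq A$) and the correct induced maps on the $\Map$-sets, including the Weyl-group/conjugation identifications that make the limit collapse to functions on conjugacy classes of homomorphisms. One must check that a compatible family in the limit is exactly the data of a single well-defined function on $\Rep(\thstar,G)$ — injectivity and surjectivity of the comparison map — which is a careful but ultimately formal bookkeeping argument about $G$-sets and their automorphisms. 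Everything else (flatness, the abelian computation, naturality) is routine given the results already established in the excerpt.
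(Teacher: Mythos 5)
The paper does not actually prove this statement: its ``proof'' is a one-line citation to Theorem~C of \cite{HKR}, so there is no in-text argument to compare against. Your proposal is therefore a reconstruction, and the strategy you choose --- reduce to the abelian case via the $\mathcal{A}(G)_{(p)}$-limit description of $\mathbb{Q}\tensor E^0(BG)$ (Proposition~\ref{E^*(BG) as limit over p-groups}), prove the abelian case by a Chinese-Remainder decomposition, and glue by naturality of $\chi$ --- is close in spirit to what Hopkins, Kuhn and Ravenel actually do, although they carry out the reduction by a rank count over $L$ rather than by directly matching two limit diagrams. The set-theoretic identification $\Map(\Rep(\thstar,G),L)\simeq \lim_{A\in\mathcal{A}(G)_{(p)}}\Map(\Hom(\thstar,A),L)$, which you flag as your main worry, does go through: every homomorphism $\thstar\to G$ has image a finite abelian $p$-subgroup, and compatibility of a family against all inclusions and conjugations is exactly the condition that it descend to a well-defined function on $\Rep(\thstar,G)$. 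So that step really is the bookkeeping you say it is.

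The genuine gap lies elsewhere, in the flatness of $L$ over $\mathbb{Q}\tensor E^0$. You need it so that $L\tensor_{\mathbb{Q}\tensor E^0}(-)$ commutes with the finite limit, but the justification offered --- that $L$ is ``a localisation followed by an integral/algebraic extension, hence flat'' --- is not a valid inference: integral extensions of domains are not automatically flat (for instance $k[t^2,t^3]\hookrightarrow k[t]$ is finite but not flat). Flatness, and indeed freeness, of each $L_m$ over $\mathbb{Q}\tensor E^0$ is one of the central theorems of \cite{HKR} and \cite{FSFG}, proved via Drinfeld's theory of level structures on the formal group; it is of a piece with the other fact you use implicitly, namely that $g_m$ has distinct roots in $L$, which is what makes the CRT decomposition in your abelian step work. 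So the outline is sound, but it treats as routine the input that carries most of the depth of the theorem; you should import the flatness of $L$ as a cited result rather than derive it on the fly.
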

\begin{pf} This is part of Theorem C from \cite{HKR}.\end{pf}

\section{Generalised characters and the finite general linear groups} \label{sec:HKRandReps}

In this section we apply the generalised character theory to the group $GL_d(\Fq)$, where $q=l^r$ is a power of
some prime different to $p$ and $d$ is a positive integer. Our main result is the following.

\begin{theorem}\label{Rep(thstar,G)} Let $\Phi=(\mathbb{Z}/p^\infty)^n$ and let $\Lambda$ be the subgroup of $\mathbb{Z}_p^\times$ generated by $q$. Then there
is a bijection $\Rep(\Theta^*,GL_d(\Fq))\iso (\Phi^d/\Sigma_d)^\Lambda$. Further, there there is a natural
addition structure on each of $\coprod_k\Rep(\Theta^*,GL_k(\Fq))$ and $\coprod_k (\Phi^k/\Sigma_k)^\Lambda$ and
the bijection $$\coprod_k\Rep(\Theta^*,GL_k(\Fq))\iso\coprod_k (\Phi^k/\Sigma_k)^\Lambda$$ is an isomorphism of
abelian semigroups.\end{theorem}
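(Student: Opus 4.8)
The plan is to build the bijection $\Rep(\thstar,GL_d(\Fq))\iso(\Phi^d/\Sigma_d)^\Lambda$ by passing through ordinary representation theory, and then to observe that under this identification both additive structures are just ``direct sum'', so the compatibility becomes essentially formal. First I would reinterpret the left-hand side: since $\thstar=\mathbb{Z}_p^n$ is compact and profinite, any homomorphism $\thstar\to GL_d(\Fq)$ has finite image and factors through a finite quotient $(\mathbb{Z}/p^k)^n$, so (by the remark following the definition of $\Rep(-,-)$) $\Hom(\thstar,GL_d(\Fq))$ is exactly the set of $d$-dimensional $\Fq$-representations of $\thstar$ and $\Rep(\thstar,GL_d(\Fq))$ its set of isomorphism classes. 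Since $\operatorname{char}(\Fq)$ is coprime to $p$, Maschke's theorem applies on each finite $p$-group quotient, so every such representation is semisimple and is determined up to isomorphism by the multiset of its simple summands, the $\Fq$-dimensions adding up to $d$. Thus $\Rep(\thstar,GL_d(\Fq))$ is identified with the set of size-$d$ multisets of simple $\Fq$-representations of $\thstar$, a simple of $\Fq$-dimension $e$ being counted with weight $e$.

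Next I would classify those simples. Over $\Flbar$ the simple representations of the abelian group $\thstar$ are one-dimensional, i.e. the continuous characters $\thstar\to\Flbar^\times$; each has finite $p$-power image and so factors through the $p$-torsion subgroup of $\Flbar^\times$, which we have fixed to be $\mathbb{Z}/p^\infty$, giving $\Hom_{\mathrm{cts}}(\thstar,\Flbar^\times)=\Hom_{\mathrm{cts}}(\mathbb{Z}_p^n,\mathbb{Z}/p^\infty)=(\mathbb{Z}/p^\infty)^n=\Phi$. By the standard Galois descent for commutative semisimple algebras (decompose $\Fq[(\mathbb{Z}/p^k)^n]$ into a product of finite field extensions of $\Fq$), the simple $\Fq$-representations of $\thstar$ correspond bijectively to the orbits of $\Gal(\Flbar/\Fq)$ on $\Phi$, the $\Fq$-dimension of a simple equalling the size of its orbit. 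Now $\Gal(\Flbar/\Fq)$ is topologically generated by the $q$-power Frobenius, which acts on $\mathbb{Z}/p^\infty\subseteq\Flbar^\times$, hence on $\Phi$, as multiplication by $q$; since $\Phi$ is discrete, its $\Gal(\Flbar/\Fq)$-orbits coincide with its $\Lambda$-orbits, $\Lambda=\langle q\rangle$. Combining the last two steps, a size-$d$ multiset of simple $\Fq$-representations is the same as a size-$d$ multiset of $\Lambda$-orbits in $\Phi$, equivalently a $\Lambda$-invariant size-$d$ multiset of elements of $\Phi$, i.e. an element of $(\Phi^d/\Sigma_d)^\Lambda$. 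This yields the bijection, and I would record that it is compatible with the inclusions $\Theta_m\hookrightarrow\Theta_{m+1}$ (naturality).

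For the final statement I would spell out the two semigroup operations and check they match. On the representation side, addition is induced by the block inclusion $GL_j(\Fq)\times GL_k(\Fq)\hookrightarrow GL_{j+k}(\Fq)$: from $\alpha\in\Hom(\thstar,GL_j(\Fq))$ and $\beta\in\Hom(\thstar,GL_k(\Fq))$ one forms $\alpha\oplus\beta\in\Hom(\thstar,GL_{j+k}(\Fq))$, and this descends to conjugacy classes, is associative, and is commutative (the permutation matrix swapping the two blocks conjugates $\alpha\oplus\beta$ to $\beta\oplus\alpha$). On the multiset side, addition is disjoint union of multisets, induced by $\Sigma_j\times\Sigma_k\hookrightarrow\Sigma_{j+k}$, i.e. the concatenation map $\Phi^j/\Sigma_j\times\Phi^k/\Sigma_k\to\Phi^{j+k}/\Sigma_{j+k}$, which sends $\Lambda$-invariant pairs to $\Lambda$-invariant multisets and is clearly associative and commutative. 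The compatibility is then immediate: decomposition of a semisimple representation into simple summands is additive under $\oplus$, so the multiset of simple summands of $\alpha\oplus\beta$ is the disjoint union of those of $\alpha$ and $\beta$; unwinding the identification above, the element of $(\Phi^{j+k}/\Sigma_{j+k})^\Lambda$ attached to $\alpha\oplus\beta$ is exactly the concatenation of those attached to $\alpha$ and $\beta$. Hence $\coprod_k\Rep(\thstar,GL_k(\Fq))\iso\coprod_k(\Phi^k/\Sigma_k)^\Lambda$ is an isomorphism of abelian semigroups.

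The step I expect to be the main obstacle is the Galois-descent identification of simple $\Fq$-representations with $\Gal(\Flbar/\Fq)$-orbits of characters, done carefully enough that the correspondence is natural in $\thstar$ and compatible with extension of scalars $\Flbar\otimes_{\Fq}(-)$ and with the colimit over finite quotients of $\thstar$; once that is in place, semisimplicity, the multiset bookkeeping, and additivity under direct sum are all routine.
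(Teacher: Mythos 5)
Your proposal follows essentially the same route as the paper. Both arguments reduce a continuous homomorphism $\thstar\to GL_d(\Fq)$ to an ordinary representation of a finite abelian $p$-quotient, apply Maschke's theorem to get semisimplicity, use that irreducibles over $\Flbar$ are one-dimensional characters landing in $\Phi$, identify the simple $\Fq$-representations with $\Gamma$-orbits (equivalently $\Lambda$-orbits) in $\Phi$, and then package everything as $\Lambda$-invariant size-$d$ multisets, with the semigroup compatibility coming from direct sum matching concatenation. The one place you diverge stylistically is the classification of $\Fq$-simples: you cite the Wedderburn decomposition of the commutative semisimple group algebra $\Fq[(\mathbb{Z}/p^k)^n]$ into a product of finite field extensions (Galois descent), whereas the paper proves this by hand in Lemma \ref{understanding irreducibles}, showing directly that the image of $\Fq[\thstar]$ in $\End_{\Fq}(V)$ for irreducible $V$ is a field isomorphic to $\fq{d}$ and that $V$ is one-dimensional over it, and then in Proposition \ref{Irreducibles split over Flbar} checks the splitting over $\Flbar$ explicitly via Lemma \ref{Galois extensions}. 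Your black-box citation is legitimate and shorter; the paper's version has the advantage of producing the explicit eigenvalue description of an irreducible, which is then reused in the cohomological calculations of Chapter 6 (notably Proposition \ref{alpha is surjective}). Either way the mathematical content is the same, and your handling of the semigroup structure is sound.
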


We will study two particular cases, firstly where $d<p$ and secondly where $d=p$ and find that
$(\Phi^d/\Sigma_d)^\Lambda$ is easy to understand in both cases. Further, this will give us a good understanding
of the ring $L\tensor_{E^0} E^0(BGL_p(\Fq))$.

\subsection{Representation theory}

As above, $q=l^r$ is a power of a prime different to $p$ and $d$ is a positive integer. We may write
$\overline{\mathbb{F}}_q$ for the algebraic closure $\Flbar$ and $\mathbb{F}_{q^d}$ for $\fl{rd}$.

\begin{defn} We write $\Phi=\Rep(\Theta^*,GL_1(\Flbar))=\Hom_\text{cts}(\Theta^*,\Flbar^\times)$.\end{defn}

As in Section \ref{sec:finite fields}, let $\Gamma=\langle \Frob_q\rangle$ be the subgroup of
$\Gal(\Flbar/\mathbb{F}_q)$ generated by the $r^\text{th}$ power of the Frobenius map. Since $\Gamma$ acts on
$\Flbar$ we have an induced action of $\Gamma$ on $\Phi$ given by $(\gamma.\alpha)(a)=\gamma.\alpha(a)$ (for
$\gamma\in\Gamma$, $\alpha:\Theta^*\to GL_1(\Flbar)$ and $a\in\thstar$). Note also that we have a componentwise
action of $\Gamma$ on $\Gflbar$ which induces an action of $\Gamma$ on $\Rep(\thstar,\Gflbar)$ in the obvious
way.

\begin{prop}\label{Phi is abelian p-group} Point-wise multiplication makes $\Phi$ into an
abelian $p$-group. Further, there is an isomorphism $\Phi\simeq (\mathbb{Z}/p^\infty)^n$.\end{prop}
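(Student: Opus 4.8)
The plan is to unwind the definition $\Phi = \Hom_{\mathrm{cts}}(\Theta^*, \Flbar^\times)$ and use the structure results already established. First I would recall from Section \ref{sec:finite fields} that the chosen embedding $\Flbar^\times \hookrightarrow S^1$ restricts to a group isomorphism from the $p$-torsion of $\Flbar^\times$ onto the Pr\"ufer group $\mathbb{Z}/p^\infty$. Since $\Theta^*\simeq\mathbb{Z}_p^n$ is a pro-$p$ group, any continuous homomorphism $\alpha:\Theta^*\to\Flbar^\times$ has image a finite $p$-group (it factors through a finite quotient of $\Theta^*$, which is a finite abelian $p$-group), hence lands in the $p$-torsion subgroup of $\Flbar^\times$. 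Therefore $\Phi = \Hom_{\mathrm{cts}}(\Theta^*,\, (\Flbar^\times)_{(p)}) \cong \Hom_{\mathrm{cts}}(\Theta^*, \mathbb{Z}/p^\infty)$, and pointwise multiplication of homomorphisms makes this an abelian group in which every element has $p$-power order, i.e.\ an abelian $p$-group.

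Next I would identify this group explicitly. Using $\Theta^*\simeq\mathbb{Z}_p^n$ and the fact that $\Hom$ out of a finite direct sum (or, via continuity, out of $\mathbb{Z}_p^n$) splits as a product, we get
$$\Phi \cong \Hom_{\mathrm{cts}}(\mathbb{Z}_p, \mathbb{Z}/p^\infty)^n.$$
It then remains to check that $\Hom_{\mathrm{cts}}(\mathbb{Z}_p, \mathbb{Z}/p^\infty)\cong \mathbb{Z}/p^\infty$. One clean way: a continuous homomorphism $\mathbb{Z}_p\to\mathbb{Z}/p^\infty$ is determined by the image of $1$, which can be any element of $\mathbb{Z}/p^\infty$ (since $\mathbb{Z}/p^k$ is a continuous quotient of $\mathbb{Z}_p$, any element of $p^{-k}\mathbb{Z}/\mathbb{Z}\subseteq\mathbb{Z}/p^\infty$ is hit), and conversely each such choice extends uniquely by continuity; this bijection is a group isomorphism. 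Alternatively one can invoke the observation in the excerpt that $(\mathbb{Z}/p^\infty)^* = \mathbb{Z}_p$ and feed that into Pontryagin duality, noting that $\Hom_{\mathrm{cts}}(\mathbb{Z}_p,\mathbb{Z}/p^\infty)\cong\Hom_{\mathrm{cts}}(\mathbb{Z}_p, \mathbb{Q}_p/\mathbb{Z}_p)$ via Lemma \ref{sec:p-adics}'s isomorphism $\mathbb{Z}/p^\infty\cong\mathbb{Q}_p/\mathbb{Z}_p$, and that is the Pontryagin-style dual of $\mathbb{Z}_p$ which is again $\mathbb{Q}_p/\mathbb{Z}_p\cong\mathbb{Z}/p^\infty$. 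Either route gives $\Phi\cong(\mathbb{Z}/p^\infty)^n$.

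The only genuinely careful point, rather than an obstacle, is making sure the identification $\Phi\cong\Hom_{\mathrm{cts}}(\Theta^*,(\Flbar^\times)_{(p)})$ is legitimate — i.e.\ that continuity forces the image into the $p$-torsion. This follows because $\Theta^*$ is compact and totally disconnected with the property that every open subgroup has $p$-power index, so the image of any continuous homomorphism is a finite $p$-group; since $\Flbar^\times$ carries the discrete topology (as noted in Section \ref{sec:finite fields}), continuity is automatic on the source side once the image is finite, and finiteness of the image is forced by the pro-$p$ structure of $\Theta^*$. Everything else is routine bookkeeping with Pontryagin duality and the $\Hom$--colimit/limit adjunction already cited in the excerpt.
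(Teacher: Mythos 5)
Your argument is correct and follows essentially the same route as the paper's: use continuity together with discreteness of $\Flbar^\times$ to see that every $\phi\in\Phi$ has open kernel containing $p^N\thstar$ and hence $p$-power order, reduce to $\Hom_{\text{cts}}(\mathbb{Z}_p,\mathbb{Z}/p^\infty)$ coordinate-wise, and identify that group with $\mathbb{Z}/p^\infty$ by noting that a continuous homomorphism has finite image and is determined by $f(1)$. The optional Pontryagin-duality detour you mention is extra; the paper just uses the direct identification.
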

\begin{pf} Let $\phi\in \Phi$. Then $\phi:\thstar\to \Flbar^\times$ is continuous. In particular, since
$\Flbar^\times$ is discrete, $\ker(\phi)=\phi^{-1}(1)$ is open. But the open neighbourhoods of $0$ in
$\mathbb{Z}_p$ are just the subgroups $p^m\mathbb{Z}_p$ for $m\in\mathbb{N}$. Since $\thstar\simeq
\mathbb{Z}_p^n$ it follows that $\ker(\phi)\supseteq p^N\thstar$ for some $N$. Thus
$\phi^{p^N}(a)=\phi(a)^{p^N}=\phi({p^N}a)=0$ and $\phi$ has order a power of $p$.

For the final statement we notice that
$$\Hom_\text{cts}(\mathbb{Z}_p,\Flbar^\times)=\Hom_{\text{cts}}(\mathbb{Z}_p,\Syl_p(\Flbar^\times))\simeq\Hom_{\text{cts}}(\mathbb{Z}_p,\mathbb{Z}/p^\infty).$$
Since any continuous homomorphism $f:\mathbb{Z}_p\to \mathbb{Z}/p^\infty$ will have finite image it will be
determined by $f(1)$. Thus $\Hom_{\text{cts}}(\mathbb{Z}_p,\mathbb{Z}/p^\infty)=\mathbb{Z}/p^\infty$ and the
result follows.\end{pf}

Let $K$ be a field. Then, remembering that any two representations $\rho_1,\rho_2:\thstar\to GL_d(K)$ are
isomorphic if and only if there is some $g\in GL_d(K)$ with $\rho_1=g\rho_2g^{-1}$, we see that there is an
obvious correspondence of $\Rep(\thstar,GL_d(K))$ with isomorphism classes of continuous $d$-dimensional
$K$-representations of $\thstar$. We will denote elements in the former by $[\alpha]$ where $\alpha$ is a
homomorphism from $\thstar$ to $GL_d(K)$ and elements of the latter by pairs $(V,\alpha)$ where $V$ is a
$d$-dimensional $K$-vector space and $\alpha:\thstar\to GL(V)$.

\begin{defn} Using the correspondence outlined above we define
$\Irr(\thstar,GL_d(K))$ to be the subset of $\Rep(\thstar,GL_d(K))$ corresponding to the irreducible
representations.\end{defn}

Our next step is to try to understand the $\Flbar$-representation theory of $\thstar$. For this we need the
following results. Recall that Schur's Lemma (see \cite{Serre}) tells us that any map $f:V\to W$ of irreducible
$K$-representations of a group $G$ is either zero or an isomorphism.

\begin{lem}\label{Irreps} Let $G$ be an abelian group and $K$ an algebraically closed field. Then any irreducible finite dimensional $K$-representation of $G$ is one-dimensional.\end{lem}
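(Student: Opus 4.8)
Let $G$ be an abelian group and $K$ an algebraically closed field. Then every irreducible finite-dimensional $K$-representation of $G$ is one-dimensional.

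The plan is to use Schur's Lemma in the form already quoted just above: any $G$-equivariant map between irreducible $K$-representations is either zero or an isomorphism. First I would take an irreducible finite-dimensional representation $(V,\rho)$ of $G$ and fix some $g\in G$. The key observation is that because $G$ is abelian, the linear map $\rho(g):V\to V$ commutes with $\rho(h)$ for every $h\in G$, so $\rho(g)$ is itself a morphism of $G$-representations from $V$ to $V$. Now I would invoke algebraic closure of $K$: since $V$ is finite-dimensional and nonzero, $\rho(g)$ has an eigenvalue $\lambda\in K$, and then $\rho(g)-\lambda\,\id_V$ is a $G$-equivariant endomorphism of $V$ with nontrivial kernel. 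By Schur's Lemma it cannot be an isomorphism, so it must be zero; that is, $\rho(g)=\lambda\,\id_V$ acts as a scalar.

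Since this holds for every $g\in G$, every element of $G$ acts on $V$ by a scalar. Consequently every one-dimensional subspace of $V$ is $G$-invariant, and hence is a subrepresentation. By irreducibility $V$ has no proper nonzero subrepresentations, so $V$ must itself be one-dimensional, which is the claim.

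There is no real obstacle here; the only point requiring a little care is the standard one that an irreducible representation is by convention nonzero (so that $V$ has an eigenvector at all), and that $\rho(g)-\lambda\,\id_V$ genuinely fails to be injective, which is exactly the defining property of the eigenvalue $\lambda$. Everything else is a direct application of Schur's Lemma together with the hypothesis that $K$ is algebraically closed.
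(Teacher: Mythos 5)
Your proof is correct and takes essentially the same route as the paper: pick $g\in G$, use algebraic closure to produce an eigenvalue $\lambda$ of $\rho(g)$, apply Schur's lemma to conclude $\rho(g)=\lambda\,\mathrm{id}_V$, and deduce that every subspace is a subrepresentation. You are in fact slightly more careful than the paper, which never explicitly invokes abelianness of $G$ when asserting that $\rho(g)-\lambda\,\mathrm{id}_V$ is subject to Schur's lemma; your remark that $\rho(g)$ commutes with every $\rho(h)$ and is therefore a $G$-equivariant endomorphism supplies exactly the missing justification.
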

\begin{pf} Let $(V,\rho)$ be any irreducible finite-dimensional
$K$-representation of $G$. If $g\in G$ then, since $K$ is algebraically closed, $\rho(g)$ has an eigenvalue,
$\lambda$ say. Thus $\ker(\rho(g)-\lambda.\text{id}_V)\neq\emptyset$ since it contains the eigenvectors
corresponding to $\lambda$. Hence, by Schur's lemma, we must have $\rho(g)-\lambda.\text{id}_V=0$ so that
$\rho(g)=\lambda.\text{id}_V$. Given any subspace $W$ of $V$ we then get $\rho(g)(W)\subseteq W$ so that $W$ is
a subrepresentation of $V$. But $V$ is irreducible, so that either $W=0$ or $W=V$. Hence $V$ has no non-trivial
proper subspaces and so is one-dimensional, as required.\end{pf}

\begin{lem}\label{Maschke} Let $K$ be a (discrete) field with $p$ invertible in $K$. Then every continuous finite-dimensional $K$-representation of $\thstar$ is a sum of irreducible representations.\end{lem}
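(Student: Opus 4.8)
The plan is to reduce the statement to the classical Maschke theorem for a finite group whose order is invertible in $K$.

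First I would observe that a continuous finite-dimensional $K$-representation $\rho:\thstar\to GL(V)\cong GL_d(K)$ necessarily factors through a finite quotient of $\thstar$. Since $K$ carries the discrete topology, $GL_d(K)$ is discrete, so $\ker(\rho)$ is an open subgroup of $\thstar\cong\mathbb{Z}_p^n$; as the subgroups $p^N\thstar$ ($N\in\mathbb{N}$) form a neighbourhood basis of $0$, we get $p^N\thstar\subseteq\ker(\rho)$ for some $N$, so $\rho$ factors through the finite group $G=\thstar/p^N\thstar\cong(\mathbb{Z}/p^N)^n$, which has order $p^{Nn}$. Because $p$ is invertible in $K$, so is $|G|=p^{Nn}$.

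Next I would run the standard averaging argument to show $V$ is semisimple as a $G$-representation. Given any $G$-subrepresentation $W\subseteq V$, choose a $K$-linear projection $\pi:V\to W$ and set $\tilde\pi=\frac{1}{|G|}\sum_{g\in G}g\pi g^{-1}$, which makes sense since $|G|\in K^\times$. A routine check shows $\tilde\pi$ is $G$-equivariant, is the identity on $W$, and has image $W$, hence is a $G$-equivariant projection onto $W$; therefore $V=W\oplus\ker(\tilde\pi)$ as $G$-representations. Then, by induction on $\dim_K V$: if $V\neq 0$, a nonzero subrepresentation of least dimension is irreducible, and by the previous sentence it splits off as a direct summand whose complement has strictly smaller dimension, so $V$ decomposes as a finite direct sum of irreducible $G$-representations.

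Finally, pulling this decomposition back along the surjection $\thstar\twoheadrightarrow G$ gives a decomposition of $(V,\rho)$ as a direct sum of irreducible (continuous) $\thstar$-representations, as required. There is no serious obstacle here: the content is just Maschke's theorem, and the only point needing a moment's care is the initial reduction to a finite quotient, which combines continuity of $\rho$ with the description of the open subgroups of $\mathbb{Z}_p^n$.
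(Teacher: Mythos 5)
Your proposal is correct and follows essentially the same route as the paper: reduce to a finite quotient $G=\thstar/p^N\thstar$ using continuity and openness of the kernel, then run the standard Maschke averaging to produce a $G$-equivariant projection and induct on dimension. The only cosmetic difference is that you split off a minimal-dimension (hence irreducible) subrepresentation, whereas the paper splits an arbitrary proper subrepresentation $W$ and inducts on both $W$ and its complement; this is an inessential variation.
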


\begin{pf} This is essentially Maschke's theorem. Let $(V,\rho)$ be a continuous finite-dimensional
$K$-representation of $\thstar$. Suppose $\dim V=d>1$. Note that, as in the proof of Proposition \ref{Phi is
abelian p-group}, $\ker(\rho)\supseteq p^N\thstar$ for some $N\in\mathbb{N}$ and $\rho$ factors through the
finite abelian $p$-group $G=\thstar/p^N\thstar$. If $V$ is irreducible, then we are done. Otherwise there is a
proper subrepresentation $W$ of $V$ of dimension less than $d$. Let $W'$ be any vector space complement of $W$
in $V$. Writing $\pi$ for the projection $V=W'\oplus W\twoheadrightarrow W$ we define a map $r:V\to V$ by
$$r(v) = \frac{1}{|G|}\sum_{g\in G} \rho(g)\pi (\rho(g)^{-1}(v)).$$
Then the image of $\rho$ is contained in $W$ and for any $w\in W$ we have $r(w)=w$; that is, $r$ is a projection
of $V$ onto $W$. Let $U=\ker r$ so that $V=U\oplus W$. For any $h\in G$ it is easy to check that we have
$r(\rho(h)v)=\rho(h)r(v)$ so that if $u\in U$ we have $r(\rho(h)u)= \rho(h)r(u)=0$ showing that $\rho(h)u\in U$.
Thus $U$ is also a subrepresentation of $V$ and $V=U\oplus W$ is a sum of representations of $\thstar$ of
dimension less than $d$. The result then follows by induction, noting that all 1-dimensional representations are
(trivially) irreducible.\end{pf}

\begin{cor}\label{Flbar reps of theta* split into 1-dims} Every continuous $\Flbar$-representation of $\thstar$ is a sum of 1-dimensional representations.
Moreover, this decomposition is unique up to reordering.\end{cor}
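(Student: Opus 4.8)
The statement to prove is Corollary \ref{Flbar reps of theta* split into 1-dims}: every continuous $\overline{\mathbb{F}}_l$-representation of $\Theta^*$ is a sum of one-dimensional representations, uniquely up to reordering.

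The plan is to deduce this directly from the two preceding lemmas. First I would invoke Lemma \ref{Maschke}: since $p$ is invertible in $\overline{\mathbb{F}}_l$ (because $l \neq p$), every continuous finite-dimensional $\overline{\mathbb{F}}_l$-representation of $\Theta^*$ decomposes as a direct sum of irreducible representations. Then I would apply Lemma \ref{Irreps} with $G = \Theta^*$ and $K = \overline{\mathbb{F}}_l$ (which is algebraically closed): each of those irreducible summands is one-dimensional. Composing the two gives the existence half of the statement.

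For the uniqueness half, the approach is the standard Krull–Schmidt / character-theoretic argument. A one-dimensional representation of $\Theta^*$ over $\overline{\mathbb{F}}_l$ is just a homomorphism $\chi : \Theta^* \to \overline{\mathbb{F}}_l^\times$, i.e.\ an element of $\Phi$. Given a decomposition $V \simeq \chi_1 \oplus \cdots \oplus \chi_d$, the multiset $\{\chi_1,\ldots,\chi_d\}$ can be recovered intrinsically: for each $\chi \in \Phi$, the multiplicity of $\chi$ equals $\dim_{\overline{\mathbb{F}}_l} \{v \in V \mid \rho(g)v = \chi(g)v \text{ for all } g \in \Theta^*\}$, the dimension of the $\chi$-eigenspace (simultaneous eigenspace, which makes sense since $\Theta^*$ is abelian and, as in the proof of Lemma \ref{Maschke}, the action factors through a finite quotient so all the $\rho(g)$ are simultaneously diagonalizable). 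Since this quantity is defined without reference to the decomposition, any two decompositions into one-dimensionals have the same multiset of characters, which is exactly uniqueness up to reordering.

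I do not expect any real obstacle here — this corollary is a formal consequence of the two lemmas it follows, plus the elementary observation that simultaneous eigenspaces are a decomposition invariant. The only point requiring a word of care is spelling out why the $\rho(g)$ are simultaneously diagonalizable (abelian image in a finite quotient, eigenvalues are roots of $x^{p^N}-1$ which is separable over $\overline{\mathbb{F}}_l$ since $l \neq p$), but this is already implicit in the arguments given for Lemmas \ref{Irreps} and \ref{Maschke}, so I would keep the proof short and simply cite them.
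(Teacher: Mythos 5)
Your proposal is correct and matches the paper's approach: existence is immediate from Lemmas \ref{Irreps} and \ref{Maschke}, and your eigenspace computation for uniqueness is the concrete form of what the paper states abstractly, since the $\chi$-eigenspace is canonically $\Hom_{\thstar}(\chi,V)$ and the paper's Schur-lemma argument recovers multiplicities as $\dim\Hom(V,W)$.
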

\begin{pf} The first statement is immediate from Lemmas \ref{Irreps} and \ref{Maschke}. The second is a standard
application of Schur's Lemma: any irreducible representation $W$ appears in the decomposition for $V$ with
multiplicity $\dim \Hom(V,W)$.\end{pf}

\begin{prop}\label{Rep Thstar} Let $\Sigma_d$ act on $\Phi^d$ by permuting the factors. Then the map
$\Phi^d\to \Rep(\thstar,GL_d(\Flbar))$ given by $(\phi_1,\ldots,\phi_d)\mapsto [\phi_1\oplus\ldots\oplus
\phi_d]$ induces a $\Gamma$-equivariant bijection $\Phi^d/\Sigma_d\iso \Rep(\thstar,GL_d(\Flbar))$.\end{prop}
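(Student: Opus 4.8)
The plan is to derive the proposition almost entirely from Corollary \ref{Flbar reps of theta* split into 1-dims}, which already packages the hard analytic content (semisimplicity via Maschke and one-dimensionality of irreducibles). First I would check that the assignment $(\phi_1,\ldots,\phi_d)\mapsto[\phi_1\oplus\cdots\oplus\phi_d]$ is well defined on orbits: the block-diagonal homomorphism $\phi_1\oplus\cdots\oplus\phi_d:\thstar\to GL_d(\Flbar)$ is a continuous $d$-dimensional $\Flbar$-representation of $\thstar$, and reordering the summands conjugates it by the corresponding permutation matrix (invertible over $\Flbar$), so the representation class is unchanged and the map factors through $\Phi^d/\Sigma_d$.

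For surjectivity I would take $[\alpha]\in\Rep(\thstar,GL_d(\Flbar))$, view it as the isomorphism class of a $d$-dimensional $\Flbar$-representation $(V,\alpha)$, and apply Corollary \ref{Flbar reps of theta* split into 1-dims} to write $V\simeq\phi_1\oplus\cdots\oplus\phi_d$ with each $\phi_i\in\Phi=\Hom_\text{cts}(\thstar,\Flbar^\times)$; then $[\alpha]$ is the image of the class of $(\phi_1,\ldots,\phi_d)$. For injectivity, suppose $\phi_1\oplus\cdots\oplus\phi_d$ and $\psi_1\oplus\cdots\oplus\psi_d$ are isomorphic. The uniqueness clause of Corollary \ref{Flbar reps of theta* split into 1-dims} (each $1$-dimensional summand occurs with a well-defined multiplicity, namely $\dim\Hom(V,-)$) forces the multiset $\{\phi_1,\ldots,\phi_d\}$ to coincide with $\{\psi_1,\ldots,\psi_d\}$, i.e.\ there is $\sigma\in\Sigma_d$ with $\psi_i=\phi_{\sigma(i)}$ for all $i$; hence the two tuples lie in one $\Sigma_d$-orbit. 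This gives the claimed bijection $\Phi^d/\Sigma_d\iso\Rep(\thstar,GL_d(\Flbar))$.

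It then remains to check $\Gamma$-equivariance. Fix a generator $\gamma$ of $\Gamma$. Applying $\gamma$ entry-wise to matrices is a group automorphism of $GL_d(\Flbar)$ (matrix multiplication is built from the field operations, which $\gamma$ respects), so it induces the action on $\Rep(\thstar,GL_d(\Flbar))$ via $(\gamma.\alpha)(a)=\gamma.(\alpha(a))$; since this commutes with conjugation in $GL_d(\Flbar)$ it descends to $\Rep$. On the source side $\Gamma$ acts diagonally on $\Phi^d$, and as this commutes with the $\Sigma_d$-action it descends to $\Phi^d/\Sigma_d$. Finally, applying $\gamma$ entry-wise to the block-diagonal homomorphism $\phi_1\oplus\cdots\oplus\phi_d$ again produces a block-diagonal homomorphism whose $i^\text{th}$ block is $a\mapsto\gamma.(\phi_i(a))=(\gamma.\phi_i)(a)$, so $\gamma.(\phi_1\oplus\cdots\oplus\phi_d)=(\gamma.\phi_1)\oplus\cdots\oplus(\gamma.\phi_d)$, which is precisely equivariance of the map.

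The only mildly delicate step is injectivity, and it is handled entirely by the uniqueness clause of Corollary \ref{Flbar reps of theta* split into 1-dims}; I expect no real obstacle, since the remaining verifications (well-definedness, surjectivity, and the bookkeeping for the $\Gamma$-action) are formal.
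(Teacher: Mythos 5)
Your proof is correct and follows the same route as the paper: both derive bijectivity from Corollary \ref{Flbar reps of theta* split into 1-dims} (existence from the decomposition, injectivity from its uniqueness-up-to-reordering clause) and then verify $\Gamma$-equivariance by the same direct computation on block-diagonal representations. You have simply spelled out the well-definedness and injectivity steps that the paper leaves implicit.
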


\begin{pf} Apply Corollary \ref{Flbar reps of theta* split into 1-dims} to see that the map is a bijection. For
$\gamma\in\Gamma$ we have $$\gamma.[\phi_1,\ldots,\phi_d]=[\gamma.\phi_1,\ldots,\gamma.\phi_d]\mapsto
[\gamma.\phi_1\oplus\ldots\oplus\gamma.\phi_d]=\gamma.[\phi_1\oplus\ldots\oplus\phi_d]$$ so the map is
$\Gamma$-equivariant.\end{pf}

\begin{lem}\label{understanding irreducibles} Let $V$ be an $\Fq$-representation of
$\thstar$ of dimension $d$. Then $V$ is irreducible if and only if there is a 1-dimensional $\fq{d}$-vector
space structure on $V$ and a representation $\phi:\thstar\to GL_1(\fq{d})$ with $\min \{s\in \mathbb{N}\mid
\phi^{q^s}=\phi\}=d$ such that the diagram
$$
\xymatrix{~\phantom{\text{commutes}} & \thstar \ar[r] \ar[d]_\phi & \Fq[\thstar] \ar[r] \ar[d] & \End_{\Fq}(V)\\
& \fq{d}^\times \ar[r] & \fq{d} \ar[r] & \End_{\mathbb{F}_{q^d}}(V) \ar[u]& \text{commutes.}}
$$\end{lem}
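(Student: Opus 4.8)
The plan is to analyse, for a given $d$-dimensional $\Fq$-representation $\rho:\thstar\to GL(V)$, the $\Fq$-subalgebra $A\subseteq\End_{\Fq}(V)$ generated by $\rho(\thstar)$, i.e.\ the image of the group algebra $\Fq[\thstar]$. For the forward implication, suppose $V$ is irreducible. Since $\thstar$ is abelian, $A$ is commutative, and being a subalgebra of $\End_{\Fq}(V)$ it is finite-dimensional over $\Fq$ and acts faithfully on $V$; moreover the $A$-submodules of $V$ are exactly the $\thstar$-submodules, so $V$ is a faithful simple $A$-module. For any $0\neq v\in V$ the map $A\to V$, $a\mapsto av$, has image a nonzero $A$-submodule, hence is surjective, with kernel $\ann_A(v)$; and $\ann_A(v)=\ann_A(V)$ since $V=Av$ and $A$ is commutative, so $\ann_A(v)=0$ and $A\cong V$ as $A$-modules. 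In particular $A$ is a finite field containing $\Fq$ with $\dim_{\Fq}A=\dim_{\Fq}V=d$, so $A=\fq d$, and the $A$-action equips $V$ with a one-dimensional $\fq d$-vector space structure. Composing $\rho$ with $\thstar\to A^\times=\fq d^\times$ gives $\phi$, and the required square commutes by construction, since $\rho$ is by definition $\phi$ followed by scalar multiplication through $\fq d=\End_{\fq d}(V)\hookrightarrow\End_{\Fq}(V)$.

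It remains to identify the minimal period of $\phi$, which I record as a general observation. For any homomorphism $\phi:\thstar\to\fq d^\times$ the identity $\phi^{q^s}=\phi$ says that $\phi(\thstar)$ lies in the fixed field of $\Frob_q^s$ acting on $\fq d$, which is $\fq{\gcd(s,d)}$; writing $\fq e=\Fq[\phi(\thstar)]$ (so $e\mid d$), this is equivalent to $e\mid\gcd(s,d)$, hence to $e\mid s$. Thus $\min\{s\in\mathbb N\mid\phi^{q^s}=\phi\}=e=[\Fq[\phi(\thstar)]:\Fq]$. In the situation above $\Fq[\phi(\thstar)]=A=\fq d$, so this minimum equals $d$, completing the forward direction.

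For the converse, suppose we are given a one-dimensional $\fq d$-structure on $V$ and $\phi:\thstar\to\fq d^\times$ with $\min\{s\mid\phi^{q^s}=\phi\}=d$ making the square commute. Commutativity of the square says precisely that the image $A$ of $\Fq[\thstar]$ in $\End_{\Fq}(V)$ coincides with $\Fq[\phi(\thstar)]$, viewed inside $\fq d=\End_{\fq d}(V)$ (here we use that $V$ is one-dimensional over $\fq d$). By the period computation of the previous paragraph the hypothesis forces $[\Fq[\phi(\thstar)]:\Fq]=d$, i.e.\ $A=\fq d=\End_{\fq d}(V)$. So $\thstar$ acts on the one-dimensional $\fq d$-space $V$ through the full endomorphism algebra $\End_{\fq d}(V)$, which leaves no nonzero proper $\fq d$-subspace invariant; since the $\Fq$-subspaces of $V$ stable under $\thstar$ are exactly the $\fq d$-subspaces (because $\fq d=A$ is the $\Fq$-algebra generated by $\rho(\thstar)$), $V$ is irreducible.

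The main obstacle is not any individual step but keeping the dictionary straight: one must check that the field produced in the forward direction genuinely carries the one-dimensional structure appearing in the commuting square, and that the Galois-theoretic condition $\min\{s\mid\phi^{q^s}=\phi\}=d$ translates correctly into the statement that $\phi(\thstar)$ generates $\fq d$ over $\Fq$. Each verification is routine, but the care lies in matching the intrinsic characterisation (irreducibility) to the extrinsic one (the diagram) in both directions.
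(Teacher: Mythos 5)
Your proof is correct, and while the forward direction runs along essentially the same lines as the paper's, your converse is genuinely different and arguably cleaner. In the forward direction both arguments focus on the commutative $\Fq$-subalgebra $A\subseteq\End_{\Fq}(V)$ generated by $\rho(\thstar)$; the paper shows $A$ is a field via Schur's lemma plus Cayley--Hamilton (each nonzero $\psi\in A$ is an $A$-endomorphism, hence invertible, and its inverse is a polynomial in $\psi$), whereas you show $A\cong V$ as $A$-modules and deduce that $A$, being a simple module over itself, has no proper nonzero ideals. You also isolate the tidy observation that $\min\{s\mid\phi^{q^s}=\phi\}=[\Fq[\phi(\thstar)]:\Fq]$, which you then invoke in both directions; the paper proves the period claim in the forward direction somewhat more ad hoc, by noting that if $\phi^{q^s}=\phi$ with $s<d$ then $\im(\phi)$ lands in a proper subfield, giving a proper subrepresentation. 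The real divergence is in the converse: the paper decomposes $V$ into irreducibles via Maschke's theorem, applies the forward direction to each summand, and derives a contradiction by comparing $p$-adic orders of the resulting characters $\phi_i$ against the order of $\phi$ (an argument that, as stated, relies on $v_p(q^s-1)<v_p(q^d-1)$ for $s<d$, which is cleanest when $d$ is a power of $p$, the only case needed later). Your converse avoids all of that: the period hypothesis forces $\Fq[\phi(\thstar)]=\fq{d}$, hence $A=\fq{d}$, so the $\thstar$-stable $\Fq$-subspaces of $V$ are exactly its $\fq{d}$-subspaces, of which there are only two. This is shorter, avoids Maschke, and makes the role of the period condition entirely transparent.
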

\begin{pf} Suppose $V$ is irreducible. Viewing $V$ as an $\Fq[\thstar]$-module, let $\alpha:\Fq[\thstar]\to \End_{\Fq}(V)$ be the map corresponding to
the action of $\Fq[\thstar]$ on $V$ and put $K=\im(\alpha)$. We will show that $K\simeq\fq{d}$ and that $V$ is a
1-dimensional $K$-vector space. Let $0\neq \psi\in K$ and write $\psi=\alpha(a)$ for $a\in \Fq[\thstar]$. Then,
since $\thstar$ and hence $\Fq[\thstar]$ is commutative, taking any $b\in\Fq[\thstar]$ we have
$$\psi(b.v)=(\alpha(a)\circ\alpha(b))(v)=(\alpha(ab))(v)=(\alpha(ba))(v)=\alpha(b)(\psi(v))=b.\psi(v).$$  Thus $\psi$ is an $\Fq[\thstar]$-endomorphism of $V$ and
hence, by Schur's lemma, is an automorphism. Further, by the Cayley-Hamilton theorem, $\psi$ satisfies an
equation $\psi^r + c_1 \psi^{r-1} + \ldots + c_r=0$ for some $c_1,\ldots,c_r\in \Fq$ with $c_r=\det(\psi)\in
\Fq^\times$. Then $\psi^{-1}=-\alpha((c_{r-1} + \ldots + c_1 a^{r-2} + a^{r-1})c_r^{-1})$ lies in $K$ and $K$ is
a field.

Notice that, by definition of $K$, $V$ is a $K$-vector space. Suppose $\dim_K(V)>1$ and let $U$ be a proper
$K$-subspace of $V$. Then, for $a\in \Fq[\thstar]$ we have $\alpha(a)\in K$ so $\alpha(a)(u)\in U$ for all $u\in
U$. Thus $U$ is an $\Fq[\thstar]$-module; that is, $U$ is a proper subrepresentation of $V$ contradicting the
fact that $V$ is irreducible. Hence we must have $\dim_K(V)=1$ so that $V\simeq K$. Since $\dim_{\Fq}(V)=d$ it
follows that $V\simeq K \simeq \fq{d}$, as required.

For the final statement, let $\phi$ be the composition $\thstar \to
\Fq[\thstar]\overset{\alpha}{\longrightarrow} \End_{\Fq}(V)$. Then, since $\im(\phi)\subseteq \fq{d}$, it
follows that $\phi^{q^d}=\phi$. Suppose that $\phi^{q^s}=\phi$ for some $s<d$. Then $\im(\phi)\subseteq
\fq{s}\subset \fq{d}$. But then $\fq{s}$ is a proper $\Fq$-subrepresentation of $V$, contradicting the fact that
$V$ is irreducible over $\Fq$.

Conversely, suppose we have an $\fq{d}$-representation $\phi:\thstar\to GL_1(\fq{d})=\fq{d}^\times$ satisfying
the relevant properties. Note that $\phi$ factors through a finite $p$-group $\thstar/p^N\thstar$ for some $N$,
so that $\im(\phi)\subseteq\Syl_p(\fq{d})\simeq C_{p^v}$, where $v=v_p(q^d-1)$. Further, the conditions on
$\phi$ ensure that $\phi$ has order precisely $p^v$ and that, for any $s<d$, writing $v_s=v_p(q^s-1)$ we have
$v_s<v$. Suppose that $(\fq{d},\phi)$ is not irreducible and write $\fq{d}\simeq U_1\oplus \ldots\oplus U_m$ as
a sum of irreducible $\Fq$-representations $U_i$ of dimensions $d_i$. Applying the first part of the result we
find that $U_i\simeq \fq{d_i}$ with action map $\phi_i:\thstar\to (\fq{d_i})^{\times}$. Then we find that
$\im(\phi_i)\simeq C_{p^{v_{d_i}}}$. In particular, since $v_i<v$, we find $(\phi_i)^{p^{v-1}}=1$ so that
$(\phi_1\times\ldots\times\phi_m)^{p^{v-1}}=1$. Hence we see that $\phi^{p^{v-1}}=1$, contradicting the
assumption on its order. Hence $(\fq{d},\phi)$ is irreducible.\end{pf}

We move towards an understanding of $\Rep(\thstar,GL_d(\Fq))$. By post-composing with the inclusion
$GL_d(\Fq)\rightarrowtail GL_d(\Flbar)$ we get a map $\Rep(\thstar,GL_d(\Fq))\to \Rep(\thstar,GL_d(\Flbar))$.
Since $\Fq=\Flbar^\Gamma$ we see that this map lands in the $\Gamma$-invariants. Thus, using
$\Gamma$-equivariance of the bijection $\Phi^d/\Sigma_d\iso \Rep(\thstar,GL_d(\Flbar))$, we find that we have a
map $\Rep(\thstar,GL_d(\Fq))\to (\Phi^d/\Sigma_d)^\Gamma$. We will shortly show this is bijective, but first we
need a result from field theory.

\begin{lem}\label{Galois extensions} Let $K$ be a field and $L$ be a Galois extension of
$K$ with Galois group $G$. Then there is a $\overline{K}$-vector space isomorphism $\overline{K}\tensor_K L\iso
\Map(G,\overline{K})$ given by $a\tensor b\mapsto (g\mapsto a.g(b))$. \end{lem}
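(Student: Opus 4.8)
The plan is to exhibit the stated formula as a $\overline{K}$-linear map between two finite-dimensional $\overline{K}$-vector spaces of the same dimension, and then to verify injectivity by invoking linear independence of characters.

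First I would check that the formula gives a well-defined $\overline{K}$-linear map $\Phi:\overline{K}\tensor_K L\to \Map(G,\overline{K})$. The assignment $(a,b)\mapsto (g\mapsto a\,g(b))$ is $K$-bilinear, since each $g\in G$ is a $K$-linear endomorphism of $L$ and multiplication in $\overline{K}$ is $K$-bilinear; hence it factors through $\overline{K}\tensor_K L$, and linearity in the left-hand variable shows $\Phi$ is $\overline{K}$-linear. Next I would record that both sides are free $\overline{K}$-modules of the same rank $n:=[L:K]$: on the left because $\dim_{\overline{K}}(\overline{K}\tensor_K L)=\dim_K L=n$, and on the right because $\Map(G,\overline{K})\simeq \overline{K}^{|G|}$ and $|G|=[L:K]=n$ as $L/K$ is Galois. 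It therefore suffices to show $\Phi$ is injective.

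I would then fix a $K$-basis $b_1,\dots,b_n$ of $L$, so that $1\tensor b_1,\dots,1\tensor b_n$ is an $\overline{K}$-basis of $\overline{K}\tensor_K L$ with $\Phi(1\tensor b_i)$ equal to the function $g\mapsto g(b_i)$. Relative to this basis and the standard basis of $\Map(G,\overline{K})$ indexed by $g\in G$, the matrix of $\Phi$ is $M=(g(b_i))_{g\in G,\,1\le i\le n}$, and I would argue it is invertible as follows. If $M$ were singular its rows would be $\overline{K}$-linearly dependent, giving scalars $c_g\in\overline{K}$, not all zero, with $\sum_{g\in G} c_g\,g(b_i)=0$ for every $i$; since the $b_i$ span $L$ over $K$ and each $g$ is $K$-linear, this would force $\sum_{g\in G} c_g\,g=0$ as a function $L\to\overline{K}$, contradicting the $\overline{K}$-linear independence of the pairwise distinct characters $g|_{L^\times}:L^\times\to\overline{K}^\times$. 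Hence $M$ is invertible and $\Phi$ is the desired isomorphism.

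The only non-formal ingredient is this last step, the linear independence over $\overline{K}$ of the characters $g|_{L^\times}$, which is the classical Dedekind/Artin theorem on independence of characters and may be quoted from a standard reference such as \cite{LangAlgebra}; everything else is bookkeeping with dimensions and bilinearity.
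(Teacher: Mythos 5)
Your proof is correct and rests on the same essential ingredient as the paper's: Dedekind/Artin linear independence of the distinct embeddings $g:L\to\overline{K}$ over $\overline{K}$, together with the dimension count $[L:K]=|G|$. The paper accesses this by citing \cite[Theorem 14.1]{Adamson} for the isomorphism $\overline{K}[G]\iso\Hom_K(L,\overline{K})$ and then applying $\Hom_{\overline{K}}(-,\overline{K})$, whereas you verify the stated map directly by showing the matrix $(g(b_i))$ is invertible; these are the same argument in dual form.
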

\begin{pf} This is a well known result. An application of \cite[Theorem 14.1]{Adamson} shows that the map
$\overline{K}[G]\to \Hom_{K}(L,\overline{K})$ is an isomorphism of $\overline{K}$-vector spaces and the result
follows on applying $\Hom_{\overline{K}}(-,\overline{K})$.\end{pf}

\begin{prop}\label{Irreducibles split over Flbar} Let $V$ be an irreducible $\Fq$-representation of $\thstar$ of dimension $d$. Then there
is a continuous $\Flbar$-representation $(\Flbar,\phi)$ of $\thstar$ such that $\min\{s\in\mathbb{N}\mid
\phi^{l^{rs}}=\phi\}=d$ and
$$\Flbar\tensor_{\Fq} V\simeq (\Flbar,\phi)\oplus
(\Flbar,\phi^{l^r})\oplus\ldots\oplus(\Flbar,\phi^{l^{r(d-1)}}).$$\end{prop}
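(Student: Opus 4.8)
The plan is to reduce the statement to two facts already available: the structure theorem for irreducible $\Fq$-representations of $\thstar$ (Lemma \ref{understanding irreducibles}) and the Galois-descent isomorphism (Lemma \ref{Galois extensions}). The underlying principle is that an irreducible representation over a non-algebraically-closed field becomes, after base change, the sum of the Galois orbit of an absolutely irreducible (here, one-dimensional) representation.

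First I would apply Lemma \ref{understanding irreducibles}: since $V$ is irreducible of dimension $d$, we may regard $V$ as a one-dimensional $\fq{d}$-vector space on which $a\in\thstar$ acts as multiplication by $\phi(a)$, for some homomorphism $\phi\colon\thstar\to\fq{d}^\times$ with $\min\{s\in\mathbb{N}\mid\phi^{q^s}=\phi\}=d$. Composing $\phi$ with the inclusion $\fq{d}^\times\hookrightarrow\Flbar^\times$ produces the desired representation $(\Flbar,\phi)$; it is continuous since it factors through a finite quotient of $\thstar$, and because $q=l^r$ the condition on $\phi$ is exactly $\min\{s\in\mathbb{N}\mid\phi^{l^{rs}}=\phi\}=d$. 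It then remains only to identify $\Flbar\tensor_{\Fq}V$.

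To compute this, identify $V$ with $\fq{d}$, so that the $\thstar$-action on $\Flbar\tensor_{\Fq}\fq{d}$ is $a\cdot(c\tensor v)=c\tensor\phi(a)v$, i.e.\ multiplication by the ring element $1\tensor\phi(a)\in\Flbar\tensor_{\Fq}\fq{d}$. Since $\fq{d}/\Fq$ is Galois with cyclic group $G=\Gal(\fq{d}/\Fq)=\{1,\Frob_q,\dots,\Frob_q^{d-1}\}$ generated by the $q$-power Frobenius $\Frob_q=\Frob^r$, Lemma \ref{Galois extensions} gives an $\Flbar$-algebra isomorphism $\Flbar\tensor_{\Fq}\fq{d}\iso\Map(G,\Flbar)$, $a\tensor b\mapsto(g\mapsto a\,g(b))$ (one checks this linear isomorphism is also a ring map), under which $1\tensor\phi(a)$ corresponds to the function $g\mapsto g(\phi(a))$. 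Writing $\Map(G,\Flbar)=\bigoplus_{g\in G}\Flbar e_g$ for the decomposition into the lines spanned by the indicator functions $e_g$, the element $a$ acts on $\Flbar e_g$ by the scalar $g(\phi(a))$, so $\Flbar e_g\simeq(\Flbar,g\circ\phi)$ as $\thstar$-representations. Taking $g=\Frob_q^i$ and using $(\Frob_q^i\circ\phi)(a)=\phi(a)^{q^i}=\phi(a)^{l^{ri}}$ identifies $\Flbar e_{\Frob_q^i}$ with $(\Flbar,\phi^{l^{ri}})$, and summing over $i=0,\dots,d-1$ yields the claimed decomposition \[\Flbar\tensor_{\Fq}V\simeq\bigoplus_{i=0}^{d-1}(\Flbar,\phi^{l^{ri}}).\]

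There is no genuine obstacle here: the whole argument is an assembly of the two cited lemmas. The only points needing care are the bookkeeping that tracks the $\thstar$-action through the isomorphism of Lemma \ref{Galois extensions} (so that multiplication by $1\tensor\phi(a)$ really does become multiplication by $g\mapsto g(\phi(a))$), and the observation that $\Frob_q=\Frob^r$ is the $q$-power Frobenius generating $G$, so that the conjugates picked out are precisely $\phi,\phi^{l^r},\dots,\phi^{l^{r(d-1)}}$.
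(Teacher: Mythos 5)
Your proof is correct and follows the same route as the paper: reduce to $V=\fq{d}$ via Lemma~\ref{understanding irreducibles}, apply the Galois-descent isomorphism of Lemma~\ref{Galois extensions}, and track the $\thstar$-action through to obtain the decomposition into the Galois conjugates $\phi,\phi^{l^r},\dots,\phi^{l^{r(d-1)}}$. The only cosmetic difference is that you emphasize the ring-algebra structure of $\Map(G,\Flbar)$ and its idempotents $e_g$, whereas the paper simply writes the explicit map $a\tensor b\mapsto(ab,ab^{l^r},\dots,ab^{l^{r(d-1)}})$ and checks equivariance directly.
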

\begin{pf} Using Lemma \ref{understanding irreducibles} we can assume that $V=\fl{rd}$ and we get an
$\Flbar$-representation $\phi:\thstar\to GL_1(\fl{rd})\rightarrowtail GL_1(\Flbar)$ with the required properties
on its order. Next we apply Lemma \ref{Galois extensions} to the extension $\fl{rd}/\Fq$. Note that
$\Gal(\fl{rd}/\Fq)=\langle \Frob^r\rangle$ is cyclic of order $d$ generated by the $r^\text{th}$ power of the
Frobenius map so that $\Map(\Gal(\fl{rd}/\Fq),\Flbar)$ is just $\Flbar^d$. Thus we have an isomorphism of
$\Flbar$-vector spaces $\Flbar\tensor_{\Fq} \fl{rd} \iso \Flbar^d$ given by $a\tensor b\mapsto
(ab,ab^{l^r},\ldots,ab^{l^{r(d-1)}}).$ It is clear that, giving the right-hand side a $\thstar$-action by
$x.(a_1,\ldots,a_d)=(a_1\phi(x),a_2\phi(x)^{l^r},\ldots,a_d\phi(x)^{l^{r(d-1)}})$, the isomorphism is
$\thstar$-equivariant and thus is the claimed isomorphism of $\Flbar$-representations.\end{pf}

We will call an element of $(\Phi^d/\Sigma_d)^\Gamma$ \emph{irreducible} if it is of the form
$[\phi,\phi^{l^r},\ldots,\phi^{l^{r(d-1)}}]$ for some $\phi\in\Phi$ with $\min\{s\in\mathbb{N}\mid
\phi^{l^{rs}}=\phi\}=d$. We write $\Irr_d(\Phi)$ for the set of irreducible elements of
$(\Phi^d/\Sigma_d)^\Gamma$.

\begin{prop} The binary operation $+:(\Phi^s/\Sigma_s)^\Gamma\times(\Phi^t/\Sigma_t)^\Gamma\to (\Phi^{s+t}/\Sigma_{s+t})^\Gamma$
given by $[\phi_1,\ldots,\phi_s]+[\phi_1',\ldots,\phi_t']=[\phi_1,\ldots,\phi_s,\phi_1',\ldots,\phi_t']$ makes
$\coprod_{d>0}(\Phi^d/\Sigma_d)^\Gamma$ into an abelian semigroup freely generated by the irreducible
elements.\end{prop}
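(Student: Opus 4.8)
The plan is to verify the semigroup axioms, then identify $\coprod_{d>0}(\Phi^d/\Sigma_d)^\Gamma$ explicitly as the free abelian semigroup on the irreducible elements by decomposing each $\Gamma$-invariant multiset into $\Gamma$-orbits.

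First I would check that $+$ is well defined and gives an abelian semigroup. An element of $\Phi^d/\Sigma_d$ is an unordered $d$-tuple, i.e.\ a multiset of size $d$ in $\Phi$, and concatenating a multiset of size $s$ with one of size $t$ plainly does not depend on the chosen orderings, so the operation descends to $\Phi^{s+t}/\Sigma_{s+t}$. Since $\Gamma$ acts on $\Phi^{s+t}$ diagonally, the concatenation of two $\Gamma$-invariant multisets is again $\Gamma$-invariant, so $+$ restricts to the invariants. Associativity is immediate, as all bracketings produce the multiset union of the three tuples, and commutativity holds because $[\phi_1,\dots,\phi_s,\phi_1',\dots,\phi_t']$ and $[\phi_1',\dots,\phi_t',\phi_1,\dots,\phi_s]$ differ by the element of $\Sigma_{s+t}$ interchanging the two blocks. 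Thus $\coprod_{d>0}(\Phi^d/\Sigma_d)^\Gamma$ is an abelian semigroup.

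Next comes the main point, orbit decomposition. Recall $\Gamma=\langle\Frob_q\rangle$ acts on $\Phi$ with $\Frob_q$ sending $\phi$ to its pointwise $q$-th power (here $q=l^r$). By the continuity argument in the proof of Proposition~\ref{Phi is abelian p-group}, every $\phi\in\Phi$ has finite image, hence lies in a finite subfield and satisfies $\phi^{q^s}=\phi$ for some $s$; let $e(\phi)=\min\{s\geq 1\mid \phi^{q^s}=\phi\}$, so the $\Gamma$-orbit of $\phi$ is $O_\phi=\{\phi,\phi^{q},\dots,\phi^{q^{e(\phi)-1}}\}$, a set of exactly $e(\phi)$ elements, and $O_\phi$ viewed as a multiset of size $e(\phi)$ is precisely an element of $\Irr_{e(\phi)}(\Phi)$. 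Now take $x=[\phi_1,\dots,\phi_d]\in(\Phi^d/\Sigma_d)^\Gamma$. Because the multiset is $\Gamma$-invariant, if $\psi$ occurs in $x$ with multiplicity $k$ then applying powers of $\Frob_q$ shows every element of $O_\psi$ occurs with the same multiplicity $k$. Hence $x$ is the $+$-sum, counted with these common multiplicities, of the irreducibles $O_\psi$ over the distinct orbits meeting $x$; this shows the irreducible elements generate.

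Finally I would establish freeness, i.e.\ uniqueness of such an expression. Suppose $x=\sum_i n_i I_i$ with the $I_i$ distinct irreducibles and each $n_i\geq 1$. Each $I_i$ is the multiset of a single $\Gamma$-orbit, and distinct irreducibles correspond to disjoint orbits, so for any $\psi$ occurring in $x$ the multiplicity of $\psi$ in $x$ equals the unique $n_i$ with $\psi\in I_i$. Therefore the $I_i$ are exactly the $\Gamma$-orbits meeting $x$ and the $n_i$ are the common multiplicities, so the decomposition coincides with the one built above and is unique up to reordering. Since the free abelian semigroup on a set $S$ is exactly the set of finite non-empty multisets over $S$ under union, this identifies $\coprod_{d>0}(\Phi^d/\Sigma_d)^\Gamma$ with the free abelian semigroup on $\coprod_{d>0}\Irr_d(\Phi)$. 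The only step needing genuine care is the bookkeeping in "$\Gamma$-invariance forces equal multiplicities along an orbit'' together with a precise statement of what "freely generated'' means; none of it is deep, so I would keep the write-up short.
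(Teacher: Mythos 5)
Your proof is correct and follows essentially the same route as the paper's: both arguments reduce to the observation that a $\Gamma$-invariant multiset on $\Phi$ must be a disjoint union of $\Gamma$-orbits (counted with multiplicity), each orbit being an irreducible. The paper peels off one orbit at a time and leaves uniqueness largely to the reader; you make the multiplicity bookkeeping and the uniqueness argument explicit, which is a harmless elaboration rather than a different method.
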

\begin{pf} If $[\phi_1,\ldots,\phi_d]\in\Phi^d/\Sigma_d$ is $\Gamma$-invariant then
$[\phi_1,\ldots,\phi_d]=[\Frob^r.\phi_1,\ldots,\Frob^r.\phi_d]$ so that for each $i$ there is a $j$ with
$\phi_j=\phi_i^{l^r}$. Thus $\phi_1,\phi_1^{l^r},\phi_1^{l^{2r}},\ldots$ all appear in the expression
$[\phi_1,\ldots,\phi_d]$ and so, by finiteness, we must at some point have $\phi_1^{l^{rs}}=\phi_1$. Thus
$[\phi_1,\ldots,\phi_1^{l^{r(s-1)}}]$ is an irreducible element of $(\Phi^{s}/\Sigma_s)^\Gamma$ which appears as
a summand in $[\phi_1,\ldots,\phi_d]$. Continuing in this way it is easy to see that $[\phi_1,\ldots,\phi_d]$
decomposes as a sum of irreducibles in a unique way.\end{pf}

\begin{prop}\label{Rep thstar like phi} The map
$\alpha:\Rep(\Theta^*,GL_d(\Fq))\to (\Phi^d/\Sigma_d)^\Gamma$ is bijective.\end{prop}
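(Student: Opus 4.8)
The plan is to prove the statement for all $d$ simultaneously by passing to the disjoint union $\coprod_{d>0}\Rep(\Theta^*,GL_d(\Fq))$ and exploiting that, once the appropriate additive structures are in place, both sides are free abelian semigroups. First I would record that $\coprod_{d>0}\Rep(\Theta^*,GL_d(\Fq))$, under $[V]+[W]=[V\oplus W]$, is a free abelian semigroup on $\coprod_{d>0}\Irr(\Theta^*,GL_d(\Fq))$: the existence of a decomposition into irreducibles is Maschke's theorem (Lemma \ref{Maschke}), since $p$ is invertible in $\Fq$ and $\alpha$ factors through a finite $p$-group, and uniqueness of the multiset of irreducible summands is the standard Krull--Schmidt/Wedderburn argument for the semisimple algebra $\Fq[\thstar/p^N\thstar]$ (the multiplicity of an irreducible $W$ in $V$ is pinned down by $\dim_{\End_{\Fq[\thstar]}(W)}\Hom_{\Fq[\thstar]}(V,W)$). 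The preceding proposition gives the analogous statement for $\coprod_{d>0}(\Phi^d/\Sigma_d)^\Gamma$, with free generators $\coprod_{d>0}\Irr_d(\Phi)$.

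Next I would check that the maps $\alpha$ assemble into a homomorphism of abelian semigroups $\coprod_d\Rep(\Theta^*,GL_d(\Fq))\to\coprod_d(\Phi^d/\Sigma_d)^\Gamma$ respecting the grading by dimension. This holds because extension of scalars $V\mapsto\Flbar\tensor_{\Fq}V$ commutes with direct sums and, under the $\Gamma$-equivariant bijection $\Phi^d/\Sigma_d\iso\Rep(\Theta^*,GL_d(\Flbar))$ of Proposition \ref{Rep Thstar}, direct sum of representations corresponds to concatenation of tuples; and the image lies in the $\Gamma$-invariants because $\Fq=\Flbar^\Gamma$.

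The heart of the proof is then to show that $\alpha$ carries the set of free generators bijectively onto the set of free generators, i.e.\ restricts to a bijection $\Irr(\Theta^*,GL_d(\Fq))\iso\Irr_d(\Phi)$ for each $d$. That $\alpha$ sends an irreducible $\Fq$-representation of dimension $d$ into $\Irr_d(\Phi)$ is precisely Proposition \ref{Irreducibles split over Flbar}. Surjectivity onto $\Irr_d(\Phi)$ is the converse direction of Lemma \ref{understanding irreducibles}: given $[\phi,\phi^{l^r},\ldots,\phi^{l^{r(d-1)}}]\in\Irr_d(\Phi)$ with $\min\{s\mid\phi^{l^{rs}}=\phi\}=d$, that lemma produces an irreducible $\Fq$-representation on $\fq{d}$, whose image under $\alpha$ is the given orbit by Proposition \ref{Irreducibles split over Flbar}. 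For injectivity on irreducibles, suppose irreducible $\Fq$-representations $V,V'$ of dimension $d$ satisfy $\alpha([V])=\alpha([V'])$; by Lemma \ref{understanding irreducibles} we may take both to be $\fq{d}$ with action maps $\phi,\phi'$, and equality of $\Flbar$-decompositions forces $\phi'=\phi^{l^{rk}}$ for some $k$. The $l^{rk}$-power map on $\fq{d}$ is $\Fq$-linear, being the identity on $\Fq$, and intertwines $\phi$ with $\phi'$, so $[V]=[V']$.

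Finally, a grading-preserving homomorphism of free abelian semigroups that restricts to a bijection of the sets of free generators is itself a bijection; restricting to the degree-$d$ part yields that $\alpha\colon\Rep(\Theta^*,GL_d(\Fq))\to(\Phi^d/\Sigma_d)^\Gamma$ is bijective. I expect the main obstacle to be the injectivity-on-irreducibles step: making precise that two irreducible $\Fq$-structures producing $\Gamma$-conjugate one-dimensional $\fq{d}$-representations are genuinely conjugate inside $GL_d(\Fq)$ via a Frobenius twist. (Alternatively, general injectivity of $\alpha$ could be deduced from a Noether--Deuring descent argument for $\Fq[\thstar]$-modules, but the generator-level statement above is self-contained given the cited results.) Everything else is bookkeeping with the two decomposition results.
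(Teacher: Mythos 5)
Your proposal is correct and follows essentially the same route as the paper: reduce to the graded semigroup level, show $\alpha$ restricts to a bijection $\Irr(\thstar,GL_d(\Fq))\iso\Irr_d(\Phi)$ using Proposition \ref{Irreducibles split over Flbar} and Lemma \ref{understanding irreducibles}, handle injectivity on irreducibles by realising both as $(\fq{d},\phi)$, $(\fq{d},\phi^{l^{rs}})$ and noting the Frobenius-power map is an $\Fq$-linear intertwiner, then deduce the full bijection from Maschke's theorem and freeness of the target semigroup. The only cosmetic difference is that you spell out the free-abelian-semigroup bookkeeping (and mention Krull--Schmidt uniqueness, which is not actually needed once one has existence of a decomposition and freeness of the target), whereas the paper states the same conclusion more tersely.
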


\begin{pf} By Proposition \ref{Irreducibles split over Flbar}, $\alpha$ restricts to a map
$\Irr(\thstar,GL_d(\Fq))\to\Irr_d(\Phi)$; we will show that this is a bijection. It then follows,
using Lemma \ref{Maschke}, that $\coprod_{d>0}\Rep(\thstar,GL_d(\Fq))$ bijects with
$\coprod_{d>0}(\Phi^d/\Sigma_d)^\Gamma$ and from that we conclude that $\alpha$ itself is bijective.

Take any irreducibles $V,W\in \Irr(\thstar,GL_d(\Fq))$ with $\alpha(V)=\alpha(W)$. Then, as before, we can
assume that $V=(\fl{rd},\phi)$ and $W=(\fl{rd},\phi')$ for some $\phi,\phi':\thstar\to \fl{rd}^\times$. Viewing
$\phi$ and $\phi'$ as maps $\thstar\to \fl{rd}^\times\to\Flbar^\times$ we have
$\alpha(V)=[\phi,\phi^{l^r},\ldots,\phi^{l^{r(d-1)}}]$ and
$\alpha(W)=[\phi',(\phi')^{l^r},\ldots,(\phi')^{l^{r(d-1)}}]$. Thus we must have $\phi'=\phi^{l^{rs}}$ for some
$0\leq s<d$ and it follows that the $\Fq$-linear isomorphism $V\iso W$, $a\mapsto a^{l^{rs}}$ gives an
isomorphism of $\Fq$-representations $V\simeq W$; that is $V=W$ in $\Irr(\thstar,GL_d(\Fq))$.

For surjectivity, given $[\phi,\phi^{l^r},\ldots,\phi^{l^{r(d-1)}}]\in \text{Irr}_d(\Phi)$ we have
$\phi^{l^{rd}}=\phi$ so that the image of $\phi$ is contained in $\fl{rd}\subseteq\Flbar$. Using Proposition
\ref{understanding irreducibles} the irreducible $d$-dimensional $\Fq$-representation $V=(\fl{rd},\phi)$
satisfies $\alpha(V)=[\phi,\phi^{l^r},\ldots,\phi^{l^{r(d-1)}}]$.\end{pf}

Hence, to get a good understanding of $\Rep(\thstar,GL_d(\Fq))$ it suffices to study the set
$(\Phi^d/\Sigma_d)^\Gamma$.

\begin{lem} The set $\Irr_d(\Phi)$ bijects with $\Gamma$-orbits of size $d$ on $\Phi$.\end{lem}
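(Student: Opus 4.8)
$\Irr_d(\Phi)$ is the set of elements of $(\Phi^d/\Sigma_d)^\Gamma$ of the form $[\phi, \phi^{l^r}, \ldots, \phi^{l^{r(d-1)}}]$ with $\min\{s \in \mathbb{N} \mid \phi^{l^{rs}} = \phi\} = d$, and I must show this set bijects with the set of $\Gamma$-orbits of size $d$ on $\Phi$.

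Let me think. $\Gamma = \langle \mathrm{Frob}_q \rangle$ acts on $\Phi$, where $\mathrm{Frob}_q = \mathrm{Frob}^r$ is the $r$-th power of the Frobenius (recall $q = l^r$). The action on $\phi \in \Phi = \mathrm{Hom}_{\mathrm{cts}}(\Theta^*, \bar{\mathbb{F}}_l^\times)$ is $(\gamma.\phi)(a) = \gamma.\phi(a)$, so $\mathrm{Frob}_q.\phi$ is the map $a \mapsto \phi(a)^{l^r}$, i.e. $\mathrm{Frob}_q.\phi = \phi^{l^r}$ in the multiplicative notation used in the excerpt.

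So a $\Gamma$-orbit of $\phi$ is $\{\phi, \phi^{l^r}, \phi^{l^{2r}}, \ldots\}$, and this orbit has size $d$ exactly when $\min\{s \in \mathbb{N} \mid \phi^{l^{rs}} = \phi\} = d$ — which is precisely the condition in the definition of irreducible. And the element $[\phi, \phi^{l^r}, \ldots, \phi^{l^{r(d-1)}}] \in \Phi^d/\Sigma_d$ is exactly the orbit written out as an unordered $d$-tuple (with the understanding that since the orbit has size $d$, the $d$ entries listed are distinct, so this is an unordered tuple with no repetition, i.e. essentially a $d$-element subset of $\Phi$).

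**The proof plan.**

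First I would set up the correspondence explicitly. Given a $\Gamma$-orbit $O \subseteq \Phi$ of size $d$, pick any $\phi \in O$; then $O = \{\phi, \phi^{l^r}, \ldots, \phi^{l^{r(d-1)}}\}$ and these $d$ elements are distinct, so $[\phi, \phi^{l^r}, \ldots, \phi^{l^{r(d-1)}}]$ is a well-defined element of $\Phi^d/\Sigma_d$ independent of the choice of $\phi \in O$ (a different choice $\phi^{l^{rj}}$ just cyclically permutes the list). By the size condition and the remark above, $\min\{s \mid \phi^{l^{rs}} = \phi\} = d$, so this element lies in $\Irr_d(\Phi)$. Conversely, given an element of $\Irr_d(\Phi)$, say $[\phi, \phi^{l^r}, \ldots, \phi^{l^{r(d-1)}}]$ with the minimality condition, the underlying set $\{\phi, \phi^{l^r}, \ldots, \phi^{l^{r(d-1)}}\}$ is a $\Gamma$-orbit (it is $\mathrm{Frob}_q$-stable, and transitivity is clear) and has size exactly $d$ by minimality.

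Then I would check these two assignments are mutually inverse. Going orbit $\to$ element $\to$ orbit just recovers the underlying set of the $d$-tuple, which is the original orbit. Going element $\to$ orbit $\to$ element: starting from $[\phi, \phi^{l^r}, \ldots, \phi^{l^{r(d-1)}}]$, we extract the orbit of $\phi$, then re-list it as $[\phi, \phi^{l^r}, \ldots, \phi^{l^{r(d-1)}}]$, recovering the original element in $\Phi^d/\Sigma_d$ (the $\Sigma_d$-quotient makes the ordering irrelevant). One small point to verify carefully: that every element of $\Irr_d(\Phi)$, when presented in the form in the definition, genuinely has all $d$ listed entries distinct — but this is immediate from the minimality of $d$, since $\phi^{l^{ri}} = \phi^{l^{rj}}$ for $0 \le i < j < d$ would give $\phi^{l^{r(j-i)}} = \phi$ with $0 < j - i < d$, contradicting minimality.

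I do not expect any serious obstacle here; the statement is essentially unwinding the definitions, and the only thing requiring a moment's care is the bookkeeping between "unordered $d$-tuple in $\Phi^d/\Sigma_d$" and "$d$-element subset of $\Phi$", which works precisely because the minimality condition forces the entries to be distinct. The identification $\mathrm{Frob}_q.\phi = \phi^{l^r}$ is the one conceptual input, and it is clear from the definition of the $\Gamma$-action on $\Phi$.
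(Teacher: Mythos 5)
Your proposal is correct and takes essentially the same approach as the paper, which proves the lemma in one line by observing that both sets consist of the same unordered $d$-tuples $(\phi,\phi^{q},\ldots,\phi^{q^{d-1}})$; you have simply spelled out the identification $\Frob_q.\phi=\phi^{l^r}=\phi^q$ and the distinctness/well-definedness bookkeeping in more detail.
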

\begin{pf} Each of these consists of all unordered $d$-tuples $(\phi,\phi^{q},\ldots,\phi^{q^{d-1}})$ with
$\phi^{q^d}=\phi$.\end{pf}

To take our analysis further, we will make the simplifying assumption that $v_p(q-1)=v>0$.

\begin{prop}\label{Irr_d(Phi) must be a power of p} Suppose that $v_p(q-1)=v>0$. Then
$$\Irr_d(\Phi)=\left\{\begin{array}{ll} \phantom{(}\Phi(p^v) & \text{if $d=1$}\\(\Phi(p^{v+k})\setminus\Phi(p^{v+k-1}))/\Gamma & \text{if $d=p^k$ for some $k>0$}\\
\phantom{(}0 & \text{otherwise}.\end{array}\right.$$\end{prop}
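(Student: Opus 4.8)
The plan is to reduce the claim to a computation of the sizes of the $\Gamma$-orbits on $\Phi$ and then feed in the $p$-divisibility results of Chapter \ref{ch:prelims}. By the lemma immediately preceding the statement, $\Irr_d(\Phi)$ is in bijection with the set of $\Gamma$-orbits on $\Phi$ of cardinality exactly $d$, the orbit of $\phi$ being $\{\phi,\phi^q,\phi^{q^2},\ldots\}$ (recall $\Frob_q$ is the $q$-th power map on $\Flbar$, hence acts on $\Phi$ by $\phi\mapsto\phi^q$). So the whole problem is to determine, for each $\phi\in\Phi$, the number $d(\phi)=\min\{s\geq1\mid\phi^{q^s}=\phi\}$.

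First I would fix an isomorphism $\Phi\simeq(\mathbb{Z}/p^\infty)^n$ as in Proposition \ref{Phi is abelian p-group}; under the compatible identification of the $p$-torsion of $\Flbar^\times$ with $\mathbb{Z}/p^\infty$ from Section \ref{sec:finite fields}, and writing $\Phi$ additively, the action of $\Frob_q$ is multiplication by $q$. Hence, if $\phi$ has order $p^j$ (the least power of $p$ annihilating $\phi$, equivalently $\phi\in\Phi(p^j)\setminus\Phi(p^{j-1})$), then $\phi^{q^s}=\phi$ if and only if $(q^s-1)\phi=0$, i.e.\ $p^j\mid q^s-1$, i.e.\ $v_p(q^s-1)\geq j$. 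Now I would use the hypothesis $v_p(q-1)=v>0$ together with Lemma \ref{v_p(k^s-1) for v_p(k-1)>0}, which gives $v_p(q^s-1)=v+v_p(s)$ for all $s\geq1$; thus $\phi^{q^s}=\phi$ iff $v_p(s)\geq j-v$, so $d(\phi)=1$ when $j\leq v$ and $d(\phi)=p^{\,j-v}$ when $j>v$.

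The three cases then fall out. Every orbit has $p$-power size, so $\Irr_d(\Phi)=\emptyset$ unless $d$ is a power of $p$. For $d=1$ the size-one orbits are exactly the fixed points, i.e.\ the $\phi$ with $j\leq v$, which are precisely the elements of $\Phi(p^v)$; so $\Irr_1(\Phi)=\Phi(p^v)$. For $d=p^k$ with $k>0$, we have $d(\phi)=p^k$ iff $j-v=k$, i.e.\ $\phi$ has order exactly $p^{v+k}$, i.e.\ $\phi\in\Phi(p^{v+k})\setminus\Phi(p^{v+k-1})$; the size-$p^k$ orbits are thus the $\Gamma$-orbits lying inside this set, which gives $\Irr_{p^k}(\Phi)=(\Phi(p^{v+k})\setminus\Phi(p^{v+k-1}))/\Gamma$, as claimed.

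There is no serious obstacle here; the one point needing care is the meaning of \emph{order} for an element of $(\mathbb{Z}/p^\infty)^n$ --- it must be the exponent, the least $p^j$ with $p^j\phi=0$, which is the maximum of the orders of the $n$ coordinates --- so that the condition $(q^s-1)\phi=0$ is governed by this single integer $j$ and the sets $\Phi(p^m)=\{\phi\in\Phi\mid p^m\phi=0\}$ are the correct objects throughout. One should also confirm that $\Frob_q$ really does act as multiplication by $q$ on $\Phi$, which is where the compatibility of the chosen embedding $\Flbar^\times\hookrightarrow S^1$ with the Pr\"ufer-group structure of Section \ref{sec:finite fields} enters; the rest is bookkeeping.
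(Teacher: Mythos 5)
Your proof is correct and follows essentially the same route as the paper's: reduce to classifying $\Gamma$-orbit sizes on $\Phi$ via the preceding lemma, identify the action of $\Frob_q$ as the $q$-th power map, and use $v_p(q^s-1)=v+v_p(s)$ (your Lemma \ref{v_p(k^s-1) for v_p(k-1)>0}, which is the $a=1$ case of the paper's Proposition \ref{v_p(k^s-1) for general k}) to conclude that the orbit size of an element of exponent $p^j$ is $1$ if $j\leq v$ and $p^{j-v}$ otherwise. The only cosmetic difference is that you index by the exponent $j$ of $\phi$ and compute the orbit size as a function of $j$, whereas the paper fixes $d$ and characterizes which $\phi$ have orbit size $d$; these are two views of the same calculation, and your version is if anything slightly more explicit about why only $p$-power sizes occur.
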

\begin{pf} The case $d=1$ is clear. For $d>1$, using the fact that $\Phi$ is a $p$-group we see that $\phi\in\Phi$ generates a $\Gamma$-orbit of
size $d$ if and only if it has order $p^{v_p(q^d-1)}$ and $v_p(q^s-1)<v_p(q^d-1)$ for all $s<d$. By Proposition
\ref{v_p(k^s-1) for general k}, this latter requirement is equivalent to $v_p(s)<v_p(d)$ for all $s<d$ which is
satisfied precisely when $d$ is a power of $p$.\end{pf}

We look at two easy to understand cases.

\begin{prop} Suppose that $v_p(q-1)=v>0$ and $d<p$. Then $$(\Phi^d/\Sigma_d)^\Gamma=
\Irr_1(\Phi)^d/\Sigma_d=\Phi(p^v)^d/\Sigma_d.$$\end{prop}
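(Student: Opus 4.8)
The plan is to deduce the statement from the semigroup structure on $\coprod_{d>0}(\Phi^d/\Sigma_d)^\Gamma$ together with the classification of irreducible elements established under the hypothesis $v_p(q-1)=v>0$.

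First I would recall the proposition preceding Proposition \ref{Rep thstar like phi}, which says that $\coprod_{d>0}(\Phi^d/\Sigma_d)^\Gamma$ is the free abelian semigroup on the set $\coprod_{d'>0}\Irr_{d'}(\Phi)$, where an element of $\Irr_{d'}(\Phi)$ is regarded as carrying ``dimension'' $d'$ (its image lies in $(\Phi^{d'}/\Sigma_{d'})^\Gamma$). Consequently every $\xi\in(\Phi^d/\Sigma_d)^\Gamma$ is uniquely a sum $\xi=\xi_1+\cdots+\xi_m$ with $\xi_i\in\Irr_{d_i}(\Phi)$ and $d_1+\cdots+d_m=d$.

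Next I would invoke Proposition \ref{Irr_d(Phi) must be a power of p}: since $v_p(q-1)=v>0$, $\Irr_{d'}(\Phi)=\emptyset$ unless $d'=1$ or $d'$ is a positive power of $p$. Because $d<p$, each summand dimension satisfies $1\le d_i\le d<p$, so no $d_i$ can be a positive power of $p$; hence $d_i=1$ for all $i$ and $m=d$. Thus each $\xi_i$ lies in $\Irr_1(\Phi)$, which by the $d=1$ case of the same proposition equals $\Phi(p^v)$. This gives the inclusion $(\Phi^d/\Sigma_d)^\Gamma\subseteq\Irr_1(\Phi)^d/\Sigma_d=\Phi(p^v)^d/\Sigma_d$ inside $\Phi^d/\Sigma_d$. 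For the reverse inclusion, note that $\Gamma$ acts on $\Phi$ by $\phi\mapsto\phi^q$, so any $\phi\in\Phi(p^v)$ is $\Gamma$-fixed (as $\phi^{q-1}=1$ because $p^v\mid q-1$); hence any unordered $d$-tuple of elements of $\Phi(p^v)$ is $\Gamma$-invariant coordinatewise. Combining the two inclusions yields $(\Phi^d/\Sigma_d)^\Gamma=\Irr_1(\Phi)^d/\Sigma_d=\Phi(p^v)^d/\Sigma_d$.

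I do not expect a serious obstacle; the only care needed is bookkeeping of the dimension grading on the free semigroup and observing that the hypothesis $d<p$ is precisely what excludes all non-trivial irreducible summands. (One could alternatively argue directly that $\Gamma$ permutes the multiset underlying $[\phi_1,\dots,\phi_d]$, that its orbit sizes must each be $1$ or a power of $p$ by Proposition \ref{v_p(k^s-1) for general k}, and that $d<p$ forces every orbit to be a singleton, but routing through the semigroup statement is cleanest.)
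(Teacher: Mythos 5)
Your proof is correct and follows essentially the same route as the paper: decompose each element of $(\Phi^d/\Sigma_d)^\Gamma$ into irreducibles and use Proposition \ref{Irr_d(Phi) must be a power of p} together with $d<p$ to force every summand into $\Irr_1(\Phi)=\Phi(p^v)$. You additionally spell out the reverse inclusion (that $\Phi(p^v)^d/\Sigma_d$ is indeed $\Gamma$-fixed, since $p^v\mid q-1$ makes each $\phi\in\Phi(p^v)$ $\Gamma$-invariant), which the paper leaves implicit; this is a welcome bit of care but not a different argument.
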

\begin{pf} Since every element of the left-hand side is a sum or irreducibles and $d<p$ it follows from Proposition \ref{Irr_d(Phi) must be a power of p} that all
these irreducibles must be in $\Irr_1(\Phi)$.\end{pf}

\begin{rem}\label{Representations for d<p} Suppose that $v_p(q-1)=v>0$ and $d<p$. As $\Phi(p^v)$ consists of all
$\phi\in\Phi$ for which $\phi^{p^v}=1$, any such $\phi$ therefore satisfies $\phi^q=\phi$ and thus has image
contained in $\Fq$. Hence $\Rep(\thstar,GL_1(\Fq))\to \Phi(p^v)$ is a bijection and the diagram
$$
\xymatrix{ \Rep(\thstar,GL_d(\Fq)) \ar[r]^-\sim & (\Phi^d/\Sigma_d)^\Gamma\\
\Rep(\thstar,GL_1(\Fq))^d/\Sigma_d \ar[u]  \ar[r]^-\sim & \Phi(p^v)^d/\Sigma_d \ar[u]_-\wr}
$$
shows that $\Rep(\thstar,GL_d(\Fq))=\Rep(\thstar,GL_1(\Fq))^d/\Sigma_d$; that is, any representation
$\rho:\thstar\to GL_d(\Fq)$ is diagonalisable.\end{rem}

\begin{prop}\label{(Phi^p/Sigma_p)^Gamma)} Suppose that $v_p(q-1)=v>0$. Then
$$\textstyle (\Phi^p/\Sigma_p)^\Gamma=
(\Irr_1(\Phi)^p/\Sigma_p)\coprod \Irr_p(\Phi)=(\Phi(p^v)^p/\Sigma_p)\coprod
(\Phi(p^{v+1})\setminus\Phi(p^v))/\Gamma.$$\end{prop}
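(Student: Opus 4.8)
The plan is to deduce this directly from the free abelian semigroup structure on $\coprod_{d>0}(\Phi^d/\Sigma_d)^\Gamma$ established above, together with the computation of $\Irr_d(\Phi)$ in Proposition \ref{Irr_d(Phi) must be a power of p}. First I would recall that, by that semigroup structure, every element of $(\Phi^p/\Sigma_p)^\Gamma$ decomposes uniquely as a sum of irreducible elements, say of ``dimensions'' (tuple-lengths) $d_1,\ldots,d_k$ with $d_1+\cdots+d_k=p$ and $\Irr_{d_i}(\Phi)$ non-empty for each $i$.

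Next comes the key combinatorial observation. By Proposition \ref{Irr_d(Phi) must be a power of p}, under the hypothesis $v_p(q-1)=v>0$ the set $\Irr_d(\Phi)$ is empty unless $d=1$ or $d$ is a power of $p$. Since each $d_i\leq p$, this forces $d_i\in\{1,p\}$ for every $i$. If some $d_i=p$ then $k=1$ and the element lies in $\Irr_p(\Phi)$; otherwise every $d_i=1$, and the element corresponds under the semigroup isomorphism to an unordered $p$-tuple of elements of $\Irr_1(\Phi)$, i.e.\ to an element of $\Irr_1(\Phi)^p/\Sigma_p$ (with $\Sigma_p$ acting by permuting factors as in Proposition \ref{Rep Thstar}). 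These two possibilities are mutually exclusive, since $p>1$ means a partition with a single part of size $p$ is never a partition into $p$ parts of size $1$. Hence $(\Phi^p/\Sigma_p)^\Gamma=(\Irr_1(\Phi)^p/\Sigma_p)\coprod\Irr_p(\Phi)$.

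Finally I would substitute the explicit descriptions from Proposition \ref{Irr_d(Phi) must be a power of p}: $\Irr_1(\Phi)=\Phi(p^v)$ and, taking $k=1$ there, $\Irr_p(\Phi)=(\Phi(p^{v+1})\setminus\Phi(p^v))/\Gamma$, which yields the second displayed equality. There is no real obstacle in this argument; the only point requiring any care is the bookkeeping that a partition of the prime $p$ into parts each equal to $1$ or a power of $p$ must be either $p=1+\cdots+1$ or $p=p$, and the matching of the ``all ones'' case with $\Irr_1(\Phi)^p/\Sigma_p$ under the semigroup isomorphism of the preceding proposition.
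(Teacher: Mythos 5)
Your proof is correct and follows the same approach as the paper: the paper's proof is a one-liner appealing to the unique decomposition into irreducibles, and you have simply spelled out the bookkeeping (that under $v_p(q-1)>0$ the only allowed irreducible sizes $\leq p$ are $1$ and $p$, forcing the partition of $p$ to be $1+\cdots+1$ or $p$). No gap; you have just supplied the details the paper leaves implicit.
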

\begin{pf} Writing any element in the left-hand side as a sum of irreducibles, it is clear that either it is
already irreducible or it is a sum of $1$-dimensional irreducibles.\end{pf}

For $d>p$ things get more complicated.

\subsection{Applying the character theory}\label{sec:character theory for GL_d(Fq)}

Now that we have a good understanding of the set $\Rep(\thstar,GL_d(\Fq))$ we can apply the generalised
character theory to the group $GL_d(\Fq)$ to better understand the structure of its cohomology. We first need
the following lemma.

\begin{lem} Let $G$ act on a set $X$. Then, if $Y$ is any set, $G$ acts on $\Map(X,Y)$ by $(g.f)(x)=f(g^{-1}.x)$
and the obvious map $\Map(X/G,Y)\to \Map(X,Y)$ gives a bijection $$\Map(X/G,Y)\simeq \Map(X,Y)^G.$$\end{lem}
\begin{pf} Given $f:X/G\to Y$ it is clear that the map $\tilde{f}:X\to Y$, $x\mapsto f(\bar{x})$ is
$G$-invariant and it is easy to check that this construction gives a bijection of sets.\end{pf}

Recall that $T_d$ denotes the maximal torus of $GL_d(\Fq)$.

\begin{prop} Suppose that $v_p(q-1)=v>0$ and $d<p$. Then the restriction map induces an isomorphism
$L\tensor_{E^0} E^0(BGL_d(\Fq))\iso L\tensor_{E^0} E^0(BT_d)^{\Sigma_d}$.\end{prop}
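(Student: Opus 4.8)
The plan is to read off both sides from the Hopkins--Kuhn--Ravenel isomorphism (Proposition \ref{HKR}), applied to $GL_d(\Fq)$ and to $T_d$, and then to match up the index sets using the representation-theoretic analysis already carried out for $d<p$.

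First I would dispense with the invariants. Since $d<p$ the integer $d!=|\Sigma_d|$ is coprime to $p$, hence invertible in $E^0=\mathbb{Z}_p\lpow u_1,\ldots,u_{n-1}\rpow[u,u^{-1}]$. Thus the averaging idempotent $e=\frac{1}{d!}\sum_{\sigma\in\Sigma_d}\sigma$ acts on $E^0(BT_d)$ and exhibits $E^0(BT_d)^{\Sigma_d}=e\cdot E^0(BT_d)$ as an $E^0$-module direct summand; tensoring with $L$ preserves direct summands, so $L\tensor_{E^0}E^0(BT_d)^{\Sigma_d}=(L\tensor_{E^0}E^0(BT_d))^{\Sigma_d}$ with no flatness hypothesis needed. (Here the restriction map is $\res\colon E^0(BGL_d(\Fq))\to E^0(BT_d)$, which lands in $E^0(BT_d)^{\Sigma_d}$ by Lemma \ref{E^*(BG) lands in invariants} since $\Sigma_d\leqslant N_{GL_d(\Fq)}(T_d)$.)

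Next I would apply Proposition \ref{HKR} to both groups, obtaining $L$-algebra isomorphisms $L\tensor_{E^0}E^0(BGL_d(\Fq))\iso\Map(\Rep(\thstar,GL_d(\Fq)),L)$ and $L\tensor_{E^0}E^0(BT_d)\iso\Map(\Rep(\thstar,T_d),L)$. By naturality of the character map $\chi$ for the inclusion $\iota\colon T_d\hookrightarrow GL_d(\Fq)$ (one has $\chi_\alpha(\res a)=\chi_{\iota\circ\alpha}(a)$ directly from its construction), the map $\res$ corresponds to precomposition with $\iota_*\colon\Rep(\thstar,T_d)\to\Rep(\thstar,GL_d(\Fq))$. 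Since $T_d$ is abelian, $\Rep(\thstar,T_d)=\Hom(\thstar,T_d)=\Hom(\thstar,\Fq^\times)^d$ with $\Sigma_d$ permuting the factors, and $\iota_*$ factors through the quotient by $\Sigma_d$. Passing to $\Sigma_d$-invariants on the $T_d$ side and applying the lemma identifying $\Map(X/\Sigma_d,L)$ with $\Map(X,L)^{\Sigma_d}$, the $L$-tensored restriction map becomes precomposition with the induced map
$$q\colon \Hom(\thstar,\Fq^\times)^d/\Sigma_d\longrightarrow\Rep(\thstar,GL_d(\Fq)),\qquad [\phi_1,\ldots,\phi_d]\longmapsto[\phi_1\oplus\cdots\oplus\phi_d].$$

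Finally I would show $q$ is a bijection, which immediately gives that precomposition with $q$ is an isomorphism of $\Map(-,L)$, completing the argument. But this bijectivity is exactly the representation theory established earlier: under the hypotheses $v_p(q-1)=v>0$ and $d<p$, Proposition \ref{Rep thstar like phi} together with the description of $(\Phi^d/\Sigma_d)^\Gamma=\Phi(p^v)^d/\Sigma_d$ and Remark \ref{Representations for d<p} show that every $d$-dimensional $\Fq$-representation of $\thstar$ splits as a sum of one-dimensional representations with image in $\Fq^\times$, so that $\Rep(\thstar,GL_d(\Fq))=\Rep(\thstar,GL_1(\Fq))^d/\Sigma_d=\Hom(\thstar,\Fq^\times)^d/\Sigma_d$ and $q$ is the identity under these identifications. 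I expect the only genuine work to be the bookkeeping in the third paragraph --- tracking the naturality of the HKR character map against $T_d\hookrightarrow GL_d(\Fq)$ and checking that, after forming $\Sigma_d$-orbits, the map of index sets is precisely $q$; the diagonalizability fact that makes $q$ bijective is already in hand.
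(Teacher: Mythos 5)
Your proposal is correct and follows essentially the same approach as the paper: apply the HKR isomorphism to both $GL_d(\Fq)$ and $T_d$, identify $\Rep(\thstar,GL_d(\Fq))$ with $\Rep(\thstar,GL_1(\Fq))^d/\Sigma_d$ via diagonalizability for $d<p$, and use $\Map(X/\Sigma_d,L)\cong\Map(X,L)^{\Sigma_d}$. The only difference is that you spell out two points the paper elides — the averaging idempotent justifying commutation of $L\tensor_{E^0}(-)$ with $\Sigma_d$-invariants, and the naturality of the character map under $T_d\hookrightarrow GL_d(\Fq)$ — both of which are correct and worth making explicit.
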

\begin{pf} Note first that
$\Rep(\thstar,T_d)=\Rep(\thstar,(\Fq^\times)^d)=\Rep(\thstar,\Fq^\times)^d=\Rep(\thstar,GL_1(\Fq))^d$. Then,
using the remarks in \ref{Representations for d<p}, applying the generalised character theory we get a diagram
$$
\xymatrix{ L\tensor_{E^0} E^0(BGL_d(\Fq)) \ar[r]^-\sim \ar[d] & \Map(\Rep(\thstar,GL_d(\Fq)),L) \ar[d]_-\wr\\
L\tensor_{E^0} E^0(BT_d)^{\Sigma_d} \ar[r]^-\sim \ar[d] & \Map(\Rep(\thstar,GL_1(\Fq))^d,L)^{\Sigma_d} \ar[d]\\
L\tensor_{E^0} E^0(BT_d) \ar[r]^-\sim & \Map(\Rep(\thstar,GL_1(\Fq))^d,L)}
$$
which gives the result.\end{pf}

In fact it is not hard to show that the map $E^0(BGL_d(\Fq))\to E^0(BT_d)^{\Sigma_d}$ is itself an isomorphism,
which we do in Chapter \ref{ch:E^0(BGL_d(Fq))}.

We next look at the case where $d=p$ and again take the simplifying assumption that $v_p(q-1)=v>0$. Fix a basis
for $\fq{p}$ as a vector space over $\Fq$. Then for any $a\in \fq{p}^\times$ we have an $\Fq$-linear
automorphism $\mu_a:\fq{p}\to \fq{p}$ given by multiplication by $a$. Let $\mu:\fq{p}^\times\to GL_p(\Fq),
a\mapsto \mu_a$ be the corresponding group homomorphism. This gives us a map $\mu^*:E^0(BGL_p(\Fq))\to
E^0(B\fq{p}^\times)$. We need the following lemma.

\begin{lem}\label{CRT applied rationally to p^s+1 series} For any $s>0$, the quotient maps $E^0\lpow x\rpow/[p^{s+1}](x)\to E^0\lpow x\rpow/[p^s](x)$ and $E^0\lpow x\rpow/[p^{s+1}](x)\to E^0\lpow x\rpow/\langle p\rangle([p^s](x))$
induce an isomorphism
$$\mathbb{Q}\tensor \frac{E^0\lpow x\rpow}{[p^{s+1}](x)}\overset{\sim}{\longrightarrow} \left(\mathbb{Q}\tensor
\frac{E^0\lpow x\rpow}{[p^{s}](x)}\right)\times \left(\mathbb{Q}\tensor \frac{E^0\lpow x\rpow}{\langle
p\rangle([p^{s}](x))}\right)$$ where $\langle p\rangle (t)=[p](t)/t\in E^0\lpow t\rpow$ is the divided
$p$-series.\end{lem}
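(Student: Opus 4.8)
The plan is to apply Corollary \ref{Q tensor CRT} with $R=E^0\lpow x\rpow$, $I=([p^s](x))$ and $J=(\langle p\rangle([p^s](x)))$. To do this I need two things: first, that $IJ=([p^{s+1}](x))$, and second, that $k\in I+J$ for some nonzero integer $k$. For the first point, recall that $\langle p\rangle(t)=[p](t)/t$, so $t\langle p\rangle(t)=[p](t)$; substituting $t=[p^s](x)$ gives $[p^s](x)\cdot\langle p\rangle([p^s](x))=[p]([p^s](x))=[p^{s+1}](x)$. Hence as ideals of $E^0\lpow x\rpow$ we have $IJ=([p^s](x))(\langle p\rangle([p^s](x)))=([p^{s+1}](x))$, as required.

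**The coprimality condition.** The main obstacle is verifying that $I+J$ contains a nonzero integer. The natural candidate is $k=p$: I claim $p\in ([p^s](x))+(\langle p\rangle([p^s](x)))$ after noting that both generators are related to the $p$-series. Concretely, write $y=[p^s](x)$; then $[p](y)=y\langle p\rangle(y)$, and since $[p](y)=py \bmod (y^2)$ we have $\langle p\rangle(y)=[p](y)/y = p \bmod (y)$, i.e. $\langle p\rangle(y)=p+yg(y)$ for some $g(y)\in E^0\lpow y\rpow\subseteq E^0\lpow x\rpow$. Therefore $p=\langle p\rangle([p^s](x))-[p^s](x)\,g([p^s](x))\in J+I$. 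This gives $p\in I+J$, and since $p\neq 0$ in $\mathbb{Z}$, Corollary \ref{Q tensor CRT} applies directly.

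**Assembling the conclusion.** With $IJ=([p^{s+1}](x))$ and $p\in I+J$, Corollary \ref{Q tensor CRT} yields an isomorphism
$$\mathbb{Q}\tensor\big(E^0\lpow x\rpow/([p^{s+1}](x))\big)\iso \mathbb{Q}\tensor\big(E^0\lpow x\rpow/([p^s](x))\big)\times\mathbb{Q}\tensor\big(E^0\lpow x\rpow/(\langle p\rangle([p^s](x)))\big),$$
and one checks that the isomorphism it produces is precisely the map induced by the pair of quotient maps in the statement (this is built into the statement of the Chinese Remainder Theorem, Lemma \ref{CRT}, which sends $a\mapsto(a+I,a+J)$). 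Writing the ideals as $[p^s](x)$, $\langle p\rangle([p^s](x))$ and $[p^{s+1}](x)$ in the displayed fractions then gives exactly the claimed isomorphism, completing the proof. The only genuinely computational step is the expansion $\langle p\rangle(y)=p+yg(y)$, which follows from Lemma \ref{[m](x) twiddles x} and the definition of the divided $p$-series.
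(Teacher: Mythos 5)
Your proof is correct and takes essentially the same route as the paper: both reduce the claim to Corollary \ref{Q tensor CRT} by observing that $[p^s](x)\cdot\langle p\rangle([p^s](x))=[p^{s+1}](x)$ and that $\langle p\rangle([p^s](x))\equiv p \pmod{[p^s](x)}$, so $p\in I+J$. The only cosmetic difference is that the paper phrases things via the Weierstrass polynomials $g_s, g_{s+1}$ (which are unit multiples of the $p$-series), while you work with the $p$-series and divided $p$-series directly; this does not change the substance of the argument.
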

\begin{pf} Recall that $[p^{s+1}](x)$ and $[p^s](x)$ are unit multiples of the Weierstrass polynomials $g_{s+1}(x)$ and $g_s(x)$
respectively. Further $g_{s}(x)$ divides $g_{s+1}(x)$. Put $g(x)=g_{s+1}(x)/g_s(x)$. Then, since $\langle
p\rangle(x)=p$ mod $x$ it follows that $\langle p\rangle([p^s](x))=p$ mod $[p^s](x)$ so that $p$ is in the ideal
of $E^0\lpow x\rpow/[p^{s+1}](x)$ generated by $g(x)$ and $g_s(x)$. Hence, using the Chinese remainder theorem
(in particular, Corollary \ref{Q tensor CRT}) the result follows.\end{pf}

Note that $v_p(|\fq{p}^\times|)=v_p(q^p-1)=v+1$ and our complex orientation gives us a presentation
$E^0(B\fq{p}^\times)\simeq E^0\lpow x\rpow/[p^{v+1}](x)$. Further, $\Gamma$ acts on $E^0(B\fq{p}^\times)$ and
hence also on the quotient ring $E^0\lpow x\rpow/\langle p\rangle([p^v](x))$. Write $q:E^0(B\fq{p}^\times)\to
E^0\lpow x\rpow/\langle p\rangle([p^v](x))$ for the quotient map and let $\alpha=q\circ\mu^*:E^0(BGL_p(\Fq))\to
E^0\lpow x\rpow/\langle p\rangle ([p^v](x))$.

\begin{prop}\label{L tensor E^0(BFq^p) splits} Suppose that $v_p(q-1)=v>0$ and let $\alpha$ be as above. Then $\alpha$ lands in the $\Gamma$-invariants of $E^0\lpow x\rpow/\langle
p\rangle([p^v](x))$ and the map
$$L\tensor_{E^0} E^0(BGL_p(\Fq)) \longrightarrow \left(L\tensor_{E^0} E^0(BT_p)^{\Sigma_p}\right)\times \left(L\tensor_{E^0} (E^0\lpow
x\rpow/\langle p\rangle ([p^v](x)))^\Gamma\right)$$ is an isomorphism.\end{prop}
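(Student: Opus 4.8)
The plan is to apply the Hopkins--Kuhn--Ravenel isomorphism (Proposition~\ref{HKR}) simultaneously to the three finite groups $GL_p(\Fq)$, $T_p=(\Fq^\times)^p$ and $\fq{p}^\times$, and to read off everything from the combinatorics of the sets $\Rep(\thstar,-)$ established in Section~\ref{sec:HKRandReps}. By Propositions~\ref{Rep thstar like phi}, \ref{Irr_d(Phi) must be a power of p} and \ref{(Phi^p/Sigma_p)^Gamma)}, when $v_p(q-1)=v>0$ there is a partition
\[\Rep(\thstar,GL_p(\Fq))\cong(\Phi^p/\Sigma_p)^\Gamma=\bigl(\Phi(p^v)^p/\Sigma_p\bigr)\ \sqcup\ \Irr_p(\Phi),\qquad \Irr_p(\Phi)=\bigl(\Phi(p^{v+1})\setminus\Phi(p^v)\bigr)/\Gamma .\]
Since $\Map(A\sqcup B,L)=\Map(A,L)\times\Map(B,L)$, Proposition~\ref{HKR} for $GL_p(\Fq)$ at once gives a product decomposition of $L\tensor_{E^0}E^0(BGL_p(\Fq))$; it remains to identify the two factors with the rings in the statement and to check that the decomposition map is the $L$-linear extension of $(\res_{T_p}^{GL_p(\Fq)},\alpha)$. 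Throughout I will use that $L$ is flat over $E^0$ (part of the Hopkins--Kuhn--Ravenel framework), so $L\tensor_{E^0}(-)$ commutes with the formation of $\Sigma_p$- and $\Gamma$-invariants, together with the elementary identity $\Map(X/G,L)=\Map(X,L)^G$.

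\textbf{First factor.} The inclusion $T_p\hookrightarrow GL_p(\Fq)$ induces on $\Rep(\thstar,-)$ the map $\Phi(p^v)^p=\Hom(\thstar,(\Fq^\times)^p)\to(\Phi^p/\Sigma_p)^\Gamma$, $(\phi_i)\mapsto[\phi_1\oplus\cdots\oplus\phi_p]$, which by Remark~\ref{Representations for d<p} factors through $\Phi(p^v)^p/\Sigma_p$ and realises the first piece of the partition. Naturality of Proposition~\ref{HKR} therefore identifies $L\tensor_{E^0}\res_{T_p}^{GL_p(\Fq)}$ (which lands in the $\Sigma_p$-invariants by Lemma~\ref{E^*(BG) lands in invariants}) with the restriction $\Map((\Phi^p/\Sigma_p)^\Gamma,L)\to\Map(\Phi(p^v)^p/\Sigma_p,L)=\Map(\Phi(p^v)^p,L)^{\Sigma_p}$, and the target equals $\bigl(L\tensor_{E^0}E^0(BT_p)\bigr)^{\Sigma_p}=L\tensor_{E^0}E^0(BT_p)^{\Sigma_p}$ by Proposition~\ref{HKR} for $T_p$ and flatness of $L$.

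\textbf{Second factor.} First, $\alpha$ is $\Gamma$-invariant: each $\gamma\in\Gamma$ restricts to an $\Fq$-linear automorphism of $\fq{p}$, hence (after the chosen identification) to an element of $GL_p(\Fq)$ with $\conj_\gamma\circ\mu=\mu\circ\gamma$; since $\conj_\gamma^*=\id$ (Proposition~\ref{Conjugation induces identity}) we get $\mu^*=\gamma^*\circ\mu^*$, so $\mu^*$ takes values in $E^0(B\fq{p}^\times)^\Gamma$, and a routine computation with the $p$-series (using that $[q]$ commutes with $[p^k]$ and $\langle q\rangle(t)$ has unit constant term) shows $x\mapsto[q](x)$ carries the ideal $(\langle p\rangle([p^v](x)))$ into itself, so $q$ is $\Gamma$-equivariant and $\alpha=q\circ\mu^*$ lands in $(E^0\lpow x\rpow/\langle p\rangle([p^v](x)))^\Gamma$. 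To identify the ring, note Proposition~\ref{HKR} for $\fq{p}^\times$ gives $L\tensor_{E^0}E^0(B\fq{p}^\times)\cong\Map(\Phi(p^{v+1}),L)$, since $\Rep(\thstar,\fq{p}^\times)=\Hom(\thstar,\fq{p}^\times)=\Phi(p^{v+1})$ (an image lies in $\Syl_p(\fq{p}^\times)=C_{p^{v+1}}$ as $v_p(q^p-1)=v+1$). Base-changing Lemma~\ref{CRT applied rationally to p^s+1 series} (with $s=v$) to $L$ presents the same module as $L\tensor_{E^0}E^0(BC_{p^v})\times L\tensor_{E^0}(E^0\lpow x\rpow/\langle p\rangle([p^v](x)))$, and the first projections of the two decompositions agree: the quotient $E^0(B\fq{p}^\times)=E^0\lpow x\rpow/[p^{v+1}](x)\to E^0\lpow x\rpow/[p^v](x)=E^0(BC_{p^v})$ is induced by $C_{p^v}\hookrightarrow\fq{p}^\times$, hence corresponds under Proposition~\ref{HKR} to restriction along $\Phi(p^v)\hookrightarrow\Phi(p^{v+1})$, which is the first projection of $\Map(\Phi(p^{v+1}),L)=\Map(\Phi(p^v),L)\times\Map(\Phi(p^{v+1})\setminus\Phi(p^v),L)$. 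Comparing kernels yields a $\Gamma$-equivariant isomorphism $L\tensor_{E^0}(E^0\lpow x\rpow/\langle p\rangle([p^v](x)))\cong\Map(\Phi(p^{v+1})\setminus\Phi(p^v),L)$ (the $\Gamma$-actions match by naturality of Proposition~\ref{HKR} for the Galois action on $\fq{p}^\times$, which on $\Phi(p^{v+1})$ is $\phi\mapsto\phi^q$); taking $\Gamma$-invariants and using $\Irr_p(\Phi)=(\Phi(p^{v+1})\setminus\Phi(p^v))/\Gamma$ identifies $\Map(\Irr_p(\Phi),L)$ with $L\tensor_{E^0}(E^0\lpow x\rpow/\langle p\rangle([p^v](x)))^\Gamma$. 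Finally $\mu$ induces on $\Rep(\thstar,-)$ the map $\phi\mapsto[\phi,\phi^q,\dots,\phi^{q^{p-1}}]$ (Proposition~\ref{Irreducibles split over Flbar}), which on $\Phi(p^{v+1})\setminus\Phi(p^v)$ is exactly the quotient to $\Irr_p(\Phi)$ while $q$ discards the $\Phi(p^v)$-component, so $L\tensor_{E^0}\alpha$ is the restriction to $\Irr_p(\Phi)$. Thus the decomposition map is $(\res_{T_p}^{GL_p(\Fq)},\alpha)$ base-changed to $L$, and since restriction of functions along a two-piece partition is an isomorphism, the proposition follows.

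I expect the main obstacle to be the matching in the second factor: aligning the Weierstrass/Chinese-remainder splitting of $E^0(B\fq{p}^\times)$ (Lemma~\ref{CRT applied rationally to p^s+1 series}) with the representation-theoretic partition $\Phi(p^{v+1})=\Phi(p^v)\sqcup(\Phi(p^{v+1})\setminus\Phi(p^v))$ through naturality of the character map, and keeping the various $\Gamma$-actions (on $\fq{p}^\times$, on $\Phi$, on the quotient ring) consistently aligned; the $p$-series check that $q$ is $\Gamma$-equivariant is routine but must be carried out carefully.
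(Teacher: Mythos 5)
Your proposal is correct and follows essentially the same route as the paper's proof: both apply the Hopkins--Kuhn--Ravenel isomorphism to $GL_p(\Fq)$, $T_p$ and $\fq{p}^\times$, read off the product decomposition from the partition $(\Phi^p/\Sigma_p)^\Gamma = \Phi(p^v)^p/\Sigma_p \sqcup \Irr_p(\Phi)$ of Proposition~\ref{(Phi^p/Sigma_p)^Gamma)}, and use Lemma~\ref{CRT applied rationally to p^s+1 series} to identify the second factor with $L\tensor_{E^0}(E^0\lpow x\rpow/\langle p\rangle([p^v](x)))$. The one point where you go further is that you give a direct argument for the $\Gamma$-invariance of $\alpha$ (conjugation by the Frobenius matrix via Proposition~\ref{Conjugation induces identity}, plus checking that $x\mapsto[q](x)$ preserves the ideal $(\langle p\rangle([p^v](x)))$), which the paper merely defers to Lemma~\ref{Maps land in Gamma-invariants} in Chapter~\ref{ch:E^0(BGL_d(Fq))}.
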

\begin{pf} We will defer an explicit proof that $\alpha$ lands in the $\Gamma$-invariants until Chapter \ref{ch:E^0(BGL_d(Fq))} although it is implicit in the workings below. Using the generalised character isomorphism we have a diagram
$$
\xymatrix{ L\tensor_{E^0} E^0(B(\fq{p})^\times) \ar[r]^-\sim \ar[d] & \Map(\Rep(\thstar,(\fq{p})^\times),L)
\ar@{=}[r] \ar[d]
& \Map(\Phi(p^{v+1}),L) \ar[d]\\
L\tensor_{E^0} E^0(B(\Fq)^\times) \ar[r]^-\sim & \Map(\Rep(\thstar,(\Fq)^\times),L) \ar@{=}[r] &
\Map(\Phi(p^{v}),L)}
$$
But
\begin{eqnarray*}
\Map(\Phi(p^{v+1}),L)& = & \textstyle  \Map(\Phi(p^v)\coprod\Phi(p^{v+1})\setminus \Phi(p^v) ,L)\\
&= &\Map(\Phi(p^v),L)\times\Map(\Phi(p^{v+1})\setminus \Phi(p^v),L).
\end{eqnarray*}
Thus, using the isomorphism of Lemma \ref{CRT applied rationally to p^s+1 series} we have
\begin{eqnarray*}
L\tensor_{E^0} \frac{E^0\lpow x\rpow}{\langle p\rangle([p^{v}](x))} & = & \ker(L\tensor_{E^0}
E^0(B(\fq{p})^\times)\to L\tensor_{E^0} E^0(B(\Fq)^\times))\\
&\simeq& \ker(\Map(\Phi(p^{v+1}),L)\to \Map(\Phi(p^{v}),L))\\
&=& \Map(\Phi(p^{v+1})\setminus \Phi(p^v),L).
\end{eqnarray*}
Finally, Proposition \ref{(Phi^p/Sigma_p)^Gamma)} gives
$$
\xymatrix{ L\tensor_{E^0} E^0(BGL_p(\Fq)) \ar[r] \ar[d]_-\wr &  \left(L\tensor_{E^0}
E^0(BT_p)^{\Sigma_p}\right)\times
\left(L\tensor_{E^0} (E^0\lpow x\rpow/\langle p\rangle ([p^v](x)))^\Gamma\right) \ar[d]^-\wr\\
\Map((\Phi^p/\Sigma_p)^\Gamma,L)  \ar[r]^-\sim & \Map((\Phi(p^v)^p/\Sigma_p),L)\times
\Map((\Phi(p^{v+1})\setminus\Phi(p^v))/\Gamma,L)}
$$
and the result follows.\end{pf}

In Chapter \ref{ch:E^0(BGL_d(Fq))} we will look at the maps of Proposition \ref{L tensor E^0(BFq^p) splits}
again and see that there is a close relationship between them even before applying the functor $L\tensor_{E^0}
-$. However, we will no longer obtain a splitting; the relationship is more subtle.

%------------------------------------------------------

\chapter{The ring $\mathbf{E^0(BGL_d(K))}$}\label{ch:E^0(BGL_d(Fq))}

In this chapter we examine the structure of the ring $E^0(BGL_d(K))$ for finite fields $K$ of characteristic
different from $p$. We first consider the low dimensional case where $d<p$ before moving on to study the more
complex situations that arise when $d=p$. Our main results will assume that $v_p(|K|^\times)>0$ so that we have
a good understanding of the Sylow $p$-subgroups of $GL_d(K)$ from Section \ref{sec:Syl_p(GL_d(K))}.

\section{Tanabe's calculations}

Let $l$ be a prime different to $p$ and $q=l^r$ for some $r$. Let $\Gamma=\Gamma_q$ be the subgroup of
$\Gal(\Flbar/\mathbb{F}_q)$ generated by $\Frob_q=\Frob^r$, where $\Frob$ is the Frobenius homomorphism
$a\mapsto a^l$ of Section \ref{sec:finite fields}. Then $\Gamma$ acts on $GL_d(\Flbar)$ component-wise and
$GL_d(\mathbb{F}_q)=GL_d(\Flbar)^{\Gamma}$. We use the following lemma.

\begin{lem} Let $h$ be any cohomology theory. Let $K$ act on a group $G$ and $H$ be a subgroup of $G^K$. Then the restriction map $h^*(BG)\to
h^*(BH)$ factors through $h^*(BG)_K$, where $h^*(BG)_K$ denotes the coinvariants of the induced action, that is
the quotient of $h^*(BG)$ by the ideal $\{a-k^*a\mid a\in h^*(BG), k\in K\}$.\end{lem}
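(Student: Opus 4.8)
The plan is to deduce the statement from a single commutative square of groups, namely the one expressing that every element of $K$ fixes $H$ pointwise, and then to apply the functors $B(-)$ and $h^*(-)$ to it.

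First I would set up notation: write $\iota\colon H\hookrightarrow G$ for the inclusion and, for each $k\in K$, write $\phi_k\colon G\to G$ for the corresponding automorphism, so that $G^K=\{g\in G\mid \phi_k(g)=g\text{ for all }k\in K\}$. Since $H\leqslant G^K$, we have $\phi_k(\iota(h))=\iota(h)$ for every $h\in H$, i.e. $\phi_k\circ\iota=\iota$. Applying the classifying space functor gives $B\phi_k\circ B\iota\simeq B\iota$, and hence, by homotopy invariance of $h$, the identity $\res_H^G\circ\phi_k^*=\res_H^G$ of maps $h^*(BG)\to h^*(BH)$, where $\phi_k^*$ denotes $(B\phi_k)^*\colon h^*(BG)\to h^*(BG)$.

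Next I would observe that the maps $\phi_k^*$ (possibly after replacing $k$ by $k^{-1}$, which does not change anything below) are precisely the operators giving the induced $K$-action on the ring $h^*(BG)$, so that by definition $h^*(BG)_K$ is the quotient of $h^*(BG)$ by the ideal generated by all elements $a-\phi_k^*a$ with $a\in h^*(BG)$, $k\in K$. By the previous paragraph each such generator satisfies $\res_H^G(a-\phi_k^*a)=\res_H^G(a)-\res_H^G(\phi_k^*a)=0$. Since $\res_H^G$ is a ring homomorphism it annihilates the whole ideal generated by these elements, so that ideal lies in $\ker(\res_H^G)$; hence $\res_H^G$ factors uniquely through the quotient $h^*(BG)\twoheadrightarrow h^*(BG)_K$, which is the assertion.

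I do not expect a genuine obstacle here: the argument is formal. The only points needing a word of care are that the step from $\phi_k\circ\iota=\iota$ to the cohomological identity uses homotopy invariance of $h^*$ (the two classifying maps agree only up to homotopy), and that "factoring through the coinvariant ideal" uses multiplicativity of $h$, so that killing the generators of the ideal is enough to kill the ideal.
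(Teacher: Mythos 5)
Your proof is correct and takes essentially the same route as the paper: both start from the equality $k\circ\iota=\iota$ of group homomorphisms $H\to G$, pass through $B(-)$ and $h^*(-)$ to get $\res_H^G=\res_H^G\circ k^*$, and conclude that the coinvariant ideal lies in $\ker(\res_H^G)$. You are somewhat more explicit than the paper about the role of homotopy invariance and the fact that multiplicativity of $h$ is needed so that killing the generators $a-k^*a$ kills the whole ideal, but these are the same points, just spelled out.
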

\begin{pf} Let $k\in K$. Then the commuting diagram
$$\begin{array}{ccc}
\xymatrix{ G  &  \ar[l] \ar[ld] H\\
G \ar[u]^{k}} & \begin{array}{c}\phantom{induces}\\\phantom{induces}\\\text{ induces }\end{array} &
\xymatrix{ h^*(BG) \ar[r] \ar[d]_{k^*} &  h^*(BH)\\
h^*(BG) \ar[ur]}
\end{array}
$$
showing that $a-k^*a$ is in the kernel of the restriction map $h^*(BG)\to h^*(BH)$.\end{pf}

 In his paper \cite{Tanabe},
Tanabe proved the following result.

\begin{prop} $K(n)^*(BGL_d(\mathbb{F}_q))$ is concentrated in even degrees and restriction induces an
isomorphism $K(n)^*(BGL_d(\Flbar))_{\Gamma}\simeq K(n)^*(BGL_d(\mathbb{F}_q))$.\end{prop}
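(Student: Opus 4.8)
The plan is to combine the homotopy-theoretic description of $BGL_d(\Fq)$ due to Quillen and Friedlander with the Eilenberg--Moore spectral sequence of a pullback square, computed in $K(n)$-theory. Throughout I use that Morava $K$-theory is insensitive to $p$-completion, and I abbreviate $R=K(n)^*(BGL_d(\Flbar))$. By the $K(n)$-analogue of Corollary \ref{E^0(BGL_d(Flbar)=E^0[[c_1...c_d]])} this is the power series ring $K(n)^*\lpow c_1,\dots,c_d\rpow$ on the $l$-Chern classes, free (in the graded sense) and even over $K(n)^*$, and $\Gamma=\langle\Frob_q\rangle$ acts on it by $\Frob_q^*(c_i)=\sigma_i\big([q]_F(x_1),\dots,[q]_F(x_d)\big)$, where $F$ is the height-$n$ formal group and $x_1,\dots,x_d$ are the usual coordinates on $K(n)^*(B\overline{T}_d)$.

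First I would recall the topological input: after $p$-completion ($p\neq l$), $BGL_d(\Flbar)^\wedge_p\simeq BU(d)^\wedge_p$ is simply connected (Friedlander--Mislin, cf.\ Proposition \ref{Friedlander and Mislin}), and Friedlander's comparison of the Lang isogeny $g\mapsto g^{-1}\Frob_q(g)$ on $\Gflbar$ with a Galois covering exhibits $BGL_d(\Fq)^\wedge_p$ as the homotopy equalizer of $\mathrm{id}$ and $B\Frob_q$ on $BGL_d(\Flbar)^\wedge_p$; equivalently, as the homotopy pullback of $(\mathrm{id},B\Frob_q)\colon BGL_d(\Flbar)^\wedge_p\to BGL_d(\Flbar)^\wedge_p\times BGL_d(\Flbar)^\wedge_p$ along the diagonal, with both projections to $BGL_d(\Flbar)$ inducing the restriction map $\res^{GL_d(\Flbar)}_{GL_d(\Fq)}$. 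Applying $K(n)^*$, the (completed) Künneth isomorphism — legitimate since $R$ is free over $K(n)^*$ — identifies the $K(n)$-cohomology of the corner with $R\,\widehat{\tensor}_{K(n)^*}\,R$, and the Eilenberg--Moore spectral sequence of the pullback takes the form
$$E_2=\mathrm{Tor}^{R\,\widehat{\tensor}\,R}_{*}(R,R)\ \Longrightarrow\ K(n)^*(BGL_d(\Fq)),$$
where the two $R\,\widehat{\tensor}\,R$-module structures on the copies of $R$ are through $\Delta^*$ (multiplication) and through $(\mathrm{id},B\Frob_q)^*$ (the twist $a\tensor b\mapsto a\cdot\Frob_q^*(b)$).

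The bottom line is readily computed: since $R$ is a power series ring, the diagonal module admits a Koszul resolution over $R\,\widehat{\tensor}\,R$ on $(c_i\tensor 1-1\tensor c_i)_{i=1}^d$, so $\mathrm{Tor}_*(R,R)$ is the homology of the Koszul complex on $\big(c_i-\Frob_q^*(c_i)\big)_{i=1}^d$ with coefficients in $R$. In particular $\mathrm{Tor}_0=R\big/\big(c_i-\Frob_q^*(c_i)\mid i\big)=R_\Gamma$, and tracing through the edge map, together with the lemma preceding this proposition, identifies the edge homomorphism $K(n)^*(BGL_d(\Fq))\to R_\Gamma$ with the factorisation of $\res^{GL_d(\Flbar)}_{GL_d(\Fq)}$ through the coinvariants. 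Thus everything reduces to the vanishing $\mathrm{Tor}_{\geq1}(R,R)=0$, i.e.\ to showing that $\big(c_i-\Frob_q^*(c_i)\big)_{i=1}^d$ is a regular sequence on $R$; then the spectral sequence collapses, $K(n)^*(BGL_d(\Fq))\simeq R_\Gamma$ (which, being a quotient of the even ring $R$, is concentrated in even degrees, giving the first assertion too), and restriction realises the isomorphism.

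\textbf{The main obstacle is this regularity statement.} When $p\nmid q-1$ it is trivial: $\Frob_q^*(c_i)=q^i c_i+(\text{higher order})$ with $q^i$ a unit mod $p$, so the leading linear parts are independent. In the case central to this thesis, $p\mid q-1$, these linear parts vanish, and one must argue instead that the quotient $R/\big(c_i-\Frob_q^*(c_i)\big)$ is finite over $K(n)^*$ — equivalently, that $\big(c_i-\Frob_q^*(c_i)\big)_i$ is a system of parameters, hence a regular sequence since $R$ is Cohen--Macaulay. For this I would pass to the $\Sigma_d$-cover $K(n)^*\lpow x_1,\dots,x_d\rpow$ and observe that the formal points of $\mathrm{Spec}$ of the quotient are the unordered tuples $[a_1,\dots,a_d]$ with $\{a_1,\dots,a_d\}=\{[q]_F(a_1),\dots,[q]_F(a_d)\}$ as multisets; the self-map $a\mapsto[q]_F(a)$ then permutes such a tuple, so each $a_j$ satisfies $[q^N]_F(a_j)=a_j$ for some $N\leq d$, and since $[q^N]_F(x)-x$ is a nonzero Weierstrass series — its leading term is a unit times $x^{p^{n\,v_p(q^N-1)}}$ — it has only finitely many roots. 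Hence there are only finitely many such tuples, the quotient is $0$-dimensional, and the regular-sequence property follows, completing the argument. (If preferred, the evenness in the first assertion can be obtained independently from the fact that $GL_d(\Fq)$ is a good group, via Proposition \ref{Theorem E, HKR} and Proposition \ref{E^0(BG) only interesting group}.)
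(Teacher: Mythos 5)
You have essentially reconstructed Tanabe's proof: the paper itself gives no argument for this proposition but simply cites \cite{Tanabe}, whose method is exactly the route you take — the Lang-isogeny homotopy pullback square, the Eilenberg--Moore spectral sequence in Morava $K$-theory with a Koszul resolution of the diagonal, and regularity of the Frobenius-twisted sequence $(c_i-\Frob_q^* c_i)$. Two places in your sketch need more justification, and they are precisely where Tanabe's paper does the real work. First, strong convergence of the $K(n)$-based Eilenberg--Moore spectral sequence for a homotopy pullback is not automatic (unlike in ordinary cohomology); one needs the Friedlander--Mislin identification $BGL_d(\Flbar)^\wedge_p\simeq BU(d)^\wedge_p$ together with convergence theorems for generalized Eilenberg--Moore spectral sequences, and without that input the collapse statement is unsupported. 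Second, the regularity of $(c_i-\Frob_q^* c_i)$ is the technical heart of the matter (it is Tanabe's Proposition 4.6, which this thesis quotes verbatim when lifting to $E$-theory); your geometric sketch — finitely many formal points, hence a system of parameters, hence a regular sequence since $R$ is Cohen--Macaulay — is morally right, but the step from ``finitely many geometric points'' to ``zero-dimensional quotient'' needs to be made precise, e.g.\ by observing that after passing to the $\Sigma_d$-cover the quotient is set-theoretically supported on the union over $\sigma\in\Sigma_d$ of the closed subschemes $V\bigl(x_{\sigma(i)}-[q](x_i):1\leq i\leq d\bigr)$, each of which is a finite $K(n)^*$-algebra by the Weierstrass argument you give. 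A small slip: the edge homomorphism runs $R_\Gamma\to K(n)^*(BGL_d(\Fq))$, not the other way, though this does not affect your conclusion once the spectral sequence collapses onto $\mathrm{Tor}_0$.
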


Thus we have $K(n)^*(BGL_d(\mathbb{F}_q))\simeq
K(n)^*\lpow\sigma_1,\ldots,\sigma_d\rpow/(\sigma_1-(\Frob_q)^*\sigma_1,\ldots,\sigma_d-(\Frob_q)^*\sigma_d)$. In
fact, Tanabe's result lifts to $E$-theory; that is, we will prove the following.

\begin{prop}\label{E^*(BGL_d(Flbar))_Gamma_q=E^0(BGL_d(Fl))} $E^*(BGL_d(\mathbb{F}_q))$ is concentrated in even degrees and is free and finitely generated over
$E^*$. Further, the restriction map induces an isomorphism $$E^*(BGL_d(\Flbar))_{\Gamma}\simeq
E^*(BGL_d(\mathbb{F}_q)).$$\end{prop}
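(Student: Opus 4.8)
**The plan is to lift Tanabe's mod-$p$ result to $E$-theory by the standard comparison technique, followed by a coinvariants-to-fixed-points argument that exploits freeness.**

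First I would establish that $E^*(BGL_d(\mathbb{F}_q))$ is free and finitely generated over $E^*$ and concentrated in even degrees. Finite generation is Proposition \ref{E^*(BG) finitely generated}. For the rest, by Proposition \ref{E^*(X) in even degrees} (equivalently Proposition \ref{E^0(BG) only interesting group}) it suffices to know that $K(n)^*(BGL_d(\mathbb{F}_q))$ is concentrated in even degrees, which is exactly the content of Tanabe's result quoted just above; alternatively one can invoke the fact established earlier that $GL_d(\mathbb{F}_q)$ is good whenever $v_p(|K^\times|)>0$ (and in general Tanabe handles the evenness). So this first part is essentially a citation assembly. The same reasoning applies to $BGL_d(\Flbar)$ via Corollary \ref{E^0(BGL_d(Flbar)=E^0[[c_1...c_d]])}, which gives $E^*(BGL_d(\Flbar))\simeq E^*\lpow\sigma_1,\ldots,\sigma_d\rpow$, manifestly free and even.

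Next I would produce the comparison. The inclusion $GL_d(\mathbb{F}_q)=GL_d(\Flbar)^\Gamma\hookrightarrow GL_d(\Flbar)$ gives a map of spaces $BGL_d(\mathbb{F}_q)\to BGL_d(\Flbar)$, and by the lemma just proved (restriction factors through coinvariants) we get an induced map $E^*(BGL_d(\Flbar))_\Gamma\to E^*(BGL_d(\mathbb{F}_q))$; likewise $K(n)^*(BGL_d(\Flbar))_\Gamma\to K(n)^*(BGL_d(\mathbb{F}_q))$, which Tanabe proved is an isomorphism. I want to deduce the $E$-theory statement from the $K(n)$-theory statement. Here the key point is that both source and target of the $E$-theory map are free over $E^*$: since $E^*(BGL_d(\Flbar))$ is free on monomials in the $\sigma_i$, and $\Gamma$ acts through a finite quotient $\Gamma/\Gamma'$ (as the $\sigma_i$ generate a finitely generated module over the Noetherian $E^*$, only finitely many $\Frob_q$-powers act distinctly), the coinvariants $E^*(BGL_d(\Flbar))_\Gamma$ is a quotient of a free module. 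I would then reduce the comparison map modulo the maximal ideal $\mathfrak{m}=(p,u_1,\ldots,u_{n-1})$ of $E^0$: by right-exactness of $-\otimes_{E^0} E^0/\mathfrak{m}$ and Lemma \ref{R/I tensor M = M/IM}, the coinvariants functor commutes with this reduction, so $E^0/\mathfrak{m}\otimes (E^*(BGL_d(\Flbar))_\Gamma) \simeq (K^*(BGL_d(\Flbar)))_\Gamma$, and similarly $E^0/\mathfrak{m}\otimes E^*(BGL_d(\mathbb{F}_q))\simeq K^*(BGL_d(\mathbb{F}_q))$ using Proposition \ref{K^*(BG)=K^* tensor E^*(BG)}. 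Since $K$ is obtained from $K(n)$ by a flat base change ($\mathbb{F}_p[u,u^{-1}]$ over $K(n)^*$), Tanabe's isomorphism gives that the reduced map is an isomorphism. Then I apply Proposition \ref{M/IM=N/IN implies M=N} together with Corollary \ref{If m is regular on M then M is free over R}: the target $E^*(BGL_d(\mathbb{F}_q))$ is free, hence the regular sequence $(p,u_1,\ldots,u_{n-1})$ acts regularly on it; one also needs regularity on the source $E^*(BGL_d(\Flbar))_\Gamma$, and here I would argue that since the reduced map is an isomorphism onto a free module and the source is finitely generated, Nakayama-plus-counting-dimensions forces the source to be free of the same rank and the map to be an isomorphism. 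Concretely: lift an $E^0/\mathfrak{m}$-basis of the target through the (iso mod $\mathfrak{m}$) map to the source, get a map $(E^0)^N\to E^*(BGL_d(\Flbar))_\Gamma$ which is iso mod $\mathfrak{m}$, apply Proposition \ref{M/IM=N/IN implies M=N} to see it is surjective, and then the composite $(E^0)^N\to E^*(BGL_d(\Flbar))_\Gamma\to E^*(BGL_d(\mathbb{F}_q))$ is iso mod $\mathfrak{m}$ with target free, so by the lemma preceding Corollary \ref{If m is regular on M then M is free over R} it is an isomorphism; this forces $(E^0)^N\to E^*(BGL_d(\Flbar))_\Gamma$ to be injective as well, hence an isomorphism, and therefore the comparison map $E^*(BGL_d(\Flbar))_\Gamma\to E^*(BGL_d(\mathbb{F}_q))$ is an isomorphism.

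The main obstacle I anticipate is the bookkeeping around the coinvariants functor and the reduction modulo $\mathfrak{m}$: I must be careful that $(-)_\Gamma$ genuinely commutes with $E^0/\mathfrak{m}\otimes_{E^0}-$ (this is formal right-exactness, since coinvariants is a cokernel of $\bigoplus_{k\in\Gamma}(1-k^*)$, and tensoring is right exact), and that the finitely-many-relevant-elements-of-$\Gamma$ reduction is legitimate (it is: $E^*(BGL_d(\Flbar))$ is a finitely generated module over the Noetherian ring $E^*$, so the subgroup of $\Gamma$ acting trivially has finite index). The other delicate point is ensuring the source $E^*(BGL_d(\Flbar))_\Gamma$ is finitely generated over $E^*$ so that Nakayama applies — this follows since it is a quotient of the finitely generated module $E^*(BGL_d(\Flbar))$. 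Once these are in place the argument is a routine application of the Nakayama/regular-sequence package already developed in Chapter \ref{ch:prelims}.
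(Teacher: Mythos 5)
Your overall strategy — compare mod $\mathfrak{m}=(p,u_1,\dots,u_{n-1})$, use Tanabe's isomorphism there, and then lift with the Nakayama/regular-sequence machinery from Chapter~\ref{ch:prelims} — is the right one, and your trick of routing injectivity of $(E^0)^N\to E^*(BGL_d(\Flbar))_\Gamma$ through the composite with the free target is a legitimate alternative to the paper's more direct route (the paper instead invokes Tanabe's Proposition~4.6 to get that $p,\sigma_1-\sigma_1^*,\dots,\sigma_d-\sigma_d^*$ is regular on $\mathbb{Z}_p\lpow\sigma_1,\dots,\sigma_d\rpow$, then permutes the regular sequence to deduce that $p,u_1,\dots,u_{n-1}$ is regular on $E^0(BGL_d(\Flbar))_\Gamma$). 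However, there is a genuine gap in the way you establish finite generation of the source.

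You assert that $E^*(BGL_d(\Flbar))_\Gamma$ is finitely generated over $E^*$ ``since it is a quotient of the finitely generated module $E^*(BGL_d(\Flbar))$.'' But $E^*(BGL_d(\Flbar))\simeq E^*\lpow\sigma_1,\dots,\sigma_d\rpow$ is a power series ring over $E^*$, which is emphatically \emph{not} finitely generated as an $E^*$-module, so the quotient argument collapses. (Your preliminary remark that ``$\Gamma$ acts through a finite quotient $\Gamma/\Gamma'$ since the $\sigma_i$ generate a finitely generated module'' suffers the same problem, and in fact $\Frob_q^*$ acts on $E^0\lpow\sigma_1,\dots,\sigma_d\rpow$ with infinite order, so $\Gamma$ does not act through a finite quotient.) This matters because Proposition~\ref{M/IM=N/IN implies M=N} requires the target to be finitely generated, so without finite generation of $E^0(BGL_d(\Flbar))_\Gamma$ you cannot deduce surjectivity of $(E^0)^N\to E^0(BGL_d(\Flbar))_\Gamma$ and the whole argument stalls. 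The correct route — and the one the paper takes — is to observe first that $K^0\tensor_{E^0}E^0(BGL_d(\Flbar))_\Gamma\simeq K^0(BGL_d(\Flbar))_\Gamma$ (right-exactness, as you note), that the latter is finite-dimensional over $\mathbb{F}_p$ (Tanabe gives an isomorphism with $K^0(BGL_d(\mathbb{F}_q))$, or the paper's Krull-dimension argument), and then lift the finite generating set using the complete Nakayama statement, Lemma~\ref{M/mM finitely generated implies M finitely generated}, rather than trying to propagate finite generation from the (infinitely generated) ring $E^0(BGL_d(\Flbar))$ itself. With that replacement your remaining steps go through.
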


To prove this we need a number of intermediate results. Note first that, by Proposition \ref{E^*(X) in even
degrees} and the fact that $K(n)^*(BGL_d(\mathbb{F}_q))$ is concentrated in even degrees,
$E^*(BGL_d(\mathbb{F}_q))$ is free over $E^*$ and concentrated in even degrees and, further,
$K^*(BGL_d(\mathbb{F}_q))=K^*\tensor_{E^*} E^*(BGL_d(\mathbb{F}_q))$. Hence it suffices to prove the result in
degree $0$.

\begin{lem} $K^0\tensor_{E^0}
E^0(BGL_d(\Flbar))_{\Gamma}= K^0(BGL_d(\Flbar))_{\Gamma}$.\end{lem}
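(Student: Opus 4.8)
The idea is that this is simply base change of a quotient ring along the quotient map of coefficient rings $E^0\twoheadrightarrow K^0$. First I would collect the two inputs from earlier: that $E^0(BGL_d(\Flbar))\simeq E^0\lpow\sigma_1,\ldots,\sigma_d\rpow$ is free over $E^0$ (Corollary \ref{E^0(BGL_d(Flbar)=E^0[[c_1...c_d]])}), so that $K^0(BGL_d(\Flbar))\simeq K^0\tensor_{E^0}E^0(BGL_d(\Flbar))$ (compare Proposition \ref{K^*(BG)=K^* tensor E^*(BG)}, or simply compare the explicit power series descriptions from Section \ref{sec:cohomology of general linear groups}); and that $K^0=\mathbb{F}_p=E^0/\mathfrak{m}_{E^0}$, since $K^*=\mathbb{F}_p[u,u^{-1}]$ arises from $E^*$ by killing $p,u_1,\ldots,u_{n-1}$.

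Next I would note that the $\Gamma$-coinvariants of $E^0(BGL_d(\Flbar))$ are cut out by finitely many relations: $\Frob_q^*$ is a continuous $E^0$-algebra endomorphism of $E^0\lpow\sigma_1,\ldots,\sigma_d\rpow$ fixing $E^0$, so the coinvariants ideal $(a-k^*a\mid a\in E^0(BGL_d(\Flbar)),\ k\in\Gamma)$ coincides with $(\sigma_1-\Frob_q^*\sigma_1,\ldots,\sigma_d-\Frob_q^*\sigma_d)$; write $f_1,\ldots,f_d$ for these generators. Then base change of a quotient by a finitely generated ideal is transparent:
$$K^0\tensor_{E^0}\left(E^0(BGL_d(\Flbar))/(f_1,\ldots,f_d)\right)\simeq\left(K^0\tensor_{E^0}E^0(BGL_d(\Flbar))\right)/(\bar f_1,\ldots,\bar f_d)=K^0(BGL_d(\Flbar))/(\bar f_1,\ldots,\bar f_d),$$
where $\bar f_i$ denotes the image of $f_i$. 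This is the instance of Lemma \ref{Right exactness for red mod ideals} with $R=E^0$, $I=\mathfrak{m}_{E^0}$, $A=E^0(BGL_d(\Flbar))$ and $J=(f_1,\ldots,f_d)$, combined with right-exactness of $K^0\tensor_{E^0}-$.

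Finally I would identify the right-hand side with $K^0(BGL_d(\Flbar))_\Gamma$. By naturality of the reduction map $E^0(BGL_d(\Flbar))\to K^0(BGL_d(\Flbar))$ with respect to the self-map of $BGL_d(\Flbar)$ induced by $\Frob_q$ acting componentwise on $GL_d(\Flbar)$, we have $\bar f_i=\sigma_i-\Frob_q^*\sigma_i$ inside $K^0(BGL_d(\Flbar))\simeq K^0\lpow\sigma_1,\ldots,\sigma_d\rpow$; and by the same continuity argument as above, $(\bar f_1,\ldots,\bar f_d)$ is precisely the coinvariants ideal for the $\Gamma$-action on $K^0(BGL_d(\Flbar))$. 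Hence the right-hand side is $K^0(BGL_d(\Flbar))_\Gamma$, which is the claim.

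The argument is entirely formal, so there is no genuine obstacle; the only two points deserving a line of justification are the reduction of the coinvariants ideal to a finitely generated one (continuity of $\Frob_q^*$ on the power series ring) and the compatibility of the $\Gamma$-action with reduction modulo $\mathfrak{m}_{E^0}$ (functoriality of cohomology applied to the $\Frob_q$-induced self-map). I expect the write-up to be short.
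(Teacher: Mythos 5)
Your proof is correct and takes essentially the same route as the paper: the paper's proof is a compressed diagram chase that implicitly uses the same two facts you spell out, namely that $K^0(BGL_d(\Flbar))\simeq K^0\tensor_{E^0}E^0(BGL_d(\Flbar))$ and that passing to coinvariants commutes with the base change $E^0\twoheadrightarrow K^0$ by right-exactness (Lemma \ref{Right exactness for red mod ideals}). Your version is more explicit about identifying the image of the coinvariants ideal with the coinvariants ideal downstairs; the paper summarises this as ``the map is just reduction modulo $(p,u_1,\ldots,u_{n-1})$.'' One minor remark: the reduction to the finitely generated ideal $(f_1,\ldots,f_d)$ is not strictly needed, since right-exactness applies to any ideal $J$ and the $\Gamma$-equivariance and surjectivity of the reduction map already force the image of the coinvariants ideal of $E^0(BGL_d(\Flbar))$ to equal the coinvariants ideal of $K^0(BGL_d(\Flbar))$; but noting the finite generation is harmless and is in any case used by the paper elsewhere (in the regularity argument) so is worth recording.
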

\begin{pf} Since $E^0(BGL_d(\Flbar))=E^0\lpow \sigma_1,\ldots,\sigma_d\rpow=\mathbb{Z}_p\lpow u_1,\ldots,u_{n-1},\sigma_1,\ldots,\sigma_d\rpow$ (by Corollary \ref{E^0(BGL_d(Flbar)=E^0[[c_1...c_d]])}) it
follows that $K^0(BGL_d(\Flbar))=K^0\tensor_{E^0} E^0(BGL_d(\Flbar))$ and we have a diagram
$$
\xymatrix{ E^0(BGL_d(\Flbar)) \ar@{->>}[d] \ar[r] & E^0(BGL_d(\Flbar))_{\Gamma} \ar@{->>}[d] \ar[r] & E^0(BGL_d(\mathbb{F}_q)) \ar@{->>}[d]\\
K^0(BGL_d(\Flbar)) \ar[r] \ar@{->>}[rd] & K^0\tensor_{E^0} E^0(BGL_d(\Flbar))_{\Gamma} \ar[r] \ar[d] & K^0(BGL_d(\mathbb{F}_q))\\
& K^0(BGL_d(\Flbar))_{\Gamma} \ar[ru]_-{\sim}}
$$
where the second row is reduction modulo $(p,u_1,\ldots,u_{n-1})$. Chasing the diagram we see that the map
$E^0(BGL_d(\Flbar))_{\Gamma}\to K^0(BGL_d(\Flbar))_{\Gamma}$ is surjective and, further, it is just reduction
modulo the ideal $(p,u_1,\ldots,u_{n-1})$; that is, $K^0\tensor_{E^0}
E^0(BGL_d(\Flbar))_{\Gamma}=K^0(BGL_d(\Flbar))_{\Gamma}$.
\end{pf}

\begin{lem} The sequence $p,u_1,\ldots,u_{n-1}$ is regular on $E^0(BGL_d(\Flbar))_{\Gamma}$.\end{lem}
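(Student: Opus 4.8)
The plan is to present the coinvariant ring explicitly as a quotient of a power series ring by $d$ elements, and then to produce the required regular sequence by reduction modulo $(p,u_1,\ldots,u_{n-1})$ followed by a permutation of regular sequences. Write $A=E^0(BGL_d(\Flbar))$, which by Corollary \ref{E^0(BGL_d(Flbar)=E^0[[c_1...c_d]])} is the power series ring $\mathbb{Z}_p\lpow u_1,\ldots,u_{n-1},\sigma_1,\ldots,\sigma_d\rpow$, a regular complete local Noetherian ring with maximal ideal $\mathfrak{m}_A=(p,u_1,\ldots,u_{n-1},\sigma_1,\ldots,\sigma_d)$. The group $\Gamma$ acts $E^0$-linearly on $A$ (it fixes $p$ and each $u_i$), and as an abstract group it is infinite cyclic on $\Frob_q$, so the ideal defining $E^0(BGL_d(\Flbar))_{\Gamma}$ is $J=(a-\gamma^{*}a\mid a\in A,\ \gamma\in\Gamma)$. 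Put $f_i=\sigma_i-\Frob_q^{*}\sigma_i$ and $I=(f_1,\ldots,f_d)$. The first step is to verify $J=I$: the map $D\colon A\to A$, $a\mapsto a-\Frob_q^{*}a$, is additive, satisfies the twisted Leibniz rule $D(ab)=D(a)b+(\Frob_q^{*}a)D(b)$, kills $E^0$, and carries $\mathfrak{m}_A^k$ into $\mathfrak{m}_A^k$; hence $D$ is continuous, and since $D(\sigma_i)=f_i\in I$ and $I$ is closed (being finitely generated in a Noetherian complete local ring), $D(A)\subseteq I$. As $I$ is moreover $\Frob_q^{*}$-stable, a telescoping sum gives $a-(\Frob_q^{*})^{k}a\in I$ for all $k\in\mathbb{Z}$, so $J=I$ and $M:=E^0(BGL_d(\Flbar))_{\Gamma}=A/(f_1,\ldots,f_d)$.

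Next I would reduce modulo $(p,u_1,\ldots,u_{n-1})$. By Lemma \ref{R/I tensor M = M/IM} and the preceding lemma, $M/(p,u_1,\ldots,u_{n-1})M=K^0\tensor_{E^0}M=K^0(BGL_d(\Flbar))_{\Gamma}$, which by Tanabe's isomorphism quoted above (in degree $0$) is $K^0(BGL_d(\Fq))$; this is finite-dimensional over $\mathbb{F}_p$ because $K(n)^{*}(BG)$ is finitely generated over $K(n)^{*}$ for every finite group $G$ (\cite{RavenelK}). On the other hand $A/(p,u_1,\ldots,u_{n-1})A=\mathbb{F}_p\lpow\sigma_1,\ldots,\sigma_d\rpow$, so the reductions $\bar f_1,\ldots,\bar f_d$ generate an ideal of finite colength in this $d$-dimensional regular local ring, hence form a system of parameters; since a regular local ring is Cohen--Macaulay, a system of parameters is a regular sequence (\cite{Matsumura}). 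Thus $\bar f_1,\ldots,\bar f_d$ is a regular sequence on $\mathbb{F}_p\lpow\sigma_1,\ldots,\sigma_d\rpow$.

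Finally I would lift and permute. The sequence $p,u_1,\ldots,u_{n-1}$ is part of a regular system of parameters of the regular local ring $A$, hence a regular sequence on $A$; concatenating it with the sequence of the previous paragraph shows $p,u_1,\ldots,u_{n-1},f_1,\ldots,f_d$ is a regular sequence on $A$. All of these elements lie in $\mathfrak{m}_A$ and $A$ is Noetherian local, so a regular sequence may be reordered (\cite{Matsumura}); therefore $f_1,\ldots,f_d,p,u_1,\ldots,u_{n-1}$ is also a regular sequence on $A$. Reading off its tail, $p,u_1,\ldots,u_{n-1}$ is a regular sequence on $A/(f_1,\ldots,f_d)=M$, and $M/(p,u_1,\ldots,u_{n-1})M=K^0(BGL_d(\Fq))\neq 0$, which is precisely the assertion.

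I expect the main obstacle to be the bookkeeping in the first step, namely identifying the coinvariant ring with $A/(f_1,\ldots,f_d)$ via the twisted-derivation argument and the closedness of finitely generated ideals. The one genuinely non-formal ingredient is the finite-dimensionality of $K^0(BGL_d(\Fq))$ over $\mathbb{F}_p$, coming from Tanabe's theorem together with Ravenel's finite-generation result; this is exactly what forces $\bar f_1,\ldots,\bar f_d$ to be a system of parameters. Everything downstream (Cohen--Macaulayness of regular local rings, regularity of the obvious parameter sequence on $A$, and permutability of regular sequences) is standard commutative algebra.
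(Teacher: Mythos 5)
Your proof is correct, and it shares the paper's backbone (concatenate a regular sequence, permute in a Noetherian local ring via Matsumura's Theorem~16.3, read off the tail) but obtains the key regularity input by a genuinely different route. The paper simply quotes Tanabe's Proposition~4.6, which asserts outright that $p,\sigma_1-\sigma_1^*,\ldots,\sigma_d-\sigma_d^*$ is a regular sequence on $\mathbb{Z}_p\lpow\sigma_1,\ldots,\sigma_d\rpow$; it then appends $u_1,\ldots,u_{n-1}$, permutes, and concludes. You instead bypass that specific proposition: you reduce all the way to $\mathbb{F}_p\lpow\sigma_1,\ldots,\sigma_d\rpow$, invoke Tanabe's coinvariant isomorphism $K^0(BGL_d(\Flbar))_\Gamma\simeq K^0(BGL_d(\Fq))$ together with Ravenel's finite generation to see that $\bar f_1,\ldots,\bar f_d$ generate an $\mathfrak{m}$-primary ideal, and then use the system-of-parameters/Cohen--Macaulay argument to get regularity. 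This is a more self-contained derivation (it needs only the statement of Tanabe's main theorem, not the interior Proposition~4.6), at the cost of being longer; the paper's version is shorter precisely because it leans on the more refined citation. You also make explicit, via the twisted-Leibniz-rule and closed-ideal argument, that the coinvariant ideal is exactly $(\sigma_1-\sigma_1^*,\ldots,\sigma_d-\sigma_d^*)$ --- a point the paper takes for granted when it writes $E^0(BGL_d(\Flbar))_\Gamma = E^0(BGL_d(\Flbar))/(\sigma_1-\sigma_1^*,\ldots,\sigma_d-\sigma_d^*)$; your spelling this out is a correct and worthwhile addition, not a detour.
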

\begin{pf} Write $\sigma_i^*$ for $(\Frob_q)^*\sigma_i$. Then, by \cite[Proposition 4.6]{Tanabe},
$p,~\sigma_1-\sigma_1^*,\ldots,{\sigma_d-\sigma_d^*}$ is a regular sequence on
$$\widehat{K(n})^0(BGL_d(\Flbar))=\mathbb{Z}_p\lpow \sigma_1,\ldots,\sigma_d\rpow=\frac{E^0\lpow
\sigma_1\ldots,\sigma_d\rpow}{(u_1,\ldots,u_{n-1})}=\frac{E^0(BGL_d(\Flbar))}{(u_1,\ldots,u_{n-1})}.$$ It
follows that $u_1,\ldots,u_{n-1},p,\sigma_1-\sigma_1^*,\ldots,\sigma_d-\sigma_d^*$ is regular on
$E^0(BGL_d(\Flbar))$. But, since $E^0(BGL_d(\Flbar))$ is a Noetherian ring, the corollary to Theorem 16.3 in
\cite{Matsumura} tells us that $\sigma_1-\sigma_1^*,\ldots,\sigma_d-\sigma_d^*, p,~ u_1,\ldots,u_{n-1}$ is also
regular. Hence $p,u_1,\ldots,u_{n-1}$ is regular on
$E^0(BGL_d(\Flbar))/(\sigma_1-\sigma_1^*,\ldots,\sigma_d-\sigma_d^*)=E^0(BGL_d(\Flbar))_{\Gamma}$.\end{pf}

\begin{lem} $E^0(BGL_d(\Flbar))_{\Gamma}$ is finitely generated over $E^0$.\end{lem}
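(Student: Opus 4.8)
The plan is to invoke the standard finiteness criterion for modules over a complete local Noetherian ring, Lemma \ref{M/mM finitely generated implies M finitely generated} (Matsumura, Theorem 8.4). Here the relevant ring is $E^0 = \mathbb{Z}_p\lpow u_1,\ldots,u_{n-1}\rpow$, which is complete local Noetherian with maximal ideal $\mathfrak{m}=(p,u_1,\ldots,u_{n-1})$ and residue field $E^0/\mathfrak{m}=\mathbb{F}_p=K^0$. Writing $M=E^0(BGL_d(\Flbar))_{\Gamma}$, it will therefore be enough to show that $M/\mathfrak{m}M$ is finitely generated --- in fact finite-dimensional --- over $\mathbb{F}_p$.

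First I would identify $M/\mathfrak{m}M$ explicitly. By Lemma \ref{R/I tensor M = M/IM}, $M/\mathfrak{m}M = K^0\tensor_{E^0} M$, and by the identification $K^0\tensor_{E^0}E^0(BGL_d(\Flbar))_{\Gamma}=K^0(BGL_d(\Flbar))_{\Gamma}$ established above this is just $K^0(BGL_d(\Flbar))_{\Gamma}$. Reducing Corollary \ref{E^0(BGL_d(Flbar)=E^0[[c_1...c_d]])} modulo $\mathfrak{m}$ gives $K^0(BGL_d(\Flbar))=\mathbb{F}_p\lpow \sigma_1,\ldots,\sigma_d\rpow$, so, writing $\sigma_i^*$ for $(\Frob_q)^*\sigma_i$,
$$K^0(BGL_d(\Flbar))_{\Gamma}=\frac{\mathbb{F}_p\lpow \sigma_1,\ldots,\sigma_d\rpow}{(\sigma_1-\sigma_1^*,\ldots,\sigma_d-\sigma_d^*)}=\frac{\mathbb{Z}_p\lpow \sigma_1,\ldots,\sigma_d\rpow}{(p,\sigma_1-\sigma_1^*,\ldots,\sigma_d-\sigma_d^*)}.$$

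The remaining point is to see that this quotient has Krull dimension $0$. By \cite[Proposition 4.6]{Tanabe}, exactly as already used in the proof of the previous lemma, $p,\sigma_1-\sigma_1^*,\ldots,\sigma_d-\sigma_d^*$ is a regular sequence on $\mathbb{Z}_p\lpow \sigma_1,\ldots,\sigma_d\rpow$, which is a Noetherian local ring of Krull dimension $d+1$; quotienting by a regular sequence of that length brings the dimension down to $0$. Thus $K^0(BGL_d(\Flbar))_{\Gamma}$ is a Krull-dimension-zero local Noetherian $\mathbb{F}_p$-algebra with residue field $\mathbb{F}_p$, and Lemma \ref{dimension zero local K-algebras are finite dimensional} then makes it finite-dimensional over $\mathbb{F}_p$. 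Feeding this back into Lemma \ref{M/mM finitely generated implies M finitely generated} yields that $M$ is finitely generated over $E^0$. I do not expect any genuine obstacle here: the argument is essentially bookkeeping around Tanabe's regular-sequence result, the only step warranting a little care being the Krull-dimension count for the quotient ring.
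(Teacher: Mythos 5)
Your proof is correct and follows essentially the same route as the paper: reduce modulo the maximal ideal of $E^0$, identify the quotient with $K^0(BGL_d(\Flbar))_{\Gamma}$, deduce Krull dimension $0$ from Tanabe's regular-sequence result, apply Lemma \ref{dimension zero local K-algebras are finite dimensional} and then Lemma \ref{M/mM finitely generated implies M finitely generated}. The only cosmetic difference is that you count the dimension drop inside $\mathbb{Z}_p\lpow\sigma_1,\ldots,\sigma_d\rpow$ (dimension $d+1$, regular sequence of length $d+1$), whereas the paper works with $E^0(BGL_d(\Flbar))$ of dimension $d+n$ and the full sequence of length $d+n$; the two counts are equivalent.
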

\begin{pf} Since $E^0(BGL_d(\Flbar))=\mathbb{Z}_p\lpow u_1,\ldots,u_{n-1},\sigma_1,\ldots,\sigma_d\rpow$ has Krull dimension $d+n$ it follows that
$E^0(BGL_d(\Flbar))_{\Gamma}/(p,u_1,\ldots,u_{n-1})=K^0(BGL_d(\Flbar))_{\Gamma}$ has Krull dimension $0$. Thus,
using Lemma \ref{dimension zero local K-algebras are finite dimensional}, we see that
$K^0(BGL_d(\Flbar))_{\Gamma}$ is finite-dimensional over $\mathbb{F}_p$. Thus, an application of Lemma \ref{M/mM
finitely generated implies M finitely generated} shows that $E^0(BGL_d(\Flbar))_{\Gamma}$ is finitely generated
over $E^0$.\end{pf}

\begin{proof}[Proof of Proposition \ref{E^*(BGL_d(Flbar))_Gamma_q=E^0(BGL_d(Fl))}] Since $p,u_1,\ldots,u_{n-1}$
is regular on the finitely generated $E^0$-module $E^0(BGL_d(\Flbar))_{\Gamma}$ an application of Lemma \ref{If
m is regular on M then M is free over R} shows that $E^0(BGL_d(\Flbar))_{\Gamma}$ is free over $E^0$. Thus the
map $E^0(BGL_d(\Flbar))_{\Gamma}\to E^0(BGL_d(\Fl))$ is a map of finitely generated free $E^0$-modules which
becomes an isomorphism modulo $(p,u_1,\ldots,u_{n-1})$. Hence, from Proposition \ref{M/IM=N/IN implies M=N}, the
map is surjective. But a surjective map of free modules of the same rank must be an isomorphism, and we are
done.\end{proof}

\section{The restriction map $E^0(BGL_d(K))\to E^0(BT_d)$}\label{sec:restriction map}

Recall that $T_d\simeq (K^\times)^d$ denotes the maximal torus of $GL_d(K)$ and that there is a restriction map
$E^0(BGL_d(K))\to E^0(BT_d)^{\Sigma_d}$. This map plays a large part in our later calculations and we show now
that, for all $d$ and $K$, it is surjective. Put $v=v_p(|K|^\times)$. Then using our complex orientation we have
an identification $$E^0(BT_d)\simeq E^0\lpow x_1,\ldots,x_d\rpow/([p^v](x_1),\ldots,[p^v](x_d)).$$ Note that
$T_d$ naturally sits inside $\overline{T}_d\simeq(\overline{K}^\times)^d\subseteq GL_d(\overline{K})$. We need a
couple of definitions.

\begin{defn}\label{def:Permutation Module} Let $M$ be a finitely generated free $R$-module and $G$ a (finite) group acting on $M$. Then we call $M$ a \emph{permutation module for $G$} if there is a basis for $M$ over $R$ such that the action of $G$ permutes the basis.\end{defn}

\begin{lem} Let $M$ and $G$ be as above and let $S$ be a basis for $M$ closed under the action of $G$. For
$s\in S$ write $$s^G=\sum_{s'\in \orb_G(s)} s'.$$ Then the set $\{s^G\mid s\in S\}$ is a basis for
$M^G$.\end{lem}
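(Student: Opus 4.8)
The plan is to reduce the statement to the observation that the $G$-invariant elements of $M$ are exactly the $R$-combinations of the orbit sums. First I would set up notation: let $S/G$ be the set of $G$-orbits on $S$, and for an orbit $O\subseteq S$ put $e_O=\sum_{s\in O}s$. Since $s^G=e_O$ whenever $s\in O$, the set $\{s^G\mid s\in S\}$ is exactly $\{e_O\mid O\in S/G\}$, now indexed without repetition by the (finite) set of orbits; it is this description I would work with throughout.

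Next I would verify the three points that make $\{e_O\}$ a basis for $M^G$. For membership in $M^G$: each $g\in G$ permutes the finite set $O$ among itself, so $g\cdot e_O=\sum_{s\in O}g\cdot s=\sum_{s\in O}s=e_O$. For linear independence: distinct orbits are disjoint subsets of the basis $S$, so a relation $\sum_O a_O e_O=0$ is a relation among the elements of $S$ in which each $s\in S$ occurs with coefficient $a_{O(s)}$, where $O(s)$ denotes the orbit of $s$; since $S$ is $R$-linearly independent, all $a_O=0$.

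For spanning, take $m\in M^G$ and write $m=\sum_{s\in S}r_s s$ uniquely with $r_s\in R$. Applying $g\in G$ gives $m=g\cdot m=\sum_{s}r_s(g\cdot s)=\sum_{s}r_{g^{-1}\cdot s}s$, and comparing coefficients against the basis $S$ forces $r_s=r_{g^{-1}\cdot s}$ for all $s\in S$ and $g\in G$. Hence $s\mapsto r_s$ is constant on each orbit, say equal to $r_O$ on $O$, and therefore $m=\sum_{O\in S/G}r_O e_O$. Together, these three points give the lemma.

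I do not expect any genuine obstacle here; the argument is just the standard ``coordinates are constant on orbits'' computation, together with the fact (from Definition \ref{def:Permutation Module}) that $S$ is a finite $R$-basis. The only place requiring a word of care is pure bookkeeping: recording that the claimed basis is naturally indexed by orbits rather than by elements of $S$, so that the word ``set'' in the statement is doing real work in collapsing the repetitions $s^G=(s')^G$ whenever $s$ and $s'$ lie in the same orbit.
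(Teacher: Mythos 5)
Your proof is correct and follows essentially the same route as the paper's: spanning comes from the observation that the coordinates of an invariant element are constant on $G$-orbits, and independence is inherited from that of $S$. You spell out membership in $M^G$ and the disjointness-of-orbits argument in slightly more detail, and you are right that the bookkeeping about indexing by orbits rather than by $S$ is the only point needing care, but there is no genuine difference in approach.
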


\begin{pf} Let $m=\sum_{s\in S} m_s s\in M^G$. Then, for $g\in G$, we get
$g.\sum_{s\in S} m_s s = \sum_{s\in S} m_s s$ so that $\sum_{s\in S} m_s (g.s) = \sum_{s\in S} m_s s$. Thus,
using the fact that $G$ permutes $S$, we have $m_{g.s}=m_s$ for all $s$. Hence $m_{s'}=m_s$ for all $s'\in
\orb_G(s)$. It follows that $m$ is an $R$-linear sum of the $s^G$'s. That these are linearly independent follows
easily from the fact that $S$ was a basis.\end{pf}

The basis introduced above for $M^G$ is known as the {\em basis of orbit sums}, for obvious reasons.

\begin{lem} Let $\Sigma_d$ act on $T_d$ and $\overline{T}_d$ by permuting the coordinates. Then restriction induces a surjective map $E^0(B\overline{T}_d)^{\Sigma_d}\to E^0(BT_d)^{\Sigma_d}$.\end{lem}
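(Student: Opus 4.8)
The plan is to exhibit $E^0(BT_d)$ as a permutation module for $\Sigma_d$ and then simply lift the resulting basis of orbit sums back to $E^0(B\overline{T}_d)$. First I would recall the relevant presentations already in hand: $E^0(B\overline{T}_d)=E^0\lpow x_1,\ldots,x_d\rpow$ with $x_i=\pi_i^*(x)$, while, putting $v=v_p(|K^\times|)$, Proposition \ref{E^0(B prod C_m_i)} gives $E^0(BT_d)\simeq E^0\lpow x_1,\ldots,x_d\rpow/([p^v](x_1),\ldots,[p^v](x_d))$, and the restriction map $E^0(B\overline{T}_d)\to E^0(BT_d)$ is the quotient map $x_i\mapsto x_i$. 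This map is $\Sigma_d$-equivariant, being induced by the $\Sigma_d$-equivariant inclusion $T_d\hookrightarrow\overline{T}_d$, so what must be shown is that $(E^0\lpow x_1,\ldots,x_d\rpow)^{\Sigma_d}\to E^0(BT_d)^{\Sigma_d}$ is onto. Since the standard $p$-typical $F$ has height $n$ over the complete local $\mathbb{Z}_p$-algebra $E^0=\mathbb{Z}_p\lpow u_1,\ldots,u_{n-1}\rpow$, Corollary \ref{Basis for R[[x]]/[p](x)} (applied one variable at a time) shows $E^0(BT_d)$ is free over $E^0$ on the monomials $x_1^{a_1}\cdots x_d^{a_d}$ with $0\le a_i<N:=p^{nv}$, and $\Sigma_d$ permutes this basis. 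Thus $E^0(BT_d)$ is a permutation module for $\Sigma_d$ in the sense of Definition \ref{def:Permutation Module}.

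By the orbit-sum lemma above, $E^0(BT_d)^{\Sigma_d}$ is then free over $E^0$ with basis the orbit sums $(x^{\mathbf a})^{\Sigma_d}=\sum_{\mathbf b\in\orb_{\Sigma_d}(\mathbf a)} x^{\mathbf b}$, one for each $\Sigma_d$-orbit of exponent vectors $\mathbf a\in\{0,\ldots,N-1\}^d$. Now I would lift: for each such $\mathbf a$ the finite sum $\sum_{\mathbf b\in\orb_{\Sigma_d}(\mathbf a)} x^{\mathbf b}$ is a polynomial in $E^0\lpow x_1,\ldots,x_d\rpow$ fixed by $\Sigma_d$, hence lies in $E^0(B\overline{T}_d)^{\Sigma_d}$, and its image under the restriction map is the sum of the images of the monomials $x^{\mathbf b}$. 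Since every $\mathbf b$ in the orbit again has all entries strictly below $N$, each such image is a basis element of $E^0(BT_d)$ and distinct $\mathbf b$'s give distinct (in particular non-equal, by $E^0$-linear independence in the quotient) basis elements, so the image of the lift is precisely $(x^{\mathbf a})^{\Sigma_d}$. Hence every basis element of $E^0(BT_d)^{\Sigma_d}$ lies in the image; as the restriction map is $E^0$-linear and $E^0(BT_d)^{\Sigma_d}$ is generated over $E^0$ by these orbit sums, the map is surjective.

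There is no real obstacle here; the two points that deserve a sentence of care rather than pure bookkeeping are (i) that the monomials of multidegree in $[0,N)^d$ genuinely form an $E^0$-basis of the \emph{multivariable} quotient, which one gets by iterating Corollary \ref{Basis for R[[x]]/[p](x)} (using that $[p^v](x)$ is a unit multiple of a monic Weierstrass polynomial of degree $N$), and (ii) that pushing an orbit sum forward does not collapse any of its terms, which is immediate from that same linear independence. The essential content is just that a permutation module, and its invariants of orbit sums, behave well under a quotient map that respects the permutation basis.
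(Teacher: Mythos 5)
Your argument is correct and takes essentially the same route as the paper: identify $E^0(BT_d)$ as a permutation module for $\Sigma_d$ via the monomial basis from Corollary \ref{Basis for R[[x]]/[p](x)}, pass to the basis of orbit sums for the invariants, and lift each orbit sum to a $\Sigma_d$-invariant polynomial in $E^0(B\overline{T}_d)=E^0\lpow x_1,\ldots,x_d\rpow$. Your proposal simply spells out the details (multivariable freeness, non-collapse of orbits) that the paper leaves as "it is easy to see."
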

\begin{pf} Firstly, since the restriction map $E^0(B\overline{T}_d)\to E^0(BT_d)$ is $\Sigma_d$-equivariant
it induces a map of the $\Sigma_d$-invariants. We have identifications $E^0(B\overline{T}_d)\simeq E^0\lpow
x_1,\ldots,x_d\rpow$ and $E^0(BT_d)\simeq E^0\lpow x_1,\ldots,x_d\rpow/([p^v](x_i))$ with restriction being just
the obvious quotient map. Using Corollary \ref{Basis for R[[x]]/[p](x)}, $E^0\lpow
x_1,\ldots,x_d\rpow/([p^v](x_i))$ has basis $S=\{x_1^{\alpha_1}\ldots x_d^{\alpha_d}\mid 0\leq
\alpha_k<p^{nv}\}$ over $E^0$ and is a permutation module for $\Sigma_d$. Thus we can take the basis of orbit
sums for the ${\Sigma_d}$-invariants. It is easy to see that any such basis element can be lifted to a
$\Sigma_d$-invariant element of $E^0(B\overline{T}_d)$ under our map and we are done.\end{pf}

\begin{prop}\label{E^0(BGL_d(K))to E^0(BT_d)^Sigma_d is surjective} The restriction map $E^0(BGL_d(K))\to E^0(BT_d)^{\Sigma_d}$ is surjective.\end{prop}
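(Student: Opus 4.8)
The plan is to factor the restriction map through the corresponding map for the algebraic closure, using the results already established. First I would recall from Corollary \ref{E^0(BGL_d(Flbar)=E^0[[c_1...c_d]])} that restriction gives an isomorphism $E^0(BGL_d(\Flbar))\iso E^0(B\overline{T}_d)^{\Sigma_d}$, and from Proposition \ref{E^*(BGL_d(Flbar))_Gamma_q=E^0(BGL_d(Fl))} that restriction induces an isomorphism $E^0(BGL_d(\Flbar))_{\Gamma}\iso E^0(BGL_d(K))$, where we identify $K$ with $\mathbb{F}_q$ for an appropriate prime power $q$. Here we are using that $T_d$ sits inside $\overline{T}_d$ with $T_d=\overline{T}_d^{\Gamma}$, so there is a commuting square relating the two tori and their ambient general linear groups.

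The key observation is that all these maps fit into a commutative diagram
$$
\xymatrix{
E^0(BGL_d(\Flbar)) \ar[r]^-\sim \ar@{->>}[d] & E^0(B\overline{T}_d)^{\Sigma_d} \ar@{->>}[d]\\
E^0(BGL_d(K)) \ar[r] & E^0(BT_d)^{\Sigma_d}
}
$$
where the left vertical map is the quotient $E^0(BGL_d(\Flbar))\to E^0(BGL_d(\Flbar))_{\Gamma}\iso E^0(BGL_d(K))$ (hence surjective), the right vertical map is the surjection of the preceding lemma, and the top map is the Borel-type isomorphism. To see the square commutes, one uses naturality of restriction along the inclusions $T_d\hookrightarrow \overline{T}_d$ and $GL_d(K)\hookrightarrow GL_d(\Flbar)$ together with the lemma at the start of Section \ref{sec:restriction map} showing that restriction to a subgroup of the $\Gamma$-fixed points factors through the $\Gamma$-coinvariants; the point is that $T_d\leqslant GL_d(K)\leqslant GL_d(\Flbar)^{\Gamma}$ and $T_d\leqslant \overline{T}_d^{\Gamma}$.

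Granting commutativity, the conclusion is immediate: the composite $E^0(BGL_d(\Flbar))\to E^0(BT_d)^{\Sigma_d}$ going across the top and down the right is surjective, since both of those arrows are surjective; but by commutativity this composite factors as $E^0(BGL_d(\Flbar))\twoheadrightarrow E^0(BGL_d(K))\to E^0(BT_d)^{\Sigma_d}$, and a composite that is surjective forces the final arrow $E^0(BGL_d(K))\to E^0(BT_d)^{\Sigma_d}$ to be surjective. I expect the main obstacle to be purely bookkeeping: verifying carefully that the square above genuinely commutes, i.e. that the isomorphism $E^0(BGL_d(\Flbar))_{\Gamma}\iso E^0(BGL_d(K))$ of Proposition \ref{E^*(BGL_d(Flbar))_Gamma_q=E^0(BGL_d(Fl))} is compatible with restriction to the torus. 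This is really just chasing naturality of the restriction maps through the constructions, but it needs to be stated cleanly; once it is in place the surjectivity is a one-line diagram chase.
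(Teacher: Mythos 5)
Your argument is correct and follows the paper's proof almost exactly: the same commutative square, with the Borel-type isomorphism $E^0(BGL_d(\Flbar))\iso E^0(B\overline{T}_d)^{\Sigma_d}$ along the top and the surjection $E^0(B\overline{T}_d)^{\Sigma_d}\twoheadrightarrow E^0(BT_d)^{\Sigma_d}$ from the preceding lemma, and then the observation that if a composite $A\to B\to C$ is surjective, the second map must be. One small point: you invoke Tanabe's result to get surjectivity of $E^0(BGL_d(\Flbar))\to E^0(BGL_d(K))$, but this is not actually needed — the surjectivity of the bottom map follows purely from the surjectivity of the other composite and commutativity, as the paper's one-line chase shows. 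Similarly, the commutativity of the square is immediate from naturality of restriction along the system of inclusions $T_d\subseteq \overline{T}_d$ and $GL_d(K)\subseteq GL_d(\overline{K})$; you need not appeal to the coinvariants lemma for that.
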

\begin{pf} Using Corollary \ref{E^0(BGL_d(Flbar)=E^0[[c_1...c_d]])}, a system of inclusions induces the diagram
$$
\xymatrix{E^0(B\overline{T}_d)^{\Sigma_d} \ar@{->>}[d] & E^0(BGL_d(\overline{K})) \ar_{\sim}[l]
\ar[d]\\
E^0(T_d)^{\Sigma_d} & E^0(BGL_d(K)) \ar[l]}
$$
showing that the composition $E^0(BGL_d(\overline{K}))\to E^0(BT_d)^{\Sigma_d}$ is surjective and hence that
$E^0(BGL_d(K))\to E^0(BT_d)^{\Sigma_d}$ is also surjective.
\end{pf}

\section{Low dimensions}

Here we deal with the case where $d<p$. As usual, we let $q=l^r$ be a power of a prime different to $p$ and let
$T_d$ denote the maximal torus of $GL_d(\Fq)$. Our main theorem in this case is as follows.

\begin{theorem}\label{d<p} If $d<p$ and $v_p(q-1)=v>0$ then the restriction map $E^0(BGL_d(\Fq))\to E^0(BT_d)$ induces an isomorphism
$E^0(BGL_d(\Fq))\simeq E^0(BT_d)^{\Sigma_d}$.\end{theorem}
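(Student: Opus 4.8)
The plan is to prove the statement by establishing two things about the restriction map $\beta\colon E^0(BGL_d(\Fq))\to E^0(BT_d)$: that it lands in $E^0(BT_d)^{\Sigma_d}$ and is surjective onto it, and that it is injective. Together these say precisely that $\beta$ induces an isomorphism $E^0(BGL_d(\Fq))\iso E^0(BT_d)^{\Sigma_d}$, which is the claim.

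The surjectivity half is already available: by Lemma \ref{N_d products} we have $\Sigma_d\leqslant N_{GL_d(\Fq)}(T_d)$, so Lemma \ref{E^*(BG) lands in invariants} shows $\beta$ factors through $E^0(BT_d)^{\Sigma_d}$, and Proposition \ref{E^0(BGL_d(K))to E^0(BT_d)^Sigma_d is surjective} (valid for every $d$ and every finite $K$) shows the resulting map $E^0(BGL_d(\Fq))\to E^0(BT_d)^{\Sigma_d}$ is onto. So the entire content of the theorem is the injectivity of $\beta$, and this is where the hypothesis $d<p$ enters.

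For injectivity I would argue as follows. Since $v=v_p(q-1)>0$, the group $GL_d(\Fq)$ is good (by the proposition preceding Section \ref{sec:cohomology of general linear groups}), so $E^0(BGL_d(\Fq))$ is free over $E^0$ by Proposition \ref{E^0(BG) only interesting group}. Next, because $d<p$ we have $v_p(d!)=0$, hence $\Syl_p(\Sigma_d)=1$, and Remark \ref{Syl_p(Gfl)} then tells us that a Sylow $p$-subgroup of $GL_d(\Fq)$ is exactly $\Syl_p(T_d)\leqslant T_d$. Consequently, by Sylow's theorems, every abelian $p$-subgroup $A$ of $GL_d(\Fq)$ lies in some Sylow $p$-subgroup and is therefore $GL_d(\Fq)$-conjugate into $T_d$; since $T_d$ itself is abelian, the hypotheses of Corollary \ref{Maximal abelian p-subgroups give jointly injective maps} are met with the single subgroup $A_1=T_d$. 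That corollary then gives that $\beta\colon E^0(BGL_d(\Fq))\to E^0(BT_d)$ is injective.

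Combining the two halves, $\beta$ is an injection whose image is $E^0(BT_d)^{\Sigma_d}$, so it is an isomorphism onto $E^0(BT_d)^{\Sigma_d}$, as required. I do not expect a genuine obstacle here; the one point that must be got right is the reduction ``$d<p$ $\Rightarrow$ the Sylow $p$-subgroup sits inside the torus,'' which is exactly Remark \ref{Syl_p(Gfl)} and is what makes the single-subgroup case of Corollary \ref{Maximal abelian p-subgroups give jointly injective maps} applicable.
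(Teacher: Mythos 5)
Your proof is correct, but it takes a different route for the injectivity step than the paper does. The paper simply observes that, by Proposition \ref{Syl_p(G)}, the torus $T_d$ contains a Sylow $p$-subgroup of $GL_d(\Fq)$ when $d<p$, so that $[GL_d(\Fq):T_d]$ is coprime to $p$ and Proposition \ref{Restriction to Sylow injective} --- which is just Frobenius reciprocity together with the double-coset formula --- gives injectivity of the restriction $E^0(BGL_d(\Fq))\to E^0(BT_d)$ directly. You instead invoke Corollary \ref{Maximal abelian p-subgroups give jointly injective maps}, which sits on top of the Hopkins--Kuhn--Ravenel character isomorphism: your injectivity argument requires first knowing that $E^*(BGL_d(\Fq))$ is free (via the ``good groups'' machinery) and that every abelian $p$-subgroup is subconjugate into $T_d$ (which you correctly deduce from Remark \ref{Syl_p(Gfl)} and Sylow's theorems). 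Both routes are valid. The transfer argument is the lighter tool and is why the paper's proof is two lines; your route has the virtue of being the one that generalises cleanly to the $d=p$ case, where the Sylow $p$-subgroup is no longer abelian and one must compare against several maximal abelian $p$-subgroups simultaneously --- which is exactly how the paper proceeds in Section 6.4. So while you have reached for heavier machinery than needed here, it is the machinery that will be forced on you one dimension higher.
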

\begin{pf} By Proposition \ref{Syl_p(G)}, $T_d$ is a Sylow $p$-subgroup of $GL_d(\Fq)$ and it follows that the
map $E^0(BGL_d(\Fq))\to~E^0(BT_d)^{\Sigma_d}$ is injective. Further, the image is the whole of
$E^0(BT_d)^{\Sigma_d}$ by Proposition \ref{E^0(BGL_d(K))to E^0(BT_d)^Sigma_d is surjective}.\end{pf}

\begin{cor} With the hypotheses of Theorem \ref{d<p} we have a presentation
$$E^0(BGL_d(\Fq))\simeq
\left(\frac{E^0\lpow x_1,\ldots,x_d\rpow}{([p^v](x_1),\ldots,[p^v](x_d))}\right)^{\Sigma_d}$$ where
$\res_{T_d}^{GL_d(K)}(x_i)=\euler_l(T_d \overset{\pi_i}{\longrightarrow} \Fq^\times\rightarrowtail\Flbar)$ and,
as $E^0$-modules,
$$E^0(BGL_d(\Fq))\simeq E^0\{\sigma_1^{\alpha_1}\ldots \sigma_d^{\alpha_d}\mid 0\leq\alpha_1+\ldots+\alpha_d<p^{nv}\}$$ where
$\sigma_i$ is the $i^\text{th}$ elementary symmetric function in $x_1,\ldots,x_n$.\end{cor}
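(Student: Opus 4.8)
The plan is to derive this corollary directly from Theorem \ref{d<p}, which identifies $E^0(BGL_d(\Fq))$ with $E^0(BT_d)^{\Sigma_d}$ via restriction, together with the earlier description of $E^0(BT_d)$ in terms of the complex orientation. First I would recall from Section \ref{sec:restriction map} that under our complex orientation there is an identification $E^0(BT_d)\simeq E^0\lpow x_1,\ldots,x_d\rpow/([p^v](x_1),\ldots,[p^v](x_d))$, where $x_i$ is the restriction of $x$ along the $i^\text{th}$ projection $T_d\simeq (\Fq^\times)^d\to \Fq^\times$; since $\Fq^\times$ sits inside $\Flbar^\times$ with $v_p(|\Fq^\times|)=v$, the class $x_i$ is precisely $\euler_l$ of the composite $T_d\overset{\pi_i}{\to}\Fq^\times\rightarrowtail\Flbar$ (this is the $l$-Euler class of a $1$-dimensional representation, which by Proposition \ref{l-Chern Properties}(4) and functoriality is $\pi_i^*(x)$). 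Substituting this identification into Theorem \ref{d<p} gives the first displayed presentation.

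For the second statement I would pass to the $\Sigma_d$-invariants of the quotient ring $E^0\lpow x_1,\ldots,x_d\rpow/([p^v](x_1),\ldots,[p^v](x_d))$. The key point is that, by Corollary \ref{Basis for R[[x]]/[p](x)} applied with $F$ the standard $p$-typical formal group law over $E^0$, each $[p^v](x_i)$ is a unit multiple of the Weierstrass polynomial $g_v(x_i)$ of degree $p^{nv}$, so the quotient $E^0\lpow x_i\rpow/([p^v](x_i))$ is free over $E^0$ with basis $\{1,x_i,\ldots,x_i^{p^{nv}-1}\}$. Hence, after a Weierstrass change of the ideal generators (replacing $[p^v](x_i)$ by $g_v(x_i)$, which does not change the ideal), we have
\[
\frac{E^0\lpow x_1,\ldots,x_d\rpow}{([p^v](x_1),\ldots,[p^v](x_d))}\simeq \frac{E^0[x_1,\ldots,x_d]}{(g_v(x_1),\ldots,g_v(x_d))},
\]
which is visibly the reduction of $E^0\lpow x_1,\ldots,x_d\rpow$ modulo $(x_1^{N},\ldots,x_d^{N})$ in the sense relevant to Proposition \ref{Basis for Sigma_d-invariants}, with $N=p^{nv}$ — more precisely, one checks that the Weierstrass polynomial ideal and the power-ideal $(x_1^N,\ldots,x_d^N)$ have the same associated graded structure and that Proposition \ref{Basis for Sigma_d-invariants}'s proof (the lexicographic leading-term argument) goes through verbatim with monomials $x^\alpha$ for $0\le\alpha_i<N$. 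Then Proposition \ref{Basis for Sigma_d-invariants} gives that the $\Sigma_d$-invariants are free over $E^0$ with basis $\{\sigma_1^{\alpha_1}\cdots\sigma_d^{\alpha_d}\mid 0\le\alpha_1+\cdots+\alpha_d<N\}$, and since $N=p^{nv}$ this is exactly the claimed basis.

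The main obstacle, and the place where I would need to be careful rather than wave hands, is justifying that Proposition \ref{Basis for Sigma_d-invariants} — which is stated for the truncation $R\lpow x_1,\ldots,x_d\rpow/(x_1^N,\ldots,x_d^N)$ — applies to our quotient by the Weierstrass polynomials $g_v(x_i)$. The cleanest route is probably to observe that $g_v(x_i)=x_i^{p^{nv}}\cdot(\text{unit}) + (\text{lower order terms in }\mathfrak{m}_{E^0})$, so that modulo the maximal ideal $\mathfrak{m}_{E^0}$ the two ideals coincide; combined with $E^0$-freeness of both $E^0[x_i]/(g_v(x_i))$ and $E^0\lpow x_i\rpow/(x_i^{p^{nv}})$ on the monomial basis $\{1,\ldots,x_i^{p^{nv}-1}\}$, and a Nakayama/base-change argument (Corollary \ref{If m is regular on M then M is free over R} or Proposition \ref{M/IM=N/IN implies M=N}), one gets a natural $\Sigma_d$-equivariant $E^0$-module isomorphism between the two quotient rings carrying $x_i$ to $x_i$, hence $\sigma_k$ to $\sigma_k$. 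Alternatively — and this may be simpler to write — one reruns the lexicographic-ordering argument in the proof of Proposition \ref{Basis for Sigma_d-invariants} directly in $E^0[x_1,\ldots,x_d]/(g_v(x_1),\ldots,g_v(x_d))$, using that the leading monomial of $g_v(x_i)$ is $x_i^{p^{nv}}$ to reduce any monomial to one with all exponents $<p^{nv}$, exactly as the power-ideal is used in the original proof; linear independence follows as in Lemma \ref{linear independence of symmetric functions} by lifting relations to $E^0\lpow x_1,\ldots,x_d\rpow$. Either way the remaining content is routine, so I would present the second route briefly and cite Propositions \ref{Basis for Sigma_d-invariants} and \ref{linear independence of symmetric functions}.
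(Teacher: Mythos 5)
Your proposal follows the paper's route (cite the identification $E^0(BGL_d(\Fq))\simeq E^0(BT_d)^{\Sigma_d}$ from Theorem \ref{d<p}, then invoke Proposition \ref{Basis for Sigma_d-invariants}), but you are more careful than the paper, which cites Proposition \ref{Basis for Sigma_d-invariants} as if it literally applied even though that result is stated for the power ideal $(x_1^N,\ldots,x_d^N)$ rather than the Weierstrass ideal $([p^v](x_1),\ldots,[p^v](x_d))$. Your second route — rerunning the lexicographic leading-term argument using that each $g_v(x_i)$ has leading monomial $x_i^{p^{nv}}$, together with the fact that the monomial set $\{\mathbf{x}^\alpha : 0\le\alpha_i<p^{nv}\}$ is still a permutation basis for the quotient and that the linear-independence lifting argument of Lemma \ref{linear independence of symmetric functions} still works — is sound and is the right way to fill the gap. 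Be wary of the first route as you phrased it: the $E^0$-module isomorphism sending $\mathbf{x}^\alpha\mapsto\mathbf{x}^\alpha$ between the Weierstrass quotient and the power quotient is not a ring map, so while it does carry $x_i\mapsto x_i$ and $\sigma_k\mapsto\sigma_k$ (these are basis combinations of degree $<N$ in each variable), it does \emph{not} in general carry the element $\sigma_1^{\beta_1}\cdots\sigma_d^{\beta_d}$ computed in the first ring to the element of the same name computed in the second; already for $d=1$ one has $\sigma_1^2=x^2\neq 0$ in $E^0[x]/g_v(x)$ but $\sigma_1^2=0$ in $E^0[x]/x^N$. A workable variant of that idea is to reduce modulo $\mathfrak{m}_{E^0}$ — where the two ideals genuinely coincide — apply Proposition \ref{Basis for Sigma_d-invariants} over $\mathbb{F}_p$, and then lift via Nakayama on the invariant rings, using that $|\Sigma_d|$ is invertible (as $d<p$) so invariants commute with reduction mod $\mathfrak{m}$ (Lemma \ref{A^G/aA^G=(A/aA)^G}); but since you ultimately present the second route, your conclusion stands.
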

\begin{pf} The first statement follows from the corresponding presentation of $E^0(BT_d)^{\Sigma_d}$. The $E^0$-basis is an application of Proposition \ref{Basis for Sigma_d-invariants}.\end{pf}

Before we move on to the higher dimensional cases we note that, by the work of Strickland in
\cite{StricklandK(n)duality}, both of $E^0(BGL_d(\Fq))$ and $K^0(BGL_d(\Fq))$ have duality over their respective
coefficient rings in the sense of Section \ref{sec:duality algebras}. In fact we can verify the latter claim
directly.

\begin{lem} Let $N\in\mathbb{N}$ and write $A=\mathbb{F}_p[[x_1,\ldots,x_d]]/(x_1^N,\ldots,x_d^N)$. Then, taking
the standard basis for $A$, the map $A\to
\mathbb{F}_p$ given by $\sum_\mathbf{\alpha}k_\mathbf{\alpha} \mathbf{x}^\mathbf{\alpha}\longmapsto
k_{N-1,\ldots,N-1}$ is a Frobenius form on $A$.\end{lem}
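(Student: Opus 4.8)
The plan is to verify directly that $\theta$ is a Frobenius form, that is, that the $A$-linear map $\Phi\colon A\to \Hom_{\mathbb{F}_p}(A,\mathbb{F}_p)$ given by $a\mapsto a.\theta$, where $(a.\theta)(b)=\theta(ab)$, is an isomorphism of $A$-modules. Since $A=\mathbb{F}_p\lpow x_1,\ldots,x_d\rpow/(x_1^N,\ldots,x_d^N)$ has the standard monomial basis $\{\mathbf{x}^\alpha\mid 0\le\alpha_i<N\}$, it is finite-dimensional over $\mathbb{F}_p$ of dimension $N^d$, and $\Hom_{\mathbb{F}_p}(A,\mathbb{F}_p)$ has the same dimension. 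Hence it suffices to show that $\Phi$ is injective; and $\Phi$ is automatically $A$-linear, so once it is a bijection we conclude that $A$ has duality over $\mathbb{F}_p$ with Frobenius form $\theta=\Phi(1)$.

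For injectivity, suppose $0\ne a=\sum_\alpha k_\alpha\mathbf{x}^\alpha\in A$ written in the standard basis, and choose a multi-index $\beta$ with $0\le\beta_i<N$ for all $i$ and $k_\beta\ne 0$. Put $\gamma=(N-1-\beta_1,\ldots,N-1-\beta_d)$, which is again a legitimate exponent. Then for a monomial $\mathbf{x}^\alpha$ in the support of $a$ one has $\mathbf{x}^\alpha\mathbf{x}^\gamma=\mathbf{x}^{\alpha+\gamma}$, and this equals the top monomial $\mathbf{x}^{(N-1,\ldots,N-1)}$ in $A$ exactly when $\alpha_i+\gamma_i=N-1$ for every $i$, i.e. when $\alpha=\beta$; for any other $\alpha$ the product is either zero in $A$ (some coordinate reaches $N$) or a standard basis monomial other than the top one. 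Therefore $\theta(a\,\mathbf{x}^\gamma)=k_\beta\ne 0$, so $\Phi(a)\ne 0$, and $\Phi$ is injective, hence an isomorphism.

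Alternatively, one can deduce this from the machinery already developed: $A$ is a local ring with maximal ideal $\mathfrak{m}=(x_1,\ldots,x_d)$ and residue field $\mathbb{F}_p$ by Lemma \ref{(A,I) local iff I^N=0} (note $\mathfrak{m}^{d(N-1)+1}=0$), and a short computation identifies $\soc(A)=\ann_A(\mathfrak{m})$ with the one-dimensional space spanned by $\mathbf{x}^{(N-1,\ldots,N-1)}$. Proposition \ref{gorenstein} then gives that $A$ has duality, and the Frobenius form constructed in its proof — take a socle generator $v$, extend $\{v\}$ to a basis, let the form be dual to $v$ on that basis — is exactly $\theta$ for the standard basis.

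I do not expect a serious obstacle. The only point requiring a moment's care is that, for the chosen $\beta$, no monomial of the support of $a$ other than $\mathbf{x}^\beta$ can combine with $\mathbf{x}^\gamma$ to hit the top monomial, which is immediate from the observation that $\alpha_i+\gamma_i=N-1$ forces $\alpha_i=\beta_i$. The reduction of "isomorphism" to "injective" via the equality of dimensions is what keeps the argument short.
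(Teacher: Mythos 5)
Your proof is correct. Your primary argument — fix a $\beta$ in the support of $a$ with $k_\beta\ne 0$, pair against $\mathbf{x}^\gamma$ with $\gamma=(N-1-\beta_1,\ldots,N-1-\beta_d)$, observe that only $\alpha=\beta$ contributes to the top coefficient, and use the dimension count to upgrade injectivity to bijectivity — is a clean, self-contained verification that makes no appeal to the duality machinery developed earlier in the chapter. The paper instead proceeds exactly along the lines of your "alternative": it computes $\soc(A)=\ann_A(\mathfrak m)=\mathbb{F}_p\cdot(x_1\ldots x_d)^{N-1}$, observes this is one-dimensional, and then invokes Proposition \ref{gorenstein} (more precisely, the construction in its proof: dualize a socle generator on a basis containing it) to conclude. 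The tradeoff is the usual one: your direct route is elementary and transparent but re-proves a special case of the general criterion; the paper's route is shorter on the page because the socle-dimension criterion has already been established, and it also makes explicit the fact that $\soc(A)$ is spanned by the top monomial, which is conceptually useful. Both are sound, and your reduction of ``isomorphism'' to ``injective'' via equal finite dimensions is exactly the right way to keep the direct argument short.
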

\begin{pf} Note first that $$\soc(A)=\ann_A(\mathfrak{m})=\ann_A(x_1,\ldots,x_d)=((x_1\ldots x_d)^{N-1})=\mathbb{F}_p.(x_1\ldots x_d)^{N-1}.$$ As in the proof of Proposition
\ref{gorenstein}, it follows that $A$ has duality over $\mathbb{F}_p$ and that the given map is indeed a
Frobenius form on $A$.\end{pf}

\begin{cor} For any $N\in\mathbb{N}$, the $\mathbb{F}_p$-algebra
$\left(\mathbb{F}_p[[x_1,\ldots,x_d]]/(x_1^N,\ldots,x_d^N)\right)^{\Sigma_d}$ has duality over
$\mathbb{F}_p$.\end{cor}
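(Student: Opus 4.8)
The plan is to apply Proposition \ref{A^G has duality if A does} with $K=\mathbb{F}_p$, $A=\mathbb{F}_p[[x_1,\ldots,x_d]]/(x_1^N,\ldots,x_d^N)$, and $G=\Sigma_d$ acting by permuting the variables. The hypotheses of that proposition require that $A$ be a finite-dimensional local $\mathbb{F}_p$-algebra with $\mathbb{F}_p\to A\to A/\mathfrak{m}$ an isomorphism, that $p\nmid|\Sigma_d|$, and — crucially — that $A$ admit a $\Sigma_d$-\emph{invariant} Frobenius form. The conclusion is then exactly that $A^{\Sigma_d}$ has duality over $\mathbb{F}_p$, which is the statement of the corollary. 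Since we are already in the regime $d<p$, we have $d!$ coprime to $p$, so $p\nmid|\Sigma_d|$ holds; and $A$ is visibly finite-dimensional local with residue field $\mathbb{F}_p$ (its maximal ideal $\mathfrak{m}=(x_1,\ldots,x_d)$ satisfies $\mathfrak{m}^{d(N-1)+1}=0$, so Lemma \ref{(A,I) local iff I^N=0} applies).

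First I would invoke the preceding lemma, which exhibits the map $\theta:A\to\mathbb{F}_p$, $\sum_{\alpha}k_\alpha\mathbf{x}^\alpha\mapsto k_{N-1,\ldots,N-1}$, as a Frobenius form on $A$. The one genuine thing to check is that this $\theta$ is $\Sigma_d$-invariant, i.e.\ $\theta(\sigma.y)=\theta(y)$ for all $\sigma\in\Sigma_d$ and $y\in A$. But the $\Sigma_d$-action permutes the standard monomial basis by $\sigma.\mathbf{x}^\alpha=\mathbf{x}^{\sigma.\alpha}$, and the monomial $x_1^{N-1}\cdots x_d^{N-1}$ (whose coefficient $\theta$ reads off) is fixed by every permutation of the variables. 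Hence the coefficient of $(x_1\cdots x_d)^{N-1}$ in $\sigma.y$ equals that in $y$, so $\theta$ is $\Sigma_d$-invariant.

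With $\theta$ a $\Sigma_d$-invariant Frobenius form in hand, Proposition \ref{A^G has duality if A does} immediately gives that the restriction $\theta|_{A^{\Sigma_d}}$ is a Frobenius form on $A^{\Sigma_d}$, so $A^{\Sigma_d}$ has duality over $\mathbb{F}_p$, completing the proof. There is essentially no hard step here: the only point requiring a moment's thought is the $\Sigma_d$-invariance of $\theta$, and even that is transparent once one notes that the top monomial $(x_1\cdots x_d)^{N-1}$ is symmetric. One could alternatively argue directly that $\soc(A^{\Sigma_d})$ is one-dimensional over $\mathbb{F}_p$ (spanned by the image of $(x_1\cdots x_d)^{N-1}$) and appeal to Proposition \ref{gorenstein}, but routing through Proposition \ref{A^G has duality if A does} is cleaner and reuses exactly the machinery already set up.

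Finally, I would remark that this gives the promised direct verification that $K^0(BGL_d(\Fq))$ has duality over $K^0=\mathbb{F}_p[u,u^{-1}]$: for $v_p(q-1)=v>0$ and $d<p$ we have $K^0(BGL_d(\Fq))\simeq\big(\mathbb{F}_p[[x_1,\ldots,x_d]]/([p^v](x_i))\big)^{\Sigma_d}$, and since $[p^v](x)$ is a unit multiple of a Weierstrass polynomial of degree $N=p^{nv}$ (Corollary \ref{Basis for R[[x]]/[p](x)}), after a change of coordinate this is isomorphic as an $\mathbb{F}_p$-algebra to the ring $\big(\mathbb{F}_p[[x_1,\ldots,x_d]]/(x_1^N,\ldots,x_d^N)\big)^{\Sigma_d}$ of the corollary.
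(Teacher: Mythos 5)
Your proof is correct and takes essentially the same route as the paper: apply Proposition~\ref{A^G has duality if A does} with $A=\mathbb{F}_p[[x_1,\ldots,x_d]]/(x_1^N,\ldots,x_d^N)$, $G=\Sigma_d$, and the ``top coefficient'' Frobenius form from the preceding lemma. You are a bit more careful than the paper in spelling out the hypothesis checks — in particular the $\Sigma_d$-invariance of $\theta$ (clear since $(x_1\cdots x_d)^{N-1}$ is symmetric) and the requirement $p\nmid|\Sigma_d|$, i.e.\ $d<p$, which the paper's statement leaves implicit from the surrounding context.
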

\begin{pf} Apply Proposition \ref{A^G has duality if A does} with $A$ being the $\mathbb{F}_p$-algebra $\mathbb{F}_p[[x_1,\ldots,x_d]]/(x_1^N,\ldots,x_d^N)$ and $G=\Sigma_d$, noting that $A$ has $\mathbb{F}_p$-basis
$\{x_1^{\alpha_1}\ldots x_d^{\alpha_d}\mid 0\leq\alpha_1,\ldots,\alpha_d<N\}$ and that the map $A\to
\mathbb{F}_p$ given by $\sum_\mathbf{\alpha}k_\mathbf{\alpha} \mathbf{x}^\mathbf{\alpha}\longmapsto
k_{N-1,\ldots,N-1}$ is a Frobenius form on $A$.
\end{pf}
\begin{cor} With the hypotheses of Theorem \ref{d<p} the algebra $K^0(BGL_d(K))$ has duality over
$K^0=\mathbb{F}_p$.\end{cor}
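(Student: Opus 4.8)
The plan is to identify $K^0(BGL_d(\Fq))$ with a ring of the form $\left(\mathbb{F}_p\lpow x_1,\ldots,x_d\rpow/(x_1^N,\ldots,x_d^N)\right)^{\Sigma_d}$ and then apply the preceding corollary. First I would note that, under the hypotheses of Theorem \ref{d<p}, the $E^0$-module $E^0(BGL_d(\Fq))\simeq E^0(BT_d)^{\Sigma_d}$ is free over $E^0$: $E^0(BT_d)$ is free over $E^0$, and since $d<p$ the order $d!$ of $\Sigma_d$ is a unit in $E^0$, so the averaging idempotent $e=\frac{1}{d!}\sum_{\sigma\in\Sigma_d}\sigma$ realises $E^0(BGL_d(\Fq))=e\,E^0(BT_d)$ as a direct summand. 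Hence Proposition \ref{K^*(BG)=K^* tensor E^*(BG)} applies and, restricting to degree $0$ (so that $K^0=\mathbb{F}_p=E^0/\mathfrak{m}_{E^0}$ with $\mathfrak{m}_{E^0}=(p,u_1,\ldots,u_{n-1})$), we get $K^0(BGL_d(\Fq))=K^0\tensor_{E^0}E^0(BGL_d(\Fq))$.

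Next I would combine this with the presentation $E^0(BGL_d(\Fq))=\left(E^0\lpow x_1,\ldots,x_d\rpow/([p^v](x_1),\ldots,[p^v](x_d))\right)^{\Sigma_d}$ from the corollary to Theorem \ref{d<p}. Because $d!$ is invertible in $E^0$, the idempotent $e$ makes the formation of $\Sigma_d$-invariants a functorial direct summand, so it commutes with the base change $-\tensor_{E^0}K^0$; therefore
$$K^0(BGL_d(\Fq))\simeq\left(K^0\tensor_{E^0}\frac{E^0\lpow x_1,\ldots,x_d\rpow}{([p^v](x_1),\ldots,[p^v](x_d))}\right)^{\Sigma_d}.$$
The formal group law governing $K$ is the reduction of the standard $p$-typical law $F$ of Proposition \ref{Universal deformation FGL} modulo $(p,u_1,\ldots,u_{n-1})$; by that proposition together with Corollary \ref{p-typical FGL over Z} this reduction satisfies $[p](x)=x^{p^n}$, and hence $[p^v](x)=x^{p^{nv}}$. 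Thus $K^0\tensor_{E^0}E^0\lpow x_1,\ldots,x_d\rpow/([p^v](x_i))=\mathbb{F}_p\lpow x_1,\ldots,x_d\rpow/(x_1^{p^{nv}},\ldots,x_d^{p^{nv}})$, and $K^0(BGL_d(\Fq))$ is precisely the ring of $\Sigma_d$-invariants of this. Applying the previous corollary with $N=p^{nv}$ then yields that $K^0(BGL_d(\Fq))$ has duality over $\mathbb{F}_p$.

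The verifications involved are routine: the commutation of invariants with reduction is the standard averaging argument (valid because $d<p$), already used implicitly in Proposition \ref{A^G has duality if A does}. The one point needing a little care — and the main obstacle — is to make sure that the coordinate in which we present $K^0(BT_d)$ is one for which $[p^v]$ is literally $x^{p^{nv}}$, not merely a Weierstrass polynomial of degree $p^{nv}$ with possibly nonzero lower-order terms; this is why I would invoke the explicit reduction of $F$ from Proposition \ref{Universal deformation FGL} rather than only the abstract statement that $K$ has height $n$, and it is what pins down $N=p^{nv}$ so that the earlier duality corollary applies verbatim.
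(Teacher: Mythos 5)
Your proof is correct and takes essentially the same route as the paper's: reduce to degree $0$ via Proposition \ref{K^*(BG)=K^* tensor E^*(BG)}, identify the result as the $\Sigma_d$-invariants of $\mathbb{F}_p\lpow x_1,\ldots,x_d\rpow/(x_1^{p^{nv}},\ldots,x_d^{p^{nv}})$, and then invoke the immediately preceding corollary. You are slightly more explicit than the paper in checking that passing to $\Sigma_d$-invariants commutes with base change to $K^0$ (via the averaging idempotent, which exists because $d<p$ makes $d!$ invertible), and in fact the paper's stated identification appears to omit the $\Sigma_d$-superscript, so your care there is well placed. The one point where you go further than necessary is in insisting that $[p^v](x)$ be \emph{literally} $x^{p^{nv}}$ modulo $(p,u_1,\ldots,u_{n-1})$: a unit multiple suffices, since $[p^v](x_i)\sim x_i^{p^{nv}}$ already forces the equality of ideals $([p^v](x_1),\ldots,[p^v](x_d))=(x_1^{p^{nv}},\ldots,x_d^{p^{nv}})$ in $\mathbb{F}_p\lpow x_1,\ldots,x_d\rpow$, and this is all the preceding duality corollary requires. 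The paper only uses the unit-multiple statement for exactly this reason, so the ``main obstacle'' you flag at the end is not actually an obstacle.
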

\begin{pf} By Proposition \ref{K^*(BG)=K^* tensor E^*(BG)} we have $K^0(BGL_d(K))=K^0\tensor_{E^0} E^0(BGL_d(K))$. Since $[p^v](x)$ is a unit multiple of $x^{p^{nv}}$ modulo $(p,u_1,\ldots,u_n)$ we
find that
$$K^0(BGL_d(K))=\mathbb{F}_p\lpow x_1,\ldots,x_d\rpow/(x_1^{p^{nv}},\ldots,x_d^{p^{nv}}).$$ Applying the
preceding proposition then gives the result.\end{pf}

\begin{rem} The case where $v_p(q-1)=0$ seems to be quite complicated. However, by Proposition \ref{Sylow p-subgroup for d<p and
v_p(K^x)=0}, we have identified a Sylow $p$-subgroup $P$ of $GL_d(K)$ and it follows that the restriction map
$E^0(BGL_d(K))\to E^0(BP)$ is injective. It remains to identify the image of the map, although this will be
easier said than done.\end{rem}

\section{Dimension $p$}

As usual, let $q=l^r$ be a power of a prime $l$ different to $p$ and suppose that $v_p(q-1)=v>0$. We aim to get
a handle on $E^0(BGL_p(\Fq))$ by consideration of two comparison maps to more easily understandable rings. As
for the low-dimensional case we have the inclusion of the maximal torus $T=T_p$ which induces a map to one of
these rings, although unlike for the lower dimensional case this will no longer be an isomorphism. For the
other, choosing a basis of $\mathbb{F}_{q^p}^{\times}$ over $\Fq$ leads to an embedding
$\mu:\mathbb{F}_{q^p}^{\times}\to GL_p(\Fq)$ and we use this to define our second map. It is the interplay
between these two maps that will give us the structure theorem in this case.

For the remainder of this section, we will let $\beta:E^0(BGL_p(\Fq))\to E^0(BT)^{\Sigma_p}$ denote the
surjective restriction map of Section \ref{sec:restriction map}. As in Section \ref{sec:character theory for
GL_d(Fq)}, writing $x=\euler_l(\fq{p}^\times\rightarrowtail\Flbar^\times)$ we get $E^0(B\fq{p}^\times)\simeq
E^0\lpow x\rpow/[p^{v+1}](x)$ and hence a quotient map $q:E^0(B\fq{p}^\times)\to E^0\lpow x\rpow/\langle
p\rangle([p^v](x))$ and we denote the target ring here by $D$. We then let $\alpha=q\circ
\mu^*:E^0(BGL_p(\Fq))\to D$. Note that $\Gamma=\Gal(\fq{p}/\Fq)$ acts on $E^0(B\fq{p}^\times)$ and hence also on
$D$.

\begin{lem}\label{Maps land in Gamma-invariants} The map $E^0(BGL_p(\Fq))\to E^0(B\fq{p}^\times)$ lands in the $\Gamma$-invariants. Hence the image of $\alpha$ is contained in $D^\Gamma$.\end{lem}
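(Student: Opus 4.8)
The plan is to show that the composite $\mu^*:E^0(BGL_p(\Fq))\to E^0(B\fq{p}^\times)$ equalises the two obvious maps arising from the $\Gamma$-action, so that it factors through the coinvariants, and then to identify the relevant coinvariant quotient. The key observation is that for each $\gamma\in\Gamma$ the Frobenius map on $\fq{p}$ is an $\Fq$-linear (in fact $\Fq$-algebra) automorphism of $\fq{p}$, hence conjugation by it gives an element $g_\gamma\in GL_p(\Fq)$ once we have fixed our basis of $\fq{p}$ over $\Fq$. The relation to check is that the square
\[
\xymatrix{
\fq{p}^\times \ar[r]^-{\mu} \ar[d]_{\gamma} & GL_p(\Fq) \ar[d]^{\conj_{g_\gamma}}\\
\fq{p}^\times \ar[r]^-{\mu} & GL_p(\Fq)
}
\]
commutes: indeed $\mu_{\gamma(a)}$ is multiplication by $\gamma(a)$, and since $\gamma$ is a ring automorphism this equals $\gamma\circ\mu_a\circ\gamma^{-1}$, which is precisely $\conj_{g_\gamma}(\mu_a)$. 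Passing to classifying spaces and then to $E$-theory, and using Proposition \ref{Conjugation induces identity} (so that $\conj_{g_\gamma}^*$ is the identity on $E^0(BGL_p(\Fq))$), we get $\gamma^*\circ\mu^* = \mu^*$ on $E^0(BGL_p(\Fq))$. This exactly says that $\mu^*$ lands in $E^0(B\fq{p}^\times)^\Gamma$.

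Next I would pass to the quotient $D = E^0\lpow x\rpow/\langle p\rangle([p^v](x))$. Since $\Gamma$ acts on $E^0(B\fq{p}^\times)$ compatibly with the quotient map $q$ (this is how the $\Gamma$-action on $D$ was defined), the composite $\alpha = q\circ\mu^*$ satisfies $\gamma^*\circ\alpha = q\circ\gamma^*\circ\mu^* = q\circ\mu^* = \alpha$ for every $\gamma\in\Gamma$. Hence the image of $\alpha$ lies in $D^\Gamma$, which is the assertion.

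The only genuine point requiring care — the thing I would be careful to spell out rather than wave at — is the construction of $g_\gamma$ and the verification that conjugation by it realises the Galois action on $\mu$. This hinges on the functoriality of $BGL_p(\Fq)$ together with the fact that an $\Fq$-linear automorphism of the $p$-dimensional $\Fq$-vector space $\fq{p}$ is, after our chosen basis identification $\fq{p}\cong\Fq^p$, an honest element of $GL_p(\Fq)$; the matrix identity $\mu_{\gamma(a)} = g_\gamma \mu_a g_\gamma^{-1}$ is then just the statement that $\gamma$ conjugates "multiplication by $a$" to "multiplication by $\gamma(a)$", valid because $\gamma$ is a field automorphism. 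Everything else is a routine diagram chase using Proposition \ref{Conjugation induces identity}. I do not expect any serious obstacle; the proof is short once the commuting square is set up correctly.
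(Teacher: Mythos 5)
Your proof is correct and takes essentially the same approach as the paper: both identify the Frobenius automorphism of $\fq{p}$ with an element $F_q$ (your $g_\gamma$) of $GL_p(\Fq)$ once a basis is fixed, observe that conjugation by it intertwines the embedding $\mu$ with its Galois twist, and then invoke the fact that conjugation induces the identity on $E^0(BGL_p(\Fq))$. You are slightly more explicit than the paper about the final step of passing from $E^0(B\fq{p}^\times)^\Gamma$ to $D^\Gamma$, which the paper compresses into ``the result follows.''
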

\begin{pf} Let $k\in \fq{p}^\times$. Then the $\Fq$-linear isomorphism $\Frob_q$ fits into the diagram
$$
\xymatrix{ \fq{p} \ar[r]^{\times k} \ar[d]_{\Frob_q}^\wr & \fq{p} \ar[d]^{\Frob_q}_\wr\\
\fq{p} \ar[r]_{\times k^q} & \fq{p}.}
$$
Hence, with our chosen basis for $\fq{p}$ over $\Fq$, there is an $F_q\in GL_p(\Fq)$ corresponding to the
Frobenius map and the diagram
$$
\xymatrix{ \fq{p}^\times~ \ar@{>->}[r] \ar[d]_{\Frob_q} & GL_p(\Fq) \ar[d]^{\conj_{F_q}}\\
\fq{p}^\times~ \ar@{>->}[r] & GL_p(\Fq)}
$$
commutes. On passing to cohomology, $\conj_{F_q}^*$ is just the identity map so that the map $E^0(BGL_p(\Fq))\to
E^0(B\fq{p}^\times)$ lands in the $\Gamma$-invariants. The result follows.\end{pf}

We now state our main theorem in this case.

\begin{theorem}\label{Theorem, d=p} Let $q=l^r$ be a power of a prime $l$ different to $p$ and suppose that $v_p(q-1)=v>0$. Then there are jointly injective $E^0$-algebra epimorphisms
$$
\xymatrix{& E^0(BGL_p(\Fq)) \ar@{->>}[dl]_{\beta} \ar@{->>}[dr]^{\alpha}\\
E^0(BT)^{\Sigma_p} & & D^\Gamma }
$$
which induce a rational isomorphism
$$\mathbb{Q}\tensor E^0(BGL_p(\Fq)) \overset{\sim}{\longrightarrow}
(\mathbb{Q}\tensor E^0(BT)^{\Sigma_p})\times (\mathbb{Q}\tensor D^\Gamma).$$ Further, both $\ker(\alpha)$ and
$\ker(\beta)$ are $E^0$-module summands in $E^0(BGL_p(\Fq))$ and the latter is principal. Finally, we have
$\ker(\alpha)=\ann(\ker(\beta))$ and $\ker(\beta)=\ann(\ker(\alpha))$.\end{theorem}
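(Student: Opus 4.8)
The plan is to assemble Theorem \ref{Theorem, d=p} from pieces that are already largely in place. First I would establish the two maps. The map $\beta:E^0(BGL_p(\Fq))\to E^0(BT)^{\Sigma_p}$ is surjective by Proposition \ref{E^0(BGL_d(K))to E^0(BT_d)^Sigma_d is surjective}. For $\alpha$, Lemma \ref{Maps land in Gamma-invariants} shows it lands in $D^\Gamma$; to get surjectivity onto $D^\Gamma$ I would argue as follows. The subgroup $A=\langle a\rangle\leqslant GL_p(\Fq)$ from Proposition \ref{Abelian p-subgroups of GL_p(K)} is (conjugate into) the image of $\mu$ on the $p$-part of $\fq{p}^\times$; the restriction $E^0(BGL_p(\Fq))\to E^0(BA)$ factors through $E^0(B\fq{p}^\times)\to E^0(BA)$, which is the isomorphism of Corollary \ref{E^0(BA) to E^0(BA_p) is iso}, and composing with $q$ gives $\alpha$ up to this identification. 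Since $N_{GL_p(\Fq)}(A)$ acts on $A$ through the automorphisms $a\mapsto a^{1+kp^v}$ (Lemma \ref{Normalizer of A}), and since modulo $\langle p\rangle([p^v](x))$ these induce exactly the $\Gamma$-action on $D$, a transfer/Frobenius-reciprocity argument along the lines of Proposition \ref{E^0(BG)=E^0(BN)^G/N if N is normal in G} — applied to an intermediate subgroup playing the role that $M=\Aut(C_p)\ltimes C_p$ played for $\Sigma_p$ in Proposition \ref{Cohomology of Sigma_p} — shows $\alpha$ surjects onto $D^\Gamma$. This is the step I expect to be the main obstacle: pinning down precisely which subgroup of $GL_p(\Fq)$ has the right index (coprime to $p$) and the right normalizer action so that the transfer argument identifies the image as $D^\Gamma$.

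Next, joint injectivity. By Proposition \ref{Abelian p-subgroups of GL_p(K)} every abelian $p$-subgroup of $GL_p(\Fq)$ is $GL_p(\Fq)$-conjugate into either $T$ or $A$. Since $GL_p(\Fq)$ is good, $E^0(BGL_p(\Fq))$ is free over $E^0$, so Corollary \ref{Maximal abelian p-subgroups give jointly injective maps} applies with $A_1=T$, $A_2=A$: the restriction maps $E^0(BGL_p(\Fq))\to E^0(BT)$ and $E^0(BGL_p(\Fq))\to E^0(BA)$ are jointly injective. Composing the second with the quotient $E^0(B\fq{p}^\times)\to D$ could a priori lose information, so I would instead note that $\ker(\alpha)$ equals the kernel of restriction to $A$ intersected with \dots — more cleanly: the rational splitting below will force joint injectivity of $(\beta,\alpha)$ once we know the rational statement together with freeness over $E^0$ (as in the last lines of the proof of Corollary \ref{Maximal abelian p-subgroups give jointly injective maps}, $E^0(BGL_p(\Fq))\hookrightarrow \mathbb{Q}\tensor E^0(BGL_p(\Fq))$). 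So I would derive the rational isomorphism first. That is exactly Proposition \ref{L tensor E^0(BFq^p) splits} with $L$ replaced by $\mathbb{Q}$: re-run that argument using the HKR character map over $\mathbb{Q}\tensor E^0$ rather than $L$, using Proposition \ref{(Phi^p/Sigma_p)^Gamma)} to split $(\Phi^p/\Sigma_p)^\Gamma$ and Lemma \ref{CRT applied rationally to p^s+1 series} for the ring-side splitting. This gives $\mathbb{Q}\tensor E^0(BGL_p(\Fq))\iso (\mathbb{Q}\tensor E^0(BT)^{\Sigma_p})\times(\mathbb{Q}\tensor D^\Gamma)$ via $(\beta,\alpha)$, whence $(\beta,\alpha)$ is jointly injective integrally.

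Now the structural claims. From the rational square-zero / product decomposition, $\ker(\alpha)$ and $\ker(\beta)$ are rationally complementary ideals with $\ker(\alpha)\cap\ker(\beta)=0$ and $\ker(\alpha)+\ker(\beta)$ of finite index. I would show $\ker(\beta)$ is principal by identifying a generator: $\ker(\beta)$ is the kernel of $E^0(BGL_p(\Fq))\to E^0(BT)^{\Sigma_p}$, and one expects it generated by a single class built from a transfer or an Euler-class expression (the natural candidate is the image under $\alpha$-side data of something like $\langle p\rangle$-type element, detected by the $D^\Gamma$ factor only). Concretely, $\ker(\beta)$ maps isomorphically into $D^\Gamma$ under $\alpha$ (joint injectivity plus $\ker(\beta)\cap\ker(\alpha)=0$), and its image is an ideal of $D^\Gamma$; since $D=E^0\lpow x\rpow/\langle p\rangle([p^v](x))$ and $D^\Gamma$ are cyclic-by-construction modules of the shape analyzed in Section \ref{sec:Sigma_p}, that image is principal, and lifting a generator gives principality of $\ker(\beta)$. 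Then $E^0(BGL_p(\Fq))$ being a finite-dimensional (after reduction mod $\mathfrak{m}$) algebra with the structure of a pullback of two duality algebras, I would invoke Proposition \ref{gorenstein} and Corollary \ref{ann P is a summand}: $E^0(BGL_p(\Fq))$ (or its reduction) is a duality algebra — this follows since $\beta$ and $\alpha$ exhibit it as having the right socle, or by pulling back Frobenius forms — and then for a duality algebra $\ann(\ker\alpha)=\ker\beta$ and $\ann(\ker\beta)=\ker\alpha$ follows from Lemma \ref{ann I = ann_theta I} once one checks $\theta(\ker\alpha\cdot\ker\beta)=0$, which holds because $\ker\alpha\cdot\ker\beta\subseteq\ker\alpha\cap\ker\beta=0$; and Corollary \ref{ann P is a summand} gives that both kernels are $E^0$-module summands since each is the annihilator of the other and the splitting over $\mathbb{Q}$ shows they are summands after inverting $p$, which upgrades to an integral summand by freeness. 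Finally the explicit $E^0$-basis is read off from the basis of $E^0(BT)^{\Sigma_p}$ (Proposition \ref{Basis for Sigma_d-invariants}) together with the principal generator of $\ker(\beta)$ and the description of $D^\Gamma$ from Proposition \ref{E^0(BC)^Aut(C) basis}-style computations.
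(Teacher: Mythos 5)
Your overall assembly strategy matches the paper's in outline: establish surjectivity of both maps, derive the rational isomorphism, deduce joint injectivity from freeness, then handle the kernel structure. But several of the individual steps either take a route that does not work, or wave past the genuine content of the theorem.

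\textbf{Surjectivity of $\alpha$.} You propose a transfer/Frobenius-reciprocity argument with $A$ playing the role $C_p$ plays inside $\Sigma_p$. This does not go through: $A$ is a $p$-group, so its index in $GL_p(\Fq)$ is not coprime to $p$, and the transfer argument of Proposition \ref{E^0(BG)=E^0(BN)^G/N if N is normal in G} requires prime-to-$p$ index. The paper instead (Proposition \ref{alpha is surjective}) computes $\alpha$ directly by diagonalising the representation $\fq{p}^\times \rightarrowtail GL_p(\Flbar)$ via the isomorphism $\Flbar\tensor_{\Fq}\fq{p}\iso\Flbar^p$ (Lemma \ref{Galois extensions}). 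This shows $\alpha(\sigma_p) = y = \prod_k [q^k](x)$, which generates $D^\Gamma$ as an $E^0$-algebra by Proposition \ref{structure of D^Lambda}. That is a one-line conclusion once you have the right comparison diagram; no transfer is needed.

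\textbf{Principality of $\ker(\beta)$.} This is the real gap. You argue: $\alpha$ is injective on $\ker(\beta)$ and its image is an ideal of $D^\Gamma$, and $D^\Gamma$ "being cyclic-by-construction," the image is principal. But $D^\Gamma \simeq E^0\lpow y\rpow/h(y)$ is a regular local ring of Krull dimension $n$; for $n>1$ it is not a PID, and ideals of it are not automatically principal. That the image is in fact principal (and, crucially, that $\ker(\beta)$ equals the ideal $I$ generated by the specific class $t$ restricting to $\prod_i[p^v](x_i)$) is exactly the hard content. The paper establishes $I\subseteq\ker(\beta)$, computes $\rank_{E^0}(\ker(\beta)) = \rank_{E^0}(D^\Gamma) = N$ from the rational splitting, then proves $I$ has full rank $N$ by an explicit mod-$(p,u_1,\ldots,u_{n-1})$ computation — identifying $t$ with $c_p^{p^{nv}}$, proving $c_p^{N+p^{nv}-1}\neq 0$ in $K^0(BN)$ via the level-structure relation $c_p^{p^{nv}} + dh(d,c_p) = 0$ — and then concludes $I = \ker(\beta)$ by Nakayama. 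Your proposal mentions none of this and would need to recreate it.

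\textbf{Summand claim.} Your argument is circular: you invoke Corollary \ref{ann P is a summand} to show the kernels are summands, but that corollary takes a summand $P$ as input and tells you $\ann(P)$ is a summand. The paper's route is direct: the short exact sequences $0\to\ker(\beta)\to E^0(BGL_p(\Fq))\to E^0(BT)^{\Sigma_p}\to 0$ and similarly for $\alpha$ split because the targets are free $E^0$-modules, so the kernels are projective, hence free over the local ring $E^0$. The annihilator identifications then follow by invoking duality of $E^0(BGL_p(\Fq))$ (taken from \cite{StricklandK(n)duality}, not derived from Proposition \ref{gorenstein}) together with $\ker(\alpha)\cdot\ker(\beta)=0$ and rank-counting. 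Your suggestion to re-derive duality "by pulling back Frobenius forms" along the pullback structure is not attempted in the paper and is not obviously valid.

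\textbf{Rational isomorphism.} You suggest re-running the HKR character argument with $\mathbb{Q}\tensor E^0$ in place of $L$; the character isomorphism is stated only over $L$. The paper instead uses Proposition \ref{E^*(BG) as limit over p-groups} (the abelian $p$-subgroup limit theorem over $\mathbb{Q}$), simplified via Lemma \ref{simplifying the limit diagram} to a pullback over the three groups $A, T_{(p)}, \Delta_p$ using the classification from Proposition \ref{Abelian p-subgroups of GL_p(K)}. This is a cleaner and more elementary route than rehearsing the character argument, and it avoids descent from $L$ to $\mathbb{Q}\tensor E^0$, which would require a separate Galois-descent step.
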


The rest of this chapter is devoted to proving this result.

\subsection{A cyclic $p$-subgroup of maximal order}\label{sec:definition of A}

Recall that we defined $\alpha:E^0(BGL_p(\Fq))\to D$ as the composite of the restriction under some embedding
$\fq{p}^\times\to GL_p(\Fq)$ and a quotient map. For convenience, we choose the embedding a little more
carefully to allow us easier analysis of the structure.

Recall, from Section \ref{sec:finite fields}, that we have a fixed embedding $\Flbar^\times\to S^1$ and this
gives us an isomorphism of groups $\mathbb{Z}/p^\infty\iso \{a\in\Flbar^\times|a^{p^s}=1 \mbox{ for some
$s$}\}$. Hence, for each $s\geq 1$, we have canonical generators, $a_s$ say, for the cyclic subgroups $C_{p^s}$
compatible in the sense that $a_{s+1}^p=a_s$.

As in Section \ref{sec:abelian subgroups of GL_p(K)}, we let $\gamma=\gamma_p$ denote the standard $p$-cycle
$(1\ldots p)\in \Sigma_p$ and put $$a=\gamma(a_v,1,\ldots,1)\in \Sigma_p\wr \Fq^\times\subseteq GL_p(\Fq).$$ We
then let $A=\langle a\rangle$ be the subgroup of $GL_p(\Fq)$ generated by $a$. Note that $a^p=(a_v,\ldots,a_v)$
so that $a^{p^{v+1}}=1$ and $A$ is cyclic of order $p^{v+1}$.

Now, from Proposition \ref{Abelian p-subgroups of GL_p(K)}, $A$ is a maximal abelian $p$-subgroup of $GL_p(\Fq)$
and any other cyclic subgroup of order $p^{v+1}$ is $GL_p(\Fq)$-conjugate to $A$. In particular, $A$ is
conjugate to the $p$-part of $\fq{p}^\times\simeq C_{p^{v+1}}$. That is, there exists $g\in GL_p(\Fq)$ such that
$gAg^{-1}\subseteq \fq{p}^\times$. But then, by post composing our chosen embedding $\fq{p}^\times\to GL_p(\Fq)$
by $\conj_{g^{-1}}$, we get a new embedding such that $A$ is precisely the $p$-part of $\fq{p}^\times$. Thus we
can assume that our original embedding was such that $A\subseteq \fq{p}^\times$ and hence we can identify
$E^0(B\fq{p}^\times)$ with $E^0(BA)$. Thus the map $\alpha$ can be viewed as the composition $GL_p(\Fq)\to
E^0(BA)\simeq E^0\lpow x\rpow/[p^{v+1}](x)\to D$, where $x=\euler_l(A\rightarrowtail
\fq{p}^\times\rightarrowtail (\Flbar)^\times)$.

\subsection{The ring $D^\Gamma$}

We defined $D=E^0\lpow x\rpow/\langle p\rangle([p^v](x))$ as a quotient ring of $E^0(B\fq{p}^\times)$ in the
previous section and noted that it inherited an action of $\Gamma=\langle \Frob_q\rangle=\Gal(\fq{p}/\Fq)\simeq
C_{p}$. Our first step is to understand how this action works.

\begin{lem} The action of $\Gamma$ on $D$ is given by $\Frob_q.x=[q](x)$.\end{lem}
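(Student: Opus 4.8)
The plan is to trace the definition of the $\Gamma$-action back to the level of classifying spaces and reduce everything to the standard fact that the $q$-th power map on $BS^1$ induces $x\mapsto[q]_F(x)$ on $E$-cohomology. Recall that $\Gamma=\langle\Frob_q\rangle$ with $\Frob_q=\Frob^r$ and $\Frob(a)=a^l$, so $\Frob_q$ acts on $\fq{p}$, and hence on $\fq{p}^\times$, as the group automorphism $a\mapsto a^q$; the action of $\Frob_q$ on $E^0(B\fq{p}^\times)$ is by definition the map induced by applying $B(-)$ to this automorphism. Since, in the identification of Section~\ref{sec:definition of A}, we have $E^0(B\fq{p}^\times)\simeq E^0\lpow x\rpow/[p^{v+1}](x)$ with $x=\euler_l(\fq{p}^\times\rightarrowtail\Flbar^\times)$, it suffices to compute where this induced map sends $x$ and then to verify that the resulting formula descends to the quotient $D$.

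For the first step, recall from Section~\ref{sec:finite fields} that the embedding $\fq{p}^\times\rightarrowtail\Flbar^\times\rightarrowtail S^1$ is a homomorphism of groups, so it intertwines the $q$-th power map on $\fq{p}^\times$ with the $q$-th power map on $S^1$; moreover $x\in E^0(B\fq{p}^\times)$ is the restriction along $B(-)$ of this embedding of the standard complex coordinate $x\in E^0(BS^1)=E^0(\mathbb{C}P^\infty)$. By naturality it is therefore enough to show that the $q$-th power map $\psi_q\colon S^1\to S^1$ induces $\psi_q^*(x)=[q]_F(x)$ in $E^0(BS^1)=E^0\lpow x\rpow$. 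This is routine: $\psi_1=\mathrm{id}$ gives $[1]_F(x)=x$, and $\psi_{k+1}$ is the composite $S^1\xrightarrow{\Delta}S^1\times S^1\xrightarrow{\mathrm{id}\times\psi_k}S^1\times S^1\xrightarrow{\mu}S^1$, so $\psi_{k+1}^*(x)=F(x,\psi_k^*(x))=x+_F[k]_F(x)=[k+1]_F(x)$ by induction, using that $\mu^*(x)=F(x_1,x_2)$. Pulling back along the embedding, $\Frob_q$ sends $x$ to $[q](x)$ in $E^0(B\fq{p}^\times)=E^0\lpow x\rpow/[p^{v+1}](x)$; this is well defined on the quotient since $\Frob_q^*([p^{v+1}](x))=[p^{v+1}]([q](x))=[q]([p^{v+1}](x))=[q](0)=0$.

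Finally I would check that the formula passes to $D=E^0\lpow x\rpow/\langle p\rangle([p^v](x))$, i.e.\ that the ideal $(\langle p\rangle([p^v](x)))$ is $\Frob_q$-stable, so that the $\Gamma$-action on $D$ is indeed given by $\Frob_q.x=[q](x)$. Writing $y=[p^v](x)$ and using Corollary~\ref{[m](x) twiddles x} (so $[q](z)=\langle q\rangle(z)\,z$ with $\langle q\rangle$ a unit in $E^0\lpow z\rpow$, since $q$ is coprime to $p$ and hence a unit in $E^0$), one computes $[p]([q](y))=[q]([p](y))=\langle q\rangle([p](y))\,[p](y)=\langle q\rangle([p](y))\,y\,\langle p\rangle(y)$, whence $\langle p\rangle([q](y))=[p]([q](y))/[q](y)=\bigl(\langle q\rangle([p](y))/\langle q\rangle(y)\bigr)\langle p\rangle(y)$, a unit multiple of $\langle p\rangle(y)=\langle p\rangle([p^v](x))$. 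Since $\Frob_q^*(\langle p\rangle([p^v](x)))=\langle p\rangle([p^v]([q](x)))=\langle p\rangle([q](y))$, the ideal is preserved and the action descends. The substantive content is the (standard) identification of $\psi_q^*$ on $BS^1$; the only mildly delicate points are bookkeeping ones---matching the variance convention for the induced $\Gamma$-action and the displayed unit-multiple identity---so I do not expect a serious obstacle.
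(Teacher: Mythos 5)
Your proof is correct and uses essentially the same approach as the paper: factor $\Frob_q$ through the $q$-th power map on $S^1$ via the chosen embedding $\fq{p}^\times\rightarrowtail S^1$, then pass to $E$-cohomology. You spell out two steps the paper leaves implicit --- the inductive verification that the $q$-th power map on $BS^1$ induces $x\mapsto [q]_F(x)$, and the unit-multiple computation showing $(\langle p\rangle([p^v](x)))$ is $\Frob_q$-stable so the action really does descend to $D$.
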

\begin{pf} Using the embedding $\Flbar^\times\rightarrowtail S^1$ we get a diagram
$$
\xymatrix{ \fq{p}^\times \ar[r] \ar[d]^\wr_{\Frob_q} & S^1 \ar[d] & z \ar@{|->}[d]\\
\fq{p}^\times \ar[r] & S^1 & z^q.}
$$
On passing to cohomology the result follows.\end{pf}

To progress, we record a couple of results concerning norm maps. Let $R\hookrightarrow S$ be an extension of
rings and suppose that $S$ is finitely generated and free over $R$ of rank $n$. The \emph{norm map},
$N_{S/R}:S\to R$, is defined by $N_{S/R}(s)=\det(\mu_s)$ where $\mu_s:S\to S$ is multiplication by $s$.

\begin{lem}\label{Norm map}
Let $R\hookrightarrow S$ be an extension of rings and suppose that $S$ is finitely generated and free over $R$
of rank $n$. Then $N_{S/R}(s)$ is divisible by $s$ for all $s\in S$.
\end{lem}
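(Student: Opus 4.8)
The plan is to deduce the claim directly from the Cayley--Hamilton theorem applied to the multiplication-by-$s$ endomorphism, evaluated at the identity element of $S$. Note that divisibility here is meant in $S$: although $N_{S/R}(s)$ lies in $R$, we want $N_{S/R}(s)\in sS$.

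First I would choose an $R$-basis of $S$ (this exists since $S$ is free of rank $n$ over $R$) and observe that $\mu_s\colon S\to S$ is an $R$-linear map — this uses that $S$ is commutative and an $R$-algebra — so it is represented by an $n\times n$ matrix with entries in $R$. Let $\chi(t)=t^n-c_1t^{n-1}+\cdots+(-1)^nc_n\in R[t]$ be its characteristic polynomial, so that by definition $c_n=\det(\mu_s)=N_{S/R}(s)$. The Cayley--Hamilton theorem, which holds over an arbitrary commutative ring, then gives $\chi(\mu_s)=0$ as an endomorphism of $S$.

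Next I would apply this identity to the element $1\in S$. Since $\mu_s^k(1)=s^k$ for each $k\geq 0$, we obtain
$$s^n-c_1s^{n-1}+\cdots+(-1)^{n-1}c_{n-1}s+(-1)^nN_{S/R}(s)=0$$
in $S$. Rearranging,
$$N_{S/R}(s)=(-1)^{n-1}s\bigl(s^{n-1}-c_1s^{n-2}+\cdots+(-1)^{n-1}c_{n-1}\bigr),$$
and the right-hand side visibly lies in $sS$. Hence $s$ divides $N_{S/R}(s)$ in $S$, as required.

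There is no real obstacle here; the only point worth flagging is that Cayley--Hamilton is being invoked over a ground ring $R$ that need not be a field (or even reduced), which is legitimate. This is the same mechanism already used in the proof of Lemma~\ref{understanding irreducibles}, so nothing new is needed.
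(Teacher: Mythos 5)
Your proposal is correct and follows essentially the same route as the paper: both apply Cayley--Hamilton to $\mu_s$ over the commutative ring $R$, deduce a monic polynomial relation $\chi(s)=0$ in $S$ with constant term $\pm N_{S/R}(s)$, and solve for the constant term to exhibit it as a multiple of $s$. Your version is merely a touch more careful about signs and about making the passage from $\chi(\mu_s)=0$ to $\chi(s)=0$ explicit by evaluating at $1\in S$.
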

\begin{pf} After choosing a basis for $S$ over $R$ let
$\chi_s(t)=\text{det}(\mu_s-tI)$ so that $\chi_s(t)=t^n+a_{n-1} t^{n-1}+\ldots + a_0$ with $a_i\in R$ for each
$i$. By the Cayley-Hamilton theorem we have $\mu_s^n+a_{n-1} {\mu_s}^{n-1}+\ldots + a_0=0$ and hence $s^n +
a_{n-1} s^{n-1} + \ldots + a_0=0$. Since $a_0=\chi_s(0)=\text{det}(\mu_s)=N_{S/R}(s)$ we see that
$s|N_{S/R}(s)$, as required.
\end{pf}

Recall that any integral domain $R$ has a field of fractions which we denote by $Q(R)$.

\begin{lem}\label{lin alg over int domains}
Let $R\hookrightarrow S$ be an extension of integral domains and suppose that $S$ is finitely generated and free
over $R$ of rank $n$. Let $0\neq s\in S$ and suppose that $N_{S/R}(s)=0$. Then $s=0$.
\end{lem}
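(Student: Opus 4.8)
The plan is to pass to the fraction field $Q(R)$ and exploit the fact that an endomorphism of a finite-dimensional vector space with vanishing determinant fails to be injective.

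First I would form the localization $S' = (R\setminus\{0\})^{-1}S$. Since $S$ is an integral domain containing $R$, every element of $R\setminus\{0\}$ is a nonzero element of $S$ and hence a non-zero-divisor in $S$, so the canonical map $S\to S'$ is injective and $S'$ is a subring of $Q(S)$; in particular $S'$ is again an integral domain. A choice of $R$-basis $e_1,\dots,e_n$ of $S$ becomes a $Q(R)$-basis of $S'$, so $S'$ is an $n$-dimensional $Q(R)$-vector space, and multiplication by $s$, $\mu_s\colon S'\to S'$, is $Q(R)$-linear with matrix (in the basis $e_1,\dots,e_n$) the image under $R\hookrightarrow Q(R)$ of the matrix of $\mu_s\colon S\to S$. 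Hence $\det(\mu_s\colon S'\to S')$ is the image of $N_{S/R}(s)$ in $Q(R)$, which is $0$ by hypothesis.

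Next, a $Q(R)$-linear endomorphism of a finite-dimensional $Q(R)$-vector space with determinant zero is not injective, so there is some $v\in S'$ with $v\neq 0$ and $sv = \mu_s(v) = 0$. Since $S'$ is an integral domain and $v\neq 0$, this forces $s = 0$ in $S'$, and therefore $s = 0$ in $S$ because $S\hookrightarrow S'$; equivalently, the situation $s\neq 0$ together with $N_{S/R}(s)=0$ cannot occur. The argument is essentially routine once one has localized; the only point requiring any care is the verification that $S' = (R\setminus\{0\})^{-1}S$ is still an integral domain, which is precisely where the hypothesis that $R$ and $S$ are both domains (rather than merely $S$ being free over $R$) is used, and which is also why Lemma \ref{Norm map} alone does not suffice here.
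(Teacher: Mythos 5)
Your proof is correct and is essentially the same argument as the paper's: both pass to $Q(R)$, use that the determinant of $\mu_s$ vanishes to produce a nonzero kernel vector, and then invoke the integral domain hypothesis to force $s=0$. The only cosmetic difference is that the paper clears denominators to get the kernel vector back inside $S\simeq R^n$ and uses that $S$ is a domain, whereas you observe that $S'=(R\setminus\{0\})^{-1}S$ is itself a domain (being a subring of $Q(S)$) and finish there — a slightly tidier packaging of the same idea.
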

\begin{pf} Choose a basis for $S$ over $R$ and let $\mu_s:S\to
S$ be multiplication by $s$, so that we can view $\mu_s$ as an $n\times n$ matrix over $R$. Then
$\text{det}(\mu_s)=N_{S/R}(s)=0$ so, by linear algebra over $Q(R)$, there is a non-trivial
$u=(u_1,\ldots,u_n)\in Q(R)^n$ with $\mu_s.u=0$. Write $u_i=v_i/w_i$ for some $v_i,w_i\in R$ with $w_i\neq 0$.
Put $w=\prod_i w_i\neq 0$ and $\tilde{u}=wu$. Then $0\neq\tilde{u}\in R^n$ and $\mu_s.\tilde{u}=w\mu_s.u=0$.
That is, there is a non-zero element $\tilde{u}\in S$ such that $s\tilde{u}=\mu_s(\tilde{u})=0$. Hence, since
$S$ is an integral domain, $s=0$.\end{pf}

\begin{prop}\label{field of fractions}
Let $R\hookrightarrow S$ be an extension of integral domains and suppose that $S$ is finitely generated and free
over $R$. Then $Q(S)=Q(R)\tensor_{R} S$.
\end{prop}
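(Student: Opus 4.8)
The plan is to show that the natural map $Q(R)\tensor_R S \to Q(S)$ is an isomorphism of $Q(R)$-algebras, using the finiteness and freeness hypotheses together with Lemma \ref{lin alg over int domains}. First I would fix a basis $s_1,\ldots,s_n$ for $S$ over $R$, so that $Q(R)\tensor_R S$ is a free $Q(R)$-module with basis $1\tensor s_1,\ldots,1\tensor s_n$; in particular it is a finite-dimensional $Q(R)$-vector space and hence an Artinian ring. There is an obvious ring homomorphism $\eta:Q(R)\tensor_R S\to Q(S)$ sending $\frac{a}{b}\tensor s$ to $\frac{as}{b}$, where we regard $R\subseteq S\subseteq Q(S)$; this is well defined and is a map of $Q(R)$-algebras.

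Next I would argue that $\eta$ is injective. An element of $Q(R)\tensor_R S$ can be written (clearing denominators) as $\frac{1}{b}\tensor t$ for some $0\neq b\in R$ and $t\in S$, and $\eta(\frac1b\tensor t)=\frac tb=0$ in $Q(S)$ forces $t=0$ since $S$ is a domain. So $\ker\eta=0$. For surjectivity, it suffices to show that every nonzero element $s\in S$ becomes invertible in $Q(R)\tensor_R S$, since then $\frac{s'}{s}=(\frac11\tensor s')\cdot(\frac11\tensor s)^{-1}$ lies in the image for all $s'\in S$, $0\neq s\in S$, and such fractions exhaust $Q(S)$ (again clearing denominators into $R$ if necessary, or just noting $Q(S)$ is generated as a field by $S$). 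Here is where I would use the norm: by Lemma \ref{lin alg over int domains}, for $0\neq s\in S$ the multiplication map $\mu_s:S\to S$ has $N_{S/R}(s)=\det(\mu_s)\neq 0$ in $R$. Tensoring with $Q(R)$, the map $\mu_s\tensor 1$ on the finite-dimensional $Q(R)$-vector space $Q(R)\tensor_R S$ has nonzero determinant $N_{S/R}(s)\in R\subseteq Q(R)$, hence is invertible; equivalently, multiplication by $\frac11\tensor s$ on $Q(R)\tensor_R S$ is a bijection, so $\frac11\tensor s$ is a unit. This gives surjectivity of $\eta$, and combined with injectivity we conclude $Q(R)\tensor_R S\iso Q(S)$.

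Finally I would remark that the isomorphism is canonical and compatible with the inclusions $R\hookrightarrow S$, $Q(R)\hookrightarrow Q(R)\tensor_R S$, so it identifies $Q(S)$ with $Q(R)\tensor_R S$ as claimed.

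**The main obstacle** is really just organizing the surjectivity argument cleanly: one must be careful that $Q(S)$ is exactly the set of fractions $s'/s$ with $s',s\in S$, $s\neq 0$ (which is immediate since $S$ is a domain), and that inverting all nonzero elements of $S$ inside $Q(R)\tensor_R S$ suffices — this is where the norm trick (via Lemma \ref{lin alg over int domains}) does the real work, converting "nonzero in the domain $S$" into "invertible after inverting $R\setminus\{0\}$". Everything else is routine.
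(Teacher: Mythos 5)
Your proof is correct, and the surjectivity step is a genuinely different route from the one the paper takes, even though both ultimately rest on the nonvanishing of the norm. The paper proves surjectivity computationally: given $a/b\in Q(S)$ it sets $c=N_{S/R}(b)$, invokes Lemma \ref{Norm map} (a Cayley--Hamilton consequence) to factor $c=b\bar b$ with $\bar b\in S$, and exhibits the explicit preimage $\tfrac1c\tensor a\bar b$. You instead argue abstractly: since $Q(R)\tensor_R S$ is a finite-dimensional $Q(R)$-vector space, the $Q(R)$-linear map $\mu_s\tensor 1$ with nonzero determinant $N_{S/R}(s)$ is invertible, so $1\tensor s$ is a unit of the ring; then every $s'/s$ has a preimage. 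Your version avoids Lemma \ref{Norm map} entirely and uses only Lemma \ref{lin alg over int domains} plus elementary linear algebra over a field, which is arguably cleaner conceptually, while the paper's explicit formula makes the preimage visible without appealing to the ring structure of the tensor product. The injectivity arguments are essentially identical (clearing denominators to reduce to $\tfrac1b\tensor t$ and using that $S$ is a domain).
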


\begin{pf} Let $\phi:Q(R) \tensor_{R} S\to Q(S)$ be the ring map $\phi(k\tensor s)=ks$. We show that $\phi$ is an
isomorphism. For injectivity, take $a=\sum_{i}(b_i\tensor c_i/d_i)\in \text{ker}(\phi)$ with $b_i\in S$ and
$c_i,d_i\in R$ with $d_i\neq 0$. Put $d=\prod_i d_i$ (necessarily non-zero) and $\bar{d_i}=\prod_{j\neq i} d_j$.
Then we have $a=\sum_i(b_i\tensor c_i\bar{d_i}/d)=\sum_i(b_i c_i \bar{d_i} \tensor 1/d)$ and so, since
$\phi(a)=0$, we get $(\sum_i b_i c_i \bar{d_i})/d=0$ in $Q(S)$. Thus, we see $\sum_i b_i c_i \bar{d_i}=0$ and
$a=(\sum_i b_i c_i \bar{d_i})\tensor 1/d = 0$, as required.

To show that $\phi$ is surjective, take $a/b\in Q(S)$ and let $c=N_{S/R}(b)\in R$. Since $b\neq 0$ we see from
Lemma \ref{lin alg over int domains} that $c\neq 0$ so $1/c$ exists in $Q(R)$. By Lemma \ref{Norm map} we have
$c=b.\bar{b}$ for some $\bar{b}\in S$ and then $a/b=a\bar{b}/c=\phi(1/c\tensor a\bar{b})$. Hence $\phi$ is an
isomorphism, as claimed.
\end{pf}

We now have the tools we need to study the rings $D$ and $D^\Gamma$ and, in particular, their module structures
over $E^0$.

\begin{defn}\label{definition of y} We will let $N=(p^{n(v+1)}-p^{nv})/p$ and define $y =\prod_{k=0}^{p-1}[1+kp^v](x)\in D$. We also let
$g(x)=g_{v+1}(x)/g_v(x)$ be the Weierstrass polynomial of degree $Np$ which is a unit multiple of $\langle p
\rangle ([p^v](x))$ in $E^0\lpow x\rpow$. Note that $D=E^0\lpow x\rpow/g(x)$.\end{defn}

\begin{lem}\label{g is irreducible} With the notation above, $g(x)$ is monic and irreducible over $E^0$.\end{lem}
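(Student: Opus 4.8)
The plan is to prove monicity by a division argument and then deduce irreducibility from Eisenstein's criterion at the maximal ideal $\mathfrak{m}=(p,u_1,\ldots,u_{n-1})$ of $E^0$, exploiting that $E^0/\mathfrak{m}\cong\mathbb{F}_p$ is a field and that $E^0$ is a regular local ring.

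\emph{Monicity.} Normalise each $g_r(x)$ to be monic of degree $p^{nr}$. The roots of $g_r$ in $\overline{Q(E^0)}$ are exactly the $p^r$-torsion points $\Theta_r$ of $F$, they are simple, and $\Theta_v\subseteq\Theta_{v+1}$; hence $g_v$ divides $g_{v+1}$ in $\overline{Q(E^0)}[x]$, therefore in $Q(E^0)[x]$, therefore in $E^0[x]$ (divide $g_{v+1}$ by the monic polynomial $g_v$ inside $E^0[x]$ and note that uniqueness of division over $Q(E^0)$ forces the remainder to vanish). Thus $g=g_{v+1}/g_v\in E^0[x]$, and comparing leading coefficients in $g_{v+1}=g\cdot g_v$ shows $g$ is monic of degree $Np$. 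Being a Weierstrass polynomial, its lower coefficients all lie in $\mathfrak{m}$.

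\emph{The constant term.} Since $[p^r]_F(x)=x\langle p^r\rangle(x)=p^rx+O(x^2)$, writing $g_r=u_r(x)[p^r]_F(x)$ with $u_r$ a unit of $E^0\lpow x\rpow$ gives $g_r(x)=c_rx+O(x^2)$ with $c_r=u_r(0)\,p^r$ a unit multiple of $p^r$; in particular $x\mid g_v$ while $x^2\nmid g_{v+1}$. From $g\cdot g_v=g_{v+1}$ the first fact rules out $x\mid g$, so $g(0)\neq 0$, and comparing the coefficients of $x$ on both sides gives $g(0)\,c_v=c_{v+1}$; cancelling the non-zero-divisor $p^v$ yields $g(0)=\bigl(u_{v+1}(0)/u_v(0)\bigr)p$, a unit multiple of $p$ in $E^0$. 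As $E^0=\mathbb{Z}_p\lpow u_1,\ldots,u_{n-1}\rpow$ is regular local of dimension $n$ with $\mathfrak{m}/\mathfrak{m}^2$ spanned by $p,u_1,\ldots,u_{n-1}$, we have $p\notin\mathfrak{m}^2$, so $g(0)\in\mathfrak{m}\setminus\mathfrak{m}^2$.

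\emph{Irreducibility.} Now $g$ is monic, all its lower coefficients lie in the prime ideal $\mathfrak{m}$, and $g(0)\notin\mathfrak{m}^2$, so Eisenstein's criterion applies: in any factorisation $g=g_1g_2$ in $E^0[x]$ into non-units both factors have positive degree (a constant factor divides the unit leading coefficient, hence is a unit), and reducing modulo $\mathfrak{m}$ in the PID $\mathbb{F}_p[x]$ forces each $\bar g_i$ to equal $x^{\deg g_i}$ up to a scalar, whence $g_1(0),g_2(0)\in\mathfrak{m}$ and $g(0)=g_1(0)g_2(0)\in\mathfrak{m}^2$, a contradiction. Hence $g$ is irreducible over $E^0$ (and over $Q(E^0)$, being monic). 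The only step carrying real content is the computation showing $g(0)$ is a unit multiple of $p$; monicity and the $\mathfrak{m}$-divisibility of the remaining coefficients are essentially built into the definition of $g$ as a Weierstrass polynomial.
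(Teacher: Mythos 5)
Your proof is correct and uses the same strategy as the paper: Eisenstein's criterion at the maximal ideal $\mathfrak{m}=(p,u_1,\ldots,u_{n-1})$, once one knows $g$ is monic, reduces to $x^{Np}$ modulo $\mathfrak{m}$, and has constant term a unit multiple of $p$. The paper gets $g(0)\sim p$ more directly from $g(x)\sim\langle p\rangle([p^v](x))$, so that $g(0)\sim\langle p\rangle(0)=p$, rather than by your coefficient comparison, but the argument is otherwise identical.
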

\begin{pf} By definition $g(x)$ is monic. Note that $g(x)\sim [p^{v+1}](x)/[p^{v}](x)=\langle
p\rangle([p^v](x))$ so that $g(0)\sim p$, say $g(0)=ap$ for some unit $a\in (E^0)^\times$. Then, since
$g(x)=x^{Np}$ modulo $(p,u_1,\ldots,u_{n-1})$, an application of Eisenstein's criterion (\cite[p228]{Matsumura})
shows that $g(x)$ is irreducible over $E^0$.\end{pf}

\begin{cor} $D$ is an integral domain and is free of rank $Np$ over $E^0$.\end{cor}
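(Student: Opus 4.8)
The plan is to deduce both claims—that $D$ is an integral domain and that $D$ is free of rank $Np$ over $E^0$—directly from the preceding two results, Lemma \ref{g is irreducible} (which gives that $g(x)$ is monic and irreducible over $E^0$) and the defining presentation $D = E^0\lpow x\rpow/g(x)$ from Definition \ref{definition of y}.

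For freeness, the key observation is that $g(x)$ is a Weierstrass polynomial of degree $Np$ (in fact monic of that degree), so the Weierstrass preparation theorem applies: by the Corollary to the Weierstrass preparation theorem stated earlier in Chapter \ref{ch:FGLs}, $E^0\lpow x\rpow/(g(x))$ is a free $E^0$-module of rank $Np$ with basis $\{1,x,\ldots,x^{Np-1}\}$. Since $D = E^0\lpow x\rpow/g(x)$ by definition, this gives the rank statement immediately.

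For the integral domain claim, I would argue as follows. Since $E^0 = \mathbb{Z}_p\lpow u_1,\ldots,u_{n-1}\rpow$ is a Noetherian domain (indeed a regular local ring, hence a UFD), and $g(x)$ is irreducible over $E^0$, the polynomial $g(x)$ is a prime element of the polynomial ring $E^0[x]$ (an irreducible element in the polynomial ring over a UFD is prime). Hence $E^0[x]/(g(x))$ is an integral domain. The subtlety is that $D$ is a quotient of the power series ring $E^0\lpow x\rpow$, not the polynomial ring. To bridge this, I would use the Weierstrass preparation theorem once more: since $g(x)$ is a monic polynomial, the natural map $E^0[x]/(g(x)) \to E^0\lpow x\rpow/(g(x))$ is an isomorphism. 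Indeed both sides are free $E^0$-modules with basis $\{1,x,\ldots,x^{Np-1}\}$ and the map sends basis to basis, so it is an isomorphism of rings; alternatively one checks directly that every power series is congruent modulo $g(x)$ to a unique polynomial of degree $<Np$ by repeated division, using that $g$ is monic. Therefore $D \simeq E^0[x]/(g(x))$ is an integral domain.

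The main obstacle, such as it is, is the bookkeeping of identifying $D = E^0\lpow x\rpow/g(x)$ with the more tractable $E^0[x]/(g(x))$ — but this is exactly the content of the Weierstrass preparation theorem applied to the (already polynomial, hence trivially Weierstrass) series $g$, so there is no real difficulty. One should also be slightly careful that $E^0$ really is a domain: this follows because $\mathbb{Z}_p$ is a domain and adjoining power series variables over a domain preserves this property. With these pieces in place the proof is a two-line deduction.
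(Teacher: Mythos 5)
Your proof is correct and takes essentially the same route as the paper: both claims are deduced from the preceding lemma (irreducibility of $g$) together with the Weierstrass preparation theorem. The paper dispatches the integral-domain claim with the single word ``immediate,'' implicitly relying on the fact that $E^0\lpow x\rpow$ is a regular local ring (hence a UFD) so that irreducible elements are prime; your detour through $E^0[x]/(g(x))\simeq E^0\lpow x\rpow/(g(x))$ is a slightly more explicit way to make the same point, and has the small advantage of applying the irreducibility exactly where it was proved (Eisenstein's criterion is a statement about the polynomial $g\in E^0[x]$, not a priori about the power series ring). Either route works; yours is a reasonable expansion of the paper's terse argument.
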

\begin{pf} The first result is immediate since $D=E^0\lpow x\rpow/g(x)$ and the second is an application of the
Weierstrass preparation theorem.\end{pf}

\begin{lem}\label{D basis} $D$ is free over $E^0$ with basis $S=\{x^i y^j\mid 0\leq i<p,~0\leq j<N\}$.\end{lem}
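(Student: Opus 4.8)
The plan is to show that the proposed set $S=\{x^i y^j\mid 0\leq i<p,\ 0\leq j<N\}$, which has exactly $Np=\dim_{E^0}D$ elements, is an $E^0$-basis for $D=E^0\lpow x\rpow/g(x)$. Since we already know $D$ is free of rank $Np$ over $E^0$, it suffices to prove that $S$ spans $D$ over $E^0$; linear independence will then follow automatically from the fact that a spanning set of the right cardinality in a free module of that rank over a (local, hence with no weird idempotent behaviour) commutative ring must be a basis --- or, more carefully, I would reduce modulo the maximal ideal $\mathfrak{m}_{E^0}$ and check that the images of the $x^iy^j$ span the $\mathbb{F}_p$-vector space $D/\mathfrak{m}_{E^0}D$, which has dimension $Np$, so they form a basis there, and then lift via Nakayama (Lemma \ref{M/mM finitely generated implies M finitely generated}, or the fact that $D$ is free so a mod-$\mathfrak{m}_{E^0}$ basis lifts to a basis).

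So the real content is the spanning statement, and I would do this by working modulo $(p,u_1,\ldots,u_{n-1})$, i.e. in $K^0\lpow x\rpow$. There $[p^{v}](x)\sim x^{p^{nv}}$ and $[p^{v+1}](x)\sim x^{p^{n(v+1)}}$, so $g(x)\sim x^{p^{n(v+1)}}/x^{p^{nv}}=x^{Np-?}$ --- more precisely $g(x)=x^{Np}$ mod $(p,u_1,\ldots,u_{n-1})$ since $\deg g=Np$ and $g$ is monic (this is exactly the congruence used in the proof of Lemma \ref{g is irreducible}, noting $Np = p^{n(v+1)}-p^{nv}$). Hence $D/\mathfrak{m}_{E^0}D\simeq \mathbb{F}_p[x]/x^{Np}$. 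Now I must understand the image $\bar{y}$ of $y=\prod_{k=0}^{p-1}[1+kp^v](x)$ in this ring. Using Lemma \ref{F(x,y)=x+y mod xy} repeatedly, each factor $[1+kp^v](x)=x +_F [kp^v](x)$ is a unit multiple of $x$ in $K^0\lpow x\rpow$ (by Corollary \ref{[m](x) twiddles x}, since $1+kp^v$ is a unit mod $p$), and more sharply $[1+kp^v](x)\equiv x \pmod{[p^v](x)\cdot(\text{stuff})}$; the key point is that $[1+kp^v](x)=x\cdot u_k(x)$ where $u_k(0)=1$, so $y=x^p\cdot(\text{unit})$... but that would make $\bar y$ a unit times $x^p$, and $\{x^iy^j\}$ would span $x^{i+pj}\cdot(\text{units})$ for $0\le i<p$, $0\le j<N$, i.e. all powers $x^0,\ldots,x^{Np-1}$ up to units --- which is exactly a basis of $\mathbb{F}_p[x]/x^{Np}$. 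So the spanning is clear once one checks $y$ is $x^p$ times a unit in $K^0\lpow x\rpow/x^{Np}$; and in fact I suspect the reason for the careful definition $y=\prod[1+kp^v](x)$ (rather than just $x^p$) is that $y$ genuinely lies in $D^\Gamma$, which is why it is the natural generator to use, but for \emph{this} lemma only the mod-$\mathfrak{m}_{E^0}$ behaviour matters.

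The main obstacle, then, is the bookkeeping in the previous paragraph: verifying cleanly that $\bar{y}=x^p\cdot(\text{unit})$ in $K^0\lpow x\rpow$, equivalently that $y\equiv x^p \pmod{\mathfrak{m}_{E^0}, x^{p+1}}$ up to a unit, which reduces to showing each $[1+kp^v](x)$ has $x$-linear coefficient a unit mod $p$ --- immediate since $[1+kp^v](x)=(1+kp^v)x$ mod $x^2$ and $1+kp^v\equiv 1\not\equiv 0\pmod p$. Granting that, the images $\{x^iy^j \bmod \mathfrak{m}_{E^0}D\}$ run (up to units) over $\{x^m : 0\le m<Np\}$ as $(i,j)$ ranges over $0\le i<p$, $0\le j<N$ --- here one uses that the map $(i,j)\mapsto i+pj$ is a bijection onto $\{0,1,\ldots,Np-1\}$ --- hence they span $\mathbb{F}_p[x]/x^{Np}$, hence they form a basis there by a dimension count, hence (lifting, as $D$ is $E^0$-free) $S$ is an $E^0$-basis of $D$.
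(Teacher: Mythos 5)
Your proposal is correct and takes essentially the same route as the paper's own proof: reduce modulo $(p,u_1,\ldots,u_{n-1})$ to land in $\mathbb{F}_p\lpow x\rpow/x^{Np}$, observe that each factor $[1+kp^v](x)$ has unit linear coefficient so $\bar y$ is a unit times $x^p$, deduce that the images of $x^iy^j$ form an $\mathbb{F}_p$-basis, lift the generating set using Lemma \ref{M/mM finitely generated implies M finitely generated}, and conclude by comparing cardinalities against $\rank_{E^0}D=Np$. The brief detour in your middle paragraph considering a congruence modulo $[p^v](x)$ is unnecessary and can be cut, but the argument you actually settle on matches the paper's.
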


\begin{pf}
We first show that $S$ generates $D$ over $E^0$. Working modulo $(p,u_1,\ldots,u_{n-1})$ we have
$g_{v+1}(x)=x^{n(v+1)}$ and $g_v(x)=x^{nv}$ so that $g(x)=x^{n(v+1)-nv}=x^{Np}$. Hence
$D/(p,u_1,\ldots,u_{n-1})=\mathbb{F}_p\lpow x \rpow / x^{Np}=\mathbb{F}_p\{1,x,\ldots,x^{Np-1}\}$ as an
$\mathbb{F}_p$-vector space. Now, since $y=\prod_{k=0}^{p-1}[1+kp^v](x)=\prod_{k=0}^{p-1}\left((1+kp^v)x +
O(2)\right)$ we find that, mod $(p,u_1,\ldots,u_{n-1})$, we have $y=x^p + O(p+1)$. It follows easily that $\{x^i
y^j\mid 0\leq i<p, 0\leq j<N\}$ is also a basis for $D/(p,u_1,\ldots,u_{n-1})$. Applying Lemma \ref{M/mM
finitely generated implies M finitely generated} gives us a generating set $S$ for $D$ over $E^0$. But $D$ is
free over $E^0$ of rank $Np=|S|$ so that $S$ is a basis for $D$.\end{pf}

\begin{lem} Let $y'=\prod_{\gamma\in\Gamma} \gamma.x\in D^\Gamma$. Then $y=y'$ and, in particular, $y$ is $\Gamma$-invariant.\end{lem}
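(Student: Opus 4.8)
The plan is to compute both $y$ and $y'$ explicitly as products of $p$-series in $D$ and show they agree after reindexing; the $\Gamma$-invariance of $y$ is then a free consequence of that of $y'$.

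First I would unwind $y'$. Since $\Gamma=\langle\Frob_q\rangle$ has order $p$ and the preceding lemma identifies the action of $\Frob_q$ on $D$ with $x\mapsto[q](x)$, iteration gives $\Frob_q^{\,k}.x=[q^k](x)$ for $0\le k<p$, so
$$y'=\prod_{\gamma\in\Gamma}\gamma.x=\prod_{k=0}^{p-1}[q^k](x).$$
That $y'$ is $\Gamma$-invariant is immediate: for $\delta\in\Gamma$, $\delta$ acts as a ring automorphism of $D$ and left translation by $\delta$ permutes $\Gamma$, so $\delta.y'=\prod_{\gamma\in\Gamma}(\delta\gamma).x=y'$.

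The crux is a reduction principle: \emph{if $a,b\in\Zp$ with $a\equiv b\pmod{p^{v+1}}$, then $[a](x)=[b](x)$ in $D$.} To prove this I would write $a=b+p^{v+1}m$, apply Lemma \ref{p-adic formal series} to get $[a](x)=[b](x)+_F[m]\big([p^{v+1}](x)\big)$, and observe that in $D=E^0\lpow x\rpow/\langle p\rangle([p^v](x))$ one has
$$[p^{v+1}](x)=[p]\big([p^v](x)\big)=[p^v](x)\cdot\langle p\rangle([p^v](x))=0,$$
whence $[m]\big([p^{v+1}](x)\big)=0$ (the $m$-series has zero constant term) and $[a](x)=[b](x)$. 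This is the step I expect to be the only mildly delicate one, and it is the key observation that $[p^{v+1}](x)$ dies in $D$ even though $[p^v](x)$ need not.

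It then remains to match the two index sets. I would note that, since $v_p(q-1)=v$, each $q^k$ lies in $1+p^v\Zp$, so its image mod $p^{v+1}$ lies in the order-$p$ subgroup $H=\{1+jp^v \bmod p^{v+1}:0\le j<p\}$ of $(\Zp/p^{v+1}\Zp)^\times$. By Lemma \ref{v_p(k^s-1) for v_p(k-1)>0}, $v_p(q^{\,k}-1)=v+v_p(k)$, so $q^{k_1}\equiv q^{k_2}\pmod{p^{v+1}}$ forces $p\mid(k_1-k_2)$; hence the images of $q^0,\dots,q^{p-1}$ in $H$ are pairwise distinct and therefore exhaust $H$. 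Applying the reduction principle termwise to the two products,
$$y'=\prod_{k=0}^{p-1}[q^k](x)=\prod_{j=0}^{p-1}[1+jp^v](x)=y$$
in $D$. Since $y'$ is $\Gamma$-invariant, so is $y$, which is the assertion.
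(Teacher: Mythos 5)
Your proof is correct and follows essentially the same route as the paper's: unwind $y'=\prod_{k=0}^{p-1}[q^k](x)$, observe that $q=1+ap^v$ with $p\nmid a$ makes the residues $q^0,\dots,q^{p-1}$ mod $p^{v+1}$ run through exactly $\{1+jp^v : 0\le j<p\}$, and reindex. The paper compresses your "reduction principle" (that $[a](x)$ depends only on $a \bmod p^{v+1}$, because $[p^{v+1}](x)=0$ in $D$) and the distinctness of the $q^k$ into the single phrase "$q$ generates the subgroup $1+p^v\mathbb{Z}/p^{v+1}$"; your write-up simply makes those two steps explicit, and your $\Gamma$-invariance-by-permutation observation is a harmless extra.
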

\begin{pf} Since $\Gamma=\langle \Frob_q\rangle$ is cyclic of order $p$ we find that $y'=\prod_{k=0}^{p-1}
[q^k](x)$. But, by assumption, $q=1+ap^v$ for some $a$ not divisible by $p$ and it follows that $q$ generates
the subgroup $1+p^v\mathbb{Z}/p^{v+1}\subseteq (\mathbb{Z}/p^{v+1})^\times$. Thus
$y'=\prod_{k=0}^{p-1}[1+kp^v](x)=y$.\end{pf}

We now aim to understand $D^\Gamma$. We already know that $y\in D^\Gamma$.

\begin{lem} $\Gamma$ acts on $Q(D)$ by $\ds \gamma.\frac{a}{b}=\frac{\gamma.a}{\gamma.b}$ and $Q(D)^\Gamma=Q(D^\Gamma)$.\end{lem}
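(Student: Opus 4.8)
The plan is to treat this as a purely formal fact about fraction fields, in two stages: first extend the $\Gamma$-action from $D$ to $Q(D)$, then identify $Q(D)^\Gamma$ with $Q(D^\Gamma)$ via a norm-type trick.

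First I would check that $\gamma.(a/b)=(\gamma.a)/(\gamma.b)$ is well defined. Each $\gamma\in\Gamma$ acts on $D$ by a ring automorphism (this is the $\Gal(\fq{p}/\Fq)$-action restricted from $E^0(B\fq{p}^\times)$), and $D$ is an integral domain, so $\gamma.b\neq 0$ whenever $b\neq 0$ and the right-hand side makes sense in $Q(D)$; if $ab'=a'b$ in $D$ then applying $\gamma$ gives $(\gamma.a)(\gamma.b')=(\gamma.a')(\gamma.b)$, so the value is independent of the chosen representative. It is then immediate that $\gamma$ so defined is a field automorphism of $Q(D)$ extending the original action on $D\subseteq Q(D)$ and that $\gamma_1.(\gamma_2.z)=(\gamma_1\gamma_2).z$; this is just the universal property of the fraction field. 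Consequently $Q(D)^\Gamma$ is a subfield of $Q(D)$. Also, since $D$ is an integral domain so is its subring $D^\Gamma$, and $Q(D^\Gamma)$ embeds canonically in $Q(D)$; the inclusion $Q(D^\Gamma)\subseteq Q(D)^\Gamma$ is clear, because for $a,b\in D^\Gamma$ with $b\neq 0$ we have $\gamma.(a/b)=(\gamma.a)/(\gamma.b)=a/b$.

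The substantive point is the reverse inclusion $Q(D)^\Gamma\subseteq Q(D^\Gamma)$. Given $z\in Q(D)^\Gamma$, write $z=a/b$ with $a,b\in D$ and $b\neq 0$, and set $N(b)=\prod_{\gamma\in\Gamma}\gamma.b$. Since $\Gamma$ is finite and merely permutes the factors, $N(b)\in D^\Gamma$, and $N(b)\neq 0$ as a product of nonzero elements of the integral domain $D$. Put $c=a\cdot\prod_{1\neq\gamma\in\Gamma}\gamma.b\in D$, so that $z=c/N(b)$ in $Q(D)$, i.e. $c=z\,N(b)$. Applying an arbitrary $\gamma\in\Gamma$ and using that both $z$ and $N(b)$ are $\Gamma$-invariant gives $\gamma.c=(\gamma.z)(\gamma.N(b))=z\,N(b)=c$ as an identity in $Q(D)$; since $c$ and $\gamma.c$ both lie in $D\subseteq Q(D)$, this forces $\gamma.c=c$ in $D$, so $c\in D^\Gamma$. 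Hence $z=c/N(b)$ is a ratio of elements of $D^\Gamma$, i.e. $z\in Q(D^\Gamma)$, which completes the proof.

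I do not expect a genuine obstacle here; the only care needed is bookkeeping — keeping track of which identities hold in $D$ versus merely in $Q(D)$ when concluding $c\in D^\Gamma$, and recording that $N(b)\neq 0$ so that every fraction written is legitimate.
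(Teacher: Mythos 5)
Your proof is correct, and it takes a genuinely different route from the paper's. The paper invokes Proposition \ref{field of fractions} to write $Q(D)=Q(D^\Gamma)\tensor_{D^\Gamma}D$ and then shows that a $\Gamma$-invariant simple tensor $\frac{a}{b}\tensor c$ forces $c\in D^\Gamma$; but Proposition \ref{field of fractions} as stated requires $D$ to be finitely generated and free over $D^\Gamma$ (so that the norm $N_{D/D^\Gamma}$ is available), and that freeness is only proved two results later in the paper (and its proof in turn uses the present lemma via the dimension count), so the paper's argument carries a forward reference. Your argument sidesteps this entirely: you clear the denominator $b$ by multiplying through by $\prod_{1\neq\gamma\in\Gamma}\gamma.b$, so that the new denominator is the visibly $\Gamma$-invariant element $N(b)=\prod_{\gamma\in\Gamma}\gamma.b\in D^\Gamma$, which is nonzero because $D$ is an integral domain; then $\Gamma$-invariance of $z$ and of $N(b)$ forces the numerator $c=zN(b)\in D$ to be fixed as well, using injectivity of $D\hookrightarrow Q(D)$ to pass the identity $\gamma.c=c$ from $Q(D)$ back down to $D$. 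This is the standard Galois-theoretic denominator-clearing trick; it needs only that $\Gamma$ is finite and $D$ is a domain, and is both more elementary and more self-contained than the paper's route.
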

\begin{pf} It is clear that we have an inclusion $Q(D^\Gamma)\hookrightarrow Q(D)$ which lands in the $\Gamma$-invariants. It
remains to show that the map is surjective. Using Proposition \ref{field of fractions} we have
$Q(D)=Q(D^\Gamma)\tensor_{D^\Gamma} D$. Take any $\frac{ac}{b}=\frac{a}{b}\tensor c\in
Q(D)^\Gamma\tensor_{D^\Gamma} D$, where $a,b\in D^\Gamma$ and $c\in D$. Then, for all $\gamma\in\Gamma$ we have
$\gamma.\frac{ac}{b}=\frac{ac}{b}$ so that $\frac{a\gamma.c}{b}=\frac{ac}{b}$ whereby $\gamma.c=c$. Thus $c\in
D^\Gamma$ and $\frac{ac}{b}\in Q(D^\Gamma)$.\end{pf}

\begin{lem}\label{Q(D)=Q(D^Gamma)^p} $Q(D)$ has dimension $p$ over $Q(D^\Gamma)$.\end{lem}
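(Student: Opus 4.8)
The plan is to run the standard Galois-theoretic argument. Recall that $D$ is an integral domain (by the corollary to Lemma \ref{g is irreducible}), so $Q(D)$ is a field, and $\Gamma=\langle\Frob_q\rangle\simeq C_p$ acts on $Q(D)$ by field automorphisms with fixed field $Q(D)^\Gamma=Q(D^\Gamma)$ by the preceding lemma. If the induced map $\Gamma\to\Aut(Q(D))$ is injective, then Artin's theorem shows that $Q(D)/Q(D^\Gamma)$ is Galois with group $\Gamma$, whence $[Q(D):Q(D^\Gamma)]=|\Gamma|=p$. Since $\Gamma$ has prime order, faithfulness is equivalent to the statement that $\Frob_q$ does not act as the identity on $D$, and this is the only non-formal point to check: if instead $\Gamma$ acted trivially we would get $Q(D^\Gamma)=Q(D)$ and dimension $1$, which we must exclude.

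To check that the action is non-trivial I would argue as follows. The generator $\Frob_q$ acts on $D=E^0\lpow x\rpow/(g(x))$ by $x\mapsto[q](x)$, so $\Frob_q$ is the identity on $D$ if and only if $g(x)$ divides $[q](x)-x$ in $E^0\lpow x\rpow$. Writing $q-1=ap^v$ with $p\nmid a$, the integer $a$ is a unit in $E^0$ and $[q](x)-_F x=[ap^v](x)=[a]([p^v](x))$; combining Lemma \ref{x-_F y twiddles x-y} (so that $[q](x)-x$ is a unit multiple of $[q](x)-_F x$) with Corollary \ref{[m](x) twiddles x} (so that $[a](t)$ is a unit multiple of $t$) shows that $[q](x)-x$ is a unit multiple of $[p^v](x)$, hence of the Weierstrass polynomial $g_v(x)$. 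Thus $\Frob_q$ acts trivially on $D$ precisely when $g(x)\mid g_v(x)$ in $E^0\lpow x\rpow$.

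Finally I would rule this out. By Definition \ref{definition of y} we have $g_{v+1}(x)=g(x)g_v(x)$, so $g(x)\mid g_v(x)$ would force $g(x)^2\mid g_{v+1}(x)$. But $g_{v+1}(x)$ is a unit multiple of $[p^{v+1}](x)$, whose roots in $\overline{Q(E^0)}$ are exactly the $p^{n(v+1)}$ distinct elements of $\Theta_{v+1}$, so $g_{v+1}(x)$ has no repeated roots; on the other hand $\deg g(x)=Np=p^{n(v+1)}-p^{nv}>0$, so $g(x)$ has a root in $\overline{Q(E^0)}$, which would then be a repeated root of $g_{v+1}(x)$ — a contradiction. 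Hence $\Frob_q$ acts non-trivially, the $\Gamma$-action on $Q(D)$ is faithful, and Artin's theorem gives $[Q(D):Q(D^\Gamma)]=p$ (concretely, $x$ generates $Q(D)$ over $Q(D^\Gamma)$ and is a root of the separable degree-$p$ polynomial $\prod_{\gamma\in\Gamma}(T-\gamma.x)\in D^\Gamma[T]$, which is therefore its minimal polynomial). The only real obstacle is the faithfulness step — in particular keeping careful track of which Weierstrass polynomial is a unit multiple of which, and squeezing a contradiction out of the divisibility $g\mid g_v$; everything after that is formal.
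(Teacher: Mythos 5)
Your argument is correct and mirrors the paper's own one-line proof, which invokes the embedding $\Gamma\rightarrowtail\Gal(Q(D)/Q(E^0))$ and then Galois theory; the genuine extra content you supply is the verification that $\Frob_q$ acts nontrivially on $D$, which the paper asserts without checking. That step is sound as you wrote it, though it can be shortened: since $\deg g(x)=p^{nv}(p^n-1)>p^{nv}=\deg g_v(x)$, the divisibility $g\mid g_v$ is impossible by a degree count alone, with no appeal to the separability of $g_{v+1}$.
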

\begin{pf} We can embed $\Gamma\rightarrowtail\Gal(Q(D)/Q(E^0))$ and so, by Galois theory, $Q(D)$ has dimension $|\Gamma|=p$ over $Q(D)^\Gamma=Q(D^\Gamma)$.\end{pf}

\begin{prop} $D$ is free over $D^\Gamma$ with basis $\{1,x,\ldots,x^{p-1}\}$.\end{prop}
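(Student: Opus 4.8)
The plan is to prove the two halves of "basis" separately: that $\{1,x,\ldots,x^{p-1}\}$ generates $D$ as a $D^\Gamma$-module, which will be essentially immediate from Lemma \ref{D basis}, and that it is linearly independent over $D^\Gamma$, which will follow from the Galois-theoretic dimension count of Lemma \ref{Q(D)=Q(D^Gamma)^p} once we pass to fraction fields.

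For the generating statement, recall from Lemma \ref{D basis} that $D$ is free over $E^0$ on the set $\{x^i y^j \mid 0 \le i < p,\ 0 \le j < N\}$, so any $d \in D$ can be written as $d = \sum_{0 \le i < p,\ 0 \le j < N} c_{ij}\, x^i y^j$ with $c_{ij} \in E^0$. Since $\Gamma$ acts by $E^0$-algebra automorphisms, we have $E^0 \subseteq D^\Gamma$, and $y \in D^\Gamma$ by the lemma immediately preceding; hence each coefficient $\sum_{j=0}^{N-1} c_{ij}\, y^j$ lies in $D^\Gamma$. Regrouping gives $d = \sum_{i=0}^{p-1}\left(\sum_{j=0}^{N-1} c_{ij}\, y^j\right) x^i \in \sum_{i=0}^{p-1} D^\Gamma x^i$, so $D = \sum_{i=0}^{p-1} D^\Gamma x^i$.

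For linear independence, note first that $D$ is an integral domain which is free of rank $Np$ over $E^0$ (the corollary to Lemma \ref{g is irreducible}), so $D^\Gamma$ is an integral domain with $Q(E^0) \subseteq Q(D^\Gamma)$, and by Proposition \ref{field of fractions} applied to the finite free extension $E^0 \hookrightarrow D$ we have $Q(D) = Q(E^0) \tensor_{E^0} D$. In particular every element of $Q(D)$ is a $Q(E^0)$-linear combination of elements of $D$; combining this with the previous paragraph and the inclusions $Q(E^0) \subseteq Q(D^\Gamma)$ and $D^\Gamma \subseteq Q(D^\Gamma)$ shows that $\{1,x,\ldots,x^{p-1}\}$ spans $Q(D)$ as a $Q(D^\Gamma)$-vector space. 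By Lemma \ref{Q(D)=Q(D^Gamma)^p} that space has dimension $p$, so a spanning set of $p$ elements must be a basis; in particular $\{1,x,\ldots,x^{p-1}\}$ is linearly independent over $Q(D^\Gamma)$, hence over $D^\Gamma$. Together with the generating statement this exhibits $\{1,x,\ldots,x^{p-1}\}$ as a $D^\Gamma$-module basis of $D$.

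There is no genuine obstacle here; the one point to handle carefully is that "$\{1,x,\ldots,x^{p-1}\}$ generates $D$ over $D^\Gamma$" does not by itself rule out relations, so the dimension count over the fraction fields is essential — and for that count one must first observe, via Proposition \ref{field of fractions} for the module-finite extension $E^0 \hookrightarrow D$, that the same set also spans $Q(D)$ over $Q(D^\Gamma)$.
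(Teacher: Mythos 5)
Your proof is correct and follows essentially the same strategy as the paper: surjectivity of $D^\Gamma\{1,x,\ldots,x^{p-1}\}\to D$ from the $E^0$-basis $\{x^iy^j\}$ of $D$ and $\Gamma$-invariance of $y$, then linear independence via the dimension count $\dim_{Q(D^\Gamma)}Q(D)=p$. The only cosmetic difference is in how you pass to fraction fields: you invoke Proposition \ref{field of fractions} for $E^0\hookrightarrow D$ (where freeness is already known), whereas the paper tensors the surjection with $Q(D^\Gamma)$ over $D^\Gamma$ and silently uses $Q(D^\Gamma)\tensor_{D^\Gamma}D=Q(D)$; your variant is marginally tidier since it never appeals to a fact about $D$ over $D^\Gamma$ before freeness is established.
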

\begin{pf} By Lemma \ref{D basis} we know that $S=\{x^iy^j\mid 0\leq i<p,~0\leq j<N\}$
generates $D$ over $E^0$. Since $y\in D^\Gamma$ we find that the map $D^\Gamma\{1,x,\ldots,x^{p-1}\}\to D$ is
surjective. Applying the functor $Q(D^\Gamma)\tensor_{D^\Gamma} -$ we get a map
$Q(D^\Gamma)\{1,x,\ldots,x^{p-1}\}\to Q(D)$ which, by right exactness, is also surjective. But, by Lemma
\ref{Q(D)=Q(D^Gamma)^p}, both source and target are $Q(D^\Gamma)$-vector spaces of dimension $p$ and therefore
the map is an isomorphism. Consider the diagram
$$
\xymatrix{ D^\Gamma\{1,x,\ldots,x^{p-1}\} \ar@{->>}[rr] \ar[d] & & D \ar[d]\\
Q(D^\Gamma)\{1,x,\ldots,x^{p-1}\} \ar[rr]^-{\sim} & & Q(D).}
$$
Since $D^\Gamma$ is an integral domain, the left-hand map is injective. Thus we see that the map
$D^\Gamma\{1,x,\ldots,x^{p-1}\}\to D$ is also injective and hence is an isomorphism.\end{pf}

\begin{prop}\label{structure of D^Lambda} $D^\Gamma$ is free over $E^0$ with basis $\{1,y,\ldots,y^{N-1}\}$ and there is a polynomial
$h(t)\in E^0[t]$ of degree $N$ such that $D^\Gamma=E^0\lpow y\rpow/h(y)$.\end{prop}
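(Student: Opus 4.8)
The plan is to leverage the preceding result that $D$ is free over $D^\Gamma$ with basis $\{1,x,\ldots,x^{p-1}\}$, together with the fact (Lemma \ref{D basis}) that $D$ is free over $E^0$ with basis $S=\{x^iy^j\mid 0\leq i<p,\ 0\leq j<N\}$. First I would observe that $D^\Gamma\{1,y,\ldots,y^{N-1}\}$ maps into $D^\Gamma$, and I want to show this is all of $D^\Gamma$, i.e.\ that $\{1,y,\ldots,y^{N-1}\}$ spans $D^\Gamma$ over $E^0$. The key point is a counting/rank argument: since $D$ is free of rank $p$ over $D^\Gamma$ and free of rank $Np$ over $E^0$, if $D^\Gamma$ is finitely generated and free over $E^0$ its rank must be $N$. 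Freeness and finite generation of $D^\Gamma$ over $E^0$ follow because $D^\Gamma$ is a summand (or at least a submodule) of the finitely generated free $E^0$-module $D$ over the Noetherian local ring $E^0$; more cleanly, one can invoke Proposition \ref{A^G has duality if A does}-style reasoning or simply note $D^\Gamma$ is the kernel of $\prod_{\gamma}(1-\gamma^*)$ on the finitely generated $E^0$-module $D$, hence finitely generated, and then use that $p,u_1,\ldots,u_{n-1}$ is regular on it (inherited from $D$, since $D/D^\Gamma$ injects into enough copies of $D$) together with Corollary \ref{If m is regular on M then M is free over R}.

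Next I would show $\{1,y,\ldots,y^{N-1}\}$ actually spans. The cleanest route is to reduce modulo $(p,u_1,\ldots,u_{n-1})$: in the proof of Lemma \ref{D basis} it was shown that $y\equiv x^p$ modulo this ideal, so $y^j\equiv x^{pj}$, and the images $\{1,y,\ldots,y^{N-1}\}$ together with $\{1,x,\ldots,x^{p-1}\}$ recover the full $\mathbb{F}_p$-basis $\{1,x,\ldots,x^{Np-1}\}$ of $D/(p,u_1,\ldots)$. Restricting to $\Gamma$-invariants (and using that $\Gamma$ acts $\mathbb{F}_p$-linearly, with $x^{pj}$ manifestly fixed since $y$ is), one sees $\{1,y,\ldots,y^{N-1}\}$ spans $D^\Gamma/(p,u_1,\ldots,u_{n-1})D^\Gamma$ over $\mathbb{F}_p$; this $\mathbb{F}_p$-space has dimension $N$ (it equals $\mathbb{F}_p\{1,x^p,\ldots,x^{(N-1)p}\}$), so the elements are a basis mod the maximal ideal. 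An application of Lemma \ref{M/mM finitely generated implies M finitely generated} (using that $E^0$ is complete local Noetherian) then lifts this to a generating set of $N$ elements for $D^\Gamma$ over $E^0$, and since $D^\Gamma$ is free of rank $N$, $\{1,y,\ldots,y^{N-1}\}$ is a basis.

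Finally, for the presentation $D^\Gamma\simeq E^0\lpow y\rpow/h(y)$: since $y^N\in D^\Gamma$ and $\{1,y,\ldots,y^{N-1}\}$ is an $E^0$-basis, write $y^N=\sum_{i=0}^{N-1}c_i y^i$ with $c_i\in E^0$ and set $h(t)=t^N-\sum_{i=0}^{N-1}c_i t^i$, a monic polynomial of degree $N$. There is an evident surjective $E^0$-algebra map $E^0\lpow y\rpow\to D^\Gamma$ (well-defined since $y\in D^\Gamma$ and $D^\Gamma$ is complete local, being a summand of the complete local $D$), and it kills $h(y)$. Since $h$ is monic of degree $N$, the Weierstrass preparation theorem (or just division) gives that $E^0\lpow y\rpow/h(y)$ is free over $E^0$ with basis $\{1,y,\ldots,y^{N-1}\}$, so the induced surjection $E^0\lpow y\rpow/h(y)\to D^\Gamma$ is a surjection of free $E^0$-modules of the same rank $N$, hence an isomorphism. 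The main obstacle I anticipate is cleanly establishing that $D^\Gamma$ is \emph{free} (not just finitely generated) over $E^0$ — everything else is bookkeeping — but this should follow either from the regular-sequence argument above or by noting $D$ carries a $\Gamma$-invariant Frobenius form (since $[p^{v+1}](x)/[p^v](x)$ type quotients inherit duality and $\Gamma$ acts by $E^0$-algebra automorphisms) and invoking Proposition \ref{A^G has duality if A does} to get that $D^\Gamma$ is itself a duality algebra, in particular a free summand.
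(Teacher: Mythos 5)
Your proposal is correct, but it takes a genuinely longer route than the paper, and in doing so it misses a simplification that makes most of your machinery unnecessary. The paper's whole argument for the first claim is a single commutative square: the obvious inclusion $\phi\colon E^0\{1,y,\ldots,y^{N-1}\}\to D^\Gamma$, when extended over the free basis $\{1,x,\ldots,x^{p-1}\}$ of $D$ over $D^\Gamma$, becomes the map
\[
(E^0\{1,y,\ldots,y^{N-1}\})\{1,x,\ldots,x^{p-1}\}\longrightarrow D^\Gamma\{1,x,\ldots,x^{p-1}\},
\]
which, after identifying the source with $E^0\{S\}$ and the target with $D$ via the two already-proved isomorphisms, is just the known isomorphism $E^0\{S\}\iso D$ of Lemma~\ref{D basis}. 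This extended map is $\phi^{\oplus p}$, so $\phi$ is forced to be an isomorphism — freeness, rank $N$, and the basis all come out at once, and there is no need to separately establish that $D^\Gamma$ is free or finitely generated.

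Your two-phase plan (first freeness of $D^\Gamma$, then spanning) works, but a couple of points are shakier than they should be. For freeness, the clean statement is not ``$D^\Gamma$ is a submodule of the free module $D$'' (submodules of free modules over local Noetherian rings need not be free); it is that $D\simeq D^\Gamma\oplus D^\Gamma x\oplus\cdots\oplus D^\Gamma x^{p-1}$ as $D^\Gamma$-modules (from the preceding proposition), hence $D^\Gamma$ is a direct \emph{summand} of $D$ as an $E^0$-module, hence projective, hence free over the local ring $E^0$. Your excursion through regularity is not needed once this is said, and the clause ``$D/D^\Gamma$ injects into enough copies of $D$'' does not obviously make sense as written ($D/D^\Gamma$ is a quotient, not naturally a submodule). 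For the spanning step, the claim that ``$x^{pj}$ is manifestly fixed since $y$ is'' is slightly off: what is $\Gamma$-fixed is $y^j$, and $y^j\equiv x^{pj}$ modulo the maximal ideal only up to higher-order terms, and in any case the quotient $D^\Gamma/\mathfrak{m}D^\Gamma$ is not a priori the same as $(D/\mathfrak{m}D)^\Gamma$. The repair is again the summand decomposition: reducing $D\simeq\bigoplus_i D^\Gamma x^i$ mod $\mathfrak{m}$ shows $D^\Gamma/\mathfrak{m}D^\Gamma$ is an $N$-dimensional $\mathbb{F}_p$-subspace of $D/\mathfrak{m}D$, and the images of $1,y,\ldots,y^{N-1}$ are linearly independent there since $S$ maps to a basis of $D/\mathfrak{m}D$. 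Your treatment of the second claim (the presentation $D^\Gamma=E^0\lpow y\rpow/h(y)$) is correct and usefully more explicit than the paper's one-line remark.
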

\begin{pf} For the first claim, note that we have a map $E^0\{1,y,\ldots,y^{N-1}\}\to D^\Gamma$ and the diagram
$$
\xymatrix{ (E^0\{1,y,\ldots,y^{N-1}\})\{1,x,\ldots,x^{p-1}\} \ar[rr] \ar[d]_-\wr & &
D^\Gamma\{1,x,\ldots,x^{p-1}\} \ar[d]^-\wr\\
E^0\{S\} \ar[rr]^-\sim & & D}
$$
shows that it must be an isomorphism. For the second claim, write $y^N=\sum_i a_iy^i$ for unique $a_i\in E^0$;
then the polynomial $h(y)=y^N-\sum_i a_iy^i$ does the job.\end{pf}

\begin{lem}\label{D^Gamma/I one dim over F_p} $D^\Gamma/(y,u_1,\ldots,u_{n-1})$ is a one dimensional vector space over $\mathbb{F}_p$.\end{lem}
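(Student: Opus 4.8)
The plan is to reduce the computation to $D$ itself and then to make it explicit by killing $(u_1,\ldots,u_{n-1})$; in fact the polynomial $h$ of Proposition \ref{structure of D^Lambda} will not really be needed.

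\emph{Step 1: reduce to $D$.} Put $I=(u_1,\ldots,u_{n-1})$. Using the proposition that $D$ is free over $D^\Gamma$ with basis $\{1,x,\ldots,x^{p-1}\}$, together with $y\in D^\Gamma$ and $I\subseteq E^0\subseteq D^\Gamma$, we get $yD+ID=\bigoplus_{i=0}^{p-1}(yD^\Gamma+ID^\Gamma)\,x^i$ inside $D=\bigoplus_{i=0}^{p-1}D^\Gamma x^i$. Hence $D^\Gamma\cap(yD+ID)=yD^\Gamma+ID^\Gamma$, so the natural map $D^\Gamma/(y,u_1,\ldots,u_{n-1})\to D/(yD+ID)$ is injective and identifies its source with the image of $D^\Gamma$. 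It therefore suffices to compute $D/(yD+ID)$ and this image.

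\emph{Step 2: compute $D/(yD+ID)$.} Since $D=E^0\lpow x\rpow/(g(x))$ and $E^0\lpow x\rpow/IE^0\lpow x\rpow=(E^0/I)\lpow x\rpow=\mathbb{Z}_p\lpow x\rpow$, we have $D/ID\cong\mathbb{Z}_p\lpow x\rpow/(\bar g(x))$, with $\bar g$ the reduction of $g$ mod $I$. Now $g$ is a Weierstrass polynomial of degree $Np$ over $(E^0,\mathfrak{m}_{E^0})$ with $\mathfrak{m}_{E^0}=(p,u_1,\ldots,u_{n-1})$, and $g(0)=ap$ for a unit $a\in(E^0)^\times$ by Lemma \ref{g is irreducible}; reducing mod $I$ puts every coefficient of $\bar g$ below the leading one into $p\mathbb{Z}_p$, with $\bar g(0)=\bar a p$, $\bar a\in\mathbb{Z}_p^\times$, so $\bar g$ is, up to a unit, an Eisenstein polynomial of degree $Np\geq p$. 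On the other hand, by Corollary \ref{[m](x) twiddles x} each factor $[1+kp^v](x)$ of $y$ is $x$ times a unit of $E^0\lpow x\rpow$, since $1+kp^v\in(E^0)^\times$; hence $y$ is the image in $D$ of $x^p u(x)$ with $u\in(E^0\lpow x\rpow)^\times$, and $\bar y$ is the image of $x^p\bar u(x)$ with $\bar u$ a unit. So the ideal of $\mathbb{Z}_p\lpow x\rpow/(\bar g)$ generated by $\bar y$ coincides with the one generated by $x^p$, giving
$$D/(yD+ID)\;\cong\;\mathbb{Z}_p\lpow x\rpow/(\bar g(x),\,x^p).$$
Truncating $\bar g$ modulo $x^p$ leaves its degree-$<p$ part $\bar a p+c_1x+\cdots+c_{p-1}x^{p-1}=p\,t(x)$ with $t(0)=\bar a$ a unit, so $t\in(\mathbb{Z}_p\lpow x\rpow)^\times$ and $(\bar g(x),x^p)=(p\,t(x),x^p)=(p,x^p)$. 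Hence $D/(yD+ID)\cong\mathbb{Z}_p\lpow x\rpow/(p,x^p)=\mathbb{F}_p\lpow x\rpow/(x^p)$, which is $p$-dimensional over $\mathbb{F}_p$, and in this isomorphism the composite $E^0\to D\to D/(yD+ID)$ factors through $E^0\twoheadrightarrow\mathbb{Z}_p\twoheadrightarrow\mathbb{F}_p$, so has image exactly $\mathbb{F}_p$. Since $D^\Gamma/(y,u_1,\ldots,u_{n-1})$ is a quotient of $E^0$ (namely $E^0/(h(0),u_1,\ldots,u_{n-1})$ by Proposition \ref{structure of D^Lambda}), combining this with the injection of Step 1 forces $D^\Gamma/(y,u_1,\ldots,u_{n-1})\cong\mathbb{F}_p$, one-dimensional over $\mathbb{F}_p$.

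The step that needs the most care is the identity $(\bar g(x),x^p)=(p,x^p)$ in Step 2: this is exactly where the Eisenstein property of $\bar g$ — equivalently $g(0)\sim p$ from Lemma \ref{g is irreducible} — pins the dimension down to $p$ and no larger. One must also be careful that the factorisation $y\sim x^p$ holds already in $E^0\lpow x\rpow$ (so it survives reduction mod $I$) and that forming $\lpow x\rpow$ commutes with killing the finitely generated ideal $I$; both are routine, but they are the points where the argument could go wrong.
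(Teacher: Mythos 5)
Your proof is correct. Both you and the paper rely on the same core facts --- the decomposition $D=\bigoplus_{i=0}^{p-1}D^\Gamma x^i$, the observation that $y$ is a unit multiple of $x^p$, and $g(0)\sim p$ --- but the final step is genuinely different. The paper keeps everything inside the $D^\Gamma$-module framework: it identifies $D/(y,I)\cong D^\Gamma/(y,I)\{1,\ldots,x^{p-1}\}$, observes $D/(y,I)=D/(x^p,I)$, and then iteratively peels off summands ($x^{p-1}D/(x^p,I)\cong D^\Gamma/(y,I)$, hence $D/(x^{p-1},I)\cong D^\Gamma/(y,I)\{1,\ldots,x^{p-2}\}$, and so on) to land on $D^\Gamma/(y,I)\cong D/(x,I)\cong\mathbb{F}_p$, never needing to know the full ring $D/(y,I)$. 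You instead only extract \emph{injectivity} of $D^\Gamma/(y,I)\hookrightarrow D/(y,I)$ from the decomposition, compute $D/(y,I)\cong\mathbb{F}_p[x]/(x^p)$ explicitly via the Eisenstein shape of $\bar g$ (noting $y\sim x^p$ already in $E^0\lpow x\rpow$ by Corollary \ref{[m](x) twiddles x}), and then pin down $D^\Gamma/(y,I)$ as the image of $E^0$ using Proposition \ref{structure of D^Lambda} to guarantee it is a cyclic $E^0$-module. Your route gives the bonus of an explicit presentation of $D/(y,I)$, at the mild cost of invoking $D^\Gamma=E^0\lpow y\rpow/h(y)$ and the Eisenstein step, neither of which the paper's iterated-stripping argument needs; the paper's version is a bit slicker but its "continue in this way" induction is easy to misstate. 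Both are sound.
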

\begin{pf} The isomorphism of
$D^\Gamma$-modules $D^\Gamma\{1,x,\ldots,x^{p-1}\}\iso D$ induces a module isomorphism
$D^\Gamma/(y,u_1,\ldots,u_{n-1})\{1,x,\ldots,x^{p-1}\}\iso D/(y,u_1,\ldots,u_{n-1}).$ But $q$ is coprime to $p$
and so $y=\prod_{k=0}^{p-1} [q^k](x)$ is a unit multiple of $\prod_{k=0}^{p-1} x=x^p$ in $D$. Hence
$D/(y,u_1,\ldots,u_{n-1})=D/(x^p,u_1,\ldots,u_{n-1})$. It then follows that
$$x^{p-1} D/(y,u_1,\ldots,u_{n-1})=x^{p-1} D/(x^p,u_1,\ldots,u_{n-1})\simeq D^\Gamma/(y,u_1,\ldots,u_{n-1})$$
so that $D^\Gamma/(y,u_1,\ldots,u_{n-1})\{1,x,\ldots,x^{p-2}\}\simeq D/(x^{p-1},u_1,\ldots,u_{n-1})$. We can
continue in this way to see that $D^\Gamma\simeq D/(x,u_1,\ldots,u_{n-1})$. But $g(x)=p$ mod $x$ whereby
\[D^\Gamma/(y,u_1,\ldots,u_{n-1})\simeq D/(x,u_1,\ldots,u_{n-1})=E^0\lpow x\rpow/(x,p,u_1,\ldots,u_{n-1})\simeq \mathbb{F}_p.\qedhere\]\end{pf}

\begin{prop} $D^\Gamma$ is a regular local ring and
$y,u_1,\ldots,u_{n-1}$ a system of parameters.\end{prop}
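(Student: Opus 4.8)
The plan is to show that $D^\Gamma$ is a Noetherian local domain of Krull dimension $n$ whose maximal ideal is generated by the $n$ elements $y,u_1,\ldots,u_{n-1}$; regularity and the system-of-parameters assertion then follow formally from the inequalities relating Krull dimension and embedding dimension.

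First I would record the basic ring-theoretic facts. By Proposition~\ref{structure of D^Lambda} we have $D^\Gamma\simeq E^0\lpow y\rpow/(h(y))$. Since $E^0$ is local, so is $E^0\lpow y\rpow$ (with maximal ideal $(p,u_1,\ldots,u_{n-1},y)$), and hence its quotient $D^\Gamma$ is local; it is Noetherian because it is module-finite over the Noetherian ring $E^0$ (Lemma~\ref{Noetherian algebra is noetherian}), and it is a domain as a subring of the integral domain $D$. To identify the maximal ideal, I would invoke Lemma~\ref{D^Gamma/I one dim over F_p}: $D^\Gamma/(y,u_1,\ldots,u_{n-1})\simeq\mathbb{F}_p$ is a field, so $\mathfrak{n}:=(y,u_1,\ldots,u_{n-1})$ is the (unique) maximal ideal of $D^\Gamma$ (in particular $p\in\mathfrak{n}$). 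Thus $\mathfrak{n}$ is generated by $n$ elements, and therefore $\embdim(D^\Gamma)=\dim_{\mathbb{F}_p}(\mathfrak{n}/\mathfrak{n}^2)\leq n$ by Nakayama.

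Next I would pin down the Krull dimension. By Proposition~\ref{structure of D^Lambda}, $D^\Gamma$ is free of rank $N$ over $E^0$; in particular the structure map $E^0\hookrightarrow D^\Gamma$ is an injective, module-finite (hence integral) ring extension. By going-up (lying-over plus going-up for integral extensions) any strictly increasing chain of primes of $E^0$ of length $n$ lifts to one in $D^\Gamma$, so $\dim D^\Gamma\geq\dim E^0=n$. Combined with the general bound $\dim D^\Gamma\leq\embdim(D^\Gamma)\leq n$ from the previous step, this forces $\dim D^\Gamma=\embdim(D^\Gamma)=n$. (Alternatively one can get $\dim D^\Gamma=n$ directly from $D^\Gamma=E^0\lpow y\rpow/(h(y))$: the power series ring has Krull dimension $n+1$, and $h(y)$ is a nonzerodivisor lying in its maximal ideal since $h(0)\in(p,u_1,\ldots,u_{n-1})$ by Lemma~\ref{D^Gamma/I one dim over F_p}, so the quotient drops dimension by exactly one.)

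Finally, since $\embdim(D^\Gamma)$ equals the Krull dimension $n$, the ring $D^\Gamma$ is by definition a regular local ring, and the $n$-element generating set $\{y,u_1,\ldots,u_{n-1}\}$ of $\mathfrak{n}$ is necessarily minimal, hence a regular system of parameters; in particular it is a system of parameters. The only step requiring genuine care is the determination of $\dim D^\Gamma$, and I expect the going-up argument (or equivalently the principal-ideal-theorem argument on $E^0\lpow y\rpow$) to be the main point; everything else is bookkeeping on results already established above.
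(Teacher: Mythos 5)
Your proof is correct and follows essentially the same approach as the paper: determine the Krull dimension of $D^\Gamma$ from module-finiteness over $E^0$, bound the embedding dimension by $n$ using the generating set $\{y,u_1,\ldots,u_{n-1}\}$ from Lemma~\ref{D^Gamma/I one dim over F_p}, and conclude regularity from the squeeze $n\leq\dim\leq\embdim\leq n$. You are somewhat more explicit than the paper — spelling out locality, Noetherianity, and the going-up argument where the paper simply cites the fact that finite extensions preserve Krull dimension, and offering the principal-ideal-theorem alternative — but the mathematical content and structure are the same.
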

\begin{pf} As $D^\Gamma$ is finitely generated over $E^0$ it follows that the Krull dimension of $D^\Gamma$ is
equal to the Krull dimension of $E^0$, namely $n$. Thus, since the maximal ideal of $D^\Gamma$ is generated by
$y,u_1,\ldots,u_{n-1}$ it follows that $\text{embdim}(D^\Gamma)\leq n$ and hence that $D^\Gamma$ is a regular
local ring.\end{pf}

\begin{prop}\label{alpha is surjective} The map $\alpha:E^0(BGL_p(\Fq))\to D^\Gamma$ sends $\sigma_i$ to the $i^\text{th}$ elementary symmetric function in $x,[q](x),\ldots,[q^{p-1}](x)$. Further, $\alpha$ is surjective.\end{prop}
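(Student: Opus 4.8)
The plan is to split the statement into two parts: first identify explicitly where $\alpha$ sends the elementary symmetric functions $\sigma_i$, and then use this identification together with the structure result for $D^\Gamma$ (Proposition \ref{structure of D^Lambda}) to deduce surjectivity. For the first part, recall that $E^0(BGL_p(\Flbar))\simeq E^0\lpow\sigma_1,\ldots,\sigma_p\rpow$ by Corollary \ref{E^0(BGL_d(Flbar)=E^0[[c_1...c_d]])}, and that $\alpha$ factors as $E^0(BGL_p(\Fq))\to E^0(BA)\to D$, where $A=\langle a\rangle$ is the chosen cyclic subgroup and the first map is restriction along $A\hookrightarrow\fq{p}^\times\hookrightarrow GL_p(\Fq)$. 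First I would recall from Section \ref{sec:definition of A} that, after choosing a basis of $\fq{p}$ over $\Fq$, the composite $A\hookrightarrow\fq{p}^\times\to GL_p(\Fq)$ realises $a$ as an $\Fq$-linear automorphism of $\fq{p}$ whose eigenvalues (over $\Flbar$) are $\zeta,\zeta^q,\ldots,\zeta^{q^{p-1}}$, where $\zeta=a$ is our chosen generator of the $p$-part of $\fq{p}^\times$ viewed inside $\Flbar^\times$; this is the standard Galois-descent description of the regular representation, since $\fq{p}\otimes_{\Fq}\Flbar\simeq\prod_{\gamma\in\Gamma}\Flbar$ with the Frobenius permuting the factors. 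Applying the compatibility of $l$-Chern classes with direct sums (Proposition \ref{l-Chern Properties}, part 3) and the fact that $\hat{c}_1$ of the identity character is $x$ (part 4), one sees that $\mu^*(\sigma_i)$ is the $i^{\text{th}}$ elementary symmetric function in $x,[q](x),\ldots,[q^{p-1}](x)$ in $E^0(BA)=E^0\lpow x\rpow/[p^{v+1}](x)$, using that $[q^k](x)=\hat{c}_1$ of the $k^{\text{th}}$ Galois twist and that formal sums translate tensor products of line bundles. Passing to the quotient $D$ gives the first claim; that $\alpha$ lands in $D^\Gamma$ is Lemma \ref{Maps land in Gamma-invariants}, and it also follows directly here since the set $\{x,[q](x),\ldots,[q^{p-1}](x)\}$ is permuted by $\Gamma=\langle\Frob_q\rangle$ (recall $\Frob_q.x=[q](x)$), so any symmetric function of it is $\Gamma$-invariant.

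For surjectivity, the key observation is that $y=\prod_{k=0}^{p-1}[1+kp^v](x)=\prod_{k=0}^{p-1}[q^k](x)$ is precisely $\alpha(\sigma_p)$: indeed $\sigma_p$ is the product of all the eigenvalues, and $q$ generates the cyclic subgroup $1+p^v\mathbb{Z}/p^{v+1}\subseteq(\mathbb{Z}/p^{v+1})^\times$, so $\{q^k\bmod p^{v+1}\}=\{1+kp^v\bmod p^{v+1}\}$. Hence $y\in\im(\alpha)$. Now by Proposition \ref{structure of D^Lambda}, $D^\Gamma$ is free over $E^0$ with basis $\{1,y,\ldots,y^{N-1}\}$; since $E^0\subseteq\im(\alpha)$ (the map is an $E^0$-algebra map) and $y\in\im(\alpha)$, the subring $\im(\alpha)$ contains $E^0[y]$, which already spans $D^\Gamma$ over $E^0$. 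Therefore $\im(\alpha)=D^\Gamma$.

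The main obstacle I expect is the first part: pinning down rigorously that the eigenvalues of the image of $a$ under the chosen embedding are exactly the Galois conjugates $x,[q](x),\ldots,[q^{p-1}](x)$ — equivalently, that the restriction of the $l$-Chern classes $\sigma_i$ along $\fq{p}^\times\hookrightarrow GL_p(\Fq)$ computes the elementary symmetric functions of the conjugates. This requires care about the identification of $\fq{p}^\times\otimes\Flbar$ as a sum of one-dimensional $\Flbar$-representations (Proposition \ref{Irreducibles split over Flbar} is the relevant tool) and about tracking the chosen compatible generators $a_s$ from Section \ref{sec:finite fields} through the embedding, so that the coordinate $x$ on $E^0(BA)$ really does correspond under $\psi$-type identifications to the tautological character. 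Once that bookkeeping is done, the Chern class computation and the surjectivity argument are both short.
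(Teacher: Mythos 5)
Your proposal is correct and follows essentially the same route as the paper: both identify the restriction to $E^0(B\fq{p}^\times)$ via the Galois-descent decomposition $\Flbar\tensor_{\Fq}\fq{p}\simeq\Flbar^p$ (the paper invokes Lemma \ref{Galois extensions} to produce an explicit conjugating matrix $g_\psi$, whereas you phrase it in terms of eigenvalues and $l$-Chern class additivity, but the computation is identical), conclude that $\alpha(\sigma_i)$ is the $i$-th elementary symmetric function in $x,[q](x),\ldots,[q^{p-1}](x)$, and deduce surjectivity from $\alpha(\sigma_p)=y$ together with Proposition \ref{structure of D^Lambda}.
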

\begin{pf} Recall that an application of Lemma \ref{Galois extensions} gives us the isomorphism of $\Flbar$-vector spaces $\psi:\Flbar\tensor_{\Fq} \fq{p}
\iso \Flbar^p$, $a\tensor b\mapsto (ab,ab^q,\ldots,ab^{q^{p-1}})$. Thus there is $g=g_{\psi}\in GL_p(\Flbar)$
such that
$$
\xymatrix{\fq{p}^\times \ar[d] \ar[r] & GL_p(\Flbar) \ar[d]^{\conj_g}\\
(\Flbar^\times)^p \ar[r] & GL_p(\Flbar)}
$$
commutes, where the left hand map is $a \mapsto (a,a^q,\ldots,a^{q^{p-1}})$. Passing to cohomology gives a
commutative diagram
$$
\xymatrix{ E^0(BGL_p(\Flbar)) \ar[r] \ar[rd] & E^0(B\fq{p}^\times) \ar[r] & D^\Gamma\\
& E^0(B(\Flbar^\times)^p) \ar[u] & E^0\lpow x_1,\ldots,x_p\rpow \ar[l]_-\sim}
$$
where the map $E^0(B(\Flbar^\times)^p)\to E^0(B\fq{p}^\times)$ sends $x_i$ to $[q^{i-1}](x)$. Remembering that
$$E^0(BGL_p(\Flbar))\simeq E^0(BT)^{\Sigma_p}=E^0\lpow \sigma_1,\ldots,\sigma_p\rpow$$ we see that
$\alpha(\sigma_i)$ is the $i^\text{th}$ elementary symmetric function in $x,[q](x),\ldots,[q^{p-1}](x)$; in
particular, $\alpha(\sigma_p)=y$. Since $y$ generates $D^\Gamma$, we are done.\end{pf}

\subsection{An important subgroup}

Recall that, from Section \ref{sec:Syl_p(GL_d(K))}, there is a subgroup $N=N_p=\Sigma_p\wr \Fq^\times$ of
$GL_p(\Fq)$. The cohomology of wreath products is fairly well understood (see \cite{Nakaoka}); we will use
methods similar to those of \cite{Hunton} to calculate $E^0(BN)$. The structure of such rings is usually
expressed in terms of standard euler classes, but we get analogous results with our $l$-euler classes. We begin
with a couple of standard results from group cohomology.

\begin{lem}[Shapiro's lemma]\label{Shapiro's lemma} Let $R$ be a ring and $H$ be a subgroup of $G$. Then
$H^*(G;R[G/H])=H^*(H;R)$, where $R$ is a trivial $H$-module.\end{lem}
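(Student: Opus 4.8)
The plan is to recognise $R[G/H]$ as a coinduced module and then apply the tensor--hom adjunction at the level of a projective resolution. Since $G$ is finite we have $[G:H]<\infty$, and there is a natural isomorphism of $R[G]$-modules $R[G/H]\cong\Hom_{R[H]}(R[G],R)=\mathrm{Coind}_H^G R$ (where $R$ carries the trivial $H$-action and $R[G]$ is viewed as a right $R[H]$-module with $G$ acting on the left): a right-$H$-invariant function $G\to R$ is the same thing as a function on $G/H$. So it suffices to prove the coinduced form $H^*(G;\mathrm{Coind}_H^G R)\cong H^*(H;R)$.

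First I would fix a projective resolution $P_\bullet\to R$ of the trivial module over $R[G]$. Choosing coset representatives exhibits $R[G]$ as a free $R[H]$-module, so restriction of scalars carries projective $R[G]$-modules to projective $R[H]$-modules; hence $P_\bullet\to R$ is also a projective resolution over $R[H]$. By definition $H^*(G;\mathrm{Coind}_H^G R)$ is computed by the cochain complex $\Hom_{R[G]}(P_\bullet,\Hom_{R[H]}(R[G],R))$ and $H^*(H;R)$ by $\Hom_{R[H]}(P_\bullet,R)$. The key step is the natural isomorphism of cochain complexes
$$\Hom_{R[G]}\big(P_\bullet,\Hom_{R[H]}(R[G],R)\big)\;\cong\;\Hom_{R[H]}(P_\bullet,R),$$
the tensor--hom adjunction (equivalently Frobenius reciprocity), under which a $G$-map $\varphi$ corresponds to the $H$-map $p\mapsto\varphi(p)(1)$, with inverse sending an $H$-map $\psi$ to $p\mapsto(g\mapsto\psi(gp))$. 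One checks this is compatible with the differentials and natural in $P_\bullet$; passing to cohomology then gives $H^*(G;R[G/H])\cong H^*(H;R)$.

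The main obstacle is purely formal bookkeeping: confirming that the adjunction isomorphism is well defined with the stated module structures and commutes with the coboundary maps, together with the elementary fact that $R[G]$ is free as an $R[H]$-module. There is no serious content here — the statement is entirely standard — and a detailed treatment can be found in \cite[Chapter 2]{Benson} or in any reference on group cohomology.
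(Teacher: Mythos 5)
Your proof is correct and is precisely the standard restriction--coinduction adjunction argument for Shapiro's lemma; the paper itself gives no proof, merely citing Weibel [6.3.2] with $A=R$, which establishes exactly this statement by the same method. The only nitpick is a left/right convention slip: your explicit inverse $\psi\mapsto\bigl(p\mapsto(g\mapsto\psi(gp))\bigr)$ produces \emph{left}-$H$-invariant functions on $G$, i.e.\ elements of $R[H\backslash G]$, whereas you identify $\Hom_{R[H]}(R[G],R)$ with \emph{right}-$H$-invariant functions $R[G/H]$ --- but since $R[G/H]\cong R[H\backslash G]$ as $R[G]$-modules via $gH\leftrightarrow Hg^{-1}$, this has no effect on the argument.
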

\begin{pf} This is \cite[6.3.2]{Weibel} with $A=R$.\end{pf}

\begin{lem} Let $G$ be a finite group and $M$ a $G$-module. Then $|G|.H^i(G;M)=0$ for all $i>0$. In particular, if multiplication by $|G|$ is an isomorphism $M\to M$ then $H^i(G;M)=0$.\end{lem}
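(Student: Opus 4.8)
The plan is to exploit the restriction and transfer (corestriction) maps in group cohomology. For any subgroup $H\leqslant G$ and any $G$-module $M$ there are homomorphisms $\res_H^G:H^i(G;M)\to H^i(H;M)$ and $\transfer_H^G:H^i(H;M)\to H^i(G;M)$ with the property that the composite $\transfer_H^G\circ\res_H^G$ is multiplication by the index $[G:H]$ (see, for example, \cite[Chapter 6]{Weibel} or the discussion in \cite{BensonRepAndCohy1}). First I would specialise to the trivial subgroup $H=1$.

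Since the higher cohomology of the trivial group vanishes, that is $H^i(1;M)=0$ for all $i>0$, the composite $\transfer_1^G\circ\res_1^G$ is identically zero on $H^i(G;M)$ whenever $i>0$. As this composite is multiplication by $[G:1]=|G|$, we conclude that $|G|\cdot H^i(G;M)=0$ for all $i>0$, which is the first assertion.

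For the second assertion, suppose that multiplication by $|G|$ is an isomorphism $M\to M$. On the one hand, by functoriality of $H^i(G;-)$ this induces an isomorphism of $H^i(G;M)$; on the other hand, since $H^i(G;-)$ is an additive functor, the endomorphism of $H^i(G;M)$ induced by multiplication by the integer $|G|$ on coefficients is again multiplication by $|G|$, which is the zero map for $i>0$ by the first part. Hence $H^i(G;M)=0$ for all $i>0$.

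I do not anticipate any genuine obstacle here: the only ingredient that is not purely formal is the existence of the transfer map together with the identity $\transfer_1^G\circ\res_1^G=|G|$, and this is a standard part of the machinery of group cohomology that may simply be quoted.
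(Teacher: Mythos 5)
Your argument is correct and is precisely the standard transfer argument that underlies \cite[Theorem~6.5.8]{Weibel}, which is all the paper cites for this lemma; you have simply spelled out the proof rather than quoting the reference. The identity $\transfer_1^G\circ\res_1^G=|G|$ together with $H^i(1;M)=0$ for $i>0$ gives the first claim, and the additivity/functoriality observation gives the second, exactly as you say.
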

\begin{pf} This is \cite[Theorem 6.5.8]{Weibel}.\end{pf}

\begin{lem}\label{Sigma_p orbits on S^p} Let $S$ be a set and let $\Sigma_p$ act on $S^p$ in the usual way. If $s\in S^p$ then either
$s\in (S^p)^{\Sigma_p}=\Delta(S)$ or $p$ divides $|\orb_{\Sigma_p}(s)|$.\end{lem}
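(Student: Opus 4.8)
The plan is to analyze the action of $\Sigma_p$ on $S^p$ via the orbit-stabilizer theorem, using the fact that $p$ is prime. Given $s = (s_1,\ldots,s_p) \in S^p$, the size of its orbit divides $|\Sigma_p| = p!$ by the orbit-stabilizer theorem, so $|\orb_{\Sigma_p}(s)| = p!/|\stab_{\Sigma_p}(s)|$. The key observation is that if $p \nmid |\orb_{\Sigma_p}(s)|$, then $\stab_{\Sigma_p}(s)$ must contain a Sylow $p$-subgroup of $\Sigma_p$; since $v_p(p!) = 1$ (for instance by Corollary \ref{v_p(p^k!)} with $k=1$), such a Sylow subgroup has order $p$ and is generated by a $p$-cycle.

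First I would observe that $\stab_{\Sigma_p}(s)$ then contains some $p$-cycle $\tau$. Since a $p$-cycle on $p$ symbols is a single cycle of full length, the relation $\tau.s = s$ forces $s_{\tau^{-1}(i)} = s_i$ for all $i$, and iterating along the cycle shows that all coordinates $s_1,\ldots,s_p$ are equal; that is, $s \in \Delta(S)$, where $\Delta(S) = \{(t,\ldots,t) \mid t \in S\}$. Conversely, it is immediate that every element of $\Delta(S)$ is fixed by all of $\Sigma_p$, so $\Delta(S) \subseteq (S^p)^{\Sigma_p}$, and since a non-constant tuple cannot be fixed by a transposition swapping two unequal coordinates we get $(S^p)^{\Sigma_p} = \Delta(S)$ as claimed in the statement. (This last equality is a routine aside and need not be belabored.)

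Putting it together: for any $s \in S^p$, either $p \mid |\orb_{\Sigma_p}(s)|$, or $p \nmid |\orb_{\Sigma_p}(s)|$, in which case by the above $s \in \Delta(S) = (S^p)^{\Sigma_p}$. This is precisely the dichotomy asserted. I do not anticipate a serious obstacle here; the only mild subtlety is the Sylow-theoretic step — spelling out that a subgroup of index coprime to $p$ in $\Sigma_p$ must contain a full Sylow $p$-subgroup, hence an element of order $p$, which by cycle-type considerations (an element of order $p$ in $\Sigma_p$ is a $p$-cycle) pins down the fixed-tuple structure.
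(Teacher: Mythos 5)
Your proof is correct and takes essentially the same route as the paper: both arguments reduce to showing that if $p$ divides $|\stab_{\Sigma_p}(s)|$ then the stabilizer contains a $p$-cycle, which forces $s$ to be a constant tuple. The only cosmetic difference is that you phrase the divisibility step via Sylow theory (the stabilizer contains a full Sylow $p$-subgroup), while the paper appeals directly to Cauchy's theorem to extract a single element of order $p$; the conclusion is identical.
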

\begin{pf} Let $H$ be a subgroup of $\Sigma_p$ and suppose that $s$ is fixed by $H$. If $p$ divides the order of $H$ then $H$
contains a cyclic subgroup of order $p$ necessarily generated by a $p$-cycle, $\sigma$ say. But $\sigma$ acts
transitively on $S^p$ so that we must have $s=(s_1,\ldots,s_1)\in \Delta(S)$. Thus, for $s\in S^p$, either
$s\in\Delta(S)$ or $\stab_{\Sigma_p}(s)=H$ for some subgroup $H\subseteq \Sigma_p$ with order not divisible by
$p$. The result follows.\end{pf}

\begin{lem} We have $H^*(\Sigma_p;\mathbb{Z}_p)\simeq \mathbb{Z}_p\lpow z\rpow/pz$ for a class $z$ in degree $2p-2$.\end{lem}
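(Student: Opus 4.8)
The plan is to mirror the argument of Proposition~\ref{Cohomology of Sigma_p}, but with $\mathbb{Z}_p$ in place of $E^0$. First I would recall that $C_p=\langle\gamma_p\rangle$ is a Sylow $p$-subgroup of $\Sigma_p$ (its index $(p-1)!$ is coprime to $p$, as in Proposition~\ref{Syl_p(Sigma_p^k)}), and that by the corollary to Lemma~\ref{Aut(C_p) in Sigma_p} we have $M:=N_{\Sigma_p}(C_p)=\Aut(C_p)\ltimes C_p$, a group of order $p(p-1)$, so $[\Sigma_p:M]=(p-2)!$ is again coprime to $p$. A transfer argument (with $\transfer\circ\res$ equal to multiplication by the unit $(p-2)!\in\mathbb{Z}_p^\times$) shows that $\res\colon H^*(B\Sigma_p;\mathbb{Z}_p)\to H^*(BM;\mathbb{Z}_p)$ is split injective, and since $C_p$ is abelian, $M$ controls $p$-fusion in $\Sigma_p$ (Burnside), so the stable-element description forces the restriction $H^*(B\Sigma_p;\mathbb{Z}_p)\to H^*(BM;\mathbb{Z}_p)$ to be an isomorphism (see \cite{BensonRepAndCohy1}, exactly as for the $\mathbb{F}_p$-coefficient statement used in Proposition~\ref{Cohomology of Sigma_p}).

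Next I would reduce $H^*(BM;\mathbb{Z}_p)$ to invariants of $H^*(BC_p;\mathbb{Z}_p)$. Since $C_p$ is normal in $M$ with $|M/C_p|=p-1$ invertible in $\mathbb{Z}_p$, the Lyndon--Hochschild--Serre spectral sequence for $C_p\vartriangleleft M$ collapses: all higher cohomology groups $H^i\!\big(\Aut(C_p);H^j(BC_p;\mathbb{Z}_p)\big)$ with $i>0$ vanish because they are annihilated by the unit $|\Aut(C_p)|=p-1$ (the vanishing lemma quoted from \cite{Weibel} in the last section). Hence $H^*(BM;\mathbb{Z}_p)\cong H^*(BC_p;\mathbb{Z}_p)^{\Aut(C_p)}$, and combining with the previous step, $H^*(B\Sigma_p;\mathbb{Z}_p)\cong H^*(BC_p;\mathbb{Z}_p)^{\Aut(C_p)}$.

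It remains to compute the invariant ring. The standard computation for a cyclic group gives $H^*(BC_p;\mathbb{Z}_p)=\mathbb{Z}_p\lpow x\rpow/(px)$ with $x$ in degree $2$ (the choice between the power-series and polynomial presentations is immaterial here since $|x|>0$, so only finitely many powers of $x$ meet each cohomological degree). The group $\Aut(C_p)\simeq(\mathbb{Z}/p)^\times$ acts on $H^2(BC_p;\mathbb{Z}_p)=(\mathbb{Z}/p)\,x$ through its tautological character, so $k$ sends $x^j$ to $k^{\pm j}x^j$ on the degree-$2j$ summand. As $(\mathbb{Z}/p)^\times$ is cyclic of order $p-1$, the element $x^j$ is fixed by all of $\Aut(C_p)$ exactly when $(p-1)\mid j$, in which case the whole summand $(\mathbb{Z}/p)\,x^j$ is fixed; the degree-$0$ part $\mathbb{Z}_p$ is fixed as well. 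Therefore the invariant ring is $\mathbb{Z}_p\oplus\bigoplus_{i\ge 1}(\mathbb{Z}/p)\,x^{i(p-1)}=\mathbb{Z}_p\lpow z\rpow/(pz)$ with $z=x^{p-1}$ in degree $2(p-1)=2p-2$, which is the asserted description.

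The main obstacle is the surjectivity half of the first step: transfer alone only gives the inclusion $H^*(B\Sigma_p;\mathbb{Z}_p)\hookrightarrow H^*(BC_p;\mathbb{Z}_p)^{\Aut(C_p)}$ (the target being where the image must land, by Proposition~\ref{conj_g:BG to BG is homotopic to identity} applied to conjugation by elements of $N_{\Sigma_p}(C_p)$), and one must know that this inclusion is an equality. This is precisely where the abelianness of $C_p$, hence control of fusion by $M$, is used; everything else — the transfer splitting, the spectral-sequence collapse, and the invariant-theoretic bookkeeping — is routine.
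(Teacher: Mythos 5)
Your proof is correct and takes essentially the same route as the paper: both reduce to $H^*(BC_p;\mathbb{Z}_p)^{\Aut(C_p)}$ via the normalizer $M=\Aut(C_p)\ltimes C_p$ (the paper simply says ``as in Proposition~\ref{Cohomology of Sigma_p},'' which is the same transfer/stable-elements reduction), and then compute the invariants of $(\mathbb{Z}/p)^\times$ acting on $\mathbb{Z}_p\lpow x\rpow/px$ by scalars, arriving at $z=\pm x^{p-1}$. The only cosmetic difference is that you handle the step $H^*(BM;\mathbb{Z}_p)\iso H^*(BC_p;\mathbb{Z}_p)^{\Aut(C_p)}$ via collapse of the Lyndon--Hochschild--Serre spectral sequence rather than by directly invoking the coprime-index transfer argument (Proposition~\ref{E^0(BG)=E^0(BN)^G/N if N is normal in G}); these are equivalent and both valid.
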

\begin{pf} As in Proposition \ref{Cohomology of Sigma_p}, $H^*(B\Sigma_p;\mathbb{Z}_p)\simeq
H^*(BC_p;\mathbb{Z}_p)^{\Aut(C_p)}$, where $H^*(BC_p;\mathbb{Z}_p)\simeq \mathbb{Z}_p\lpow x\rpow/px$ with $x$
in degree 2 and $\Aut(C_p)\simeq(\mathbb{Z}/p)^\times$ acting by $k.x=kx$. Thus we have
$H^*(B\Sigma_p;\mathbb{Z}_p)=(\mathbb{Z}_p\lpow x\rpow/px)^{\Aut(C_p)}=\mathbb{Z}_p\lpow z\rpow/pz$ where
$z=\prod_{k\in(\mathbb{Z}/p)^\times} (kx)=-x^{p-1}$. Using general theory (see, for example, \cite{Weibel}) we
identify $H^*(B\Sigma_p;\mathbb{Z}_p)$ with $H^*(\Sigma_p;\mathbb{Z}_p)$ and the result follows.\end{pf}

\begin{lem}\label{H^*(BSigma_p;E^*(Fq times ^p))} There are sets $B'$ and $T$ such that $$H^*(B\Sigma_p;E^0(B(\Fq^\times)^p))  \simeq  E^0\{B'\}^{\Sigma_p}\oplus (E^0\lpow z
\rpow/pz)\{T\},$$ where $z$ is in degree $2p-2$.\end{lem}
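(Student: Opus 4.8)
The plan is to compute $H^*(B\Sigma_p; E^0(B(\Fq^\times)^p))$ by first identifying $E^0(B(\Fq^\times)^p)$ as a $\Sigma_p$-module and then splitting the coefficients into a ``diagonal'' part and a ``non-diagonal'' part according to the orbit structure from Lemma \ref{Sigma_p orbits on S^p}. Recall from Corollary \ref{E^0(B(GxC_m)) as tensor product} and the discussion of finite abelian groups that $E^0(B\Fq^\times) \simeq E^0\lpow x\rpow/[p^v](x)$ is free over $E^0$ with a finite basis, say with basis $\set{x^0,\ldots,x^{p^{nv}-1}}$, and hence by the K\"unneth isomorphism $E^0(B(\Fq^\times)^p)\simeq E^0\lpow x_1,\ldots,x_p\rpow/([p^v](x_i))$ is free over $E^0$ with basis $S=\set{x_1^{\alpha_1}\cdots x_p^{\alpha_p}\mid 0\leq \alpha_k < p^{nv}}$, and this is a permutation module for $\Sigma_p$ (as noted in the proof of the surjectivity result in Section \ref{sec:restriction map}).

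Next I would decompose the permutation basis $S$ into $\Sigma_p$-orbits. By Lemma \ref{Sigma_p orbits on S^p} applied to the underlying set $\set{0,\ldots,p^{nv}-1}$, each orbit is either a singleton, corresponding to a ``diagonal'' monomial $x_1^\alpha\cdots x_p^\alpha$, or has cardinality divisible by $p$. Let $B'$ index the diagonal monomials and let the remaining orbits be indexed by a set $T$ (one representative per orbit), so that as $\Sigma_p$-modules
$$
E^0(B(\Fq^\times)^p) \simeq E^0\set{B'} \oplus \bigoplus_{t\in T} E^0[\Sigma_p/H_t],
$$
where $E^0\set{B'}$ carries the trivial (hence permutation-trivial, i.e.\ with $\Sigma_p$ acting as the identity on each basis element) action and $H_t=\stab_{\Sigma_p}(t)$ has index divisible by $p$, in fact order coprime to $p$ by Lemma \ref{Sigma_p orbits on S^p}. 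Then $H^*(B\Sigma_p;-)$ is additive on this direct sum of modules.

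For the diagonal summand, $H^*(\Sigma_p; E^0\set{B'})=E^0\set{B'}\otimes_{E^0} H^*(\Sigma_p;E^0)$; but here I want the stated form $E^0\set{B'}^{\Sigma_p}$ appearing in degree zero only. I should be slightly careful: the $\Sigma_p$ action on $E^0\set{B'}$ is trivial since each diagonal monomial is fixed, so $E^0\set{B'}^{\Sigma_p}=E^0\set{B'}$, and the higher cohomology $H^{>0}(\Sigma_p;E^0)$ is $p$-torsion of the form $(E^0\lpow z\rpow/pz)$-type. Rather than fuss, the cleanest route is to absorb all the positive-degree contributions of the diagonal part into the second summand: write the diagonal contribution as $E^0\set{B'}$ (in degree $0$) together with a shifted copy of $(E^0\lpow z\rpow/pz)$-modules, which can be reindexed into $T$ by enlarging $T$ appropriately. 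For the orbit summands, Shapiro's lemma (Lemma \ref{Shapiro's lemma}) gives $H^*(\Sigma_p; E^0[\Sigma_p/H_t])=H^*(H_t;E^0)$, and since $|H_t|$ is coprime to $p$ while $E^0$ is a $\mathbb{Z}_p$-algebra, the averaging argument (or Lemma on $|G|\cdot H^i(G;M)=0$) shows $H^{>0}(H_t;E^0)=0$, so each orbit summand contributes only $E^0$ in degree $0$. Collecting these, the orbit summands contribute a free $E^0$-module; I would fold these free generators plus the degree-zero diagonal generators together and note that the only positive-degree classes come from the $H^{>0}(\Sigma_p;E^0)$ factors attached to the diagonal monomials, each isomorphic to $E^0\lpow z\rpow/pz$ with $z$ in degree $2p-2$ by the computation $H^*(\Sigma_p;\mathbb{Z}_p)\simeq \mathbb{Z}_p\lpow z\rpow/pz$ tensored up; reindexing, these are the $(E^0\lpow z\rpow/pz)\set{T}$ summand.

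The main obstacle I expect is bookkeeping the degree-zero versus positive-degree pieces correctly: the statement lumps $E^0\set{B'}^{\Sigma_p}$ (which I read as the degree-zero diagonal part) with a free $E^0\lpow z\rpow/pz$-module on a set $T$, and the honest content is that (i) the non-diagonal orbits contribute only in degree $0$ and freely, and (ii) each diagonal monomial contributes a copy of $H^*(\Sigma_p;E^0)\cong E^0\lpow z\rpow/pz$. Getting the precise meaning of $B'$ and $T$ so that the isomorphism is literally true — in particular deciding whether $E^0\set{B'}^{\Sigma_p}$ is meant to carry all of $H^*(\Sigma_p;E^0\set{B'})$ or just its degree-zero part, with the higher part shunted into $T$ — is the subtle point; I would pin this down by spelling out that $T$ is chosen to index one generator per non-diagonal $\Sigma_p$-orbit on $S$ \emph{together with} one $\lpow z\rpow/pz$-generator per diagonal monomial, at which point everything follows from additivity of group cohomology, Shapiro's lemma, vanishing of $H^{>0}$ with coprime-order coefficients, and the known $H^*(\Sigma_p;E^0)$.
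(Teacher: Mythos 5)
You have all the right ingredients (the permutation module decomposition, Lemma \ref{Sigma_p orbits on S^p}, Shapiro's lemma, vanishing of $H^{>0}$ with coprime-order coefficients, and $H^*(\Sigma_p;\mathbb{Z}_p)\simeq \mathbb{Z}_p\lpow z\rpow/pz$), but you have swapped the roles of $B'$ and $T$, and your proposed repair does not close. In the paper, $T=B^{\Sigma_p}$ is the set of diagonal monomials and $B'$ is the union of the nontrivial orbits. With that labelling everything lines up cleanly: $\Sigma_p$ acts \emph{trivially} on $E^0\{T\}$, so the diagonal summand contributes $H^*(\Sigma_p;\mathbb{Z}_p)\tensor_{\mathbb{Z}_p} E^0\{T\}\simeq (E^0\lpow z\rpow/pz)\{T\}$ with no splitting into degree-zero and positive parts required, while the nontrivial orbits have stabilizers of order coprime to $p$ and thus contribute (via Shapiro) only $H^0(\Sigma_p;E^0\{B'\})=E^0\{B'\}^{\Sigma_p}$, which is the free $E^0$-module on the orbit sums --- a genuinely smaller thing than $E^0\{B'\}$.

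Because you read $B'$ as the diagonal part, you found $E^0\{B'\}^{\Sigma_p}=E^0\{B'\}$ (trivial action) and were then left wondering where the positive-degree diagonal contribution should go; your suggested fix --- letting $T$ index one generator per non-diagonal orbit plus one $z$-power per diagonal monomial --- does not produce the claimed isomorphism. The non-diagonal orbits contribute $E^0$ (in degree zero only, by coprime-order vanishing), which is not a free $E^0\lpow z\rpow/pz$-module, so throwing those generators into $(E^0\lpow z\rpow/pz)\{T\}$ overcounts by an entire $(E^0\lpow z\rpow/pz)_{>0}$ per orbit. Conversely, the positive-degree part of the diagonal contribution, $(E^0/p)\{z,z^2,\ldots\}$, is not by itself a free $E^0\lpow z\rpow/pz$-module either. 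There is simply no honest way to meet the stated formula once $B'$ and $T$ are swapped; the split in the lemma is forced to be diagonal $\leftrightarrow$ $T$ and nontrivial orbits $\leftrightarrow$ $B'$. Swap the labels and your argument becomes the paper's.
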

\begin{pf}As before, we can identify the ring
$H^*(B\Sigma_p;E^0(B(\Fq^\times)^p))$ with the group cohomology $H^*(\Sigma_p;E^0(B(\Fq^\times)^p))$. We let
$B=\{x_1^{\alpha_1}\ldots x_p^{\alpha_p}\mid 0\leq \alpha_1,\ldots,\alpha_p<p^{nv}\}\subseteq
E^0(B(\Fq^\times)^p)$ and note that $E^0(B(\Fq^\times)^p)=E^0\{B\}$ so that we can apply Lemma \ref{Sigma_p
orbits on S^p} to get $B=T\cup B'$, where $T=B^{\Sigma_p}=\{x_1^{\alpha}\ldots x_p^{\alpha}\mid 0\leq
\alpha<p^{nv}\}$ and $B'$ is a disjoint union of orbits of size divisible by $p$.

Now, each orbit in $B'$ is of the form $\Sigma_p/H$ for some $H$ with order not divisible by $p$, and
$H^*(\Sigma_p;E^0[\Sigma_p/H])\simeq H^*(H;E^0)$ by Lemma \ref{Shapiro's lemma}. But, since $|H|$ is invertible
in $E^0$, we find that $H^i(\Sigma_p;E^0[\Sigma_p/H])=0$ for all $i>0$. Hence $H^i(\Sigma_p;E^0\{B'\})=0$ for
all $i>0$. Further, $H^0(\Sigma_p;E^0\{B'\})=E^0\{B'\}^{\Sigma_p}$.

We now see that $H^*(\Sigma_p;E^0\{B\})=H^*(\Sigma_p;E^0\{B'\})\oplus H^*(\Sigma_p;E^0\{T\})$, where the latter
summand is just $H^*(\Sigma_p;\mathbb{Z}_p)\tensor_{\mathbb{Z}_p} E^0\{T\}$; that is,
$$H^*(B\Sigma_p;E^0(B(\Fq^\times)^p)  \simeq  E^0\{B'\}^{\Sigma_p}\oplus (E^0\lpow z \rpow/pz)\{T\},$$
where $z$ is in degree $2p-2$.\end{pf}

We are now in a position to establish the cohomology of $N=\Sigma_p\wr \Fq^\times$. Note first that the
projection $N\to \Sigma_p$ makes $E^*(BN)$ into a $E^*(B\Sigma_p)$-module. Recall, from Section
\ref{sec:Sigma_p}, that the embedding $C_p\rightarrowtail S^1$ gives a class $w\in E^0(BC_p)$ such that
$E^0(BC_p)=E^0\lpow w\rpow/[p](w)$ and $E^0(B\Sigma_p)\simeq E^0(BC_p)^{\Aut(C_p)}\simeq E^0\lpow d\rpow/df(d)$
where $d=-w^{p-1}$ and $f(d)=\langle p\rangle (w)$.

\begin{lem} In $E^0(B\Sigma_p)=E^0\lpow d\rpow/df(d)$ we have $pd\in (d^2)$.\end{lem}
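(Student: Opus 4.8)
The plan is to exploit the explicit presentation $E^0(B\Sigma_p) \simeq E^0\lpow d\rpow/df(d)$ obtained in Section \ref{sec:Sigma_p}, together with the normalisation recorded in the lemma just before Proposition \ref{E^0(BSigma_p) in terms of d}, namely that $f(t) \in E^0[t]$ satisfies $f(0) = p$.

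First I would use $f(0) = p$ to write $f(t) = p + t\,g(t)$ for some polynomial $g(t) \in E^0[t]$; this is legitimate precisely because $f(t) - p$ is then divisible by $t$ in $E^0[t]$. Multiplying through by $t$ gives the identity $t f(t) = pt + t^2 g(t)$ in $E^0[t]$. I would then pass to the quotient ring $E^0(B\Sigma_p) = E^0\lpow d\rpow/(df(d))$, in which the defining relation reads $df(d) = 0$. Substituting $t = d$ into the identity above yields
$$0 = df(d) = pd + d^2 g(d),$$
hence $pd = -d^2 g(d)$, which manifestly lies in the ideal $(d^2)$ of $E^0(B\Sigma_p)$.

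There is essentially no obstacle here: the statement is an immediate algebraic consequence of the presentation of $E^0(B\Sigma_p)$ and the fact that the polynomial $f$ has constant term $p$. The only point requiring a moment's care is that $f$ is indeed a polynomial (or at the very least a power series) over $E^0$ with $f(0) = p$ — but this is exactly the content of the cited lemma, so the factorisation $f(t) = p + t g(t)$ is valid and the argument goes through as above.
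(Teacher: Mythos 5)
Your argument is correct and is essentially the same as the paper's: both exploit $f(0)=p$ and the relation $df(d)=0$ to rewrite $pd$ as $-(f(d)-f(0))d$, which lies in $(d^2)$ because $f(d)-f(0)$ is divisible by $d$. Your explicit factorisation $f(t)=p+tg(t)$ is just a notationally cleaner way of saying the same thing.
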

\begin{pf} Since $df(d)=0$ and $f(0)=p$, we find $pd=f(0)d = f(d)d-f(0)d=(f(d)-f(0))d$ which is divisible by $d^2$, as
claimed.\end{pf}

\begin{prop}\label{E^0(BSigma_p wr GL_1) as a module} Let $J=\left\{\alpha\in \mathbb{N}^p\mid
0\leq\alpha_1\leq\ldots\leq\alpha_p<p^{nv}\text{ and }\alpha_1<\alpha_p\right\}.$ Then there is an isomorphism
of $E^0(B\Sigma_p)$-modules $$E^0(BN)\simeq E^0(B\Sigma_p)\{c_p^i\mid 0\leq i<p^{nv}\}\oplus (E^0(B\Sigma_p)/d)
\{b_\alpha\mid \alpha\in J\}$$ where $c_p = \euler_l(N\hookrightarrow GL_p(\Fq)\hookrightarrow GL_p(\Flbar))$
and $\ds b_{\bf{\alpha}} = \tr_{(\Fq^\times)^p}^{~N~}(x_1^{\alpha_1}\ldots x_p^{\alpha_p})$.
\end{prop}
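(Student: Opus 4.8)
The plan is to compute $E^0(BN)$ from the spectral sequence associated to the skeletal filtration of $B\Sigma_p$ under the identification $BN=E\Sigma_p\times_{\Sigma_p}BT_p$, where $T_p=(\Fq^\times)^p$ and $N=\Sigma_p\ltimes T_p$; this is the $E$-theory analogue of Hunton's Morava $K$-theory computation. The spectral sequence has the form $E_2^{s,t}=H^s(\Sigma_p;E^t(BT_p))\Rightarrow E^{s+t}(BN)$. By the K\"unneth isomorphism, $E^*(BT_p)=E^*\tensor_{E^0}E^0(BT_p)$ is free and concentrated in even degrees, so $E_2$ vanishes for $t$ odd; and Lemma~\ref{H^*(BSigma_p;E^*(Fq times ^p))} (which decomposes the permutation module $E^0(BT_p)=E^0\{B\}$ as $E^0\{T\}\oplus E^0\{B'\}$, with $T$ the diagonal monomials and $B'$ a union of $\Sigma_p$-orbits of size divisible by $p$, via Shapiro's Lemma~\ref{Shapiro's lemma}) identifies $E_2$, up to $E^*$-periodicity in $t$, with $E^0\{B'\}^{\Sigma_p}$ sitting in the column $s=0$ together with the tower $(E^0\lpow z\rpow/pz)\{T\}$, $z$ in bidegree $(2p-2,0)$.

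Next I would show the spectral sequence collapses at $E_2$ by a parity argument: each nonzero $E_2^{s,t}$ has $t$ even and $s\in\{0\}\cup\{2k(p-1):k\ge 1\}$, hence $s$ is even too; a differential $d_r$ changes total degree by one, so a nonzero differential would have to land in a bidegree of odd total degree while hitting a nonzero group, which is impossible for $p$ odd. (It is the use of $\mathbb{Z}_p$-coefficients — rather than the $\mathbb{F}_p$-coefficients of Morava $K$-theory, which carry an exterior class in odd degree — that makes the argument work.) Hence $E^*(BN)$ carries an exhaustive, Hausdorff, complete skeletal filtration with $\mathrm{gr}\,E^*(BN)=E_2$.

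I would then exhibit explicit classes realising this associated graded. Since the edge homomorphism $E^0(BN)\to\mathrm{gr}^0 E^0(BN)=E^0(BT_p)^{\Sigma_p}$ is the restriction (Lemma~\ref{E^*(BG) lands in invariants}), functoriality and multiplicativity of $l$-Chern classes (Proposition~\ref{l-Chern Properties}) give $\res_{T_p}^{N}(c_p^i)=x_1^i\cdots x_p^i$, so for $0\le i<p^{nv}$ the class $c_p^i$ has leading term the $i$-th diagonal monomial; the pullback $\pi^*\colon E^0(B\Sigma_p)\to E^0(BN)$ along $\pi\colon N\twoheadrightarrow\Sigma_p$ is filtration preserving and sends the $E^0$-basis $\{1,d,\dots,d^{(p^n-1)/(p-1)}\}$ of $E^0(B\Sigma_p)$ (Proposition~\ref{E^0(BC)^Aut(C) basis}) to classes with leading terms $1,z,z^2,\dots$ in the tower, so the classes $\pi^*(d^k)c_p^i$ span a submodule with associated graded the tower $(E^0\lpow z\rpow/pz)\{T\}$. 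For the transfers $b_\alpha=\tr_{T_p}^{N}(x_1^{\alpha_1}\cdots x_p^{\alpha_p})$, $\alpha\in J$: these lie in filtration $0$, and the double coset formula (Lemma~\ref{Transfers}) gives $\res_{T_p}^{N}\tr_{T_p}^{N}=\sum_{\sigma\in\Sigma_p}\sigma^*$, so the leading term of $b_\alpha$ is $|\stab_{\Sigma_p}(\alpha)|$ times the orbit sum of $x^\alpha$; as $\alpha$ is off-diagonal, Lemma~\ref{Sigma_p orbits on S^p} makes $|\stab_{\Sigma_p}(\alpha)|$ prime to $p$, hence a unit, so the $b_\alpha$ have leading terms spanning $E^0\{B'\}^{\Sigma_p}$. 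Finally Frobenius reciprocity (Lemma~\ref{Transfers}) gives $d\cdot b_\alpha=\tr_{T_p}^{N}\!\big(\res_{T_p}^{N}(\pi^*d)\cdot x^\alpha\big)$ with $\res_{T_p}^{N}(\pi^*d)=0$ (as $\pi$ is trivial on $T_p$ and $d$ lies in the augmentation ideal), so $E^0(B\Sigma_p)$ acts on the span of the $b_\alpha$ through $E^0(B\Sigma_p)/(d)$, and there is a well-defined $E^0(B\Sigma_p)$-module map from the right-hand side of the asserted isomorphism into $E^0(BN)$.

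To finish I would check this map is an isomorphism: it carries the evident filtration on the source onto the skeletal filtration and, by the leading-term computations above, induces an isomorphism on associated gradeds, hence is an isomorphism since both sides are complete (for $E^0(BN)$ this completeness is as in the proof that $E^0(BG)$ is complete, it being finitely generated over the complete local ring $E^0$). Alternatively one can reduce mod $\mathfrak{m}=(p,u_1,\dots,u_{n-1})$: show the reductions of the $c_p^i d^k$ and the $b_\alpha$ span $K^0(BN)$, deduce surjectivity onto $E^0(BN)$ from the Nakayama-type Lemma~\ref{M/mM finitely generated implies M finitely generated}, and upgrade to a bijection using that $E^0(BN)$ is free over $E^0$ (Proposition~\ref{E^0(BG) only interesting group}) of rank $p^{nv}\big((p^n-1)/(p-1)+1\big)+|J|$. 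The main obstacle is exactly this last passage: the skeletal filtration is infinite — the tower $E^0\lpow z\rpow/pz$ contributes in unboundedly many filtration degrees — so there is no finite bootstrap, and one must reconcile this infinite associated graded with the finitely generated module $E^0(BN)$, the relation $df(d)=0$ (equivalently $pd\in(d^2)$) being precisely what collapses the tower back down to the finite free $E^0(B\Sigma_p)$-module on the $c_p^i$.
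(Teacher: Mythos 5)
Your proposal is correct and follows essentially the same route as the paper: the same Serre spectral sequence for $B(\Fq^\times)^p\to BN\to B\Sigma_p$, the same decomposition of the permutation module via Lemma \ref{H^*(BSigma_p;E^*(Fq times ^p))}, the same collapse-by-evenness, the same explicit classes ($c_p^i$, $\pi^*d$, and $b_\alpha$ via the double coset formula and Frobenius reciprocity), and the same filtration-comparison endgame using that $pd\in(d^2)$ makes the infinite tower collapse onto the finite module. The paper packages the final convergence step as a five-lemma argument on the finite filtration stages $F^0R/F^{k(2p-2)}R\iso F_0/F_{k(2p-2)}$ followed by an inverse limit, which is the precise form of the "complete on both sides" claim you sketch.
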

\begin{pf} First note that in $E^0(BN)$,
$$d.b_{\alpha}=d.\transfer_{(\Fq^\times)^p}^N(x_1^{\alpha_1}\ldots
x_p^{\alpha_p})=\transfer_{(\Fq^\times)^p}^N(\res_{(\Fq^\times)^p}^N(d)x_1^{\alpha_1}\ldots x_p^{\alpha_p})=0$$
since the composite $(\Fq^\times)^p\to N\to \Sigma_p$ is zero. Thus, writing $$R=E^0(B\Sigma_p)\{c_p^i\mid 0\leq
i<p^{nv}\}\oplus (E^0(B\Sigma_p)/d) \{b_\alpha\mid \alpha\in J\}$$ there is an evident (well defined) map of
$E^0(B\Sigma_p)$-modules $\phi:R\to E^0(BN)$. We introduce a filtration on $R$ by
\begin{eqnarray*}
F^0 R & = & R,\\
F^1 R = \ldots = F^{2p-2}R & = & Rd,\\
F^{2p-1} R = \ldots = F^{4p-4}R & = & Rd^2,~\ldots\quad .
\end{eqnarray*}
Then
\begin{eqnarray*}
\frac{F^{k(2p-2)} R}{F^{(k+1)(2p-2)} R} & \simeq & (Rd^k/Rd^{k+1}) = \left\{\begin{array}{ll}
          E^0\{c_p^i\mid 0\leq i<p^{nv}\}\oplus
          E^0\{b_{\alpha}\mid \alpha\in J\} & \mbox{for $k=0$}\\
          (E^0/p)\{c_p^i\mid 0\leq i<p^{nv}\}d^k & \mbox{for $k>0$}.
          \end{array}\right.
\end{eqnarray*}

We use the spectral sequence $H^*(B\Sigma_p;E^*(B(\Fq^\times)^p))\Rightarrow E^*(BN)$ associated to the
fibration $B(\Fq^\times)^p\to BN\to B\Sigma_p$. By Lemma \ref{H^*(BSigma_p;E^*(Fq times ^p))},
$H^*(\Sigma_p;E^*(B(\Fq^\times)^p))$ is in even degrees and the spectral sequence collapses. Thus, we have a
filtration $E^0(BN)= F_0 \geqslant F_{2p-2} \geqslant F_{4p-4} \geqslant \ldots$ with
$F_{k(2p-2)}/F_{(k+1)(2p-2)} = E^{k(2p-2),0}_{\infty}$; that is,
$$F_{k(2p-2)}/F_{(k+1)(2p-2)} = \left\{\begin{array}{ll} E^0\{T\} \oplus E^0\{B'\}^{\Sigma_p} & \text{for $k=0$}\\
(E^0/p)\{T\} z^k & \text{for $k>0$.}\end{array}\right.$$ It remains to show that $\phi$ induces an isomorphism
$$\frac{F^{k(2p-2)} R}{F^{(k+1)(2p-2)} R} \simeq \frac{F_{k(2p-2)}}{F_{(k+1)(2p-2)}}$$
for all $k$ since, if so, an application of the five-lemma (\cite[p129]{Hatcher}) gives an isomorphism
$$\frac{F^{0} R}{F^{k(2p-2)} R} \simeq \frac{F_{0}}{F_{k(2p-2)}}$$ and, on taking limits, an isomorphism $R=F^0
R\simeq F_0 = E^0(BN)$.

Firstly note that the map $E^0(BN)\to F_0/F_{2p-2}= E^0\{T\} \oplus E^0\{B'\}^{\Sigma_p} \subseteq
E^0(B(\Fq^\times)^p)$ is just the restriction map (see, for example, \cite{McCleary}). An application of the
double coset formula shows that $\res_{(\Fq^\times)^p}^{~N}(b_\alpha)=\sum_{\sigma\in\Sigma_p}
\sigma.x_1^{\alpha_1}\ldots x_p^{\alpha_p}$ so that the images of $b_\alpha$ ($\alpha\in J$) are precisely the
basis elements of $E^0\{B'\}^{\Sigma_p}$, where the latter is given the basis of orbit sums. Further, the
restriction of the $l$-euler class $c_p$ to $E^0(B(\Fq^\times)^p)$ is just $x_1\ldots x_p$, so that the classes
$c_p^i$ ($0\leq i< p^{nv}$) give precisely the set $T$. Similarly, the class $d^kc_p^j\in F_{k(2p-2)}$ lifts
$(x_1\ldots x_p)^j z^k\in F_{k(2p-2)}/F_{(k+1)(2p-2)}$. The result follows.\end{pf}

\subsection{Summary of notation}

We summarise the notation for the generators that will be used to study $E^0(BGL_p(\Fq))$.
\begin{itemize}
\item We have $v=v_p(q-1)$ which is assumed to be positive.
\item We let $T$ denote the maximal torus of $GL_p(\Fq)$ and have $$E^0(BT)=E^0\lpow
x_1\ldots,x_p\rpow/([p^v](x_1),\ldots,[p^v](x_p)).$$ We write $\beta$ for the surjective restriction map
$E^0(BGL_p(\Fq))\to E^0(BT)^{\Sigma_p}$.
\item We let $\Delta$ denote the diagonal subgroup of $T$ and write $E^0(B\Delta)=E^0\lpow
x\rpow/[p^v](x)$, where $x=\euler_l(\Delta\simeq \Fq^\times\rightarrowtail (\Flbar)^\times)$. We also let
$\Delta_p$ denote the $p$-part of $\Delta$.
\item We let $A$ be the maximal cyclic $p$-subgroup of $GL_p(\Fq)$ of Section \ref{sec:definition of A} and, remembering that we can view $A$ as a subgroup of $\Fq^\times$, we have $E^0(BA)\simeq E^0\lpow
x\rpow/[p^{v+1}](x)$ where $x=\euler_l(A\rightarrowtail \fq{p}^\times\rightarrowtail (\Flbar)^\times)$. Note
that this notation is consistent with that of $E^0(B\Delta)=E^0(B\Delta_p)$ since $\Delta_p$ sits inside $A$ in
a compatible way. We write $D$ for the quotient ring $E^0(BA)/\langle p\rangle([p^v](x))$ and $\alpha$ for the
surjective map $E^0(BGL_p(\Fq))\to D^\Gamma$.
\item We let $C_p$ denote the cyclic subgroup of order $p$. Using the embedding $(\Flbar)^\times\rightarrowtail
S^1$ we get an embedding $C_p\rightarrowtail(\Flbar)^\times$ and we write $E^0(BC_p)=E^0\lpow w\rpow/[p](w)$,
where $w=\euler_l(C_p\rightarrowtail (\Flbar)^\times)$.
\item We write $E^0(B\Sigma_p)=E^0\lpow d\rpow/df(d)$ as in Proposition \ref{E^0(BSigma_p) in terms of d}, where
the restriction of $d$ to $E^0(BC_p)$ is $-w^{p-1}$ and $f(d)$ restricts to $\langle p\rangle (w)$.
\item As in the previous section, we write $N=\Sigma_p\wr \Fq^\times$ and have
$$E^0(BN)\simeq E^0(B\Sigma_p)\{c_p^i\mid 0\leq i<p^{nv}\}\oplus E^0 \{b_\alpha\mid \alpha\in J\}$$
where $c_p = \euler_l(N\hookrightarrow GL_p(\Fq)\hookrightarrow GL_p(\Flbar))$, $\ds b_{\bf{\alpha}} =
\tr_{(\Fq^\times)^p}^{~N~}(x_1^{\alpha_1}\ldots x_p^{\alpha_p})$ and $$J=\left\{\alpha\in \mathbb{N}^p\mid
0\leq\alpha_1\leq\ldots\leq\alpha_p<p^{nv}\text{ and }\alpha_1<\alpha_p\right\}.$$
\end{itemize}

Recall the results of Section \ref{sec:abelian subgroups of GL_p(K)}, namely that every $p$-subgroup of $N$ is
subconjugate to one of $A$, $\Sigma_p\times \Delta$ or $T$. We begin with the diagram of inclusions below.
$$
\xymatrix{ & GL_p(\Fq)\\
 & N \ar[u]\\
T \ar[ur] \ar@/^1pc/[uur] & \Sigma_p\times \Delta \ar[u] & A \ar[ul] \ar@/_1pc/[uul]\\
& \Delta_p \ar[ul] \ar[u] \ar[ur]}
$$

Applying the functor $E^0(B-)$ we get the following diagram.
$$
\xymatrix{~\phantom{\frac{E^{x^y}}{z^{p^q}}} & E^0(BGL_p(\Fq)) \ar[d] \ar@/^2pc/[ddr] \ar@/_2pc/[ddl]_{\beta}\\
& E^0(BN) \ar[dr]^{\psi_3} \ar[d]^{\psi_2} \ar[dl]_{\psi_1}\\
E^0(BT) \ar@{=}[d] & E^0(B\Sigma_p)\tensor_{E^0} E^0(B\Delta) \ar@{=}[d] & \qquad \quad E^0(BA)\qquad \quad \ar@{=}[d]\\
\ds \frac{E^0\lpow x_1\ldots x_p\rpow}{([p^v](x_1),\ldots,[p^v](x_p))} \ar[dr] & \ds \frac{E^0\lpow
d,x\rpow}{(df(d),[p^v](x))} \ar[d] & \ds \qquad \quad\frac{E^0\lpow x\rpow}{[p^{v+1}](x)}\qquad \quad \ar[dl]\\
& \ds E^0(B\Delta_p)=\frac{E^0\lpow x\rpow}{[p^v](x)}}
$$

\begin{prop}\label{psi 1,2 and 3 are jointly injective} The maps $\psi_1$, $\psi_2$ and $\psi_3$ shown above are jointly injective.\end{prop}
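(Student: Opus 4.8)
The plan is to reduce everything to Corollary \ref{Maximal abelian p-subgroups give jointly injective maps}, applied to the group $G=N$. First I would deal with the fact that the target of $\psi_2$, namely $E^0(B\Sigma_p)\tensor_{E^0}E^0(B\Delta)$, is not the cohomology of an abelian subgroup: the map $\psi_2$ is restriction to $\Sigma_p\times\Delta$, and this factors further through restriction to the abelian subgroup $C_p\times\Delta\leqslant\Sigma_p\times\Delta\leqslant N$ (recall that $\langle\gamma\rangle$ centralises the diagonal $\Delta$ by Lemma \ref{N_d products}, so $\langle\gamma\rangle.\Delta\cong C_p\times\Delta$). Since $\psi_1$ is restriction to $T$ and $\psi_3$ is restriction to $A$, the kernel of each $\psi_i$ is contained in the kernel of restriction to an honest abelian subgroup in the list $T$, $C_p\times\Delta$, $A$. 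Hence it suffices to show that the restriction maps $E^0(BN)\to E^0(BT)$, $E^0(BN)\to E^0(B(C_p\times\Delta))$ and $E^0(BN)\to E^0(BA)$ are jointly injective.

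Next I would verify the two hypotheses of Corollary \ref{Maximal abelian p-subgroups give jointly injective maps}. For freeness of $E^*(BN)$ over $E^*$: a Sylow $p$-subgroup of $N=\Sigma_p\wr\Fq^\times$ is $C_p\wr C_{p^v}$ (with $v=v_p(q-1)$), which is good by parts 1 and 4 of Proposition \ref{Theorem E, HKR}, so $N$ is good by part 3, and then $E^*(BN)$ is free over $E^*$ by Proposition \ref{E^0(BG) only interesting group}. (Alternatively, freeness can be read off the explicit decomposition of Proposition \ref{E^0(BSigma_p wr GL_1) as a module}, noting $E^0(B\Sigma_p)/d\cong E^0$.) For the covering condition, I would invoke Proposition \ref{Classified Subgroups}: an abelian $p$-subgroup $B$ of $N$ either lies in $T$; or is $N$-conjugate to a subgroup of $\langle\gamma\rangle.\Delta=C_p\times\Delta$; or has $\pi(B)$ non-trivial and is cyclic of order $p^{v+1}$. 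In the last case the element $a=\gamma(a_v,1,\ldots,1)$ of Section \ref{sec:definition of A} satisfies $a^p=(a_v,\ldots,a_v)$, a generator of $\Delta_p$, so $A=\langle a\rangle$ is itself of type $2$; as all type-$2$ subgroups are $N$-conjugate, $B$ is $N$-conjugate into $A$. Thus every abelian $p$-subgroup of $N$ is $N$-subconjugate to one of $T$, $C_p\times\Delta$, $A$.

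Granting both hypotheses, Corollary \ref{Maximal abelian p-subgroups give jointly injective maps} yields joint injectivity of restriction to these three subgroups, and the reduction in the first paragraph then gives that $\psi_1,\psi_2,\psi_3$ are jointly injective. There is no real obstacle here beyond bookkeeping: the one point to get right is the factorisation of $\psi_2$ through $C_p\times\Delta$ so that an abelian subgroup actually appears in the list fed to the corollary, together with the observation that the $A$ of Section \ref{sec:definition of A} is the canonical representative of the type-$2$ subgroups classified in Proposition \ref{Classified Subgroups}. Everything else is a direct appeal to results already established.
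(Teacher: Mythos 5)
Your proof is correct and uses the same strategy the paper does: combine the classification of abelian $p$-subgroups of $N$ from Proposition \ref{Classified Subgroups} with Corollary \ref{Maximal abelian p-subgroups give jointly injective maps}. The one place where you are actually more careful than the paper's terse proof is the factorisation of $\psi_2$ through $E^0(B(C_p\times\Delta))$: the paper simply cites the subconjugation to $T$, $A$, and $\Sigma_p\times\Delta$, but $\Sigma_p\times\Delta$ is not abelian, so strictly speaking it cannot be fed directly into the corollary as stated, and your replacement by the abelian subgroup $C_p\times\Delta=\langle\gamma\rangle.\Delta$ (together with the observation that $\ker\psi_2\subseteq\ker(\res^N_{C_p\times\Delta})$) is the right way to make that step watertight.
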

\begin{pf} This follows from Section \ref{sec:abelian subgroups of GL_p(K)} and Corollary \ref{Maximal abelian p-subgroups give jointly injective maps}: any abelian $p$-subgroup of $N$ is subconjugate to one of $T$, $A$ and $\Sigma_p\times \Delta$.\end{pf}

Hence we should be able to get a good understanding of the multiplicative structure of $E^0(BN)$ by studying the
maps $\psi_1,~\psi_2$ and $\psi_3$. As well as looking at the usual generators of $E^0(BN)$, we will be
particularly interested in a class $t$ defined below.

\begin{prop}\label{definition of t} There is a unique class $t\in E^0(BGL_p(\Flbar))$ which restricts to $\prod_i[p^v](x_i)$ in
$E^0(B(\Flbar^\times)^p)\simeq E^0\lpow x_1,\ldots,x_p\rpow$. Further, writing $t$ for the restriction of this
class to $E^0(BGL_p(\Fq))$, we find that $t\in\ker(\beta)$.\end{prop}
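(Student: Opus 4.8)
The plan is to handle the two assertions in turn; both are short given Corollary~\ref{E^0(BGL_d(Flbar)=E^0[[c_1...c_d]])} and naturality of restriction. First I would establish existence and uniqueness of $t$. By Corollary~\ref{E^0(BGL_d(Flbar)=E^0[[c_1...c_d]])} the restriction map $E^0(BGL_p(\Flbar))\to E^0(B\overline{T}_p)=E^0\lpow x_1,\ldots,x_p\rpow$ is injective with image the invariant subring $E^0\lpow x_1,\ldots,x_p\rpow^{\Sigma_p}=E^0\lpow\sigma_1,\ldots,\sigma_p\rpow$. The element $\prod_i[p^v](x_i)$ is visibly symmetric in $x_1,\ldots,x_p$, hence lies in this image, so it is the restriction of a unique class $t\in E^0(BGL_p(\Flbar))$; concretely $t$ is the power series in the elementary symmetric functions $\sigma_1,\ldots,\sigma_p$ obtained by rewriting $\prod_i[p^v](x_i)$.

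Next I would show $\beta(t)=0$ by a naturality argument. The maximal torus $T=T_p$ of $GL_p(\Fq)$ embeds in $GL_p(\Flbar)$ both through $GL_p(\Fq)$ and through $\overline{T}_p\simeq(\Flbar^\times)^p$, and these two embeddings coincide (both send $(\Fq^\times)^p$ to the diagonal), so passing to classifying spaces and then to $E^0$ gives a commutative square
$$
\xymatrix{
E^0(BGL_p(\Flbar)) \ar[r]^-{\res} \ar[d]_-{\res} & E^0(B\overline{T}_p) \ar[d]^-{\res}\\
E^0(BGL_p(\Fq)) \ar[r]^-{\res} & E^0(BT).
}
$$
Under the identifications of Section~\ref{sec:restriction map} the right-hand vertical map is the quotient $E^0\lpow x_1,\ldots,x_p\rpow\to E^0\lpow x_1,\ldots,x_p\rpow/([p^v](x_1),\ldots,[p^v](x_p))$, which kills $\prod_i[p^v](x_i)$. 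Chasing $t$ around the square gives $\res^{GL_p(\Fq)}_{T}(t)=0$ in $E^0(BT)$. Since $\beta$ is exactly this restriction map with target cut down to $E^0(BT)^{\Sigma_p}$ (the image does land there by Lemma~\ref{E^*(BG) lands in invariants}, using that $\Sigma_p$ normalises $T$) and $E^0(BT)^{\Sigma_p}\hookrightarrow E^0(BT)$ is injective, it follows that $\beta(t)=0$, that is, $t\in\ker(\beta)$.

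I do not anticipate a real obstacle here. The only points that need care are invoking the injectivity half of Corollary~\ref{E^0(BGL_d(Flbar)=E^0[[c_1...c_d]])} so that $t$ is genuinely well defined by its restriction, and checking that the two inclusions $T\hookrightarrow GL_p(\Flbar)$ agree so that the square above commutes; everything else is formal.
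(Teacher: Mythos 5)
Your proof is correct and follows essentially the same route as the paper's: both identify $E^0(BGL_p(\Flbar))$ with the $\Sigma_p$-invariants of $E^0\lpow x_1,\ldots,x_p\rpow$ to get existence and uniqueness of $t$, and both use the commutative restriction square down to $E^0(BT)\simeq E^0\lpow x_1,\ldots,x_p\rpow/([p^v](x_i))$ to see that $t$ dies under $\beta$. The extra care you take in noting the compatibility of the two embeddings of $T$ and the injectivity of $E^0(BT)^{\Sigma_p}\hookrightarrow E^0(BT)$ is accurate but left implicit in the paper.
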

\begin{pf} By the results of Tanabe we have $E^0(BGL_p(\Flbar))=E^0(B(\Flbar^\times)^p)^{\Sigma_p}$, where
$\Sigma_p$ acts by permuting the $x_i$, and it is clear that $\prod_i[p^v](x_i)$ is $\Sigma_p$-invariant. For
the second claim, the commutative diagram
$$
\xymatrix{ E^0(BGL_p(\Flbar)) \ar[r] \ar[d] & E^0(B(\Flbar^\times)^p) \ar[d] \ar@{=}[r] & E^0\lpow x_1,\ldots,x_p\rpow \ar@{->>}[d]\\
E^0(BGL_p(\Fq)) \ar[r] & E^0(BT) \ar@{=}[r] & \ds \frac{E^0\lpow
x_1,\ldots,x_p\rpow}{([p^v](x_1),\ldots,[p^v](x_p))}}
$$ shows that $\beta(t)=\prod_i[p^v](x_i)=0$ in $E^0(BT)$.\end{pf}

For the remainder of this chapter we will write $I$ for the ideal of $E^0(BGL_p(\Fq))$ generated by $t$. Then,
by the previous result, we have $I\subseteq \ker(\beta)$. Later we will find that $I=\ker(\beta)$; that is,
$\ker(\beta)$ is a principal ideal generated by $t$.

The following proposition shows the images of the key elements of $E^0(BN)$ under each of the maps $\psi_1$,
$\psi_2$ and $\psi_3$ and will be proved in the subsequent section.

\begin{prop}\label{Generators of E^0(BN)} With the notation above, the following table shows the images of the classes $d$, $c_p$, $b_\alpha$ ($\alpha\in J$) and $t$ of $E^0(BN)$ under the maps $\psi_1,~\psi_2$ and
$\psi_3$.
$$
{\footnotesize
\begin{array}{|c||c|c|c|c|}
\hline ~ &&&&\\
\text{Map $/$ target} & d & c_p & b_\alpha~(\alpha\in J) & t\\
~ &&&&\\
\hline\hline ~&&&&\\
~\psi_1~/~E^0(BT)~ & 0 & x_1\ldots x_p & \ds \sum_{\sigma\in\Sigma_p}^{\phantom{\sigma}}\sigma.(x_1^{\alpha_1}\ldots x_p^{\alpha_p}) & 0\\
~&&&&\\
\hline ~ &&&&\\
~\psi_2~/~E^0(B(\Sigma_p\times\Delta))~ & d & ~ \ds \prod_{k=0}^{p-1} (x+_F[k](w))~ & \ds (p-1)!f(d)x^{\sum \alpha_i}& 0\\
~ &&&&\\
\hline ~ &&&&\\
\psi_3~/~E^0(BA) & ~ -[p^v](x)^{p-1}~ &\ds  \prod_{k=0}^{p-1}[1+kp^v](x) & \ds ~(p-1)!x^{\sum \alpha_i}\langle
p\rangle
([p^v](x))~& ~ \ds [p^v](x)^p ~\\
~ &&&&\\
\hline \end{array}}
$$\end{prop}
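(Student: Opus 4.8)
All three maps are restriction maps, $\psi_1=\res_T^N$, $\psi_2=\res_{\Sigma_p\times\Delta}^N$ and $\psi_3=\res_A^N$, so the plan is to treat the four columns one at a time, reducing in each case to the functorial behaviour of the class under the relevant inclusion together with the standard identities for $l$-Chern and $l$-euler classes (Proposition~\ref{l-Chern Properties}) and for transfers (Lemma~\ref{Transfers}). I will use throughout: that $T=\ker(N\twoheadrightarrow\Sigma_p)$ is normal in $N$ with quotient $\Sigma_p$; that $A\cap T=\Delta_p$ and $(\Sigma_p\times\Delta)\cap T=\Delta$; that the image of $A$ in $\Sigma_p$ is $\langle\gamma\rangle$ (since $a\mapsto\gamma$), so $A\to\Sigma_p$ factors through an isomorphism $A/\Delta_p\iso\langle\gamma\rangle$; and that the eigenvalues of $a$ over $\Flbar$ are exactly the $p$ distinct $p$-th roots of $a^p=a_vI$, namely $\iota_A(a)^{1+kp^v}$ for $0\le k<p$ --- here $\iota_A$ denotes the inclusion $A\hookrightarrow\Flbar^\times$, with $\hat c_1(\iota_A)=x$, and we use that $\iota_A(a)$ has order $p^{v+1}$, so $\iota_A(a)^{p^v}$ generates $\mu_p(\Flbar)$. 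I abbreviate $x^\alpha=x_1^{\alpha_1}\cdots x_p^{\alpha_p}$ and $|\alpha|=\sum_i\alpha_i$.

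\emph{Columns $d$ and $c_p$.} The class $d$ is pulled back from $E^0(B\Sigma_p)=E^0(BC_p)^{\Aut(C_p)}$ along $N\twoheadrightarrow\Sigma_p$, restricting to $-w^{p-1}$ on $C_p$. Composing with the three inclusions: $T\to\Sigma_p$ is trivial, so $\psi_1(d)=0$; $\Sigma_p\times\Delta\to\Sigma_p$ is the projection, so $\psi_2(d)=d$; and $A\to\Sigma_p$ factors through $A/\Delta_p\cong C_p$, so by Proposition~\ref{l-Chern Properties}(2) $w$ pulls back to $\hat c_1$ of an order-$p$ character of $A$ killing $\Delta_p$, i.e.\ to $[p^v](x)$ (independently of the chosen identification $A/\Delta_p\cong C_p$, since two such differ by a Teichm\"uller unit), whence $\psi_3(d)=-[p^v](x)^{p-1}$. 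For $c_p=\euler_l(N\hookrightarrow GL_p(\Flbar))$ I decompose the restricted $p$-dimensional representation into a sum of lines and use that the top $l$-Chern class is the product of their first $l$-Chern classes: to $T$ it is $\bigoplus_i\pi_i$, giving $\psi_1(c_p)=\prod_i x_i$; to $A$ it is the eigenspace decomposition $\bigoplus_{k=0}^{p-1}(\iota_A)^{1+kp^v}$, giving $\psi_3(c_p)=\prod_{k=0}^{p-1}[1+kp^v](x)$; and, restricted further to $C_p\times\Delta$ --- where $\res\colon E^0(B(\Sigma_p\times\Delta))\to E^0(B(C_p\times\Delta))$ is injective by Proposition~\ref{Restriction to Sylow injective} --- it is the regular representation of $C_p$ tensored with the scalar character, i.e.\ $\bigoplus_{k=0}^{p-1}\chi^k\boxtimes\iota_\Delta$ with $\hat c_1(\chi)=w$, giving $\prod_{k=0}^{p-1}(x+_F[k](w))$, an $\Aut(C_p)$-invariant element that therefore already lies in $E^0(B(\Sigma_p\times\Delta))$; hence $\psi_2(c_p)=\prod_{k=0}^{p-1}(x+_F[k](w))$.

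\emph{Column $b_\alpha$.} Since $b_\alpha=\tr_{(\Fq^\times)^p}^N(x^\alpha)$ I compute $\res_H^N\tr_T^N$ for $H\in\{T,\ \Sigma_p\times\Delta,\ A\}$ by the double coset formula (Lemma~\ref{Transfers}(5)), using that $T$ is normal in $N$ with $N/T=\Sigma_p$. For $H=T$ this gives $\sum_{\sigma\in\Sigma_p}\conj_\sigma^*$, which permutes the $x_i$ by Lemma~\ref{N_d products}, so $\psi_1(b_\alpha)=\sum_{\sigma\in\Sigma_p}\sigma.x^\alpha$. For $H=\Sigma_p\times\Delta$ there is a single double coset (the image of $H$ in $\Sigma_p$ is all of $\Sigma_p$), $H\cap T=\Delta$, and $\res_\Delta^T(x^\alpha)=x^{|\alpha|}$ since each coordinate character restricts to the identity on the diagonal; Frobenius reciprocity gives $\tr_\Delta^H(x^{|\alpha|})=x^{|\alpha|}\,\tr_\Delta^H(1)$, and $\tr_\Delta^H(1)=\pi^*\tr_1^{\Sigma_p}(1)=(p-1)!\,f(d)$ by Lemma~\ref{Transfers}(4) (with $\Delta$ a direct factor of $H$) and Lemma~\ref{Transfer_1^C_p(1)}, so $\psi_2(b_\alpha)=(p-1)!\,f(d)\,x^{|\alpha|}$. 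For $H=A$ the image $\langle\gamma\rangle$ acts freely on $\Sigma_p$, so there are $(p-1)!$ double cosets, each contributing $\tr_{\Delta_p}^A\res_{\Delta_p}^T\conj_g^*(x^\alpha)=\tr_{\Delta_p}^A(x^{|\alpha|})$; then $\tr_{\Delta_p}^A(x^{|\alpha|})=x^{|\alpha|}\,\tr_{\Delta_p}^A(1)=x^{|\alpha|}\,\langle p\rangle([p^v](x))$, where $\tr_{\Delta_p}^A(1)=\pi^*\tr_1^{C_p}(1)=\langle p\rangle([p^v](x))$ by Lemma~\ref{Transfers}(4), Lemma~\ref{Transfer_1^C_p(1)} and the identification $\pi^*(w)=[p^v](x)$ from the $d$-column; hence $\psi_3(b_\alpha)=(p-1)!\,x^{|\alpha|}\,\langle p\rangle([p^v](x))$.

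\emph{Column $t$, and the main obstacle.} By Proposition~\ref{definition of t}, $t$ restricts to $\prod_i[p^v](x_i)$ on $\overline{T}_p$ and $\beta(t)=0$; hence $\psi_1(t)=\res_T^{GL_p(\Fq)}(t)=\beta(t)=0$. Both $A$ and $C_p\times\Delta$ are diagonalisable over $\Flbar$, hence conjugate into $\overline{T}_p$, and since conjugation acts trivially on $E^0(BGL_p(\Flbar))$ (Proposition~\ref{Conjugation induces identity}) the restriction of $t$ to either factors through $\overline{T}_p$, where $x_i\mapsto[1+k_ip^v](x)$ (with $\{k_i\}=\{0,\dots,p-1\}$) in the first case and $x_i\mapsto[i-1](w)+_F x$ in the second. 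In $E^0(BA)$ one has $[p^v]\bigl([1+kp^v](x)\bigr)=[p^v](x)+_F[kp^{2v}](x)=[p^v](x)$, since $2v\ge v+1$ forces $[p^{2v}](x)=0$, giving $\psi_3(t)=[p^v](x)^p$; and in $E^0(B(C_p\times\Delta))$ one has $[p^v]\bigl([i-1](w)+_F x\bigr)=[(i-1)p^v](w)+_F[p^v](x)=[p^v](x)$, since $v\ge1$ forces $[p^v](w)=0$, and then $[p^v](x)^p=0$ because $[p^v](x)=0$ in $E^0(B\Delta)$, giving $\psi_2(t)=0$. I expect the $b_\alpha$ row to be the main difficulty: keeping the double-coset bookkeeping honest (counting double cosets, identifying $H\cap gTg^{-1}$, tracking $\conj_g^*$) and, in the $A$-case, pinning down $\tr_{\Delta_p}^A(1)$ \emph{exactly} rather than merely up to a unit --- for which the key is Lemma~\ref{Transfers}(4), reducing it to $\tr_1^{C_p}(1)=\langle p\rangle(w)$ of Lemma~\ref{Transfer_1^C_p(1)}, after which one need only observe that the relevant identities hold modulo $[p^{v+1}](x)$ (cf.\ Corollary~\ref{<p>(kx)=<p>(x) for k a teichmuller lift}), which suffices in $E^0(BA)$.
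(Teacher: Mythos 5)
Your proposal is correct and follows essentially the same route as the paper's own proof, which likewise computes each entry via restriction, the behaviour of $l$-euler classes under direct sum, and the double-coset and transfer-product formulas. A few of your sub-arguments take slightly different (but equivalent) roads: for $\psi_3(c_p)$ you find the eigenvalues of $a=\gamma(a_v,1,\ldots,1)$ directly as the $p$ distinct $p$-th roots of $a_vI$ (the characteristic polynomial is $t^p-a_v$), whereas the paper invokes Proposition~\ref{alpha is surjective}, which uses the Galois-theoretic splitting of $\Flbar\tensor_{\Fq}\fq{p}$ from Lemma~\ref{Galois extensions}; for $\psi_3(d)$ you argue that the identification $A/\Delta_p\cong C_p$ only affects $\pi^*(w)$ by a Teichm\"uller unit, which is harmless in $w^{p-1}$ and in $\langle p\rangle(w)$ via Corollary~\ref{<p>(kx)=<p>(x) for k a teichmuller lift}, whereas the paper pins down the identification exactly by tracking the compatibly chosen generators ($\xi(\gamma)=a_1=a_{v+1}^{p^v}=\chi(a)^{p^v}$), giving $\pi^*(w)=[p^v](x)$ on the nose; and for $\psi_2(b_\alpha)$ you use Lemma~\ref{Transfers}(4) directly, while the paper splits the transfer along $\Sigma_p\times\Delta$ via Lemma~\ref{Transfers}(2). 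These are cosmetic variations; the decompositions of the restricted representations, the double-coset bookkeeping, and the reduction of the transfer terms to $\tr_1^{\Sigma_p}(1)=(p-1)!f(d)$ and $\tr_1^{C_p}(1)=\langle p\rangle(w)$ are the same, and the computations of $\psi_1(t),\psi_2(t),\psi_3(t)$ match the paper's line for line.
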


\subsection{Tracking the key classes in $E^0(BN)$}

\begin{prop} In $E^0(BT)=E^0\lpow x_1,\ldots,x_p\rpow/([p^v](x_i))$ we have
$$\psi_1(d)=0,~ \psi_1(c_p)=x_1\ldots x_p\text{ and }
\psi_1(b_\alpha)=\sum_{\sigma\in\Sigma_p}\sigma.(x_1^{\alpha_1}\ldots x_p^{\alpha_p}).$$\end{prop}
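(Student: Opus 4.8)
The plan is to recognise $\psi_1$ as the restriction map $\res_T^N$ along the inclusion of the maximal torus $T=(\Fq^\times)^p$ into $N=\Sigma_p\wr\Fq^\times$, and to exploit that $T$ is precisely the base of the wreath product, hence the kernel of the projection $\pi\colon N\twoheadrightarrow\Sigma_p$. In particular $T$ is normal in $N$ with $N/T\simeq\Sigma_p$, and the permutation matrices $\sigma\in\Sigma_p\leqslant N$ serve as coset representatives. I will then track the three classes $d$, $c_p$ and $b_\alpha$ separately.

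For $d$: since $d$ is viewed in $E^0(BN)$ through the map $\pi^*\colon E^0(B\Sigma_p)\to E^0(BN)$ that gives the module structure of Proposition~\ref{E^0(BSigma_p wr GL_1) as a module}, and the composite $T\hookrightarrow N\xrightarrow{\pi}\Sigma_p$ is trivial, the class $\psi_1(d)=\res_T^N\pi^*(d)$ factors through $E^0(\mathrm{pt})=E^0$; it is therefore the image of $d$ under the augmentation $E^0(B\Sigma_p)=E^0\lpow d\rpow/df(d)\to E^0$ of Proposition~\ref{E^0(BSigma_p) in terms of d}, which sends $d\mapsto0$. For $c_p=\euler_l(N\hookrightarrow GL_p(\Fq)\hookrightarrow GL_p(\Flbar))$: the composite $T\hookrightarrow N\hookrightarrow GL_p(\Flbar)$ is the direct sum $\pi_1\oplus\cdots\oplus\pi_p$ of the coordinate characters $\pi_i\colon T\to\Fq^\times\hookrightarrow\Flbar^\times$, so functoriality of the $l$-Chern classes (Proposition~\ref{l-Chern Properties}, part~2) together with the Whitney sum formula (Proposition~\ref{l-Chern Properties}, part~3) give $\psi_1(c_p)=\hat c_p(\pi_1\oplus\cdots\oplus\pi_p)=\prod_{i=1}^p\hat c_1(\pi_i)$, the cross terms vanishing because each $\pi_i$ is one-dimensional so its higher $l$-Chern classes are zero; and $\hat c_1(\pi_i)=x_i$ by Proposition~\ref{l-Chern Properties}, part~4, yielding $x_1\cdots x_p$.

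For $b_\alpha=\tr_{(\Fq^\times)^p}^{N}(x_1^{\alpha_1}\cdots x_p^{\alpha_p})$ I would apply the double coset formula (Lemma~\ref{Transfers}, part~5) to $\res_T^N\tr_T^N$; since $T$ is normal in $N$ this collapses, exactly as in the remark following Lemma~\ref{Transfers}, to $\res_T^N\tr_T^N=\sum_{g\in N/T}\conj_g^*=\sum_{\sigma\in\Sigma_p}\conj_\sigma^*$. By Lemma~\ref{N_d products} conjugation by $\sigma$ permutes the coordinates of $T$, so $\conj_\sigma^*$ acts on $E^0(BT)=E^0\lpow x_1,\ldots,x_p\rpow/([p^v](x_i))$ by the corresponding permutation of $x_1,\ldots,x_p$, and summing over all of $\Sigma_p$ gives $\psi_1(b_\alpha)=\sum_{\sigma\in\Sigma_p}\sigma.(x_1^{\alpha_1}\cdots x_p^{\alpha_p})$.

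None of these steps is deep: the argument is bookkeeping with transfers and $l$-Chern classes. The only point demanding a little care is reconciling the $\Sigma_p$-action coming from $\conj_\sigma^*$ with the action $\tau.x_i=x_{\tau(i)}$ used elsewhere in the chapter — a potential discrepancy of $\sigma\mapsto\sigma^{-1}$ — but this is harmless here since every formula above involves a sum over the whole symmetric group.
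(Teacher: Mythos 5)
Your argument is correct and is essentially the proof given in the paper: the composite $T \hookrightarrow N \twoheadrightarrow \Sigma_p$ being trivial kills $d$, the $l$-euler class of $T \hookrightarrow GL_p(\Flbar)$ is computed by the Whitney formula, and the double coset formula with $T$ normal in $N$ handles the transfers $b_\alpha$. Your version is slightly more explicit in spelling out the augmentation $E^0(B\Sigma_p)\to E^0$ for the first claim and the vanishing of cross terms for the second, but the structure is identical.
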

\begin{pf} For the first statement, the composition $T\rightarrowtail N \twoheadrightarrow
\Sigma_p$ is trivial and hence, since $d$ is the restriction of a class in $E^0(B\Sigma_p)$, it follows that
$\psi_1(d)=0$. For the second statement we have $\psi_1(c_p)=\psi_1(\euler_l(N \hookrightarrow
GL_p(\Flbar))=\euler_l(T\hookrightarrow GL_p(\Flbar))=x_1\ldots x_p$. For the final statement, using the
double-coset formula (Lemma \ref{Transfers}) we get
\begin{eqnarray*}
\psi_1(b_\alpha) &=& \res_T^N\transfer_T^N(x_1^{\alpha_1}\ldots x_p^{\alpha_p})\\
&=& \sum_{\sigma\in T \backslash N / T} \transfer_{T\cap T}^T \res_{T\cap T}^T (\conj_\sigma^*(x_1^{\alpha_1}\ldots x_p^{\alpha_p}))\\
&=& \sum_{\sigma\in\Sigma_p} \sigma.(x_1^{\alpha_1}\ldots x_p^{\alpha_p}),
\end{eqnarray*}
as claimed.\end{pf}

\begin{prop}\label{psi_2 of generators} In $E^0(B(\Sigma_p\times\Delta))=E^0\lpow d,x\rpow/(df(d),[p^v](x))$ we have
$$\psi_2(d)=d,~\psi_2(c_p)=\prod_{k=0}^{p-1} (x+_F[k](w))\text{ and }\psi_2(b_\alpha)=(p-1)!f(d)x^{\sum \alpha_i}.$$
\end{prop}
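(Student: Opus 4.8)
The plan is to identify the image of each generator directly. Recall that $\psi_2$ is the restriction map $E^0(BN)\to E^0(B(\Sigma_p\times\Delta))\cong E^0(B\Sigma_p)\tensor_{E^0}E^0(B\Delta)$, so for each of $d$, $c_p$, $b_\alpha$ I would unwind the definition of the class and combine functoriality of $l$-euler classes (Proposition \ref{l-Chern Properties}) with the transfer identities and the double coset formula (Lemma \ref{Transfers}).

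\emph{The class $d$.} By construction $d\in E^0(BN)$ is the pullback $\pi^*(d)$ of $d\in E^0(B\Sigma_p)$ along the projection $\pi:N\twoheadrightarrow\Sigma_p$, and the composite $\Sigma_p\times\Delta\hookrightarrow N\xrightarrow{\pi}\Sigma_p$ is the projection onto the first factor; hence $\psi_2(d)$ is the image of $d$ under $a\mapsto a\tensor 1$, that is $\psi_2(d)=d$. This case is immediate.

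\emph{The class $b_\alpha$.} Writing $T=(\Fq^\times)^p\trianglelefteq N$ we have $b_\alpha=\tr_T^N(x_1^{\alpha_1}\cdots x_p^{\alpha_p})$, and I would apply the double coset formula to $\res_{\Sigma_p\times\Delta}^N\tr_T^N$. Because $T$ is normal in $N$ and $(\Sigma_p\times\Delta)T=N$ there is a single double coset, with $(\Sigma_p\times\Delta)\cap T=\Delta$, so the formula collapses to $\psi_2(b_\alpha)=\tr_\Delta^{\Sigma_p\times\Delta}\bigl(\res_\Delta^T(x_1^{\alpha_1}\cdots x_p^{\alpha_p})\bigr)$. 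Each coordinate character of $T$ restricts to the inclusion $\Delta\hookrightarrow\Flbar^\times$, so $\res_\Delta^T(x_1^{\alpha_1}\cdots x_p^{\alpha_p})=x^{\sum\alpha_i}$; and viewing $\Delta$ as $1\times\Delta$ inside $\Sigma_p\times\Delta$, multiplicativity of transfers over products (Lemma \ref{Transfers}(2)) gives $\tr_\Delta^{\Sigma_p\times\Delta}(x^{\sum\alpha_i})=\tr_1^{\Sigma_p}(1)\cdot x^{\sum\alpha_i}$, which equals $(p-1)!f(d)x^{\sum\alpha_i}$ by Lemma \ref{Transfer_1^C_p(1)}.

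\emph{The class $c_p$.} Here $c_p=\euler_l(\rho)$ for the defining representation $\rho:N\hookrightarrow GL_p(\Fq)\hookrightarrow GL_p(\Flbar)$, so functoriality gives $\psi_2(c_p)=\euler_l(\rho|_{\Sigma_p\times\Delta})$. Since $[\Sigma_p\times\Delta:C_p\times\Delta]=(p-1)!$ is coprime to $p$, the restriction $E^0(B(\Sigma_p\times\Delta))\to E^0(B(C_p\times\Delta))=E^0\lpow w,x\rpow/([p](w),[p^v](x))$ is injective (Proposition \ref{Restriction to Sylow injective}); under it $d\mapsto-w^{p-1}$, and this injectivity is what makes the displayed formula, written with $w$, unambiguous. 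Over $\Flbar$ the standard $p$-cycle permutation matrix has the $p$ distinct $p$-th roots of unity as eigenvalues, so $\rho|_{C_p\times\Delta}$ decomposes as $\bigoplus_{k=0}^{p-1}(\chi_k\tensor\iota)$, where $\chi_k:C_p\to\Flbar^\times$ has $\hat{c}_1(\chi_k)=[k](w)$ (taking $\chi_1$ to be the inclusion that defines $w$) and $\iota:\Delta\hookrightarrow\Flbar^\times$ has $\hat{c}_1(\iota)=x$. The Whitney sum formula (Proposition \ref{l-Chern Properties}(3)) together with the identity $\hat{c}_1(\alpha\tensor\beta)=\hat{c}_1(\alpha)+_F\hat{c}_1(\beta)$ then give $\euler_l(\rho|_{C_p\times\Delta})=\prod_{k=0}^{p-1}\bigl(x+_F[k](w)\bigr)$, as claimed.

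\emph{Main obstacle.} The steps for $d$ and $b_\alpha$ are routine bookkeeping with the transfer axioms; the one needing care is $c_p$. I must check that among the eigencharacters of the permutation matrix the one carrying the class $w$ is precisely the fixed embedding $C_p\hookrightarrow\Flbar^\times$, so that no spurious twist enters the identification $\hat{c}_1(\chi_k)=[k](w)$, and that the final formula is read inside $E^0(B(C_p\times\Delta))$ via the injective restriction from $E^0(B(\Sigma_p\times\Delta))$.
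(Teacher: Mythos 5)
Your argument is correct and essentially identical to the paper's: the same double-coset collapse plus product-transfer identity for $b_\alpha$, and the same decomposition of the restriction of the defining representation to $C_p\times\Delta$ as $\bigoplus_{k=0}^{p-1}(\chi_k\otimes\iota)$ (the paper phrases it as the regular representation of $C_p$ tensored with $M$) for $c_p$. The ``spurious twist'' you flag is not a genuine obstacle: the regular representation of $C_p$ decomposes as the sum of \emph{all} $p$ characters, so the product $\prod_{k=0}^{p-1}(x+_F[k](w))$ is an invariant of the full set of characters and does not depend on which nontrivial one you label $\chi_1$.
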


\begin{pf} The first claim is clear. For the second, let $V=\Flbar^p$ correspond to the representation
$C_p\times\Delta\hookrightarrow GL_p(\Flbar)$ and $L=\Flbar$ to $C_p\rightarrowtail (\Flbar)^\times$. Then,
since $C_p$ acts on $V$ by permuting the coordinates, it follows that $V$ is the regular representation of
$C_p$. Thus, standard representation theory (see, for example, \cite[2.4]{Serre}) gives
$V\simeq\bigoplus_{k=0}^{p-1} L^k$ as $\Flbar$-representations of $C_p$. Now, let $M=\Flbar$ be the standard
one-dimensional representation of $\Delta\simeq \Fq^\times\subseteq (\Flbar)^\times$. Then, since $\Delta$ acts
diagonally on $V$, we have $V\simeq \bigoplus_{k=0}^{p-1} M\tensor_{\Flbar} L^k$ as $\Flbar$-representations of
$C_p\times\Delta$. Hence we have
\begin{eqnarray*}
\euler_l(C_p\times\Delta\hookrightarrow GL_p(\Flbar)) & = & \euler_l\left(\bigoplus_{k=0}^{p-1} M\tensor_{\Flbar} L^k\right)\\
&=& \prod_{k=0}^{p-1} \euler_l(M\tensor_{\Flbar} L^k))\\
&=& \prod_{k=0}^{p-1} (\euler_l(M)+_F \euler_l(L^k))\\
 &=& \prod_{k=0}^{p-1} (x+_F [k](w)).
\end{eqnarray*}
Thus, $\psi_2(c_p)=\euler_l(\Sigma_p\times\Delta\hookrightarrow GL_p(\Flbar))$ is just the pullback of this
class under the injective map $E^0(B\Sigma_p\times \Delta)\hookrightarrow E^0(BC_p\times\Delta)$, as required.

For the final statement, note first that $(\Sigma_p\times\Delta) \backslash N / T=1$ and that
$(\Sigma_p\times\Delta) \cap T=\Delta$. Writing $S$ for $\Sigma_p\times\Delta$ we can apply the properties from
Lemma \ref{Transfers} to get
\begin{eqnarray*}
\psi_2(b_\alpha) &=& \res_{S}^N\transfer_T^N(x_1^{\alpha_1}\ldots x_p^{\alpha_p})\\
&=& \sum_{\sigma\in S\backslash N / T} \transfer_{S\cap T}^{~S} \res_{S\cap T}^{~T} (\conj_\sigma^*(x_1^{\alpha_1}\ldots x_p^{\alpha_p}))\\
&=& \transfer_{\Delta}^{S}\res_\Delta^T(x_1^{\alpha_1}\ldots x_p^{\alpha_p})\\
&=& \transfer_{\Delta}^{S}(x^{\sum \alpha_i})\\
&=&\transfer_{1\times\Delta}^{\Sigma_p\times\Delta}(1\tensor x^{\sum \alpha_i})\\
&=& \transfer_1^{\Sigma_p}(1)\tensor \transfer_\Delta^\Delta (x^{\sum \alpha_i})\\
&=& \transfer_1^{\Sigma_p}(1)\tensor x^{\sum \alpha_i}.
\end{eqnarray*}
But, from Lemma \ref{Transfer_1^C_p(1)}, $\transfer_1^{\Sigma_p}(1)=(p-1)!f(d)$ and the result follows.\end{pf}

\begin{prop}\label{psi_3(d)} In $E^0(BA)=E^0\lpow x\rpow/[p^{v+1}](x)$ we have $\psi_3(d)=-[p^v](x)^{p-1}.$\end{prop}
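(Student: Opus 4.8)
The plan is to use the fact that the class $d\in E^0(BN)$ is the image of $d\in E^0(B\Sigma_p)$ under $\pi^*$, where $\pi\colon N\to\Sigma_p$ is the projection, so that $\psi_3(d)=(\pi|_A)^*(d)$. Since $\pi(a)=\gamma$ is the standard $p$-cycle, the homomorphism $\pi|_A\colon A\to\Sigma_p$ factors as $A\overset{\lambda}{\twoheadrightarrow}C_p=\langle\gamma\rangle\hookrightarrow\Sigma_p$, and hence $\psi_3(d)=\lambda^*\!\bigl(\res^{\Sigma_p}_{C_p}(d)\bigr)$. By the analysis of Section \ref{sec:Sigma_p}, $\res^{\Sigma_p}_{C_p}(d)=-w^{p-1}$, and moreover this element equals $\prod_{k=1}^{p-1}[k](w)$ in $E^0(BC_p)=E^0\lpow w\rpow/[p](w)$ (this is exactly the identity coming out of Lemmas \ref{hat(k)(x)=k(x) mod p(x)} and \ref{Wilson's theorem}). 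Carrying $-w^{p-1}$ in this $\Aut(C_p)$-symmetric form, rather than working with $w$ directly, is what will make the final cancellation exact rather than merely up to a unit.

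Next I would compute the pullback. Let $\chi\colon A\overset{\lambda}{\to}C_p\hookrightarrow\Flbar^\times$ be the associated one-dimensional $\Flbar$-representation, so that $\lambda^*(w)=\hat{c}_1(\chi)$ by functoriality of the $l$-Chern classes (Proposition \ref{l-Chern Properties}). Since $\chi^p$ is trivial we have $[p]_F(\hat{c}_1(\chi))=\hat{c}_1(\chi^p)=0$ in $E^0(BA)$, so $\lambda^*$ is the well-defined ring map ``substitute $w\mapsto\hat{c}_1(\chi)$'' and sends $[k](w)\mapsto[k]_F(\hat{c}_1(\chi))=\hat{c}_1(\chi^k)$, using the additivity of $\hat{c}_1$ under tensor product (established above). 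Applying $\lambda^*$ to the product formula therefore yields $\psi_3(d)=\prod_{k=1}^{p-1}\hat{c}_1(\chi^k)$. Now $\chi$ is a character of $A$ of exact order $p$, and so is $\chi_0:=\chi_x^{\,p^v}$, where $\chi_x$ is the tautological character of $A$ (the one with $\hat{c}_1(\chi_x)=x$, of order $|A|=p^{v+1}$); here $\hat{c}_1(\chi_0)=[p^v]_F(x)=[p^v](x)$. Since $A$ has exactly $p-1$ nontrivial characters killed by $p$, both $\{\chi^k:1\le k\le p-1\}$ and $\{\chi_0^{\,j}:1\le j\le p-1\}$ equal that full set, so
$$\psi_3(d)=\prod_{k=1}^{p-1}\hat{c}_1(\chi^k)=\prod_{j=1}^{p-1}\hat{c}_1(\chi_0^{\,j})=\prod_{j=1}^{p-1}[j]\!\bigl([p^v](x)\bigr)$$
in $E^0(BA)$.

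Finally I would evaluate the product. Since $[p]\bigl([p^v](x)\bigr)=[p^{v+1}](x)=0$ in $E^0(BA)$, the argument of Lemma \ref{hat(k)(x)=k(x) mod p(x)} shows $[j]\bigl([p^v](x)\bigr)=[\hat{j}]\bigl([p^v](x)\bigr)$ there, and $[\hat{j}]\bigl([p^v](x)\bigr)=\hat{j}\,[p^v](x)$ by Lemma \ref{Teichmuller Series} (recall $F$ is $p$-typical over the torsion-free $\mathbb{Z}_p$-algebra $E^0$). Hence $\psi_3(d)=\bigl(\prod_{j=1}^{p-1}\hat{j}\bigr)[p^v](x)^{p-1}=-[p^v](x)^{p-1}$, the last equality by Lemma \ref{Wilson's theorem}. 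The only step needing genuine care is the one flagged in the first paragraph: eliminating the unit ambiguity in identifying $\lambda^*(w)$ with $[p^v](x)$, which is precisely why the computation is organised around the symmetric product $\prod_k[k](w)$; everything else is routine bookkeeping with $l$-Chern classes and the formal-group identities already in hand.
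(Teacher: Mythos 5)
Your argument is correct and takes a genuinely different route from the paper. The paper shows $\pi^*(w)=[p^v](x)$ on the nose by tracking the generator $a=\gamma(a_v,1,\ldots,1)$ through $\xi\circ\pi$ and equating $\xi(\gamma)=a_1$ with $\chi_x(a)^{p^v}$ (where $\chi_x$ is the tautological character of $A$); this step quietly relies on $\chi_x(a)$ being $a_{v+1}^{1+kp^v}$ for some $k$ (visible from the diagonalisation remark that follows the proof), so that the $k$-dependence is killed on raising to the $p^v$ because $p^{2v}\ge p^{v+1}$. Your argument sidesteps the exact identification entirely: you only use $\lambda^*(w)=\hat{c}_1(\chi)$ for \emph{some} nontrivial order-$p$ character $\chi$ of $A$, and the $\Aut(C_p)$-symmetric form $d=\prod_{k=1}^{p-1}[k](w)$ lets you replace $\chi$ by the convenient $\chi_0=\chi_x^{p^v}$ without ever pinning $\chi$ down. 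Both proofs funnel into the same Teichm\"uller and Wilson identities at the end. One quibble with your closing remark: the symmetric form is pleasant but not strictly required for exactness. Working with $-w^{p-1}$ directly gives $\hat{c}_1(\chi)=\hat m\,[p^v](x)$ for some $m\in(\mathbb{Z}/p)^\times$ by Lemmas \ref{hat(k)(x)=k(x) mod p(x)} and \ref{Teichmuller Series}, and since $\hat m^{p-1}=1$ the ambiguity is annihilated by the $(p-1)$th power anyway; what your organisation really buys is that the choice-independence is transparent from the start rather than a cancellation one notices at the end.
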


\begin{pf} Write $\chi$ and $\xi$ for the embeddings $A\rightarrowtail\Flbar^\times$ and $C_p\rightarrowtail
\Flbar^\times$ respectively. Note that under the map $\pi:N\to \Sigma_p$ we have $\pi(A)=C_p$ and we get a
commutative diagram
$$
\xymatrix{ & \Flbar^\times\\
 A \ar@{->>}[rr]^\pi \ar[d] \ar[ur]^{\xi\circ\pi} & & C_p \ar[ul]_\xi \ar[d]\\
N \ar@{->>}[rr]^\pi & & \Sigma_p}
$$
Now, writing $a=\gamma(a_v,1,\ldots,1)$ for the usual generator of $A$ we have
$(\xi\circ\pi)(a)=\xi(\gamma)=a_1=a_{v+1}^{p^v}=\chi(a)^{p^v}$ so that $\xi\circ\pi=\chi^{p^v}$. Thus, applying
$E^0(B-)$ to the above diagram gives
$$
\xymatrix{ & E^0(B\Flbar^\times) \ar[dl]_{(\chi^{p^v})^*} \ar[dr]^{\xi^*}\\
E^0(BA) & & E^0(BC_p) \ar[ll]_{\pi^*}\\
E^0(BN) \ar[u]^{\psi_3} & & E^0(B\Sigma_p) \ar[u] \ar[ll]_{\pi^*}}
$$
Hence $\pi^*(w)=\pi^*(\euler_l(\xi))=\euler_l(\xi\circ\pi)=\euler_l(\chi^{p^v})=[p^v](\euler_l(\chi))=[p^v](x)$.
Thus, since $d$ is a class in $E^0(B\Sigma_p)$ and $d\mapsto -w^{p-1}$ in $E^0(BC_p)$ we find that
\[\psi_3(d)=\pi^*(-w^{p-1})=-\pi^*(w)^{p-1}=-[p^v](x)^{p-1}.\qedhere\]\end{pf}

\begin{prop} In $E^0(BA)=E^0\lpow x\rpow/[p^{v+1}](x)$ we have $\psi_3(c_p)=\prod_{k=0}^{p-1}[1+kp^v](x).$\end{prop}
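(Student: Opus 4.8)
The plan is to reinterpret $\psi_3(c_p)$ as an $l$-Euler class and to compute it by splitting a representation of $A$ into characters. Since $\psi_3$ is the restriction map $\res_A^N$ and $c_p=\euler_l(N\hookrightarrow GL_p(\Flbar))=\hat c_p(N\hookrightarrow GL_p(\Flbar))$, functoriality of the $l$-Chern classes (part~2 of Proposition~\ref{l-Chern Properties}) gives $\psi_3(c_p)=\hat c_p(\rho)=\euler_l(\rho)$, where $\rho\colon A\hookrightarrow N\hookrightarrow GL_p(\Fq)\hookrightarrow GL_p(\Flbar)$ is the $p$-dimensional $\Flbar$-representation of $A$ determined by the inclusions.

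Next I would analyse $\rho$. Writing $a=\gamma(a_v,1,\dots,1)$ for the standard generator of $A$, we have $a^p=(a_v,\dots,a_v)$, which is the scalar matrix $a_v\cdot I$, and $a$ has order $p^{v+1}$. Hence any eigenvalue $\lambda$ of $a$ in $\Flbar$ satisfies $\lambda^p=a_v$; since $\mathrm{char}(\Flbar)=l\neq p$ the polynomial $T^p-a_v$ is separable, so $a$ is diagonalisable, and a direct computation of eigenvectors (of exactly the type used in the proof of Proposition~\ref{Abelian p-subgroups of GL_p(K)}) shows that each of the $p$ distinct roots $a_1^j a_{v+1}$ ($0\le j<p$) of $T^p-a_v$ actually occurs, necessarily with multiplicity one. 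As $A$ is a finite abelian group and $\Flbar$ is algebraically closed of characteristic prime to $|A|$, $\rho$ is a direct sum of one-dimensional representations (Lemma~\ref{Irreps} together with Maschke's theorem), so by the eigenvalue computation $\rho\cong\bigoplus_{j=0}^{p-1}\chi_j$, where $\chi_j\colon A\to\Flbar^\times$ sends $a\mapsto a_1^j a_{v+1}$.

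It then remains to express the $\chi_j$ in terms of the standard generator $x\in E^0(BA)=E^0\lpow x\rpow/[p^{v+1}](x)$. Recall that $x=\hat c_1(\chi)$ for $\chi\colon A\hookrightarrow\fq{p}^\times\hookrightarrow\Flbar^\times$; by the choice of embedding fixed in Section~\ref{sec:definition of A} (the same normalisation used in the proof of Proposition~\ref{psi_3(d)}, where $\chi(a)^{p^v}=a_1$) we have $\chi(a)=a_{v+1}$. Therefore $\chi^{1+jp^v}(a)=a_{v+1}^{1+jp^v}=a_{v+1}\cdot(a_{v+1}^{p^v})^{j}=a_1^j a_{v+1}=\chi_j(a)$, so $\chi_j=\chi^{1+jp^v}$. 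Using the additivity of $\hat c_1$ under tensor products, which gives $\hat c_1(\chi^{m})=[m]_F(\hat c_1(\chi))$, together with the fact that the top $l$-Chern class of a sum of line representations is the product of their first $l$-Chern classes (part~3 of Proposition~\ref{l-Chern Properties}, noting that $\hat c_k$ of a one-dimensional representation vanishes for $k>1$), I would conclude
$$\psi_3(c_p)=\hat c_p\Bigl(\bigoplus_{j=0}^{p-1}\chi_j\Bigr)=\prod_{j=0}^{p-1}\hat c_1(\chi_j)=\prod_{j=0}^{p-1}[1+jp^v]_F(x)=\prod_{k=0}^{p-1}[1+kp^v](x).$$

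The main obstacle is the representation-theoretic step: pinning down the eigenvalues of $a$ exactly — that all $p$ roots of $T^p-a_v$ appear, and that the resulting characters correspond to $x$ under the normalisation $\chi(a)=a_{v+1}$. A different normalisation would replace the index set $\{1+kp^v:0\le k<p\}$ by a different coset in $(\mathbb{Z}/p^{v+1})^\times$ and hence change the answer, so it is essential to use (and if necessary make explicit) the consistency of the chosen embedding with the one underlying Proposition~\ref{psi_3(d)}.
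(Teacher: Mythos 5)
Your overall strategy --- restrict $c_p$ to $A$, diagonalise the representation of the cyclic group $A$ over $\Flbar$ by computing eigenvalues of $a=\gamma(a_v,1,\ldots,1)$ directly, and then read off the Euler class as a product of first $l$-Chern classes --- is a legitimate alternative to what the paper does. The paper instead cites Proposition \ref{alpha is surjective}, which diagonalises the representation $\fq{p}^\times\hookrightarrow GL_p(\Flbar)$ all at once using the Galois isomorphism $\Flbar\tensor_{\Fq}\fq{p}\simeq\Flbar^p$ of Lemma \ref{Galois extensions}, giving characters $b\mapsto b^{q^k}$ and hence $\psi_3(c_p)=\prod_k[q^k](x)=\prod_k[1+kp^v](x)$. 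Your explicit eigenvalue computation is in fact exactly what the paper writes out in the Remark immediately following the proposition, so the two decompositions are the same; the difference is in how the eigenvalues are matched up with powers of the fixed character $\chi$.

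That matching step is where your argument has a real gap. You assert $\chi(a)=a_{v+1}$ and justify it by the identity $\chi(a)^{p^v}=a_1$ from the proof of Proposition \ref{psi_3(d)}. But $\chi(a)^{p^v}=a_1=a_{v+1}^{p^v}$ only says that $\chi(a)=a_{v+1}\cdot\zeta$ for some $p^v$-th root of unity $\zeta$; equivalently, writing $\chi(a)=a_{v+1}^m$, it only forces $m\equiv 1\pmod p$, not $m\equiv 1\pmod{p^v}$. Since your final product depends (a priori) on the coset $m^{-1}\{1+kp^v\}$ in $(\mathbb{Z}/p^{v+1})^\times$, the weaker congruence is not enough when $v>1$: a priori it could shift the index set to a different coset and give the wrong answer, which is precisely the danger you flag at the end but do not actually rule out. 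The fact that saves you is not the normalisation in Proposition \ref{psi_3(d)} but rather that $\chi(a)$ is itself one of the eigenvalues of the matrix $a$ --- for instance because, under $\Flbar\tensor_{\Fq}\fq{p}\simeq\Flbar^p$, multiplication by $b\in\fq{p}^\times$ has eigenvalues $b,b^q,\ldots,b^{q^{p-1}}$; or because the minimal polynomial of $b=\chi(a)$ over $\Fq$ has degree $p$ (as $A$ has order $p^{v+1}$, which does not embed in any $\fq{s}^\times$ with $s<p$) and so equals the characteristic polynomial of $a$. This gives $\chi(a)=a_{v+1}^{1+j_0p^v}$ for some $j_0$, i.e.\ $m\equiv 1\pmod{p^v}$. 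Since $\{1+kp^v\bmod p^{v+1}\mid 0\le k<p\}$ is the kernel of $(\mathbb{Z}/p^{v+1})^\times\to(\mathbb{Z}/p^v)^\times$, hence a subgroup, multiplying by $m^{-1}$ fixes it as a set and the product $\prod_k[1+kp^v](x)$ comes out correctly. Note that closing the gap this way reintroduces the Galois-descent ingredient that the paper's own proof (via Proposition \ref{alpha is surjective}) uses, so the two arguments end up relying on the same key fact.
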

\begin{pf} We have $\psi_3(c_p)=\euler_l(A\hookrightarrow GL_p(\Flbar))=\prod_{k=0}^{p-1}[q^k](x)=\prod_{k=0}^{p-1}[1+kp^v](x)$ using Proposition \ref{alpha is surjective}.\end{pf}

\begin{rem} We can give an explicit diagonalisation of the representation $A\hookrightarrow
GL_p(\Flbar)$. For $k=0,\ldots,p-1$ let
$\nu_k=(1,a_{v+1}^{1+kp^v},a_{v+1}^{2(1+kp^v)},\ldots,a_{v+1}^{(p-1)(1+kp^v)})^T$. Then it turns out that
$\nu_k$ is an eigenvector for $a=\gamma(a_v,1,\ldots,1)$ with eigenvalue $a_{v+1}^{1+kp^v}$ since
$$a.\nu_k = \gamma.\left(\begin{array}{c} a_v\\ a_{v+1}^{1+kp^v}\\
\vdots \\ a_{v+1}^{(p-1)(1+kp^v)}\end{array}\right)=\left(\begin{array}{c} a_{v+1}^{1+kp^v}\\
\vdots \\ a_{v+1}^{(p-1)(1+kp^v)} \\ a_{v+1}^{p}\end{array}\right)=a_{v+1}^{1+kp^v}\nu_k$$ where we use the fact
that $a_v=a_{v+1}^{p}=a_{v+1}^{p(1+kp^v)}$. Thus $\gamma(a_v,1,\ldots,1)$ has distinct eigenvalues
$a_{v+1}^{1+kp^v}$ for $0\leq k<p$ and hence is conjugate to the matrix
$$\left(\begin{array}{cccc}
1 & 0 & \ldots & 0\\
0 & a_{v+1}^{1+p^v} & \ldots & 0\\
\vdots & \vdots & \ddots & \vdots\\
0 & 0 & \ldots & a_{v+1}^{1+(p-1)p^v}\end{array}\right),$$ the form of which is to be expected by earlier
diagonalisation results.\end{rem}

\begin{prop} In $E^0(BA)=E^0\lpow x\rpow/[p^{v+1}](x)$ we get $\psi_3(b_\alpha)=(p-1)!x^{\sum \alpha_i}\langle p\rangle ([p^v](x)).$\end{prop}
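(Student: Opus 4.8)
The plan is to compute $\psi_3(b_\alpha)=\res_A^N\tr_{(\Fq^\times)^p}^N(x_1^{\alpha_1}\cdots x_p^{\alpha_p})$ by the double coset formula, reducing it to a multiple of the transfer $\tr_{\Delta_p}^A(1)$, and then to evaluate that transfer by comparison with $\tr_1^{C_p}(1)=\langle p\rangle(w)$ from Lemma \ref{Transfer_1^C_p(1)}.

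First I would set $T=(\Fq^\times)^p$, which by Proposition \ref{N_d as wreath product} is the kernel of $\pi:N\to\Sigma_p$, hence normal in $N$. Applying the double coset formula (Lemma \ref{Transfers}, part 5), which simplifies since $T$ is normal, gives
$$\res_A^N\tr_T^N=\sum_{g\in A\backslash N/T}\tr_{A\cap T}^A\,\res_{A\cap T}^T\,\conj_g^*.$$
I would then identify the pieces: by Lemma \ref{N_d products}, $a^p=(a_v,\dots,a_v)$, so $A\cap T=\langle a^p\rangle=\Delta_p$, the $p$-part of the diagonal, cyclic of order $p^v$. As $T$ is normal, $A\backslash N/T\cong A\backslash(N/T)$, on which $A$ acts through $\pi(A)=\langle\gamma\rangle$ of order $p$, and this action is free, so there are exactly $(p-1)!$ double cosets. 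For each $g$, Lemma \ref{conj_a permutes B^p} together with Lemma \ref{N_d products} shows $\conj_g^*$ acts on $E^0(BT)=E^0\lpow x_1,\dots,x_p\rpow/([p^v](x_i))$ by the coordinate permutation $\pi(g)$, so it sends $x_1^{\alpha_1}\cdots x_p^{\alpha_p}$ to another monomial with the same multiset of exponents; restricting further along the diagonal $\Delta_p\hookrightarrow T$ (each coordinate projection restricts to the standard map, so each $x_i\mapsto x$) yields $x^{\alpha_1+\cdots+\alpha_p}$, independently of $g$. Writing $m=\sum_i\alpha_i$, this gives $\psi_3(b_\alpha)=(p-1)!\,\tr_{\Delta_p}^A(x^m)$, and by Frobenius reciprocity (Lemma \ref{Transfers}, part 3), using that $x\in E^0(BA)$ restricts to $x\in E^0(B\Delta_p)$ under the compatible choice of embeddings, $\tr_{\Delta_p}^A(x^m)=x^m\,\tr_{\Delta_p}^A(1)$.

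It then remains to show $\tr_{\Delta_p}^A(1)=\langle p\rangle([p^v](x))$ in $E^0(BA)=E^0\lpow x\rpow/[p^{v+1}](x)$. Since $A$ is abelian, $\Delta_p$ is normal in $A$, so part 4 of Lemma \ref{Transfers} gives $\tr_{\Delta_p}^A(1)=\rho^*\tr_1^{A/\Delta_p}(1)$, where $\rho:A\to A/\Delta_p$ is the quotient and $A/\Delta_p$ is cyclic of order $p$. Let $\chi:A\rightarrowtail\fq{p}^\times\rightarrowtail\Flbar^\times\rightarrowtail S^1$ be the chosen embedding, so $x=\chi^*(\text{orientation})$. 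The homomorphism $\chi^{p^v}\colon a'\mapsto\chi(a')^{p^v}$ has kernel exactly $\Delta_p$ (as $\chi(a)$ has order $p^{v+1}$) and image of order $p$, hence factors uniquely as $\chi^{p^v}=j\circ\rho$ for an embedding $j:A/\Delta_p\rightarrowtail S^1$. Taking $x'=j^*(\text{orientation})$ as the generator of $E^0(B(A/\Delta_p))=E^0(BC_p)$, we get $\rho^*(x')=(\chi^{p^v})^*(\text{orientation})=[p^v]_F(x)$, since the multiplication-by-$p^v$ map on $S^1$ induces the $[p^v]$-series on $E^0(BS^1)=E^0(\mathbb{C}P^\infty)$. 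Now $\tr_1^{A/\Delta_p}(1)=\langle p\rangle(x')$ by Lemma \ref{Transfer_1^C_p(1)} (whose proof uses only that $x'$ lies in the augmentation ideal and that $\res_1^{C_p}\tr_1^{C_p}(1)=p$, so it is valid for the generator coming from any embedding $C_p\rightarrowtail S^1$). Hence $\tr_{\Delta_p}^A(1)=\langle p\rangle(\rho^*x')=\langle p\rangle([p^v](x))$, and therefore $\psi_3(b_\alpha)=(p-1)!\,x^{m}\,\langle p\rangle([p^v](x))$, as claimed.

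The main obstacle I anticipate is this last step: pinning down $\tr_{\Delta_p}^A(1)$ \emph{exactly} (not just up to a unit) requires care, since the naive approach via transitivity and Frobenius reciprocity only determines it up to a factor in $1+(x)$. The clean route is the one above, but it rests on two small points that must be checked carefully: that $S^1$ has a unique subgroup of order $p$, so that the composite $A\xrightarrow{\rho}A/\Delta_p\rightarrowtail S^1$ can be taken to coincide with $\chi^{p^v}$ and thus realize precisely $[p^v]_F(x)$; and that Lemma \ref{Transfer_1^C_p(1)} is insensitive to the chosen embedding of $C_p$ into $S^1$. The double-coset bookkeeping in the first part (the count of $(p-1)!$ cosets and the vanishing of the permutations $\conj_g^*$ after restriction to the diagonal) is routine but should be spelled out.
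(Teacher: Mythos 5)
Your proof is correct and follows essentially the same route as the paper: the double coset formula to reduce to $(p-1)!\,\transfer_{\Delta_p}^A(x^{\sum\alpha_i})$, Frobenius reciprocity, then the normal-subgroup transfer identity $\transfer_{\Delta_p}^A(1)=\rho^*\transfer_1^{A/\Delta_p}(1)$ combined with $\transfer_1^{C_p}(1)=\langle p\rangle(\cdot)$ and the computation that the quotient generator pulls back to $[p^v](x)$. The paper makes the last identification by citing Proposition \ref{psi_3(d)} (where $\pi^*(w)=[p^v](x)$ is shown via $\xi\circ\pi=\chi^{p^v}$), whereas you re-derive it directly from $\chi^{p^v}$; the content is the same, and your observation that Lemma \ref{Transfer_1^C_p(1)} is coordinate-independent is correct and worth noting (it follows from Corollary \ref{<p>(kx)=<p>(x) for k a teichmuller lift} together with Lemma \ref{hat(k)(x)=k(x) mod p(x)}).
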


\begin{pf} Using the double coset formula we have
$$\psi_3(b_\alpha) = \res_A^{N}\transfer_T^{N}(x_1^{\alpha_1}\ldots x_p^{\alpha_p}) = \sum_{A\backslash N / T}
\transfer_{A\cap T}^{~A} \res_{A\cap T}^{~T}(\conj_\sigma^*(x_1^{\alpha_1}\ldots x_p^{\alpha_p})).$$ But it is
not hard to see that $A\backslash N / T \simeq \Sigma_p/\langle \gamma\rangle$ and $A\cap T=\Delta_p$ and it
follows that $\res_{A\cap T}^{~T}(\conj_\sigma^*(x_1^{\alpha_1}\ldots
x_p^{\alpha_p}))=\res_{\Delta_p}^T(\conj_\sigma^*(x_1^{\alpha_1}\ldots x_p^{\alpha_p}))=x^{\sum \alpha_i}$ for
all $\sigma\in A\backslash N / T$. Thus we have
\begin{eqnarray*}
\psi_3(b_\alpha) &=& \sum_{\Sigma_p/\langle \gamma\rangle} \transfer_{\Delta_p}^A
\res_{\Delta_p}^T(x_1^{\alpha_1}\ldots
x_p^{\alpha_p})\\
&=& |\Sigma_p/\langle \gamma\rangle|.\transfer_{\Delta_p}^A(x^{\sum \alpha_i})\\
&=& (p-1)!\transfer_{\Delta_p}^A(x^{\sum \alpha_i}).
\end{eqnarray*}
Now, the map $\res_{\Delta_p}^A:E^0(BA)\to E^0(B\Delta_p)$ sends $x$ to $x$ and hence
$$\transfer_{\Delta_p}^A(x)=\transfer_{\Delta_p}^A(\res_{\Delta_p}^A(x))=x.\transfer_{\Delta_p}^A(1)$$ by Frobenius reciprocity. Writing
$q:A\to A/\Delta_p$ for the quotient map and using the properties of the transfer map (Lemma \ref{Transfers}) we
get $\transfer_{\Delta_p}^A(1)=q^*\transfer_1^{A/\Delta_p}(1)$. But $A/\Delta_p$ is naturally identified with
$C_p$ (since $A\overset{\pi}{\to} C_p$ has kernel $\Delta_p$) and thus we have
$$\transfer_{\Delta_p}^A(1)=\pi^*\transfer_1^{C_p}(1)=\pi^*(\langle p\rangle(w)).$$ But, as in the proof of Proposition
\ref{psi_3(d)}, $\pi^*(w)=[p^v](x)$. Hence we have \[\psi_3(b_\alpha)=(p-1)!\transfer_{\Delta_p}^A(x^{\sum
\alpha_i})=(p-1)!x^{\sum\alpha_i}\transfer_{\Delta_p}^A(1)=(p-1)!x^{\sum\alpha_i}\langle
p\rangle([p^v](x)).\qedhere\]\end{pf}

\begin{prop} Let $t\in E^0(BN)$ be as in Proposition \ref{definition of t}. Then $\psi_1(t)=0$, $\psi_2(t)=0$
and $\psi_3(t)=[p^v](x)^p$.\end{prop}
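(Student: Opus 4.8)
The plan is to use that, by its very construction in Proposition~\ref{definition of t}, the class $t\in E^0(BGL_p(\Fq))$ (and hence its further restriction to $E^0(BN)$) is pulled back from the class $\tilde t\in E^0(BGL_p(\Flbar))$ whose image in $E^0(B\overline{T}_p)=E^0\lpow x_1,\ldots,x_p\rpow$ is $\prod_{i=1}^p[p^v](x_i)$. Since $E^0(B-)$ is functorial and each of $T$, $\Sigma_p\times\Delta$, $A$ is a subgroup of $N\leqslant GL_p(\Fq)\leqslant GL_p(\Flbar)$, the element $\psi_j(t)$ is nothing but the image of $\tilde t$ under the restriction $E^0(BGL_p(\Flbar))\to E^0(BH)$ for the relevant $H$. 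The guiding observation is that each of these three subgroups becomes diagonal after conjugating inside $GL_p(\Flbar)$; as $\conj_g$ acts as the identity on $E^0(BGL_p(\Flbar))$ by Proposition~\ref{Conjugation induces identity}, each restriction is computed by pulling $\prod_i[p^v](x_i)$ back along the resulting diagonal representation. For $\psi_1$ this needs nothing new: $T=T_p$ already lies in $\overline{T}_p$, the map $E^0\lpow x_1,\ldots,x_p\rpow\to E^0(BT)$ is the quotient by $([p^v](x_1),\ldots,[p^v](x_p))$ (via Corollary~\ref{Cyclic group iso to p-part}), so $\psi_1(t)=\prod_i[p^v](x_i)=0$, which is just $\psi_1(t)=\beta(t)=0$ from Proposition~\ref{definition of t}.

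For $\psi_3$ I would invoke the identification, already set up in the proof of Proposition~\ref{alpha is surjective} through Lemma~\ref{Galois extensions}, of the embedding $A\hookrightarrow\fq{p}^\times\hookrightarrow GL_p(\Flbar)$ with the diagonal representation whose $i$-th component is $a\mapsto a^{q^{i-1}}$. Pulling back $x_i$ gives $[q^{i-1}](x)$, where $x$ is the generator of $E^0(BA)=E^0\lpow x\rpow/[p^{v+1}](x)$, so $\psi_3(t)=\prod_{k=0}^{p-1}[p^vq^k](x)$. Now $v_p(q^k-1)\geq v$ by Lemma~\ref{v_p(k^s-1) for v_p(k-1)>0}, so writing $q^k=1+p^vs$ with $s\in\mathbb{Z}$ gives $p^vq^k=p^v+p^{2v}s$; hence, modulo $[p^{v+1}](x)$ (which vanishes in $E^0(BA)$), $[p^vq^k](x)=[p^v](x)+_F[p^{v-1}s]\bigl([p^{v+1}](x)\bigr)=[p^v](x)$, using $2v\geq v+1$. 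Therefore $\psi_3(t)=[p^v](x)^p$.

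For $\psi_2$ the subgroup $\Sigma_p\times\Delta$ is not abelian, so I would first restrict further to $C_p\times\Delta$. The map $E^0(B(\Sigma_p\times\Delta))\to E^0(B(C_p\times\Delta))$ is injective: by the Künneth isomorphism (Corollary~\ref{E^0(B(GxC_m)) as tensor product}, $\Delta$ being cyclic and $E^0(B\Delta)$ free over $E^0$) it is $\bigl(E^0(B\Sigma_p)\to E^0(BC_p)\bigr)\tensor_{E^0}E^0(B\Delta)$, and $E^0(B\Sigma_p)\to E^0(BC_p)$ is injective since $[\Sigma_p:C_p]=(p-1)!$ is prime to $p$ (Proposition~\ref{Restriction to Sylow injective}). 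As recorded in the proof of Proposition~\ref{psi_2 of generators}, $C_p\times\Delta\hookrightarrow GL_p(\Flbar)$ is conjugate to the diagonal representation $\bigoplus_{k=0}^{p-1}M\tensor L^k$, so $x_{k+1}$ pulls back to $x+_F[k](w)$; hence the image of $\psi_2(t)$ in $E^0(B(C_p\times\Delta))=E^0\lpow w,x\rpow/([p](w),[p^v](x))$ is $\prod_{k=0}^{p-1}[p^v](x+_F[k](w))$. The $k=0$ factor is $[p^v](x)$, which is zero in that ring, so the product vanishes, and injectivity forces $\psi_2(t)=0$.

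The computations are short once the ``restrict, conjugate into the torus, pull back'' mechanism is in place; the only points needing care are the two conjugation-into-$\overline{T}_p$ identifications (for $A$ and for $C_p\times\Delta$), both of which are essentially already available from the proofs of Propositions~\ref{alpha is surjective} and~\ref{psi_2 of generators}, and the injectivity of $E^0(B(\Sigma_p\times\Delta))\to E^0(B(C_p\times\Delta))$ used to reduce the non-abelian case $\psi_2$ to a torus computation. These, rather than any delicate formal-group-law manipulation, are where I expect the small obstacles to lie.
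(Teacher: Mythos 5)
Your proof is correct and follows essentially the same route as the paper: $\psi_1(t)=0$ because $t\in\ker\beta$, $\psi_2(t)=0$ by restricting to $E^0(B(C_p\times\Delta))$ and observing the $k=0$ factor vanishes, and $\psi_3(t)=[p^v](x)^p$ by diagonalising $A\hookrightarrow GL_p(\Flbar)$ via Lemma~\ref{Galois extensions} and simplifying $[p^vq^k](x)$ modulo $[p^{v+1}](x)$. You also spell out the injectivity of $E^0(B(\Sigma_p\times\Delta))\to E^0(B(C_p\times\Delta))$, a step the paper uses implicitly; this is a welcome clarification rather than a deviation.
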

\begin{pf} Since $t\in\ker(\beta)$ it follows straight away that its image in $E^0(BT)$ is zero, so that
$\psi_1(t)=0$. As in Proposition \ref{psi_2 of generators}, the representation $C_p\times\Delta\rightarrowtail
GL_p(\Flbar)$ is isomorphic to a diagonal representation $C_p\times\Delta\to ((\Flbar)^\times)^p$ sending
$\gamma$ to $(1,a_1,\ldots,a_1^{p-1})$ and mapping $\Delta$ along the diagonal. Thus in cohomology we find that
the map $E^0(B((\Flbar)^\times)^p)=E^0\lpow x_1,\ldots,x_p\rpow\to E^0(B(C_p\times\Delta))=E^0\lpow
w,x\rpow/([p](w),[p^v](x))$ sends $x_k$ to $x+_F [k-1](w)$. We now see that $\psi_2(t)=\prod_{k=0}^{p-1}
[p^v](x+_F [k](w))=\prod_{k=0}^{p-1}[p^v](x)+_F [p^v]([k](w))=0$. By the methods of Proposition \ref{alpha is
surjective}, we have a diagram
$$
\xymatrix{ E^0(BGL_p(\Flbar)) \ar[r]^-{\res} \ar[rd] & E^0(B\fq{p}^\times)=E^0(BA)\\
& E^0(B(\Flbar^\times)^p) \ar[u]}
$$
where the vertical map sends $x_i$ to $[q^{i-1}](x)$. Hence we see that
$\psi_3(t)=\prod_{i=1}^{p}[p^v]([q^{i-1}](x))=\prod_{i=1}^{p}[p^v]([1+kp^v](x))=\prod_{i=1}^{p}[p^v](x)=[p^v](x)^p$
since $[p^{v+1}](x)=0$.
\end{pf}

\subsection{A system of maps}\label{sec:system of maps}

In this section we will look at the earlier system of maps,
$$
\xymatrix{~\phantom{\frac{E^{x^y}}{z^{p^q}}} & E^0(BGL_p(\Fq)) \ar[d] \ar@/^2pc/[ddr] \ar@/^4pc/[dddrr]^{\alpha} \ar@/_2pc/[ddl]_\beta\\
& E^0(BN) \ar[dr]^{\psi_3} \ar[d]^{\psi_2} \ar[dl]_{\psi_1}\\
E^0(BT) \ar[dr] & E^0(B(\Sigma_p\times\Delta)) \ar[d] & E^0(BA) \ar[dl]^{q_1} \ar[dr]_{q_2}\\
& \ds E^0(B\Delta_p) & & ~~D~~}
$$

\begin{prop}\label{Q tensor A = Q tensor Delta times Q tensor D} The map $\mathbb{Q}\tensor E^0(BA)\longrightarrow \mathbb{Q}\tensor E^0(B\Delta_p)\times
\mathbb{Q}\tensor D$ induced by $q_1$ and $q_2$ is an isomorphism.\end{prop}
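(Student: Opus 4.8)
The plan is to recognise this as a direct instance of Lemma \ref{CRT applied rationally to p^s+1 series} with $s=v$. First I would recall the identifications already set up: under the complex orientation we have $E^0(BA)\simeq E^0\lpow x\rpow/[p^{v+1}](x)$, $E^0(B\Delta_p)\simeq E^0\lpow x\rpow/[p^v](x)$, and $D=E^0(BA)/\langle p\rangle([p^v](x))\simeq E^0\lpow x\rpow/\langle p\rangle([p^v](x))$; moreover, under these identifications $q_1$ and $q_2$ are precisely the evident quotient maps. This last point is the only thing requiring care, and it is immediate from the way things were arranged: the coordinate $x$ is the $l$-euler class of the relevant rank-one representation in each case, and $\Delta_p$ sits inside $A$ compatibly (Section \ref{sec:definition of A}), so restriction along $\Delta_p\hookrightarrow A$ sends $x\mapsto x$, while $q_2$ is by definition the reduction of $E^0(BA)$ modulo $\langle p\rangle([p^v](x))$.

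Next I would record the factorisation $[p^{v+1}](x)=[p]\big([p^v](x)\big)=[p^v](x)\cdot\langle p\rangle([p^v](x))$, which comes from $[p](t)=t\,\langle p\rangle(t)$. Writing $I=([p^v](x))$ and $J=(\langle p\rangle([p^v](x)))$ as ideals of $E^0\lpow x\rpow$, this says $IJ=([p^{v+1}](x))$. Since $\langle p\rangle(t)\equiv p\pmod t$ we get $\langle p\rangle([p^v](x))\equiv p\pmod{[p^v](x)}$, so $p\in I+J$.

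Then I would apply Corollary \ref{Q tensor CRT} with $R=E^0\lpow x\rpow$, the ideals $I$ and $J$, and $k=p$: this gives that the natural map $\mathbb{Q}\tensor(R/IJ)\to\mathbb{Q}\tensor(R/I)\times\mathbb{Q}\tensor(R/J)$ is an isomorphism. Translating back through the identifications of the first paragraph, $R/IJ=E^0(BA)$, $R/I=E^0(B\Delta_p)$, $R/J=D$, and the map is exactly the one induced by $q_1$ and $q_2$. This completes the proof.

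I do not expect a genuine obstacle here: the argument is literally Lemma \ref{CRT applied rationally to p^s+1 series} repackaged, and the rational Chinese Remainder Theorem does all the work. The only step demanding any attention is verifying that $q_1$ and $q_2$ coincide with the quotient maps in the power-series presentations, i.e. that the coordinate $x$ has been chosen consistently across $BA$, $B\Delta_p$ and $B\fq{p}^\times$; once that is noted, the conclusion is immediate.
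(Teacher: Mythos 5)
Your proof is correct and takes essentially the same route as the paper: both reduce to the rational Chinese Remainder Theorem (Corollary \ref{Q tensor CRT}) via the factorisation $[p^{v+1}](x)=[p^v](x)\cdot\langle p\rangle([p^v](x))$ together with the observation that $p$ lies in the sum of the two ideals. Your framing via Lemma \ref{CRT applied rationally to p^s+1 series} with $s=v$ is a clean way to package it, and your attention to the coordinate identifications is exactly the point the paper opens with ($q_1(x)=x$).
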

\begin{pf} Note first that $q_1(x)=x$ so that $q_1$ and $q_2$ are really nothing more than reduction modulo $([p](x))$ and
$(\langle p\rangle([p](x)))$ respectively. Now, $\langle p\rangle([p](x))=p$ mod $([p](x))$ and it follows that
$p\in ([p](x))+(\langle p\rangle([p](x))$. Thus Corollary \ref{Q tensor CRT} applies and the result is
immediate.\end{pf}

\begin{cor}\label{E^0(BA) split} The maps $q_1$ and $q_2$ defined above are jointly injective.\end{cor}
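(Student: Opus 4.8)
The plan is to bootstrap from Proposition \ref{Q tensor A = Q tensor Delta times Q tensor D} using the fact that $E^0(BA)$ is torsion-free, so that joint injectivity can be checked after rationalising. First I would record that $E^0(BA)\simeq E^0\lpow x\rpow/[p^{v+1}](x)$ is free of finite rank over $E^0$ by Corollary \ref{Basis for R[[x]]/[p](x)}; since $E^0=\mathbb{Z}_p\lpow u_1,\ldots,u_{n-1}\rpow[u,u^{-1}]$ is torsion-free over $\mathbb{Z}$, it follows that $E^0(BA)$ is torsion-free over $\mathbb{Z}$, and hence the canonical localisation map $E^0(BA)\to\mathbb{Q}\tensor E^0(BA)$ is injective.

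Next I would contemplate the commutative square
$$
\xymatrix{
E^0(BA) \ar[r]^-{q_1\times q_2} \ar[d] & E^0(B\Delta_p)\times D \ar[d]\\
\mathbb{Q}\tensor E^0(BA) \ar[r]_-\sim & \big(\mathbb{Q}\tensor E^0(B\Delta_p)\big)\times\big(\mathbb{Q}\tensor D\big)
}
$$
in which the vertical arrows are the rationalisation maps (using that $\mathbb{Q}\tensor-$ commutes with the finite product on the right) and the bottom arrow is precisely the map induced by $q_1$ and $q_2$, which is an isomorphism by Proposition \ref{Q tensor A = Q tensor Delta times Q tensor D}. By the first paragraph the left-hand vertical arrow is injective, and the bottom arrow is an isomorphism, so the composite $E^0(BA)\to\big(\mathbb{Q}\tensor E^0(B\Delta_p)\big)\times\big(\mathbb{Q}\tensor D\big)$ is injective. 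Commutativity of the square says this composite also equals $(\text{rationalisation})\circ(q_1\times q_2)$, so $q_1\times q_2$ must be injective; that is, $q_1$ and $q_2$ are jointly injective.

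I do not expect any serious obstacle here: the content is entirely carried by Proposition \ref{Q tensor A = Q tensor Delta times Q tensor D}, and the only point needing a moment's care is the injectivity of $E^0(BA)\to\mathbb{Q}\tensor E^0(BA)$, which is exactly why I would first invoke freeness over $E^0$ rather than attempt a direct argument about the intersection of the ideals $([p^v](x))$ and $(\langle p\rangle([p^v](x)))$ inside $E^0\lpow x\rpow$.
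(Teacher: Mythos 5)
Your argument is correct and is essentially the proof given in the paper: the same commutative square, the same use of freeness of $E^0(BA)$ over $E^0$ to get injectivity of the rationalisation map, and the same diagram chase to conclude.
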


\begin{pf} We have a commutative diagram
$$
\xymatrix{ E^0(BA) \ar[r]^-{(q_1,q_2)} \ar[d] &  \ds E^0(B\Delta_p) \times D \ar[d]\\
\mathbb{Q}\tensor E^0(BA) \ar[r]^-\sim & \ds \mathbb{Q}\tensor E^0(B\Delta_p) \times \mathbb{Q}\tensor D.}
$$
Since $E^0(BA)$ is free over $E^0$ it follows that the map $E^0(BA)\to \mathbb{Q}\tensor E^0(BA)$ is injective
and a diagram chase shows that the top map is also injective.\end{pf}

\begin{cor}\label{jointly injective maps 2} The maps $\psi_1$, $\psi_2$ and $q_2\circ \psi_3$ defined above are jointly
injective.\end{cor}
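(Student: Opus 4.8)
The plan is to reduce this to the two injectivity statements already established: Proposition \ref{psi 1,2 and 3 are jointly injective}, which says $\psi_1,\psi_2,\psi_3$ are jointly injective, and Corollary \ref{E^0(BA) split}, which says $q_1,q_2$ are jointly injective. The bridge between them will be the observation that the composite $q_1\circ\psi_3$ is redundant: it is already recovered from $\psi_1$ (equivalently from $\psi_2$), so knowing that $\psi_1,\psi_2$ vanish on an element and that $q_2\circ\psi_3$ does too is enough to conclude that $\psi_3$ vanishes on it, and then Proposition \ref{psi 1,2 and 3 are jointly injective} finishes the job.

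First I would record the relevant compatibilities. All three of the maps $E^0(BT)\to E^0(B\Delta_p)$, $E^0(B(\Sigma_p\times\Delta))\to E^0(B\Delta_p)$ and $q_1\colon E^0(BA)\to E^0(B\Delta_p)$ appearing in the diagram of Section \ref{sec:system of maps} are the restriction maps attached to the inclusions $\Delta_p\hookrightarrow T$, $\Delta_p\hookrightarrow\Sigma_p\times\Delta$ and $\Delta_p\hookrightarrow A$ (for $q_1$ this is precisely the fact $q_1(x)=x$ noted in the proof of Proposition \ref{Q tensor A = Q tensor Delta times Q tensor D}). Since $\psi_1=\res_T^N$, $\psi_2=\res_{\Sigma_p\times\Delta}^N$ and $\psi_3=\res_A^N$, functoriality of restriction in $E$-theory gives
$$\res_{\Delta_p}^T\circ\psi_1=\res_{\Delta_p}^N=q_1\circ\psi_3,$$
and the same with $\psi_2$ in place of $\psi_1$.

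Now suppose $z\in E^0(BN)$ satisfies $\psi_1(z)=0$, $\psi_2(z)=0$ and $(q_2\circ\psi_3)(z)=0$. Applying $\res_{\Delta_p}^T$ to $\psi_1(z)=0$ and using the displayed identity, $(q_1\circ\psi_3)(z)=0$. Thus $\psi_3(z)\in E^0(BA)$ is killed by both $q_1$ and $q_2$, so Corollary \ref{E^0(BA) split} gives $\psi_3(z)=0$. Then $\psi_1(z)=\psi_2(z)=\psi_3(z)=0$, and Proposition \ref{psi 1,2 and 3 are jointly injective} forces $z=0$. Hence $\psi_1$, $\psi_2$ and $q_2\circ\psi_3$ are jointly injective.

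I do not anticipate any real obstacle: the only point requiring care is the identification of the three maps into $E^0(B\Delta_p)$ with honest restriction maps, so that functoriality applies; everything after that is a two-line diagram chase combining the two previously proved joint-injectivity results.
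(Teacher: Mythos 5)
Your proof is correct and is essentially the same as the paper's: both reduce the statement to Corollary \ref{E^0(BA) split} (joint injectivity of $q_1,q_2$) and Proposition \ref{psi 1,2 and 3 are jointly injective} via the commutativity $q_1\circ\psi_3=\res_{\Delta_p}^T\circ\psi_1$. You merely make the compatibility-of-restrictions step more explicit, which the paper leaves implicit.
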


\begin{pf} Suppose $z\in E^0(BN)$ with $z$ maps to $0$ in each of $E^0(BT),~E^0(B(\Sigma_p\times\Delta))$ and
$D$. Then $z$ maps to $0$ in $E^0(B\Delta_p)$ under $\res_{\Delta_p}^T$ so that, by Corollary \ref{E^0(BA)
split}, $z$ maps to $0$ in $E^0(BA)$. Thus we have $\psi_1(z),\psi_2(z)$ and $\psi_3(z)$ all zero, whereby $z=0$
by Proposition \ref{psi 1,2 and 3 are jointly injective}.\end{pf}

Recall, from Proposition \ref{Maps land in Gamma-invariants}, that the map $E^0(BGL_p(\Fq))\to E^0(BA)$ lands in
the $\Gamma$-invariants, where $\Gamma\simeq \mathbb{Z}/p$ acts on $E^0(BA)$ by $k.x=[q^k](x)$. It follows that
$\psi_3$ lands in $E^0(BA)^\Gamma$ and $q_2\circ \psi_3$ lands in $D^\Gamma$.

\begin{rem} We can in fact see the above $\Gamma$-invariance explicitly by looking at the images of the generators of $E^0(BN)$ under the map $q_2\circ\psi_3$. Note first
that, in $E^0(BA)$, $k.[p](x)=[p]([q^k](x))=[pq^k](x)=[p](x)$ since $q^k=1$ mod $p^v$. Hence $[p](x)$ and also
$(q_2\circ\psi_3)(d)=-[p](x)^{p-1}$ are $\Gamma$-invariant. We have $\psi_3(b_\alpha)=(p-1)!x^{\sum
\alpha_i}\langle p\rangle ([p^v](x))$ which is zero mod $\langle p\rangle([p^v](x))$, and so clearly maps into
$D^\Gamma$. Finally, $\psi_3(c_p)=\prod_{j=0}^{p-1}[q^j](x)$ so that
$k.\psi_3(c_p)=\prod_{j=0}^{p-1}[q^{j+k}](x)=\psi_3(c_p)$ since $q^p=1$ mod $p^{v+1}$. Hence the generators of
$E^0(BN)$ all land in $D^\Gamma$, as expected.\end{rem}

Note next that there is an injective restriction map $E^0(B(\Sigma_p\times\Delta))\to E^0(B(C_p\times\Delta)),$
where the latter ring has a presentation $E^0\lpow w,x\rpow/([p](w),[p^v](x))$ with
$w=\euler_l(C_p\rightarrowtail\Flbar^\times)$ and $x=\euler_l(\Delta\rightarrowtail\Flbar^\times)$. We will
write $\psi_2'$ for the composite map
$$E^0(BN)\overset{\psi_2}{\longrightarrow}E^0(B(\Sigma_p\times\Delta))\to E^0(B(C_p\times\Delta))\twoheadrightarrow E^0(B(C_p\times\Delta))/\langle p\rangle(w).$$
This allows us the following proposition which will prove useful to us later.

\begin{prop}\label{Jointly injective maps 3} The maps
$$
\xymatrix{ & E^0(BN) \ar[dl]_{\psi_1} \ar[d]^{\psi_2'} \ar[dr]^{q_2\circ\psi_3}\\
E^0(BT) & \ds \frac{E^0(B(C_p\times\Delta))}{\langle p\rangle(w)} & \ds D^\Gamma}
$$
are jointly injective.\end{prop}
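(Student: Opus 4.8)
The strategy is to upgrade the joint injectivity of $\psi_1$, $\psi_2$, $q_2\circ\psi_3$ from Corollary \ref{jointly injective maps 2} by showing that no information is lost when we pass from $E^0(B(\Sigma_p\times\Delta))$ to its quotient $E^0(B(C_p\times\Delta))/\langle p\rangle(w)$. Suppose $z\in E^0(BN)$ maps to zero under all three of $\psi_1$, $\psi_2'$ and $q_2\circ\psi_3$. By Corollary \ref{jointly injective maps 2} it suffices to prove that $\psi_2(z)=0$ in $E^0(B(\Sigma_p\times\Delta))=E^0\lpow d,x\rpow/(df(d),[p^v](x))$. We know $\psi_1(z)=0$ and $q_2\psi_3(z)=0$, and from $\psi_2'(z)=0$ we know that the image of $\psi_2(z)$ in $E^0(B(C_p\times\Delta))/\langle p\rangle(w)=E^0\lpow w,x\rpow/([p](w),[p^v](x),\langle p\rangle(w))=E^0\lpow w,x\rpow/(w,[p^v](x))$ is zero (since $[p](w)=w\langle p\rangle(w)$, the ideal $([p](w),\langle p\rangle(w))$ contains $w$ once we are rational-free — actually one must be careful here, but the ideal generated by $[p](w)$ and $\langle p\rangle(w)$ does contain $w\langle p\rangle(w)$ and $\langle p\rangle(w)$, hence $w\langle p\rangle(w)-w\cdot 0$... let me instead argue directly via the basis).

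The cleaner route: recall from Proposition \ref{E^0(BSigma_p wr GL_1) as a module} that $E^0(BN)$ is a free $E^0(B\Sigma_p)$-module on the classes $c_p^i$ ($0\le i<p^{nv}$) together with the torsion classes $b_\alpha$ ($\alpha\in J$) which are killed by $d$. Writing $z = \sum_i r_i(d)\,c_p^i + \sum_\alpha s_\alpha\, b_\alpha$ with $r_i(d)\in E^0(B\Sigma_p)$ and $s_\alpha\in E^0$, I will first extract what $\psi_1(z)=0$ and $q_2\psi_3(z)=0$ tell us about the coefficients $r_i$ and $s_\alpha$, using the explicit images in Proposition \ref{Generators of E^0(BN)}. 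Then I examine $\psi_2$: by Proposition \ref{psi_2 of generators} we have $\psi_2(c_p)=\prod_{k=0}^{p-1}(x+_F[k](w))$ (pulled back from $E^0(B(C_p\times\Delta))$), $\psi_2(b_\alpha)=(p-1)!f(d)x^{\sum\alpha_i}$, and $\psi_2(d)=d$. The key observation is that in $E^0(B(C_p\times\Delta))$ the element $\prod_{k=0}^{p-1}(x+_F[k](w))$ reduces, modulo $\langle p\rangle(w)$ (equivalently modulo the socle-type relation $w\mid$ everything after inverting the unit $\langle p\rangle(w)/p$... ), to a power series in $x$ alone — indeed modulo $(w)$ it becomes $x^p$ — while $\psi_2(d)=d$ maps to $-w^{p-1}$ in $E^0(BC_p\times\Delta)$, which becomes $0$ modulo $\langle p\rangle(w)$ (since $d=-w^{p-1}$ and $w\langle p\rangle(w)=[p](w)=0$, repeated use gives $w^{p-1}\langle p\rangle(w)$-type vanishing). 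Thus $\psi_2'$ kills the entire $d$-torsion-free part built from $d$ and also controls the $b_\alpha$-part through the factor $f(d)=\langle p\rangle(w)$, which is exactly what gets quotiented out.

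More precisely, I would argue that $\psi_2$ is \emph{itself} jointly injective together with $\psi_1$ and $q_2\psi_3$ already (Corollary \ref{jointly injective maps 2}), and separately that $\ker(\psi_2 \to \psi_2')$ — i.e.\ the preimage in $E^0(B(\Sigma_p\times\Delta))$ of the kernel of the projection to $E^0(B(C_p\times\Delta))/\langle p\rangle(w)$ — meets the image of $E^0(BN)$ only in elements already detected by $\psi_1$ and $q_2\psi_3$. Concretely: if $\psi_2(z)$ lies in the kernel of the further projection, then writing $\psi_2(z)$ in the $E^0(B(\Sigma_p\times\Delta))$-presentation, its reduction mod $(d,[p^v](x))$ (note $f(d)\equiv p \pmod d$ and mod $\langle p\rangle(w)$ the class $d\mapsto -w^{p-1}$ dies, so the relevant quotient is $E^0\lpow x\rpow/[p^v](x)$) must vanish, which pins down the $c_p$-coefficients modulo $d$; then the residual ambiguity is a multiple of $d$, and $d\cdot c_p^i$ is detected by $q_2\psi_3$ via $\psi_3(d)=-[p^v](x)^{p-1}$ unless it is also zero there, forcing the $c_p$-coefficients to be divisible by enough of $d$ and $[p^v](x)$ to conclude. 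I expect the main obstacle to be the bookkeeping at the $b_\alpha$ classes: one must check that the image $(p-1)!f(d)x^{\sum\alpha_i}=(p-1)!\langle p\rangle(w)x^{\sum\alpha_i}$ being zero modulo $\langle p\rangle(w)$ does not cause a loss, i.e.\ that whatever the $b_\alpha$-part of $z$ is, its effect is already visible in $q_2\psi_3$ where $\psi_3(b_\alpha)=(p-1)!x^{\sum\alpha_i}\langle p\rangle([p^v](x))\ne 0$ in general, combined with $\psi_1(b_\alpha)$ being the orbit sum. Tracking these three images of the $b_\alpha$'s against the structure in Proposition \ref{E^0(BSigma_p wr GL_1) as a module} and cross-referencing with the basis for $D$ from Lemma \ref{D basis} should close the argument; this is routine but fiddly linear algebra over $E^0$ rather than anything conceptually deep.
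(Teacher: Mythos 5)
Your high-level strategy---reduce to Corollary \ref{jointly injective maps 2} by showing $\psi_2(z)=0$---is exactly the right one and is indeed what the paper does. But the execution contains several algebraic errors that do real damage, and the key mechanism that actually closes the argument is missing. Concretely: (i) the quotient $E^0(B(C_p\times\Delta))/\langle p\rangle(w)$ is $E^0\lpow w,x\rpow/(\langle p\rangle(w),[p^v](x))$, not $E^0\lpow w,x\rpow/(w,[p^v](x))$; since $\langle p\rangle(w)$ has constant term $p$ and is a Weierstrass series of degree $p^n-1$, the ideal $(\langle p\rangle(w))$ does not contain $w$. (ii) Your claim that $\psi_2'(d)=0$, i.e.\ that $-w^{p-1}\equiv 0\pmod{\langle p\rangle(w)}$, is false: in $E^0\lpow w\rpow/\langle p\rangle(w)$, a free $E^0$-module of rank $p^n-1$ with basis $\{1,w,\ldots,w^{p^n-2}\}$, the class $w^{p-1}$ is either a basis element (for $n\geq 2$) or a nonzero combination of lower powers (for $n=1$, where $w^{p-1}\equiv -u^{-1}p$ modulo higher terms for some unit $u$). (iii) You write that the $b_\alpha$-part of $z$ ``is already visible in $q_2\psi_3$'', but $q_2$ is precisely reduction modulo $\langle p\rangle([p^v](x))$, so $q_2\psi_3(b_\alpha)=0$: the $b_\alpha$-component is killed by \emph{both} $\psi_2'$ and $q_2\psi_3$, and is detected only by $\psi_1$ (via the orbit-sum basis of $E^0(BT)^{\Sigma_p}$). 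Beyond these errors, the ``cleaner route'' is a sketch that is never carried through to a conclusion.

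What is absent is the conceptual step the paper actually uses. By the rational Chinese Remainder Theorem (applied as in Corollary \ref{Q tensor CRT}, using $[p](w)=w\langle p\rangle(w)$), the two quotient maps $E^0(B(C_p\times\Delta))\to E^0\lpow x\rpow/[p^v](x)=E^0(B\Delta)$ and $E^0(B(C_p\times\Delta))\to E^0(B(C_p\times\Delta))/\langle p\rangle(w)$ are jointly injective (because $E^0(B(C_p\times\Delta))$ is $E^0$-free, so it embeds in its rationalization). Then, mirroring the proof of Corollary \ref{jointly injective maps 2}: if $\psi_1(z)=0$ then in particular $z$ restricts to $0$ in $E^0(B\Delta_p)\simeq E^0(B\Delta)$; combined with $\psi_2'(z)=0$ and the joint injectivity just established, the image of $z$ in $E^0(B(C_p\times\Delta))$ vanishes, hence $\psi_2(z)=0$ since $E^0(B(\Sigma_p\times\Delta))\to E^0(B(C_p\times\Delta))$ is injective. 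One then finishes exactly as in Corollary \ref{jointly injective maps 2}. The essential point you needed---that the information lost in passing from $\psi_2$ to $\psi_2'$ is precisely the ``$w=0$'' component, which is recoverable from $\psi_1$ through restriction to $\Delta_p$---is what the CRT splitting furnishes, and without it the argument does not close.
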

\begin{pf} Since $[p](w)=w\langle p\rangle(w)$, an application of the Chinese remainder theorem (in particular, Corollary \ref{Q tensor
CRT}) shows that $$\mathbb{Q}\tensor \frac{E^0\lpow w,x\rpow}{([p](w),[p^v](x))}\simeq \mathbb{Q}\tensor
\frac{E^0\lpow w,x\rpow}{(w,[p^v](x))}\times \mathbb{Q}\tensor\frac{E^0\lpow w,x\rpow}{(\langle
p\rangle(w),[p^v](x))},$$ whereby, since $E^0(B(C_p\times\Delta))$ is free over $E^0$, we have jointly injective
maps $E^0(B(C_p\times\Delta))\to E^0\lpow x\rpow/[p^v](x)=E^0(B\Delta)$ and $E^0(B(C_p\times\Delta))\to
E^0(B(C_p\times\Delta))/\langle p\rangle(w)$. By an argument like that of Corollary \ref{jointly injective maps
2} the result follows.\end{pf}

\subsection{Applying the theory of level structures}

We now investigate some properties of the ring $E^0(B(C_p\times\Delta))/\langle p\rangle (w)$ which we identify
with $E^0\lpow w,x\rpow/(\langle p\rangle(w),[p^v](x))$ as in the remarks preceding Proposition \ref{Jointly
injective maps 3}.

\begin{lem} The ring $R=E^0\lpow w\rpow /\langle p\rangle(w)$ is an integral domain.\end{lem}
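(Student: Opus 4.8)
**The plan is to show that $\langle p \rangle(w)$ is an irreducible element of $E^0\lpow w \rpow$, from which it follows immediately that the quotient is an integral domain** (since $E^0\lpow w \rpow$ is a unique factorisation domain, or at least since a principal ideal generated by an irreducible element is prime in any Noetherian domain, and $E^0\lpow w\rpow$ is a domain). The structure of the argument will closely mirror the proof of Lemma \ref{g is irreducible}: recall that there we showed $g(x) = g_{v+1}(x)/g_v(x)$ is monic and irreducible over $E^0$ via Eisenstein's criterion, observing that $g(x) \equiv x^{Np} \bmod (p, u_1, \ldots, u_{n-1})$ and $g(0) \sim p$.

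First I would recall that $[p](w) = w \langle p \rangle(w)$ by definition of the divided $p$-series, and that $[p]_F(w)$ is a Weierstrass series of degree $p^n$ over the complete local ring $E^0$ (this is the proposition on formal group laws of height $n$ over complete local $\mathbb{Z}_p$-algebras). By the Weierstrass preparation theorem, $[p](w) \sim g_1(w)$ where $g_1(w)$ is a Weierstrass polynomial of degree $p^n$; since $w \mid [p](w)$ with $[p](w)/w = \langle p \rangle(w)$, we get that $\langle p \rangle(w)$ is a unit multiple of $g_1(w)/w$, a Weierstrass polynomial of degree $p^n - 1$. So it suffices to show this polynomial is irreducible over $E^0$.

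Next I would apply Eisenstein's criterion at the maximal ideal $(p, u_1, \ldots, u_{n-1})$ of $E^0 = \mathbb{Z}_p\lpow u_1, \ldots, u_{n-1}\rpow$. Working modulo this ideal, $[p]_F(w) \equiv w^{p^n}$ (since the reduced formal group law has height $n$, using the explicit form of the $p$-series from Proposition \ref{Universal deformation FGL}: $[p](w) = u_i w^{p^i}$ mod $(p, u_1, \ldots, u_{i-1}, w^{p^{i+1}})$, so mod the whole ideal only the $w^{p^n}$ term survives). Hence $\langle p \rangle(w) = [p](w)/w \equiv w^{p^n - 1} \bmod (p, u_1, \ldots, u_{n-1})$, so all non-leading coefficients of the associated Weierstrass polynomial lie in the maximal ideal. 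For the constant term: $\langle p \rangle(w) \equiv p \bmod (w)$, so the constant term of $\langle p \rangle(w)$ is $p$ (up to the unit from Weierstrass preparation, it is a unit multiple of $p$), which lies in the maximal ideal but not in its square — here one uses that $p \notin (p, u_1, \ldots, u_{n-1})^2 = (p^2, pu_i, u_iu_j)$, which holds since $E^0/(p, u_1,\ldots,u_{n-1})^2$ has $p$ as a nonzero element (it is a free $\mathbb{Z}/p^2$-module issue; more carefully, $p \in (p^2, pu_i, u_iu_j)$ would force $p = p^2 a + \sum pu_i b_i + \sum u_i u_j c_{ij}$, and reducing mod $(u_1,\ldots,u_{n-1})$ gives $p = p^2 a(0)$ in $\mathbb{Z}_p$, impossible).

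**The main obstacle** will be verifying the Eisenstein hypothesis cleanly — specifically confirming that the constant term of the Weierstrass polynomial associated to $\langle p \rangle(w)$ is genuinely not in the square of the maximal ideal, and being careful that the unit multiple from the Weierstrass preparation theorem does not spoil this (it cannot, since multiplying by a unit of $E^0\lpow w\rpow$ whose constant term is a unit of $E^0$ preserves the property of the constant term being a unit multiple of $p$). An alternative, possibly cleaner, route avoiding Eisenstein entirely: observe that $E^0\lpow w\rpow/\langle p\rangle(w)$ modulo $(p, u_1, \ldots, u_{n-1})$ equals $\mathbb{F}_p\lpow w\rpow/w^{p^n-1}$, which is local with nilpotent maximal ideal; lift to conclude $E^0\lpow w\rpow/\langle p \rangle(w)$ is local; then since it is free of rank $p^n - 1$ over the integrally closed (regular) ring $E^0$ and its total ring of fractions is $Q(E^0)\lpow w\rpow/\langle p\rangle(w)$ which one checks is a field via the height-$n$ analysis of roots as in the proposition on $\Theta_m$ — but this is more involved, so I would favour the Eisenstein argument as the primary line.
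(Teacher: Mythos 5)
Your proposal is correct and follows essentially the same route as the paper: the paper's proof simply says $\langle p\rangle(w)\sim g_1(w)/w$ and invokes Lemma \ref{g is irreducible}, whose proof is exactly the Eisenstein argument you lay out (observing $g\equiv x^{\deg}$ mod $(p,u_1,\ldots,u_{n-1})$ and $g(0)\sim p$). You spell out the $a_0\notin\mathfrak{m}^2$ check, which the paper leaves implicit, but the argument is the same.
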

\begin{pf} With the usual notation we have $\langle p\rangle(w)\sim g_1(w)/w$ which is monic and irreducible
over the integral domain $E^0$ as in Lemma \ref{g is irreducible}.\end{pf}

\begin{lem}  The element $y=\prod_{k=0}^{p-1}(x+_F [k](w))\in R\lpow x\rpow=E^0\lpow w,x\rpow/\langle
p\rangle(w)$ is a unit multiple of $\prod_{k=0}^{p-1}(x-[k](w))$.\end{lem}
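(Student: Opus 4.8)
The plan is to use the fact, established earlier in this chapter (Lemma \ref{x-_F y twiddles x-y}), that for any formal group law $F$ the formal difference $a -_F b$ is a unit multiple of $a-b$ in the relevant power series ring. Applying this with $a = x$ and $b = [k](w)$ gives that $x -_F [k](w)$ is a unit multiple of $x - [k](w)$ in $R\lpow x\rpow$, where $R = E^0\lpow w\rpow/\langle p\rangle(w)$ (note $R$ is an integral domain by the preceding lemma, though that is not strictly needed here). Similarly I would like to compare $x +_F [k](w)$ with $x -_F [k](w)$, or directly with $x - [k](w)$.

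First I would observe that $x +_F [k](w) = x -_F \iota([k](w)) = x -_F [-k](w)$, using the definition of $-_F$ and the fact that $[-k](w) = \iota([k](w))$. Then by Lemma \ref{x-_F y twiddles x-y}, $x +_F [k](w) = x -_F [-k](w)$ is a unit multiple of $x - [-k](w)$. So $\prod_{k=0}^{p-1}(x +_F [k](w))$ is a unit multiple of $\prod_{k=0}^{p-1}(x - [-k](w))$. The key point is then that, as $k$ ranges over $\{0,1,\ldots,p-1\}$, so does $-k$ modulo the behaviour of the $k$-series on $w$: since $w$ has order dividing $p$ in the formal group (because $\langle p\rangle(w) = [p](w)/w$, so $[p](w) = w\langle p\rangle(w) = 0$ in $R\lpow w \rpow$... wait, $[p](w)= w \cdot \langle p \rangle(w) = 0$ in $R\lpow x\rpow$), we have $[-k](w) = [p-k](w)$ for $k = 1,\ldots,p-1$ and $[0](w) = [p](w) = 0 = [0](w)$. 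Hence $\{[-k](w) \mid 0 \le k \le p-1\} = \{[k](w)\mid 0\le k\le p-1\}$ as a multiset, so $\prod_{k=0}^{p-1}(x - [-k](w)) = \prod_{k=0}^{p-1}(x - [k](w))$.

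Combining these, $\prod_{k=0}^{p-1}(x +_F [k](w))$ is a unit multiple of $\prod_{k=0}^{p-1}(x - [k](w))$, which is the claim. The one point requiring a little care — and the main obstacle — is verifying that $[p](w) = 0$ in $R\lpow x\rpow$, i.e.\ that $w\langle p\rangle(w)$ vanishes mod $\langle p\rangle(w)$; this is immediate from the definition of $R$, but I should also confirm that the unit multiples coming from Lemma \ref{x-_F y twiddles x-y} for the various $k$ genuinely are units in $R\lpow x\rpow$ (each is $1 + [k](w)\cdot(\text{power series})$, which has invertible constant term since $R$ is local), so their product is a unit. I would also double-check the indexing: $[-0](w) = \iota(0) = 0 = [0](w)$ handles $k=0$, and for $1 \le k \le p-1$, $[-k](w) = [p](w) -_F [-k](w)\cdot(\ldots)$; more cleanly, $[-k](w) +_F [k](w) = 0$ and $[p-k](w) +_F [k](w) = [p](w) = 0$, so by uniqueness of formal inverses $[-k](w) = [p-k](w)$. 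This gives the bijection $k \mapsto p-k$ on $\{1,\ldots,p-1\}$ fixing the multiset, completing the argument.
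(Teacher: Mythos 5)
Your proof is correct and takes essentially the same route as the paper: both rest on the re-indexing observation that $[-k](w)=[p-k](w)$ in $R$ (since $[p](w)=w\langle p\rangle(w)=0$ there) together with Lemma \ref{x-_F y twiddles x-y}. The paper simply reorganizes the two steps, first noting $\prod_k(x+_F[k](w))=\prod_k(x-_F[k](w))$ by re-indexing and then applying the unit-multiple lemma once, whereas you apply the lemma term by term and re-index the resulting ordinary product; the content is the same, and your extra care about the unit factors being genuine units in $R\lpow x\rpow$ is sound.
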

\begin{pf} Note that $y=\prod_{k=0}^{p-1} (x+_F[k](w))=\prod_{k=0}^{p-1} (x-_F[k](w))$. Then an application of Lemma \ref{x-_F y twiddles x-y} shows that $y\sim \prod_{k=0}^{p-1}(x-
[k](w))$.\end{pf}

To proceed further we need to use the results of \cite{FSFG} which require some familiarity with the language of
formal schemes and, in particular, level structures. The aforementioned paper gives a good account of the basic
definitions and notations.

Write $X=\spf(E^0)$ so that $\mathbb{G}=\spf(E^0(\mathbb{C}P^\infty))=\spf(E^0\lpow x\rpow)$ is a formal group
over $X$. Let $X_R=\spf(R)$. Then $\mathbb{G}_R=\mathbb{G}\times_X X_R = \spf(R\lpow x\rpow)$ is a formal group
over $X_R$. Put $\ds \mathbb{D}=\spf\left(R\lpow x\rpow/y\right)$ and note that $\mathbb{D}$ is a formal scheme
over $X_R$.

\begin{prop} $\mathbb{D}$ is a subgroup scheme of degree $p$ of $\mathbb{G}_R(1)=\ker(\times
p:\mathbb{G}_R\longrightarrow \mathbb{G}_R)$ and $y$ is a coordinate on the quotient group
$\mathbb{G}_R/\mathbb{D}$.\end{prop}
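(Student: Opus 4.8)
The plan is to recognise the ring $R\lpow x\rpow/y$ as the ring of functions on a scheme of subgroups (or rather, points) of the formal group $\mathbb{G}_R$, using the language and results of \cite{FSFG}. First I would observe that over $R = E^0\lpow w\rpow/\langle p\rangle(w)$ the element $w$ is, by construction, a point of exact order dividing $p$ on $\mathbb{G}$: indeed $[p]_F(w) = w\langle p\rangle(w) = 0$ in $R$, so $w \in \mathbb{G}_R(1)(R)$. Hence the $p$ elements $0, w, [2](w), \ldots, [p-1](w)$ all lie in $\mathbb{G}_R(1)(R)$, and they are visibly closed under $+_F$ (each is $[k](w)$ and $[p](w)=0$), so they form a subgroup of $\mathbb{G}_R(1)(R)$ of order (at most) $p$. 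By the preceding lemma, $y \sim \prod_{k=0}^{p-1}(x - [k](w))$, so $R\lpow x\rpow/y$ is, as an $R$-algebra, the quotient of $R\lpow x\rpow$ by the ideal cutting out exactly the divisor $\sum_{k=0}^{p-1}[[k](w)]$ on $\mathbb{G}_R$. This is precisely the definition of the closed subscheme $\mathbb{D}$ associated to that finite subgroup, so $\mathbb{D}$ is a subgroup scheme of $\mathbb{G}_R$ contained in $\mathbb{G}_R(1)$, and it is free of rank $p$ over $R$ (the polynomial $\prod(x-[k](w))$ is monic of degree $p$, after clearing the unit), i.e. of degree $p$.

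Next I would address the group-scheme structure. The subgroup is given on points by $\{[k](w) : 0 \le k < p\}$; to see that $\mathbb{D}$ is a \emph{subgroup scheme} and not merely a subscheme, I would invoke the theory of \cite{FSFG}: a divisor on $\mathbb{G}_R$ which is a sum over a finite subgroup of the ``translation'' divisors is automatically a subgroup scheme, because the defining equation $y(x) = \prod_k (x -_F [k](w))$ (up to a unit) satisfies $y(x +_F x') \equiv 0$ whenever $y(x) = y(x') = 0$ — this is exactly the statement that the underlying set of points is closed under $+_F$, which we have checked. More precisely one checks that $\mathbb{D}$ is the scheme-theoretic kernel of the composite isogeny $\mathbb{G}_R \to \mathbb{G}_R/\mathbb{D}$, or alternatively that $y$ generates a Hopf ideal; the relevant formalism is set up in \cite{FSFG}. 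Since every point of $\mathbb{D}$ is killed by $p$, we have $\mathbb{D} \le \mathbb{G}_R(1)$.

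For the final clause — that $y$ is a coordinate on $\mathbb{G}_R/\mathbb{D}$ — I would argue as follows. Quotienting a formal group by a finite subgroup scheme of degree $p$ yields another formal group, with an isogeny $\pi \colon \mathbb{G}_R \to \mathbb{G}_R/\mathbb{D}$ of degree $p$; this is part of the basic theory (\cite[around Proposition 27]{FSFG}, or the general Weierstrass-preparation argument: $R\lpow x\rpow$ is free of rank $p$ over the subring of functions pulled back from the quotient). The function $y = \prod_{k=0}^{p-1}(x +_F [k](w))$ is $\mathbb{D}$-invariant in the sense that translating $x$ by any $[j](w) \in \mathbb{D}$ merely permutes the factors, so $y$ descends to a function on $\mathbb{G}_R/\mathbb{D}$; and $y$ vanishes to order exactly $1$ at the identity section (its linear term in $x$ is $\prod_{k \ne 0}[k](w) \cdot x$, a unit times $x$, since each $[k](w)$ for $k \ne 0$ is a unit multiple of $w$ and $w$ is a non-zero-divisor — one must check $\prod_{k=1}^{p-1}[k](w)$ is a unit in $R$, which follows because modulo the maximal ideal it reduces to $w^{p-1}$ up to a unit and ... actually it reduces to $(p-1)!\,w^{p-1}$-type behaviour; the cleaner statement is that $y/\!\!\prod_{k\ne 0}[k](w)$ has linear coefficient $1$). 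Hence $y$ is a coordinate (normalised or not) on the quotient formal group, and $R\lpow x\rpow/y = R\lpow y\rpow$-flat of the right rank.

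The main obstacle I anticipate is making the group-scheme claim rigorous rather than set-theoretic: one genuinely needs the machinery of \cite{FSFG} to pass from ``the finite set of points is a subgroup under $+_F$'' to ``the associated closed subscheme is a subgroup scheme, and the quotient is a formal group with $y$ as coordinate.'' The point-counting and divisor bookkeeping are routine, but the identification of $\mathbb{D}$ as $\ker(\mathbb{G}_R \to \mathbb{G}_R/\mathbb{D})$ and the verification that $y$ has invertible linear coefficient (equivalently, that $\prod_{k=1}^{p-1}[k](w)$ is a unit in the integral domain $R = E^0\lpow w\rpow/\langle p\rangle(w)$) are where care is needed; for the latter I would reduce mod the maximal ideal of $E^0$ and use that $\langle p\rangle(w) \sim g_1(w)/w$ is Eisenstein, so $R/\mathfrak{m}_{E^0}R$ is a field extension of $\mathbb{F}_p$ in which $w \ne 0$, forcing each $[k](w)$, and hence the product, to be nonzero, whence a unit.
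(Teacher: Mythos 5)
Your overall strategy matches the paper's: recognise that $k\mapsto [k](w)$ is a level structure on $\mathbb{G}_R$ in the sense of \cite{FSFG}, so that $\mathbb{D}$ is the divisor $[\phi(\mathbb{Z}/p)]$ and the conclusions (subgroup scheme of $\mathbb{G}_R(1)$, $y$ a coordinate on the quotient) follow from \cite[Proposition~32, Corollary~33]{FSFG}. The paper's proof is exactly this, with no further computation, so there is no substantive difference in route.

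However, your attempted side-verification of the ``$y$ is a coordinate'' clause contains a genuine error, and the statement you try to prove is in fact false. The coefficient of $x^1$ in $y=\prod_{k=0}^{p-1}(x+_F[k](w))$ is indeed $\prod_{k=1}^{p-1}[k](w)$, but this is \emph{not} a unit in $R=E^0\lpow w\rpow/\langle p\rangle(w)$: each $[k](w)$ is a unit multiple of $w$, and $w$ lies in the maximal ideal of the local ring $R$ (whose maximal ideal is generated by $\mathfrak{m}_{E^0}$ together with $w$), so the product lies in $\mathfrak{m}_R$. Relatedly, $R/\mathfrak{m}_{E^0}R$ is not a field extension of $\mathbb{F}_p$; modulo $(p,u_1,\ldots,u_{n-1})$ the Weierstrass polynomial $\langle p\rangle(w)$ becomes $w^{p^n-1}$, so $R/\mathfrak{m}_{E^0}R\simeq \mathbb{F}_p\lpow w\rpow/w^{p^n-1}$, in which $w$ is nonzero but nilpotent. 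The conceptual slip is conflating ``$y$ is a coordinate on the quotient formal group $\mathbb{G}_R/\mathbb{D}$'' with ``$y$ has unit linear coefficient as a power series in the coordinate $x$ on $\mathbb{G}_R$''; the latter would make $y$ a coordinate on $\mathbb{G}_R$ itself, which is false and is not what is claimed. The correct content (which you do gesture at with ``$R\lpow x\rpow$ is free of rank $p$ over the subring of functions pulled back from the quotient'') is that $R\lpow y\rpow\subseteq R\lpow x\rpow$ is the ring of functions on $\mathbb{G}_R/\mathbb{D}$ and $R\lpow x\rpow$ is free of rank $p$ over it; this is precisely what \cite[Corollary~33]{FSFG} gives, and it cannot be shortcut by a linear-coefficient check in the $x$-variable.
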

\begin{pf} As in \cite{HKR}, let $pF(R)$ denote the elements $a\in R$ for which $[p](a)=0$, which is a group
under $+_F$. Define a group homomorphism $\phi:\mathbb{Z}/p\to pF(R)$ given by $k\mapsto [k](w)$. Then $\phi$ is
injective (since $[k](w)\sim w$ which is non-zero for $k\neq 0$ mod $p$) and hence, using the terminology of
\cite[Proposition 26]{FSFG}, is a level structure on $\mathbb{G}_R$. Noting that, as divisors on $\mathbb{G}_R$,
we have $\mathbb{D}=\spf(R\lpow x\rpow /\prod_{k=0}^{p-1}(x-[k](w)))=[\phi(\mathbb{Z}/p)]$ and so, by
Proposition 32 and Corollary 33 in \cite{FSFG} we find that $\mathbb{D}$ is a subgroup scheme of $\mathbb{G}_R$
contained in $\mathbb{G}_R(1)$ and that $\prod_{k=0}^{p-1}(x-_F[k](w))=y$ is a coordinate on
$\mathbb{G}_R/\mathbb{D}$.\end{pf}

\begin{rem} It may be helpful to have a translation of some consequences of this result into algebra. We now know that there is a well defined map $R\lpow x\rpow/y\to R\lpow x\rpow/y$ sending $x$ mod
$y$ to $[p](x)$ mod $y$; in other words $\prod_{k=0}^{p-1}([p](x)-[k](w))$ is divisible by
$\prod_{k=0}^{p-1}(x-[k](w))$ in $R\lpow x\rpow$. This should not be too difficult to believe, as the former is
divisible by $[p](x)$ and $[p]([k](w))=[k]([p](w))=0$ in $R$ so that $(x-[k](w))$ is certainly a factor; the
above result tells us that $\prod_{k=0}^{p-1}(x-[k](w))$ is also a factor. The remarks about the quotient group
$\mathbb{G}_R/\mathbb{D}$ give us a subring $\mathcal{O}_{\mathbb{G}_R/\mathbb{D}}=R\lpow y\rpow$ of
$\mathcal{O}_{\mathbb{G}_R}=R\lpow x\rpow$ with favourable properties, as we will see below.\end{rem}

\begin{prop}\label{Definition of h} Let $y=\prod_{k=0}^{p-1} (x+_F[k](w))\in R\lpow x\rpow$, as above. Then
there is a unique power-series $h\in R\lpow y\rpow$ such that $h=[p^v](x)$ in $R\lpow x\rpow$.\end{prop}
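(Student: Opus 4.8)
The plan is to exploit the quotient isogeny $\pi\colon\mathbb{G}_R\to\mathbb{G}_R/\mathbb{D}$ supplied by the previous proposition, whose source has coordinate ring $R\lpow x\rpow$ and whose target has coordinate ring $R\lpow y\rpow$. The point is that the $p^v$-series map on $\mathbb{G}_R$ annihilates $\mathbb{D}$, hence factors through $\pi$, and reading off this factorisation on coordinate rings exhibits $[p^v](x)$ as a power series in $y$.

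In more detail: first I would note that $\mathbb{D}\subseteq\mathbb{G}_R(1)=\ker(\times p\colon\mathbb{G}_R\to\mathbb{G}_R)$ by the previous proposition, and that $\ker(\times p)\subseteq\ker([p^v])$ since $[p^v]=[p^{v-1}]\circ[p]$; thus $\mathbb{D}\subseteq\ker([p^v])$ as a subgroup scheme. By the universal property of the quotient from \cite{FSFG}, the endomorphism $[p^v]\colon\mathbb{G}_R\to\mathbb{G}_R$ then factors uniquely as $[p^v]=\bar f\circ\pi$ for a homomorphism $\bar f\colon\mathbb{G}_R/\mathbb{D}\to\mathbb{G}_R$ of formal groups over $\spf(R)$. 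Passing to coordinate rings, $\pi^*\colon R\lpow y\rpow\to R\lpow x\rpow$ is the inclusion sending $y$ to $\prod_{k=0}^{p-1}(x+_F[k](w))$, while $\bar f^*\colon R\lpow x\rpow\to R\lpow y\rpow$ is an $R$-algebra map; I then set $h=\bar f^*(x)\in R\lpow y\rpow$. Since $[p^v]^*=(\bar f\circ\pi)^*=\pi^*\circ\bar f^*$ and $[p^v]^*(x)=[p^v](x)$, we obtain $[p^v](x)=\pi^*(h)$, that is, $h=[p^v](x)$ inside $R\lpow x\rpow$, as required.

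For uniqueness I would argue that $\pi^*$ is injective: by Lemma~\ref{x-_F y twiddles x-y} the element $y=\prod_{k=0}^{p-1}(x+_F[k](w))$ is a unit multiple of the monic polynomial $\prod_{k=0}^{p-1}(x-[k](w))$ of degree $p$ in $x$, so by the Weierstrass preparation theorem $R\lpow x\rpow$ is free over $R\lpow y\rpow$ on $\{1,x,\dots,x^{p-1}\}$; in particular the inclusion $R\lpow y\rpow\hookrightarrow R\lpow x\rpow$ is injective, so any two power series in $y$ agreeing in $R\lpow x\rpow$ coincide.

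The main obstacle is the factorisation step: one must be sure that $\mathbb{D}$ lies in the kernel of $[p^v]$ as a subgroup scheme and not merely on points, and that the universal property of $\mathbb{G}_R/\mathbb{D}$ is available in exactly the form used — for homomorphisms of formal groups over the fixed base $\spf(R)$ — from \cite{FSFG}. Should that be awkward to pin down, the same conclusion can be reached elementarily: the group $\mathbb{Z}/p$ acts on $R\lpow x\rpow$ by $R$-algebra automorphisms via $k\cdot x=x+_F[k](w)$ with invariant subring exactly $R\lpow y\rpow$, and $[p^v](x)$ is invariant because $k\cdot[p^v](x)=[p^v](x)+_F[p^v]([k](w))=[p^v](x)$, using that $[p]([k](w))=[k]([p](w))=[k](w\langle p\rangle(w))=0$ in $R=E^0\lpow w\rpow/\langle p\rangle(w)$ and hence $[p^v]([k](w))=0$; injectivity of $R\lpow y\rpow\hookrightarrow R\lpow x\rpow$ again gives uniqueness.
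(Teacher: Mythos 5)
Your argument is essentially the paper's: both factor $[p^v]$ through the quotient isogeny $\mathbb{G}_R\to\mathbb{G}_R/\mathbb{D}$ using $\mathbb{D}\subseteq\mathbb{G}_R(1)$, set $h$ to be the pullback of the coordinate $x$, and get uniqueness from the injectivity of $R\lpow y\rpow\hookrightarrow R\lpow x\rpow$. You flesh out two points the paper leaves implicit (that $\ker(\times p)\subseteq\ker(\times p^v)$ because $v\geq 1$, and that $R\lpow x\rpow$ is free of rank $p$ over $R\lpow y\rpow$ by Weierstrass preparation), and your closing $\mathbb{Z}/p$-invariance remark is a valid reformulation, though identifying the invariant subring with $R\lpow y\rpow$ is really the same content as the preceding proposition rather than an independent shortcut.
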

\begin{pf} Since $\mathbb{D}$ is contained in $\mathbb{G}_R(1)$, the map $\mathbb{G}_R\overset{\times p^v}{\longrightarrow}\mathbb{G}_R$ factors through $\mathbb{G}_R/\mathbb{D}$. Thus there
is a map $\psi:\mathbb{G}_R/\mathbb{D}\to \mathbb{G}_R$ making the diagram below commute.
$$
\xymatrix{ \mathbb{G}_R \ar[rr]^{\times p^v} \ar[rd]& & \mathbb{G}_R\\
& \mathbb{G}_R/\mathbb{D}. \ar[ru]_{\psi}}
$$
Put $h(y)=\psi^*(x)\in \mathcal{O}_{\mathbb{G}_R/\mathbb{D}}=R\lpow y\rpow$. Then $h(y)=[p^v](x)$ in
$\mathcal{O}_{\mathbb{G}_R}=R\lpow x\rpow$, as required. Uniqueness is immediate since
$\mathcal{O}_{\mathbb{G}_R/\mathbb{D}}$ is a subring of $\mathcal{O}_{\mathbb{G}_R}$.\end{pf}

We explore some of the properties of the power-series $h$ defined above, but first need a couple of lemmas.

\begin{lem}\label{A^G/aA^G=(A/aA)^G} Let $A$ be a ring, let $G$ be a finite group acting on $A$ and suppose that $|G|$ is invertible in $A$. Then for any
$a\in A^G$ there is an action of $G$ on $A/aA$ and there is a $G$-equivariant isomorphism $A^G/aA^G\simeq
(A/aA)^G$.\end{lem}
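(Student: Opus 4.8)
The plan is to exploit the averaging operator coming from the invertibility of $|G|$, exactly as in the proof of Proposition \ref{A^G has duality if A does}. First I would observe that, since $a\in A^G$, the ideal $aA$ is a $G$-submodule of $A$ (for $g\in G$ and $r\in A$ we have $g.(ar)=(g.a)(g.r)=a(g.r)\in aA$), so the $G$-action descends to $A/aA$ and the quotient map $A\to A/aA$ is $G$-equivariant. This gives a natural map $A^G\to (A/aA)^G$ whose kernel is $A^G\cap aA$; the content of the lemma is that this kernel is exactly $aA^G$ and that the map is surjective.

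For surjectivity, I would use the Reynolds operator $r:A\to A^G$ defined by $r(b)=\frac{1}{|G|}\sum_{g\in G} g.b$, which is $A^G$-linear and restricts to the identity on $A^G$ (these facts were already checked in the proof of Proposition \ref{A^G has duality if A does}). Given an element of $(A/aA)^G$, lift it arbitrarily to some $b\in A$; then $\overline{r(b)}=r(\overline{b})$ since $r$ commutes with the $G$-equivariant reduction map, and $r(\overline{b})=\overline{b}$ because $\overline{b}$ is already $G$-invariant and $r$ fixes invariants. Hence $r(b)\in A^G$ is a preimage.

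For injectivity of the induced map $A^G/aA^G\to (A/aA)^G$, suppose $c\in A^G$ with $c\in aA$, say $c=ab$ for some $b\in A$. Applying $r$ and using $A^G$-linearity together with $a\in A^G$, we get $c=r(c)=r(ab)=a\,r(b)$ with $r(b)\in A^G$, so $c\in aA^G$. This shows $A^G\cap aA=aA^G$, and combined with surjectivity we obtain the isomorphism $A^G/aA^G\iso (A/aA)^G$. The map is visibly $G$-equivariant (indeed $G$ acts trivially on both sides, since both are built from invariants), so nothing more is needed there.

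I do not anticipate a serious obstacle here: the only point requiring a moment's care is confirming that $aA$ is genuinely $G$-stable, which is where the hypothesis $a\in A^G$ is used, and that reduction mod $aA$ commutes with $r$, which is immediate from $G$-equivariance of the quotient map. The argument is essentially the standard ``averaging'' proof of Maschke-type statements, specialised to a principal ideal generated by an invariant element.
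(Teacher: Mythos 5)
Your proof is correct and follows essentially the same route as the paper's: both use the averaging operator $\frac{1}{|G|}\sum_{g\in G} g.(-)$ to produce a $G$-invariant preimage, establishing surjectivity of $A^G\to (A/aA)^G$. The paper declares the identification of the kernel with $aA^G$ to be ``clear,'' whereas you spell it out via $A^G$-linearity of the Reynolds operator ($c=ab\in A^G\Rightarrow c=r(c)=a\,r(b)\in aA^G$), which is a worthwhile addition but not a departure in method.
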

\begin{pf} It is clear that the composite $f:A^G\rightarrowtail A\to A/aA$ has image contained in
$(A/aA)^G$. Given any $r+aA\in (A/aA)^G$ we find that $f(\frac{1}{|G|}\sum_{g\in G} g.r)=\frac{1}{|G|}\sum_{g\in
G} (g.r + aA)=\frac{|G|}{|G|}(r+aA)=r+aA$ so that $f:A^G\to (A/aA)^G$ is surjective. It is clear that the kernel
of the map is just $aA^G$ and the result follows.\end{pf}

\begin{lem}\label{Z/p^times invariants of R}  There is an action of $(\mathbb{Z}/p)^\times$ on $R$ given by
$k.w=[k](w)$ and $\langle p\rangle(w)$ is $(\mathbb{Z}/p)^\times$-invariant. Further, the element
$d=\prod_{k=1}^{p-1}[k](w)$ generates $R^{(\mathbb{Z}/p)^\times}$ over $E^0$ and
$R^{(\mathbb{Z}/p)^\times}=E^0\lpow d\rpow/f(d)$, where $f(d)=\langle p\rangle(x)$ as an element of $R\lpow
x\rpow$.\end{lem}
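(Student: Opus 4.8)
The plan is to mirror the structure of the analogous results already established for $E^0(BC_p)^{\Aut(C_p)}$ in Section \ref{sec:Sigma_p}, now working over the base ring $E^0\lpow w\rpow/\langle p\rangle(w)$ rather than $E^0\lpow w\rpow/[p](w)$. First I would verify that the formula $k.w = [k](w)$ does define an action of $(\mathbb{Z}/p)^\times$ on $R = E^0\lpow w\rpow/\langle p\rangle(w)$: since $\langle p\rangle([k](w)) = \langle p\rangle(w)$ by Corollary \ref{<p>(kx)=<p>(x) for k a teichmuller lift} combined with Lemma \ref{hat(k)(x)=k(x) mod p(x)} (note $[k](w) = [\hat{k}](w)$ modulo $[p](w)$, hence also modulo its divisor-relevant factor — I will need to check $[k](w)$ is a well-defined element of $R$, which follows because $w \mid [p](w)$ and $\langle p\rangle(w)$ is coprime to $w$ up to the usual considerations), the substitution $w \mapsto [k](w)$ descends to a ring endomorphism of $R$, and it is invertible because $[k^{-1}]$ is its inverse under composition on the level of $[p]$-torsion. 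The $(\mathbb{Z}/p)^\times$-invariance of $\langle p\rangle(w)$ is then immediate from the action being by substitutions that fix $\langle p\rangle(w)$.

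Next I would compute the invariant subring. Recall from Corollary \ref{Basis for R[[x]]/[p](x)} and the Weierstrass preparation theorem that $R$ is free over $E^0$ with basis $\{1, w, \ldots, w^{p^{n}-2}\}$ (the degree of the Weierstrass polynomial $g_1(w)/w$ being $p^n - 1$... I will double-check this count against Lemma \ref{g is irreducible} and the definition of $g$). As in the proof of Proposition \ref{E^0(BC)^Aut(C) basis}, for $k \in (\mathbb{Z}/p)^\times$ we have $k.w^i = [k](w)^i = [\hat{k}](w)^i = \hat{k}^i w^i$ modulo $[p](w)$; I need to argue this identity also holds modulo $\langle p\rangle(w)$, which follows since $\langle p\rangle(w) \mid [p](w)$ only in the wrong direction — instead I should work directly in $R\lpow w\rpow/\langle p\rangle(w)$ and use that $[\hat{k}](w) = \hat{k}w$ exactly (Lemma \ref{Teichmuller Series}) together with $[k](w) = [\hat{k}](w)$ modulo $\langle p\rangle([p^{v}](\cdot))$-type relations — actually the cleanest route is to observe $[k](w) \equiv [\hat{k}](w) = \hat{k}w$ already holds modulo $(w\langle p\rangle(w)) = ([p](w))$ and $\langle p\rangle(w) \mid$ nothing here, so I instead use that $k = \hat{k} + ap$ and $[p](w) = w\langle p\rangle(w) = 0$ in... no: in $R$ we have $\langle p\rangle(w) = 0$ but not $[p](w) = 0$. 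So $[k](w) = [\hat{k}](w) +_F [a]([p](w))$ and $[p](w) = w \cdot 0 = 0$ in $R$ as well, hence $[k](w) = [\hat{k}](w) = \hat{k}w$ in $R$ too. Good — so the action on the $E^0$-basis is diagonal with eigenvalue $\hat{k}^i$ on $w^i$, and exactly as in Proposition \ref{E^0(BC)^Aut(C) basis} the invariants are spanned by the $w^{j(p-1)}$, i.e.\ by powers of $d = \prod_{k=1}^{p-1}[k](w) = \prod_{k=1}^{p-1}\hat{k}w = -w^{p-1}$ (using Lemma \ref{Wilson's theorem}).

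Finally I would identify the ring structure $R^{(\mathbb{Z}/p)^\times} = E^0\lpow d\rpow/f(d)$. Here $f$ is the polynomial with $\langle p\rangle(w) = f(d) = f(-w^{p-1})$ in $R\lpow w\rpow$ — I should confirm such $f$ exists by the same argument showing $\langle p\rangle(w)$ is $(\mathbb{Z}/p)^\times$-invariant in $E^0\lpow w\rpow$ and hence a power series in $d$, with $f(0) = \langle p\rangle(0) = p$. The relation $f(d) = 0$ holds in $R^{(\mathbb{Z}/p)^\times}$ since $\langle p\rangle(w) = 0$ in $R$. For the converse — that $f(d)$ generates all the relations — I would run the argument of Proposition \ref{E^0(BSigma_p) in terms of d} verbatim: take $g(t) \in E^0\lpow t\rpow$ with $g(d) = 0$ in $R$, lift to $g(d) = h(w)\langle p\rangle(w)$ in $E^0\lpow w\rpow$, evaluate at $w = 0$ to get $g(0) = 0$ so $d \mid g(d)$, and push through the divisibility $d \mid wh(w)$ using $f(0) = p \neq 0$ together with the fact that $\langle p \rangle(w)$ is a non-zero-divisor, then average over $(\mathbb{Z}/p)^\times$ to land the quotient inside the invariants. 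The main obstacle I anticipate is purely bookkeeping: making sure the passage between $E^0\lpow w\rpow/[p](w)$ and $E^0\lpow w\rpow/\langle p\rangle(w)$ is handled carefully — in particular that $[p](w)$ genuinely vanishes in $R$ (it does, since $[p](w) = w\langle p\rangle(w)$), so that all the identities $[k](w) = \hat{k}w$ transport correctly, and that the rank count $\deg(g_1(w)/w) = p^n - 1$ is used consistently. No genuinely new ideas beyond those in Section \ref{sec:Sigma_p} should be needed.
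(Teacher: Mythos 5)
Your approach is genuinely different from the paper's. You recompute $R^{(\mathbb{Z}/p)^\times}$ from scratch via the eigenvalue/basis argument of Proposition~\ref{E^0(BC)^Aut(C) basis}, whereas the paper deduces the whole lemma in one stroke from the known isomorphism $E^0(BC_p)^{\Aut(C_p)}\simeq E^0\lpow d\rpow/df(d)$ of Proposition~\ref{E^0(BSigma_p) in terms of d}, by applying Lemma~\ref{A^G/aA^G=(A/aA)^G} with $A=E^0(BC_p)$ and $a=f(d)$: since $p-1$ is invertible, taking $(\mathbb{Z}/p)^\times$-invariants commutes with passing to $A/aA$, and $E^0(BC_p)/(\langle p\rangle(w))=R$ while $(E^0\lpow d\rpow/df(d))/(f(d))=E^0\lpow d\rpow/f(d)$. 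Your route is longer but self-contained, and the paper's is shorter because it leans on the general lemma.

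That said, your last step has a genuine error. You propose to run the argument of Proposition~\ref{E^0(BSigma_p) in terms of d} ``verbatim,'' in particular to ``evaluate at $w=0$ to get $g(0)=0$ so $d\mid g(d)$.'' That step depended crucially on $[p](0)=0$; here a relation lifts to $g(d)=h(w)\langle p\rangle(w)$ in $E^0\lpow w\rpow$, and since $\langle p\rangle(0)=p\neq 0$, setting $w=0$ gives $g(0)=ph(0)$, which need not vanish. In fact you do not want $d\mid g(d)$ at all: the target ideal is $(f(d))$, not $(df(d))$. The correct finish is simpler than the original. Let $(\mathbb{Z}/p)^\times$ act on $E^0\lpow w\rpow$ by $k.w=\hat k w$ (this is what descends to $k.w=[k](w)$ on $R$, since $[p](w)=0$ there), so $E^0\lpow w\rpow^{(\mathbb{Z}/p)^\times}=E^0\lpow d\rpow$. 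From $g(d)=h(w)\,f(d)$ with both $g(d)$ and $f(d)=\langle p\rangle(w)$ invariant and $f(d)$ a non-zero-divisor in the domain $E^0\lpow w\rpow$, one gets $h(w)=k.h(w)$ for every $k$, hence $h(w)\in E^0\lpow d\rpow$ and $f(d)\mid g(d)$ in $E^0\lpow d\rpow$; no averaging is needed. Alternatively, your basis computation already exhibits both $R^{(\mathbb{Z}/p)^\times}$ and $E^0\lpow d\rpow/f(d)$ as free $E^0$-modules of rank $(p^n-1)/(p-1)$, so the evident surjection $E^0\lpow d\rpow/f(d)\twoheadrightarrow R^{(\mathbb{Z}/p)^\times}$ is automatically an isomorphism.
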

\begin{pf} As is Proposition \ref{E^0(BSigma_p) in terms of d}, we know that $(E^0\lpow w\rpow/[p](w))^{(\mathbb{Z}/p)^\times}=E^0\lpow d\rpow/df(d)$. Then an application of Lemma \ref{A^G/aA^G=(A/aA)^G} shows that $(E^0\lpow w\rpow/\langle
p\rangle(w))^{(\mathbb{Z}/p)^\times}=E^0\lpow d\rpow/f(d)$.\end{pf}

\begin{cor} With $h$ as in Proposition \ref{Definition of h} and $(\mathbb{Z}/p)^\times$ acting on $R$ as above,
we have $h\in R\lpow y\rpow^{(\mathbb{Z}/p)^\times}=E^0\lpow d,y\rpow/f(d)$.\end{cor}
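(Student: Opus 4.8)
The plan is to extend the $(\mathbb{Z}/p)^\times$-action on $R$ to $R\lpow x\rpow$, observe that both $y$ and $[p^v](x)$ are invariant, and then identify $R\lpow y\rpow^{(\mathbb{Z}/p)^\times}$ via Lemma \ref{Z/p^times invariants of R}.

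First I would note that the action of Lemma \ref{Z/p^times invariants of R}, being an $E^0$-algebra automorphism of $R=E^0\lpow w\rpow/\langle p\rangle(w)$ with $k.w=[k](w)$, extends to an $E^0$-algebra automorphism of $R\lpow x\rpow = E^0\lpow w,x\rpow/\langle p\rangle(w)$ by $k.x=x$. Since it fixes $E^0$ pointwise, it commutes with $+_F$ and with each $[m](-)$; in particular $k.[j](w)=[j]([k](w))=[kj](w)$. Crucially, $[p](w)=w\langle p\rangle(w)=0$ in $R$, so $[m](w)$ depends only on $m$ mod $p$ and hence $k\in(\mathbb{Z}/p)^\times$ permutes $\{[0](w),\ldots,[p-1](w)\}$. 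Therefore
$$k.y=\prod_{j=0}^{p-1}\bigl(x+_F [kj](w)\bigr)=\prod_{j=0}^{p-1}\bigl(x+_F [j](w)\bigr)=y,$$
so $y$ is invariant and the action restricts to the subring $R\lpow y\rpow\subseteq R\lpow x\rpow$.

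Next, because $k.y=y$, the action on $R\lpow y\rpow$ is coefficientwise: $k.\sum_i a_i y^i=\sum_i (k.a_i) y^i$. As power-series expansions in $y$ are unique, such an element is invariant precisely when every $a_i$ lies in $R^{(\mathbb{Z}/p)^\times}$, so $R\lpow y\rpow^{(\mathbb{Z}/p)^\times}=R^{(\mathbb{Z}/p)^\times}\lpow y\rpow$. By Lemma \ref{Z/p^times invariants of R} this equals $\bigl(E^0\lpow d\rpow/f(d)\bigr)\lpow y\rpow=E^0\lpow d,y\rpow/f(d)$.

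Finally, $h=[p^v](x)$ in $R\lpow x\rpow$ by Proposition \ref{Definition of h}, and $[p^v]_F(x)\in E^0\lpow x\rpow$ has all coefficients in $E^0$, which the action fixes; hence $h$ is invariant. Together with $h\in R\lpow y\rpow$ from Proposition \ref{Definition of h}, this gives $h\in R\lpow y\rpow^{(\mathbb{Z}/p)^\times}=E^0\lpow d,y\rpow/f(d)$, as claimed. The only step that needs any care is the invariance of $y$ — specifically noticing that $[p](w)=0$ in $R$, so that multiplication by the unit $k$ genuinely permutes the factors of $y$; everything else is formal.
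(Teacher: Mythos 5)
Your proof is correct and follows essentially the same line as the paper's, which simply observes that the inclusion $R\lpow y\rpow\rightarrowtail R\lpow x\rpow$ sends $h$ to the visibly invariant element $[p^v](x)$ and concludes $h\in R\lpow x\rpow^{(\mathbb{Z}/p)^\times}\cap R\lpow y\rpow=R\lpow y\rpow^{(\mathbb{Z}/p)^\times}$. What you do that the paper leaves implicit is verify the equality $R\lpow x\rpow^{(\mathbb{Z}/p)^\times}\cap R\lpow y\rpow=R\lpow y\rpow^{(\mathbb{Z}/p)^\times}$: this requires knowing that $y=\prod_{k=0}^{p-1}(x+_F[k](w))$ is itself $(\mathbb{Z}/p)^\times$-invariant in $R\lpow x\rpow$, which you prove correctly by noting that $[p](w)=w\langle p\rangle(w)=0$ in $R$ and hence a unit $k$ merely permutes the factors. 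That observation, together with the uniqueness of $y$-expansions guaranteed by the coordinate property of $y$ on $\mathbb{G}_R/\mathbb{D}$, is exactly what makes the paper's terse "$\cap$" step legitimate, so your write-up is a slightly more careful version of the same argument rather than a different one.
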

\begin{pf} We know that the inclusion $R\lpow y\rpow\rightarrowtail R\lpow x\rpow$ maps $h$ to $[p^v](x)$, which is clearly invariant under the action of
$(\mathbb{Z}/p)^\times$ on $R$. Hence $h\in R\lpow x\rpow^{(\mathbb{Z}/p)^\times}\cap R\lpow y\rpow=R\lpow
y\rpow^{(\mathbb{Z}/p)^\times}$.\end{pf}

We will write $h=h(d,y)\in E^0\lpow d,y\rpow/f(d)$ thought of as a power-series in $d$ and $y$. Note that there
is a well defined element $d{h}(d,y)\in E^0\lpow d,y\rpow/df(d)$ and hence a well defined element $dh(d,c_p)\in
E^0(BN)$.

\begin{prop} With $h$ as in Proposition \ref{Definition of h} we have $t+dh(d,c_p)=0$ in $E^0(BN)$.\end{prop}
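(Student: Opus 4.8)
The plan is to prove the identity $t + dh(d,c_p) = 0$ in $E^0(BN)$ by exploiting the joint injectivity of the maps $\psi_1$, $\psi_2'$ and $q_2\circ\psi_3$ established in Proposition \ref{Jointly injective maps 3}. It therefore suffices to check that the element $t + dh(d,c_p)$ maps to zero under each of these three maps.

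First I would handle $\psi_1$. Since $\psi_1(d) = 0$ (from the table in Proposition \ref{Generators of E^0(BN)}) and $\psi_1(t) = 0$ (as $t \in \ker(\beta)$, hence restricts to zero on $E^0(BT)$), we get $\psi_1(t + dh(d,c_p)) = \psi_1(t) + \psi_1(d)\psi_1(h(d,c_p)) = 0$ trivially; one just needs to note that $\psi_1$ is a ring homomorphism so it commutes with forming the power series $h(d,c_p)$.

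Next I would treat $\psi_2'$, the composite landing in $E^0(B(C_p\times\Delta))/\langle p\rangle(w) = E^0\lpow w,x\rpow/(\langle p\rangle(w),[p^v](x))$. This is exactly the ring $R\lpow x\rpow/[p^v](x)$ with $R = E^0\lpow w\rpow/\langle p\rangle(w)$. Under $\psi_2'$ we have, from Proposition \ref{Generators of E^0(BN)} and the identification used in Proposition \ref{Jointly injective maps 3}, that $c_p \mapsto \prod_{k=0}^{p-1}(x +_F [k](w)) = y$ and $d \mapsto -[p^v](x)^{p-1}$ and $t \mapsto 0$. By the defining property of $h$ in Proposition \ref{Definition of h}, $h(y) = [p^v](x)$ in $R\lpow x\rpow$, so modulo $[p^v](x)$ we get $h(\psi_2'(c_p)) = h(y) = [p^v](x) = 0$. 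Hence $\psi_2'(dh(d,c_p)) = \psi_2'(d)\cdot 0 = 0$, and $\psi_2'(t) = 0$, so $\psi_2'(t + dh(d,c_p)) = 0$. The only subtlety here is making sure the power series $h$, when evaluated at $c_p$, behaves correctly under $\psi_2'$ — that is, that $\psi_2'(h(d,c_p)) = h(\psi_2'(d),\psi_2'(c_p))$, which holds since $\psi_2'$ is a continuous ring homomorphism and $h$ is a well-defined power series.

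The main obstacle is the third map $q_2 \circ \psi_3 : E^0(BN) \to D^\Gamma$, where $D = E^0\lpow x\rpow/\langle p\rangle([p^v](x))$. From the table, $\psi_3(t) = [p^v](x)^p$, $\psi_3(c_p) = \prod_{k=0}^{p-1}[1+kp^v](x)$ and $\psi_3(d) = -[p^v](x)^{p-1}$, all computed in $E^0(BA) = E^0\lpow x\rpow/[p^{v+1}](x)$. The task is to show $[p^v](x)^p + (-[p^v](x)^{p-1})\cdot h(\psi_3(d),\psi_3(c_p)) = 0$ in $D$, i.e. modulo $\langle p\rangle([p^v](x))$. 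Writing $s = [p^v](x)$, note that $\langle p\rangle(s) = [p^{v+1}](x)/s = 0/s$... more precisely $s\langle p\rangle(s) = [p^{v+1}](x) = 0$ in $E^0(BA)$, and in $D$ we additionally impose $\langle p\rangle(s) = 0$, so $s^p = s\cdot s^{p-1}$ and we want $s^p = s^{p-1}h$, i.e. $s^{p-1}(s - h) = 0$ in $D$. The key computation is to identify $h(\psi_3(d),\psi_3(c_p))$ with $[p^v](x) = s$ modulo $\langle p\rangle(s)$: since $\mathbb{D}$ pulls back appropriately under the map classifying $A \hookrightarrow GL_p(\Flbar)$ — using that $\psi_3(c_p) = \prod_k[q^k](x)$ plays the role of the coordinate $y$ on the quotient group $\mathbb{G}_R/\mathbb{D}$ under the relevant base change — the defining relation $h(y) = [p^v](x)$ transports to give $h(\psi_3(d),\psi_3(c_p)) = [p^v](x)$ in the appropriate quotient. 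Then $s^{p-1}(s-h) = 0$ follows. I would make this precise by tracking the commutative square (analogous to that in the proof of Proposition \ref{alpha is surjective}) relating $E^0(B(\Flbar^\times)^p)$, $E^0(BA)$ and the formal-scheme picture, checking that the subgroup scheme $\mathbb{D}$ and its quotient coordinate $y$ correspond under this base change, and invoking uniqueness of $h$ from Proposition \ref{Definition of h}. Having verified vanishing under $\psi_1$, $\psi_2'$ and $q_2\circ\psi_3$, joint injectivity gives $t + dh(d,c_p) = 0$.
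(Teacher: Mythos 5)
Your overall strategy --- verify vanishing under the three jointly injective maps $\psi_1$, $\psi_2'$ and $q_2\circ\psi_3$ of Proposition \ref{Jointly injective maps 3} --- is exactly the paper's approach, and your $\psi_1$ case is fine. However, there are two points worth flagging.

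First, you have conflated $\psi_2'(d)$ with $\psi_3(d)$. From the table, $\psi_2(d) = d$, and the restriction of $d$ to $E^0(BC_p)$ is $-w^{p-1}$; so $\psi_2'(d) = -w^{p-1}$, not $-[p^v](x)^{p-1}$. This happens to be harmless here (and indeed if you use your wrong value, $-[p^v](x)^{p-1}$ is already zero in the target since $[p^v](x)=0$ there, so the product is trivially zero), but it is not what makes the defining relation $h(-w^{p-1},\,\prod_k(x+_F[k](w))) = [p^v](x)$ apply. Be careful: the defining property of $h$ as a two-variable series $h(d,y)$ requires plugging in $d = -w^{p-1}$ in the first slot.

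Second, for the third map your plan is to re-derive $h(\psi_3(d),\psi_3(c_p)) = [p^v](x)$ in $D$ via a base-change argument in the formal-scheme picture, tracking how $\mathbb{D}$ and its quotient coordinate pull back. While this can probably be carried out, it is considerably more machinery than is needed. The paper's observation is purely algebraic: there is a well-defined $E^0$-algebra map $E^0\lpow w,x\rpow/\langle p\rangle(w)\to D$ given by $w\mapsto [p^v](x)$ and $x\mapsto x$ (well-defined precisely because $\langle p\rangle([p^v](x)) = 0$ in $D$ by definition). Applying this map to the identity $h\left(-w^{p-1},\,\prod_{k=0}^{p-1}(x+_F[k](w))\right) = [p^v](x)$ immediately yields $h\left(-[p^v](x)^{p-1},\,\prod_{k=0}^{p-1}[1+kp^v](x)\right) = [p^v](x)$, since $x+_F[k](w) \mapsto x+_F[kp^v](x) = [1+kp^v](x)$. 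This is shorter, avoids any reference to pulling back subgroup schemes, and matches $\psi_3(d)$ and $\psi_3(c_p)$ on the nose. Your sketch identifies the right target fact but would require a nontrivial verification that you have not supplied; the algebraic transport map does it in one line.
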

\begin{pf} We check that $t+dh(d,c_p)$ maps to zero in each of $E^0(BT)$, $E^0(B(C_p\times
\Delta))/\langle p\rangle(w)$ and $D^\Gamma$ and conclude that it must be zero in $E^0(BN)$ by Corollary
\ref{Jointly injective maps 3}. We use the results of Proposition \ref{Generators of E^0(BN)}. In $E^0(BT)$ we
have both $t$ and $d$ mapping to $0$ so that $t+dh(d,a)\mapsto 0$. In $E^0(B(C_p\times \Delta))/\langle
p\rangle(w)=E^0\lpow w,x\rpow/(\langle p\rangle(w),[p^v](x))$ we have
$$\textstyle t+dh(d,c_p)\mapsto 0+ dh\left(d,\prod_{k=0}^{p-1} (x+_F [k](w))\right)=d[p^v](x)=0.$$
We are left to consider the image in $D^\Gamma$. There is a well-defined map $E^0\lpow w,x\rpow/\langle
p\rangle(w)\to D^\Gamma$ given by $w\mapsto [p^v](x)$ and $x\mapsto x$. Then the identity
$h\left(-w^{p-1},\prod_{k=0}^{p-1}(x+_F[k](w))\right)=[p^v](x)$ gives
$h\left(-[p^v](x)^{p-1},\prod_{k=0}^{p-1}[1+kp^v](x)\right)=[p^v](x)$ in $D^\Gamma$. Thus we have
$$\textstyle t+dh(d,c_p)\mapsto [p^v](x)^p + (-[p^v](x)^{p-1})h\left(-[p^v](x)^{p-1},\prod_{k=0}^{p-1}[1+kp^v](x)\right)=0$$
in $D^\Gamma$ and we are done.\end{pf}

\begin{cor}\label{c_p^{p^n}+d.tilde{h}(d,c_p)=0} In $K^0(BN)$ we have $t=c_p^{p^{nv}}$ and $c_p^{p^{nv}}+dh(d,c_p)=0$.\end{cor}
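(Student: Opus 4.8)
The plan is to get the whole statement by reducing the preceding proposition ($t+dh(d,c_p)=0$ in $E^0(BN)$) modulo the ideal $(p,u_1,\ldots,u_{n-1})$ of $E^0$. Two things need checking: first that $t=c_p^{p^{nv}}$ in $K^0(BN)$, and second that the relation $t+dh(d,c_p)=0$ itself survives the reduction; granting these, the displayed identity $c_p^{p^{nv}}+dh(d,c_p)=0$ follows at once. The second point is purely formal: by Proposition \ref{E^0(BSigma_p wr GL_1) as a module}, $E^0(BN)$ is a direct sum of copies of $E^0(B\Sigma_p)$ and of $E^0(B\Sigma_p)/d\cong E^0$, each of which is free over $E^0$, so $E^0(BN)$ is free over $E^0$ and hence $K^0(BN)=K^0\tensor_{E^0}E^0(BN)$ by Proposition \ref{K^*(BG)=K^* tensor E^*(BG)}. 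Applying $K^0\tensor_{E^0}-$ to the $E$-theory relation then carries it over verbatim, with $t,d,c_p,h$ now denoting their reductions.

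For the identity $t=c_p^{p^{nv}}$ I would pass to $GL_p(\Flbar)$. By Proposition \ref{definition of t}, by the definition of the $l$-euler class, and by naturality of reduction mod $(p,u_1,\ldots,u_{n-1})$, both $t$ and $c_p$ in $K^0(BN)$ are pulled back along $N\hookrightarrow GL_p(\Flbar)$ from $t$ and $\sigma_p$ in $K^0(BGL_p(\Flbar))$. By the $K$-theory version of Corollary \ref{E^0(BGL_d(Flbar)=E^0[[c_1...c_d]])} (the reduction mod $(p,u_1,\ldots,u_{n-1})$ of that statement), restriction gives an injection $K^0(BGL_p(\Flbar))=K^0\lpow\sigma_1,\ldots,\sigma_p\rpow\hookrightarrow K^0(B\overline{T}_p)=K^0\lpow x_1,\ldots,x_p\rpow$ under which $\sigma_p\mapsto x_1\cdots x_p$ and $t\mapsto\prod_i[p^v]_F(x_i)$. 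Now over $K^0$ the standard $p$-typical formal group law satisfies $[p]_F(x)=x^{p^n}$ (Proposition \ref{Universal deformation FGL}, since $\exp_F(px)\equiv 0$ and each $u_i\equiv 0$), hence $[p^v]_F(x)=x^{p^{nv}}$; therefore $t\mapsto\prod_i x_i^{p^{nv}}=(x_1\cdots x_p)^{p^{nv}}$, which is exactly the image of $\sigma_p^{p^{nv}}$. Injectivity of the restriction gives $t=\sigma_p^{p^{nv}}$ in $K^0(BGL_p(\Flbar))$, and pulling back along $N\hookrightarrow GL_p(\Flbar)$ yields $t=c_p^{p^{nv}}$ in $K^0(BN)$. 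Combining with the previous paragraph, $c_p^{p^{nv}}+dh(d,c_p)=0$ in $K^0(BN)$.

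The computational heart is trivial — just the observation that $[p]_F(x)\equiv x^{p^n}$ modulo the maximal ideal of $E^0$, which is immediate from the explicit $p$-series of Proposition \ref{Universal deformation FGL}. The only point that deserves a word is the identification of $K^0(BGL_p(\Flbar))$ and $K^0(B\overline{T}_p)$ as the power series rings $K^0\lpow\sigma_i\rpow$ and $K^0\lpow x_i\rpow$ with injective restriction between them; this is the mod-$(p,u_1,\ldots,u_{n-1})$ shadow of facts already recorded (Corollary \ref{E^0(BGL_d(Flbar)=E^0[[c_1...c_d]])}, freeness over $E^0$), and if one prefers one can avoid it altogether by transporting the congruence $t\equiv\sigma_p^{p^{nv}}$ directly in $E$-theory, using that $E^0\lpow x_1,\ldots,x_p\rpow$ is a free, hence faithfully flat, module over $E^0\lpow\sigma_1,\ldots,\sigma_p\rpow$ (with $1$ in a basis), so that $(p,u_1,\ldots,u_{n-1})E^0(B\overline{T}_p)\cap E^0(BGL_p(\Flbar))=(p,u_1,\ldots,u_{n-1})E^0(BGL_p(\Flbar))$. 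Either way the corollary reduces to routine bookkeeping plus this one remark about the $p$-series.
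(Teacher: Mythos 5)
Your proof is correct and follows essentially the same route as the paper's one-line argument: both reduce modulo $(p,u_1,\ldots,u_{n-1})$, observe that $[p^v](x_i)\equiv x_i^{p^{nv}}$, and identify $\prod_i x_i^{p^{nv}}$ with $c_p^{p^{nv}}$ via the torus. You simply make explicit the injectivity (or faithful flatness) of the restriction to $K^0(B\overline{T}_p)$ that the paper's proof leaves tacit; this is a reasonable piece of bookkeeping to spell out, but not a different method.
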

\begin{pf} We have $t=\prod_{i=1}^{p}[p^v](x_i)=\prod_{i=1}^{p}x_i^{p^{nv}}$ mod $(p,u_1,\ldots,u_{n-1})$.\end{pf}

We will be interested in decoding this relation a bit further, and the following will help.

\begin{lem}\label{h(0,s)} In $K^0\tensor_{E^0} E^0\lpow w,s\rpow/\langle p\rangle(w)$ we have $h(-w^{p-1},s)=s^{p^{nv-1}}$ mod $w$. Hence, $h(0,s)=s^{p^{nv-1}}\in \mathbb{F}_p\lpow s\rpow$.\end{lem}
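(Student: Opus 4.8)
The plan is to work entirely inside the ring $K^0\tensor_{E^0} E^0\lpow w,s\rpow/\langle p\rangle(w)$ and exploit the defining property of $h$ established in Proposition \ref{Definition of h}, namely that under the inclusion $R\lpow y\rpow\hookrightarrow R\lpow x\rpow$ the series $h(y)$ goes to $[p^v](x)$, where $y=\prod_{k=0}^{p-1}(x+_F[k](w))$ and $R=E^0\lpow w\rpow/\langle p\rangle(w)$. After tensoring with $K^0$ and reducing modulo $w$, we have $R/wR = K^0 = \mathbb{F}_p$ (since $\langle p\rangle(w)=p$ mod $w$, so $R/wR = E^0\lpow w\rpow/(w,\langle p\rangle(w)) = E^0/p = \mathbb{F}_p$ after applying $K^0\tensor_{E^0}-$). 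Modulo $w$, the formal group law degenerates: $[k](w)\equiv 0$, so $y\equiv \prod_{k=0}^{p-1} x = x^p$. Also, over $K^0$ the height-$n$ $p$-series satisfies $[p^v](x) = x^{p^{nv}}$ up to a unit multiple; I will track the unit carefully or simply note that the standard coordinate is chosen (Corollary \ref{p-typical FGL over Z} and the surrounding discussion) so that $[p^v](x)$ has leading coefficient exactly $1$ mod $(p,u_1,\ldots,u_{n-1})$, hence $[p^v](x) = x^{p^{nv}}$ in $K^0\lpow x\rpow$.

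First I would substitute these reductions into the identity $h(-w^{p-1},y) = [p^v](x)$. Reducing mod $w$: the variable $d = -w^{p-1}$ becomes $0$, the variable $y$ becomes $x^p$, and the right-hand side becomes $x^{p^{nv}} = (x^p)^{p^{nv-1}}$. Therefore $h(0, x^p) = (x^p)^{p^{nv-1}}$ as elements of $\mathbb{F}_p\lpow x\rpow$. Since $x\mapsto x^p$ is injective on $\mathbb{F}_p\lpow x\rpow$ (Frobenius on a domain of characteristic $p$), renaming $s = x^p$ gives $h(0,s) = s^{p^{nv-1}}$ in $\mathbb{F}_p\lpow s\rpow$. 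The statement ``$h(-w^{p-1},s)=s^{p^{nv-1}}$ mod $w$'' then follows because reducing mod $w$ kills $w^{p-1}$ (as $p\geq 2$), so $h(-w^{p-1},s)\equiv h(0,s) = s^{p^{nv-1}}$.

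A small technical point I would need to address: in the display $h(-w^{p-1},y)=[p^v](x)$ the left side is a priori a power series in $y$ with coefficients in $R$, and I must make sure substituting $y=x^p$ mod $w$ and $y=s$ are compatible, i.e.\ that the ``mod $w$'' reduction of $h$ as a series in the abstract variable $y$ (or $s$) is well-defined independently of the relation $y=\prod_k(x+_F[k](w))$. This is fine because $R\lpow y\rpow$ is genuinely a subring of $R\lpow x\rpow$ (the map is injective, as $y\equiv x^p$ shows it is topologically nilpotent of the right order), so $h$ is an honest power series in one variable and reduction mod $w$ commutes with everything.

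The main obstacle, such as it is, is bookkeeping rather than conceptual: pinning down that $[p^v](x)$ equals $x^{p^{nv}}$ exactly (not just up to a unit) in $K^0\lpow x\rpow$, and confirming that the two ways of specialising ($y\rightsquigarrow x^p$ versus $y\rightsquigarrow s$) give the same answer. Both follow from the choice of standard coordinate and the fact that $h$ lives in $R\lpow y\rpow$; once these are in hand the computation is a one-line substitution. I expect the proof to be about four or five lines in total.
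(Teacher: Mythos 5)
Your proof is correct and follows essentially the same route as the paper: reduce the defining identity $h(-w^{p-1},\prod_k(x+_F[k](w)))=[p^v](x)$ modulo $(p,u_1,\ldots,u_{n-1},w)$ to obtain $h(0,x^p)=x^{p^{nv}}$, then deduce $h(0,s)=s^{p^{nv-1}}$. The two technical points you flag — that $[p^v](x)=x^{p^{nv}}$ exactly over $K^0$ (not merely up to a unit, which follows from the explicit $p$-series in Proposition \ref{Universal deformation FGL}), and the injectivity of $s\mapsto x^p$ on $\mathbb{F}_p\lpow s\rpow$ — are precisely the points the paper leaves implicit, and you resolve them correctly.
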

\begin{pf} The identity $h(-w^{p-1},\prod_{k=0}^{p-1}x+_F[k](w))=[p^v](x)$ in $E^0\lpow w,x\rpow/\langle p\rangle(w)$ read modulo $(p,u_1,\ldots,u_{n-1})$ gives $h(-w^{p-1},\prod_{k=0}^{p-1}x+_F[k](w))=x^{p^{nv}}$. Then, modulo $w$, we get $h(0,x^p)=x^{p^{nv}}$ and hence $h(0,s)=s^{p^{nv-1}}$.
\end{pf}

In Section \ref{sec:studying K^0 tensor ker beta} we will need these results to get an idea of the structure of
the kernel of the map $\beta:E^0(BGL_p(\Fq))\to E^0(BT)^{\Sigma_p}$. In particular, they will be crucial in
proving that a certain class in $K^0(BN)$ is non-zero.

\subsection{The kernels of $\alpha$ and $\beta$}

In a similar fashion to the results of Section \ref{sec:system of maps}, we aim to prove joint injectivity of
the maps $\alpha$ and $\beta$.

\begin{lem}\label{invertible g contains permutation matrix} Let $g\in GL_d(K)$. Then there exists a permutation $\rho\in\Sigma_d$ such that $g_{i\rho(i)}\neq 0$
for all $i$.\end{lem}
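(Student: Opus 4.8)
The statement is the standard fact that an invertible matrix has a non-zero generalised diagonal, and the natural route is via the Leibniz expansion of the determinant. The plan is as follows. Since $g\in GL_d(K)$ we have $\det(g)\neq 0$ in $K$. Writing out the determinant as
$$\det(g)=\sum_{\rho\in\Sigma_d} \operatorname{sgn}(\rho)\prod_{i=1}^d g_{i\rho(i)},$$
we observe that if every summand were zero then $\det(g)$ would be zero, contradicting invertibility. Hence at least one $\rho\in\Sigma_d$ has $\prod_{i=1}^d g_{i\rho(i)}\neq 0$ in $K$, and since $K$ is a field (so has no zero divisors) this forces $g_{i\rho(i)}\neq 0$ for every $i$. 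That is exactly the conclusion.

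First I would recall (or simply invoke, as standard linear algebra over a commutative ring, valid in particular over the field $K$) the Leibniz formula for the determinant; this is the only non-trivial input and it is entirely routine. Then the argument is the two-line contrapositive above: non-vanishing of the sum gives non-vanishing of some term, and non-vanishing of a product of field elements gives non-vanishing of each factor.

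There is essentially no obstacle here — the only thing to be slightly careful about is that one needs $K$ to have no zero divisors to pass from ``$\prod_i g_{i\rho(i)}\neq 0$'' to ``$g_{i\rho(i)}\neq 0$ for all $i$'', and this holds since $K$ is a field. I would phrase the proof to make that use of the field hypothesis explicit. No induction or case analysis is needed, and the result is immediate once the determinant expansion is on the table.
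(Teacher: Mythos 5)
Your proof is correct, but it takes a genuinely different route from the paper's. You invoke the Leibniz expansion $\det(g)=\sum_{\rho\in\Sigma_d}\operatorname{sgn}(\rho)\prod_i g_{i\rho(i)}$ directly: since $\det(g)\neq 0$, some summand is non-zero, and since $K$ is a field (no zero divisors) each factor in that product is non-zero. The paper instead argues by Laplace expansion along the first row together with induction on $d$: because $\det(g)\neq 0$, there is a non-zero entry $g_{1j}$ whose complementary $(d-1)\times(d-1)$ minor is again invertible, so one sets $\rho(1)=j$ and recurses. Your approach is shorter, avoids the induction entirely, and makes the role of the field hypothesis (no zero divisors) explicit in one spot; the paper's approach is more constructive in spirit, building $\rho$ entry by entry, but at the cost of having to note at each stage that some cofactor is non-zero. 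Both are standard and fully rigorous; the Leibniz version is arguably the cleaner write-up for a lemma of this kind.
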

\begin{pf} Label the rows of $g$ as $r_1,\ldots,r_d$. Then, by considering the expansion of the determinant
along $r_1$, there must be a non-zero entry $r_{1j}$ such that the resultant matrix formed by deleting row 1 and
column $j$ has non-zero determinant, and is therefore invertible. Put $\rho(1)=j$ and continue inductively to
get a well-defined permutation $\rho\in \Sigma_d$ with the required property.\end{pf}

\begin{lem}\label{simplifying the limit diagram} Let $\mathcal{A}'$ be the full subcategory of $\mathcal{A}(G)_{(p)}$ with objects $A,~T_{(p)}$ and
$\Delta_p$. Then $$\lim_{H\in\mathcal{A}(G)_{(p)}} \mathbb{Q}\tensor E^0(BH) = \lim_{H\in
\mathcal{A}'}\mathbb{Q}\tensor E^0(BH).$$\end{lem}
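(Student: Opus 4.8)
The claim is that the limit defining $\mathbb{Q}\tensor E^0(BGL_p(\Fq))$ via Proposition \ref{E^*(BG) as limit over p-groups} can be computed over the small subcategory $\mathcal{A}'$ with objects $A$, $T_{(p)}$ and $\Delta_p$, rather than over the whole category $\mathcal{A}(G)_{(p)}$ of abelian $p$-subgroups of $G = GL_p(\Fq)$. The plan is to exhibit $\mathcal{A}'$ as ``cofinal enough'': every abelian $p$-subgroup $H$ of $G$ is conjugate into one of $A$, $T_{(p)}$, $\Delta_p$, and the inclusion functor $\mathcal{A}'\hookrightarrow \mathcal{A}(G)_{(p)}$ induces an equivalence on the relevant limits after applying $\mathbb{Q}\tensor E^0(B-)$.

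\textbf{Key steps.} First I would recall, from Proposition \ref{Abelian p-subgroups of GL_p(K)}, that any abelian $p$-subgroup $H$ of $GL_p(\Fq)$ is $GL_p(\Fq)$-conjugate either to a subgroup of $T = T_p$ or to the cyclic group $A$; since the restriction $E^0(BT)\to E^0(BT_{(p)})$ is an isomorphism (Corollary \ref{E^0(BA) to E^0(BA_p) is iso}) we may replace $T$ by $T_{(p)}$, so every such $H$ is subconjugate to $A$ or to $T_{(p)}$. Thus $\mathcal{A}'$ contains, up to conjugacy, enough objects to ``see'' all of $\mathcal{A}(G)_{(p)}$; I include $\Delta_p$ as well since it is the common intersection $A\cap T_{(p)}$ and its presence is needed to make the maps between the two branches composable inside the subcategory (it is the object through which the morphisms $A\to\Delta_p$ and $T_{(p)}\to\Delta_p$ of $\mathcal{A}(G)$ factor). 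Next, mimicking the proof of Proposition \ref{E^*(BG) as limit over p-groups}, I would build the comparison map: by abstract category theory restriction gives a canonical map $\lim_{\mathcal{A}(G)_{(p)}}\mathbb{Q}\tensor E^0(BH)\to \lim_{\mathcal{A}'}\mathbb{Q}\tensor E^0(BH)$, and I would construct an inverse. Given a compatible family over $\mathcal{A}'$ and an arbitrary $C\in\mathcal{A}(G)_{(p)}$, pick $g\in G$ with $gCg^{-1}\subseteq A$ or $\subseteq T_{(p)}$; restrict and apply $\conj_g^*$ (which is the identity on $E^0(BG)$-side data and an isomorphism $\mathbb{Q}\tensor E^0(B(gCg^{-1}))\iso\mathbb{Q}\tensor E^0(BC)$ by Proposition \ref{Conjugation induces identity}) to get an element of $\mathbb{Q}\tensor E^0(BC)$. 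One then checks this is independent of the choice of $g$ and of the chosen ambient object (the overlap being governed by $\Delta_p$, hence by the object $\Delta_p\in\mathcal{A}'$ and Corollary \ref{E^0(BA) split}), and that it is compatible with all morphisms $C\to D$ of $\mathcal{A}(G)_{(p)}$ — this uses exactly the same commutative-square argument as in Proposition \ref{E^*(BG) as limit over p-groups}, plus Lemma \ref{invertible g contains permutation matrix} to handle morphisms realized by general elements of $G$. The constructed map is then visibly a two-sided inverse.

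\textbf{Main obstacle.} The delicate point is the well-definedness and compatibility check in the inverse construction: a single $C$ may embed into both $A$ and $T_{(p)}$ (for instance if $C\leqslant\Delta_p$), and may do so via several group elements, so one must verify that the two resulting elements of $\mathbb{Q}\tensor E^0(BC)$ agree and that they glue compatibly along every morphism. This is where the presence of $\Delta_p$ in $\mathcal{A}'$ is essential: any two such embeddings differ by an automorphism that, after conjugating, can be pushed through $\Delta_p$, and joint injectivity of the maps out of $E^0(BA)$ to $E^0(B\Delta_p)$ and $D$ (Corollary \ref{E^0(BA) split}), together with $\mathbb{Q}\tensor E^0(BA)\simeq\mathbb{Q}\tensor E^0(B\Delta_p)\times\mathbb{Q}\tensor D$ (Proposition \ref{Q tensor A = Q tensor Delta times Q tensor D}), pins down the ambiguity. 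Once this bookkeeping is in place the rest is formal category theory exactly parallel to Proposition \ref{E^*(BG) as limit over p-groups}, so I would keep the write-up terse and refer to that proof for the routine diagram chases.
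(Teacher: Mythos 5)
Your proposal has the same overall shape as the paper's proof: build the canonical map from $\lim_{\mathcal{A}(G)_{(p)}}$ to $\lim_{\mathcal{A}'}$ by abstract category theory, then construct a two-sided inverse by choosing conjugating elements and checking that the resulting assignment is well defined and natural. You also correctly flag well-definedness as the delicate step, and you cite Lemma~\ref{invertible g contains permutation matrix} for the right reason (morphisms of $\mathcal{A}(G)_{(p)}$ realised by arbitrary $g\in G$ landing in the $T$-branch). Up to that point you are on track.

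Where you go astray is in the mechanism you offer for the well-definedness. You invoke the joint injectivity of the maps out of $E^0(BA)$ (Corollary~\ref{E^0(BA) split}) and the rational splitting of $E^0(BA)$ (Proposition~\ref{Q tensor A = Q tensor Delta times Q tensor D}) to ``pin down the ambiguity''. That is not the right tool: joint injectivity tells you when two elements of $E^0(BA)$ coincide, whereas the issue is whether a fixed compatible family $z\in L=\lim_{\mathcal{A}'}\mathbb{Q}\tensor E^0(BH)$ produces the same element of $\mathbb{Q}\tensor E^0(BC)$ under two different choices of conjugating element or of ambient object. Neither the rational splitting nor joint injectivity address that. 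What the paper actually uses is (i) the observation that, because morphisms $A\to A$ and $T_{(p)}\to T_{(p)}$ in $\mathcal{A}(G)$ are realised by normaliser elements, the structure maps $L\to\mathbb{Q}\tensor E^0(BA)$ and $L\to\mathbb{Q}\tensor E^0(BT_{(p)})$ land in the corresponding normaliser-invariants, so the image is independent of which normaliser element one picks; (ii) the fact that any $H$ subconjugate to $A$ but not equal to a conjugate of $A$ is actually contained in $\Delta_p$, and $\Delta$ is central, so every conjugation acts as the identity on $H$ and the value factors through the $\Delta_p$-leg of the limit; and (iii) the permutation-matrix lemma, which you do cite, to show that a conjugation of $H\leqslant T$ into $T$ is always realised by an element of $N_{GL_p(\Fq)}(T)$. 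Without the normaliser-invariance step and the centrality argument the construction of the inverse is not justified, so as written your proof has a genuine gap at its core.
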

\begin{pf} There is a unique map $\lim_{H\in\mathcal{A}(G)_{(p)}} \mathbb{Q}\tensor E^0(BH) \to \lim_{H\in
\mathcal{A}'}\mathbb{Q}\tensor E^0(BH)$ commuting with the arrows, by abstract category theory. We construct an
inverse.

Recall from Proposition \ref{Abelian p-subgroups of GL_p(K)} that any abelian $p$-subgroup of $GL_p(\Fq)$ is
sub-conjugate to either $A$ or $T$ (or both). Write $L=\lim_{H\in \mathcal{A}'}\mathbb{Q}\tensor E^0(BH)$. Note
that the structure maps $L\to \mathbb{Q}\tensor E^0(BA)$ and $L\to \mathbb{Q}\tensor E^0(BT_{(p)})$ land in the
invariant subrings under the action of the relevant normaliser. We consider abelian $p$-subgroups $H$ of
$GL_p(\Fq)$.

If $H$ is cyclic of order $p^{v+1}$ then $H$ must be conjugate to $A$. Further, given two such isomorphisms
$g_1Hg_1^{-1}=A=g_2Hg_2^{-1}$ the diagram
$$
\xymatrix{ A \ar[d]_{\conj_{g_2^{~}g_1^{-1}}} & & H \ar[ll]_{\conj_{g_1^{~}}} \ar[dll]^{\conj_{g_2^{~}}}\\
A}
$$
commutes. Thus $g_2 g_1^{-1}\in N_{GL_p(\Fq)}(A)$ and both maps $L\to \mathbb{Q}\tensor E^0(BA)\to
\mathbb{Q}\tensor E^0(BH)$ are equal. Hence we have a uniquely defined map $L\to \mathbb{Q}\tensor E^0(BH)$.
Further, given any map $\conj_g:H_1\to H_2$ of such subgroups in $\mathcal{A}(G)_{(p)}$ it is clear from similar
reasoning that we have a commuting diagram
$$
\xymatrix{& & \mathbb{Q}\tensor E^0(BH_1) \ar[dd]^{\conj_g^*}\\
L \ar[r] \ar@/^2pc/[urr] \ar@/_2pc/[drr] & \mathbb{Q}\tensor E^0(BA) \ar[ur] \ar[dr]\\
 & & \mathbb{Q}\tensor E^0(BH_2).}
$$
Any other $H\leqslant GL_p(\Fq)$ which is sub-conjugate to $A$ must be sub-conjugate to $\Delta_p\subseteq A$.
In particular, since $\Delta$ is central in $GL_p(\Fq)$, we find that $H\subseteq\Delta_p$ and, for any $g\in
GL_p(\Fq)$, the conjugation map induced by $g$ is the identity on $H$. Hence we have a uniquely defined map
$L\to \mathbb{Q}\tensor E^0(BA)\to \mathbb{Q}\tensor E^0(B\Delta_p)\to \mathbb{Q}\tensor E^0(BH)$ which respects
any arrow in $\mathcal{A}(G)_{(p)}$.

We are left with the case that $H$ is subconjugate to $T$. First, suppose that $H\subseteq T$ and let
$h=(h_1,\ldots,h_p)\in H$. Then $ghg^{-1}=k$ for some $k=(k_1,\ldots,k_p)\in T$. Letting $g=(g_{ij})$ we get
equations $g_{ij}h_j=k_i g_{ij}$ for all $i,j$. Hence $g_{ij}(h_j-k_i)=0$. But, by Lemma \ref{invertible g
contains permutation matrix}, there is a permutation $\rho\in\Sigma_p$ with $g_{i\rho(i)}\in \Fq^\times$ for all
$i$. Thus, for each $i$ we have $h_{\rho(i)}=k_i$ and so $ghg^{-1}=(h_{\rho(1)},\ldots,h_{\rho(p)})$. It follows
that the map $\conj_g:H\to T$ corresponds to permutation by $\rho$ and hence extends to a map $T\to T$ induced
by an element of $N_{GL_p(\Fq)}(T)$.

Now, given any $H$ subconjugate to $T_{(p)}$ it follows that the map $L\to \mathbb{Q}\tensor E^0(BT_{(p)})\to
\mathbb{Q}\tensor E^0(BH)$ is independent of the choice of conjugating element and, further, that any arrow in
$\mathcal{A}(G)_{(p)}$ commutes with these maps.

Thus, we conclude that given any arrow $H\to K$ in $\mathcal{A}(G)_{(p)}$ we have maps $L\to \mathbb{Q}\tensor
E^0(BH)$ and $L\to \mathbb{Q}\tensor E^0(BK)$ which commute with the arrow. Hence we get a well defined map
$L\to \lim_{H\in\mathcal{A}(G)_{(p)}} \mathbb{Q}\tensor E^0(BH)$ which is necessarily inverse to the map at the
start of the proof by abstract category theory.\end{pf}

\begin{prop}\label{Rational isomorphism} The map $\mathbb{Q}\tensor E^0(BGL_p(\Fl))\longrightarrow \mathbb{Q}\tensor E^0(BT)^{\Sigma_p}\times \mathbb{Q}\tensor
D^{\Gamma}$ induced by $\alpha$ and $\beta$ is an isomorphism.\end{prop}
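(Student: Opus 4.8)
The plan is to use the Hopkins--Kuhn--Ravenel theorem in the form of Proposition \ref{E^*(BG) as limit over p-groups}, which identifies $\mathbb{Q}\tensor E^0(BGL_p(\Fq))$ with the limit $\lim_{H\in\mathcal{A}(GL_p(\Fq))_{(p)}}\mathbb{Q}\tensor E^0(BH)$. By Lemma \ref{simplifying the limit diagram} this limit collapses to the limit over the small category $\mathcal{A}'$ with objects $A$, $T_{(p)}$ and $\Delta_p$. So the first step is to write down explicitly what this small limit is: it is the subring of $\mathbb{Q}\tensor E^0(BA)^{N(A)}\times \mathbb{Q}\tensor E^0(BT_{(p)})^{\Sigma_p}$ consisting of pairs that agree after restriction to $\mathbb{Q}\tensor E^0(B\Delta_p)$ (using that $\Delta_p\subseteq A$ and $\Delta_p\subseteq T_{(p)}$, and that the normaliser actions are the full $\Sigma_p$ on $T_{(p)}$ and the $\Gamma$-type action of Lemma \ref{Normalizer of A} on $A$). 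Here I would note that $E^0(BT_{(p)})=E^0(BT)$ and that $E^0(BA)^{N(A)}$ is, rationally, what we want to compare against.

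Next I would bring in the rational splitting of $\mathbb{Q}\tensor E^0(BA)$. By Proposition \ref{Q tensor A = Q tensor Delta times Q tensor D}, the maps $q_1,q_2$ induce an isomorphism $\mathbb{Q}\tensor E^0(BA)\iso \mathbb{Q}\tensor E^0(B\Delta_p)\times \mathbb{Q}\tensor D$. This isomorphism is equivariant for the relevant group actions: on the $E^0(B\Delta_p)$ factor the $N(A)$-action is trivial (since $\Delta$ is central in $GL_p(\Fq)$), and on the $D$ factor the $N(A)$-action factors through $\Gamma$ by Lemma \ref{Normalizer of A} together with the identification of the $\Gamma$-action as $x\mapsto[q](x)$. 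Taking invariants gives $\mathbb{Q}\tensor E^0(BA)^{N(A)}\iso \mathbb{Q}\tensor E^0(B\Delta_p)\times \mathbb{Q}\tensor D^\Gamma$. Under this decomposition the restriction map $\mathbb{Q}\tensor E^0(BA)^{N(A)}\to \mathbb{Q}\tensor E^0(B\Delta_p)$ is just projection onto the first factor (since $q_1$ is reduction modulo $[p](x)$ and fixes $x$). Therefore the compatibility condition defining the limit over $\mathcal{A}'$ says: a pair $(a,b)$ with $a\in\mathbb{Q}\tensor E^0(B\Delta_p)\times\mathbb{Q}\tensor D^\Gamma$ and $b\in\mathbb{Q}\tensor E^0(BT)^{\Sigma_p}$ lies in the limit iff the first component of $a$ equals the image of $b$ in $\mathbb{Q}\tensor E^0(B\Delta_p)$.

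This last observation is the crux: the limit is then isomorphic to the fibre product of $\mathbb{Q}\tensor E^0(B\Delta_p)\times\mathbb{Q}\tensor D^\Gamma$ and $\mathbb{Q}\tensor E^0(BT)^{\Sigma_p}$ over $\mathbb{Q}\tensor E^0(B\Delta_p)$, where the map $\mathbb{Q}\tensor E^0(BT)^{\Sigma_p}\to\mathbb{Q}\tensor E^0(B\Delta_p)$ is the (diagonal) restriction. Since this restriction map is \emph{surjective} --- one can see this directly, as a $\Sigma_p$-invariant power series in the $x_i$ restricts along the diagonal to an arbitrary power series in $x$ modulo $[p^v](x)$, or cite Proposition \ref{E^0(BGL_d(K))to E^0(BT_d)^Sigma_d is surjective} composed with further restriction --- the fibre product maps isomorphically onto $\mathbb{Q}\tensor E^0(BT)^{\Sigma_p}\times\mathbb{Q}\tensor D^\Gamma$ by the map $(a_1,a_2,b)\mapsto(b,a_2)$: given $b$ and $a_2$ freely, the first component $a_1$ is forced to be the image of $b$. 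Tracing through the identifications, this composite isomorphism is exactly the map induced by $(\beta,\alpha)$, using that $\beta$ is the restriction to $T$ and $\alpha=q_2\circ\mu^*$ factors through restriction to $A$ followed by $q_2$ (as arranged in Section \ref{sec:definition of A}).

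The main obstacle I anticipate is the bookkeeping in matching the abstract limit maps with the concrete maps $\alpha$ and $\beta$, and in verifying that the $\Gamma$-action appearing in $D^\Gamma$ is precisely the restriction of the $N_{GL_p(\Fq)}(A)$-action on $\mathbb{Q}\tensor E^0(BA)$ under the splitting --- this needs Lemma \ref{Normalizer of A} (so that $N(A)$ acts on $A$ through $1+p^v\mathbb{Z}/p^{v+1}$, which is exactly the image of $\Gamma=\langle q\rangle$) and the fact, noted in the remark after Corollary \ref{jointly injective maps 2}, that $k.[p](x)=[p](x)$ so that the action is trivial on the $\Delta_p$ factor. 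Everything else is formal manipulation of fibre products and invariants, which I would not spell out in full detail.
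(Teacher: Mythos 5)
Your proposal is correct and follows essentially the same route as the paper's proof: both apply Proposition \ref{E^*(BG) as limit over p-groups}, reduce to the small category $\mathcal{A}'$ via Lemma \ref{simplifying the limit diagram}, recognise the resulting limit as the pullback of $\mathbb{Q}\tensor E^0(BA)^\Gamma$ and $\mathbb{Q}\tensor E^0(BT)^{\Sigma_p}$ over $\mathbb{Q}\tensor E^0(B\Delta_p)$, split $\mathbb{Q}\tensor E^0(BA)^\Gamma$ using Proposition \ref{Q tensor A = Q tensor Delta times Q tensor D}, and invoke surjectivity of $E^0(BT)^{\Sigma_p}\to E^0(B\Delta_p)$ to collapse the fibre product. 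The only difference is that you spell out the final fibre-product manipulation and the identification with $(\alpha,\beta)$ in slightly more detail than the paper does.
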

\begin{pf} Writing $G=GL_p(\Fq)$, by Proposition \ref{E^*(BG) as limit over p-groups} we have an isomorphism
$$\mathbb{Q}\tensor E^0(BGL_p(\Fq)) \iso \lim_{H\in\mathcal{A}(G)_{(p)}}
\mathbb{Q}\tensor E^0(BH).$$ But, by Lemma \ref{simplifying the limit diagram}, the right-hand side simplifies
to $\lim_{H\in\mathcal{A}'}\mathbb{Q}\tensor E^0(BH)$. Thus, using Proposition \ref{N(T_d) and W(T_d)} and Lemma
\ref{Normalizer of A} we are left with a pullback
$$
\xymatrix{ \mathbb{Q}\tensor E^0(BGL_p(\Fq)) \ar[r] \ar[d] & \mathbb{Q}\tensor E^0(BA)^\Gamma \ar[d]\\
\mathbb{Q}\tensor E^0(BT_{(p)})^{\Sigma_p} \ar[r] & \mathbb{Q}\tensor E^0(B\Delta_{p}).}$$ From Proposition
\ref{Q tensor A = Q tensor Delta times Q tensor D} we know that $\mathbb{Q}\tensor E^0(BA)\simeq
\mathbb{Q}\tensor E^0(B\Delta_p)\times \mathbb{Q}\tensor D$. But the action of $\Gamma$ is trivial on
$E^0(B\Delta_p)$ so we get $\mathbb{Q}\tensor E^0(BA)^\Gamma\simeq \mathbb{Q}\tensor E^0(B\Delta_p)\times
\mathbb{Q}\tensor D^\Gamma$. Since $E^0(BT)^{\Sigma_p}\to E^0(B\Delta_p)$ is surjective, the result
follows.\end{pf}

\begin{cor}\label{alpha, beta are jointly injective} The maps
$$
\xymatrix{ & E^0(BGL_p(\Fq)) \ar[dl]_{\beta} \ar[dr]^{\alpha}\\
E^0(BT)^{\Sigma_p} & & D^\Gamma}
$$
are jointly injective.\end{cor}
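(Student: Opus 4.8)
The plan is to deduce joint injectivity of $\alpha$ and $\beta$ directly from the rational isomorphism of Proposition \ref{Rational isomorphism}, using the fact that $E^0(BGL_p(\Fq))$ is free over $E^0$. The key point is that a free $E^0$-module embeds into its rationalisation, since $E^0 = \mathbb{Z}_p\lpow u_1,\ldots,u_{n-1}\rpow[u,u^{-1}]$ is torsion-free.

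First I would recall that $E^0(BGL_p(\Fq))$ is free over $E^0$; this follows from Proposition \ref{E^*(BGL_d(Flbar))_Gamma_q=E^0(BGL_d(Fl))}, which identifies it with $E^0(BGL_p(\Flbar))_\Gamma$ and shows the latter is free over $E^0$. Consequently the canonical map $E^0(BGL_p(\Fq)) \to \mathbb{Q}\tensor E^0(BGL_p(\Fq))$ is injective. Next I would set up the commutative square
$$
\xymatrix{ E^0(BGL_p(\Fq)) \ar[r]^-{(\beta,\alpha)} \ar[d] & E^0(BT)^{\Sigma_p}\times D^\Gamma \ar[d]\\
\mathbb{Q}\tensor E^0(BGL_p(\Fq)) \ar[r]^-\sim & (\mathbb{Q}\tensor E^0(BT)^{\Sigma_p})\times(\mathbb{Q}\tensor D^\Gamma)}
$$
where the bottom map is the isomorphism of Proposition \ref{Rational isomorphism} (using that rationalisation commutes with finite products and with taking $\Sigma_p$- and $\Gamma$-invariants, the latter since the relevant group orders are invertible in $\mathbb{Q}\tensor E^0$). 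Then a routine diagram chase gives the result: if $a\in E^0(BGL_p(\Fq))$ maps to $0$ under both $\alpha$ and $\beta$, then it maps to $0$ in $\mathbb{Q}\tensor E^0(BT)^{\Sigma_p}\times \mathbb{Q}\tensor D^\Gamma$, hence to $0$ in $\mathbb{Q}\tensor E^0(BGL_p(\Fq))$ by the isomorphism, and hence $a=0$ by injectivity of the left vertical map.

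I do not expect a serious obstacle here, since this is essentially the same argument pattern already used in Corollary \ref{E^0(BA) split} and Corollary \ref{jointly injective maps 2}. The only minor care needed is in justifying that the bottom arrow is literally the map induced by $(\beta,\alpha)$ after rationalisation, i.e. that the square genuinely commutes — this is immediate from the construction of the rational isomorphism in Proposition \ref{Rational isomorphism}, which was built precisely from $\alpha$ and $\beta$. So the proof is short: cite freeness, cite the rational isomorphism, and chase the diagram.
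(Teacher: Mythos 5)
Your proof is correct and is essentially identical to the paper's: the paper's proof of this corollary simply says the result follows from freeness of $E^0(BGL_p(\Fq))$ over $E^0$ by the same diagram-chase argument as Corollary \ref{E^0(BA) split}, which is exactly the argument you spell out. You have just written out the "analogous argument" explicitly rather than citing it.
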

\begin{pf} Since $E^0(BGL_p(\Fq))$ is free over $E^0$ the result follows analogously to the proof of Corollary \ref{E^0(BA) split}.\end{pf}

\begin{cor}\label{rank E^0(BGL_p(Fl))} $\rank_{E^0}(E^0(BGL_p(\Fl)))=\rank_{E^0}(E^0(BT)^{\Sigma_p})+\rank_{E^0}(D^\Gamma)$.
Hence we have $\rank_{E^0}(E^0(BGL_p(\Fq)))=\rank_{E^0}(\ker(\alpha))+\rank_{E^0}(\ker(\beta))$.\end{cor}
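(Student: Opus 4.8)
The plan is to derive both statements from the rational isomorphism of Proposition \ref{Rational isomorphism}, using that every $E^0$-module in sight is free of finite rank: $E^0(BGL_p(\Fq))$ by Proposition \ref{E^*(BGL_d(Flbar))_Gamma_q=E^0(BGL_d(Fl))}, $E^0(BT)^{\Sigma_p}$ by Proposition \ref{Basis for Sigma_d-invariants}, and $D^\Gamma$ by Proposition \ref{structure of D^Lambda}. Write $Q=Q(E^0)$ for the field of fractions of the domain $E^0$; since $Q$ is flat over $E^0$ and contains $\mathbb{Q}$, for any finitely generated $E^0$-module $M$ the quantity $\dim_Q(Q\tensor_{E^0} M)$ is well defined, equals $\rank_{E^0}(M)$ when $M$ is free, and satisfies $Q\tensor_{E^0}(\mathbb{Q}\tensor M)=Q\tensor_{E^0} M$.

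First I would apply the exact functor $Q\tensor_{E^0}-$ to the isomorphism $\mathbb{Q}\tensor E^0(BGL_p(\Fq)) \iso (\mathbb{Q}\tensor E^0(BT)^{\Sigma_p})\times (\mathbb{Q}\tensor D^\Gamma)$ of Proposition \ref{Rational isomorphism}. This produces an isomorphism of $Q$-vector spaces, and comparing dimensions gives the first assertion
$$\rank_{E^0}(E^0(BGL_p(\Fq))) = \rank_{E^0}(E^0(BT)^{\Sigma_p}) + \rank_{E^0}(D^\Gamma).$$

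For the second assertion, recall that $\beta$ is surjective by Proposition \ref{E^0(BGL_d(K))to E^0(BT_d)^Sigma_d is surjective} and $\alpha$ is surjective by Proposition \ref{alpha is surjective}. As $E^0(BT)^{\Sigma_p}$ and $D^\Gamma$ are free, the short exact sequences $0\to\ker(\beta)\to E^0(BGL_p(\Fq))\to E^0(BT)^{\Sigma_p}\to 0$ and $0\to\ker(\alpha)\to E^0(BGL_p(\Fq))\to D^\Gamma\to 0$ split; in particular $\ker(\alpha)$ and $\ker(\beta)$ are free $E^0$-modules of finite rank with $\rank_{E^0}(\ker(\beta)) = \rank_{E^0}(E^0(BGL_p(\Fq))) - \rank_{E^0}(E^0(BT)^{\Sigma_p})$ and $\rank_{E^0}(\ker(\alpha)) = \rank_{E^0}(E^0(BGL_p(\Fq))) - \rank_{E^0}(D^\Gamma)$. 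Adding these two identities and substituting the first assertion yields $\rank_{E^0}(\ker(\alpha)) + \rank_{E^0}(\ker(\beta)) = \rank_{E^0}(E^0(BT)^{\Sigma_p}) + \rank_{E^0}(D^\Gamma) = \rank_{E^0}(E^0(BGL_p(\Fq)))$, as required.

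The argument is essentially bookkeeping and presents no real obstacle; the only points needing a touch of care are the additivity of rank — handled by passing to $Q$-vector spaces through the exact functor $Q\tensor_{E^0}-$ — and the fact that $Q$ already contains $\mathbb{Q}$, so that base-changing the rational isomorphism of Proposition \ref{Rational isomorphism} from $\mathbb{Q}\tensor E^0$ up to $Q$ costs nothing.
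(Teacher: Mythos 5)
Your proof is correct and follows essentially the same route as the paper: both pass from the integral statement to the rational isomorphism of Proposition \ref{Rational isomorphism} and exploit freeness to compare ranks, then use surjectivity of $\alpha$ and $\beta$ together with the resulting split short exact sequences for the second assertion. The only cosmetic difference is that you base-change all the way to the field of fractions $Q(E^0)$, whereas the paper works directly with ranks over $\mathbb{Q}\tensor E^0$; both are valid and the substance is identical.
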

\begin{pf} Each of the rings in question is free over $E^0$ so
\begin{eqnarray*}
\rank_{E^0}(E^0(BGL_p(\Fl))) & = & \rank_{\mathbb{Q}\tensor E^0}(\mathbb{Q}\tensor E^0(BGL_p(\Fl)))\\
& = & \rank_{\mathbb{Q}\tensor E^0}(\mathbb{Q}\tensor_{E^0}E^0(BT)^{\Sigma_p})+\rank_{\mathbb{Q}\tensor E^0}(\mathbb{Q}\tensor_{E^0}D^\Gamma)\\
& = & \rank_{E^0}(E^0(BT)^{\Sigma_p})+\rank_{E^0}(D^\Gamma).\end{eqnarray*} For the final remark, note that
$$\rank_{E^0}(E^0(BGL_p(\Fq)))=\rank_{E^0}(\ker(\beta)) + \rank_{E^0}(\im(\beta))$$ and
$\im(\beta)=E^0(BT)^{\Sigma_p}$. It follows that $\rank_{E^0}(\ker(\beta))=\rank_{E^0}(D^\Gamma)$ and similarly
that $\rank_{E^0}(\ker(\alpha))=\rank_{E^0}(E^0(BT)^{\Sigma_p})$.\end{pf}

Recall that $I=E^0(BGL_p(\Fq))t$ was the ideal of $E^0(BGL_p(\Fq))$ generated by $t$ so that, by Proposition
\ref{definition of t}, we have $I\subseteq \ker(\beta)$. We suspect that $I=\ker(\beta)$ and work towards
proving the reverse inclusion.

\begin{lem}\label{ker alpha.ker beta=0} We have $\ker(\alpha)\cap\ker(\beta)=0$ and hence $\ker(\alpha).\ker(\beta)=0$.\end{lem}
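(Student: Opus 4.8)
The statement is an immediate consequence of the joint injectivity of $\alpha$ and $\beta$ established in Corollary \ref{alpha, beta are jointly injective}, together with the elementary fact that the product of two ideals is contained in their intersection. So the proof will be very short.

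First I would recall that, by definition, the maps $\alpha$ and $\beta$ being jointly injective means precisely that the $E^0$-algebra map
$$
(\beta,\alpha)\colon E^0(BGL_p(\Fq))\longrightarrow E^0(BT)^{\Sigma_p}\times D^\Gamma,\qquad a\longmapsto (\beta(a),\alpha(a)),
$$
is injective. Its kernel is exactly $\ker(\beta)\cap\ker(\alpha)$, so injectivity forces $\ker(\alpha)\cap\ker(\beta)=0$, which is the first assertion.

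For the second assertion, note that $\ker(\alpha)$ and $\ker(\beta)$ are both ideals of $E^0(BGL_p(\Fq))$. Hence for $a\in\ker(\alpha)$ and $b\in\ker(\beta)$ we have $ab\in\ker(\alpha)$ (as $\ker(\alpha)$ is an ideal absorbing multiplication) and likewise $ab\in\ker(\beta)$, so $ab\in\ker(\alpha)\cap\ker(\beta)=0$. Therefore $\ker(\alpha)\cdot\ker(\beta)=0$.

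There is no real obstacle here; the only thing to be careful about is simply citing Corollary \ref{alpha, beta are jointly injective} correctly and observing that joint injectivity translates directly into the vanishing of the intersection of the kernels.

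\begin{pf} By Corollary \ref{alpha, beta are jointly injective} the maps $\alpha$ and $\beta$ are jointly injective, so the $E^0$-algebra map $(\beta,\alpha)\colon E^0(BGL_p(\Fq))\to E^0(BT)^{\Sigma_p}\times D^\Gamma$ sending $a$ to $(\beta(a),\alpha(a))$ is injective. Its kernel is $\ker(\beta)\cap\ker(\alpha)$, so we conclude that $\ker(\alpha)\cap\ker(\beta)=0$. Since $\ker(\alpha)$ and $\ker(\beta)$ are ideals of $E^0(BGL_p(\Fq))$, for any $a\in\ker(\alpha)$ and $b\in\ker(\beta)$ we have $ab\in\ker(\alpha)\cap\ker(\beta)=0$; hence $\ker(\alpha).\ker(\beta)=0$.\end{pf}
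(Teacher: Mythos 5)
Your proof is correct and follows essentially the same approach as the paper: you invoke Corollary \ref{alpha, beta are jointly injective}, deduce that $\ker(\alpha)\cap\ker(\beta)$ is the kernel of the injective product map and hence zero, and then observe that the product of ideals lies in their intersection. Nothing to add.
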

\begin{pf} By Corollary \ref{alpha, beta are jointly injective} the map $(\alpha,\beta):E^0(BLG_p(\Fq))\to D^\Gamma\times
E^0(BT)^{\Sigma_p}$ is injective. If $a\in\ker(\alpha)\cap\ker(\beta)$ then $\alpha(a)=\beta(a)=0$ whereby
$a\in\ker(\alpha,\beta)=0$. The second claim follows as $\ker(\alpha).\ker(\beta)\subseteq
\ker(\alpha)\cap\ker(\beta)$.\end{pf}

\begin{cor}\label{I free over D^Gamma} The identification $E^0(BGL_p(\Fq))/\ker(\alpha)\simeq D^\Gamma$ makes $I$ into a free rank one module over
$D^\Gamma$ and hence a free $E^0$-module of rank $N$.\end{cor}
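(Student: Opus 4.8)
The claim has two parts: first, that under the identification $E^0(BGL_p(\Fq))/\ker(\alpha)\simeq D^\Gamma$ the ideal $I=E^0(BGL_p(\Fq))t$ becomes a free rank-one $D^\Gamma$-module; second, that consequently $I$ is free of rank $N$ over $E^0$. The plan is to first understand the map $E^0(BGL_p(\Fq))\to I$, $a\mapsto at$, modulo $\ker(\beta)$. Since $I\subseteq\ker(\beta)$ by Proposition \ref{definition of t} and $\ker(\alpha)\cap\ker(\beta)=0$ by Lemma \ref{ker alpha.ker beta=0}, the restriction $\alpha|_I:I\to D^\Gamma$ is \emph{injective}. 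So the first real task is to show that multiplication by $t$ induces an isomorphism $D^\Gamma\simeq E^0(BGL_p(\Fq))/\ker(\alpha)\iso I$ of $D^\Gamma$-modules; equivalently, that $\ker(\alpha)$ is exactly the annihilator of $t$, i.e.\ $\ann(t)=\ker(\alpha)$.

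For this I would argue as follows. The map $a\mapsto at$ descends to a surjection $E^0(BGL_p(\Fq))/\ann(t)\twoheadrightarrow I$; I want to identify $\ann(t)$ with $\ker(\alpha)$. One inclusion: if $a\in\ker(\alpha)$ then $at\in\ker(\alpha)$ (an ideal) and also $at\in I\subseteq\ker(\beta)$, so $at\in\ker(\alpha)\cap\ker(\beta)=0$ by Lemma \ref{ker alpha.ker beta=0}; hence $\ker(\alpha)\subseteq\ann(t)$. For the reverse inclusion I would compare ranks. Since $\alpha$ is surjective onto $D^\Gamma$ (Proposition \ref{alpha is surjective}) with $\ker(\alpha)\subseteq\ann(t)$, we get a surjection $D^\Gamma=E^0(BGL_p(\Fq))/\ker(\alpha)\twoheadrightarrow E^0(BGL_p(\Fq))/\ann(t)\twoheadrightarrow I$, and the composite $I\hookrightarrow E^0(BGL_p(\Fq))\xrightarrow{\alpha}D^\Gamma$ is injective. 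Chasing these, the composite $D^\Gamma\twoheadrightarrow I\hookrightarrow D^\Gamma$ (the last map being $\alpha|_I$) is a surjective endomorphism of the finitely generated $E^0$-module $D^\Gamma$ (indeed of a domain finite over the Noetherian local ring $E^0$), hence an isomorphism; therefore both surjections above are isomorphisms, giving $\ann(t)=\ker(\alpha)$ and $I\simeq D^\Gamma$ as $D^\Gamma$-modules, of rank one.

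The main obstacle I expect is justifying that the surjective endomorphism of $D^\Gamma$ is injective: the cleanest route is the standard fact that a surjective endomorphism of a finitely generated module over a Noetherian (here even local complete) ring is injective — this follows from Nakayama-type reasoning (e.g.\ apply the determinant trick, or cite that $D^\Gamma$ is Noetherian so the ascending chain of kernels of iterates stabilises). Alternatively, one can sidestep it entirely by invoking Corollary \ref{rank E^0(BGL_p(Fl))}: $I\subseteq\ker(\beta)$ and $\rank_{E^0}\ker(\beta)=\rank_{E^0}D^\Gamma=N$, while the surjection $D^\Gamma\twoheadrightarrow I$ forces $\rank_{E^0}I\le N$ and injectivity of $\alpha|_I$ forces $\rank_{E^0}I$ to be at least... — actually the rank comparison shows $I$ has rank $N$, matching $D^\Gamma$, and a surjection of free $E^0$-modules (once we know $I$ is free, or at least torsion-free of full rank over the domain $D^\Gamma$) of equal rank is an isomorphism. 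Once $I\simeq D^\Gamma$ as $D^\Gamma$-modules is established, the second assertion is immediate: $D^\Gamma$ is free over $E^0$ with basis $\{1,y,\dots,y^{N-1}\}$ by Proposition \ref{structure of D^Lambda}, so $I$ is free of rank $N$ over $E^0$, with basis $\{t, ty', \dots, t(y')^{N-1}\}$ for any lift $y'\in E^0(BGL_p(\Fq))$ of $y$ under $\alpha$.
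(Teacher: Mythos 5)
Your overall strategy — reduce to showing the natural surjection $D^\Gamma\simeq E^0(BGL_p(\Fq))/\ker(\alpha)\twoheadrightarrow I$ is injective — matches the paper's, but the key step in your argument is wrong. You claim the composite
\[
D^\Gamma\twoheadrightarrow I\xrightarrow{\;\alpha|_I\;}D^\Gamma
\]
is a \emph{surjective} endomorphism. It is not. Tracing through, the composite sends $s\mapsto\alpha(st)=s\,\alpha(t)$, i.e.\ it is multiplication by $\alpha(t)$, and $\alpha(t)\sim[p^v](x)^p$ is very far from being a unit in $D^\Gamma$ (it lies in the maximal ideal: modulo $(p,u_1,\ldots,u_{n-1})$ it is $x^{p\cdot p^{nv}}$). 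A surjection followed by an injection is surjective only when the injection is onto, which is exactly what you cannot assume here. So the Nakayama/determinant-trick conclusion that the composite is an isomorphism does not apply, and the inference $\ann(t)=\ker(\alpha)$ is unjustified. Your fallback rank count has the same problem: you obtain $\rank_{E^0}I\le N$ from the surjection, but nothing you cite gives the reverse inequality, and the parenthetical ``once we know $I$ is free'' is assuming what is to be proved. Note also that neither route verifies that $t\neq 0$ (equivalently $\alpha(t)\neq 0$); if $t$ were zero then $I=0$ and the statement would fail.

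The fix is to flip ``surjective'' to ``injective,'' which requires actually establishing $\alpha(t)\neq 0$ — and that is precisely where the paper spends its effort. Since $D$ is a domain (the Weierstrass polynomial $g$ is Eisenstein, Lemma \ref{g is irreducible}) and $\alpha(t)\sim[p^v](x)^p$, multiplication by $\alpha(t)$ on $D^\Gamma$ is injective once one knows $[p^v](x)^p\neq 0$ in $D$; the paper shows this by noting $[p^v](x)$ divides $\langle p\rangle([p^v](x))-p$, hence $[p^v](x)\mid p$ in $D$, so $[p^v](x)^p\mid p^p\neq 0$. With injectivity of the composite in hand, injectivity of the first surjection $D^\Gamma\twoheadrightarrow I$ (and so $\ann(t)=\ker(\alpha)$ and $I\simeq D^\Gamma$) follows formally, and freeness of rank $N$ over $E^0$ is then Proposition \ref{structure of D^Lambda}, as you say. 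So your outline is salvageable, but the surjectivity claim must be replaced by the nonvanishing computation of $\alpha(t)$, which is the actual content of the paper's proof.
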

\begin{pf} Since $t\in\ker(\beta)$ we have $\ker(\alpha)t=0$ and it follows that
\[I=E^0(BGL_p(\Fq))t=\left(\frac{E^0(BGL_p(\Fq))}{\ker(\alpha)}\right)t\simeq D^\Gamma t.\]
Now, as $I$ is generated by one element over $D^\Gamma$ it is sufficient to show that $I$ is torsion free. Take
$0\neq s\in D^\Gamma$. Then $\alpha(s.t)=s\alpha(t)\sim s[p^v](x)^p$. Since $[p^v](x)$ divides $\langle
p\rangle([p^v](x))-p$ we see that $[p^v](x)$ divides $p$ in $D$. Thus $s[p^v](x)^p$ divides $s p^p$, which is
non-zero as $D$ is free over $E^0$. Hence $\alpha(s.t)\neq 0$ and $s.t\neq 0$, as required. The final statement
is immediate from Proposition \ref{structure of D^Lambda}.\end{pf}

\begin{lem}\label{ker alpha and ker beta summands} The ideal $\ker(\beta)$ is an $E^0$-module summand in $E^0(BGL_p(\Fq))$ and is free of rank $N$.
Similarly, $\ker(\alpha)$ is a free $E^0$-summand in $E^0(BGL_p(\Fq))$ of rank
$\frac{(p^{nv}+p-1)!}{p!(p^{nv}-1)!}$.\end{lem}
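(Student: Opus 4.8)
The plan is to obtain both statements by splitting the defining short exact sequences of $\alpha$ and $\beta$; the only substantive input is that each of the two target rings is free over $E^0$, after which everything reduces to bookkeeping with Corollary \ref{rank E^0(BGL_p(Fl))}.

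First I would treat $\ker(\beta)$. By Proposition \ref{E^0(BGL_d(K))to E^0(BT_d)^Sigma_d is surjective} the restriction map $\beta$ is surjective, so there is a short exact sequence of $E^0$-modules
$$0 \to \ker(\beta) \to E^0(BGL_p(\Fq)) \overset{\beta}{\longrightarrow} E^0(BT)^{\Sigma_p} \to 0 .$$
Now $E^0(BT)^{\Sigma_p}$ is free over $E^0$: writing each $[p^v](x_i)$ as a unit multiple of a Weierstrass polynomial of degree $p^{nv}$, Corollary \ref{Basis for R[[x]]/[p](x)} exhibits $E^0(BT)$ as a free $E^0$-module with basis the monomial set $S=\{x_1^{a_1}\cdots x_p^{a_p}\mid 0\le a_i<p^{nv}\}$, on which $\Sigma_p$ acts by permuting the $x_i$; hence $E^0(BT)$ is a permutation module for $\Sigma_p$ and the basis of orbit sums (as in Section \ref{sec:restriction map}) makes $E^0(BT)^{\Sigma_p}$ free over $E^0$. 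Since the quotient in the sequence above is free, hence projective, the sequence splits, so $\ker(\beta)$ is an $E^0$-module summand of $E^0(BGL_p(\Fq))$; as $E^0$ is local Noetherian and $E^0(BGL_p(\Fq))$ is finitely generated and free, $\ker(\beta)$ is finitely generated and projective, therefore free. Its rank is $\rank_{E^0}(D^\Gamma)=N$ by Corollary \ref{rank E^0(BGL_p(Fl))} together with Proposition \ref{structure of D^Lambda}.

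The argument for $\ker(\alpha)$ is symmetric: by Proposition \ref{alpha is surjective} the map $\alpha$ is onto $D^\Gamma$, which is free over $E^0$ by Proposition \ref{structure of D^Lambda}, so the sequence $0\to\ker(\alpha)\to E^0(BGL_p(\Fq))\overset{\alpha}{\longrightarrow} D^\Gamma\to 0$ splits and $\ker(\alpha)$ is a free $E^0$-summand of $E^0(BGL_p(\Fq))$. By Corollary \ref{rank E^0(BGL_p(Fl))} its rank equals $\rank_{E^0}(E^0(BT)^{\Sigma_p})$, which by the orbit-sum basis above is the number of $\Sigma_p$-orbits on $S$, i.e. the number of multisets of size $p$ from a set of size $p^{nv}$, namely $\binom{p^{nv}+p-1}{p}=\frac{(p^{nv}+p-1)!}{p!\,(p^{nv}-1)!}$.

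There is no genuine obstacle here: the one point requiring care is the verification that $E^0(BT)^{\Sigma_p}$ (equivalently $\im(\beta)$) is free over $E^0$, since it is exactly this that licenses the splitting of the two sequences; once that is in hand, both rank computations follow immediately from Corollary \ref{rank E^0(BGL_p(Fl))} and the elementary count of $\Sigma_p$-orbits of monomials.
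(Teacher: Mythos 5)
Your proof is correct and follows essentially the same route as the paper: split the short exact sequences using that $E^0(BT)^{\Sigma_p}$ and $D^\Gamma$ are free (hence projective) over $E^0$, invoke Kaplansky's theorem that projective modules over a local ring are free, and read off the ranks from Corollary \ref{rank E^0(BGL_p(Fl)}. The only cosmetic difference is that you count $\rank_{E^0}(E^0(BT)^{\Sigma_p})$ via the orbit-sum basis (number of multisets of size $p$ from $\{0,\dots,p^{nv}-1\}$) where the paper counts via the basis of Proposition \ref{Basis for Sigma_d-invariants} (monomials $\sigma_1^{\beta_1}\cdots\sigma_p^{\beta_p}$ with $\sum\beta_i<p^{nv}$); both yield $\binom{p^{nv}+p-1}{p}$.
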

\begin{pf} Since each of $E^0(BGL_p(\Fq))$ and $E^0(BT)^{\Sigma_p}$ is free over $E^0$ it follows that the short exact sequence
$0\to\ker(\beta)\to E^0(BGL_p(\Fq))\to E^0(BT)^{\Sigma_p}\to 0$ splits. Thus $\ker(\beta)$ is a summand in
$E^0(BGL_p(\Fq))$ and hence is projective. But all projective modules over local rings are free (see
\cite[Theorem 2]{Kaplansky}). Hence $\ker(\beta)$ is free. By Proposition \ref{rank E^0(BGL_p(Fl))} we have
$\rank_{E^0}(E^0(BGL_p(\Fq)))=\rank_{E^0}(E^0(BT)^{\Sigma_p})+\rank_{E^0}(D^\Gamma)$ and it follows that
$\rank_{E^0}(\ker(\beta))=\rank_{E^0}(D^\Gamma)=N$.

Since $D^\Gamma$ is also free over $E^0$ an exactly analogous argument will prove that $\ker(\alpha)$ is a free
$E^0$-summand. The rank of $\ker(\alpha)$ is the same as that of $\rank_{E^0}(E^0(BT)^{\Sigma_p})$ which, by
Proposition \ref{Basis for Sigma_d-invariants}, is equal to the size of the set $\{\beta\in\mathbb{N}^p\mid
0\leq \beta_1+\ldots +\beta_p < p^{nv}\}$. Standard combinatorics ($p$ markers in $p^{nv}+p-1$ slots) gives this
as ${p^{nv}+p-1 \choose p}=\frac{(p^{nv}+p-1)!}{p!(p^{nv}-1)!}$.\end{pf}

\begin{cor}\label{ann ker alpha = ker beta} We have $\ker(\alpha)=\ann(\ker(\beta))$ and $\ker(\beta)=\ann(\ker(\alpha))$.\end{cor}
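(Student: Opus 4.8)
The plan is to obtain the inclusions $\ker(\alpha)\subseteq\ann(\ker(\beta))$ and $\ker(\beta)\subseteq\ann(\ker(\alpha))$ directly from Lemma~\ref{ker alpha.ker beta=0}, and then to force the reverse inclusions by a rank count that exploits the fact that $A:=E^0(BGL_p(\Fq))$ is a duality algebra over $E^0$ (by \cite{StricklandK(n)duality}). Indeed, $\ker(\alpha)\cdot\ker(\beta)=0$ says that every element of $\ker(\alpha)$ annihilates $\ker(\beta)$ and conversely, so it only remains to see that $\ann(\ker(\beta))$ is no larger than $\ker(\alpha)$ and $\ann(\ker(\alpha))$ no larger than $\ker(\beta)$; for this I would show that both annihilators are $E^0$-module summands of $A$ of the expected rank.

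Fix a Frobenius form $\theta\colon A\to E^0$ and let $\Theta\colon A\iso\Hom_{E^0}(A,E^0)$ be the associated ($E^0$-linear) isomorphism. By Lemma~\ref{ann I = ann_theta I}, for any ideal $J\subseteq A$ one has $\ann_A(J)=\Theta^{-1}(J^{\perp})$, where $J^{\perp}=\{\phi\in\Hom_{E^0}(A,E^0)\mid\phi(J)=0\}$. Now $\ker(\beta)$ is an $E^0$-module summand of the finitely generated free module $A$ by Lemma~\ref{ker alpha and ker beta summands}; writing $A=\ker(\beta)\oplus C$ as $E^0$-modules (so $C$ is free) we get $\Hom_{E^0}(A,E^0)=\Hom_{E^0}(\ker(\beta),E^0)\oplus\Hom_{E^0}(C,E^0)$ with $\ker(\beta)^{\perp}=\Hom_{E^0}(C,E^0)$. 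The latter is a free $E^0$-summand of $\Hom_{E^0}(A,E^0)$ of rank $\rank_{E^0}(C)=\rank_{E^0}(A)-\rank_{E^0}(\ker(\beta))=\rank_{E^0}(E^0(BT)^{\Sigma_p})$ (using surjectivity of $\beta$), so $\ann(\ker(\beta))=\Theta^{-1}(\ker(\beta)^{\perp})$ is a free $E^0$-summand of $A$ of that same rank. By Corollary~\ref{rank E^0(BGL_p(Fl))}, $\rank_{E^0}(\ker(\alpha))=\rank_{E^0}(E^0(BT)^{\Sigma_p})$ as well. The symmetric argument (using that $\ker(\alpha)$ is an $E^0$-summand and that $\alpha$ is surjective by Proposition~\ref{alpha is surjective}) shows that $\ann(\ker(\alpha))$ is a free $E^0$-summand of $A$ of rank $\rank_{E^0}(D^\Gamma)=\rank_{E^0}(\ker(\beta))$.

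The two identities then follow by Nakayama's lemma. Consider the inclusion $\ker(\alpha)\hookrightarrow\ann(\ker(\beta))$: both sides are $E^0$-module summands of $A$, so reducing modulo the maximal ideal $\mathfrak{m}=(p,u_1,\ldots,u_{n-1})$ of $E^0$ yields an injection $\ker(\alpha)/\mathfrak{m}\ker(\alpha)\hookrightarrow\ann(\ker(\beta))/\mathfrak{m}\ann(\ker(\beta))$ of $\mathbb{F}_p$-vector spaces whose source and target have the same finite dimension (the common $E^0$-rank), hence an isomorphism. By Proposition~\ref{M/IM=N/IN implies M=N} the inclusion $\ker(\alpha)\hookrightarrow\ann(\ker(\beta))$ is itself an equality. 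Exactly the same reasoning applied to $\ker(\beta)\hookrightarrow\ann(\ker(\alpha))$ gives $\ker(\beta)=\ann(\ker(\alpha))$. The one point requiring insight is the use of duality: it is what turns the abstractly defined annihilators into $E^0$-module summands of controlled rank, so that the rank bookkeeping already recorded in Corollary~\ref{rank E^0(BGL_p(Fl))} can be invoked; the rest is formal.
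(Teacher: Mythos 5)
Your proof is correct and follows essentially the same route as the paper: obtain the two inclusions from Lemma~\ref{ker alpha.ker beta=0}, use duality to show the annihilators are free $E^0$-summands of controlled rank, invoke the rank bookkeeping from Corollary~\ref{rank E^0(BGL_p(Fl))}, and conclude equality. The only difference is presentational — you unwind the duality by hand via $\ann_A(J)=\Theta^{-1}(J^{\perp})$ where the paper simply cites Corollary~\ref{ann P is a summand}, and you spell out the final step via Proposition~\ref{M/IM=N/IN implies M=N} where the paper states directly that an inclusion of free summands of equal rank is an equality.
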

\begin{pf} Firstly note that $\ker(\alpha)\subseteq \ann(\ker(\beta))$ and $\ker(\beta)\subseteq
\ann(\ker(\alpha))$ by Lemma \ref{ker alpha.ker beta=0}. By \cite{StricklandK(n)duality} we know that
$E^0(BGL_p(\Fq))$ has duality over $E^0$. Thus we can apply Corollary \ref{ann P is a summand} to see that both
of $\ann(\ker(\alpha))$ and $\ann(\ker(\beta))$ are summands in $E^0(BGL_p(\Fq))$ and hence free. But
$\rank_{E^0}(\ann(\ker(\beta))) = \rank_{E^0}(E^0(BGL_p(\Fq))) - \rank_{E^0}(\ker(\beta))$ and the latter is
just $\rank_{E^0}(\ker(\alpha))$ using Proposition \ref{rank E^0(BGL_p(Fl))}. Thus $\ker(\alpha)\subseteq
\ann(\ker(\beta))$ is an inclusion of free summands of the same rank and so is an equality. The same argument
shows that $\ker(\beta)= \ann(\ker(\alpha))$.\end{pf}

\subsection{Studying $\ker(\beta)$ more closely}\label{sec:studying K^0 tensor ker beta}

To proceed further we apply the functor $K^0\tensor_{E^0}-$ or, equivalently, work modulo the maximal ideal
$(p,u_1,\ldots,u_{n-1})$. We have the following commutative diagram.
$$
\xymatrix{ I \ar[r] \ar[d] & \ker(\beta) \ar[r] \ar[d] & E^0(BGL_p(\Fq)) \ar@{->>}[r]^-\alpha
\ar[d] & D^\Gamma \ar[d]\\
K^0\tensor_{E^0} I \ar[r] & K^0\tensor_{E^0} \ker(\beta) \ar[r] & K^0\tensor_{E^0}E^0(BGL_p(\Fq)) \ar@{->>}[r] &
K^0\tensor_{E^0} D^\Gamma}
$$
We know that $K^0\tensor_{E^0}E^0(BGL_p(\Fq)) = K^0(BGL_p(\Fq))$ and aim to understand the remainder of the
bottom row. Recall that we defined $N$ to be $(p^{n(v+1)}-p^{nv})/p$.

\begin{prop} With $y$ as in Proposition \ref{structure of D^Lambda} we have $K^0\tensor_{E^0} D^\Gamma\simeq \mathbb{F}_p\lpow y\rpow/ y^N$.\end{prop}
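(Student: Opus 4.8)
The plan is to read off $K^0\tensor_{E^0}D^\Gamma$ from the presentation of Proposition \ref{structure of D^Lambda} and then invoke that $\mathbb{F}_p\lpow y\rpow$ is a discrete valuation ring. Write $\mathfrak{m}_{E^0}=(p,u_1,\ldots,u_{n-1})$, so that $K^0=E^0/\mathfrak{m}_{E^0}$ and, by Lemma \ref{R/I tensor M = M/IM}, $K^0\tensor_{E^0}D^\Gamma\simeq D^\Gamma/\mathfrak{m}_{E^0}D^\Gamma$. Since $\mathfrak{m}_{E^0}$ is finitely generated, $\mathfrak{m}_{E^0}E^0\lpow y\rpow$ is precisely the set of power series all of whose coefficients lie in $\mathfrak{m}_{E^0}$, so $E^0\lpow y\rpow/\mathfrak{m}_{E^0}E^0\lpow y\rpow=\mathbb{F}_p\lpow y\rpow$; hence the quotient map $E^0\lpow y\rpow\twoheadrightarrow D^\Gamma$ of Proposition \ref{structure of D^Lambda} reduces to a surjection $\mathbb{F}_p\lpow y\rpow\twoheadrightarrow K^0\tensor_{E^0}D^\Gamma$.

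Now I would combine two facts. First, because $D^\Gamma$ is free over $E^0$ on the basis $\{1,y,\ldots,y^{N-1}\}$, its reduction $K^0\tensor_{E^0}D^\Gamma$ is an $\mathbb{F}_p$-vector space of dimension exactly $N$. Second, the kernel of the surjection $\mathbb{F}_p\lpow y\rpow\twoheadrightarrow K^0\tensor_{E^0}D^\Gamma$ is an ideal of the discrete valuation ring $\mathbb{F}_p\lpow y\rpow$, hence of the form $(y^k)$ for some $k\in\{0,1,2,\ldots\}\cup\{\infty\}$; the corresponding quotient then has $\mathbb{F}_p$-dimension $k$ (interpreting $k=\infty$ as infinite-dimensional and $k=0$ as the zero ring). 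Matching dimensions forces $k=N$ (note $N=p^{nv-1}(p^n-1)$ is a positive integer), so the kernel is $(y^N)$ and $K^0\tensor_{E^0}D^\Gamma\simeq\mathbb{F}_p\lpow y\rpow/y^N$, the isomorphism carrying the class of $y$ to the class of $y$. As a consistency check this recovers Lemma \ref{D^Gamma/I one dim over F_p}: modulo $(y)$ the ring becomes $\mathbb{F}_p$.

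There is no serious obstacle here — the argument is essentially a dimension count once the presentation is in hand — and the only step requiring a line of justification is the identification $E^0\lpow y\rpow/\mathfrak{m}_{E^0}E^0\lpow y\rpow=\mathbb{F}_p\lpow y\rpow$, so that the reduced map has source $\mathbb{F}_p\lpow y\rpow$. If one preferred a more hands-on route, one could instead verify directly that the image of $y=\prod_{k=0}^{p-1}[1+kp^v](x)$ in $K^0\tensor_{E^0}D^\Gamma$ is nilpotent of exact order $N$, using the relation $c_p^{p^{nv}}+dh(d,c_p)=0$ in $K^0(BN)$ together with the images of $c_p$ and $d$ under the map $E^0(BN)\to D^\Gamma$ and the formula $h(0,s)=s^{p^{nv-1}}$ of Lemma \ref{h(0,s)}; but this is more laborious and yields the same conclusion.
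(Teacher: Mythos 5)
Your proof is correct, but it takes a genuinely different route from the paper's. The paper computes $K^0\tensor_{E^0}D$ explicitly: since $[p^m](x)\equiv x^{p^{nm}}$ modulo $(p,u_1,\ldots,u_{n-1})$, one gets $K^0\tensor_{E^0}D=\mathbb{F}_p\lpow x\rpow/(x^{Np})$, and then identifies $K^0\tensor_{E^0}D^\Gamma$ as the subring generated by $y\sim x^p$, which is visibly $\mathbb{F}_p\lpow y\rpow/y^N$. This implicitly uses that the inclusion $D^\Gamma\hookrightarrow D$ remains injective after $K^0\tensor_{E^0}-$, which is fine because $D$ is free over $D^\Gamma$ and hence the inclusion is split. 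You instead stay entirely with the presentation $D^\Gamma\simeq E^0\lpow y\rpow/h(y)$ of Proposition \ref{structure of D^Lambda}, reduce it modulo $\mathfrak{m}_{E^0}$ to get a surjection $\mathbb{F}_p\lpow y\rpow\twoheadrightarrow K^0\tensor_{E^0}D^\Gamma$, then invoke the DVR structure of $\mathbb{F}_p\lpow y\rpow$ to see the kernel must be $(y^k)$, and pin $k=N$ by counting $\mathbb{F}_p$-dimensions from the freeness of $D^\Gamma$ over $E^0$. Your approach buys you not having to verify that $\overline{h}(y)=y^N$ directly or to work inside the larger ring $K^0\tensor_{E^0}D$; the dimension count together with the fact that $\mathbb{F}_p\lpow y\rpow$ has only the ideals $(y^k)$ does all the work. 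The paper's version is slightly more explicit and also exposes the relation $t\sim y^N$ later, but as a proof of this proposition alone yours is equally sound and arguably cleaner.
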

\begin{pf} Modulo $(p,u_1,\ldots,u_{n-1})$ we know that $[p^m](x)=x^{p^{nm}}$ for all $m$ and hence that $\langle p\rangle([p^v](x))=x^{p^{n(v+1)}-p^{nv}}$. Thus
$$K^0\tensor_{E^0} D = \frac{K^0\tensor_{E^0} E^0\lpow x\rpow}{K^0\tensor_{E^0} (\langle
p\rangle([p](x)))}= \mathbb{F}_p\lpow x\rpow/(x^{p^{n(v+1)}-p^{nv}}).$$ Further, $K^0\tensor_{E^0} D^\Gamma$ is
the subring of $K^0\tensor_{E^0} D$ generated by $y\sim x^p$, so that $K^0\tensor_{E^0}
D^\Gamma=\mathbb{F}_p\lpow y\rpow/y^N$, as claimed.\end{pf}

\begin{lem} $\ker(K^0\tensor\beta)$ is a module over $K^0\tensor_{E^0} D^\Gamma$.\end{lem}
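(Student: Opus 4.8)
The plan is to notice that $\ker(\beta)$ already carries a natural $D^\Gamma$-module structure before passing to $K$-theory, and then to transport that structure across the tensor product.

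\textbf{Step 1: a $D^\Gamma$-action on $\ker(\beta)$.} The subset $\ker(\beta)$ is an ideal of $E^0(BGL_p(\Fq))$, and by Lemma~\ref{ker alpha.ker beta=0} it is annihilated by $\ker(\alpha)$. Since $\alpha$ is surjective (Proposition~\ref{alpha is surjective}), it induces an isomorphism $E^0(BGL_p(\Fq))/\ker(\alpha)\iso D^\Gamma$, so $\ker(\beta)$ becomes a module over $D^\Gamma$: for $d\in D^\Gamma$ and $b\in\ker(\beta)$ set $d.b=\tilde d b$ for any lift $\tilde d\in E^0(BGL_p(\Fq))$ of $d$, which is well defined because $\ker(\alpha).\ker(\beta)=0$. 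Applying $K^0\tensor_{E^0}-$ then makes $K^0\tensor_{E^0}\ker(\beta)$ a module over $K^0\tensor_{E^0}D^\Gamma$ in the evident way.

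\textbf{Step 2: identify $K^0\tensor_{E^0}\ker(\beta)$ with $\ker(K^0\tensor\beta)$.} By Lemma~\ref{ker alpha and ker beta summands} the short exact sequence of $E^0$-modules
$$0\to\ker(\beta)\to E^0(BGL_p(\Fq))\overset{\beta}{\to}E^0(BT)^{\Sigma_p}\to 0$$
is split, hence stays (split) exact after applying $K^0\tensor_{E^0}-$. The middle term becomes $K^0(BGL_p(\Fq))$ and the middle map becomes $K^0\tensor\beta$, so this identifies the image of $K^0\tensor_{E^0}\ker(\beta)$ inside $K^0(BGL_p(\Fq))$ with exactly $\ker(K^0\tensor\beta)$, the inclusion being injective. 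Transporting the module structure of Step~1 across this isomorphism proves the claim. Equivalently, one can phrase the conclusion intrinsically: the same splitting shows $\ker(K^0\tensor\alpha)$ is the image of $K^0\tensor_{E^0}\ker(\alpha)$, and the product of this with $\ker(K^0\tensor\beta)$ in $K^0(BGL_p(\Fq))$ is spanned over $K^0$ by products $ab$ with $a\in\ker(\alpha)$, $b\in\ker(\beta)$, which already vanish in $E^0(BGL_p(\Fq))$; hence the ideal $\ker(K^0\tensor\beta)$ is annihilated by $\ker(K^0\tensor\alpha)$ and its $K^0(BGL_p(\Fq))$-module structure factors through the surjection $K^0\tensor\alpha\colon K^0(BGL_p(\Fq))\twoheadrightarrow K^0\tensor_{E^0}D^\Gamma$.

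The argument is essentially formal; the one point needing care is the identification $\ker(K^0\tensor\beta)=K^0\tensor_{E^0}\ker(\beta)$ in Step~2. This is false for a general ring map after base change, and here it works precisely because $\ker(\beta)$ is an $E^0$-module summand of $E^0(BGL_p(\Fq))$ (equivalently, because $E^0(BGL_p(\Fq))$ and $E^0(BT)^{\Sigma_p}$ are free over $E^0$, so the defining sequence of $\ker(\beta)$ splits). Everything else is bookkeeping.
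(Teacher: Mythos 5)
Your proof is correct, and the core idea is the same as the paper's: the vanishing $\ker(\alpha)\cdot\ker(\beta)=0$ from Lemma~\ref{ker alpha.ker beta=0} descends to $K$-theory, so the $K^0(BGL_p(\Fq))$-module structure on $\ker(K^0\otimes\beta)$ factors through $K^0\otimes\alpha$ to give a $K^0\otimes_{E^0}D^\Gamma$-structure. Your ``Equivalently'' paragraph is in substance identical to what the paper writes.

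The only real difference is a matter of economy of means. To compare $K^0\otimes_{E^0}\ker(\beta)$ with $\ker(K^0\otimes\beta)$, you invoke the splitting of the sequence $0\to\ker(\beta)\to E^0(BGL_p(\Fq))\to E^0(BT)^{\Sigma_p}\to 0$ coming from Lemma~\ref{ker alpha and ker beta summands}, which gives you an isomorphism $K^0\otimes_{E^0}\ker(\beta)\iso\ker(K^0\otimes\beta)$. The paper is more frugal: it only needs surjectivity of $K^0\otimes_{E^0}\ker(\beta)\to\ker(K^0\otimes\beta)$ (and likewise for $\alpha$) in order to lift elements to $\ker(\beta)$ and $\ker(\alpha)$, and this surjectivity follows from right exactness of the tensor product alone (Lemma~\ref{Right exactness for red mod ideals}), with no need for the splitting or for freeness of the kernels. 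So your argument proves a stronger interim statement than necessary, and in doing so depends on the slightly heavier Lemma~\ref{ker alpha and ker beta summands}; but since that lemma has already been established at this point in the chapter, this costs nothing logically, and you have correctly flagged exactly where the splitting is used and why it matters. Both routes are valid; the paper's is the minimal one.
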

\begin{pf} By Lemma \ref{Right exactness for red mod ideals}, the maps
$K^0\tensor_{E^0}\ker(\alpha)\to \ker(K^0\tensor\alpha)$ and $K^0\tensor_{E^0}\ker(\beta)\to
\ker(K^0\tensor\beta)$ are both surjective. Take $a\in \ker(K^0\tensor\alpha)$. We can lift $a$ first to
$K^0\tensor_{E^0}\ker(\alpha)$ and then to $\ker(\alpha)$. Choose such a lift, $\tilde{a}\in \ker(\alpha)$ say.
Similarly, given any $b\in\ker(K^0\tensor_{E^0}\beta)$ we can choose a lift $\tilde{b}\in\ker(\beta)$. Then
$\tilde{a}.\tilde{b}=0$ in $E^0(BGL_p(\Fq))$ by Lemma \ref{ker alpha.ker beta=0} so that $a.b=0$ in
$K^0(BGL_p(\Fq))$. It follows that $\ker(K^0\tensor_{E^0}\beta)$ is a module over
$K^0(BGL_p(\Fq))/\ker(K^0\tensor\alpha)=K^0\tensor_{E^0}D^\Gamma$.\end{pf}

\begin{cor} $K^0(BGL_p(\Fq))c_p^{p^{nv}}$ is a module over $K^0\tensor_{E^0}D^\Gamma$.\end{cor}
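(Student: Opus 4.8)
The plan is to realise $K^0(BGL_p(\Fq))c_p^{p^{nv}}$ as an ideal contained in $\ker(K^0\tensor\beta)$, and then read off the module structure from the preceding lemma. Here $c_p$ means the $l$-Euler class $\hat{c}_p$ of the standard representation $GL_p(\Fq)\hookrightarrow GL_p(\Flbar)$, an element of $E^0(BGL_p(\Fq))$ whose restriction to $BN$ is the class $c_p$ of Proposition \ref{Generators of E^0(BN)} and which maps along $\beta$ to $\sigma_p=x_1\cdots x_p$ in $E^0(BT)^{\Sigma_p}$.

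First I would show $c_p^{p^{nv}}\in\ker(K^0\tensor\beta)$. Modulo $(p,u_1,\ldots,u_{n-1})$ the image of $c_p$ under restriction to the torus is $x_1\cdots x_p\in K^0(BT)=\mathbb{F}_p\lpow x_1,\ldots,x_p\rpow/(x_1^{p^{nv}},\ldots,x_p^{p^{nv}})$, so $c_p^{p^{nv}}$ maps to $\prod_i x_i^{p^{nv}}=0$ there; since $E^0(BT)^{\Sigma_p}$ is a free $E^0$-module summand of $E^0(BT)$ (basis of orbit sums), $K^0\tensor_{E^0}E^0(BT)^{\Sigma_p}$ injects into $K^0(BT)$, and hence $c_p^{p^{nv}}$ already vanishes in the target of $K^0\tensor\beta$. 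Equivalently, one may instead use Corollary \ref{c_p^{p^{nv}}+d.tilde{h}(d,c_p)=0} to identify $c_p^{p^{nv}}$ with the image of $t$, which lies in $\ker(\beta)$ by Proposition \ref{definition of t}; this route uses that restriction $K^0(BGL_p(\Fq))\to K^0(BN)$ is injective, being the reduction mod $(p,u_1,\ldots,u_{n-1})$ of the split injection of Proposition \ref{Restriction to Sylow injective}.

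Now $K^0(BGL_p(\Fq))c_p^{p^{nv}}$ is an ideal of $K^0(BGL_p(\Fq))$ sitting inside $\ker(K^0\tensor\beta)$. By the preceding lemma the latter is a module over $K^0\tensor_{E^0}D^\Gamma=K^0(BGL_p(\Fq))/\ker(K^0\tensor\alpha)$, the action of a class being multiplication by any lift in $K^0(BGL_p(\Fq))$; our ideal is stable under multiplication by all of $K^0(BGL_p(\Fq))$, so it is a $K^0\tensor_{E^0}D^\Gamma$-submodule of $\ker(K^0\tensor\beta)$, hence a $K^0\tensor_{E^0}D^\Gamma$-module, which is the claim. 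I expect no genuine obstacle: the only point needing care is the identification of $c_p^{p^{nv}}$ with (the reduction of) $t$, which rests on injectivity of restriction to $N$; alternatively one checks directly that $\ker(K^0\tensor\alpha)$ kills the ideal, using $\ker(\alpha)\cdot\ker(\beta)=0$ (Lemma \ref{ker alpha.ker beta=0}) together with $I\subseteq\ker(\beta)$, and then descends the $K^0(BGL_p(\Fq))$-action along the quotient $K^0(BGL_p(\Fq))\twoheadrightarrow K^0\tensor_{E^0}D^\Gamma$.
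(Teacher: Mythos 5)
Your proof is correct and essentially matches the paper's: both reduce the claim to showing $c_p^{p^{nv}}\in\ker(K^0\tensor\beta)$ and then invoke the preceding lemma that $\ker(K^0\tensor\beta)$ is a $K^0\tensor_{E^0}D^\Gamma$-module. The paper cites $t\in\ker(\beta)$ together with $t\equiv c_p^{p^{nv}}$ modulo $(p,u_1,\ldots,u_{n-1})$, which is the same computation as your direct check that $c_p^{p^{nv}}$ restricts to $\prod_i x_i^{p^{nv}}=0$ in $K^0(BT)$; your extra worry about injectivity of restriction to $N$ is unnecessary, since the identity $t=c_p^{p^{nv}}$ already holds in $K^0(BGL_p(\Flbar))$ and hence in $K^0(BGL_p(\Fq))$ by naturality.
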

\begin{pf} Since $t\in\ker(\beta)$ and $t=c_p^{p^{nv}}$ modulo $(p,u_1,\ldots,u_{n-1})$ it follows that $c_p^{p^{nv}}$ maps to zero under $K^0\tensor\beta$. Thus
$K^0(BGL_p(\Fq))c_p^{p^n}\subseteq\ker(K^0\tensor\beta)$ and so is annihilated by $\ker(K^0\tensor\alpha)$. The
result follows.\end{pf}

To understand the structure of $K^0(BGL_p(\Fq))c_p^{p^{nv}}$ as a $K^0\tensor_{E^0}D^\Gamma$-module we will need
an understanding of the nilpotency of $c_p^{p^{nv}}$ in $K^0(BN)$.

\begin{lem}\label{a^{p^{n+i}} in terms of a,d} In $K^0(BN)$ we have $c_p^{p^{nv+i}}\sim c_p^{p^{nv-1}}d^{1+p+\ldots+p^i}$ modulo
$d^{1+p+\ldots+p^i+1}$ for each $i\geq 0$.\end{lem}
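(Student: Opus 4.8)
The plan is to work in $K^0(BN)$, where by Corollary \ref{c_p^{p^n}+d.tilde{h}(d,c_p)=0} we have the fundamental relation $c_p^{p^{nv}} = -dh(d,c_p)$, together with $h(0,s)=s^{p^{nv-1}}$ from Lemma \ref{h(0,s)}. First I would prove the base case $i=0$: since $c_p^{p^{nv}} = -dh(d,c_p)$ and $h(d,c_p) = h(0,c_p) + (\text{terms divisible by }d) = c_p^{p^{nv-1}} + d(\ldots)$, we get $c_p^{p^{nv}} = -d c_p^{p^{nv-1}} - d^2(\ldots)$, so $c_p^{p^{nv}} \equiv -d c_p^{p^{nv-1}}$ mod $d^2$. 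This gives $c_p^{p^{nv}} \sim c_p^{p^{nv-1}} d$ mod $d^2$, which is exactly the claimed statement with $1+p+\cdots+p^i = 1$ when $i=0$. (I should double-check the unit: the statement says $\sim$, i.e. unit multiple, and $-1$ is a unit in $K^0=\mathbb{F}_p$.)

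For the inductive step, suppose $c_p^{p^{nv+i}} \sim c_p^{p^{nv-1}} d^{1+p+\cdots+p^i}$ mod $d^{1+p+\cdots+p^i+1}$, say $c_p^{p^{nv+i}} = \lambda c_p^{p^{nv-1}} d^{e_i} + (\text{higher powers of }d)$ where $e_i = 1+p+\cdots+p^i$ and $\lambda\in\mathbb{F}_p^\times$. Then I would raise to the $p$-th power, using that $K^0(BN)$ is an $\mathbb{F}_p$-algebra so the Frobenius $x\mapsto x^p$ is additive: $c_p^{p^{nv+i+1}} = (c_p^{p^{nv+i}})^p = \lambda^p c_p^{p^{nv}} d^{pe_i} + (\text{higher powers of }d^{pe_i})$, roughly speaking — I need to be a little careful since the "higher" terms also get raised to powers and the cross terms vanish by additivity of Frobenius, so only powers of $d$ strictly above $pe_i$ survive from the error. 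Now I substitute the base-case relation $c_p^{p^{nv}} \sim c_p^{p^{nv-1}} d$ mod $d^2$ into this: $c_p^{p^{nv+i+1}} \sim \lambda^p c_p^{p^{nv-1}} d \cdot d^{pe_i} = \lambda^p c_p^{p^{nv-1}} d^{pe_i + 1}$ modulo $d^{pe_i+2}$. Finally observe $pe_i + 1 = p(1+p+\cdots+p^i) + 1 = 1 + p + p^2 + \cdots + p^{i+1} = e_{i+1}$, which closes the induction.

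The one subtlety to be careful about — and I think the main obstacle — is bookkeeping the error terms precisely enough through the $p$-th power map. Because Frobenius is additive on an $\mathbb{F}_p$-algebra, $(a + b)^p = a^p + b^p$, so if $c_p^{p^{nv+i}} = \lambda c_p^{p^{nv-1}}d^{e_i} + r$ with $r \in (d^{e_i+1})$, then $(c_p^{p^{nv+i}})^p = \lambda^p (c_p^{p^{nv-1}})^p d^{pe_i} + r^p$ with $r^p \in (d^{pe_i+p}) \subseteq (d^{pe_i+2})$ (as $p\geq 2$); and $(c_p^{p^{nv-1}})^p = c_p^{p^{nv}}$. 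So the error genuinely lands in $(d^{pe_i+2})$ and then applying the base case turns $\lambda^p c_p^{p^{nv}} d^{pe_i}$ into $\lambda^p c_p^{p^{nv-1}} d^{pe_i+1}$ modulo $(d^{pe_i+2})$. I should also note at the outset that $d$ is nilpotent in $K^0(BN)$ (indeed $d^{?}=0$ eventually since $df(d)=0$ and $f(0)=p=0$ in $K^0$, so $d^2$ divides $d\cdot d = d\cdot(f(d)-f(0)+f(0))/\ldots$; more simply $pd\in(d^2)$ becomes $d^2 \mid 0$ mod... — in any case the relations in $K^0(B\Sigma_p)=\mathbb{F}_p\lpow d\rpow/d^2$ force $d^2=0$), so the "modulo $d^{\text{large}}$" statements are not vacuous only up to the relevant range, but the identity as stated is what we need and the induction above establishes it at each level $i$ for which the powers involved are meaningful. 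That caveat aside, the argument is a clean Frobenius-and-substitution induction with no serious analytic content.
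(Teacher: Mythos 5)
Your argument is correct and is essentially identical to the paper's: induction on $i$, with the base case extracted from $c_p^{p^{nv}}\sim dh(d,c_p)$ (Corollary \ref{c_p^{p^n}+d.tilde{h}(d,c_p)=0}) combined with $h(0,s)=s^{p^{nv-1}}$ (Lemma \ref{h(0,s)}), and the inductive step done by raising to the $p$-th power via Frobenius additivity and then substituting the base case into $u^p c_p^{p^{nv}}d^{pe_i}$, with the exponent bookkeeping $pe_i+1=e_{i+1}$ and $pe_i+p\geq e_{i+1}+1$ exactly as in the paper.

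One correction to your closing aside. It is \emph{not} true that $K^0(B\Sigma_p)\simeq\mathbb{F}_p\lpow d\rpow/d^2$, and the route you sketch (``$pd\in(d^2)$, hence $d^2=0$ in $K^0$'') does not hold up: once $p=0$, the relation $pd\in(d^2)$ is vacuous and tells you nothing about $d^2$. The correct reduction is that $f(d)$ restricts to $\langle p\rangle(w)\sim w^{p^n-1}=(-d)^{(p^n-1)/(p-1)}$ modulo $(p,u_1,\ldots,u_{n-1})$, so $df(d)=0$ becomes $d^{(p^n-1)/(p-1)+1}=0$ and $K^0(B\Sigma_p)\simeq\mathbb{F}_p\lpow d\rpow/d^{(p^n-1)/(p-1)+1}$. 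Getting this exponent right matters: the next lemma in the paper applies the present one at $i=n-1$ and requires $c_p^{p^{nv-1}}d^{(p^n-1)/(p-1)}$ to be a nonzero basis element of $K^0(BN)$, which would fail for $n\geq 2$ if $d^2$ were already zero. Since your aside is not load-bearing, the proof stands; just drop the claim or replace it with the correct nilpotency exponent. (A smaller point: the inductive hypothesis a priori gives a unit $u\in K^0(BN)^\times$, not a scalar in $\mathbb{F}_p^\times$. A careful chase shows the unit is in fact a sign, but the argument only needs ``unit'', so it is cleaner to phrase it as the paper does.)
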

\begin{pf} We proceed by induction. For $i=0$ we have $c_p^{p^{nv}}\sim dh(d,c_p)$ by Corollary \ref{c_p^{p^n}+d.tilde{h}(d,c_p)=0}. From
Lemma \ref{h(0,s)} we get $h(d,c_p)= c_p^{p^{nv-1}}$ mod $d$ so that $dh(d,c_p)= c_p^{p^{nv-1}}d$ mod $d^2$, as
required.

Supposing $c_p^{p^{nv+k}}\sim c_p^{p^{nv-1}}d^{1+p+\ldots+p^k}$ mod $d^{1+p+\ldots+p^k+1}$ write
$$c_p^{p^{nv+k}}=u c_p^{p^{nv-1}}d^{1+p+\ldots+p^k}+d^{1+p+\ldots+p^k+1}s$$ for some unit $u$ and some $s$. Then, raising to the power $p$ (a mod-$p$ automorphism) we have
$$c_p^{p^{nv+k+1}}=u^p c_p^{p^{nv}}d^{p+p^2+\ldots+p^{k+1}}+d^{p+p^2+\ldots+p^{k+1}+p}s^p.$$
Thus, modulo $d^{1+p+\ldots+p^{k+1}+1}$, we have $c_p^{p^{nv+k+1}}\sim c_p^{p^{nv}}d^{p+p^2+\ldots+p^{k+1}}$
since $p\geq 2$. But $c_p^{p^{nv}}\sim c_p^{p^{nv-1}}d$ mod $d^2$ so that
$c_p^{p^{nv}}d^{p+p^2+\ldots+p^{k+1}}\sim c_p^{p^{nv-1}}d^{1+p+\ldots+p^{k+1}}$ mod
$d^{p+\ldots+d^{p^{k+1}}+2}$. Hence $c_p^{p^{nv+k+1}}\sim c_p^{p^{nv-1}}d^{1+p+\ldots+p^{k+1}}$ mod
$d^{1+p+\ldots+p^{k+1}+1}$, completing the inductive step.\end{pf}

\begin{lem}\label{a^{p^{2n-1}+p^n-p^{n-1}-1} in terms of a,d} In $K^0(BN)$ we have $c_p^{p^{n(v+1)-1}}\sim c_p^{p^{nv-1}}d^{(p^n-1)/(p-1)}$, which is non-zero.\end{lem}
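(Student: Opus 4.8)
The plan is to leverage Lemma~\ref{a^{p^{n+i}} in terms of a,d} with $i = n-1$. Set $e = 1 + p + \cdots + p^{n-1} = (p^n-1)/(p-1)$, and note $n(v+1)-1 = nv + (n-1)$. That lemma then furnishes a unit $u$ and an element $s$ in $K^0(BN)$ with
\[
c_p^{p^{n(v+1)-1}} = u\, c_p^{p^{nv-1}} d^{e} + d^{e+1} s .
\]
So the whole statement reduces to two points: that $d^{e+1}$ annihilates $K^0(BN)$ (which kills the error term and upgrades the congruence to a genuine unit-multiple relation), and that $c_p^{p^{nv-1}} d^{e}$ is itself non-zero.

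For the first point I would first identify $K^0(B\Sigma_p)$ precisely. By Proposition~\ref{Cohomology of Sigma_p} and Proposition~\ref{E^0(BC)^Aut(C) basis}, $E^0(B\Sigma_p) \simeq E^0(BC_p)^{\Aut(C_p)}$ is free over $E^0$ with basis $\{1, d, \ldots, d^{e}\}$; freeness lets me invoke Proposition~\ref{K^*(BG)=K^* tensor E^*(BG)} to get $K^0(B\Sigma_p) = K^0 \tensor_{E^0} E^0(B\Sigma_p)$, a quotient of $\mathbb{F}_p\lpow d\rpow$ that is free of rank $e+1$ over $\mathbb{F}_p$. Since $\mathbb{F}_p\lpow d\rpow$ is a discrete valuation ring, the only such quotient is $\mathbb{F}_p\lpow d\rpow/(d^{e+1})$; in particular $d^{e+1} = 0$ in $K^0(B\Sigma_p)$.

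Next I would apply $K^0 \tensor_{E^0} -$ to the $E^0(B\Sigma_p)$-module decomposition of Proposition~\ref{E^0(BSigma_p wr GL_1) as a module}. Since both summands there are free over $E^0$ (using $E^0(B\Sigma_p)/d \simeq E^0$), this yields a decomposition
\[
K^0(BN) \simeq K^0(B\Sigma_p)\{c_p^i \mid 0 \le i < p^{nv}\} \oplus K^0\{b_\alpha \mid \alpha \in J\}
\]
of $K^0(BN)$ as a $K^0(B\Sigma_p)$-module, the action being via $N \twoheadrightarrow \Sigma_p$. From this, $d^{e+1} \cdot K^0(BN) = 0$ (as $d^{e+1} = 0$ already in $K^0(B\Sigma_p)$), so the term $d^{e+1}s$ vanishes and we obtain $c_p^{p^{n(v+1)-1}} = u\, c_p^{p^{nv-1}} d^{e} \sim c_p^{p^{nv-1}} d^{e}$. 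For non-vanishing, observe $0 \le p^{nv-1} < p^{nv}$, so $c_p^{p^{nv-1}}$ is one of the free generators of the first summand; hence the coordinate of $c_p^{p^{nv-1}} d^{e}$ at that generator is $d^{e}$, which is non-zero in $K^0(B\Sigma_p) = \mathbb{F}_p\lpow d\rpow/(d^{e+1})$. Therefore $c_p^{p^{nv-1}} d^{e} \neq 0$, and so $c_p^{p^{n(v+1)-1}} \neq 0$.

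I do not anticipate a real obstacle: the hard computation is done in Lemma~\ref{a^{p^{n+i}} in terms of a,d}, and both structural inputs --- the shape of $K^0(B\Sigma_p)$ and the $K^0(B\Sigma_p)$-module structure of $K^0(BN)$ --- are essentially already available. The only place to tread carefully is confirming that reducing the $E^0(B\Sigma_p)$-module splitting of Proposition~\ref{E^0(BSigma_p wr GL_1) as a module} modulo $(p, u_1, \ldots, u_{n-1})$ really gives a $K^0(B\Sigma_p)$-module splitting in which $c_p^{p^{nv-1}}$ remains a free generator; this is routine since every module in sight is free over $E^0$, so tensoring is exact.
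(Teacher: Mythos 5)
Your argument is correct and matches the paper's proof essentially step for step: invoke Lemma~\ref{a^{p^{n+i}} in terms of a,d} with $i=n-1$, kill the error term using $d^{(p^n-1)/(p-1)+1}=0$ in $K^0(B\Sigma_p)\simeq\mathbb{F}_p\lpow d\rpow/d^{(p^n-1)/(p-1)+1}$, and observe that $c_p^{p^{nv-1}}d^{(p^n-1)/(p-1)}$ is a $K^0$-basis element of $K^0(BN)$. The paper simply states the last two facts as known, whereas you spell out the DVR argument for $K^0(B\Sigma_p)$ and the reduction of Proposition~\ref{E^0(BSigma_p wr GL_1) as a module} modulo the maximal ideal; these are correct elaborations rather than a different route.
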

\begin{pf} Put $i=n-1$ in Lemma \ref{a^{p^{n+i}} in terms of a,d} to get the result $c_p^{p^{n(v+1)-1}}\sim c_p^{p^{nv-1}}d^{1+p+\ldots+p^{n-1}}$ mod
$d^{1+p+\ldots + p^{n-1}+1}$. But we have $d^{1+p+\ldots + p^{n-1}+1}=d^{(p^n-1)/(p-1)+1}=0$ in $K^0(BN)$
(since, by tensoring with $K^0$, $K^0(B\Sigma_p)\simeq \mathbb{F}_p\lpow d \rpow/d^{(p^n-1)/(p-1)+1}$). Thus we
get
$$c_p^{p^{n(v+1)-1}}\sim c_p^{p^{nv-1}}d^{1+p+\ldots+p^{n-1}}=c_p^{p^{nv-1}}d^{(p^n-1)/(p-1)}$$ in $K^0(BN)$. The right-hand side is a basis element for $K^0(BN)$ over $K^0$ so is non-zero.\end{pf}

\begin{prop}\label{c_p^{N+p^n-1} neq 0} We have $c_p^{N+p^{nv}-1}\neq 0$ in $K^0(BGL_p(\Fq))$.\end{prop}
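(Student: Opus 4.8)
The plan is to reduce the claim to a statement in $K^0(BN)$, where $N=\Sigma_p\wr\Fq^\times$, and then read it off from the explicit module basis of $K^0(BN)$ together with the nilpotency computations of Lemmas \ref{a^{p^{n+i}} in terms of a,d} and \ref{a^{p^{2n-1}+p^n-p^{n-1}-1} in terms of a,d}. Recall that the class $c_p\in K^0(BGL_p(\Fq))$ is $\euler_l$ of the standard representation, and by functoriality of $l$-Euler classes (Proposition \ref{l-Chern Properties}) its restriction along $K^0(BGL_p(\Fq))\to K^0(BN)$ is the class $c_p\in K^0(BN)$ appearing in Proposition \ref{E^0(BSigma_p wr GL_1) as a module}. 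Hence it suffices to prove $c_p^{N+p^{nv}-1}\neq 0$ in $K^0(BN)$.

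First I would do the exponent bookkeeping. Since $N=(p^{n(v+1)}-p^{nv})/p=p^{n(v+1)-1}-p^{nv-1}$, we have
$$N+p^{nv}-1=p^{n(v+1)-1}+\bigl(p^{nv}-p^{nv-1}-1\bigr),$$
and the second summand is $\geq 0$ (it equals $p^{nv-1}(p-1)-1\geq 0$). Therefore
$$c_p^{N+p^{nv}-1}=c_p^{p^{n(v+1)-1}}\cdot c_p^{\,p^{nv}-p^{nv-1}-1}\qquad\text{in }K^0(BN).$$
Next I would invoke Lemma \ref{a^{p^{2n-1}+p^n-p^{n-1}-1} in terms of a,d}, which gives $c_p^{p^{n(v+1)-1}}\sim c_p^{p^{nv-1}}d^{(p^n-1)/(p-1)}$ in $K^0(BN)$. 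Substituting and simplifying the $c_p$-exponent, $p^{nv-1}+(p^{nv}-p^{nv-1}-1)=p^{nv}-1$, this yields
$$c_p^{N+p^{nv}-1}\ \sim\ c_p^{\,p^{nv}-1}\,d^{(p^n-1)/(p-1)}\qquad\text{in }K^0(BN).$$

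Finally I would conclude by basis considerations. Tensoring Proposition \ref{E^0(BSigma_p wr GL_1) as a module} with $K^0$ and using $K^0(B\Sigma_p)\simeq\mathbb{F}_p\lpow d\rpow/d^{(p^n-1)/(p-1)+1}$, the elements $d^kc_p^i$ with $0\leq k\leq (p^n-1)/(p-1)$ and $0\leq i<p^{nv}$ form part of a $K^0$-basis of $K^0(BN)$. Since $0\leq p^{nv}-1<p^{nv}$ and $k=(p^n-1)/(p-1)$ is exactly the top admissible power of $d$, the element $c_p^{\,p^{nv}-1}d^{(p^n-1)/(p-1)}$ is one of these basis elements and in particular is nonzero; a unit multiple of a nonzero element is nonzero, so $c_p^{N+p^{nv}-1}\neq 0$ in $K^0(BN)$, and hence in $K^0(BGL_p(\Fq))$. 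The only thing requiring care is the exponent arithmetic and checking that all intermediate $c_p$- and $d$-powers stay within the ranges where the listed monomials are genuine basis elements (so that no truncation against $d^{(p^n-1)/(p-1)+1}=0$ or re-expression of $c_p^{\geq p^{nv}}$ is needed); this is routine but is the step where a slip would be easy.
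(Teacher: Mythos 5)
Your proof is correct and follows essentially the same route as the paper's: factor $c_p^{N+p^{nv}-1}=c_p^{p^{n(v+1)-1}}\cdot c_p^{p^{nv}-p^{nv-1}-1}$, apply Lemma \ref{a^{p^{2n-1}+p^n-p^{n-1}-1} in terms of a,d} to replace $c_p^{p^{n(v+1)-1}}$ by a unit multiple of $c_p^{p^{nv-1}}d^{(p^n-1)/(p-1)}$, collapse the exponents to get $c_p^{p^{nv}-1}d^{(p^n-1)/(p-1)}$, and observe this is a basis element of $K^0(BN)$. You are slightly more explicit than the paper in two minor places — the restriction argument $K^0(BGL_p(\Fq))\to K^0(BN)$ at the start and the nonnegativity check $p^{nv}-p^{nv-1}-1\geq 0$ — both of which the paper leaves implicit; otherwise identical.
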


\begin{pf} By Lemma \ref{a^{p^{2n-1}+p^n-p^{n-1}-1} in terms of a,d} we have $c_p^{p^{n(v+1)-1}}\sim
c_p^{p^{nv-1}}d^{(p^n-1)/(p-1)}$. Multiplying both sides by $c_p^{p^{nv}-p^{nv-1}-1}$ then gives
$c_p^{N+p^{nv}-1}\sim c_p^{p^{nv}-1}d^{(p^n-1)/(p-1)}\neq 0$. Thus $c_p^{N+p^{nv}-1}$ is non-zero in
$K^0(BN)$.\end{pf}

\begin{prop} The ideal $K^0(BGL_p(\Fq))c_p^{p^{nv}}$ is free of rank 1 over $K^0\tensor_{E^0} D^\Gamma=\mathbb{F}_p\lpow y\rpow/y^N$ and hence has dimension $N$ as a vector space over $\mathbb{F}_p$.\end{prop}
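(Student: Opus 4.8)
The plan is to show that $K^0(BGL_p(\Fq))c_p^{p^{nv}}$ is a cyclic $\mathbb{F}_p\lpow y\rpow/y^N$-module on which multiplication by $y^{N-1}$ is nonzero; since $\mathbb{F}_p\lpow y\rpow/y^N$ is a local ring with a one-dimensional socle, any cyclic module on which $y^{N-1}$ acts nontrivially must be free of rank one, hence of $\mathbb{F}_p$-dimension $N$.

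First I would record that the ideal is generated by $c_p^{p^{nv}}$ as a module over $K^0\tensor_{E^0}D^\Gamma=\mathbb{F}_p\lpow y\rpow/y^N$: this follows from the preceding corollary, which identifies $K^0(BGL_p(\Fq))c_p^{p^{nv}}$ as a module over that ring, and the fact that it is generated over $K^0(BGL_p(\Fq))$ by the single element $c_p^{p^{nv}}$, while $K^0(BGL_p(\Fq))\to K^0\tensor_{E^0}D^\Gamma$ is a surjective ring map killing the annihilator action. So the module is cyclic. Next I would pin down the action of $y$: under $\alpha$ (reduced mod the maximal ideal) we have $\alpha(\sigma_p)=y$ up to a unit by Proposition \ref{alpha is surjective}, so multiplication by $y$ on the cyclic generator is (up to a unit) multiplication by $c_p$ restricted appropriately—more precisely, tracking $c_p$ through $\psi_3$ and the quotient to $D$ one sees $\alpha(c_p)\sim y$ modulo $(p,u_1,\dots,u_{n-1})$, so $y^{j}\cdot c_p^{p^{nv}}\sim c_p^{p^{nv}+j}$ in $K^0(BN)$, and hence also in $K^0(BGL_p(\Fq))$ via restriction.

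The key nonvanishing input is Proposition \ref{c_p^{N+p^n-1} neq 0}: $c_p^{N+p^{nv}-1}\neq 0$ in $K^0(BGL_p(\Fq))$. Since $c_p^{N+p^{nv}-1}$ is, up to a unit, $y^{N-1}$ times the generator $c_p^{p^{nv}}$, this says the cyclic $\mathbb{F}_p\lpow y\rpow/y^N$-module $K^0(BGL_p(\Fq))c_p^{p^{nv}}$ is not annihilated by $y^{N-1}$. A cyclic module over $\mathbb{F}_p\lpow y\rpow/y^N$ is a quotient $\mathbb{F}_p\lpow y\rpow/y^k$ for some $k\leq N$, and $y^{N-1}$ acts nontrivially on it precisely when $k=N$; therefore the module is free of rank one, with $\mathbb{F}_p$-dimension $N$.

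The main obstacle will be the bookkeeping needed to confirm that multiplication by $y$ on $K^0(BGL_p(\Fq))c_p^{p^{nv}}$ really does correspond to multiplication by $c_p$ in $K^0(BN)$ (after restriction), so that $y^{N-1}$ times the generator is a unit multiple of $c_p^{N+p^{nv}-1}$ rather than something whose nonvanishing we have not established. This requires carefully chasing $c_p$ and $\sigma_p$ through the comparison maps $\alpha$ and the restriction $E^0(BGL_p(\Fq))\to E^0(BN)$, using the images computed in Proposition \ref{Generators of E^0(BN)} and the identity $\alpha(\sigma_p)=y$; once that compatibility is in hand the structure-theoretic conclusion is immediate.
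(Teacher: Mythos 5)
Your proof follows the paper's own argument exactly: the module is cyclic over $\mathbb{F}_p\lpow y\rpow/y^N$, the action of $y$ corresponds to multiplication by $c_p$ because $\alpha(\sigma_p)=y$, and the nonvanishing of $c_p^{N+p^{nv}-1}$ forces the cyclic module to be all of $\mathbb{F}_p\lpow y\rpow/y^N$. The "bookkeeping obstacle" you flag is already settled by the definition of the $K^0\tensor_{E^0}D^\Gamma$-module structure (act by choosing a lift in $K^0(BGL_p(\Fq))$), so there is nothing further to chase.
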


\begin{pf} Since $M=K^0(BGL_p(\Fq))c_p^{p^{nv}}$ is generated over $\mathbb{F}_p\lpow y\rpow/y^N$ by one element, namely
$c_p^{p^{nv}}$, it follows that $M\simeq \mathbb{F}_p\lpow y\rpow/y^m$ for some $m\leq N$, that is
$y^m.c_p^{p^{nv}}=0$ for some $m$. Since $\alpha$ maps $c_p$ to $y$ we have $y^{N-1}.c_p^{p^{nv}}\neq 0$ if and
only if $c_p^{N-1}c_p^{p^{nv}}\neq 0$. By Corollary \ref{c_p^{N+p^n-1} neq 0}, the latter holds so that $M\simeq
\mathbb{F}_p\lpow y\rpow/y^N$ is free over $K^0\tensor_{E^0} D^\Gamma$.\end{pf}

\begin{lem} The induced map $K^0\tensor_{E^0} I\to K^0(BGL_p(\Fq))c_p^{p^{nv}}$ is an isomorphism.\end{lem}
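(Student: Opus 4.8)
The plan is to realise the map in the lemma as the reduction-modulo-$\mathfrak{m}_{E^0}$ map of Corollary \ref{Mod I reduction of principal ideal surjects}, which hands us surjectivity for free, and then to finish by a dimension count, both source and target turning out to be $\mathbb{F}_p$-vector spaces of dimension $N$.

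Concretely, write $\mathfrak{m}=(p,u_1,\ldots,u_{n-1})$ for the maximal ideal of $E^0$ and $A=E^0(BGL_p(\Fq))$, so that $I=At$. Since $A$ is free over $E^0$ we have $K^0(BGL_p(\Fq))=A/\mathfrak{m}A$ by Proposition \ref{K^*(BG)=K^* tensor E^*(BG)}, and by Lemma \ref{R/I tensor M = M/IM} the functor $K^0\tensor_{E^0}-$ is just reduction modulo $\mathfrak{m}$. Under this identification the map of the lemma is exactly the surjection $I/\mathfrak{m}I\twoheadrightarrow (A/\mathfrak{m}A)\bar{t}$ of Corollary \ref{Mod I reduction of principal ideal surjects} (applied with $R=E^0$ and $a=t$), so it is onto the ideal of $K^0(BGL_p(\Fq))$ generated by the image $\bar{t}$ of $t$. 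To see that this ideal equals $K^0(BGL_p(\Fq))c_p^{p^{nv}}$ I would invoke the identity $\bar{t}=c_p^{p^{nv}}$: this holds in $K^0(BN)$ by Corollary \ref{c_p^{p^n}+d.tilde{h}(d,c_p)=0}, and hence in $K^0(BGL_p(\Fq))$ because the restriction map $K^0(BGL_p(\Fq))\to K^0(BN)$ is injective. Indeed, by Proposition \ref{Syl_p(G)} a Sylow $p$-subgroup $P$ of $GL_p(\Fq)$ lies inside $N$, the restriction $E^0(BGL_p(\Fq))\to E^0(BP)$ is split injective (as in the proof of Proposition \ref{Restriction to Sylow injective}), and a split injection survives $K^0\tensor_{E^0}-$.

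It then only remains to compare $\mathbb{F}_p$-dimensions. On the one hand $I$ is free of rank $N$ over $E^0$ by Corollary \ref{I free over D^Gamma}, so $\dim_{\mathbb{F}_p}(K^0\tensor_{E^0}I)=N$. On the other hand, by the proposition immediately preceding this lemma, $K^0(BGL_p(\Fq))c_p^{p^{nv}}$ is free of rank one over $K^0\tensor_{E^0}D^\Gamma\simeq\mathbb{F}_p\lpow y\rpow/y^N$, hence also has dimension $N$. A surjection between finite-dimensional vector spaces of equal dimension is an isomorphism, which completes the proof. The only point that is not pure bookkeeping is promoting the identity $\bar{t}=c_p^{p^{nv}}$ from $K^0(BN)$ to $K^0(BGL_p(\Fq))$; everything else is an immediate consequence of results already established.
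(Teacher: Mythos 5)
Your proof is correct and takes essentially the same approach as the paper: surjectivity comes from Corollary~\ref{Mod I reduction of principal ideal surjects}, and then comparison of $\mathbb{F}_p$-dimensions (both equal to $N$, by Corollary~\ref{I free over D^Gamma} on one side and the immediately preceding proposition on the other) forces the surjection to be an isomorphism. The only extra content you add is the explicit justification that $\bar t=c_p^{p^{nv}}$ holds in $K^0(BGL_p(\Fq))$ and not merely in $K^0(BN)$; that check is correct (one could also obtain it more directly, since the computation $t\equiv\sigma_p^{p^{nv}}$ mod $(p,u_1,\ldots,u_{n-1})$ already takes place in $E^0(BGL_p(\Flbar))$ and can simply be restricted down), but the paper leaves it implicit, so your care here is reasonable rather than a divergence of method.
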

\begin{proof} That the map is surjective is immediate from Corollary \ref{Mod I reduction of principal ideal surjects}. Using Corollary \ref{I free over D^Gamma} we
see that $K^0\tensor_{E^0} I$ is an $\mathbb{F}_p$-vector space of dimension
$N=\dim_{\mathbb{F}_p}(K^0(BGL_p(\Fq))c_p^{p^{nv}})$. Thus the map is an isomorphism.\end{proof}

\begin{lem} The induced map $K^0\tensor_{E^0} I\to K^0\tensor_{E^0} \ker(\beta)$ is an isomorphism.\end{lem}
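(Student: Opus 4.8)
The plan is to reduce the statement to a dimension count, leveraging the preceding lemma. First I would record the two freeness facts already in hand: by Corollary \ref{I free over D^Gamma} the ideal $I$ is free over $E^0$ of rank $N$, and by Lemma \ref{ker alpha and ker beta summands} so is $\ker(\beta)$. Consequently $K^0\tensor_{E^0} I$ and $K^0\tensor_{E^0}\ker(\beta)$ are free over $K^0$ of rank $N$, hence both are $\mathbb{F}_p$-vector spaces of dimension $N$ in degree zero. So it is enough to show the induced map is injective (equivalently, surjective): an injective map of $\mathbb{F}_p$-vector spaces of equal finite dimension is an isomorphism.

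For the injectivity I would exploit the factorisation of the inclusion $I\hookrightarrow E^0(BGL_p(\Fq))$ through $\ker(\beta)$, which is available since $I\subseteq\ker(\beta)$ by Proposition \ref{definition of t}. Applying $K^0\tensor_{E^0}-$ produces a factorisation
$$K^0\tensor_{E^0} I\longrightarrow K^0\tensor_{E^0}\ker(\beta)\longrightarrow K^0\tensor_{E^0}E^0(BGL_p(\Fq))=K^0(BGL_p(\Fq)),$$
whose composite is precisely the map treated in the previous lemma; that lemma says the composite is an isomorphism onto $K^0(BGL_p(\Fq))c_p^{p^{nv}}$, and in particular it is injective. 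Since a composite being injective forces its first factor to be injective, the map $K^0\tensor_{E^0} I\to K^0\tensor_{E^0}\ker(\beta)$ is injective, and the dimension count from the first paragraph finishes the argument.

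I do not expect a genuine obstacle here: the substantive work has already been done, notably the nonvanishing $c_p^{N+p^{nv}-1}\neq 0$ in $K^0(BGL_p(\Fq))$ (Proposition \ref{c_p^{N+p^{nv}-1} neq 0}) that underpins the previous lemma, together with the rank computations. The only point requiring a little care is checking that the ``induced map'' of the previous lemma really is the composite above — i.e. that it is the map obtained from $I\hookrightarrow E^0(BGL_p(\Fq))$ by extending scalars along $E^0\to K^0$, followed by the inclusion $K^0(BGL_p(\Fq))c_p^{p^{nv}}\hookrightarrow K^0(BGL_p(\Fq))$ — which is immediate from the construction. As an alternative route, one could instead prove surjectivity directly by noting that $K^0\tensor_{E^0}-$ is right exact, so that surjectivity of $K^0\tensor_{E^0}I\to K^0\tensor_{E^0}\ker(\beta)$ is equivalent to $K^0\tensor_{E^0}(\ker(\beta)/I)=0$; but the injectivity argument is cleaner since it uses the previous lemma as a black box.
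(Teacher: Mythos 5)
Your proof is correct and takes essentially the same approach as the paper's: factor through the inclusion, use the previous lemma's isomorphism onto $K^0(BGL_p(\Fq))c_p^{p^{nv}}$ to get injectivity of the composite (hence of the first factor), and then close with the rank-$N$ dimension count. The only cosmetic difference is that the paper's diagram routes through $\ker(K^0\tensor\beta)$ as the terminal object of the comparison, while you go directly to $K^0(BGL_p(\Fq))$; this is immaterial since both receive $K^0(BGL_p(\Fq))c_p^{p^{nv}}$ injectively.
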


\begin{proof} Consider the following diagram.
$$
\xymatrix{ K^0\tensor_{E^0} I \ar[r] \ar[dr]_-{\sim} & K^0\tensor_{E^0} \ker(\beta) \ar[r] & \ker(K^0\tensor\beta)\\
& K^0(BGL_p(\Fq)).c_p^{p^{nv}} \ar@{>->}[ur]}
$$
Since the composite $K^0\tensor_{E^0} I\iso K^0(BGL_p(\Fq))c_p^{p^{nv}}\rightarrowtail \ker(K^0\tensor\beta)$ is
injective we see that the map $K^0\tensor_{E^0} I \to K^0\tensor_{E^0} \ker(\beta)$ is also injective. Both the
source and target are $\mathbb{F}_p$-vector spaces of dimension $N$ and it follows that the map is an
isomorphism.\end{proof}

\begin{cor}\label{I=ker beta} $I=\ker(\beta)$. That is, $\ker(\beta)=E^0(BGL_p(\Fq))$ is principal, generated by $t$.\end{cor}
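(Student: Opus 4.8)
The final statement is Corollary \ref{I=ker beta}, asserting $I=\ker(\beta)$, i.e.\ $\ker(\beta)$ is the principal ideal generated by $t$. We already have $I\subseteq\ker(\beta)$ from Proposition \ref{definition of t}. The plan is to upgrade this inclusion to an equality by a Nakayama-type argument, using the preceding two lemmas which establish isomorphisms after applying $K^0\tensor_{E^0}-$ (equivalently, reduction modulo the maximal ideal $\mathfrak{m}_{E^0}=(p,u_1,\ldots,u_{n-1})$ of $E^0$).

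First I would record the setup: both $I$ and $\ker(\beta)$ are finitely generated $E^0$-modules, and in fact free over $E^0$ by Corollary \ref{I free over D^Gamma} and Lemma \ref{ker alpha and ker beta summands} respectively, both of rank $N$. The inclusion $\iota:I\hookrightarrow\ker(\beta)$ is a map of finitely generated $E^0$-modules. The last lemma proved in the excerpt shows that the induced map $K^0\tensor_{E^0}I\to K^0\tensor_{E^0}\ker(\beta)$ is an isomorphism; since $K^0=K^0\tensor_{E^0}E^0 = E^0/\mathfrak{m}_{E^0}$ (up to the invertible $u$, which plays no role in degree zero), by Lemma \ref{R/I tensor M = M/IM} this says precisely that $I/\mathfrak{m}_{E^0}I\to\ker(\beta)/\mathfrak{m}_{E^0}\ker(\beta)$ is an isomorphism.

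Then I would invoke Proposition \ref{M/IM=N/IN implies M=N}: $E^0$ is a local ring, $\mathfrak{m}_{E^0}$ a proper ideal, and $\iota:I\to\ker(\beta)$ a map of finitely generated $E^0$-modules which becomes an isomorphism modulo $\mathfrak{m}_{E^0}$; hence $\iota$ is surjective, and since it is the inclusion of a submodule, $I=\ker(\beta)$. (Alternatively, since both are free $E^0$-modules of the same rank $N$ by Corollary \ref{I free over D^Gamma} and Lemma \ref{ker alpha and ker beta summands}, a surjection between them is automatically an isomorphism, but the submodule form of Proposition \ref{M/IM=N/IN implies M=N} makes even this unnecessary.) Finally, since $I=E^0(BGL_p(\Fq))t$ by definition, $\ker(\beta)$ is principal, generated by $t$, which is the assertion.

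There is no real obstacle remaining at this stage: all the substantive work — computing $K^0\tensor_{E^0}D^\Gamma$, showing $K^0(BGL_p(\Fq))c_p^{p^{nv}}$ is free of rank $N$ over it (which rests on the nilpotency computation $c_p^{N+p^{nv}-1}\neq0$ in Proposition \ref{c_p^{N+p^{nv}-1} neq 0}), and identifying $K^0\tensor_{E^0}I$ with that ideal and then with $K^0\tensor_{E^0}\ker(\beta)$ — has been done in the preceding lemmas. The corollary is a one-line deduction: Nakayama applied to the mod-$\mathfrak{m}_{E^0}$ isomorphism $I/\mathfrak{m}_{E^0}I\iso\ker(\beta)/\mathfrak{m}_{E^0}\ker(\beta)$, giving $I=\ker(\beta)$. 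If anything needs care it is merely bookkeeping: confirming that $K^0\tensor_{E^0}(-)$ on a degree-zero $E^0$-module really is reduction mod $\mathfrak{m}_{E^0}$ (true because $u$ is a unit and everything in sight is concentrated in even degrees with $E^0(BGL_p(\Fq))$ free over $E^0$), so that Lemma \ref{R/I tensor M = M/IM} and Proposition \ref{M/IM=N/IN implies M=N} apply verbatim.
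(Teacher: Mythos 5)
Your proposal is correct and follows exactly the same route as the paper: the preceding lemma gives that $K^0\tensor_{E^0}I\to K^0\tensor_{E^0}\ker(\beta)$ is an isomorphism, and Proposition \ref{M/IM=N/IN implies M=N} (Nakayama) then upgrades the inclusion $I\subseteq\ker(\beta)$ to an equality. The extra remarks on freeness and ranks are sound but, as you note yourself, not needed once the submodule form of the Nakayama lemma is invoked.
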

\begin{proof} Since $K^0\tensor_{E^0} I\to K^0\tensor_{E^0}\ker(\beta)$ is an isomorphism, the result follows
immediately by an application Proposition \ref{M/IM=N/IN implies M=N}.\end{proof}

\begin{proof}[Proof of Theorem \ref{Theorem, d=p}] It just remains to assemble the results of this chapter. That
$\alpha$ and $\beta$ are jointly injective is Corollary \ref{alpha, beta are jointly injective}. We have shown
that $\beta$ is surjective in Proposition \ref{E^0(BGL_d(K))to E^0(BT_d)^Sigma_d is surjective} and surjectivity
of $\alpha$ was proved in Proposition \ref{alpha is surjective}. The rational isomorphism was Proposition
\ref{Rational isomorphism}. The remaining results were covered in Lemma \ref{ker alpha and ker beta summands},
Corollary \ref{I=ker beta} and Proposition \ref{ann ker alpha = ker beta}.\end{proof}

As a corollary to Theorem \ref{Theorem, d=p} we can give an explicit basis for $E^0(BGL_p(\Fq))$. Indeed, by
earlier work (see Section 6.2) we have a basis $B$ for $E^0(BT)^{\Sigma_p}$ which lifts canonically to
$E^0(BGL_p(\Fq))$; write $\tilde{B}$ for this lift. We then have the following result.

\begin{cor} The set $S=\tilde{B}\cup \{tc_p^i\mid 0\leq i< N\}$ is a basis for $E^0(BGL_p(\Fq))$ over
$E^0$.\end{cor}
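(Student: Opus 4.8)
The plan is to use the structure theorem (Theorem \ref{Theorem, d=p}) together with the fact that $E^0(BGL_p(\Fq))$ is free over $E^0$ and that we have a short exact sequence of free $E^0$-modules
\[
0\to \ker(\beta)\to E^0(BGL_p(\Fq))\overset{\beta}{\to} E^0(BT)^{\Sigma_p}\to 0,
\]
which splits because $E^0(BT)^{\Sigma_p}$ is free. So I would argue that $\tilde{B}$ maps to a basis of the quotient $E^0(BT)^{\Sigma_p}$ (it does, by definition of $\tilde B$ as a lift of the basis $B$), and that $\{tc_p^i\mid 0\leq i<N\}$ is a basis of $\ker(\beta)$; then a standard splitting argument gives that the union is a basis of the middle term.

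The key step is therefore to show that $\{tc_p^i\mid 0\leq i<N\}$ is an $E^0$-basis for $\ker(\beta)$. By Corollary \ref{I=ker beta} we know $\ker(\beta)=I=E^0(BGL_p(\Fq))t$, and by Corollary \ref{I free over D^Gamma} the identification $E^0(BGL_p(\Fq))/\ker(\alpha)\simeq D^\Gamma$ makes $I$ a free rank-one $D^\Gamma$-module on the generator $t$, hence a free $E^0$-module of rank $N$ (using Proposition \ref{structure of D^Lambda}, which gives $D^\Gamma = E^0\lpow y\rpow/h(y)$ free on $\{1,y,\ldots,y^{N-1}\}$). Now $\alpha(c_p)=y$ by Proposition \ref{alpha is surjective}, so under the isomorphism $I\simeq D^\Gamma t$ the element $tc_p^i$ corresponds to $y^i t$; since $\{1,y,\ldots,y^{N-1}\}$ is an $E^0$-basis of $D^\Gamma$, the elements $\{tc_p^i\mid 0\leq i<N\}$ form an $E^0$-basis of $I=\ker(\beta)$.

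Finally I would assemble: choose the splitting $E^0(BGL_p(\Fq))\simeq \ker(\beta)\oplus \langle\tilde B\rangle_{E^0}$ coming from the section of $\beta$ determined by sending $b\in B$ to $\tilde b$. Since $\beta(\tilde B)=B$ is a basis of $E^0(BT)^{\Sigma_p}$ and $\{tc_p^i\}$ is a basis of $\ker(\beta)$, the disjoint union $S=\tilde B\cup\{tc_p^i\mid 0\leq i<N\}$ is an $E^0$-basis of $E^0(BGL_p(\Fq))$. (One should check the union is disjoint and that $|S| = |B| + N$ matches $\rank_{E^0}E^0(BGL_p(\Fq))$, which is exactly Corollary \ref{rank E^0(BGL_p(Fl))}; this also makes the argument robust.)

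The main obstacle is essentially bookkeeping rather than anything deep: one must be careful that the "canonical lift" $\tilde B$ is genuinely a set of lifts of $B$ along $\beta$ (so that $\beta$ restricted to $\langle \tilde B\rangle$ is an isomorphism onto $E^0(BT)^{\Sigma_p}$), and that no $tc_p^i$ accidentally coincides with or is $E^0$-dependent on the $\tilde B$ part — but this is immediate since the $tc_p^i$ lie in $\ker(\beta)$ while the $\tilde b$ have $E^0$-linearly independent images under $\beta$. All the real content has already been extracted in Corollaries \ref{I free over D^Gamma} and \ref{I=ker beta}, so this corollary is just packaging.
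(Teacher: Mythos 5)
Your argument is correct and is essentially the paper's own proof: split off the free summand $\ker(\beta) = E^0(BGL_p(\Fq))t \simeq D^\Gamma t = E^0\{tc_p^i\mid 0\leq i<N\}$ (using $\alpha(c_p)=y$ and the $E^0$-basis $\{1,y,\ldots,y^{N-1}\}$ of $D^\Gamma$), and complement it by the lift $\tilde B$ of the basis of $E^0(BT)^{\Sigma_p}$. The paper compresses the bookkeeping you spell out, but the decomposition and the identification of each summand's basis are exactly the same.
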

\begin{proof} Since $\ker(\beta)$ is a summand in $E^0(BGL_p(\Fq))$ we have a decomposition $$E^0(BGL_p(\Fq))\simeq
\ker(\beta)\oplus E^0(BT)^{\Sigma_p}.$$ But $\ker(\beta)=E^0(BGL_p(\Fq))t\simeq D^\Gamma t=E^0\{tc_p^i\mid 0\leq
i<N\}$ and $E^0(BT)^{\Sigma_p}=E^0\{B\}$. Thus, as $E^0$-modules, \[E^0(BGL_p(\Fq))=E^0\{tc_p^i\mid 0\leq
i<N\}\oplus E^0\{\tilde{B}\}.\qedhere\]\end{proof}

%------------------------------------------------------

%\newpage
%\thispagestyle{empty} \phantom{1}
%\newpage

\appendix
%\appendixpage
\addappheadtotoc

\chapter{Glossary}
\label{ap:notation}

To ease readability of this thesis, a glossary of frequently used notation is included below.

\begin{itemize}
\item We use $A$ to denote a chosen cyclic subgroup of $GL_p(\Fq)$ of size $p^{v+1}$.

\item We let $\alpha$ denote the composition $E^0(BGL_p(\Fq))\to E^0(BA)\to D$.

\item For ${\bf{\alpha}}\in J$ we let $b_{\bf{\alpha}} =
\tr_{(\Fq^\times)^p}^{~N~}(x_1^{\alpha_1}\ldots x_p^{\alpha_p})\in E^0(BN)$.

\item We let $\beta$ denote the restriction map $E^0(BGL_d(\Fq))\to E^0(BT_d)$.

\item We write $c_p$ for the $l$-euler class of any subgroup of $GL_p(\Flbar)$; that is, $c_p$ is the
restriction of the generator $\sigma_p$ of $E^0(BGL_p(\Flbar))\simeq E^0\lpow \sigma_1,\ldots,\sigma_p\rpow$.

\item We use $D$ to denote the ring $E^0\lpow x\rpow/\langle p\rangle([p^v](x))$.

\item We use $d$ to denote the generator of $E^0(B\Sigma_p)\simeq E^0\lpow d\rpow/df(d)$.\footnote{Not to be confused with the chosen integer greater than or equal to 1 which determines the rank of the general linear group being
studied.}

\item We let $\Delta$ denote the diagonal subgroup of $T=T_p$ and $\Delta_p$ denote the $p$-part of $\Delta$.

\item We use $f$ to denote the polynomial over $E^0$ for which $f(-w^{p-1})=\langle p\rangle (w)$ in $E^0\lpow
w\rpow/[p](w)$.

\item We let $F$ denote the Frobenius automorphism of $\Flbar$ and also to denote the standard $p$-typical
formal group law.

\item We write $\Gamma$ for the Galois group $\Gal(\overline{\mathbb{F}}_q/\mathbb{F}_q)$. We also abuse
this notation slightly to write $\Gamma=\Gal(\fq{p}/\Fq)$ where the difference is unimportant.

\item In Chapter \ref{ch:E^0(BGL_d(Fq))} we write $I$ for the ideal of $E^0(BGL_p(\Fq))$ generated by $t$.

\item We let $J$ be the set $\left\{\alpha\in \mathbb{N}^p\mid
0\leq\alpha_1\leq\ldots\leq\alpha_p<p^{nv}\text{ and }\alpha_1<\alpha_p\right\}.$

\item We use $K$ to denote a finite field of characteristic coprime to
$p$.\footnote{Not to be confused with the cohomology theory $K$; context should ensure there is no ambiguity.}

\item We write $L$ for the extension of $\mathbb{Q}\tensor E^0$ formed by adjoining a complete set of roots of
$[p^m](x)$ for each $m>0$.

\item We use $l$ to denote a chosen prime number different to $p$.

\item We let $N_d$ (or $N$) denote the normalizer of $T_d$ in $GL_d(\Fq)$.

\item We use $n$ to denote the integer corresponding to the height of the Morava $E$-theory.

\item We use $p$ to denote the prime at which the Morava $E$-theory is localised.

\item We let $\Phi$ be the group $\Rep(\Theta^*,GL_1(\Flbar))=\Hom_\text{cts}(\Theta^*,\Flbar^\times)$.

\item We write $\psi_1,~\psi_2$ and $\psi_3$ to denote the restriction maps from $E^0(BN)$ to $E^0(BT)$,
$E^0(B(\Sigma_p\times\Delta))$ and $E^0(BA)$ respectively.

\item We let $q=l^r$ be a power of a chosen prime number different to $p$.

\item We write $t$ for the unique class in $E^0(BGL_p(\Flbar))$ which restricts to $\prod_i[p^v](x_i)$ in
$E^0(B(\Flbar^\times)^p)\simeq E^0\lpow x_1,\ldots,x_p\rpow$.

\item We let $T_d$ (or $T$) denote the maximal torus of $GL_d(\Fq)$. Similarly, we may write $\overline{T}_d$ for the
maximal torus of $GL_d(\overline{\mathbb{F}}_q)$.

\item We let $\Theta=(\mathbb{Z}/p^\infty)^n$ and $\thstar=\Hom_{\text{cts}}(\Theta,S^1)=\mathbb{Z}_p^n$.

\item $u$ denotes the invertible polynomial generator of $E^*$ lying in degree -2.

\item $u_1,\ldots,u_{n-1}$ denote the standard power-series generators of $E^0$ over $\mathbb{Z}_p$.

\item We let $v=v_p(q-1)$.

\item We let $w$ denote the standard generator of $E^0(BC_p)\simeq E^0\lpow w\rpow/[p](w)$.

\item We let $x$ denote the complex orientation or complex coordinate of $E$, or a restriction there-of.

\end{itemize}

%------------------------------------------------------

\bibliographystyle{alpha}

\begin{thebibliography}{HKR00}

\bibitem[Ada74]{Adams}
J.~F. Adams.
\newblock {\em Stable homotopy and generalised homology}.
\newblock University of Chicago Press, Chicago, Ill., 1974.
\newblock Chicago Lectures in Mathematics.

\bibitem[Ada81]{Adamson}
Iain~T. Adamson.
\newblock {\em Introduction to field theory}.
\newblock Cambridge University Press, Cambridge, second edition, 1981.

\bibitem[AM04]{AdemMilgram}
Alejandro Adem and R.~James Milgram.
\newblock {\em Cohomology of finite groups}, volume 309 of {\em Grundlehren der
  Mathematischen Wissenschaften [Fundamental Principles of Mathematical
  Sciences]}.
\newblock Springer-Verlag, Berlin, second edition, 2004.

\bibitem[Ben91]{Benson}
D.~J. Benson.
\newblock {\em Representations and cohomology. {II}}, volume~31 of {\em
  Cambridge Studies in Advanced Mathematics}.
\newblock Cambridge University Press, Cambridge, 1991.
\newblock Cohomology of groups and modules.

\bibitem[Ben98]{BensonRepAndCohy1}
D.~J. Benson.
\newblock {\em Representations and cohomology. {I}}, volume~30 of {\em
  Cambridge Studies in Advanced Mathematics}.
\newblock Cambridge University Press, Cambridge, second edition, 1998.
\newblock Basic representation theory of finite groups and associative
  algebras.

\bibitem[Bor53]{Borel}
Armand Borel.
\newblock Sur la cohomologie des espaces fibr\'es principaux et des espaces
  homog\`enes de groupes de {L}ie compacts.
\newblock {\em Ann. of Math. (2)}, 57:115--207, 1953.

\bibitem[FM84]{FriedlanderMislin}
Eric~M. Friedlander and Guido Mislin.
\newblock Cohomology of classifying spaces of complex {L}ie groups and related
  discrete groups.
\newblock {\em Comment. Math. Helv.}, 59(3):347--361, 1984.

\bibitem[Fri82]{Friedlander}
Eric~M. Friedlander.
\newblock {\em \'{E}tale homotopy of simplicial schemes}, volume 104 of {\em
  Annals of Mathematics Studies}.
\newblock Princeton University Press, Princeton, N.J., 1982.

\bibitem[Fr{\"o}68]{Frohlich}
A.~Fr{\"o}hlich.
\newblock {\em Formal groups}.
\newblock Lecture Notes in Mathematics, No. 74. Springer-Verlag, Berlin, 1968.

\bibitem[GS99]{GreenleesStrickland}
J.~P.~C. Greenlees and N.~P. Strickland.
\newblock Varieties and local cohomology for chromatic group cohomology rings.
\newblock {\em Topology}, 38(5):1093--1139, 1999.

\bibitem[Hal76]{Hall}
Marshall Hall, Jr.
\newblock {\em The theory of groups}.
\newblock Chelsea Publishing Co., New York, 1976.
\newblock Reprinting of the 1968 edition.

\bibitem[Hat02]{Hatcher}
Allen Hatcher.
\newblock {\em Algebraic topology}.
\newblock Cambridge University Press, Cambridge, 2002.

\bibitem[Haz78]{Hazewinkel}
Michiel Hazewinkel.
\newblock {\em Formal groups and applications}, volume~78 of {\em Pure and
  Applied Mathematics}.
\newblock Academic Press Inc. [Harcourt Brace Jovanovich Publishers], New York,
  1978.

\bibitem[HKR00]{HKR}
Michael~J. Hopkins, Nicholas~J. Kuhn, and Douglas~C. Ravenel.
\newblock Generalized group characters and complex oriented cohomology
  theories.
\newblock {\em J. Amer. Math. Soc.}, 13(3):553--594 (electronic), 2000.

\bibitem[HS99]{HoveyStrickland}
Mark Hovey and Neil~P. Strickland.
\newblock Morava {$K$}-theories and localisation.
\newblock {\em Mem. Amer. Math. Soc.}, 139(666):viii+100, 1999.

\bibitem[Hun90]{Hunton}
John Hunton.
\newblock The {M}orava {$K$}-theories of wreath products.
\newblock {\em Math. Proc. Cambridge Philos. Soc.}, 107(2):309--318, 1990.

\bibitem[Kap58]{Kaplansky}
Irving Kaplansky.
\newblock Projective modules.
\newblock {\em Ann. of Math (2)}, 68:372--377, 1958.

\bibitem[Lan76]{Landweber}
Peter~S. Landweber.
\newblock Homological properties of comodules over {$M{\rm U}\sb\ast (M{\rm
  U})$}\ and {BP{$\sb\ast $}}({BP}).
\newblock {\em Amer. J. Math.}, 98(3):591--610, 1976.

\bibitem[Lan78]{Lang}
Serge Lang.
\newblock {\em Cyclotomic fields}.
\newblock Springer-Verlag, New York, 1978.
\newblock Graduate Texts in Mathematics, Vol. 59.

\bibitem[Lan02]{LangAlgebra}
Serge Lang.
\newblock {\em Algebra}, volume 211 of {\em Graduate Texts in Mathematics}.
\newblock Springer-Verlag, New York, third edition, 2002.

\bibitem[LT66]{LubinTate}
Jonathan Lubin and John Tate.
\newblock Formal moduli for one-parameter formal {L}ie groups.
\newblock {\em Bull. Soc. Math. France}, 94:49--59, 1966.

\bibitem[Mat89]{Matsumura}
Hideyuki Matsumura.
\newblock {\em Commutative ring theory}, volume~8 of {\em Cambridge Studies in
  Advanced Mathematics}.
\newblock Cambridge University Press, Cambridge, second edition, 1989.
\newblock Translated from the Japanese by M. Reid.

\bibitem[McC01]{McCleary}
John McCleary.
\newblock {\em A user's guide to spectral sequences}, volume~58 of {\em
  Cambridge Studies in Advanced Mathematics}.
\newblock Cambridge University Press, Cambridge, second edition, 2001.

\bibitem[Nak61]{Nakaoka}
Minoru Nakaoka.
\newblock Homology of the infinite symmetric group.
\newblock {\em Ann. of Math. (2)}, 73:229--257, 1961.

\bibitem[Qui69]{Quillen}
Daniel Quillen.
\newblock On the formal group laws of unoriented and complex cobordism theory.
\newblock {\em Bull. Amer. Math. Soc.}, 75:1293--1298, 1969.

\bibitem[Rav82]{RavenelK}
Douglas~C. Ravenel.
\newblock Morava {$K$}-theories and finite groups.
\newblock In {\em Symposium on Algebraic Topology in honor of Jos\'e Adem
  (Oaxtepec, 1981)}, volume~12 of {\em Contemp. Math.}, pages 289--292. Amer.
  Math. Soc., Providence, R.I., 1982.

\bibitem[Rav86]{RavenelCC}
Douglas~C. Ravenel.
\newblock {\em Complex cobordism and stable homotopy groups of spheres}, volume
  121 of {\em Pure and Applied Mathematics}.
\newblock Academic Press Inc., Orlando, FL, 1986.

\bibitem[Rav92]{RavenelNil}
Douglas~C. Ravenel.
\newblock {\em Nilpotence and periodicity in stable homotopy theory}, volume
  128 of {\em Annals of Mathematics Studies}.
\newblock Princeton University Press, Princeton, NJ, 1992.
\newblock Appendix C by Jeff Smith.

\bibitem[Seg68]{Segal}
Graeme Segal.
\newblock Classifying spaces and spectral sequences.
\newblock {\em Inst. Hautes \'Etudes Sci. Publ. Math.}, (34):105--112, 1968.

\bibitem[Ser77]{Serre}
Jean-Pierre Serre.
\newblock {\em Linear representations of finite groups}.
\newblock Springer-Verlag, New York, 1977.
\newblock Translated from the second French edition by Leonard L. Scott,
  Graduate Texts in Mathematics, Vol. 42.

\bibitem[Sha00]{Sharp}
R.~Y. Sharp.
\newblock {\em Steps in commutative algebra}, volume~51 of {\em London
  Mathematical Society Student Texts}.
\newblock Cambridge University Press, Cambridge, second edition, 2000.

\bibitem[Str97]{FSFG}
Neil~P. Strickland.
\newblock Finite subgroups of formal groups.
\newblock {\em J. Pure Appl. Algebra}, 121(2):161--208, 1997.

\bibitem[Str98]{StricklandSymmetricGroups}
N.~P. Strickland.
\newblock Morava {$E$}-theory of symmetric groups.
\newblock {\em Topology}, 37(4):757--779, 1998.

\bibitem[Str00]{StricklandK(n)duality}
Neil~P. Strickland.
\newblock {$K(n)$}-local duality for finite groups and groupoids.
\newblock {\em Topology}, 39(4):733--772, 2000.

\bibitem[Tan95]{Tanabe}
Michimasa Tanabe.
\newblock On {M}orava {$K$}-theories of {C}hevalley groups.
\newblock {\em Amer. J. Math.}, 117(1):263--278, 1995.

\bibitem[Wei94]{Weibel}
Charles~A. Weibel.
\newblock {\em An introduction to homological algebra}, volume~38 of {\em
  Cambridge Studies in Advanced Mathematics}.
\newblock Cambridge University Press, Cambridge, 1994.

\bibitem[Yag76]{Yagita}
Nobuaki Yagita.
\newblock The exact functor theorem for {${\rm BP}\sb\ast /I\sb{n}$}-theory.
\newblock {\em Proc. Japan Acad.}, 52(1):1--3, 1976.

\end{thebibliography}

%------------------------------------------------------

\end{document}